\newtheorem{thm}{Theorem}[section]
\newtheorem{prop}[thm]{Proposition}
\newtheorem{lem}[thm]{Lemma}
\newtheorem{cor}[thm]{Corollary}
\newtheorem*{thm*}{Theorem}
\newtheorem*{thmmain}{Theorem \ref{thm:main}}
\newtheorem*{cormain}{Corollary \ref{cor:main}}
\newtheorem*{thmsollast}{Theorem \ref{thm:sollast}}
\theoremstyle{definition}
\newtheorem{defn}[thm]{Definition}
\newtheorem{prob}{Problem}
\newtheorem{rem}[thm]{Remark}
\newtheorem{expl}[thm]{Example}
\newtheorem*{defn*}{Definition}
\newtheorem*{rem*}{Remark}
\newcommand{\cool}{co-irreducible }
\newcommand{\UU}{\mathfrak{U}}
\newcommand{\p}{\mathfrak{p}}
\newcommand{\cA}{\mathcal{A}}
\newcommand{\cc}{\mathfrak{c}}
\DeclareMathOperator{\sol}{sol}
\DeclareMathOperator{\SL}{SL}
\DeclareMathOperator{\PSL}{PSL}
\DeclareMathOperator{\nf}{nf}
\DeclareMathOperator{\Hom}{Hom}
\DeclareMathOperator{\id}{id}
\DeclareMathOperator{\Ker}{Ker}
\DeclareMathOperator{\ncl}{ncl}
\DeclareMathOperator{\tp}{tp}
\DeclareMathOperator{\ET}{ET}
\DeclareMathOperator{\D}{D}
\DeclareMathOperator{\Sh}{Sh}
\DeclareMathOperator{\az}{alph}
\DeclareMathOperator{\comp}{comp}
\DeclareMathOperator{\var}{var}
\DeclareMathOperator{\Con}{Cone}
\DeclareMathOperator{\Cay}{Cay}
\DeclareMathOperator{\lk}{link}
\DeclareMathOperator{\dist}{dist}
\DeclareMathOperator{\CAT}{CAT}
\newcommand{\Epsilon}{\mathcal E}
\renewcommand{\AA}{\mathfrak{A}}
\renewcommand{\P}{\mathcal{P}}
\newcommand{\Ts}{\mathfrak{T}}
\newcommand{\m}{\mathfrak{m}}
\newcommand{\BS}{\mathcal{BS}}
\newcommand{\BN}{\mathbb{N}}
\newcommand{\BR}{\mathbb{R}}
\newcommand{\BZ}{\mathbb{Z}}
\newcommand{\factor}[2]{{\raise0.7ex\hbox{$#1$} \!\mathord{\left/ {\vphantom {#1 {#2}}}\right.\kern-\nulldelimiterspace}\!\lower0.7ex\hbox{${#2}$}}}
\newcommand{\KK}{\ensuremath{\mathbb{K}}}
\newcommand{\GG}{\ensuremath{\mathbb{G}}}
\newcommand{\EE}{\ensuremath{\mathbb{E}}}
\newcommand{\PP}{\ensuremath{\mathbb{P}}}
\newcommand{\HH}{\ensuremath{\mathbb{H}}}
\newcommand{\FF}{\ensuremath{\mathbb{F}}}
\newcommand{\Tr}{\ensuremath{\mathbb{T}}}
\newcommand{\ov}{\overline}
\newcommand{\BA}{\ensuremath{\mathbb{A}}}
\newcommand{\lra }{\leftrightarrows}
\newcommand{\CC}{\mathcal{C}}
\newcommand{\UG}{\GG^{\UU}}
\newcommand{\Hc}{\mathcal{H}}
\newcommand{\mm}{\mathfrak{m}}
\renewcommand{\b}{\mathfrak{b}}
\newcommand{\mmm}{\mathfrak{M}}
\newcommand{\cali}{\mathfrak{l}}
\newcommand{\caN}{\mathcal{N}}
\newcommand{\caM}{\mathcal{M}}
\newcommand{\calL}{\mathcal{L}}
\newcommand{\caK}{\mathcal{K}}
\title[Group Actions on Real Cubings]{Group Actions on Real Cubings and Limit Groups over Partially Commutative Groups}
\author[M. Casals-Ruiz]{Montserrat Casals-Ruiz}
\address{Department of Mathematics, 1326 Stevenson Center, Vanderbilt University, Nashville, TN 37240, USA}
\email{montsecasals@gmail.com}
\thanks{The first author is supported by Programa de Formaci\'{o}n de Investigadores del Departamento de Educaci\'{o}n, Universidades e Investigaci\'{o}n del Gobierno Vasco}
\author[I. Kazachkov]{Ilya Kazachkov}
\address{Department of Mathematics, 1326 Stevenson Center, Vanderbilt University, Nashville, TN 37240, USA}
\email{ilya.kazachkov@gmail.com}
\thanks{The second author is supported by NSERC PostDoctoral Fellowship}
\begin{document}

\begin{abstract}
We introduce a class of spaces, called real cubings, and study the stucture of groups acting nicely on these spaces. Just as cubings are a natural generalisation of simplicial trees, real cubings can be regarded as a natural generalisation of real trees. Our main result states that a finitely generated group $G$ acts nicely (essentially freely and co-specially) on a real cubing if and only if it is a subgroup of a graph tower (a higher dimensional generalisation of $\omega$-residually free towers and NTQ-groups). It follows that $G$ acts freely, essentially freely and co-specially on a real cubing if and only if $G$ is a subgroup of the graph product of cyclic and (non-exceptional) surface groups. In the particular case when the real cubing is a tree, it follows that $G$ acts freely,  essentially freely and co-specially on the real cubing if and only if it is a subgroup of the free product of abelian and surface groups. Hence, our main result can be regarded as a generalisation of the Rips' theorem on free actions on real trees. 

We apply our results to obtain a characterisation of limit groups over partially commutative groups as subgroups of graph towers. This result generalises the work of Kharlampovich-Miasnikov, \cite{KhMNull}, Sela, \cite{Sela1} and Champetier-Guirardel, \cite{CG} on limit groups over free groups.
\end{abstract}

\maketitle

\newpage
\setcounter{page}{2}
\section*{Table of Contents}
\renewcommand{\contentsname}{}
\tableofcontents
\newpage
\section{Introduction} \label{sec:1}

Rips' theory of group actions on real trees has been extremely important not only because of its intrinsic value but also as a tool and a nexus for different disciplines, see \cite{Bestv}. For instance, it was essential  for understanding the compactification of hyperbolic structures \cite{MS1, MS2, MS3}, in Sela's approach to acylindrical accessibility \cite{SelaAcy}, for the isomorphism problem for hyperbolic groups \cite{SelaIso, DaGr1, DGuir}, for the study of limit groups \cite{Sela1}, for the construction of the JSJ decomposition \cite{RSJSJ}, for analysing automorphisms of hyperbolic groups and free groups, \cite{Paulin1, Paulin2, SelaAut}, etc. 

The goal of this paper is to generalise Rips' theory to higher dimensional spaces, i.e. to determine the structure of finitely generated groups acting nicely on \emph{real cubings} (see Definition \ref{defn:rcub}).

In order to be more concrete on the type of actions and spaces under consideration, we first discuss the point of view on group actions on real trees that we adopt. In \cite{BF}, Bestvina and Feighn determined the structure of finitely presented groups acting stably on a real tree. Guirardel, in \cite{Guir}, showed that, in fact, any stable action of a finitely presented group on a real tree can be approximated by actions on simplicial trees while keeping control on arc stabilisers. From this perspective, the theory of (stable) actions on real trees is the theory of ultralimits of actions on simplicial trees, \cite{KL,Roe}.

A natural high dimensional generalisation of a (simplicial) tree is that of a $\CAT(0)$ cube complex. Following Sageev, \cite{S95}, we call a simply connected cube complex of (combinatorially) non-positive curvature a \emph{cubing} and we define a \emph{real cubing} to be an ultralimit of cubings (see Definition \ref{defn:rcub}). As Rips' theory can be viewed as the theory of ultralimits of actions on simplicial trees, our approach to its generalisation is to study group actions on the ultralimits of cubings.

It is obvious that in order to succeed in this approach, one needs to have a good understanding of the structure of groups acting on $\CAT(0)$ cube complexes. But already at this point, one encounters the first obstacle: Burger and Mozes, see \cite{BM}, described a series of finitely presented simple groups that act \emph{freely} on the direct product of two (simplicial) trees. These examples show that the problem of describing the structure of finitely generated groups acting on $\CAT(0)$ cube complexes is practically hopeless.

Intuitively, one would expect that a ``good'' action on a space should capture the structure of the space and so, in the particular case of a direct product of trees, one would expect to recover some type of direct product structure. As shown by the examples of Burger and Mozes, free actions fail this intuition. The notion that does respond to this expectation is the notion of co-special action introduced by Haglund and Wise, see \cite{HW}. Roughly speaking, a group $G$ acts co-specially on a cube complex $X$ if the quotient $G\backslash X$ is a cube complex without certain pathologies on its hyperplanes. In this way, a group acts freely co-specially on a direct product of trees if and only if it is a subgroup of a direct product of free groups. More generally, it follows from the work of Haglund and Wise, that a group acts freely co-specially on a $\CAT(0)$ cube complex if and only if it is a subgroup of a (possibly infinitely generated) partially commutative group (also known as right-angled Artin groups or graph groups).  Furthermore, every action of a group on a simplicial tree (without inversions of edges) is co-special, therefore co-special actions on $\CAT(0)$ cube complexes are a natural generalisation of actions on simplicial trees.

The aim of this paper is to determine the structure of finitely generated groups that act on real cubings via an ultralimit of free co-special actions on cube complexes, i.e. to determine the structure of finitely generated groups that act essentially freely co-specially on real cubings, see Definition \ref{defn:essfr}. 

We show that the class of groups that admit this type of action on a real cubing is, in fact, very large, containing as main example the class of finitely generated groups discriminated by an arbitrary (finitely generated) partially commutative group, or in other words, the class of limit groups over partially commutative groups. In particular, the class of groups we consider contains the class of finitely generated residually free groups and so the class
of limit groups over free groups.

Furthermore, the type of actions we consider is rather wide. In the case of real trees (viewed as real cubings), one can show that every very small action of a free group, every faithful, non-trivial action of an abelian group and every small action of the fundamental group of an orientable surface is of our type. (Note that if one could show that very small actions of the fundamental group of a non-orientable surface on real trees can be approximated by very small simplicial actions, then we could conclude that any free action of a finitely generated group on a real tree is of our type, see \cite{Guir} for further discussion).

We now explain in what terms we determine the structure of finitely generated groups acting essentially freely and co-specially on real cubings. One family of groups that turned out to be key in the study of both the universal and the elementary theory of free groups is the class of $\omega$-residually free towers or, equivalently, the class of NTQ-groups. This class of groups is constructed inductively from free abelian groups and surface groups by taking free products and amalgamations over abelian subgroups. One of the main properties of this family is that, on the one hand, any NTQ-group is discriminated by non-abelian free groups (see \cite{KhMNull, Sela1}) and, on the other hand, any finitely generated freely discriminated group is a subgroup of an NTQ-group (see \cite{KhMNull, CG}). Hence studying limit groups over free groups is equivalent to  studying NTQ-groups and their subgroups. 

Along these lines, we prove that groups acting essentially freely co-specially on real cubings are subgroups of a nice class of groups, which we call \emph{graph towers}. Graph towers are natural generalisations of NTQ-groups ($\omega$-residually free towers) and, just as them, they are also defined inductively. Graph towers of height $0$ are partially commutative groups. A graph tower $\Ts^l$ of height $l$ is constructed by taking an amalgamated product of a graph tower $\Ts^{l-1}$ of height $l-1$ and an extension of either a free abelian group, or of a free group or of a surface group with boundary, by the centraliser $C_{\Ts^{l-1}}(D)$ of (a certain) subgroup $D$ of $\Ts^{l-1}$, and taking the amalgamation over this centraliser $C_{\Ts^{l-1}}(D)$. More precisely,

\begin{defn*}[see Lemma \ref{lem:prtower}]
Let partially commutative groups be graph towers of height $0$. Assume that graph towers $\Ts^{l-1}$ of height $l-1$ have been constructed. Then, a graph tower of height $l$ has one of the following presentations:
\begin{itemize}
\item [a1)] $\Ts^{l-1} \ast_{C_{\Ts^{l-1}}(D)} (C_{\Ts^{l-1}}(D)\times\langle x_1^l, \dots, x_{m_l}^l \rangle)$ \\
(basic type, $D$ is a certain non-abelian subgroup of $\Ts^{l-1}$);

\item [a2)] $\Ts^{l-1} \ast_{C_{\Ts^{l-1}}(D)} (C_{\Ts^{l-1}}(D) \times \langle x_1^l, \dots, x_{m_l}^l \mid [x_i^l,x_j^l]=1, 1\le i,j \le m_l, i\ne j \rangle)$ \\ (basic type, $D$ is a certain abelian subgroup of $\Ts^{l-1}$);

\item [b1)] $\Ts^{l-1} \ast_{C_{\Ts^{l-1}}(u)} (C_{\Ts^{l-1}}(u) \times \langle x_1^l, \dots, x_{m_l}^l \mid [x_i^l,x_j^l]=1, 1\le i,j \le m_l, i\ne j \rangle)$ \\ (abelian type, $u$ is a non-trivial irreducible root element);

\item [b2)] $\Ts^{l-1} \ast_{C_{\Ts^{l-1}}(D)} (C_{\Ts^{l-1}}(D) \times \langle x_1^l, \dots, x_{m_l}^l \mid [x_i^l,x_j^l]=1, 1\le i,j\le m_l, i\ne j \rangle)$ \\ (abelian type, $D$ is a certain non-abelian subgroup of $\Ts^{l-1}$);

\item [c)] $\Ts^{l-1} \ast_{C_{\Ts^{l-1}}(D)\times \langle u_{2g+1},\dots, u_m\rangle} ( \langle u_{2g+1}, \dots, u_m, x_1^l, \dots, x_{m_l}^l \mid W \rangle \times C_{\Ts^{l-1}}(D))$ \\ (surface type, $W$ is a non-exceptional quadratic equation and $D$ is a certain non-abelian subgroup of $\Ts^{l-1}$).
\end{itemize}
\end{defn*}

In general, one does not extend centralisers of arbitrary subgroups. Roughly speaking, one only considers subgroups of $\Ts^{l-1}$ which behave as directly indecomposable canonical parabolic subgroups, i.e. are discriminated (in a ``minimal way'') into directly indecomposable canonical parabolic subgroups of the partially commutative group $\GG$, see discussion in Section \ref{sec:9}. 

The class of graph towers extends the class of $\omega$-residually free towers or NTQ-groups. Indeed, if the subgroup $D$  of an $\omega$-residually free tower is non-abelian, then since centralisers in a freely discriminated group $\Ts^{l-1}$ are commutative transitive, it follows that $C_{\Ts^{l-1}}(D)$ is trivial and so, in this case, the decompositions correspond to free products and amalgamated products (over cyclic). If the subgroup $D$ is abelian, then $C_{\Ts^{l-1}}(D)$ is a maximal abelian subgroup of $\Ts^{l-1}$ and the decomposition corresponds to an amalgamated product over a (finitely generated free) abelian group. This suggests that the splittings that occur in groups acting on real cubings can give a framework for a generalisation of the theory of (abelian) JSJ-decompositions.

Just as the class of $\omega$-residually free towers and NTQ groups plays a crucial role in the classification of groups elementarily equivalent to a free group, \cite{KhMTar, SelaTar}, we expect that the class of graph towers will play a similar role in the classification of groups elementarily equivalent to a given partially commutative group.

We are now in the position to state one of the main results of this paper.

\begin{thmmain}
Let $G$ be a finitely generated group. The group $G$ acts essentially freely co-specially on a real cubing if and only if it is a subgroup of a graph tower.
\end{thmmain}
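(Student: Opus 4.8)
The plan is to prove the two implications separately, with the ``only if'' direction being by far the harder one. For the ``if'' direction, I would argue by induction on the height $l$ of the graph tower $\Ts^l$. Graph towers of height $0$ are partially commutative groups, and by the Haglund--Wise machinery these act freely co-specially on the universal cover of their Salvetti complex, which is a $\CAT(0)$ cube complex; taking a constant ultrapower gives an essentially free co-special action on a real cubing. For the inductive step, given that $\Ts^{l-1}$ acts essentially freely co-specially on a real cubing $X^{l-1}$, I would realise the amalgamated product defining $\Ts^l$ as an action on a real cubing obtained by an equivariant ``tree/cubing of spaces'' construction over the Bass--Serre tree of the splitting: the vertex spaces are copies of $X^{l-1}$ and copies of real cubings on which the new vertex group (an extended abelian, free, or surface group times a centraliser) acts essentially freely co-specially, glued along the edge spaces realising $C_{\Ts^{l-1}}(D)$ (or $C_{\Ts^{l-1}}(D)\times\langle u_{2g+1},\dots,u_m\rangle$ in the surface case). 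The point is that each of the building blocks — abelian, free, and surface groups with boundary — admits the required action on a real tree (hence a real cubing), and the centraliser factors act on their own cube complexes; one must check that the glued action remains essentially free and, crucially, co-special, which follows because co-speciality is stable under this kind of ``amalgam of co-special pieces along convex co-special subcomplexes''. A subgroup of $\Ts^l$ then acts essentially freely co-specially by restriction, since these properties pass to subgroups.

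For the ``only if'' direction, suppose $G$ is finitely generated and acts essentially freely co-specially on a real cubing $Y$. By definition $Y$ is an ultralimit of cubings $Y_i$ with free co-special actions, and one should first pass to a minimal invariant sub-real-cubing and set up the appropriate finiteness/stability hypotheses (an analogue of the ``stable action'' condition in the Bestvina--Feighn/Guirardel theory). The heart of the argument is a structure theorem for the action: one decomposes $G$ along the ``hyperplane'' structure of $Y$. A real cubing carries a system of walls/hyperplanes, and the associated wall stabilisers and the induced actions on the ``transverse'' real trees (the factors obtained by collapsing all but one direction) should be analysed by the classical Rips machine. This is where I would import the one-dimensional theory: the transverse real-tree actions are, by the remarks in the introduction, of the standard type (free, abelian, or surface), and the Rips-type decomposition of each gives rise to a splitting of the relevant subgroup as a graph of groups with abelian/surface/partially-commutative pieces. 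Assembling these one-direction decompositions coherently across the commuting hyperplane-directions of $Y$ yields a hierarchical decomposition of $G$ which, after organising it by ``height'' (the depth of the hierarchy), is exactly the data of an embedding into a graph tower; the co-speciality hypothesis is precisely what guarantees that the commuting directions assemble into direct-product pieces $C_{\Ts^{l-1}}(D)\times(\text{new generators})$ rather than the pathological Burger--Mozes-type amalgams.

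The main obstacle, and the technical core of the paper, is this last assembly step: running a ``multidimensional Rips machine'' that simultaneously handles all the hyperplane-directions of the real cubing and produces a single coherent graph-tower structure rather than an unrelated collection of one-dimensional splittings. In particular one must (i) make precise the right notion of ``directly indecomposable canonical parabolic subgroup'' and show the centraliser-extension subgroups $D$ that arise genuinely have the required form (the ``minimal discrimination into directly indecomposable canonical parabolic subgroups'' alluded to in Section~\ref{sec:9}), (ii) control how the hyperplane-stabilisers sit inside one another so that the induction on height actually terminates (an acylindrical-accessibility-type bound), and (iii) verify that the co-special quotient hypothesis rules out the exceptional quadratic equations and forces the splittings to be over exactly the centralisers named in the definition of graph tower. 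I expect (i) and (iii) to require the bulk of the work, since they are exactly the phenomena with no one-dimensional analogue; the free-product/amalgam bookkeeping in (ii) should be a routine, if lengthy, adaptation of Sela's acylindrical accessibility argument.
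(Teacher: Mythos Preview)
Your proposal takes a genuinely different route from the paper, and the ``only if'' direction as you sketch it has a real gap.

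The paper does \emph{not} analyse the geometry of the real cubing action directly. Instead, the proof of Theorem~\ref{thm:main} is a two-line reduction to results proved earlier: by Theorem~\ref{thm:gen}, a finitely generated group acts essentially freely co-specially on a real cubing if and only if it is a limit group (i.e.\ fully residually $\GG$) over some finitely generated partially commutative group $\GG$; and by Corollary~\ref{cor:limit} (together with Theorem~\ref{thm:towerdisc}), limit groups over $\GG$ are exactly the finitely generated subgroups of graph towers. The first reduction is a short ultraproduct/asymptotic-cone argument (Sections~\ref{sec:5}--\ref{sec:7}): a co-special action on a cubing of bounded width yields an embedding into a bounded-rank partially commutative group, and passing to the ultralimit gives an embedding into $\GG^{\UU}$, hence full residual $\GG$-ness. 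The second reduction is the technical heart of the paper (Sections~\ref{sec:10}--\ref{sec:11}) and is entirely algebraic/combinatorial: one runs the Makanin--Razborov process for $\GG$ developed in \cite{CKpc}, analyses a fundamental branch of the resulting tree $T_{\sol}$, and reads off the graph-tower structure from the dynamics of constrained generalised equations (tribes, periodic structures, the quadratic and linear cases). No hyperplane decomposition of the real cubing, no transverse real trees, and no Rips machine on the space ever appear.

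Your ``only if'' plan --- run a classical Rips machine on each ``transverse real tree'' and then assemble the one-dimensional splittings coherently across commuting hyperplane-directions --- is precisely the step that is \emph{not} known to work and that the paper is designed to circumvent. A real cubing has no canonical product decomposition into real trees (already the asymptotic cone of a general partially commutative group is not a product), so there are no well-defined ``transverse directions'' on which to run the one-dimensional theory; and even when such factors exist locally, there is no accessibility result that lets you glue the resulting splittings into a single hierarchy. You identify this assembly as ``the main obstacle'', but it is not an obstacle within a workable programme --- it is the whole problem, and the paper's contribution is to replace it with the Makanin--Razborov process. Similarly, your ``if'' direction (building the real cubing inductively as a tree of cubings over the Bass--Serre tree) is plausible but unnecessary: the paper instead shows (Theorem~\ref{thm:towerdisc}) that graph towers are discriminated by $\GG$, whence they and their subgroups are limit groups over $\GG$, and Theorem~\ref{thm:gen} then furnishes the action for free.
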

Furthermore, given a finitely generated group $G$, the corresponding graph tower $\Ts$ and the embedding of $G$ into $\Ts$ can be constructed effectively.

In the case of free actions, the above theorem results in the following corollary, which can be likened to Rips' theorem on free actions on real trees.
\begin{cormain}
A finitely generated group $G$ acts freely, essentially freely and co-specially on a real cubing if and only if $G$ is a subgroup of the graph product of free abelian and {\rm(}non-exceptional{\rm)} surface groups.

In particular, if the real cubing is a real tree, then $G$ is a {\rm(}subgroup of{\rm)} the free product of free abelian groups and {\rm(}non-exceptional{\rm)} surface groups.
\end{cormain}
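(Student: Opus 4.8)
The plan is to derive Corollary~\ref{cor:main} from the Main Theorem (Theorem~\ref{thm:main}) by determining precisely which graph towers are compatible with a \emph{free} (and not merely essentially free) co-special action on a real cubing, and by identifying that sub-class with the subgroups of graph products of free abelian and non-exceptional surface groups.

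First, the ``if'' direction. Let $P$ be the graph product of groups $P_v$, each $P_v$ either a free abelian group $\BZ^{n_v}$ or a non-exceptional surface group. Note that $P$ is itself a graph tower: a graph product of free abelian groups is a partially commutative group (a height-$0$ graph tower), and each surface vertex is introduced by a surface-type extension (item c above) in which the amalgamating centraliser is trivial, so that the surface piece is attached as a graph-product factor. Hence, by the Main Theorem, $P$ — and therefore $G\le P$ — acts essentially freely and co-specially on a real cubing. What remains is to see that this action may be taken to be free; for this one exhibits the action directly: each $\BZ^{n}$ acts freely on the cube complex $\BR^{n}$, each non-exceptional surface group acts freely on a real tree dual to a measured lamination (``non-exceptional'' being exactly the hypothesis guaranteeing the existence of such a lamination, hence of a free action), and the paper's construction of the real cubing associated to a graph product (an ultralimit of the Salvetti-type complex assembled from the pieces) glues these into a free, essentially free, co-special action of $P$; restricting to $G$ finishes this direction.

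Conversely, suppose $G$ acts freely, essentially freely and co-specially on a real cubing $Y$. By the Main Theorem, $G$ embeds (effectively) in a graph tower $\Ts$, which we may take to be minimal, i.e.\ $G$ lies in no proper graph subtower. I claim freeness of the action forces $\Ts$ to be a graph product of free abelian and non-exceptional surface groups. A free co-special action of $G$ on $Y$ restricts to a free co-special action of every finitely generated subgroup of $G$ on a real sub-cubing; so no finitely generated subgroup of $G$ can fail to admit a free co-special action on a real cubing. On the other hand, were some level of the tower a \emph{genuine} centraliser extension — one in which the extended centraliser $C_{\Ts^{j-1}}(D)$ is non-trivial and the amalgamation is not over a direct (graph-product) factor — then, using commutative transitivity and the parabolicity/malnormality properties of centralisers recorded after the definition of graph towers, $\Ts$ (and, by minimality, some finitely generated subgroup of $G$) would contain a ``bad'' subgroup: an amalgam of the form $C\ast_{C}(C\times\langle\dots\rangle)$ or a surface-by-$C$ amalgam with $C$ non-abelian, and such a group contains an $F_2\times\BZ$- or Baumslag--Solitar-type obstruction admitting no free co-special action on a real cubing, by the ultralimit version of the Haglund--Wise classification. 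This contradiction forces every centraliser appearing to be abelian and maximal, the extensions to be of abelian type over abelian retracts, and the surface pieces to appear closed and non-exceptional with boundary absorbed into abelian retracts; reading the hierarchy bottom-up, $\Ts$ is then a graph product of free abelian groups and non-exceptional surface groups, as claimed. Finally, for the ``in particular'' assertion: if $Y$ is one-dimensional then the approximating cube complexes may be taken to be simplicial trees, so the bottom partially commutative group of $\Ts$ has edgeless defining graph, i.e.\ is free, and the graph-product operations in the analysis degenerate to free products; thus $G$ is a subgroup of a free product of free abelian and non-exceptional surface groups, recovering Rips' theorem in the Bestvina--Feighn / Gaboriau--Levitt--Paulin form.

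The main obstacle is the ``only if'' direction, and within it the precise statement and proof that freeness of the $G$-action forces each centraliser extension to degenerate: one must pin down the correct ``bad'' subgroup inside a genuine centraliser extension, confirm that it is actually seen by a finitely generated subgroup of $G$ (using minimality of the tower and the effective construction of the embedding $G\hookrightarrow\Ts$ from the action), verify that freeness and co-speciality of the $G$-action pass to it on a real sub-cubing, and invoke the ultralimit analogue of the Haglund--Wise characterisation of free co-special actions on cube complexes to close the argument. The ``if'' direction is routine modulo the construction of real cubings associated to graph products, the only genuine subtlety being the verification of co-speciality after passing to the ultralimit.
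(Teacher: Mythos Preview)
The paper gives no explicit proof of this corollary; it is stated immediately after Theorem~\ref{thm:main} with only the remark that ``in the case of free actions, the above theorem results in the following corollary.'' So there is no detailed argument in the paper to compare against, and your task was genuinely to supply one.

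Your ``if'' direction is reasonable in outline: graph products of free abelian and non-exceptional surface groups are graph towers (each surface vertex is a surface floor with trivial amalgamating centraliser, each abelian vertex is absorbed into the height-$0$ partially commutative group or added as an abelian floor), and one can assemble a free action from the free actions of the vertex groups on real trees and Euclidean spaces. This part needs more care in checking that the assembled ultralimit action is co-special in the sense of Definition~\ref{defn:essfr}, but the idea is sound.

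The ``only if'' direction, however, has a genuine gap. Your obstruction is wrong: you claim that a non-degenerate centraliser extension forces a subgroup ``containing an $F_2\times\BZ$- or Baumslag--Solitar-type obstruction admitting no free co-special action on a real cubing,'' but $F_2\times\BZ$ is itself a partially commutative group and acts freely, co-specially on its own Cayley complex $\widetilde{C(F_2\times\BZ)}\cong(\text{tree})\times\BR$, which is a real cubing. So $F_2\times\BZ$ cannot be the obstruction. Likewise, genuine Baumslag--Solitar groups $BS(m,n)$ with $|m|\ne|n|$ do not embed in partially commutative groups or their limit groups at all, so they never appear inside a graph tower and cannot serve as an internal obstruction either. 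You also invoke ``commutative transitivity and the parabolicity/malnormality properties of centralisers recorded after the definition of graph towers,'' but the remark you are citing says exactly the opposite: centralisers in partially commutative groups are in general \emph{neither} commutative transitive \emph{nor} malnormal \emph{nor} small. Finally, the ``ultralimit version of the Haglund--Wise classification'' you appeal to is not established anywhere in the paper. What is actually needed is an analysis of when a non-trivial element of a graph tower can fix a point of the limiting real cubing (equivalently, when its images under a discriminating family have sublinear translation length), and how the absence of such elements constrains the centraliser extensions appearing in the tower; your argument does not engage with this.
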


Notice that although graph towers are finitely presented, finitely generated groups acting essentially freely co-specially on real cubings (or limit groups over partially commutative groups) are, in general, \emph{not} finitely presented, since partially commutative groups (and so graph towers) are not coherent.

\medskip

The approach to group actions on real cubings we present in this paper is similar to the one originally presented by Rips for the study of free actions on real trees and to the one presented by Kharlampovich and Miasnikov for the study of limit groups over free groups, \cite{KhMNull}. Both approaches are based on an interpretation of the Makanin-Razborov process, a combinatorial algorithm that decides whether or not a system of equations over a free group is compatible as well as produces an effective description of the set of solutions. In our case, the proof will rely on the analogue of the Makanin-Razborov process that we developed in \cite{CKpc}, which gives an effective description of the solution set of a system of equations over a partially commutative group.

\subsection*{Organization of the paper}
We now outline the organization of the paper. In the first three sections, we review basic notions and fix notation on partially commutative groups, algebraic geometry over groups and cube complexes.

In Section \ref{sec:5}, we review and extend several basic results from the theory of cube complexes. An important notion for our approach is that of co-special action introduced by Haglund and Wise in \cite{HW}, since it links cube complexes and groups acting on them to partially commutative groups and their subgroups - the natural setting for our work. More precisely, given a special cube complex $X$, Haglund and Wise construct an ($A$-)typing map and a natural combinatorial local isometry from $X$ to the standard complex of a partially commutative group $\GG(X)$,  and prove that $\pi_1(X)$ embeds into $\GG(X)$. The main inconvenience of this construction is that, in general, the partially commutative group $\GG(X)$ is infinitely generated and this precludes the use of some of the techniques needed in our approach. 

An elegant way to overcome this technical obstacle would be by solving the following problem for partially commutative groups.

\begin{prob} \label{prob:1}
Given $k\in \BN$, does there exist a universal finitely generated partially commutative group $\GG_k$ that contains all $k$-generated subgroups of arbitrary partially commutative groups? 

Or, in a stronger form, given $k\in \BN$, does there exist a finitely generated partially commutative group $\GG_k$ so that for any $k$-generated subgroup $H$ of any partially commutative group $\GG$ there exists a homomorphism $\varphi_{H}$ from $\GG$ to $\GG_k$ which is injective on $H$?
\end{prob}

We note that by \cite{Baudisch}, every $2$-generated subgroup of any partially commutative group is either free or free abelian. Hence, there does exist a $2$-universal partially commutative group.

Unfortunately, we do not know a solution to this problem in the full generality. However, we can solve it if we restrict the family of partially commutative groups to which the subgroups $H$ belong. To this end, in Section \ref{sec:5}, we introduce the notion of width of a cube complex and show that the subclass of partially commutative groups of bounded width do have a universal object for the set of their $k$-generated subgroups. Note that there are other (even wider) classes that one could handle in a similar way such as the class of partially commutative groups that are (finite or infinite) free products of finitely many different partially commutative groups of finite width.

Due to the nature of the class of partially commutative groups we are considering, some constructions in Section \ref{sec:5} might seem somewhat artificial. The reader interested primarily in understanding the structure of limit groups over partially commutative groups, may wish to skip Sections \ref{sec:5}, \ref{sec:6} and \ref{sec:7} and proceed to Section \ref{sec:9}. Our choice of approach is justified by the fact that, on the one hand, it covers the case of trees, and, on the other hand, in the event that Problem \ref{prob:1} is solved in positive, the proofs and results of this paper could be straightforwardly carried over to the general case (i.e. the restriction on the width of cubings would be dropped).

In Section \ref{sec:6}, we introduce real cubings as ultralimits of cubings of uniformly bounded width, see Definition \ref{defn:rcub}. 

In Section \ref{sec:7}, we introduce the notion of essentially free co-special action - the class of actions that we are interested in, see Definition \ref{defn:essfr}. Roughly speaking, an action of a group $G$ on a real cubing (viewed as an ultralimit) is essentially free and co-special if it is faithful, non-trivial and is a limit of co-special actions. We show that if a group $G$ acts essentially freely and co-specially on a real cubing, then it acts essentially freely and co-specially on the asymptotic cone of a finitely generated partially commutative group.  We then reduce the study of groups acting essentially freely and co-specially on real cubings to the study of limit groups over partially commutative groups. The main result of this section is summarised in the following
\begin{thm*}[see Theorem \ref{thm:gen}]
A finitely generated group acts essentially freely and co-specially on a real cubing if and only if it acts essentially freely and co-specially on an asymptotic cone of a partially commutative group if and only if it is a limit group over some partially commutative group.
\end{thm*}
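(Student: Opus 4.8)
The plan is to establish the cycle of implications
\[
\text{(acts on an asymptotic cone)}\ \Longrightarrow\ \text{(acts on a real cubing)}\ \Longrightarrow\ \text{(is a limit group)}\ \Longrightarrow\ \text{(acts on an asymptotic cone)},
\]
so three implications suffice. The easy one is that an essentially free co-special action on an asymptotic cone of a finitely generated partially commutative group $\GG$ is already such an action on a real cubing: the universal cover $\widetilde{X}_\GG$ of the standard complex of $\GG$ is a cubing whose width equals the finite width of $\GG$, so any asymptotic cone $\Cone(\GG)=\lim_\omega(\widetilde{X}_\GG,\lambda_i d,o_i)$ is an ultralimit of cubings of uniformly bounded width, hence a real cubing in the sense of Definition \ref{defn:rcub}.

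Next I would realise a limit group $G$ over a finitely generated partially commutative group $\GG$ as acting on an asymptotic cone of $\GG$ by a Bestvina--Paulin type construction. Fix a finite generating set $S$ of $G$ and a discriminating family $\varphi_i\colon G\to\GG$ (so $\varphi_i$ is injective on the ball of radius $i$ about $1$). Each $\varphi_i$, composed with the proper cocompact co-special action of $\GG$ on $\widetilde{X}_\GG$, gives a co-special action of $G$ on $\widetilde{X}_\GG$ whose quotient is the special cube complex $\varphi_i(G)\backslash\widetilde{X}_\GG$. Rescaling by $\lambda_i=(\max_{s\in S}\dist(o_i,\varphi_i(s)o_i))^{-1}$ and taking the ultralimit produces an action of $G$ on $\Cone(\GG)=\lim_\omega(\widetilde{X}_\GG,\lambda_i d,o_i)$; it is non-trivial because some generator displaces $o_i$ by exactly $1$ at every scale, and it is a limit of co-special actions by construction. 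The point that needs care is faithfulness: arguing as in Rips--Bestvina--Paulin theory, a collapse of some $1\ne g\in G$ in the limit would force the elements $\varphi_i(g)$---non-trivial for $\omega$-almost all $i$, hence acting on $\widetilde{X}_\GG$ as hyperbolic isometries with translation length at least $1$ in the combinatorial metric---to displace arbitrarily large balls about $o_i$ by only $o(\lambda_i^{-1})$, which is impossible for a suitably chosen discriminating sequence; concretely one takes the $\varphi_i$ from the Makanin--Razborov description of $\Hom(G,\GG)$ obtained in \cite{CKpc}, whose rescaled limits are faithful. (When no rescaling is forced, i.e. $\lambda_i\not\to 0$, then $G$ already embeds in $\GG$ and acts faithfully, co-specially and non-trivially on $\widetilde{X}_\GG$ and on any asymptotic cone of $\GG$.)

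To close the cycle I would start from an essentially free co-special action of $G$ on an arbitrary real cubing and show $G$ is a limit group over a finitely generated partially commutative group. Unwinding Definition \ref{defn:essfr}, the action is a limit of co-special actions $\rho_i$ of $G$ on cubings $X_i$ of width bounded by a fixed $N$; put $Z_i=G\backslash X_i$, a special cube complex of width $\le N$. By Haglund--Wise \cite{HW} there is an $A$-typing local isometry from $Z_i$ to the standard complex of a partially commutative group $\GG(Z_i)$ of width $\le N$, through which $\rho_i$ induces a homomorphism $G\twoheadrightarrow\pi_1(Z_i)\hookrightarrow\GG(Z_i)$. Since $G$ is $k$-generated for a fixed $k$ and all the $\GG(Z_i)$ have width $\le N$, the universal object of Section \ref{sec:5} supplies a single finitely generated partially commutative group $\HH=\HH_{N,k}$ together with homomorphisms $\GG(Z_i)\to\HH$ that are injective on the $(\le k)$-generated image of $G$; composing gives $\psi_i\colon G\to\HH$. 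The faithfulness and stabiliser conditions packaged into the notion of essential freeness (Definition \ref{defn:essfr}) then force $\psi_i(g)\ne 1$ for $\omega$-almost all $i$ whenever $1\ne g\in G$; hence $\{\psi_i\}$ discriminates and $G$ is a limit group over $\HH$. Together with the previous step this also yields the intermediate equivalence, namely an action on an asymptotic cone of $\HH$.

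The hard part, and the reason for Sections \ref{sec:5} and \ref{sec:6}, is the uniformisation just used: the partially commutative groups $\GG(Z_i)$ coming out of the Haglund--Wise machinery are in general infinitely generated and vary with $i$, and replacing them by one finitely generated group is exactly what the notion of width and the universal object are designed for---this is the source of the standing bounded-width hypothesis on cubings, which a positive solution to Problem \ref{prob:1} would remove. The second delicate point is the faithfulness/discrimination bookkeeping at the interface between limit groups and cone actions---the ``no collapsing in the limit'' phenomenon of Rips theory---which in both directions is controlled by the effective description of $\Hom(G,\GG)$ from \cite{CKpc} rather than by an arbitrary discriminating family.
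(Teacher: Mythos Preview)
Your cycle of implications is the same as the paper's, and your arguments for ``asymptotic cone $\Rightarrow$ real cubing'' and ``real cubing $\Rightarrow$ limit group'' are correct and essentially match what the paper does (the paper routes the second one through the cone via Proposition~\ref{prop:uem}, but the content---component homomorphisms into $\GG(Z_i)$ plus the universal finitely generated group from Section~\ref{sec:5}---is the same; your observation that faithfulness of the limiting action forces $g\notin K_i$ $\omega$-almost surely is exactly the right point).

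The gap is in your ``limit group $\Rightarrow$ asymptotic cone'' step, specifically the faithfulness argument. Knowing that $\varphi_i(g)$ is a non-trivial, hence hyperbolic, isometry of $\widetilde{X}_\GG$ with translation length $\ge 1$ tells you every point is displaced by at least $1$ in the \emph{unrescaled} metric; after rescaling by $\lambda_i\to 0$ this becomes displacement $\ge \lambda_i$, which goes to $0$ in the cone. So there is no contradiction with $g$ fixing the cone pointwise---indeed, in the classical Bestvina--Paulin setting the limiting action is typically \emph{not} faithful, and the kernel is exactly what one studies. Appealing to a ``suitably chosen'' discriminating sequence from \cite{CKpc} does not help: the Makanin--Razborov description says nothing about displacement of cone points, and in fact no special choice is needed.

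What is needed---and what the paper supplies in Lemma~\ref{lem:translfaith}---is a combinatorial fact specific to partially commutative groups with trivial centre: for any non-trivial $r_i\in\GG$ one can choose a canonical generator $y_i$ that neither commutes with nor left-divides $r_i$, and then the point $y_i^{c_i}$ (at distance $c_i$ from the basepoint) satisfies $\dist(y_i^{c_i},\,r_i y_i^{c_i})\ge c_i$, so in the cone $d(y,r\cdot y)\ge 1$. This produces, for each non-trivial translation vector in $\GG^{\UU}$, an explicit cone point it moves. Once you have this lemma, the argument is clean: the translation-vector map $G\to\GG^{\UU}$ is injective because the family is discriminating, and Lemma~\ref{lem:translfaith} converts injectivity into faithfulness of the cone action. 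Your Bestvina--Paulin setup is correct; you are only missing this one algebraic ingredient.
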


Just as the asymptotic cone of a non-abelian free group is the universal real tree, \cite{DP, MNO}, the above results suggest that the asymptotic cone of the partially commutative group $\GG_N$ is the universal real cubing for real cubings of width $N$. From this perspective, as Rips' theory can be viewed as the theory of groups acting on subspaces of the asymptotic cone of a free group, the theory developed in this paper can be regarded as the theory of groups acting on subspaces of asymptotic cones of partially commutative groups.

In Section \ref{sec:9}, we introduce the notion of graph tower, see Definition \ref{defn:tower}. As we briefly discussed, graph towers are a natural generalisation of $\omega$-residually free towers and NTQ-groups.

In Section \ref{sec:10}, we  construct a graph tower $\Ts_G$ associated to a limit group $G$ over a partially commutative group $\GG$ and in Section \ref{sec:11}, we show that $\Ts_G$ is discriminated by a family of homomorphisms induced by a discriminating family of $G$ and that $G$ embeds into $\Ts_G$.  The main result of these two sections can be formulated as follows
\begin{thm*}[see Corollary \ref{cor:limit}]
Given a limit group $G$ over a partially commutative group $\GG$, one can effectively construct a graph tower $\Ts_G$ and a monomorphism $i:G \to \Ts$. Furthermore, the graph tower $\Ts_G$ is discriminated by $\GG$.
\end{thm*}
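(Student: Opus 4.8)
The plan is to run the Makanin--Razborov machinery of \cite{CKpc} on $G$ and then ``dualise'' the resulting resolution into a graph tower, following the strategy used by \cite{KhMNull, Sela1} for limit groups over free groups. Since $G$ is a limit group over $\GG$, fix a discriminating family $\{\phi_k \colon G \to \GG\}_{k \in \BN}$, so that $\bigcap_k \Ker \phi_k = 1$. Applying the process of \cite{CKpc} to the system of equations defining $G$, I would first extract a \emph{strict} fundamental sequence (resolution) $\mathcal{F}$ for $G$ --- its existence relying on $G$ being a limit group --- in which no proper collapses occur along its branches, the subgroups whose centralisers are extended at each level behave as directly indecomposable canonical parabolic subgroups of $\GG$ (the condition discussed in Section \ref{sec:9}), and every quadratic equation that appears is non-exceptional. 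This is the first place where the partially commutative setting requires genuine work: one must verify that the structural dichotomy (rigid versus quadratically hanging versus abelian pieces) coming out of the $\GG$-version of the Makanin--Razborov analysis produces only the five types a1)--c) of the definition.

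Granting a strict fundamental sequence $\mathcal{F}$ of height $l$, I would construct $\Ts_G$ by induction on the levels of $\mathcal{F}$, from the bottom up: $\Ts_G^0$ is the (finite-width) partially commutative group at the base of $\mathcal{F}$, and the passage from $\Ts_G^{i-1}$ to $\Ts_G^{i}$ performs exactly the centraliser extension dictated by the abelian decomposition at level $i$ of $\mathcal{F}$ --- a free factor giving type a1), a free-abelian vertex giving a2) or b1), an extended-centraliser vertex giving b2), a quadratically hanging subgroup carrying a non-exceptional quadratic equation giving c). Checking that the output satisfies Definition \ref{defn:tower} amounts to verifying that the data read off from $\mathcal{F}$ meet the parabolic-like hypotheses on the subgroups $D$, which uses the analysis of canonical parabolic subgroups and their centralisers in limit groups over $\GG$ developed earlier in the paper.

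Next I would build, simultaneously with the tower, both a homomorphism $i \colon G \to \Ts_G$ and a family $\{\psi_j \colon \Ts_G \to \GG\}$ witnessing that $\Ts_G$ is discriminated by $\GG$. The family $\{\psi_j\}$ is produced by induction on height: given a discriminating family for $\Ts_G^{i-1}$, extend it over $\Ts_G^{i}$ by sending the new generators $x_1^i, \dots, x_{m_i}^i$ to suitably high powers of elements of the extended centraliser (the ``big powers'' argument, for the basic and abelian types) or to a sufficiently generic solution of the quadratic equation $W$ over $\Ts_G^{i-1}$ (for the surface type); here one needs the analogue, over a graph tower rather than over $\GG$, of the classical fact that a non-exceptional quadratic equation over a fully residually $\GG$ group has ``enough'' solutions. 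Arranging the stretching parameters to be compatible with the $\phi_k$ yields the desired discriminating family $\{\psi_j\}$ for $\Ts_G$ together with $i$, built so that each $\psi_j \circ i$ equals some $\phi_k$ and all $\phi_k$ are realised in this way.

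Injectivity of $i$ is then immediate: if $g \in \Ker i$ then $\phi_k(g) = \psi_j(i(g)) = 1$ for the matching indices, so $g = 1$ since $\{\phi_k\}$ discriminates $G$. Effectiveness follows because every step --- producing $\mathcal{F}$, refining it to be strict, reading off its levels, and choosing the stretching parameters --- is algorithmic, the crucial input being the effectiveness of the Makanin--Razborov process of \cite{CKpc}. I expect the main obstacle to be the combination of the strictness/structure step with the discrimination-preservation statement for \emph{surface} extensions over a tower: controlling exactly which subgroups may have their centralisers extended, ensuring the surface pieces are non-exceptional, and showing that a centraliser extension of surface type over $\Ts_G^{i-1}$ preserves discrimination by $\GG$ are the genuinely new ingredients compared with the free-group case.
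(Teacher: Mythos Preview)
Your high-level architecture matches the paper's: run the process of \cite{CKpc}, isolate a fundamental branch through which a discriminating family factors (Lemma~\ref{lem:discfam}), build the tower inductively from the leaf upward, produce a discriminating family for the tower level by level, and deduce injectivity of $i$ from the factorisation $\psi_j\circ i=\phi_k$. The last step is exactly the paper's Corollary~\ref{cor:limit}, and your discrimination argument for the abelian and surface floors (big powers, generic solutions of $W$) is the content of Theorem~\ref{thm:towerdisc}.

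Where your outline diverges from what actually has to be done is in the ``strictness/structure'' step. You phrase it as reading off an abelian JSJ-type decomposition at each level and matching vertices to the types a1)--c). The paper explicitly warns that this does \emph{not} work here: the edge groups arising are centralisers of non-abelian subgroups, so they are neither abelian nor slender nor small, and no existing JSJ theory applies (see the Remark after Lemma~\ref{lem:prtower}). Instead the paper works directly with the constrained generalised equations and introduces the combinatorial invariant of \emph{tribes}---a partial order on items recording into which canonical parabolic subgroup a discriminating family sends them. The substantial work in Section~\ref{sec:10} is to show, via a delicate analysis of the dynamics of an infinite branch of type~12, that all items in the quadratic section lie in a single minimal tribe (Lemmas~\ref{lem:mintribe}--\ref{lem:1stlast}, Corollary~\ref{cor:prelasttransfer}); only then can one exhibit the isomorphism to a one-relator quotient of a partially commutative group by a surface word (Lemma~\ref{lem:isoquad}, Corollary~\ref{cor:quadNF}) and verify that the subgroup $\KK$ is $E_d$-co-irreducible. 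None of this is visible from the shape of the resolution alone.

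The second point you underestimate is the linear (type 7--10) case. In the free-group setting one eliminates items covered once and splits off a free factor; here that Tietze move destroys the partially commutative structure because the eliminated generator carries commutation constraints. The paper's solution is the opposite: \emph{introduce} auxiliary bases to force every item to be covered at least twice, rerun the process, and show that each time one falls back into type 7--10 the minimal tribe strictly increases, which can happen only boundedly many times since $\GG$ is finitely generated. This bounded-ascent argument on tribes is the genuinely new mechanism, and it is not suggested by the free-group analogy you invoke.
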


Given a limit group $G$ over a  partially commutative group $\GG$, the construction of the graph tower $\Ts_G$ and the embedding $i:G\hookrightarrow \Ts_G$ essentially relies on the process we constructed in \cite{CKpc}, which can be regarded as the counterpart of the Makanin-Razborov process for free groups. In \cite{CKpc}, we show that the set of homomorphisms $\Hom(G, \GG)$ from a finitely generated group $G$ to a partially commutative  group $\GG$ can be (effectively) described by a finite, oriented, rooted tree $T_{\sol}$. Vertices $v$ of the tree are labelled by coordinate groups $G_{R(\Omega_v)}$ of generalised equations $\Omega_v$ and certain groups  $A(\Omega_v)$ of automorphisms of $G_{R(\Omega_v)}$. Edges $e:u\to v$ of the tree are labelled by proper epimorphisms $\pi(u,v):G_{R(\Omega_u)} \to G_{R(\Omega_v)}$. 

Any homomorphism from $G$ to $\GG$ determines a path $v_0\to v_1\to \dots \to v_k$ from the root to a leaf of this tree that allows to describe the homomorphism as a composition
$$
\sigma_0 \pi(v_0,v_1)\sigma_1 \pi(v_1,v_2)\cdots \sigma_{k-1}\pi(v_{k-1},v_k)\psi,
$$
where $\sigma_i\in A(G_{v_i})$ is an automorphism from the corresponding group assigned to the vertex $v_i$, $\pi(v_i,v_{i+1})$ is the epimorphism that labels the edge $e:v_i\to v_{i+1}$ and $\psi$ is a certain homomorphism from the group assigned to the leaf of the tree to $\GG$. (Note that both the group associated to a leaf and the set of homomorphisms from the leaf to $\GG$ are completely determined, see \cite[Theorem 9.2]{CKpc}).

In the free group case, Rips' machine allows one to analyse the dynamics of the foliated band complex and classify the corresponding measured foliations by taking them to a normal form, i.e. it allows one to show that minimal pieces of the corresponding action of the group on a real tree correspond to either thin, surface or toral case. 

In the case of partially commutative groups, the process we describe analyses the dynamics of a ``foliated constrained band complex'' or, in our terms, the dynamics induced by the family of solutions of a constrained generalised equation. Constrained generalised equations are generalised equations (or union of bands) with commutation constraints associated to the items. In other words, if the coordinate group associated to the generalised equation is defined to be the free group generated by the items quotient by the radical ideal generated by the set of relations, then the coordinate group associated to a constrained generalised equation is the quotient of some partially commutative group (generated by the items and commutation relations being induced by the constraints) by the radical ideal of the set of relations.

Since the tree $T_{\sol}(G)$ that describes the set $\Hom(G, \GG)$ is finite and since $G$ is discriminated by $\GG$, it follows (see Lemma \ref{lem:discfam}) that there exists a branch of the tree, called fundamental branch, so that the family of homomorphisms that factors through it, is a discriminating family for $G$. The idea is to use a fundamental branch and the family of automorphisms associated to the vertex of the branch to construct a graph tower $\Ts$ as well as the embedding of $G$ into $\Ts$. 

As in the free group case, the family of automorphisms associated to a vertex $v$ are intrinsically related to the types of dynamics of a constrained generalised equation associated to $v$. These dynamics were studied in \cite{CKpc} and can be classified into the following three types:
\begin{itemize}
\item linear-like (thin-like) case: case 7-10;
\item quadratic-like (surface-like) case: case 12;
\item general type (a combination of surface-like and axial-like cases): case 15.
\end{itemize}

If the automorphism group associated to a vertex is of abelian type, i.e. if the generalised equation associated to the vertex contains a periodic structure, then similarly to the free group case, one can show that there exists an automorphism of the coordinate group that brings it to a presentation that exhibits the splitting (see abelian floor in Definition \ref{defn:tower}).

There is an essential difference when studying the linear and quadratic cases over free groups or over partially commutative groups. In the free group case, given a generalised equation $\Omega$ of type 12 (quadratic type), the coordinate group associated to $\Omega$ is isomorphic to a surface group. Basically, one shows that bases of the generalised equation are generators of the corresponding group and that the group (in this new generating set) is a 1-relator group where the relation is a quadratic word. One concludes that there exists an automorphism of the group that brings the quadratic word into the normal form and hence the group is isomorphic to a surface group. In this case, the isomorphism only relies on the type of generalised equation.

This is not true in the case of partially commutative groups. The type of the generalised equation alone does not determine the structure of the corresponding coordinate group: in general, the coordinate group of a constrained generalised equation of type 12 is \emph{not} isomorphic to the quotient of a partially commutative group by a surface relation. In order to determine the structure of the group one essentially uses the fact that the corresponding generalised equation of type 12 repeats infinitely many times along an infinite branch of the tree $T(\Omega)$ constructed in \cite{CKpc}. In other words, the structure of the coordinate group associated to a generalised equation $\Omega_v$ of type 12 is intimately related with the presence of the automorphism group associated to the vertex $v$.

The analysis of the dynamics of an infinite branch of type 12 shows that one can gain control on the type of constraints and prove that, in this case, we can take the set of bases as generators of the group and preserve the underlying structure of a partially commutative group as well as prove the existence of an automorphism of the group that brings the quadratic word into the normal form. Therefore, if a constrained generalised equation of type 12 appears infinitely many times in an infinite branch of the tree $T(\Omega)$, then one can conclude that the group is isomorphic to the quotient of a partially commutative group by a surface relation (see surface floor in Definition \ref{defn:tower}).

A similar phenomena occurs if the generalised equation is of type 7-10 (linear type). In the case of free groups, the coordinate group splits as a free product where one of the factors is a free group. In the case of partially commutative groups, again the existence of automorphisms is crucial to determining the splitting. However, the dynamics of the linear case are much harder to analyse and we did not manage to follow this approach in order to find the splitting. Instead, we change the strategy and study this case by forcing it to behave as the cases we know how to deal with: the quadratic case and the abelian case. In order to carry out this new analysis of the branch, we do the contrary of what is done in the free group case: we use Tietze transformations to \emph{introduce} new  generators and relations and transform the generalised equation to a generalised equation where every item is covered at least twice and then study the obtained generalised equation as in the quadratic and general cases. 

As one might expect, the dynamics of the linear case, not always can be analysed in this way and it may happen that after a finite number of steps, we fall again into a linear case. To deal with this, we introduce an invariant (the height of the minimal tribe), which is bounded above by a constant that depends only on the underlying partially commutative group $\GG$, and show that every time we go back to the linear case, this invariant increases. Furthermore,  if this invariant is maximal, we prove the existence of an automorphism that brings the group to a presentation that exhibits the splitting (see basic floor in Definition \ref{defn:tower}).

In the last section, we generalise the construction of the graph tower associated to a fundamental branch to an arbitrary branch. This allows us to prove the following result.
\begin{thmsollast} 
Let $\GG$ be a partially commutative group and let $G$ be a finitely generated residually $\GG$ group. Then, one can effectively construct finitely many fully residually $\GG$ graph towers $\Ts_1,\dots,\Ts_k$ and homomorphisms $p_i$ from $G$ to $\Ts_i$, $i=1,\dots,k$ so that any homomorphism from $G$ to $\GG$ factors through a graph tower $\Ts_i$, for some $i=1,\dots,k$, i.e. for any homomorphism $\varphi:G\to \GG$ there exists $i\in \{1,\dots,k\}$ and a homomorphism $\varphi_i:\Ts_i\to \GG$ so that $\varphi=p_i\varphi_i$. In particular, $G$ is a subgroup of the direct product of the graph towers $\Ts_i$, $i=1,\dots,k$ and a subdirect product of the direct product of groups $p_i(G)<\Ts_i$, $i=1,\dots, k$.
\end{thmsollast}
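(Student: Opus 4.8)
The plan is to run the construction of Sections \ref{sec:10}--\ref{sec:11} not along a single (fundamental) branch but along \emph{every} branch of the tree $T_{\sol}(G)$ produced in \cite{CKpc}. Recall that $T_{\sol}(G)$ is a finite, oriented, rooted tree whose vertices $v$ carry coordinate groups $G_{R(\Omega_v)}$ of constrained generalised equations $\Omega_v$ together with automorphism groups $A(\Omega_v)$, whose edges $e\colon u\to v$ carry proper epimorphisms $\pi(u,v)$, and whose leaves carry completely described groups and completely described sets of homomorphisms to $\GG$; moreover every $\varphi\in\Hom(G,\GG)$ factors along some branch $v_0\to\cdots\to v_k$ from the root to a leaf as $\sigma_0\pi(v_0,v_1)\sigma_1\cdots\sigma_{k-1}\pi(v_{k-1},v_k)\psi$ with $\sigma_i\in A(\Omega_{v_i})$. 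So the first step is, for each branch $b=(v_0\to\cdots\to v_k)$, to attach a graph tower $\Ts_b$ and a homomorphism $p_b\colon G\to\Ts_b$.

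For a fixed branch $b$, read it from the leaf $v_k$ upwards. The type of dynamics of $\Omega_{v_i}$ --- linear-like (cases 7--10), quadratic-like (case 12), or general (case 15), together with the presence of a periodic structure --- determines, via the analysis of \cite{CKpc} recalled in the introduction, a floor of a graph tower in the sense of Definition \ref{defn:tower}: an abelian floor when $\Omega_{v_i}$ carries a periodic structure, a surface floor when a quadratic-like equation recurs infinitely often along the associated infinite branch of $T(\Omega)$, and a basic floor when the height-of-the-minimal-tribe invariant is maximal. Stacking these floors over the partially commutative group (or the explicitly given subgroup of it) attached to the leaf $v_k$ produces $\Ts_b$, and the epimorphisms $\pi(v_i,v_{i+1})$ together with the realisation of the automorphism groups $A(\Omega_{v_i})$ as extensions of centralisers inside the successive floors assemble into the homomorphism $p_b\colon G\to\Ts_b$. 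Exactly as in the proof of Corollary \ref{cor:limit}, the presence of $A(\Omega_{v_i})$ at every vertex --- which is precisely what forces each floor to be a genuine, non-degenerate floor --- yields a family of homomorphisms $\Ts_b\to\GG$ that discriminates $\Ts_b$, so $\Ts_b$ is fully residually $\GG$ (i.e.\ discriminated by $\GG$); and, by rewriting a homomorphism floor by floor, absorbing the automorphisms $\sigma_i$ and the leaf homomorphism $\psi$ into the successive floors and the leaf, every $\varphi\in\Hom(G,\GG)$ that factors through $b$ factors as $\varphi=p_b\varphi_b$ for some $\varphi_b\colon\Ts_b\to\GG$. The only novelty compared with Sections \ref{sec:10}--\ref{sec:11} is that we no longer assume that a discriminating family of $G$ singles out $b$: the discrimination of $\Ts_b$ is intrinsic to the branch and does not pass through $G$.

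Since $T_{\sol}(G)$ is finite it has finitely many branches $b_1,\dots,b_k$; set $\Ts_i=\Ts_{b_i}$ and $p_i=p_{b_i}$. Given any $\varphi\in\Hom(G,\GG)$, it factors through some branch $b_i$, hence $\varphi=p_i\varphi_i$ with $\varphi_i\colon\Ts_i\to\GG$, which is the asserted factorisation. For the last statement, let $1\ne g\in G$. As $G$ is residually $\GG$ there is $\varphi\colon G\to\GG$ with $\varphi(g)\ne 1$; writing $\varphi=p_i\varphi_i$ gives $\varphi_i(p_i(g))\ne 1$, so $p_i(g)\ne 1$. Hence the diagonal map $G\to\prod_{i=1}^k\Ts_i$, $g\mapsto(p_1(g),\dots,p_k(g))$, is injective, its image lies in $\prod_{i=1}^k p_i(G)$ and surjects onto each factor $p_i(G)$, so $G$ is (isomorphic to) a subdirect product of $\prod_{i=1}^k p_i(G)$. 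Effectiveness is inherited from the effective construction of $T_{\sol}(G)$ in \cite{CKpc} and of the floor-by-floor construction above.

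The main obstacle is the per-branch construction of $\Ts_b$, and within it the treatment of the linear-like cases 7--10: unlike over free groups, the splitting cannot be read off directly from the type of the generalised equation, so one must introduce new generators and relations by Tietze transformations to force the equation into a form in which every item is covered at least twice and then analyse it as in the quadratic and general cases, using the boundedness (in terms of $\GG$) of the height of the minimal tribe to guarantee that this terminates. Establishing, for \emph{arbitrary} (not necessarily fundamental) branches, the two properties used above --- that $\Ts_b$ is discriminated by $\GG$, and that every homomorphism factoring through $b$ lifts through $p_b$ --- is the technical heart of the argument; once they are in place, the passage from branches to the finite family $\Ts_1,\dots,\Ts_k$ and the subdirect-product conclusion are formal.
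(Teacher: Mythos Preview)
Your overall strategy---run the construction of Sections~\ref{sec:10}--\ref{sec:11} along every branch rather than only the fundamental one---is the paper's strategy as well. But there is a genuine gap: you propose one graph tower $\Ts_b$ per branch $b$ of $T_{\sol}(G)$, whereas the paper first replaces $T_{\sol}(G)$ by a finer tree $T_{\sol}'(G)$ (with more branches) and builds one tower per branch of $T_{\sol}'(G)$.

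The point is that the construction of Section~\ref{sec:10} does \emph{not} depend only on the combinatorics of the branch (the types of the $\Omega_{v_i}$); it depends on properties of the family of solutions factoring through the branch. Tribes are defined relative to a fixed family (Remark~\ref{rem:discfam} chooses, for each item, a canonical parabolic subgroup $\GG_{h_i}$); Remarks~\ref{rem:discrfam} and~\ref{rem:K} pass to a subfamily to make items in $\mathcal F(\Omega)$ genuinely fixed; the analysis of regular periodic structures and of the linear case both split on properties of the family. Every one of these steps invokes Lemma~\ref{lem:discfam}: given a finite decomposition of the family into subfamilies with a specified property, pick one. In the fundamental-branch situation the family is discriminating and a single choice suffices. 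For an arbitrary branch the full family of solutions through $b$ need not admit a single uniform choice; one must retain \emph{all} of the finitely many subfamilies, construct a tower for each, and check Diagram~(\ref{diag:period}) for the corresponding restricted family. That is precisely the passage $T_{\sol}(G)\rightsquigarrow T_{\sol}'(G)$ in the paper.

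Consequently your sentence ``the discrimination of $\Ts_b$ is intrinsic to the branch and does not pass through $G$'' misidentifies the obstacle. The difficulty is not the discrimination of $\Ts_b$; it is that the \emph{definition} of $\Ts_b$ (the subgroup $\KK$, the edges of $\Gamma_0$, the relations $S$) already presupposes a uniform choice of tribes and subfamilies, and without splitting the branch you cannot ensure that every $\varphi$ factoring through $b$ satisfies $\varphi=p_b\varphi_b$. Once you insert this splitting step, the rest of your argument (discrimination of each tower as in Section~\ref{sec:11}, and the diagonal embedding into the product) goes through exactly as you wrote.
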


In the case of finitely generated residually free group, an analogous result was proven by Kharlampovich and Miasnikov, \cite{KhMNull}. In a recent work on the structure of finitely presented residually free groups, Bridson, Howie, Miller and Short, \cite{BHMS}, gave an alternative construction of the embedding of a finitely presented residually free group into the direct product of finitely many limit groups and showed that their construction is canonical.

\subsubsection*{Acknowledgement} The authors would like to thank Mark Sapir and Vincent Guirardel for stimulating discussions.

\section{Partially commutative groups}\label{sec:2}
In this section we recall some preliminary results on partially commutative groups and introduce the notation we use throughout the text.

Let $\Gamma=(V(\Gamma), E(\Gamma))$ be a (undirected) simplicial graph. Then, the partially commutative group $\GG=\GG(\Gamma)$ defined by the (commutation) graph $\Gamma$ is the group given by the following presentation:
$$
\GG=\langle V(\Gamma)\mid [v_1,v_2]=1, \hbox{ whenever } (v_1,v_2)\in E(\Gamma)\rangle.
$$
We note that $\GG$ is not necessarily finitely generated. 

Let $\Gamma'=(V(\Gamma'), E(\Gamma'))$ be a full subgraph of $\Gamma$. It is not hard to show, see for instance \cite{EKR}, that the partially commutative group $\GG'=\GG(\Gamma')$ is the subgroup  of $\GG$ generated by $V(\Gamma')$, i.e. $\GG(\Gamma')=\langle V(\Gamma')\rangle$. Following \cite{DKRpar}, we call $\GG(\Gamma')$ a \emph{canonical parabolic subgroup} of $\GG$. 

We denote the length of a word $w$ by $|w|$. For a word $w \in \GG$, we denote by $\ov{w}$ a \emph{geodesic} of $w$. Naturally, $|\ov{w}|$ is called the length of an element $w\in \GG$. An element $w\in \GG$ is called \emph{cyclically reduced} if the length of $\ov{w^2}$ is twice the length of $\ov{w}$ or, equivalently, the length of $w$ is minimal in the conjugacy class of $w$.

For a given word $w$, denote by $\az(w)$ the set of letters occurring in $w$. For a  word $w\in \GG$, define $\BA(w)$ to be the subgroup of $\GG$ generated by all letters that do not occur in a geodesic $\ov w$ and commute with $w$.  The subgroup $\BA(w)$ is well-defined (independent of the choice of a geodesic $\ov w$), see \cite{EKR}. Let $v,w\in \GG$ be so that $[v,w]=1$ and $\az(v)\cap \az(w)=\emptyset$, or, which is equivalent, $v\in \BA(w)$ and $w\in \BA(v)$. In this case, we say that $v$ and $w$ \emph{disjointly commute} and write $v\lra  w$. Given a set of elements $S$ of $\GG$, define $\BA(S)=\bigcap\limits_{w\in S} \BA(w)$.

For a (not necessarily finitely generated) partially commutative  group $\GG(\Gamma)$, consider its non-commutation graph $\Delta=(V(\Delta), E(\Delta))$ defined as follows. The vertex set $V(\Delta)$ coincides with $V(\Gamma)$. There is an edge connecting $v_i$ and $v_j$ in $\Delta$ if and only if $i\ne j$ and there is no edge connecting $v_i$ and $v_j$ in $\Gamma$. Note that the graph $\Delta$ is the complement graph of the graph $\Gamma$. The graph $\Delta$ is a union of its connected components $I_1, \ldots , I_k$, which induce a decomposition of $\GG$ as the direct product
$$
\GG= \GG(I_1) \times \cdots \times \GG(I_k).
$$

Given $w \in \GG$ and the set $\az(w)$, just as above, consider the graph $\Delta (\az(w))$ (which is a full subgraph of $\Delta$). This graph can be either connected or not. If it is connected, we call $w$ a \emph{block}. If $\Delta(\az(w))$ is not connected, then we can decompose $w$ into the product
\begin{equation} \label{eq:bl}
w= w_{j_1} \cdot w_{j_2} \cdots w_{j_t};\ j_1, \dots, j_t \in J,
\end{equation}
where $|J|$ is the number of connected components of $\Delta(\az(w))$ and the word $w_{j_i}$ is a word in the letters from the $j_i$-th connected component. Clearly, the words $\{w_{j_1}, \dots, w_{j_t}\}$ pairwise disjointly commute. Each word $w_{j_i}$, $i \in {1, \dots,t}$ is a block and so we refer to presentation (\ref{eq:bl}) as the block decomposition of $w$.

Observe that the number of blocks of the block decomposition of $w\in \GG$ is bounded above by the rank of $\GG$.
An element $g\in \GG$ is called \emph{irreducible} if it is a conjugate of a cyclically reduced block element.

\begin{rem*}
Irreducible elements play a very important role in the theory of partially commutative groups. They are crucial in describing centralisers of elements; they are used to prove that free extensions of centralisers are discriminated by partially commutative groups; they are key to understand the cut-points in the asymptotic cones of partially commutative groups;  and they will be essentially used in this paper. In a sense, irreducible elements can be likened to irreducible automorphisms of free groups or surface groups.
\end{rem*}

An element $w\in \GG$ is called a \emph{least root} (or simply, root) of $v\in \GG$ if there exists a positive integer $1< m\in \BN$ such that $v=w^m$ and there does not exists $w'\in \GG$ and $1< m'\in \BN$ such that $w={w'}^{m'}$. In this case, we write $w=\sqrt{v}$. By a result from \cite{DK}, partially commutative groups have least roots, that is the root element of $v$ is defined uniquely.

Let $\Gamma$ be a simplicial graph. For any $x\in V(\Gamma)$, define $x^\perp$ to be the subset of all vertices $y\in V(\Gamma)$ so that there is an edge $(x,y)\in E(\Gamma)$. We note that $x\notin x^\perp$. Given a subset $X\subseteq V(\Gamma)$, set $X^\perp=\bigcap\limits_{x\in X} x^\perp$.

Let $w$ be a cyclically reduced element of $\GG$. It is not hard to see that $\langle\az(w)^\perp\rangle =\BA(w)$.

Introduce an equivalence relation $\sim$ on the set of vertices $V(\Gamma)$. For two vertices $v_1,v_2\in V(\Gamma)$, set $v_1\sim v_2$ if and only if $v_1^\perp=v_2^\perp$.  Since for every $v\in \Gamma$, we have  $v \notin v^\perp$, it follows that if $v_1\sim v_2$, then they are not connected by an edge in $\Gamma$. Define the graph $\Gamma'$ whose vertices are $\sim$-equivalence classes and there is an edge joining $[u]$ to $[v]$ if and only if $(u',v')$ is an edge of $\Gamma$ for some (and thus for all) $u' \in [u]$ and $v' \in [v]$.  The graph $\Gamma'$ is called the \emph{deflation} of $\Gamma$. Observe that the partially commutative group $\GG(\Gamma')$ is isomorphic to a canonical parabolic subgroup of $\GG(\Gamma)$.

\begin{defn}
A canonical parabolic subgroup $\KK$ of a partially commutative group is called \emph{closed} if $\KK^{\perp \perp}=\KK$. The subgroup $\KK$ is called \emph{\cool } if $\KK$  is closed and its complement $\KK^{\perp}$ is a directly indecomposable canonical parabolic subgroup. We denote by $\az(\KK)$ the set of canonical generators of $\GG$ that generate $\KK$.
\end{defn}

In our setting, the set of edges $E(\Gamma)$ of the graph $\Gamma$  will be decomposed into a disjoint union of two sets, $E(\Gamma)=E_d(\Gamma)\cup E_c(\Gamma)$, $E_d(\Gamma)\cap E_c(\Gamma)=\emptyset$. Let $\GG_d$ and $\GG_c$ be the partially commutative groups defined by the graphs $(V(\Gamma),E_d(\Gamma))$ and $(V(\Gamma),E_c(\Gamma))$, correspondingly. Then, any canonical parabolic subgroup $\KK$ of $\GG$ naturally defines canonical parabolic subgroups $\KK_d$ and $\KK_c$ of $\GG_d$ and $\GG_c$, correspondingly. 

We say that a canonical parabolic subgroup $\KK$ of $\GG$ is $E_d(\Gamma)$-\cool ($E_d(\Gamma)$-directly (in)decomposable) if so is the induced subgroup $\KK_d$ of $\GG_d$.  Similarly, a subgroup $\KK$ is called $E_c(\Gamma)$-abelian if so is the induced subgroup $\KK_c$ of $\GG_c$. 

The canonical parabolic subgroup $\BA(\KK_d)$ of $\GG_d$ naturally defines a canonical parabolic subgroup of $\GG$, which we shall denote by $\BA_{E_d(\Gamma)}(\KK_d)$.

Let $\GG$ be a partially commutative group given by the presentation $\langle \cA\mid R\rangle$. Let $\FF=\FF(\cA^{\pm 1})$ be the free monoid on the alphabet $\cA\cup \cA^{-1}$ and let $\Tr=\Tr(\cA^{\pm 1})$ be the partially commutative monoid with involution given by the presentation:
$$
\Tr(\cA^{\pm 1})=\langle \cA\cup \cA^{-1}\mid R_\Tr \rangle,
$$
where $\left[ a_i^{\epsilon}, a_j^\delta \right]\in R_\Tr$ if and only if $[a_i,a_j]\in R$, $\epsilon,\delta\in \{-1,1\}$. The involution on $\Tr$ is induced by the operation of inversion in $\GG$ and does not have fixed points. We refer to it as to the \emph{inversion} in $\Tr$ and denote it by $^{-1}$.

Following \cite{DM}, we define a \emph{clan} to be a maximal subset $C=\mathcal{C}\cup\mathcal{C}^{-1}$ of $\cA \cup \cA^{-1}$ such that $[a,c]\notin R_\Tr$ if and only if $[b,c]\notin R_\Tr$ for all $a,b\in  C$ and $c\in \cA^{\pm 1}$. A clan $C$ is called  \emph{thin} if there exist $a\in C$ and $b\in \cA^{\pm 1}\setminus C$ such that $[a,b]\in R_\Tr$ and is called \emph{thick}, otherwise. It follows from the definition that there is at most one thick clan and that the number of thin clans never equals 1.

Every element of $\cA\cup \cA^{-1}$ belongs to exactly one clan. If $\Tr$ is a direct product of $d$ free monoids, then the number  of thin clans is $d$ for $d>1$, and it is $0$ for $d=1$. In the following, we pick a thin clan and we make it thick by removing commutation. It might be that the number of clans does not change, but the number of thin clans decreases. This is the reason why the definition of DM-normal form below is based on thin clans (instead of considering all clans).

It is convenient to encode an element of the partially commutative monoid as a finite labelled acyclic oriented graph $[V,E,\lambda]$, where $V$ is the set of vertices, $E$ is the set of edges and $\lambda:V\to \cA^{\pm 1}$ is the labelling. Such a graph induces a labelled partial order $[V, E^*,\lambda]$. For an element $w\in \Tr$, $w=b_{1}\cdots b_{n}$, $b_{i}\in \cA^{\pm 1}$, we introduce the graph $[V,E, \lambda]$ as follows. The set of vertices of $[V,E, \lambda]$ is in one-to-one correspondence with the letters of $w$, $V=\{1,\dots, n\}$. For the vertex $j$ we set $\lambda(j)=b_{j}$. We define an edge from $b_{i}$ to $b_{j}$ if and only if both $i<j$ and $[b_{i},b_{j}]\notin R_\Tr$. The graph $[V,E, \lambda]$ thereby obtained is called the \index{dependence graph}\emph{dependence graph} of $w$. Up to isomorphism, the dependence graph of $w$ is unique, and so is its induced labelled partial order, which we further denote by $[V, \le, \lambda]$.

Let $c_{1}<\dots<c_{q}$ be the linearly ordered subset of $[V,\le,\lambda]$ containing all vertices with label in the clan $C$. For the vertex $v\in V$, we define the \emph{source point} $s(v)$ and the \emph{target point} $t(v)$ as follows:
$$
s(v)=\sup\{i\mid c_i\le v\}, \quad t(v)=\inf \{i \mid v \le c_i\}.
$$
By convention, $\sup\emptyset=0$ and $\inf\emptyset=q+1$. Thus, $0\le s(v)\le q$, $1\le t(v)\le q+1$ and $s(v)\le t(v)$ for all $v\in V$. Note that we have $s(v)=t(v)$ if and only if the label of $v$ belongs to $C$.

For $0\le s\le t\le q+1$, we define the median position $m(s,t)$ as follows. For $s=t$, we let $m(s,t)=s$. For $s<t$, by  \cite[Lemma 1]{DM}, there exist unique $l$  and $k$ such that $s\le l< t$, $k\ge 0$ and
$$
c_{s+1}\dots c_l\in \FF(\mathcal{C})(\mathcal{C}^{-1}\FF(\mathcal{C}))^k, \quad c_{l+1}\dots c_{t-1}\in (\FF(\mathcal{C}^{-1})\mathcal{C})^k \FF({\mathcal{C}^{-1}}).
$$
Then, we define $m(s,t)=l+\frac{1}{2}$ and we call $m(s,t)$ the \emph{median position}. Define the \index{global position} \emph{global position} of $v\in V$ to be $g(v)=m(s(v), t(v))$.

We define the normal form $\nf(w)$ of an element $w\in \Tr$ by introducing new edges into the dependence graph $[V, E, \lambda]$ of $w$. Let $u,v\in V$ be such that $\lambda(v)\in C$ and $[\lambda(u),\lambda(v)]\in R_\Tr$. We define a new edge from $u$ to $v$ if $g(u)<g(v)$, otherwise, we define a new edge from $v$ to $u$. The new dependence graph $[V, \hat{E}, \lambda]$ defines a unique element of the trace monoid $\hat{\Tr}$, where $\hat{\Tr}$ is obtained from $\Tr$ by omitting the commutativity relations of the form $[c,a]$ for any $c\in C$ and any $a\in \cA^{\pm 1}$. Note that the number of thin clans of $\hat{\Tr}$ is strictly less than the number of thin clans of $\Tr$. We proceed by designating a thin clan in $\hat{\Tr}$ and introducing new edges in the dependence graph $[V, \hat{E}, \lambda]$.

It is proven in \cite[Lemma 4]{DM}, that the normal form $\nf$ is a map from the trace monoid $\Tr$ to the free monoid $\FF(\cA\cup \cA^{-1})$, which is compatible with inversion, i.e.  it satisfies that $\pi(\nf(w))=w$ and $\nf(w^{-1})=\nf(w)^{-1}$, where $w\in \Tr$ and $\pi$ is the canonical epimorphism from $\FF(\cA\cup \cA^{-1})$ to $\Tr$.

We refer to this normal form as to the \emph{DM-normal form} or simply as to the \emph{normal form} of an element $w\in \Tr$.

Let $w\in \GG$ be a word and let $\HH<\GG$ be a canonical parabolic subgroup, $\HH=\HH_1\times \HH_2$, $\HH_i\ne 1$. We say that $w$ has $2k-1$ $\HH$-alternations if $w$ contains a subword $v_1 u_1\dots v_k u_{k}$, where $v_{i}\in \HH_1$ and $u_{i}\in \HH_2$ are non-trivial words, $i=1,\dots, k$. 

\begin{lem} \label{lem:pc}
Let $w\in \GG$ be written in the DM-normal form. Let $\HH<\GG$ be a canonical parabolic subgroup which decomposes as a non-trivial direct product of two canonical parabolic subgroups $\HH_1$ and $\HH_2$, $\HH=\HH_1\times \HH_2$. Then, the number of $\HH$-alternations in $w$ is bounded above by a constant that depends only on the number of clans of $\GG$.
\end{lem}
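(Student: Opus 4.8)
\textit{Proof strategy.}

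The plan is to reduce the assertion to a statement about a single ``alternating block'' and then to follow the clan-by-clan construction of the DM-normal form. Fix the occurrence of $v_1u_1v_2u_2\cdots v_ku_k$ realising the $2k-1$ $\HH$-alternations; since each $v_i$ is a geodesic word in the generators $\az(\HH_1)$ and each $u_i$ a geodesic word in $\az(\HH_2)$, the block $F=v_1u_1\cdots v_ku_k$ uses only letters of $\az(\HH_1)\cup\az(\HH_2)$, and, being a contiguous subword of the fixed linear order $\nf(w)$, it is order-convex for the dependence order $\le$ of $w$. So it suffices to bound, in terms of the number of clans, how often a contiguous $(\az(\HH_1)\cup\az(\HH_2))$-subword of a DM-normal form word switches between $\az(\HH_1)$-letters and $\az(\HH_2)$-letters.

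I would first record two elementary observations. \emph{(i) No clan of $\GG$ meets both $\az(\HH_1)$ and $\az(\HH_2)$:} if $a\in\az(\HH_1)$ and $b\in\az(\HH_2)$ then $[a,b]\in R_\Tr$ because $\HH=\HH_1\times\HH_2$, while $[b,b^{-1}]\notin R_\Tr$, so $a$ and $b$ have different non-commutation patterns and cannot lie in one clan. \emph{(ii) Inside $F$, every $\az(\HH_1)$-occurrence is $\le$-incomparable to every $\az(\HH_2)$-occurrence:} a dependence path joining them would have to leave $\az(\HH_1)\cup\az(\HH_2)$, because a path starting at an $\az(\HH_1)$-letter can only step to another $\az(\HH_1)$-letter or to a letter outside $\az(\HH_1)\cup\az(\HH_2)$ (letters of $\az(\HH_1)$ and $\az(\HH_2)$ commute); but by order-convexity such a path stays among the letters of $F$, all of which lie in $\az(\HH_1)\cup\az(\HH_2)$, a contradiction.

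Next I would induct on the number of thin clans, following the construction of $\nf$: one designates a thin clan $C$, inserts new edges pinning the $C$-occurrences and partitioning the remaining letters into the slots between consecutive $C$-occurrences, and passes to $\hat\Tr$, which has strictly fewer thin clans and over which $\nf$ of the pinned trace agrees with $\nf$ over $\Tr$. If $C$ meets neither $\az(\HH_1)$ nor $\az(\HH_2)$, no $C$-occurrence lies inside $F$, so $F$ sits inside a single slot, the subgraph on $\az(\HH_1)\cup\az(\HH_2)$ is untouched in $\GG(\hat\Gamma)$, and the inductive hypothesis applies verbatim. If $C$ meets $\az(\HH_1)$ (the case $C\cap\az(\HH_2)\ne\emptyset$ is symmetric, and by (i) these exhaust the remaining possibilities), then every $\az(\HH_2)$-occurrence commutes with $C$ — it commutes with the $\az(\HH_1)$-letters of $C$, hence with all of $C$ — so the $\az(\HH_2)$-occurrences of $F$ are relocated to median slots of the $C$-chain, while the $\az(\HH_1)$-occurrences of $F$ form a sub-chain of the $C$-chain. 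Here (ii) is decisive: those $\az(\HH_2)$-occurrences are incomparable to all $\az(\HH_1)$-occurrences of $F$, so their source and target points straddle the whole segment of the $C$-chain cut out by $F$; combined with the decomposition of the $C$-chain into a ``positive'' prefix and a ``negative'' suffix from Lemma~1 of~\cite{DM}, this should force them all into slots lying on one side of, or immediately adjacent to, that segment, so that passing to $\hat\Tr$ raises the alternation count of a contiguous $(\az(\HH_1)\cup\az(\HH_2))$-subword by at most an additive constant determined by the clans. Within each slot the induction applies to the parabolic $\langle\az(\HH_1)\cup\az(\HH_2)\smallsetminus C\rangle$, which still splits as a direct product with second factor $\HH_2$. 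Iterating over all thin clans gives the desired bound.

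The main obstacle is exactly the quantitative step in the case ``$C$ meets $\az(\HH_1)$'': pinning down which median slots receive the $\az(\HH_2)$-occurrences of $F$ when the $C$-chain carries a mixed sign pattern that lies only partly inside $F$. The clean form of what is needed is a monotonicity property of the assignment $v\mapsto m(s(v),t(v))$ restricted to letters whose $(s,t)$-intervals all contain the interval determined by the $C$-occurrences of $F$; by observation (ii) this reduces to a lemma about the function $m(\,\cdot\,,\,\cdot\,)$ of \cite{DM}, but it is the technical heart of the argument and the only point at which the bound depends genuinely on the combinatorics of the clans rather than being absolute.
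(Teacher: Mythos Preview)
Your inductive framework (on the number of thin clans) and your two preliminary observations are correct and are exactly what the paper uses. Observation~(ii) in particular --- that inside the contiguous alternation block $F$ every $\HH_2$-occurrence is $\le$-incomparable to every $\HH_1$-occurrence --- is the combinatorial core of the argument, and your justification of it via order-convexity of $F$ is clean.

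The gap is precisely where you flag it: the ``quantitative step'' when $C$ meets $\az(\HH_1)$. You propose to control it by a monotonicity property of the median map $v\mapsto m(s(v),t(v))$ and to track into which slots the $\HH_2$-letters of $F$ are distributed. This is harder than necessary, and as stated it is not a proof --- you yourself say the needed lemma is ``the technical heart'' and leave it open. The paper closes the gap by a much shorter contradiction argument that avoids median-tracking altogether. Writing the alternation as $v_1\cdots v_k$ (pieces alternating between $\HH_1$ and $\HH_2$), let $c_i$ be the first $C$-letter occurring in it, say $c_i\in v_j$; necessarily $v_j\in\HH_1$ and $v_{j-1},v_{j+1}\in\HH_2$. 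If $1<j<k$, then $v_{j-1}$ lies in the slot $\nf(w_i)$ and $v_{j+1}$ in some $\nf(w_l)$ with $l>i$; but both $v_{j-1}$ and $v_{j+1}$ disjointly commute with $v_j$ (in particular with $c_i$) and, by your observation~(ii), are $\le$-incomparable to every $C$-occurrence inside $F$, so every letter $x\in v_{j-1}$ and $y\in v_{j+1}$ has the \emph{same} global position $g(x)=g(y)$ with respect to $C$. That forces them into the same slot, contradicting $l\ne i$. Hence $j\in\{1,k\}$; a second application of the same argument shows that the next $C$-letter in the alternation can only lie in $v_k$. Therefore the interior $v_2\cdots v_{k-1}$ is $C$-free and sits in a single slot $\nf(w_i)$, and the inductive hypothesis bounds $k-2$.

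So you do not need a monotonicity lemma for $m(\cdot,\cdot)$ at all: your observation~(ii) already forces all $\HH_2$-pieces flanking an interior $C$-letter to have equal global position, and that single equality is the contradiction. Replace your slot-tracking paragraph with this two-line contradiction and the proof is complete.
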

\begin{proof}
We use induction on the number of thin clans of $\GG$. If the number of thin clans equals one, then the statement is obvious. 

Suppose that the statement is true for all partially commutative groups with less than $n$ thin clans and let $\GG$ have precisely $n$ thin clans. Let $w\in \GG$ be some word, let $[V,E,\lambda]$ be the dependence graph of $w$ and let $C$ be a thin clan of $\GG$, write 
$$
w=w_1c_1w_2\cdots w_qc_qw_{q+1},
$$
where $c_1,\dots, c_q$ are all the letters in $w$ which belong to the clan $C$ and the dependence relations for $C$ have already been established. 

By definition of the DM-normal form,
$$
\nf(w)=\nf(w_1)c_1\nf(w_2)\cdots\nf(w_q)c_q\nf(w_{q+1}).
$$
Therefore, if $\HH\cap\langle C\rangle=1$, then the statement follows by induction.

Suppose that $\HH$ and $\langle C\rangle $ intersect non-trivially, then either $\HH\cap\langle C\rangle<\HH_1$ or $\HH\cap\langle C\rangle<\HH_2$. Let $v_1\cdots v_k$ be an $\HH$-alternation in $\nf(w)$. If $v_1\cdots v_k$ does not contain $c_i$ for all $i=1,\dots, q$, then the statement follows by induction. Let us assume that $v_1\cdots v_k$ contains $c_i$ for some $i=1,\dots, q$ and let $i$ be minimal so that $c_i$ is a letter of $v_j$. 

If $j=k$, then the bound on $k$ follows by induction. Suppose that $j\ne 1$, then $v_{j-1}$ is a subword of $\nf(w_{i})$ and $v_{j+1}$ is a subword of  $\nf(w_l)$ for some $l\ne i$. But, since $v_{j-1}\lra v_j$ and $v_{j+1}\lra v_j$, it follows that for every letters $x$ and $y$ from $v_{j-1}$ and $v_{j+1}$, the global positions $g(x)$ and $g(y)$ (with respect to $C$) coincide, contradicting the definition of the DM-normal form. It follows that $k\ne 2,\dots, k-1$.

Suppose that $j=1$.  Let $i'$ be minimal so that $i'>i$ and $c_{i'}$ is contained in $v_1\cdots v_k$. Observe that, without loss of generality, we may assume that such $i'$ exists since, otherwise, the bound on $k$ follows by induction. The letter $c_{i'}$ is a letter of some $v_{j'}$. If $j'\ne k$, then $v_{j'-1}$ is a subword of $w_{i'}$ and $v_{j'+1}$ is a subword of $\nf(w_{l'})$ for some $l'>i'$. But, since $v_{j'-1}\lra v_{j'}$ and $v_{j'+1}\lra v_{j'}$, it follows that for every letters $x$ and $y$ from $v_{j'-1}$ and $v_{j'+1}$, the global positions $g(x)$ and $g(y)$ (with respect to $C$) coincide, which contradict the definition of the DM-normal form. If $j'=k$, then the bound on $k$ follows by induction.
\end{proof}

\section{Algebraic geometry over groups}\label{sec:3}

The objective of this section is to establish the basics of algebraic geometry over groups. We refer the reader to \cite{BMR1, DMR} for details. Let $G=\langle A\rangle$ be a group and $F(X)$ be the free group on the alphabet $X$, $X = \{x_1, x_2, \ldots,  x_n\}$. Denote by $G[X]$ the free product $G*F(X)$.

For any element $s\in G[X]$, the formal equality $s=1$ can be treated, in an obvious way, as an \emph{equation} over $G$. In general, for a subset  $S \subset G[X]$, the formal equality $S=1$ can be treated as {\em a system of equations} over $G$ with coefficients in $A$.  Elements from $X$ are called {\em variables} and elements from $A^{\pm 1}$ are called {\em coefficients} or {\em constants}. To emphasize this we sometimes write $S(X,A) = 1$.

A {\em solution} $U$ of the system $S(X) = 1$ over a group $G$ is a tuple of elements $g_1, \ldots, g_n \in G$ such that every equation from $S$ vanishes at $(g_1, \ldots, g_n)$, i.e. $S_i(g_1, \ldots, g_n)=1$ in $G$, for all $S_i\in S$. Equivalently, a solution $U$ of the system $S = 1$ over $G$ is a $G$-homomorphism $\pi_U: G[X] \to G$ induced by the map $\pi_U: x_i\mapsto g_i$ such that $S\subseteq \ker(\pi_U)$. When no confusion arises, we abuse the notation and write $U(w)$, where $w\in G[X]$, instead of $\pi_U(w)$.

Denote by $\ncl\langle S\rangle$ the normal closure of $S$ in $G[X]$. Then, every solution of $S(X) = 1$ in $G$ gives rise to a $G$-homomorphism $\factor{G[X]}{\ncl\langle S\rangle} \to G$, and vice versa. The set of all solutions over $G$ of the system $S=1$ is denoted by $V_G(S)$ and is called the {\em algebraic set} or \emph{variety} defined by $S$.

For every system of equations $S$, we set the {\em radical of the system $S$}  to be the following subgroup of  $G[X]$:
$$
R(S) = \left\{ T(X) \in G[X] \ \mid \ \forall g_1,\dots,\forall g_n  \left( S(g_1,\dots,g_n) = 1 \rightarrow T(g_1,\dots, g_n) = 1\right) \right\}.
$$
It is easy to see that  $R(S)$ is a normal subgroup of $G[X]$ that contains $S$. There is a one-to-one correspondence between algebraic sets $V_G(S)$ and radical subgroups $R(S)$ of $G[X]$. Notice that if $V_G(S) = \emptyset$, then $R(S) = G[X]$.

It follows from the definition that
$$
R(S)=\bigcap\limits_{U\in V_G(S)}\ker(\pi_U).
$$

The quotient group
$$
G_{R(S)}=\factor{G[X]}{R(S)}
$$
is called the {\em coordinate group} of the algebraic set  $V_G(S)$ (or of the system $S$). There exists a one-to-one correspondence between algebraic sets and  coordinate groups. More formally, the categories of algebraic sets and coordinate groups are dual, see \cite[Theorem 4]{BMR1}. If a group $G=G_{R(S)}$ is given as the quotient $\factor{G[X]}{R(S)}$, we say that $G$ is given by its \emph{radical presentation}. If the corresponding system $S$ is finite, we say that $G$ is given by a \emph{finite radical presentation}.

Given a system of equations $S=1$, we denote by $G_S$ the group $\factor{G[X]}{\ncl\langle S\rangle}$.

A group $H$ is called  ($G$-)\emph{equationally Noetherian} if every system $S(X) = 1$ with coefficients from $G$ is equivalent over $G$ to a finite subsystem $S_0 = 1$, where $S_0 \subset S$, i.e. the system $S$ and its subsystem $S_0$ define the same algebraic set. If $G$ is $G$-equationally Noetherian, then we say that $G$ is equationally Noetherian. If $G$ is equationally Noetherian then the Zariski topology over $G^n$ is {\em Noetherian} for every $n$, i.e., every proper descending chain of closed sets in $G^n$ is finite. This implies that every algebraic set $V$ in $G^n$ is a finite union of irreducible subsets, called {\em irreducible components} of $V$, and such a decomposition of $V$ is unique. Recall that a closed subset $V$ is {\em irreducible} if it is not a union of two proper closed (in the induced topology) subsets.

We note that partially commutative groups are linear, see \cite{Heqn}, thus, equationally Noetherian, see \cite{BMR1}.

We say that a family of homomorphisms $\{\varphi_i\} \subset \Hom(H,K)$ {\em separates} ({\em discriminates}) $H$ into $K$ if for every non-trivial element $h \in H$ (every finite set of non-trivial elements $H_0 \subset H$) there exists $k$ so that $\varphi_k(h) \ne 1$ ($\varphi_k(h) \neq 1$ for every $h \in H_0$). In this case, we also say that $H$ is \emph{residually $K$} (that $H$ is \emph{fully residually $K$}) and call the family $\{\varphi_i\}$ \emph{separating} (\emph{discriminating}).

\begin{rem}
There is a natural epimorphism from $\factor{G[X]}{\ncl\langle S\rangle}$ onto $G_{R(S)}$. This epimorphism is an isomorphism if and only if $\factor{G[X]}{\ncl\langle S\rangle}$ is residually $G$.
\end{rem}

A finitely generated fully residually $H$ group $G$, is called a \emph{limit group} over $H$. The term limit group was introduced by Sela in \cite{Sela1} in the setting of free groups. The original definition is given in terms of the action of $G$ on a limiting real tree. One can prove, see \cite{Sela1}, that, in the case of free groups, the geometric and residual definitions are equivalent.

\begin{thm} \label{thm:limgrchar}
Let $H$ be a group. Then, for a finitely generated group $G$ the following conditions are equivalent: 
\begin{enumerate}
\item $G$ is fully residually $H$;
\item $G$ is the coordinate group of an irreducible variety over $H$.
\end{enumerate}
If any of the above two conditions holds, then
\begin{enumerate}
\item[(3)] $G$ embeds into an ultrapower of $H$.
\end{enumerate}
Furthermore if $H$ is equationally Noetherian, then all three conditions above are equivalent.
\end{thm}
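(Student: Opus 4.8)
\textbf{Proof proposal for Theorem \ref{thm:limgrchar}.}

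The plan is to prove the cycle of implications $(1)\Rightarrow(2)\Rightarrow(1)$ directly, then establish $(1)\Rightarrow(3)$, and finally close the loop $(3)\Rightarrow(1)$ under the additional hypothesis that $H$ is equationally Noetherian. These are by now standard facts in algebraic geometry over groups (cf. \cite{BMR1, DMR}), so the proof is mostly a matter of assembling the right definitions.

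For $(1)\Rightarrow(2)$: suppose $G=\langle A, x_1,\dots,x_n\rangle$ is finitely generated and fully residually $H$, witnessed by a discriminating family $\{\varphi_i\}\subset\Hom(G,H)$. Present $G$ as $\factor{G'[X]}{N}$ where $G'$ is the subgroup generated by the coefficients and $N=\ncl\langle S\rangle$ for some $S\subseteq G'[X]$ (if $G$ itself plays the role of the coefficient group, take $G'=H$ and adjust accordingly). The key point is to show $N=R(S)$, i.e. that the system $S=1$ defines an \emph{irreducible} variety with coordinate group $G$. First, $N\subseteq R(S)$ always. For the reverse inclusion, take $w\in R(S)\setminus N$; its image $\bar w\ne 1$ in $G=G_{\ncl\langle S\rangle}$, so some $\varphi_i$ sends $\bar w$ to a nontrivial element of $H$, and $\varphi_i$ (composed with the quotient $G'[X]\to G$) gives a solution $U\in V_H(S)$ with $U(w)\ne 1$, contradicting $w\in R(S)$. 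Hence $R(S)=N$ and $G\cong G_{R(S)}$. Irreducibility: if $V_H(S)=V_H(S_1)\cup V_H(S_2)$ with both proper closed, then $R(S_j)\supsetneq R(S)$ for $j=1,2$, pick $w_j\in R(S_j)\setminus R(S)$; their images $\bar w_1,\bar w_2$ are nontrivial in $G$, so by discrimination some $\varphi_i$ sends both to nontrivial elements, hence the associated solution $U$ lies outside both $V_H(S_1)$ and $V_H(S_2)$ — contradiction. So the variety is irreducible.

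For $(2)\Rightarrow(1)$: let $G=G_{R(S)}$ with $V=V_H(S)$ irreducible. For any finite set $g_1,\dots,g_m$ of nontrivial elements of $G$, each defines a proper closed subset $V_j=\{U\in V\mid U(g_j)=1\}\subsetneq V$ (properness because $g_j\notin R(S)$). If no single solution avoided all $V_j$, then $V=\bigcup_j V_j$ would be a finite union of proper closed subsets, contradicting irreducibility. So there is $U\in V$ with $U(g_j)\ne 1$ for all $j$, and the family of all such $U\in V$ discriminates $G$ into $H$. For $(1)\Rightarrow(3)$: with $\{\varphi_i\}_{i\in I}$ discriminating, fix a nonprincipal ultrafilter on $I$ and consider the diagonal map $G\to H^I/\mathcal{U}$; an element of $G$ maps to $1$ iff $\{i:\varphi_i(g)=1\}\in\mathcal{U}$, but for $g\ne 1$ this set is \emph{co}-nontrivial — one needs a slightly more careful argument here, indexing instead over finite subsets $F$ of $G\setminus\{1\}$ and choosing for each $F$ a homomorphism $\psi_F$ nontrivial on all of $F$, then taking the ultraproduct over the directed set of such $F$ with the ultrafilter containing all cones $\{F': F'\supseteq F\}$; the diagonal map $G\hookrightarrow H^{\{F\}}/\mathcal{U}$ is then injective. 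Finally, $(3)\Rightarrow(1)$ when $H$ is equationally Noetherian: if $G$ embeds in an ultrapower $H^I/\mathcal{U}$, realize $G=G_{\ncl\langle S\rangle}$; the embedding is a solution of $S=1$ over $H^I/\mathcal{U}$, and by equational Noetherianity $S$ is equivalent to a finite subsystem, so \L o\'s's theorem lets one pull the solution down to solutions over $H$ — more precisely, one uses that being fully residually $H$ is equivalent to $G$ being a model of the universal theory of $H$ together with equational Noetherianity (see \cite{BMR1, DMR}) — and concludes $G$ is fully residually $H$.

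The main obstacle, and the step deserving the most care, is the equivalence with condition $(3)$ under equational Noetherianity, specifically the implication $(3)\Rightarrow(1)$: passing from a single solution over an ultrapower back to a discriminating family over $H$ itself genuinely needs the Noetherian hypothesis (it fails in general), and the cleanest route is via the observation that $G$ is fully residually $H$ iff $G\in\mathrm{qvar}(H)$ is finitely generated and $H$-equationally Noetherian constraints force finitely many equations to suffice, so that \L o\'s's theorem applies to a finite system. The forward directions $(1)\Leftrightarrow(2)$ are, by contrast, essentially bookkeeping with the radical and the Zariski topology and should go through with no surprises.
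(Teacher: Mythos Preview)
The paper does not actually supply a proof of this theorem: it is stated as a background fact in Section~\ref{sec:3} with implicit reference to \cite{BMR1, DMR}, and the text moves directly on to Lemma~\ref{lem:discfam}. So there is no ``paper's own proof'' to compare against; the result is quoted from the literature on algebraic geometry over groups.

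Your proposal is a correct outline of the standard arguments, and is in line with how these equivalences are established in \cite{BMR1, DMR}. A couple of minor points worth tightening. In $(1)\Rightarrow(2)$ your setup with ``$G'$ the subgroup generated by the coefficients'' is a little muddled: the cleanest formulation is simply to write $G$ as $H[X]/N$ for some normal subgroup $N$ (using a fixed generating tuple for $G$ over $H$), and then the discrimination argument shows $N=R(N)$ exactly as you describe. In $(2)\Rightarrow(1)$ you implicitly use that an irreducible closed set cannot be a finite union of proper closed subsets; this follows by an easy induction from the two-set case, but is worth saying. Your treatment of $(1)\Rightarrow(3)$ via an ultrafilter on the directed set of finite subsets of $G\setminus\{1\}$ is correct and standard. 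For $(3)\Rightarrow(1)$ under the Noetherian hypothesis, the cleanest route is exactly the one you gesture at in your final paragraph: embedding into an ultrapower gives $\Th_\forall(H)\subseteq\Th_\forall(G)$ by \L o\'s, and equational Noetherianity of $H$ is precisely what is needed to pass from ``same universal theory'' to ``fully residually $H$'' for finitely generated $G$ (see \cite{BMR1}). Your earlier attempt in that paragraph, pulling a single solution down from the ultrapower, does not by itself produce a \emph{discriminating} family, so it is good that you abandoned it for the universal-theory argument.
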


\begin{lem} \label{lem:discfam}
Let $G$ be a limit group over $H$ and let $\{\varphi_i\}$ be a discriminating family for $G$. If the family $\{\varphi_i\}$  is a union of finitely many families, $\{\varphi_i\}=\{\varphi_{i,1}\} \cup \dots \cup \{\varphi_{i,n}\}$, then one of the families $\{\varphi_{i,k}\}$ is a discriminating family for $G$.
\end{lem}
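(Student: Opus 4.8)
The plan is to argue by contradiction, exploiting the fact that a limit group is finitely generated, so it has a finite generating set on which ``discriminating'' can be tested. Suppose that none of the families $\{\varphi_{i,k}\}$, $k=1,\dots,n$, is discriminating for $G$. By definition, for each $k$ there is a finite set $H_k\subset G\setminus\{1\}$ such that no $\varphi_{i,k}$ in the $k$-th family is injective on $H_k$, i.e.\ for every index $i$ there exists some $h\in H_k$ with $\varphi_{i,k}(h)=1$. Now set $H_0=H_1\cup\dots\cup H_n$, which is still a finite subset of $G\setminus\{1\}$.

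The key step is then to observe that $H_0$ cannot be discriminated by the whole family $\{\varphi_i\}$. Indeed, take any member $\varphi_i$ of $\{\varphi_i\}$. Since $\{\varphi_i\}=\{\varphi_{i,1}\}\cup\dots\cup\{\varphi_{i,n}\}$, the homomorphism $\varphi_i$ belongs to the $k$-th subfamily for some $k\in\{1,\dots,n\}$; say $\varphi_i=\varphi_{j,k}$ for some index $j$. By the choice of $H_k$, there is an element $h\in H_k\subseteq H_0$ with $\varphi_{j,k}(h)=\varphi_i(h)=1$. Hence $\varphi_i$ is not injective on $H_0$. Since $\varphi_i$ was arbitrary, no homomorphism in $\{\varphi_i\}$ separates all the elements of the finite set $H_0$, which contradicts the hypothesis that $\{\varphi_i\}$ is a discriminating family for $G$.

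There is essentially no obstacle here beyond unwinding the definition of discriminating family correctly: the only subtlety is that discrimination is a property quantified over \emph{finite} subsets, so one must make sure to replace the hypothetical ``bad finite sets'' $H_k$ for the $n$ subfamilies by their (still finite) union $H_0$ before deriving the contradiction. Note also that finite generation of $G$ is not actually needed for this particular argument; it suffices that $n$ is finite so that the union $H_0$ of the finitely many finite sets $H_k$ is finite. This yields the statement, and the same proof shows, more generally, that a separating family which is a finite union of subfamilies contains a separating subfamily.
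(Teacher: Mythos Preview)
Your proof is correct and essentially identical to the paper's: both argue by contradiction, choose a ``bad'' finite set for each subfamily, and observe that their union witnesses failure of discrimination for the whole family (the paper reduces to $n=2$ first, you handle general $n$ directly). Your observation that finite generation of $G$ is not needed is also correct.

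One caveat: your final parenthetical claim, that the same proof shows a \emph{separating} family which is a finite union of subfamilies contains a separating subfamily, is false. Separation is tested on single elements, not finite sets, so taking the union $H_0=\{h_1,\dots,h_n\}$ of the bad singletons does not yield a contradiction: the full family need only send each $h_k$ nontrivially by \emph{some} homomorphism, not all of $H_0$ simultaneously. A concrete counterexample is $G=\BZ\times\BZ$, $H=\BZ$, with the two coordinate projections $\varphi_1,\varphi_2$: the pair $\{\varphi_1,\varphi_2\}$ separates $G$, but neither projection alone does. The argument genuinely requires the discrimination hypothesis.
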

\begin{proof}
Without loss of generality, we may assume that $n=2$ and $\{\varphi_i\}=\{\phi_i\}\cup \{\psi_i\}$. We show that either $\{\phi_i\}$ or $\{\psi_i\}$ is a discriminating family for $G$. 

Assume the contrary, then there exist finite sets of non-trivial elements $S=\{g_1,\dots, g_m\}\subset G$ and $T=\{h_1,\dots, h_k\}\subset G$  which can not be discriminated into $H$ by $\{\phi_i\}$ and $\{\psi_i\}$, correspondingly, i.e. for all $i$ we have $\psi_i(g_{l_i})=1$ and $\phi_i(h_{m_i})=1$ for some $l_i=1,\dots, m$, $m_i=1,\dots, k$. Since $\{\varphi_i\} =\{\phi_i\}\cup \{\psi_i\}$, the set $S\cup T$ can not be discriminated by $\{\varphi_i\}$ into $H$ - a contradiction.
\end{proof}

\begin{rem} \label{rem:discfam}
Let $G$ be a group discriminated by a finitely generated partially commutative group $\GG$. Notice that since $\GG$ has only finitely many different canonical parabolic subgroups, then, by Lemma \ref{lem:discfam}, for any $H< G$, there exists a canonical parabolic subgroup $\HH$ of $\GG$  with the two following properties: there exists a discriminating family $\{\varphi_i\}$ such that $\varphi_i(H)<\HH$ and there exist no proper subgroups $\HH'$ of $\HH$ with the first property. 
\end{rem}

\section{Special cube complexes}\label{sec:4}
In this section we review the theory of co-special actions on $\CAT(0)$ cube complexes, see \cite{HW}. 
\begin{defn}
A \emph{cube complex} $X$ is a CW-complex where each $n$-cell $\sigma$ is a standard Euclidean $n$-cube whose attaching map $\varphi_\sigma:\partial \sigma \to X^{(n-1)}$ satisfies the following conditions:
\begin{enumerate}
\item the restriction of $\varphi_e$ to every fact of $e$ is a linear homeomorphism onto a cube of one lower dimension;
\item $\varphi_\sigma$ is a homeomorphism onto its image.
\end{enumerate}
We give $X$ the standard CW-topology.
\end{defn}
We refer to 1-cells as edges or as 1-cubes and to 0-cells as vertices or 0-cubes. 

Let $X$ denote a cube complex. The link $\lk(\sigma)$ of a cube $\sigma$ is a \emph{simplicial} complex whose $n$-skeleta are defined inductively as follows:
\begin{itemize}
\item The set of vertices of $\lk(\sigma)$ is $\{\tau\in X^{(n+1)}\mid \sigma \in \partial \tau\}$;
\item The set of $n$-simplices, $n\ge 1$ of $\lk(\sigma)$ is 
$$
\{(\tau_0,\dots, \tau_n)\mid \tau_i\in \lk(\sigma)^0 \hbox{ and there is a cube $\nu$ such that $\tau_i \in \partial \nu$}\}.
$$
\end{itemize}

The cube complex $X$ is \emph{combinatorially non-positively} curved if each vertex link is flag (that is each complete subgraph is the 1-skeleton of a simplex). We say that $X$ is combinatorially $\CAT(0)$ whenever $X$ is combinatorially non-positively curved and simply-connected. Following Sageev, \cite{S95}, we call a simply connected combinatorially non-positively curved cube complex a \emph{cubing}.

Each cube of $X$ can be given the metric of a standard unit Euclidean cube in $\BR^{n}$. One can then put on a cubing $X$ a pseudo-metric, which, in fact, turns $X$  into a complete $\CAT(0)$ metric space $(X,d)$, \cite{BH}. 

\begin{lem}[see \cite{gromov}] 
Let $X$ be a cube complex. Then, $X$ is combinatorially non positively curved if and only if the length metric $d$ on $X$ is locally $\CAT(0)$. In particular, a cube complex is metrically $\CAT(0)$ if and only if it is combinatorially $\CAT(0)$.
\end{lem}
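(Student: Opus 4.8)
The plan is to deduce the equivalence from \emph{Gromov's link condition} for piecewise-Euclidean complexes together with a purely combinatorial criterion telling which links of cube complexes are $\CAT(1)$.

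First I would pin down the metric side. Metrising each $n$-cube of $X$ as a unit Euclidean $n$-cube makes $(X,d)$ a piecewise-Euclidean (i.e. $M_0$-polyhedral) complex, and the general link condition for such complexes (see \cite{BH}, the results of Chapters~I.7 and~II.5) asserts that $(X,d)$ is locally $\CAT(0)$ if and only if, for every cube $\sigma$ of $X$, the link $\lk(\sigma)$ with its induced piecewise-spherical metric is a $\CAT(1)$ space. What is special to cube complexes is that $\lk(\sigma)$ is an \emph{all-right} spherical simplicial complex: each of its simplices is a spherical simplex all of whose edges have length $\pi/2$, because the codimension-one faces of a Euclidean cube meet pairwise at right angles. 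Moreover it suffices to test this at vertices: the link of a higher-dimensional cube is isometric to a link of a vertex inside the link of a vertex, so a downward induction on dimension reduces the $\CAT(1)$ condition for all links to the $\CAT(1)$ condition for vertex links. Thus ``$(X,d)$ is locally $\CAT(0)$'' translates to ``every vertex link, as an all-right spherical complex, is $\CAT(1)$''.

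Second, and this is the combinatorial heart, I would establish \emph{Gromov's Lemma}: an all-right spherical simplicial complex $L$ is $\CAT(1)$ if and only if $L$ is a flag complex. For the forward implication, if $L$ is not flag it contains a minimal ``empty simplex'', i.e. vertices $v_0,\dots,v_k$ pairwise joined by edges with every proper face present but the $k$-face missing; passing to iterated vertex links --- which are again all-right spherical and are $\CAT(1)$ whenever $L$ is --- reduces to the case $k=2$, and there the loop $[v_0,v_1]\cup[v_1,v_2]\cup[v_2,v_0]$ is a closed local geodesic, since at each $v_i$ the two edges of the loop meet at angle $\ge\pi$ (the opposite two vertices are non-adjacent, hence at distance $\ge\pi$, in $\lk(v_i)$). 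This closed local geodesic has length $3\pi/2<2\pi$, which is impossible in a $\CAT(1)$ space. For the converse implication one must show that a flag all-right spherical complex admits no closed local geodesic of length $<2\pi$ and then invoke Gromov's criterion that a piecewise-spherical complex with this property is $\CAT(1)$; this is carried out by induction on dimension (using that all vertex links of a flag all-right complex are again flag all-right complexes of lower dimension), treating closed local geodesics that pass through a vertex and those that avoid all vertices separately.

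Finally I would assemble: by definition, $X$ is combinatorially non-positively curved exactly when every vertex link is a flag complex, so the first two steps give ``$X$ combinatorially non-positively curved'' iff ``every vertex link is flag'' iff ``every vertex link is $\CAT(1)$'' iff ``$(X,d)$ is locally $\CAT(0)$''. For the last sentence, ``metrically $\CAT(0)$'' means $(X,d)$ is complete, geodesic, locally $\CAT(0)$ and simply connected --- which by the Cartan--Hadamard theorem for complete geodesic locally $\CAT(0)$ spaces is the same as being globally $\CAT(0)$ --- whereas ``combinatorially $\CAT(0)$'' means combinatorially non-positively curved plus simply connected; the equivalence just proved matches these up. The main obstacle is the converse half of Gromov's Lemma, namely that a flag all-right spherical complex is genuinely $\CAT(1)$ and not merely free of short closed local geodesics: this is the one genuinely non-formal ingredient, and in practice one either cites it from \cite{gromov} or \cite{BH} or proves it by the inductive geodesic analysis sketched above; everything else is bookkeeping with the link condition.
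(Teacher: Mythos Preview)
Your proposal is correct and is precisely the standard argument (Gromov's link condition plus the flag criterion for all-right spherical complexes, as in \cite{gromov} and \cite{BH}). Note, however, that the paper does not give its own proof of this lemma: it is stated with the attribution ``[see \cite{gromov}]'' and no proof, so there is nothing to compare against beyond observing that you have reproduced the classical proof the citation points to.
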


Hence, the metric and combinatorial geometry of $\CAT(0)$ cube complexes are closely connected, see \cite{Hagiso} for more details. We work with non-positively curved cube complexes from a combinatorial viewpoint and consider them as higher-dimensional analogues of graphs. However, to introduce certain notions, it will be convenient to think of the cube complex as a geometric object.

A midcube in the cube $I_n=[0,1]\times \dots \times[0,1]$ is the subset obtained by restricting one of the coordinates to $\frac{1}{2}$, so the midcube is parallel to two $(n-1)$-faces of $I_n$. The edges of $I_n$ dual to this midcube are the edges perpendicular to it. The center of an $n$-cube in a cube complex is the image of $(\frac{1}{2}, \dots , \frac{1}{2})$ from the corresponding $n$-cell. The center of an edge is its midpoint.

Given a cube complex $X$, we form a new cube complex $Y_X$, whose cubes are the midcubes of cubes of $X$. The vertices of $Y_X$ are the midpoints of edges of $X$. The restriction of a $(k+1)$-cell of $X$ to a midcube of $I_{k+1}$ defines the attaching map of a $k$-cell in $Y_X$. Each component of $Y_X$ is a \emph{hyperplane} of $X$. An edge of $X$ is dual to some hyperplane $H$ if its midpoint is a vertex of $H$. Each edge $e$ is dual to a unique hyperplane, which we will denote by $H(e)$.  Two hyperplanes $A$, $B$ of a cube complex $X$ \emph{intersect} if $A \cap B\ne \emptyset$, they \emph{cross} if they intersect but are not equal.

Set $\mathcal{H}=\factor{\Epsilon}{\parallel}$ and let $[e],[f]\in \mathcal{H}$. Elements of $\mathcal{H}$ are sometimes called \emph{{\rm (}unoriented{\rm)}  combinatorial hyperplanes}. Combinatorial hyperplanes are in one-to-one correspondence with hyperplanes. When no confusion arises we refer to combinatorial hyperplanes as simply hyperplanes.

We will use a combinatorial metric defined on the set of vertices of $X$. Let $\Epsilon$ be the set of oriented edges of $X$. An edge path or simply a \emph{path} in $X$ is a finite sequence of oriented edges such that the end of each edge is the origin of its successor. The length of a path is just the number of edges in the sequence. Given two vertices $p, q\in X$ we define the distance $\dist(p,q)$ between $p$ and $q$ as the infimum of the lengths of paths between them. One can check that $\dist$ is a metric, called the \emph{edge-path metric}.

Sometimes, we shall consider cube complexes with the rescaled edge-path metric. Let $c\in \BR$ and $c>0$, then we define the rescaled edge-path metric $\dist_c$ on a cube complex $X$ as follows. For any $p,q\in X$, set 
$$
\dist_c(p,q)=\frac{\dist(p,q)}{c}.
$$

As we have already mentioned, essentially, just as graphs, cube complexes are combinatorial objects. Just as any simplicial tree (graph) can be made into a metric tree (graph) by identifying every edge with $[0,1]$, given a cube complex $X$ with a (rescaled) edge-path metric  $\dist_c$, we shall make $X$ into a \emph{metric cube complex}. We identify every edge of $X$ with $[0,c]$ and endow every cube of $X$ with the $\ell^1$ metric. We thereby obtain a metric on $X$ that we denote by $d_c$. Observe that for any vertices $p,q\in X$ we have $d_c(p,q)=\dist_c(p,q)$. Furthermore, any combinatorial isometry of $(X, \dist_c)$ induces an isometry of $(X, d_c)$. We shall consider only combinatorial isometries of cube complexes and for the most part identify $(X,d_c)$ and $(X,\dist_c)$, and abusing the notation and terminology, refer to $(X,d_c)$ as to $(X,\dist_c)$, in particular, in Sections \ref{sec:6} and \ref{sec:7}.

Let $X$ be a cube complex and let $\Epsilon$ be the set of edges of $X$. We define the equivalence relation $\parallel$ as follows: set $e\parallel f$, $e,f \in \Epsilon$ if and only if there exists a finite sequence of edges $e=e_1, \dots, e_n=f$ such that for each $i=1,\dots, n-1$, the edges $e_i$ and $e_i+1$ are opposite sides of some 2-cube in $X$ oriented in the same direction.

Special cube complexes were introduced by F.~Haglund and D.~Wise, \cite{HW}. Following Haglund and Wise, we define a special cube complex as a nonpositively curved cube complex which does not have certain pathologies related to its immersed hyperplanes. An immersed hyperplane $D$ crosses itself if it contains two different midcubes from the same cube of $C$. An immersed hyperplane $D$ is $2$-sided if the map $D\to C$ extends to a map $D \times I \to C$ which is a combinatorial map of cube complexes. When $D$ is $2$-sided, it is possible to consistently orient its dual $1$-cubes so that any two dual $1$-cubes lying (opposite each other) in the same $2$-cube are oriented in the same direction.

An immersed $2$-sided hyperplane $D$ self-osculates if for one of the two choices of induced orientations on its dual $1$-cells, some $0$-cube $v$ of $C$ is the initial $0$-cube of two distinct dual $1$-cells of $D$.
A pair of distinct immersed hyperplanes $D, E$ cross if they contain distinct midcubes of the same cube of $C$. We say $D,E$ osculate, if they have dual $1$-cubes which contain a common $0$-cube, but do not lie in a common $2$-cube. Finally, a pair of distinct immersed hyperplanes $D, E$ inter-osculate if they both cross and osculate, meaning that they have dual $1$-cubes which share a $0$-cube but do not lie in a common $2$-cube.
A cube complex is special if all the following hold, see \cite[Figure 1]{HWCox}:
\begin{enumerate}
\item No immersed hyperplane crosses itself;
\item Each immersed hyperplane is $2$-sided;
\item No immersed hyperplane self-osculates;
\item No two immersed hyperplanes inter-osculate.
\end{enumerate}
All cube complexes we consider are special and connected unless stated otherwise.

Central examples of special cube complexes are cube complexes associated to partially commutative groups. 
Let $\GG$ be a (perhaps infinitely generated) partially commutative group. The 2-complex $X$ of the standard presentation of $\GG$ extends to a non-positively curved cube complex $C(\GG)$ by adding an $n$-cube (in the form of an $n$-torus) for each set of $n$ pairwise commuting generators. It is well-known, see \cite[Example 3.3]{HW}, that 
\begin{itemize}
\item Every graph is a special 1-dimensional cube complex;
\item Every $\CAT(0)$ cube complex is special;
\item For any (not necessarily finitely generated) partially commutative group $\GG$, the complex $C(\GG)$ is a special cube complex.
\end{itemize}

Given a partially commutative group, throughout this text we denote by $C(\GG)$ the standard complex of $\GG$ and by $\widetilde{C(\GG)}$ its universal cover. The universal cover $\widetilde{C(\GG)}$ is a $\CAT(0)$ cube complex whose $1$-skeleton is the Cayley graph $\Cay(\GG)$ of the standard presentation of $\GG$.

As the following series of results show, not only $C(\GG)$ and  $\widetilde{C(\GG)}$ are important examples of special cube complexes, but essentially these examples are universal.
\begin{prop}[Theorem 4.2, \cite{HW}] \label{prop:liso}
A cube complex $X$ is special if and only if it admits a combinatorial local isometry to the cube complex $C(\GG)$ of a {\rm(}not necessarily finitely generated{\rm)} partially commutative group $\GG$. Furthermore, if $X$ is special, then $\pi_1(X)$ is a subgroup of $\GG$.
\end{prop}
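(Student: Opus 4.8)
\textbf{Proof proposal for Proposition \ref{prop:liso} (Theorem 4.2 of \cite{HW}).}

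The plan is to prove the two directions separately, with the bulk of the work going into the ``only if'' direction. For the easy direction, suppose $X$ admits a combinatorial local isometry $\phi\colon X\to C(\GG)$ for some partially commutative group $\GG$. Since $C(\GG)$ is itself special (as noted in the excerpt), and since being non-positively curved, having embedded two-sided hyperplanes, having no self-crossing, no self-osculation and no inter-osculation are all properties that pull back along a combinatorial \emph{local isometry} (a local isometry reflects, rather than merely preserves, the local combinatorial structure of links and of hyperplane neighbourhoods), each of the four specialness conditions for $X$ follows from the corresponding condition for $C(\GG)$. Concretely, one checks: a combinatorial local isometry is injective on the link of each vertex and maps each link isomorphically onto a full subcomplex, so flagness of links is inherited; and the immersed-hyperplane pathologies in $X$ would project to the same pathologies in $C(\GG)$. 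The $\pi_1$ statement then follows because a local isometry between non-positively curved cube complexes is $\pi_1$-injective: it lifts to an isometric embedding $\widetilde{X}\hookrightarrow\widetilde{C(\GG)}$ of the (CAT(0)) universal covers (convexity of the image, established via the combinatorial local-isometry condition, is what makes the lift injective), whence $\pi_1(X)\hookrightarrow\pi_1(C(\GG))=\GG$.

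For the ``only if'' direction, assume $X$ is special and connected. The plan is to build $\GG$ and $\phi$ explicitly from the hyperplanes of $X$. First I would choose, for each (combinatorial) hyperplane $H$ of $X$, one of its two sides as a preferred orientation; since $X$ is special, every hyperplane is two-sided, so this is possible and gives a consistent orientation of all $1$-cubes dual to $H$. Take the vertex set $V(\Gamma)$ of the commutation graph $\Gamma$ to be the set of hyperplanes of $X$, and declare $H\sim K$ to be an edge of $\Gamma$ exactly when $H$ and $K$ cross somewhere in $X$. Let $\GG=\GG(\Gamma)$ be the resulting partially commutative group (possibly infinitely generated — this is precisely the point flagged in the introduction). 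Now define $\phi\colon X\to C(\GG)$ on vertices by sending every vertex of $X$ to the unique vertex of $C(\GG)$, and on each oriented $1$-cube $e$ of $X$ to the oriented loop labelled by the generator $H(e)\in V(\Gamma)$, using the chosen orientation of $H(e)$; extend over higher cubes by observing that the $1$-cubes of any $n$-cube of $X$ are dual to $n$ pairwise-crossing hyperplanes, hence to $n$ pairwise-commuting generators of $\GG$, so they bound an $n$-torus in $C(\GG)$. This map is cellular and sends cubes to cubes of the same dimension.

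The heart of the argument — and the step I expect to be the main obstacle — is verifying that $\phi$ is a combinatorial \emph{local isometry}, not merely a combinatorial map. Two things must be checked at each vertex $v$ of $X$: that $\phi$ is injective on the link $\lk(v)$, and that $\phi(\lk(v))$ is a \emph{full} subcomplex of $\lk$ of the image vertex in $C(\GG)$. Injectivity on the $1$-skeleton of $\lk(v)$ is exactly the statement that no two distinct oriented $1$-cubes at $v$ are dual to the same oriented hyperplane with the same orientation — and this is precisely the no-self-osculation condition (together with the hyperplane being embedded, i.e. not self-crossing, to handle the two ends landing at the same vertex). Fullness is where inter-osculation enters: if two generators $H,K$ are adjacent in $\Gamma$ (they cross \emph{somewhere} in $X$) and both appear as labels of edges at $v$, fullness demands that those two edges at $v$ actually span a square at $v$; the failure of this is exactly an osculation of $H$ and $K$ at $v$ combined with the crossing elsewhere, i.e. an inter-osculation, which is forbidden. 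Flagness of $\lk(v)$ in $X$ (from non-positive curvature) together with flagness of links in $C(\GG)$ then upgrades the $1$-skeleton statements to the full simplicial links. Once $\phi$ is known to be a combinatorial local isometry, the $\pi_1$-injectivity $\pi_1(X)\hookrightarrow\GG$ follows from the same lifting-to-a-convex-embedding argument used in the first direction. The subtlety worth care throughout is the bookkeeping of orientations of two-sided hyperplanes, since a sign error there converts a would-be local isometry into a mere local injection.
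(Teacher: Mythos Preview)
The paper does not supply its own proof of this proposition; it is quoted as Theorem~4.2 of Haglund--Wise \cite{HW} and used as a black box, with the construction alluded to afterwards only as the ``$A$-typing map'' (see Corollary~\ref{cor:HW} and the proof of Proposition~\ref{prop:corHW}). So there is nothing in the paper to compare against beyond the citation.

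That said, your proposal is a faithful and essentially correct reconstruction of the Haglund--Wise argument: the commutation graph $\Gamma$ with hyperplanes as vertices and crossing as the edge relation, the typing map that collapses all $0$-cubes and labels each $1$-cube by its dual hyperplane, and the verification that no-self-osculation yields injectivity on links while no-inter-osculation yields fullness. One small inaccuracy: injectivity on the link at $v$ does not require the no-self-crossing hypothesis in the way you indicate. Two edges at $v$ dual to the same hyperplane $H$ but with \emph{opposite} induced orientations map to the distinct vertices $H$ and $H^{-1}$ of the target link, so there is nothing to exclude in that case; no-self-osculation alone handles link-injectivity. The no-self-crossing condition is used elsewhere, namely to ensure that the $n$ hyperplanes dual to the $1$-cubes of an $n$-cube of $X$ are genuinely pairwise distinct, so that the map extends coherently over higher-dimensional cubes into the corresponding $n$-torus of $C(\GG)$.
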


As we have already discussed, cubings are a natural generalisation of simplicial trees.  While the structure of groups acting (without inversions) on simplicial trees is well understood, the structure of groups acting (essentially) even freely on cubings may be extremely complex, as shown by examples of Burger and Mozes, \cite{BM}. Proposition \ref{prop:liso} motivates the definition of a co-special action. In the case of trees, if a group acts on a tree freely and without inversions (and, therefore, in this case, co-specially), the group is a subgroup of a free group. In the case of  co-special actions on cubings, Proposition \ref{prop:liso} can be reformulated as follows.
\begin{prop} \label{prop:HW}
A group $G$ acts freely co-specially on a {\rm(}not necessarily finite dimensional{\rm)} cubing $C$ if and only if it is a subgroup of a {\rm(}not necessarily finitely generated{\rm)} partially commutative group.
\end{prop}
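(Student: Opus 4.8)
The statement to prove is Proposition \ref{prop:HW}: a group $G$ acts freely co-specially on a (not necessarily finite dimensional) cubing $C$ if and only if it is a subgroup of a (not necessarily finitely generated) partially commutative group. The plan is to deduce this from Proposition \ref{prop:liso} by passing between free co-special actions and special quotient cube complexes, since the two propositions are essentially reformulations of the same fact, the only work being to translate ``acts freely co-specially on a cubing'' into ``$\pi_1$ of a special cube complex''.

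\emph{($\Rightarrow$)} Suppose $G$ acts freely (and without inversions, which we may always arrange by passing to the cubical subdivision) and co-specially on a cubing $C$. By definition, co-speciality of the action means precisely that the quotient cube complex $X = G\backslash C$ is a special cube complex; since the action is free and by a cube complex and the quotient is non-positively curved (quotients of cubings by free actions without inversions are non-positively curved cube complexes) and $C$ is simply connected, the quotient map $C \to X$ is the universal cover and $\pi_1(X) \cong G$. Now apply Proposition \ref{prop:liso}: since $X$ is special, there is a combinatorial local isometry $X \to C(\GG)$ for some (not necessarily finitely generated) partially commutative group $\GG$, and $\pi_1(X)$ is a subgroup of $\GG$. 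Hence $G \cong \pi_1(X)$ embeds into $\GG$, as required.

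\emph{($\Leftarrow$)} Conversely, suppose $G$ is a subgroup of a (not necessarily finitely generated) partially commutative group $\GG$. As recalled in the excerpt, the standard complex $C(\GG)$ is a special cube complex, and its universal cover $\widetilde{C(\GG)}$ is a $\CAT(0)$ cube complex — in particular a cubing — on which $\GG$ acts freely, without inversions, and co-specially (freely because $\GG$ acts freely on its own Cayley graph, co-specially because the quotient $C(\GG)$ is special). Restricting this action to the subgroup $G \le \GG$ yields a free co-special action of $G$ on the cubing $\widetilde{C(\GG)}$: freeness is immediate, and co-speciality passes to subgroups because $G\backslash\widetilde{C(\GG)}$ is a cover of $C(\GG)$, and covers of special cube complexes are special (hyperplane pathologies lift to pathologies downstairs). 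This establishes the action and completes the equivalence.

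\emph{Main obstacle.} The one point requiring genuine care is the back-and-forth dictionary between the geometric notion (free co-special action on a cubing) and the combinatorial notion (special quotient / covering space), specifically: (i) that a free action of $G$ without inversions on a cubing $C$ makes $C \to G\backslash C$ the universal cover with deck group $G$, so that $\pi_1(G\backslash C) \cong G$ and the quotient is a non-positively curved cube complex; and (ii) that speciality is inherited by covers, so that restricting to a subgroup preserves co-speciality. Both are standard in the cube complex literature (and (i) is exactly the content of the definition of co-special action used in this paper, following \cite{HW}), but they are the hinge on which the reduction to Proposition \ref{prop:liso} turns, so they should be stated explicitly rather than taken for granted.
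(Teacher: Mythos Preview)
Your proposal is correct and follows exactly the approach the paper has in mind: the paper does not give a separate proof of Proposition~\ref{prop:HW} but simply presents it as a reformulation of Proposition~\ref{prop:liso}, and your argument spells out precisely that dictionary (quotient by a free co-special action is special, hence $\pi_1$ embeds in some $\GG$; conversely $G\le\GG$ acts on $\widetilde{C(\GG)}$ with special quotient since covers of $C(\GG)$ are special). The paper's proof of the closely related Proposition~\ref{prop:corHW} confirms this is the intended reasoning.
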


Furthermore, by construction of the $A$-typing maps, see \cite[Sections 3 and 4]{HW}, one can extract a more specific result.
\begin{cor} \label{cor:HW}
Let $G$ be a group acting freely co-specially on a {\rm(}not necessarily finite dimensional{\rm)} cubing $C$. Then, 
\begin{itemize}
\item there exists a combinatorial isometric embedding of $C$ into the universal cover $\widetilde{C(\GG)}$ of the standard complex of a {\rm(}not necessarily finitely generated{\rm)} partially commutative group $\GG$;
\item the action of $G$ on $C$ extends to a free co-special action $\alpha$ of $G$ on $\widetilde{C(\GG)}$.
\item the action $\alpha$ is  induced by the action of $G$ by left multiplication on the Cayley graph $\Cay(\GG)$.
\end{itemize}  
\end{cor}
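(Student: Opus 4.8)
The plan is to reduce the statement to Haglund--Wise's local isometry (Proposition \ref{prop:liso}) and then pass to universal covers. First I would record the basic picture: since the action of $G$ on the cubing $C$ is free and co-special, the quotient $X=G\backslash C$ is a connected special (in particular nonpositively curved) cube complex; and since $C$ is a cubing, hence simply connected, and $G$ acts freely, $C$ is the universal cover of $X$ and $\pi_1(X)\cong G$. Applying Proposition \ref{prop:liso} to $X$ yields a (possibly infinitely generated) partially commutative group $\GG$ together with a combinatorial local isometry $\iota\colon X\to C(\GG)$, and, by the construction of the $A$-typing map in \cite[Sections 3 and 4]{HW}, this $\iota$ is $\pi_1$-injective and identifies $G=\pi_1(X)$ with a subgroup of $\GG=\pi_1(C(\GG))$.

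Next I would lift $\iota$ to the universal covers. The local isometry lifts to a $G$-equivariant combinatorial map $\widetilde\iota\colon C=\widetilde X\to\widetilde{C(\GG)}$, where $G$ acts on $\widetilde X$ by deck transformations and on $\widetilde{C(\GG)}$ through the inclusion $G\hookrightarrow\GG$ followed by the deck action of $\GG$. The heart of the argument is to check that $\widetilde\iota$ is a \emph{combinatorial isometric embedding}: a combinatorial local isometry between nonpositively curved cube complexes is injective on links and carries combinatorial geodesics to combinatorial geodesics, so its lift between the $\CAT(0)$ cube complexes $\widetilde X$ and $\widetilde{C(\GG)}$ is injective and preserves the edge-path metric, equivalently the associated $\ell^1$ (i.e.\ $\CAT(0)$) metric. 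This establishes the first bullet.

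The remaining two bullets are then read off from the construction. Since $\widetilde\iota$ is $G$-equivariant, its image is $G$-invariant and the deck action of $\GG$ on $\widetilde{C(\GG)}$ restricts on $\widetilde\iota(C)$ to (a conjugate of) the original action of $G$ on $C$; thus the action $\alpha$ of $G$ on $\widetilde{C(\GG)}$ through $G\hookrightarrow\GG$ extends the given action, giving the second bullet. This $\alpha$ is free because the deck action of $\GG$ on the covering space $\widetilde{C(\GG)}$ is free and hence so is its restriction to $G$; and it is co-special because $G\backslash\widetilde{C(\GG)}$ is the cover of $C(\GG)$ corresponding to the subgroup $G\le\GG$, which is special (compose the covering map, itself a local isometry, with the local isometry to $C(\GG)$ and apply Proposition \ref{prop:liso}). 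Finally, the deck action of $\GG$ on $\widetilde{C(\GG)}$ restricts on the $1$-skeleton $\Cay(\GG)$ to left multiplication and extends uniquely over the cubes, so $\alpha$ is induced by the left multiplication action of $G$ on $\Cay(\GG)$, which is the third bullet.

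I expect the main obstacle to be the claim that $\widetilde\iota$ is an isometric embedding rather than merely a local isometry: this requires invoking the disc-diagram machinery and the characterisations of hyperplane pathologies from \cite{HW} to rule out that a geodesic of $C$ gets shortened in $\widetilde{C(\GG)}$, and to see that specialness of $X$ is exactly what propagates to the cover $G\backslash\widetilde{C(\GG)}$. Once that is in place, the rest is bookkeeping with covering space theory and the identification of the deck group of $\widetilde{C(\GG)}$ with $\GG$ acting by left multiplication.
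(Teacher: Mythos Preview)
Your proposal is correct and is exactly the argument the paper is gesturing at: the paper gives no proof beyond the sentence ``by construction of the $A$-typing maps, see \cite[Sections 3 and 4]{HW}, one can extract a more specific result'', and your write-up is the natural unpacking of that citation---form the special quotient $X=G\backslash C$, apply Proposition~\ref{prop:liso} to get a local isometry $X\to C(\GG)$, lift to universal covers, and read off equivariance, freeness, co-specialness, and the identification with left multiplication on $\Cay(\GG)$.

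One small terminological slip: you write ``the associated $\ell^1$ (i.e.\ $\CAT(0)$) metric''. The $\CAT(0)$ metric on a cube complex is the piecewise-Euclidean ($\ell^2$) metric, not the $\ell^1$ metric; the paper keeps these distinct (see the discussion around the definition of $d_c$ versus $\dist_c$). This does not affect your argument, since what you actually need---and correctly invoke---is that a combinatorial local isometry between nonpositively curved cube complexes lifts to a combinatorial isometric embedding of the universal covers with respect to the edge-path metric, which is indeed part of the Haglund--Wise machinery.
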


The following result is not hard to deduce from Proposition \ref{prop:HW}.

\begin{prop} \label{prop:corHW}
Let $G$ be a finitely generated group. Then, the group $G$ acts freely co-specially on a finite dimensional cubing $C$ if and only if it is a subgroup of a finitely generated partially commutative group $\bar\GG$.
\end{prop}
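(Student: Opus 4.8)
The plan is to deduce Proposition \ref{prop:corHW} from Proposition \ref{prop:HW} together with a finiteness argument that replaces the (possibly infinitely generated) partially commutative group produced by Proposition \ref{prop:HW} with a finitely generated one. The ``if'' direction is essentially immediate: if $G$ is a subgroup of a finitely generated partially commutative group $\bar\GG$, then $G$ acts freely and co-specially on the universal cover $\widetilde{C(\bar\GG)}$ (via left multiplication on $\Cay(\bar\GG)$, as in Corollary \ref{cor:HW}), and since $\bar\GG$ is finitely generated, $\widetilde{C(\bar\GG)}$ is a finite dimensional cubing (its dimension is bounded by the clique number of the commutation graph). So the content is in the ``only if'' direction.

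For the ``only if'' direction, suppose $G$ is finitely generated and acts freely co-specially on a finite dimensional cubing $C$. By Proposition \ref{prop:HW}, $G$ embeds into a partially commutative group $\GG=\GG(\Gamma)$, and by Corollary \ref{cor:HW} this embedding is compatible with a combinatorial isometric embedding $C \hookrightarrow \widetilde{C(\GG)}$ and the action of $\GG$ by left multiplication on $\Cay(\GG)$. First I would pick a finite generating set $g_1,\dots,g_n$ of $G$ and express each $g_i$, under the embedding $G\hookrightarrow \GG$, as a word $w_i$ in finitely many generators of $\GG$; let $S\subseteq V(\Gamma)$ be the (finite) set of all vertices occurring in $w_1,\dots,w_n$. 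Then $G$ is already contained in the canonical parabolic subgroup $\GG(\Gamma_S)=\langle S\rangle$, where $\Gamma_S$ is the full subgraph of $\Gamma$ on $S$. By the remarks recalled in Section \ref{sec:2} (following \cite{EKR}), $\GG(\Gamma_S)$ is itself a partially commutative group, and it is finitely generated since $S$ is finite. Setting $\bar\GG=\GG(\Gamma_S)$ gives the desired embedding $G\hookrightarrow\bar\GG$.

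The main (minor) obstacle is making sure the dimension hypothesis is used correctly and that the reduction to a canonical parabolic subgroup is legitimate. On the first point: Proposition \ref{prop:HW} on its own allows $\GG$ to be infinitely generated, but one must observe that a finitely generated subgroup lands in a canonical parabolic subgroup generated by the letters appearing in the finitely many generating words — this is exactly where the ``there are only finitely many letters involved'' argument is invoked, and it does not require $C$ to be finite dimensional. The finite-dimensionality of $C$ is not strictly needed for the ``only if'' direction as stated (a finitely generated subgroup of any partially commutative group is a finitely generated subgroup of a finitely generated one), but it is the natural hypothesis dual to the ``if'' direction and guarantees the cubings involved on both sides are finite dimensional, so I would remark that under this hypothesis the whole picture stays finite dimensional: $\bar\GG$ is finitely generated, hence $\widetilde{C(\bar\GG)}$ is finite dimensional, and $C$ embeds combinatorially and isometrically into it. On the second point, the key fact — that $\langle S\rangle$ is the partially commutative group on the full subgraph $\Gamma_S$ and is a retract of $\GG$ — is precisely the statement recalled in Section \ref{sec:2}, so no further work is required. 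I therefore expect this proof to be short: cite Proposition \ref{prop:HW} (or Corollary \ref{cor:HW}) for the embedding, pass to the parabolic subgroup generated by the letters of a finite generating set, and invoke the standard facts about canonical parabolic subgroups.
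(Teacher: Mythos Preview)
Your proposal is correct and follows essentially the same approach as the paper: use Proposition~\ref{prop:HW} (via the $A$-typing map of Haglund--Wise) to embed $G$ into a possibly infinitely generated partially commutative group, then pass to the canonical parabolic subgroup generated by the finitely many letters appearing in a finite generating set of $G$; conversely, let $G$ act on $\widetilde{C(\bar\GG)}$ by left multiplication. Your observation that finite-dimensionality is not really used in the ``only if'' direction is also accurate---the paper's proof does not invoke it there either.
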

\begin{proof}
Let $G=\langle w_1,\dots, w_k\rangle$ be a finitely generated subgroup of a (perhaps, infinitely generated) partially commutative group $\GG$. Then, $G$ is a subgroup of $\bar \GG=\langle \az(w_1),\dots, \az(w_k)\rangle$, which in turn is a canonical parabolic subgroup of $\GG$. The group $G$ acts (freely) by left multiplication on the Cayley graph $\Cay(\bar\GG)$ of $\bar\GG$. The Cayley graph $\Cay(\bar\GG)$ is the $1$-skeleton of the universal cover $\widetilde{C(\bar \GG)}$ of the standard complex $C(\bar \GG)$ of $\bar\GG$ and the action of $G$ on $\Cay(\bar\GG)$ gives rise to a free co-special action on $\widetilde{C(\bar \GG)}$. 

Conversely, if $G$ is a finitely generated group acting freely and co-specially on a cubing, using an $A$-typing map, see \cite[Sections 3 and 4]{HW}, one obtains an embedding of $G$ into a (a priori infinitely generated) partially commutative group $\GG$. As above, we conclude that as $G$ is finitely generated, so it embeds into a finitely generated canonical parabolic subgroup $\bar\GG$ of $\GG$.
\end{proof}

\section{Cube complexes of finite width}\label{sec:5}

Finitely generated partially commutative groups enjoy many nice properties, which are not shared by infinitely generated ones. In our setting, it will be crucial that the  group $G$ acting co-specially on a cubing $C$ be a subgroup of a finitely generated partially commutative group. We have seen above that one can restrict the consideration to finitely generated groups in order to make sure that $G$ indeed is a subgroup of a finitely generated partially commutative group. Alternatively, as is done by Haglund and Wise, see \cite{HW, HWCox}, one can impose the condition that the quotient of $C$ by the action of $G$ have finitely many immersed hyperplanes. 

However, both of these approaches have their limitations. On the one hand, requiring that the quotient have only finitely many immersed hyperplanes, imposes strong restrictions not only on the group $G$, but also on the action of $G$ on $C$, for instance, one automatically excludes many non co-compact actions of subgroups of partially commutative groups. 

On the other hand, the assumption that the group $G$ is finitely generated is not sufficient in our setting. Suppose we are given a sequence of free actions of finitely generated groups $G_i$ on cubings $C_i$ (we shall see in the following sections that this is precisely the setting we are interested in). Each of these actions defines an embedding of $G_i$ into a partially commutative group $\GG_i$. It may happen that the ranks of $\GG_i$ tend to infinity as $i$ tends to infinity and therefore, we can not find a finitely generated universe for the groups $G_i$. As we have mentioned in the introduction, a positive solution of Problem \ref{prob:1} would resolve this technical issue and allow to consider arbitrary cubings.

Our next goal is to show that one can impose a natural geometric restriction on the cubing $C$ that allows to solve a restricted version of Problem \ref{prob:1}, namely, a group $G$ acts freely co-specially on a cubing of \emph{finite width} if and only if  $G$ is a subgroup of a finitely generated partially commutative group. Furthermore, it allows us to find a universal partially commutative group that contains all groups $G$ acting freely on cubings of bounded width. Moreover, we show that the free co-special action of a group $G$ on a cubing of finite width extends to a free co-special action of $G$ on the universal cover of the standard complex of a finitely generated partially commutative group.

Let $X$ be a special cube complex. Then, by Proposition \ref{prop:HW}, there always exists a partially commutative group $\GG$ so that there exists a combinatorial local isometry from $X$ to $C(\GG)$. However, such an isometry is by far non-unique. For example, the $A$-typing map for the Cayley graph of the free group $F_2$ of rank $2$ results in a free group of countable rank, but, of course, the Cayley graph  $\Cay(F_2)$ admits a combinatorial local isometry onto $C(F_2)$.

\begin{defn} \label{defn:width}
Let $[e],[f]\in \mathcal{H}$ be two hyperplanes of some cube complex. Set $[e]\approx[f]$ if and only if every hyperplane $[g]\in \mathcal{H}$ crosses $[e]$ if and only if it crosses $[f]$. One can check that $\approx$ is an equivalence relation. Denote by $\mathfrak{H}=\mathfrak{H}(X)$ the quotient $\factor{\mathcal{H}}{\approx}$.

We say that the cube complex $X$ is \emph{$n$-wide} or has \emph{width $n$} if and only if there exists a combinatorial local isometry from $X$ to $C(\EE)$, where $\EE$ is a partially commutative group such that $|\mathfrak{H}(C(\EE))|=n$, $n\in \BN$ and $n$ is minimal with this property, i.e. for any $\EE'$ so that $|\mathfrak{H}(C(\EE'))|=k<n$, the cube complex $X$ does not admit a combinatorial local isometry onto $C(\EE')$. In this case, we write $w(X)=n$. If  $|w(X)|=n$ for some $n\in \BN$, we say that $X$ is of \emph{finite width}, otherwise, we call $X$ \emph{infinitely wide}.

Throughout this text, given a special cube complex $X$ of width $n$, we denote by $\EE(X)$ a partially commutative group so that $w(C(\EE(X)))=n$ and there is a combinatorial local isometry from $X$ to $C(\EE(X))$.
\end{defn}

\begin{rem}
Let $X$ be a special cube complex of width $n$, then there is a combinatorial isometric embedding of the universal cover $\widetilde{X}$ into $\widetilde{C(\EE)}$ for some partially commutative group $\EE$, where $w(C(\EE))=n$.

Conversely, if $X$ is a cubing which admits a combinatorial isometric embedding into $\widetilde{C(\GG)}$ so that $w(C(\GG))=n$, and $X\to Y$ is a covering map, then $X, Y$ have finite width and $w(X), w(Y)\le n$.
\end{rem}

We now record the following observations about the width of cube complexes, which follow directly from the definition.
\begin{lem} \label{lem:width} \
\begin{enumerate}
\item The width of any graph is one. 
\item The width of the standard cubulation of the euclidean space $E^n$ is $n$. 
\item More generally, if $X$ and $Y$ are cube complexes, then $w(X\times Y)=w(X)+ w(Y)$. 
\item If the width of $X$ is finite, then $X$ is finite dimensional. 
\item The width of any compact cube complex is finite. If a cube complex contains only finitely many embedded hyperplanes, then it has finite width.
\item Let $\GG$ be a partially commutative group with the underlying graph $\Gamma$,  $w(C(\GG))=N$, then the deflation $\Gamma'$ of $\Gamma$ is a finite graph with $N$ vertices and $w(C(\GG))=w(C(\GG(\Gamma'))$.
\end{enumerate}
\end{lem}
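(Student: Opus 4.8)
The plan is to verify the six statements one at a time, all of them following quite directly from Definition~\ref{defn:width} together with Proposition~\ref{prop:liso} and the elementary properties of hyperplanes recalled in Section~\ref{sec:4}. The unifying observation is that for the standard complex $C(\GG)$ of a partially commutative group $\GG$ with underlying graph $\Gamma$, the hyperplanes are in one-to-one correspondence with the generators (canonical vertices of $\Gamma$), two hyperplanes cross precisely when the corresponding generators commute, and therefore the relation $\approx$ on $\mathcal H(C(\GG))$ identifies two generators exactly when they have the same set of commuting neighbours, i.e. $\approx$ is the relation $\sim$ used to define the deflation $\Gamma'$. Hence $|\mathfrak H(C(\GG))| = |V(\Gamma')|$, and $w(C(\GG))$ equals the number of vertices of the deflation of $\Gamma$ (whenever this is finite). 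I would state this as a preliminary remark, since it makes (1), (2), (3) and (6) essentially immediate.

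First, for (1): a graph $X$ is $1$-dimensional, it maps by a combinatorial local isometry to $C(F_r)$ for $F_r$ free of some rank, and any such $C(F_r)$ has deflation a single vertex, so $w(C(F_r))=1$; since the width is at least $1$ for any nonempty complex, $w(X)=1$. For (2): the standard cubulation of $E^n$ is combinatorially isometric to $C(\BZ^n)$ (more precisely its universal cover), whose underlying graph is the complete graph on $n$ vertices with all perpendicular stars distinct, so its deflation has $n$ vertices and the width is $n$; minimality follows because a local isometry to $C(\EE)$ with fewer hyperplane classes would force an $n$-cube to embed with fewer than $n$ pairwise-crossing hyperplane classes through it, which is impossible. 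For (3): given combinatorial local isometries $X\to C(\EE(X))$ and $Y\to C(\EE(Y))$, take the product map $X\times Y\to C(\EE(X))\times C(\EE(Y)) = C(\GG)$ where $\GG = \GG(\Gamma_X * \Gamma_Y)$ is the graph product realising the direct product (disjoint union of graphs joined completely); hyperplanes of the two factors never cross each other and the $\approx$-classes of the product are the disjoint union of those of the factors, giving $\le w(X)+w(Y)$; the reverse inequality $\ge$ comes from the fact that hyperplanes coming from $X$ and from $Y$ can never be $\approx$-equivalent, since within $X\times Y$ an $X$-hyperplane crosses every $Y$-hyperplane but, generically, not every $X$-hyperplane — one has to check this does not collapse, which is where a short argument using an $X$-cube and a $Y$-cube is needed.

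For (4): if $w(X)=N<\infty$, then $X$ admits a combinatorial local isometry into $C(\EE)$ with $|\mathfrak H(C(\EE))| = N$; any $k$-cube of $X$ maps to a $k$-cube of $C(\EE)$, through which pass $k$ pairwise-crossing hyperplanes lying in $k$ distinct $\approx$-classes (distinct because pairwise-crossing hyperplanes through a common cube cannot be $\approx$-equivalent unless equal), so $k\le N$ and $X$ is finite-dimensional. For (5): a compact cube complex has finitely many hyperplanes, hence finitely many $\approx$-classes; more generally if $X$ has only finitely many embedded hyperplanes then $|\mathcal H(X)|<\infty$, and applying Proposition~\ref{prop:liso} one gets a local isometry to some $C(\GG)$; one then argues that one may pass to the canonical parabolic subgroup generated by the (finitely many) hyperplane types actually met, and deflate, to obtain a target with finitely many hyperplane classes, so $w(X)<\infty$. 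Finally (6) is just the preliminary remark applied directly: $w(C(\GG))=N$ means the minimal target has $N$ hyperplane classes, and since $C(\GG)$ itself maps to $C(\GG(\Gamma'))$ by the local isometry induced by deflation (Section~\ref{sec:2} records that $\GG(\Gamma')$ is a canonical parabolic subgroup of $\GG(\Gamma)$) with $|\mathfrak H(C(\GG(\Gamma')))| = |V(\Gamma')|$, minimality forces $|V(\Gamma')|=N$ and $w(C(\GG))=w(C(\GG(\Gamma')))$.

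\textbf{Main obstacle.} The routine checks are the bijection ``hyperplanes $\leftrightarrow$ generators, crossing $\leftrightarrow$ commuting'' and the behaviour of $\approx$ under deflation; the genuinely delicate point is the lower bounds in (2) and (3) — i.e. proving that the width cannot be made \emph{smaller} by a cleverer local isometry. Here one cannot simply count hyperplanes of $X$; one must argue that any combinatorial local isometry $f\colon X\to C(\EE)$ is \emph{injective on hyperplane classes through a fixed cube}, because $f$ restricted to a cube is an isometric embedding of that cube, carrying its pairwise-crossing family of dual hyperplanes to a pairwise-crossing family, and pairwise-crossing distinct hyperplanes are never $\approx$-equivalent. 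Making this last implication precise (that distinct hyperplanes which cross lie in different $\approx$-classes — one must exhibit a third hyperplane crossing one but not the other, e.g.\ by using $2$-sidedness and the no-inter-osculation/no-self-osculation hypotheses of specialness) is the crux, and I would isolate it as a small lemma before doing (2)–(4).
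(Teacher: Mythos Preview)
Your proposal is essentially correct and, in fact, far more detailed than the paper's own treatment: the paper offers no proof at all, merely stating that the six items ``follow directly from the definition''. Your central observation---that hyperplanes of $C(\GG)$ correspond to generators, crossing corresponds to commuting, and hence $\approx$ on $\mathcal H(C(\GG))$ is exactly the relation $\sim$ used to define the deflation $\Gamma'$---is the right organising principle and makes (1), (2), (4), (5), (6) routine.

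Two small corrections. First, the point you flag as the ``main obstacle'' is much easier than you suggest: if two distinct hyperplanes $[e],[f]$ cross, then taking $[g]=[e]$ in the definition of $\approx$ already separates them, since $[e]$ crosses $[f]$ but does not cross itself (the paper's definition of ``cross'' explicitly excludes equality). No third hyperplane, $2$-sidedness, or osculation hypothesis is needed. This immediately gives the lower bounds in (2) and (4).

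Second, your sketch for the lower bound in (3) is the one place that genuinely needs a further step. Given an optimal local isometry $f\colon X\times Y\to C(\EE)$, first compose with the local isometry $C(\EE)\to C(\GG(\Gamma'))$ induced by deflation (this is a combinatorial local isometry because $\sim$-equivalent generators never commute, so pairwise-commuting sets map injectively); now the target has exactly $m=w(X\times Y)$ generators. Let $A$ (respectively $B$) be the generators hit by images of $X$-hyperplanes (respectively $Y$-hyperplanes); by the crossing observation $A\cap B=\varnothing$, and restricting $f$ to a slice $X\times\{y_0\}$ factors through $C(\langle A\rangle)$, giving $w(X)\le |\mathfrak H(C(\langle A\rangle))|\le |A|$. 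Hence $w(X)+w(Y)\le |A|+|B|\le m$. Your argument was heading here but had not pinned down why one may assume the target has exactly $m$ generators; the deflation step is what supplies this.
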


The main result of this section is the following free actions theorem for special cube complexes of finite width. Viewing simplicial trees as $1$-dimensional cubings and graphs as $1$-dimensional special cube complexes, the next theorem is a natural generalisation of the free actions theorem for trees: a group acts freely without inversions of edges (and hence co-specially) on a tree if and only if it is a subgroup of a finitely generated free group.

\begin{thm}
Let $G$ be a group. Then, $G$ acts freely co-specially on a cubing of width $n$ if and only if  $G$ is a subgroup of a partially commutative group $\GG$ and $w(C(\GG))=n$. 
\end{thm}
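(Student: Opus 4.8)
The plan is to prove the two directions separately, leveraging Proposition~\ref{prop:HW} to handle the geometry-to-algebra translation and then controlling the rank by passing to the deflated graph.

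First, suppose $G$ acts freely co-specially on a cubing $C$ of width $n$. By Proposition~\ref{prop:HW}, $G$ is a subgroup of some (a priori infinitely generated) partially commutative group, and more precisely, by Corollary~\ref{cor:HW}, the action of $G$ on $C$ extends to a free co-special action on $\widetilde{C(\GG)}$ for some partially commutative $\GG$ with a combinatorial isometric embedding $C\hookrightarrow \widetilde{C(\GG)}$. By definition of width, we may choose this $\GG$ so that $w(C(\GG))=n$: indeed, since $C$ has width $n$, there is a combinatorial local isometry from $C$ (equivalently, from the quotient $G\backslash C$, or one works directly on the universal cover as in the Remark following Definition~\ref{defn:width}) to $C(\EE)$ with $w(C(\EE))=n$. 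Here the only subtlety is that the $A$-typing construction of Haglund--Wise naturally produces an infinitely generated group; one must instead invoke the width hypothesis to select an $\EE$ realizing the minimal number of crossing-classes of hyperplanes. Now use Lemma~\ref{lem:width}(6): the deflation $\Gamma'$ of the underlying graph $\Gamma$ of $\EE$ is a \emph{finite} graph with exactly $n$ vertices, and $\GG(\Gamma')$ is a canonical parabolic (hence a subgroup) of $\EE$ with $w(C(\GG(\Gamma')))=n$. Since the embedding $C\hookrightarrow \widetilde{C(\EE)}$ and the action of $G$ factor through the deflation (equivalent hyperplanes of $C(\EE)$ correspond to identified vertices under $\sim$, so $G$ acts on the quotient $\CAT(0)$ cube complex which is $\widetilde{C(\GG(\Gamma'))}$), we conclude $G\le \GG(\Gamma') =: \GG$ with $w(C(\GG))=n$, as required.

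Conversely, suppose $G$ is a subgroup of a partially commutative group $\GG$ with $w(C(\GG))=n$. By Lemma~\ref{lem:width}(6) again, replacing $\GG$ by $\GG(\Gamma')$ for the deflation $\Gamma'$, we may assume $\GG$ is finitely generated on $n$ vertices with $w(C(\GG))=n$; this does not change the width and $G$ still embeds. Now $G$ acts freely by left multiplication on the Cayley graph $\Cay(\GG)$, which is the $1$-skeleton of the universal cover $\widetilde{C(\GG)}$, a $\CAT(0)$ cube complex (a cubing). As recalled in the proof of Proposition~\ref{prop:corHW}, this left-multiplication action extends to a free co-special action of $G$ on $\widetilde{C(\GG)}$. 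It remains to observe that $\widetilde{C(\GG)}$ has width $n$: since $C(\GG)$ has width $n$ and $\widetilde{C(\GG)}\to C(\GG)$ is a covering map, the Remark following Definition~\ref{defn:width} gives $w(\widetilde{C(\GG)})\le n$; conversely the identity map $C(\GG)\to C(\GG)$ shows no combinatorial local isometry to a $C(\EE')$ with fewer than $n$ crossing-classes exists even after passing to the cover (a local isometry from the cover descends information about crossings, since crossing of hyperplanes is a local condition detected in cubes), so $w(\widetilde{C(\GG)})=n$. Thus $G$ acts freely co-specially on the cubing $\widetilde{C(\GG)}$ of width $n$.

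The main obstacle, and the step requiring the most care, is the first direction: ensuring that the partially commutative group extracted from a free co-special action can be taken to be finitely generated \emph{of the prescribed width} $n$, rather than merely some infinitely generated group as produced by the raw Haglund--Wise $A$-typing. This is exactly what the notion of width and Lemma~\ref{lem:width}(6) are designed to circumvent: the deflation collapses the redundancy (hyperplanes that cross precisely the same family of other hyperplanes, i.e.\ the classes under $\approx$) into finitely many vertices while keeping the cube-complex geometry — and in particular the $G$-action — intact on the quotient $\CAT(0)$ cube complex. One must verify carefully that the $\sim$-equivalence on generators of $\EE$ corresponds to the $\approx$-equivalence on hyperplanes of $C(\EE)$, so that passing to the deflated group is the algebraic shadow of passing to the width-realizing quotient of the cubing; with that identification in hand, both directions reduce to the already-established Propositions~\ref{prop:HW}, \ref{prop:corHW} and Corollary~\ref{cor:HW}.
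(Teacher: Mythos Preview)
Your proposal contains a genuine error in the forward direction, centred on a misunderstanding of what the deflation does. The deflated group $\GG(\Gamma')$ is a canonical parabolic \emph{subgroup} of $\EE=\GG(\Gamma)$ (one simply selects one vertex from each $\sim$-class), not a quotient. There is no group homomorphism $\EE\twoheadrightarrow\GG(\Gamma')$ obtained by ``identifying equivalent vertices'', and correspondingly there is no quotient $\CAT(0)$ cube complex $\widetilde{C(\EE)}\to\widetilde{C(\GG(\Gamma'))}$ through which the $G$-action would factor. Hence your conclusion $G\le\GG(\Gamma')$ is unjustified: the embedding $G\hookrightarrow\EE$ produced by the local isometry has no reason to land in that particular parabolic. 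The same error recurs in your backward direction when you ``replace $\GG$ by $\GG(\Gamma')$'' and assert that $G$ still embeds.

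The paper's proof avoids all of this because the detour through deflation is simply unnecessary here. The theorem does not ask for $\GG$ to be finitely generated; it only asks for $w(C(\GG))=n$. By the very definition of width (Definition~\ref{defn:width}), a cubing of width $n$ admits a combinatorial local isometry to $C(\EE)$ with $w(C(\EE))=n$, and then Proposition~\ref{prop:liso} gives $G\le\EE$ directly: take $\GG=\EE$ and you are done. The passage to a \emph{finitely generated} partially commutative group is handled separately, in Lemma~\ref{lem:2k} and Corollary~\ref{cor:finitewidth}, by a different and correct device (doubling each $\sim$-class to get $\Gamma^*$ and embedding the vertex free groups into rank-$2$ free groups), not by collapsing to the deflation. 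Your backward direction is essentially fine once the spurious deflation step is removed; your verification that $w(\widetilde{C(\GG)})=n$ is more explicit than the paper's, which simply asserts it.
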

\begin{proof}
Let $\widetilde{C(\GG)}$ be the universal cover of $C(\GG)$ and $w(\widetilde{C(\GG)})=n$. Every subgroup $G$ of $\GG$ acts (freely)  by left multiplication on the Cayley graph $\Cay(\GG)$ of $\GG$. Since $\Cay(\GG)$ is the $1$-skeleton of $\widetilde{C(\GG)}$, we get a free co-special action of $G$ on $\widetilde{C(\GG)}$.

Conversely, if $X$ is a cubing of width $n$, then there exists a combinatorial local isometry from $X$ to $C(\EE)$, where $w(C(\EE))=n$. Therefore, by Proposition \ref{prop:liso}, $G$ embeds into $\EE$.
\end{proof}

\begin{lem} \label{lem:2k}
Let $\GG=\GG(\Gamma)$ be a partially commutative group so that $w(C(\GG))=n<\infty$. Then, $\GG$ is a subgroup of a $2n$-generated partially commutative group.
\end{lem}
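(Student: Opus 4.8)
The statement to prove is Lemma \ref{lem:2k}: if $\GG = \GG(\Gamma)$ is a partially commutative group with $w(C(\GG)) = n < \infty$, then $\GG$ embeds into a $2n$-generated partially commutative group. The plan is to exploit the structural information packaged in the width together with the deflation operation from Lemma \ref{lem:width}(6). First I would apply Lemma \ref{lem:width}(6): since $w(C(\GG)) = n$, the deflation $\Gamma'$ of $\Gamma$ is a \emph{finite} graph with exactly $N$ vertices (where $N = n$), and $w(C(\GG)) = w(C(\GG(\Gamma')))$. Recall from the discussion of deflation in Section \ref{sec:2} that $\GG(\Gamma')$ embeds as a canonical parabolic subgroup of $\GG(\Gamma)$ — but this is the wrong direction, so the actual content of the proof must go the other way: I need to embed $\GG = \GG(\Gamma)$ \emph{into} a finitely generated partially commutative group, not find a finitely generated subgroup.

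The key idea is the following. By definition of width, there is a partially commutative group $\EE$ with $|\mathfrak H(C(\EE))| = n$ and a combinatorial local isometry from $C(\GG)$ (equivalently, from $\widetilde{C(\GG)}$, or from the relevant cube complex) to $C(\EE)$; by the remark following Definition \ref{defn:width} together with Proposition \ref{prop:liso}, $\GG = \pi_1(C(\GG))$ embeds into $\EE$. Now $\EE$ itself may be infinitely generated, but it has the crucial property that its hyperplane-crossing structure has only $n$ classes under $\approx$. I would then show that such an $\EE$ embeds into a partially commutative group on at most $2n$ generators as follows: pick the graph $\Delta(\EE)$ underlying $\EE$; the $\approx$-classes partition the vertex set into at most $n$ blocks, each block being a set of vertices with identical adjacency to all other vertices (this is exactly the $\sim$-relation feeding into deflation). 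Within each block, the induced subgraph is edge-free (vertices in a block are pairwise non-adjacent, as noted before the definition of deflation), so the subgroup generated by a block is free of some rank, and each such free subgroup of a partially commutative group on a block embeds into $F_2 \le \GG(\text{one edge removed, two vertices})$ — more precisely, a free group of arbitrary rank embeds into a free group of rank $2$, hence $\GG(\text{block})$ embeds into the partially commutative group on $2$ new vertices. Doing this block-by-block and keeping the between-block commutation relations (which are well-defined on blocks since adjacency is constant across a block), I obtain an embedding of $\EE$ into a partially commutative group whose defining graph has $2n$ vertices — two replacement vertices per block, with commutation between two replacement vertices iff the corresponding blocks were adjacent.

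The technical heart, and the step I expect to be the main obstacle, is verifying that the block-wise embedding $F_{r_i} \hookrightarrow F_2$ on each block is \emph{compatible} with the ambient commutation relations: when I replace the generators of block $B_i$ by words in two new letters $y_{i,1}, y_{i,2}$, I must ensure that the images still commute with the (unchanged or likewise-replaced) generators of an adjacent block $B_j$, and do \emph{not} commute with generators of a non-adjacent block. The first is automatic if I declare $y_{i,k}$ to commute with $y_{j,l}$ whenever $B_i$ is adjacent to $B_j$ in the deflated graph — then any word in the $y_{i,\cdot}$ commutes with any word in the $y_{j,\cdot}$. For the second (non-degeneracy, i.e. that the map is actually injective and not collapsing), I would invoke the normal form / canonical parabolic subgroup machinery: the target partially commutative group on $2n$ generators has the deflated graph as a full subgraph structure, its canonical parabolic subgroup on $\{y_{i,1}, y_{i,2}\}$ is free of rank $2$, and these sit inside the direct-product decomposition dictated by the complement graph $\Delta$ exactly as the blocks did in $\EE$; injectivity then follows by tracking the block decomposition (equation (\ref{eq:bl})) of elements through the map. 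I would write this up by first fixing notation for the $\sim$-blocks of the deflated graph $\Gamma'$, then defining the target graph $\widehat\Gamma$ on $2n$ vertices, then defining the homomorphism $\GG \to \GG(\widehat\Gamma)$ via $\GG \hookrightarrow \EE$ followed by the block-wise free embeddings, and finally checking injectivity using the normal form. I do not expect to need more than Lemma \ref{lem:width}(6), Proposition \ref{prop:liso}, the remark after Definition \ref{defn:width}, and the elementary fact that $F_\infty \hookrightarrow F_2$.
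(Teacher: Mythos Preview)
Your approach is essentially the same as the paper's: partition the generators into the $n$ $\sim$-classes of the deflation, replace each class by two new generators (embedding the free group on the class into $F_2$), and declare commutation between the new pairs according to the deflated graph. Two minor simplifications relative to your plan: the detour through $\EE$ is unnecessary, since Lemma~\ref{lem:width}(6) already tells you the deflation $\Gamma'$ of $\Gamma$ itself has exactly $n$ vertices; and for injectivity the paper observes that both $\GG(\Gamma)$ and the target $\GG(\Gamma^*)$ are graph products of free groups over the same underlying graph $\Gamma'$, so the vertex-wise embeddings $F([v])\hookrightarrow F(v_1,v_2)$ assemble to an embedding of graph products (citing \cite{Green,Goda}), which spares you the normal-form verification.
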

\begin{proof}
Let $\Gamma'$ be the deflation of $\Gamma$. Then, by Lemma \ref{lem:width}, $\Gamma'$ has exactly $n$ vertices. Define the graph $\Gamma^*$ as follows. The graph $\Gamma^*$ is a finite graph with exactly $2n$ vertices constructed from $\Gamma'$. For every vertex $v\in \Gamma'$, we introduce two vertices $v_1, v_2\in \Gamma^*$. We set $(v_i,u_j)\in E(\Gamma^*)$ if and only if $(v,u)\in E(\Gamma')$, for all  $v,u\in \Gamma'$, $i,j=1,2$.

Observe that both $\GG(\Gamma)$ and $\GG(\Gamma^*)$ can be viewed as a graph product of groups with the underlying graph $\Gamma'$ and free vertex groups (see \cite{Green,Goda} for definition and basic properties of graph products). For every vertex $v\in \Gamma'$, let $F([v])$ be the free group generated by $\{v_1',\dots, v_k', \dots\}=[v]$ associated to the corresponding vertex of the graph product $\GG(\Gamma)$ and let $F(v_1,v_2)$ be the  free group generated $\{v_1,v_2\}$ associated to the corresponding vertex of the graph product $\GG(\Gamma^*)$.  Let $\phi_{v}:F([v])\hookrightarrow F(v_1,v_2)$ be an embedding of $F([v])$ into $F(v_1,v_2)$. 

The embeddings $\phi_v$, $v\in \Gamma'$, give rise to an embedding $\phi:\GG(\Gamma) \to \GG(\Gamma^*)$. We conclude that  $\GG$ is a subgroup of a $2n$-generated partially commutative group $\GG(\Gamma^*)$.
\end{proof}

\begin{cor} \label{cor:finitewidth}
Let $G$ be a group. Then, $G$ acts freely co-specially on a cubing of finite width if and only if $G$ is a subgroup of a finitely generated partially commutative group. 
\end{cor}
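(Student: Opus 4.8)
The plan is to deduce this directly from the free actions theorem proved above together with Lemma \ref{lem:2k} and Lemma \ref{lem:width}, so the work is entirely one of packaging. For the ``if'' direction, suppose $G$ is a subgroup of a finitely generated partially commutative group $\bar\GG$. Then the standard complex $C(\bar\GG)$ is a compact cube complex, so by Lemma \ref{lem:width}(5) it has finite width; write $w(C(\bar\GG))=n\in\BN$. Now apply the free actions theorem of this section with $\GG=\bar\GG$: since $G\le\bar\GG$ and $w(C(\bar\GG))=n$, the group $G$ acts freely co-specially on a cubing of width $n$ (concretely, $G$ acts by left multiplication on $\Cay(\bar\GG)$, the $1$-skeleton of $\widetilde{C(\bar\GG)}$, and this action extends to a free co-special action on the cubing $\widetilde{C(\bar\GG)}$, which has finite width by the Remark following Definition \ref{defn:width}). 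Hence $G$ acts freely co-specially on a cubing of finite width.

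For the ``only if'' direction, suppose $G$ acts freely co-specially on a cubing $C$ of finite width, say $w(C)=n<\infty$. By the free actions theorem, $G$ is a subgroup of a partially commutative group $\GG$ with $w(C(\GG))=n$. This $\GG$ need not itself be finitely generated (for instance a free group of infinite rank has width $1$), so we invoke Lemma \ref{lem:2k}: since $w(C(\GG))=n<\infty$, the group $\GG$ embeds into a $2n$-generated partially commutative group $\GG^{*}$. Composing the two embeddings yields $G\le\GG^{*}$, a finitely generated partially commutative group, as required.

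There is no genuine obstacle here; the only point that requires a moment's care is the interface between the two notions of finiteness. One must check that ``$C$ has finite width'' is exactly the hypothesis needed to run the free actions theorem (it is, once we let $n$ range over $\BN$), and, in the converse, that a finitely generated partially commutative group indeed has standard complex of finite width --- this is precisely Lemma \ref{lem:width}(5), since $C(\bar\GG)$ is compact --- while conversely finite width of the ambient partially commutative group is \emph{not} enough to conclude finite generation, which is why Lemma \ref{lem:2k} is essential rather than cosmetic. With these observations in place the corollary follows immediately by concatenating the cited results.
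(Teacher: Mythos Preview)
Your proof is correct and follows exactly the approach the paper intends: the corollary is stated immediately after the free actions theorem and Lemma~\ref{lem:2k} with no separate proof, precisely because it is obtained by concatenating those two results (together with Lemma~\ref{lem:width}(5) for the ``if'' direction), just as you do. Your commentary on why Lemma~\ref{lem:2k} is essential rather than cosmetic is apt and matches the paper's own motivation for introducing it.
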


Given a special cube complex $X$ of width $n$, the $2n$-generated partially commutative group  constructed from $\EE(X)$ in Lemma \ref{lem:2k} will be denoted by $\PP(X)$ .

As we have shown above, if a group $G$ acts freely co-specially on a cubing of finite width, then it is a subgroup of a finitely generated partially commutative group $\PP$. We now show that there is a universal partially commutative group that contains all groups that act freely co-specially on cubings of a given width.

By Lemma \ref{lem:width}, for any $N\in \BN$ there are only finitely many deflated graphs $\Gamma_1,\dots, \Gamma_k$, so that $w(C(\GG(\Gamma_i)))\le N$. Set $\Gamma_N$ to be the union of the graphs $\Gamma_i^*$, 
$$
\Gamma_N=\Gamma_1^*\cup \dots \cup \Gamma_k^*
$$
and $\GG_N=\GG(\Gamma_N)=\GG(\Gamma_1^*)*\dots*\GG(\Gamma_k^*)$, where the graphs $\Gamma_i^*$'s are defined as in the proof of Lemma \ref{lem:2k}. Observe that $\Gamma_N$ is a finite graph.

We now arrive to the following
\begin{cor} \label{cor:1pcg}
For every $N\in \mathbb N$, there exists a partially commutative group $\GG_N=\GG(\Gamma_N)$ such that every group $G$ acting freely co-specially on a cubing $C$ of width $N$ is a subgroup of $\GG_N$.
\end{cor}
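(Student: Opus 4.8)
The plan is to assemble Corollary \ref{cor:1pcg} directly from the pieces already in place, chiefly Corollary \ref{cor:finitewidth} (or the free actions theorem just above it), Lemma \ref{lem:2k}, and Lemma \ref{lem:width}(6). First I would fix $N\in\BN$ and let $G$ be a group acting freely co-specially on a cubing $C$ with $w(C)=N$. By the free actions theorem of this section, $G$ embeds into a partially commutative group $\GG$ with $w(C(\GG))=N$; equivalently, composing with an $A$-typing map and passing to the canonical parabolic subgroup generated by the images, we may take $\GG=\EE(C)$. By Lemma \ref{lem:width}(6), the deflation $\Gamma'$ of the underlying graph $\Gamma$ of $\GG$ is a finite graph on exactly $N$ vertices, and $\GG(\Gamma')$ is a canonical parabolic subgroup of $\GG$ into which $G$ still embeds (since the embedding factors through $\langle \az(w_1),\dots,\az(w_k)\rangle \le \GG(\Gamma')$, as in the proof of Proposition \ref{prop:corHW}). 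So without loss of generality $\Gamma$ is already deflated with $N$ vertices.

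Next, I would invoke the combinatorial construction behind Lemma \ref{lem:2k}: from the deflated graph $\Gamma$ on $N$ vertices one builds the finite graph $\Gamma^*$ on $2N$ vertices (doubling each vertex and inheriting adjacency), together with the embedding $\phi\colon \GG(\Gamma)\hookrightarrow \GG(\Gamma^*)$ obtained from the free embeddings $F([v])\hookrightarrow F(v_1,v_2)$ at each vertex of the graph-product structure. Thus $G$ embeds into $\GG(\Gamma^*)$, a $2N$-generated partially commutative group. The only remaining point is uniformity in $G$: there are, up to isomorphism, only finitely many deflated graphs $\Gamma_1,\dots,\Gamma_k$ with $w(C(\GG(\Gamma_i)))\le N$ — this is exactly the finiteness asserted just before the corollary, which itself follows from Lemma \ref{lem:width}(6) together with the bound on the number of vertices. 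Setting $\Gamma_N=\Gamma_1^*\cup\dots\cup\Gamma_k^*$ and $\GG_N=\GG(\Gamma_N)=\GG(\Gamma_1^*)\ast\dots\ast\GG(\Gamma_k^*)$, each $\GG(\Gamma_i^*)$ embeds into $\GG_N$ as the free factor corresponding to $\Gamma_i^*$, so every such $G$ embeds into $\GG_N$. Since the list $\Gamma_1,\dots,\Gamma_k$ is finite, $\Gamma_N$ is a finite graph and $\GG_N$ is finitely generated, which is the assertion.

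The genuinely delicate step — the one I would be most careful about — is the reduction ``$G$ acts freely co-specially on a width-$N$ cubing $\Rightarrow$ $G$ embeds into $\GG(\Gamma_i)$ for one of the finitely many fixed deflated graphs $\Gamma_i$ with $w\le N$''. A priori the free actions theorem only gives an embedding into \emph{some} $\GG$ with $w(C(\GG))=N$, and a priori $\GG$ could have infinite rank; the content of Lemma \ref{lem:width}(6) is precisely that deflation cuts $\GG$ down to a canonical parabolic subgroup on finitely many ($N$) vertices without changing the width, and one must check that the embedding of $G$ survives the passage to this subgroup. For a finitely generated $G$ this is immediate (the image of $G$ lands in the parabolic subgroup generated by the finitely many alphabets $\az(w_i)$, which is contained in the deflation); for an arbitrary $G$ one notes that the $A$-typing map already produces an embedding whose image lies in the canonical parabolic subgroup determined by which hyperplane $\approx$-classes are actually met, and deflation identifies exactly those generators with equal perpendicular sets, so again the embedding descends. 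Once this is granted, everything else is bookkeeping: doubling vertices (Lemma \ref{lem:2k}), taking a finite free product over the finitely many deflated types, and observing each factor embeds into the product.

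I should also remark that one need not worry about effectivity or minimality of $\GG_N$ here — the statement only claims existence of a finitely generated $\GG_N$ universal for width-$N$ free co-special actions — so the proof is genuinely just the chain: free actions theorem $\to$ deflation to one of finitely many $\Gamma_i$ $\to$ Lemma \ref{lem:2k} doubling $\to$ free product $\GG_N$.
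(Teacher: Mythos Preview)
There is a genuine gap in your reduction ``without loss of generality $\Gamma$ is already deflated''. The claim that $G$ embeds into the canonical parabolic subgroup $\GG(\Gamma')$ is false in general, and your justification --- that $\langle \az(w_1),\dots,\az(w_k)\rangle \le \GG(\Gamma')$ --- does not hold. Take $\GG = F(a,b,c)$, the free group of rank $3$: all three generators have empty $\perp$, hence lie in a single $\sim$-class, and the deflation $\Gamma'$ has one vertex, say $a$, so $\GG(\Gamma') = \langle a\rangle \cong \BZ$. But the rank-$2$ free subgroup $G=\langle ab, bc\rangle < \GG$ certainly does not embed into $\langle a\rangle$; nor is $\langle \az(ab), \az(bc)\rangle = F(a,b,c)$ contained in $\langle a\rangle$. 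Your later patch via the $A$-typing map has the same flaw: the $A$-typing produces one generator per hyperplane, and several distinct hyperplanes can lie in a single $\approx$-class, so the image of $G$ genuinely uses several generators from one $\sim$-class and does not descend to the deflation.

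The correct route --- and the one the paper takes --- is not to pass to the deflation at all, but to apply Lemma \ref{lem:2k} directly to the (possibly infinitely generated) $\GG$ furnished by the free actions theorem. The content of that lemma is precisely that both $\GG(\Gamma)$ and $\GG(\Gamma^*)$ are graph products over the same finite deflated graph $\Gamma'$, with free vertex groups (of possibly infinite rank in $\GG(\Gamma)$, of rank $2$ in $\GG(\Gamma^*)$); embedding each free vertex group $F([v])$ into $F(v_1,v_2)$ yields an embedding $\GG(\Gamma)\hookrightarrow \GG(\Gamma^*)$. Thus $G\le\GG(\Gamma)\le\GG(\Gamma^*)$, and since $\Gamma^*$ depends only on the deflation type $\Gamma'$ of $\Gamma$ --- of which there are finitely many with width at most $N$ --- the conclusion follows exactly as you outlined. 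So the architecture of your argument is right; only the step ``descend to the deflation'' must be replaced by ``embed the whole of $\GG$ up into $\GG(\Gamma^*)$''.
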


Our next goal is to show that if a group $G$ acts freely co-specially on a cubing $C$ of width $n$, then one can construct an embedding from $G$ to $\PP(C)$ so that it induces an equivariant quasi-isometric embedding from $C$ to the universal cover of $C(\PP)$. Therefore, the study of groups acting freely co-specially on cubings of finite width reduces to the study of groups acting freely co-specially on the universal covers of standard complexes of partially commutative groups.

Let $G=\langle S\rangle$, $S=S^{-1}$ be a finitely generated group acting on a cubing $C$, and let $x\in C$. Define \emph{the displacement at $x$} to be
$$
\partial_x = \max\limits_{s\in S} \dist(s.x, x).
$$
If the cubing $C$ has a designated based point $b$, then the number $\partial_b$ is called the \emph{displacement of the action of $G$}.

\begin{lem}\label{lem:qie}
Let $F(A)=F(a_1,\dots, a_r, \dots)$, $F(B)=F(a_1,\dots, a_r)$ and $F(b,c)$ be free groups on the indicated alphabets. Let $w_1,\dots, w_r\in F(b,c)$ be $r$ words in $F(b,c)$ so that $|w_i|=r$, $w_i\ne w_j^{\pm 1}$ and $w_i$ does not begin or end with $b^{-1}$ and contains the letter $c$, $i=1,\dots, r$.  Define the homomorphism $\psi:F(a_1,\dots, a_r,\dots,)\to F(b,c)$ as follows:
$$
\psi(a_i)=w_i b^{L+i} a_i b^{L+i} w_i,
$$
where $L\in \BN$, $i=1,\dots, r$ and set 
$$
\psi(a_j)=b^{L+r+j} a_i b^{L+r+j},
$$
for all $j>r$. Then, the map $\psi$ is an embedding of $F(A)$ into $F(b,c)$, the restriction of $\psi$ onto $F(B)$ is a quasi-isometric embedding and the quasi-isometry constant is $4r+2L+1$.
\end{lem}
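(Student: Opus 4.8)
The plan is to verify the three assertions — injectivity of $\psi$ on $F(A)$, the quasi-isometric estimate for $\psi|_{F(B)}$, and the explicit constant $4r+2L+1$ — by a ``no-cancellation'' (ping-pong style / small-cancellation style) analysis of the images $\psi(a_i)$ as words in $F(b,c)$. The key preliminary observation is that each $\psi(a_i) = w_i b^{L+i} a_i b^{L+i} w_i$ (for $i\le r$) and each $\psi(a_j) = b^{L+r+j} a_j b^{L+r+j}$ (for $j>r$) is already freely reduced: since $w_i$ does not begin or end with $b^{-1}$, no cancellation occurs between $w_i$ and the adjacent $b^{L+i}$ blocks, and the letters $a_i$ are not touched by the homomorphism on $F(b,c)$-letters, so they survive. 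Moreover, the $b$-power blocks $b^{L+i}$ and $b^{L+i}$ flanking $a_i$ have exponents in the range $L+1,\dots, L+2r$, which are pairwise distinct exponents attached to distinct ``anchor'' letters $a_i$; this is the mechanism that prevents the endpieces of $\psi(a_i^{\pm1})$ and $\psi(a_k^{\pm1})$ from cancelling each other too far.

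First I would record the combinatorial setup precisely: write $W_i := \psi(a_i)$ and let $\bar{W}$ denote the word obtained from a product $\psi(a_{i_1}^{\epsilon_1})\cdots \psi(a_{i_n}^{\epsilon_n})$ (with the reduced word $a_{i_1}^{\epsilon_1}\cdots a_{i_n}^{\epsilon_n}$ of length $n$ in $F(A)$) by free reduction in $F(b,c)$. The central claim is a bounded-cancellation lemma: when two consecutive syllables $\psi(a_{i_k}^{\epsilon_k})$ and $\psi(a_{i_{k+1}}^{\epsilon_{k+1}})$ are concatenated, the amount of cancellation at the junction is strictly less than the length of the ``core'' $b^{L+i_k} a_{i_k} b^{L+i_k}$ (respectively the $j>r$ analogue) on each side — in fact the cancellation cannot consume the distinguished letter $a_{i_k}$ nor the $a_{i_{k+1}}$, because a full syllable $\psi(a_{i_k}^{\epsilon_k})$ ends with $a_{i_k}^{\epsilon_k} b^{\pm(L+i_k)} w_{i_k}^{\pm1}$ or similar and for the letter $a_{i_k}$ to cancel we would need the next syllable to begin with $a_{i_k}^{-\epsilon_k}$, forcing (via the reducedness of $a_{i_1}^{\epsilon_1}\cdots$) a contradiction, and even before reaching that letter the mismatched $b$-exponents $L+i_k \ne L+i_{k+1}$ (or the presence of the letter $c$ inside $w_{i}$ versus a pure $b$-power) block the cancellation. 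Carrying this out carefully, every distinguished letter $a_{i_k}$ survives in $\bar{W}$, so $\bar{W}$ is a nonempty reduced word whenever $n\ge 1$; hence $\psi$ is injective on $F(A)$.

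Next, for the quasi-isometry estimate on $F(B)=F(a_1,\dots,a_r)$ I would count lengths. Given a reduced word $u = a_{i_1}^{\epsilon_1}\cdots a_{i_n}^{\epsilon_n} \in F(B)$, each syllable contributes $|W_{i_k}| = |w_{i_k}| + (L+i_k) + 1 + (L+i_k) + |w_{i_k}| = 2r + 2(L+i_k)+1$, which lies between $2L+2r+3$ and $4r+2L+1$; summing gives the trivial upper bound $|\psi(u)| \le (4r+2L+1)\, n = (4r+2L+1)\,|u|$. For the lower bound, the bounded-cancellation lemma shows that at each of the $n-1$ junctions at most a controlled amount is lost, and that the surviving ``cores'' $b^{L+i_k}a_{i_k}b^{L+i_k}$ are pairwise disjoint in $\bar{W}$; this yields $|\psi(u)| \ge c\,|u|$ for an explicit $c>0$ (comfortably at least $1$, since each core has length $\ge 2L+3 \ge 3$ and they do not overlap), and absorbing the additive discrepancy into a multiplicative constant gives $\tfrac{1}{4r+2L+1}|u| - (4r+2L+1) \le |\psi(u)| \le (4r+2L+1)|u| + (4r+2L+1)$, i.e. a $(4r+2L+1)$-quasi-isometric embedding as claimed. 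I would then remark that the statement about $\psi(a_j)$ for $j>r$ is only needed to make $\psi$ defined (and injective) on all of $F(A)$, and plays no role in the quasi-isometry constant since those generators do not appear in $F(B)$; the pure-$b$-power form $b^{L+r+j}a_jb^{L+r+j}$ with strictly increasing, distinct exponents ensures the bounded-cancellation lemma still applies across junctions involving these syllables.

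The main obstacle is the bounded-cancellation lemma itself: one must rule out all the ways that a tail of $\psi(a_{i_k}^{\epsilon_k})$ could cancel against a head of $\psi(a_{i_{k+1}}^{\epsilon_{k+1}})$, and the delicate cases are those where $w_{i_k}$ and $w_{i_{k+1}}$ interact (since the $w$'s are only constrained by $w_i \ne w_j^{\pm1}$, length $r$, not starting/ending with $b^{-1}$, and containing $c$), and where a $w$-block abuts a $b$-power block. The hypothesis ``$w_i$ does not begin or end with $b^{-1}$'' is exactly what is needed so that inside a single syllable no cancellation occurs between $w_i$ and its neighbouring $b^{+(L+i)}$; the hypothesis ``$w_i$ contains the letter $c$'' prevents $w_i$ from being a pure $b$-power and thereby keeps the $w$-blocks distinguishable from the anchoring $b$-powers; and $w_i \ne w_j^{\pm1}$ rules out the residual pathological junction. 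I would organize the case analysis around the position of the distinguished letters $a_{i_k}, a_{i_{k+1}}$ and show that cancellation always terminates before reaching them, which simultaneously gives injectivity and the linear lower length bound. Once that lemma is in hand, the length bookkeeping for the quasi-isometry constant is routine arithmetic with the exponents $L+1,\dots,L+2r$.
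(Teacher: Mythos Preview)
Your proposal is correct and is exactly the fleshed-out version of the paper's own argument: the paper declares the injectivity ``standard and left to the reader'' and then simply asserts the bilipschitz inequality $2L\,|v|\le |\psi(v)|\le (4r+2L+1)\,|v|$ from the syllable lengths $2r+2(L+i)+1$, which is precisely what your bounded-cancellation analysis justifies. The only cosmetic difference is that the paper records the sharper lower bound $2L\,|v|$ (the whole core $b^{L+i}a_ib^{L+i}$ survives, not just the anchor letter), whereas you content yourself with a weaker constant absorbed into $4r+2L+1$; either way the quasi-isometry constant is $4r+2L+1$.
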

\begin{proof}
The proof that $\psi$ is an embedding is standard and left to the reader. Abusing the notation, we denote the restriction of $\psi$ onto $F(B)$ by $\psi$. We only need to show that $\psi$ is a quasi-isometry and compute the quasi-isometry constant.

By definition $2r+2L+3 \le |\psi(a_i)|\le 4r+2L+1$ and the equalities are attained for $i=1$ and $i=r$, correspondingly. On the other hand, by the definition of $\psi$, it follows that for all $v\in F(a_1,\dots, a_r)$ the length $|\psi(v)|_{F(b,c)}$ satisfies the following inequality 
$$
2L |v|_{F(a_1,\dots, a_r)}\le |\psi(v)|_{F(b,c)}\le (4r+2L+1)|v|_{F(a_1,\dots, a_r)}
$$
and the statement follows.
\end{proof}

\begin{lem} \label{lem:frgrqi}
Let $F(A)=F(a_1,\dots, a_l, \dots)$ and $F(b,c)$ be two free groups on the indicated generators. Let $H=\langle w_1,\dots, w_k\rangle <F$ be a finitely generated subgroup of $F(A)$. Re-enumerating the generators of $F(A)$, let $B=\{a_1,\dots, a_r\}=\{\az(w_1), \dots, \az(w_k)\}$. We view $\Cay(F(A))$, $\Cay(F(B))$ and $\Cay(F(b,c))$ as 1-dimensional cube complexes based at the identity. The embedding of $H$ into $F(B)$ defines a subcomplex $C(H)$ in $\Cay(F(B))$. We designate the identity as the basepoint of $C(H)$.

Let $\alpha$ be the action of $H$ on $\Cay(F(B))$ by left multiplication, let $\psi$ be the map defined in {\rm Lemma \ref{lem:qie}}, let $\beta$ be the corresponding action of $H$ on $F(b,c)$ and let $\partial(\alpha)$ and $\partial(\beta)$ be the displacements of the corresponding actions. Then, 
\begin{itemize}
\item the homomorphism $\psi$ induces an equivariant, quasi-isometric embedding $\psi^*$ of $\Cay(F(B))$ into $\Cay(F(b,c))$, the constant of the quasi-isometry $\psi^*$ does not exceed $6k\partial(\alpha)+1$;
\item the following inequality holds: $2k\partial(\alpha)^2\le \partial(\beta)\le 7k\partial(\alpha)^2$;
\item the homomorphism $\psi$ induces an equivariant, quasi-isometric embedding $\psi'$ of $(F(B), \dist_{\partial(\alpha)})$ into $(\Cay(F(b,c)), \dist_{\partial(\beta)})$, the quasi-isometry constant of $\psi'$ does not exceed $7k$.
\end{itemize}
\end{lem}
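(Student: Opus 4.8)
The plan is to verify the three bullet points in order, using Lemma \ref{lem:qie} as a black box and tracking how the quasi-isometry constants get expressed in terms of $\partial(\alpha)$. The first observation is purely bookkeeping: since $H=\langle w_1,\dots,w_k\rangle$ acts on $\Cay(F(B))$ by left multiplication and the basepoint is the identity, the displacement $\partial(\alpha)$ equals $\max_i |w_i|_{F(B)}$, so in particular $|w_i|\le \partial(\alpha)$ for all $i$ and also $r\le k\partial(\alpha)$ (the alphabet $B=\az(w_1)\cup\dots\cup\az(w_k)$ has at most $\sum_i|w_i|\le k\partial(\alpha)$ letters). Before invoking Lemma \ref{lem:qie} one must arrange its hypotheses on the words $w_1,\dots,w_r$ feeding it (length $r$, not equal to $\pm 1$ of each other, not starting or ending with $b^{-1}$, containing $c$); I would simply replace them by any fixed family of $r$ words in $F(b,c)$ of length $r$ meeting those constraints (such words exist once $r\ge 2$, and the small cases are trivial), so this costs nothing and is not the substantive content — the point is only that $L$ will be chosen appropriately, in fact one takes $L$ of size comparable to $\partial(\alpha)$.

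For the first bullet, $\psi^*$ is the simplicial map on Cayley graphs induced by $\psi$: a vertex $g\in F(B)$ goes to $\psi(g)\in F(b,c)$, and it is equivariant for $\beta$ by construction since $\psi$ is a homomorphism and $\beta$ is left multiplication by $\psi(w_i)$. That $\psi^*$ is a quasi-isometric embedding is exactly the content of Lemma \ref{lem:qie}, whose quasi-isometry constant is $4r+2L+1$; substituting $r\le k\partial(\alpha)$ and choosing $L$ of order $\partial(\alpha)$ (say $L\le k\partial(\alpha)$) gives a constant at most $6k\partial(\alpha)+1$, which is the claimed bound. The precise arithmetic ($4r+2L+1 \le 4k\partial(\alpha)+2k\partial(\alpha)+1 = 6k\partial(\alpha)+1$) is routine once $L$ is pinned down, so I would state the choice of $L$ explicitly and then just cite the displayed two-sided inequality from Lemma \ref{lem:qie}.

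For the second bullet, $\partial(\beta)=\max_i |\psi(w_i)|_{F(b,c)}$ by the same reasoning as for $\alpha$. Now $|\psi(w_i)|_{F(b,c)}$ is controlled from above and below by $|w_i|_{F(B)}$ times the quasi-isometry constant and its reciprocal analogue: from Lemma \ref{lem:qie} one has roughly $2L|w_i| \le |\psi(w_i)| \le (4r+2L+1)|w_i|$. Taking the max over $i$ and using $|w_i|\le \partial(\alpha)$ on the right and the fact that $\partial(\alpha)$ is attained by some $w_i$ on the left, together with $r\le k\partial(\alpha)$ and $L$ comparable to $\partial(\alpha)$, yields $2k\partial(\alpha)^2 \le \partial(\beta) \le 7k\partial(\alpha)^2$ after collecting constants. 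This is again an arithmetic consolidation; the one place to be slightly careful is that the lower bound requires that the same index realizing $\partial(\alpha)$ also gives a lower bound on $|\psi(w_i)|$, which holds because the lower bound in Lemma \ref{lem:qie} is $2L|v|$ for \emph{every} $v$, in particular for a longest $w_i$ — and then choosing $L\ge k\partial(\alpha)$ gives the factor $2k$.

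For the third bullet, the point is simply rescaling: $\psi'$ is the same map $\psi^*$ but now $\Cay(F(B))$ carries the rescaled metric $\dist_{\partial(\alpha)}$ and $\Cay(F(b,c))$ carries $\dist_{\partial(\beta)}$. Since rescaling the source metric by $\partial(\alpha)$ and the target by $\partial(\beta)$ transforms a $K$-quasi-isometric embedding into a $K\cdot\partial(\alpha)/\partial(\beta)$-quasi-isometric embedding (multiplicative constants), and since from the first two bullets $K\le 6k\partial(\alpha)+1$ while $\partial(\beta)\ge 2k\partial(\alpha)^2$, the new constant is at most $(6k\partial(\alpha)+1)\partial(\alpha)/(2k\partial(\alpha)^2)\le 7k$ for $\partial(\alpha)$ large enough, with the small-$\partial(\alpha)$ cases handled directly. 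Equivariance of $\psi'$ is inherited from that of $\psi^*$ since the actions are by isometries of both the original and the rescaled metrics. I expect the main obstacle to be purely notational discipline: one must fix the auxiliary free generators $w_1,\dots,w_r\in F(b,c)$ and the constant $L$ at the outset (as functions of $k$ and $\partial(\alpha)$) so that all three bullets' constants come out exactly as stated, and then keep careful track of which inequalities are additive versus multiplicative when passing between the unrescaled and rescaled metrics; the genuine mathematical content is entirely contained in Lemma \ref{lem:qie}, and everything here is its equivariant, rescaled repackaging.
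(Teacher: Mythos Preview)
Your proposal is correct and follows essentially the same approach as the paper: fix $L=k\partial(\alpha)$, observe that $\partial(\alpha)=\max_i|w_i|_{F(B)}$ and $r\le k\partial(\alpha)$, and then feed these into the two-sided inequality $2L|v|\le|\psi(v)|\le(4r+2L+1)|v|$ from Lemma~\ref{lem:qie} to read off all three bullets. The paper's proof is terser (it dispatches the third bullet with ``follows from the first two'' rather than spelling out the rescaling), but the content is identical.
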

\begin{proof}
By Lemma \ref{lem:qie}, for  $L=k\partial(\alpha)$, the homomorphism $\psi$ induces the quasi-isometries $\psi^*$ and $\psi'$. We only need to compute the quasi-isometry constant.

By definition, the displacement of the action is the displacement of the basepoint of the complex. Therefore, in this case, the displacement $\partial(\alpha)$ equals $\max\limits_{i=1,\dots, k}\{ |w_i|_{F(B)}\}$. There are at most $k\partial(\alpha)$ different letters in the words $w_1,\dots, w_k$, therefore $r\le k\partial(\alpha)$. By Lemma \ref{lem:qie}, it follows that the quasi-isometry constant of $\psi^*$ does not exceed $6k\partial(\alpha)+1$.

The displacement of $\partial(\beta)$ equals $\max\limits_{i=1,\dots, k}\{ |\psi(w_i)|_{F(b,c)}\}$. Let $a_i\in B$, then $|\psi(a_i)|= 2(r+k\partial(\alpha) + i)+1$. Since $|w_i|_{F(B)}\le \partial(\alpha)$ and $r\le k\partial(\alpha)$, we have
\begin{align*}
\partial(\beta)=\max_{i=1,\dots, k}\{|\psi(w_i)|_{F(b,c)}\} & \le \max_{i=1,\dots, k}\{|w_i|_{F(B)}\}\cdot \max_{j=1,\dots,k\partial(\alpha)} |\psi(a_j)|_{F(b,c)}\\
& \le \max_{i=1,\dots, k}\{|w_i|_{F(B)}\}\cdot 7k\partial(\alpha)=7k \partial(\alpha)^2.
\end{align*}

On the other hand, since there exists $l$, $1\le l\le k$ so that $|w_l|_{F(B)}=\partial(\alpha)$, we have that
$$
\partial(\beta)\ge |\psi(w_l)|_{F(b,c)}\ge (2k\partial(\alpha)+1)\partial(\alpha)\ge 2k\partial(\alpha)^2.
$$

The last statement now follows from the first two.
\end{proof}

\begin{prop} \label{prop:uniqi}
Let $\alpha$ be a free co-special action of a $k$-generated group $G$ on a based cubing $(C,b)$ of width $n$. Let $\EE=\EE(C)$ and $\PP=\PP(C)$ be the corresponding partially commutative groups. Let $\chi:G\hookrightarrow \EE(C)$ be the induced embedding and let $\beta'$ be the action of $G$ induced by $\alpha$ on $\widetilde{C(\EE)}$. Let $G=\langle w_1,\dots, w_k\rangle<\EE(C)$ and, re-enumerating the generators of $\EE(C)$, let $\langle \az(w_1),\dots, \az(w_k)\rangle =\langle a_1,\dots, a_r\rangle$, where $a_i$ are canonical generators of $\EE(C)$, $i=1,\dots, r$. 

Let $\psi$ be the embedding of $\EE(C)$ into $\PP(C)$ defined as in {\rm Lemma \ref{lem:2k}} using the maps $\psi_v$ from {\rm Lemma \ref{lem:qie}}, where $L=k\partial(\alpha)$. Denote by $\phi$ the composition of $\chi$ and $\psi$ and let  $\beta$ be the action of $G$ induced by $\alpha$ on  $\widetilde{C(\PP)}$. Endow $C$, $\widetilde{C(\EE)}$ and $\widetilde{C(\PP)}$ with the edge-path metric and let the cubings $\widetilde{C(\EE)}$ and $\widetilde{C(\PP)}$ be based at lifts of the identity elements $\id'$ and $\id$ of $\EE$ and $\PP$, correspondingly. Then, 
\begin{itemize}
\item the embedding $\phi$ induces an equivariant, based, quasi-isometric embedding $\psi^*$ of $(C,b)$ into $(\widetilde{C(\PP)},\id)$;
\item the following inequality holds $2k\partial(\alpha)^2\le \partial(\beta)\le 7 k \partial(\alpha) ^2$;
\item the embedding $\phi$ induces an equivariant, based, $7k$-quasi-isometric embedding from $(C, \dist_{\partial(\alpha)}, b)$ to $(\widetilde{C(\PP)}, \dist_{\partial(\beta)}, \id)$.
\end{itemize}
\end{prop}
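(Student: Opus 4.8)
The plan is to reduce Proposition \ref{prop:uniqi} to the free-group computations already carried out in Lemmas \ref{lem:qie} and \ref{lem:frgrqi}, by working ``one vertex group at a time'' in the graph-product description of $\EE=\EE(C)$ and $\PP=\PP(C)$ introduced in the proof of Lemma \ref{lem:2k}. Recall that $\PP(C)=\GG(\Gamma^*)$ is built from the deflation $\Gamma'$ of the underlying graph of $\EE(C)$ by replacing each vertex $[v]$ (with associated free group $F([v])$) by a two-vertex free group $F(v_1,v_2)$, and that $\phi=\chi\psi$ where $\chi:G\hookrightarrow\EE(C)$ is the Haglund--Wise embedding and $\psi=\ast_{[v]}\psi_{[v]}$ is the graph-product assembly of the maps $\psi_{[v]}$ from Lemma \ref{lem:qie} with the common parameter $L=k\partial(\alpha)$.

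First I would record the elementary metric facts I need about graph products. If $\GG(\Gamma')$ is a graph product with vertex groups $A_{[v]}$, then for $g\in\GG(\Gamma')$ written in normal form as a product of syllables $g=g_1\cdots g_m$ (each $g_i$ in some vertex group), the word length of $g$ in the natural generating set is, up to a multiplicative constant depending only on $\Gamma'$ (in fact exactly), comparable to $\sum_i |g_i|_{A_{[v_i]}}$; this is standard (see \cite{Green}). Since $\Gamma'$ is a \emph{finite} graph with $n$ vertices and $\Gamma^*$ has $2n$ vertices, these multiplicative constants are absorbed into constants depending only on $n$. Applying the vertex-wise quasi-isometry estimates of Lemma \ref{lem:qie} (with $r$ the number of relevant generators and $L=k\partial(\alpha)$, noting as in Lemma \ref{lem:frgrqi} that $r\le k\partial(\alpha)$) to each syllable and summing, one gets that $\psi:\EE(C)\to\PP(C)$, restricted to $G$, is a quasi-isometric embedding with constant controlled by $7k$ after rescaling — exactly parallel to the third bullet of Lemma \ref{lem:frgrqi}.

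Next I would translate this into the statement about actions on the universal covers. By Corollary \ref{cor:HW}, the action $\beta'$ of $G$ on $\widetilde{C(\EE)}$ is (up to the identification of the $1$-skeleton with $\Cay(\EE)$) just left multiplication, and likewise $\beta$ on $\widetilde{C(\PP)}$ is left multiplication via $\phi$. The embedding $\chi$ is an isometric embedding of the Cayley graph of $G$ (with the word metric coming from $\{w_1,\dots,w_k\}$, which realises $\partial(\alpha)$) into $\Cay(\EE)$, because $\langle \az(w_1),\dots,\az(w_k)\rangle$ is a canonical parabolic subgroup and parabolic subgroups are isometrically embedded (convex) in partially commutative groups; combining this with the quasi-isometric embedding $\psi$ on the $1$-skeleta gives the equivariant, based quasi-isometric embedding $\psi^*$ of $(C,b)$ into $(\widetilde{C(\PP)},\id)$ — here one also uses that $C$ is convex in $\widetilde{C(\EE)}$ (again Corollary \ref{cor:HW}), so the edge-path metric on $C$ agrees with the induced one. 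The displacement bounds $2k\partial(\alpha)^2\le\partial(\beta)\le 7k\partial(\alpha)^2$ are then read off verbatim from the corresponding computation in Lemma \ref{lem:frgrqi}: $\partial(\beta)=\max_i|\phi(w_i)|_{\PP}$, and since $\phi$ acts vertex-wise by the maps $\psi_{[v]}$ with $L=k\partial(\alpha)$, the same two-sided estimate on $|\psi(a_i)|$ used there applies syllable by syllable. Finally, rescaling the source by $\partial(\alpha)$ and the target by $\partial(\beta)$ and invoking the displacement bounds turns the constant into $7k$, giving the third bullet.

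The main obstacle is the bookkeeping in the first step: making sure that the quasi-isometry constants genuinely depend only on $k$ (and not on $n$, the rank of $\EE$, or the size of $G$) after passing through the graph-product assembly. The subtle point is that Lemma \ref{lem:qie} produces a constant $4r+2L+1$ that a priori grows with $r$, and $r$ can be as large as $k\partial(\alpha)$; it is precisely the choice $L=k\partial(\alpha)$ together with the bound $r\le k\partial(\alpha)$ (and the fact that the graph-product comparison constants depend only on the \emph{fixed} finite graph $\Gamma'$, hence only on $n$, and get absorbed) that collapses everything to a constant of the shape $7k$ once one rescales by the displacements $\partial(\alpha)$, $\partial(\beta)$. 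One must also be slightly careful that different syllables of a normal-form word lie in different vertex groups, so the per-syllable maps $\psi_{[v]}$ do not interfere — this is where the hypotheses in Lemma \ref{lem:qie} that $w_i\ne w_j^{\pm1}$, that $w_i$ neither begins nor ends with $b^{-1}$, and that $w_i$ contains $c$, are used to guarantee that $\psi$ is injective and that cancellation between the images of distinct syllables is controlled.
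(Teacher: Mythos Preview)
Your proposal is correct and follows essentially the same approach as the paper's proof, which is very terse: the paper reduces to Corollary \ref{cor:HW} for the isometric embedding $C\hookrightarrow\widetilde{C(\EE)}$ and then simply points to the proofs of Lemmas \ref{lem:2k} and \ref{lem:frgrqi} for the remaining estimates. One minor simplification of your bookkeeping worry: word length in a graph product (with the standard generating set) is \emph{exactly} additive over syllables in normal form, so the ``multiplicative constant depending only on $\Gamma'$'' you introduce is in fact $1$, and the free-group estimates of Lemma \ref{lem:frgrqi} transfer verbatim once you observe that $\psi$ sends syllables to syllables in distinct vertex groups of $\PP(C)$.
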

\begin{proof}
By Corollary \ref{cor:HW}, the embedding of $G$ into $\EE$ induces an equivariant, combinatorial isometric embedding of $C$ into $\widetilde{C(\EE)}$. Hence, it suffices to show that the embedding $\psi$ of $\EE$ into $\PP$ and, consequently, the induced map $\psi^*$ from $\widetilde{C(\EE)}$ into $\widetilde{C(\PP)}$ can be chosen in such a way that the restriction $\psi^*|_C$ of $\psi^*$ onto the image of $C$ in $\widetilde{C(\EE)}$ satisfies the statements of the proposition.
 
The statement now follows by the definition of $\psi$, see proof of Lemmas \ref{lem:2k} and \ref{lem:frgrqi}.
\end{proof}

\section{Real cubings}\label{sec:6}
The aim of this section is to introduce and give examples of the main object of our study, \emph{real cubings}.
We begin by recalling the notion of an ultralimit of metric spaces. We refer the reader to \cite{Roe} for more details.

Let $\UU$ be a non-principal ultrafilter on $\BN$. Let $(X_n,d_n)$ be a sequence of metric spaces with specified base-points $p_n\in X_n$. We say that a sequence $(x_n)_{n\in \BN}$, where $x_n\in X_n$, is \emph{admissible}, if the sequence of real numbers $(d_n(x_n,p_n))_{n\in \BN}$ is bounded. Denote the set of all admissible sequences by $\AA$. It is easy to see from the triangle inequality that, for any two admissible sequences $\mathbf x=(x_n)_{n\in \BN}$ and $\mathbf y=(y_n)_{n\in \BN}$, the sequence $(d_n(x_n,y_n))_{n\in \BN}$ is bounded and hence there exists a $\UU$-limit $\hat d_\infty(\mathbf x, \mathbf y)=\lim\limits_\UU d_n(x_n,y_n)$. 

Define a relation $\sim$ on the set $\AA$ as follows. Set $\mathbf x\sim \mathbf y$ if and only if $\hat d_\infty(\mathbf x, \mathbf y)=0$. It is easy to show that $\sim$ is an equivalence relation on $\AA$. 

The \emph{ultralimit} $(X_\infty, d_\infty)=\lim\limits_\UU(X_n,d_n, p_n)$ with respect to $\UU$ of the sequence $(X_n,d_n, p_n)_{n\in \BN}$ is a metric space $(X_\infty, d_\infty)$ defined as follows. As a set, we have $X_\infty=\factor{\AA}{\sim}$. For two $\sim$-equivalence classes $[\mathbf x]$ and $\mathbf{[y]}$ of admissible sequences  $\mathbf x$ and $\mathbf{y}$, we set $d_\infty([\mathbf x], [\mathbf y])=\hat d_\infty(\mathbf x,\mathbf y)=\lim\limits_\UU d_n(x_n,y_n)$. It is not hard to see that $d_\infty$ is well-defined and that it is a metric on the set $X_\infty$.

An important class of ultralimits are the so-called \emph{asymptotic cones} of metric spaces. Let $(X,d)$ be a metric space, let $\UU$ be a non-principal ultrafilter on $\BN$, let $p_n\in X$ be a sequence of base-points and let $\{j_n\}$ be a sequence of positive integers. Then, the $\UU$-ultralimit of the sequence $(X,\frac{d}{j_n},p_n)$ is called the asymptotic cone of $X$ with respect to $\UU$, $\{j_n\}_{n\in \BN}$ and $\{p_n\}$  and is denoted $\Con_\UU (X_n,j_n,p_n)$. The point $(p_n)_{n\in \BN}$ is called the \emph{observation point} and the sequence $\{j_n\}$ is called the \emph{scaling sequence}. An asymptotic cone of a group $G$ is simply an asymptotic cone of its Cayley graph.  Note that it is customary to require that $\lim\limits_{n\to \infty} j_n=\infty$. In our setting, we allow for the possibility that the asymptotic cone of a cubing be a (simplicial) metric cubing.

The following properties of ultralimits of metric spaces are well-known, see \cite{KL, Roe}.
\begin{enumerate}
\item If $(X_n,d_n)_{n\in \BN}$ are geodesic metric spaces, then $\lim\limits_\UU(X_n,d_n, p_n)$ is also a geodesic metric space.
\item The ultralimit $\lim\limits_\UU(X_n,d_n, p_n)$ of metric spaces is a complete metric space.
\item Let $\kappa\le 0$ and let $(X_n,d_n)_{n\in \BN}$ be a sequence of $\CAT(\kappa)$-metric spaces. Then, the ultralimit  is also a $\CAT(\kappa)$-space. 
\item Let $(X_n,d_n)_{n\in \BN}$ be a sequence of $\CAT(\kappa_n)$-metric spaces, where $\lim\limits_\UU \kappa_n=-\infty$. Then, $\lim\limits_\UU(X_n,d_n, p_n)$ is a real tree.
\end{enumerate}

We recall that every cubing $X$ can be turned into a metric cubing by endowing every cube of $X$ with the $\ell^1$ metric.

\begin{defn} \label{defn:rcub}
Let $(X_n,\dist_{c_n},b_n)_{n\in \BN}$, $\dist_{c_n}=\frac{\dist}{c_n}$ be a sequence of cubings with fixed based points $b_n$ endowed with the metric $\dist_{c_n}$, and let $\UU$ be a non-principal ultrafilter on $\BN$. Suppose that the widths of $X_i$ are uniformly bounded by a fixed $N\in \BN$. Then, we call the ultralimit $\CC=\lim\limits_\UU(X_n,\dist_{c_n},b_n)$ a \emph{real cubing}. 

Let $\omega\in \UU$ and suppose that $w(X_n)=N$, for all $n\in \omega$. Then, we say that $\CC$ is a real cubing of \emph{width $N$}. Note that the width of a real cubing is well-defined by the properties of the ultrafilters.
\end{defn}

The following proposition follows from well-known results, see \cite{BH, KL, Roe}
\begin{prop}
Let $\CC$ be a real cubing, then $\CC$ is a complete contractible $\CAT(0)$ space.
\end{prop}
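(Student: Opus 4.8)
The plan is to deduce each of the three asserted properties --- complete, contractible, and $\CAT(0)$ --- by combining the ultralimit construction of a real cubing with the list of well-known stability properties of ultralimits recalled just before the statement. First I would turn each $X_n$ into a \emph{metric} cubing by endowing every cube with the $\ell^1$ metric, as recalled in the remark preceding Definition \ref{defn:rcub}; after this each $(X_n, \dist_{c_n})$ is a geodesic metric space which, by Gromov's lemma and the discussion following it (the lemma attributed to \cite{gromov}), is a $\CAT(0)$ metric space. Here I use that each $X_n$ is a cubing, i.e. simply connected and combinatorially non-positively curved. Note that rescaling the metric by the constant $c_n$ does not affect the $\CAT(0)$ property, since scaling a $\CAT(0)$ space by a positive constant is again $\CAT(0)$.

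Given this, completeness of $\CC = \lim_\UU (X_n, \dist_{c_n}, b_n)$ is immediate from property (2) of ultralimits of metric spaces (the ultralimit of any sequence of metric spaces is complete). The $\CAT(0)$ property of $\CC$ follows from property (3) applied with $\kappa = 0$: the ultralimit of a sequence of $\CAT(0)$ spaces is $\CAT(0)$. It remains to establish contractibility. For this I would use that a $\CAT(0)$ space is uniquely geodesic and that geodesics vary continuously with their endpoints, so that the map sending $(x,t)$ to the point at parameter $t$ along the geodesic from the basepoint $[\mathbf b]$ to $x$ is a continuous deformation retraction of $\CC$ onto $[\mathbf b]$; hence $\CC$ is contractible. (One uses property (1) to know $\CC$ is geodesic in the first place, together with the convexity of the $\CAT(0)$ metric to obtain uniqueness and continuity.)

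\textbf{Main obstacle.} The genuinely content-bearing point is the reduction ``cubing $\Rightarrow$ $\CAT(0)$ metric space after the $\ell^1$ cubing metric'', i.e. making sure the hypothesis ``cubing'' (which is combinatorial: simply connected plus flag links) really does feed into the $\CAT(0)$ machinery. This is exactly the content of the Gromov lemma quoted in Section \ref{sec:4}, so no new work is needed; I would simply cite it. A secondary, purely bookkeeping point is that the widths being uniformly bounded by $N$ (Definition \ref{defn:rcub}) plays no role whatsoever in this particular proposition --- it is there to control the \emph{group-theoretic} side in later sections --- so the proof should not attempt to use it. The only mild care required is in the contractibility argument: one should note that the deformation retraction is the straight-line (geodesic) homotopy $H(x,t) = \gamma_x(t)$ where $\gamma_x$ is the unique geodesic from $[\mathbf b]$ to $x$, and invoke the standard fact that in a $\CAT(0)$ space this is jointly continuous in $(x,t)$ because $d(\gamma_x(t), \gamma_y(t)) \le \max\{d(x,y), \dots\}$ is controlled by convexity of the metric; this is classical and can be cited from \cite{BH}.
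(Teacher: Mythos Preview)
Your overall strategy --- feed the cubings into the ultralimit stability properties (1)--(3) listed just before the proposition and then deduce contractibility from $\CAT(0)$ --- is exactly what the paper intends; its own ``proof'' is the one-line citation of \cite{BH,KL,Roe}.

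There is, however, one genuine slip. You write that after endowing each cube with the $\ell^1$ metric, $(X_n,\dist_{c_n})$ is $\CAT(0)$ ``by Gromov's lemma''. But Gromov's lemma, as quoted in Section~\ref{sec:4}, concerns the \emph{Euclidean} length metric $d$ on the cubes, not the $\ell^1$ metric $d_c$. With the $\ell^1$ metric a single $2$-cube already fails to be uniquely geodesic (any monotone staircase between opposite corners has length $2$), so it is not $\CAT(0)$, and property~(3) cannot be invoked for the sequence $(X_n,d_{c_n})$ as it stands.

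The fix is to run your argument with the Euclidean cube metric instead; then each $X_n$ is genuinely $\CAT(0)$, properties~(2) and~(3) give completeness and $\CAT(0)$ for the ultralimit, and your geodesic-contraction argument for contractibility goes through verbatim. The paper is itself loose on exactly this point --- it defines the real cubing via the $\ell^1$-based metric $\dist_{c_n}$ yet asserts $\CAT(0)$ --- and what reconciles the two is that the Euclidean and $\ell^1$ metrics on an $n$-cube are $\sqrt{n}$-bi-Lipschitz, hence \emph{uniformly} bi-Lipschitz once the dimensions are uniformly bounded; by Lemma~\ref{lem:limiso} the two ultralimits are then bi-Lipschitz equivalent. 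So, contrary to your side remark, the uniform width bound in Definition~\ref{defn:rcub} is not entirely idle here: via Lemma~\ref{lem:width}(4) it bounds the dimensions and is precisely what lets one pass between the $\ell^1$ real cubing and the $\CAT(0)$ Euclidean ultralimit.
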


As the following example demonstrates, the class of real cubings is very wide.

\begin{expl}\
\begin{enumerate}
\item Any asymptotic cone of a finitely generated partially commutative group is a real cubing since it is an ultralimit of the universal cover of the standard complex of the partially commutative group.
\item Asymptotic cones of nilpotent group are homeomorphic to $\BR^k$ for some $k$, see \cite{Pansu}, hence the asymptotic cone of any nilpotent group is homeomorphic to a real cubing.
\item It is well known that $\SL_2(\BR)$ is quasi-isometric to the direct product of the hyperbolic plane with the real line, hence every asymptotic cone of $\SL_2(\BR)$ is bi-Lipschitz equivalent to the direct product of a real tree and the real line, and, in fact, see \cite{Kar}, every asymptotic cone of the universal cover of $\SL_2(\BR)$ endowed with the Sasaki metric is isometric to a real cubing. 
\item Let $(M,g)$ be a compact connected Riemannian manifold of dimension $m$, where $g$ is a Riemannian metric on $M$. Let $d$ be the metric on $M$ corresponding to $g$, so that $(M,d)$ is a geodesic metric space. Choose a basepoint $p\in M$. Then, the ultralimit (and even the ordinary Gromov-Hausdorff limit) $\lim\limits_\UU (M,nd,p)$ is isometric to the tangent space $T_p M$ of $M$ at $p$ with the distance function on $T_p M$ given by the inner product $g(p)$. Therefore the ultralimit is isometric to the Euclidean space with the standard Euclidean metric and is bi-Lipschitz equivalent to a real cubing.
\item  The asymptotic cone of any toral relatively hyperbolic group is bi-Lipschitz equivalent to a real cubing, \cite{OS, Sisto}.
\end{enumerate}
\end{expl}

We shall need the following lemma, which is well-known and easy to prove.
\begin{lem}\label{lem:limiso}
Let $(X_n, d_n, x_n)$ and $(Y_n, d_n', y_n)$ be a sequence of based metric spaces and let $\UU$ be an ultrafilter. Suppose that $(X_n,d_n, x_n)$ and $(Y_n,d_n',y_n)$ are $(q_n,C)$ quasi-isometric and $\lim\limits_\UU q_n=B$. Then, $\lim\limits_\UU(X_n, d_n, x_n)$ and $\lim\limits_\UU(Y_n, d_n', y_n)$ are $B$-bi-Lipschitz equivalent.
\end{lem}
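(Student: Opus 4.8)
The plan is to unwind both ultralimits and compare them point-by-point using the quasi-isometry constants. First I would fix, for each $n$, a $(q_n,C)$-quasi-isometry $f_n:(X_n,d_n,x_n)\to (Y_n,d_n',y_n)$; that is, a map with $\tfrac{1}{q_n}d_n(a,b)-C\le d_n'(f_n(a),f_n(b))\le q_n d_n(a,b)+C$ for all $a,b\in X_n$, together with $C$-density of the image. Since $\lim_\UU q_n=B$, we have $\UU$-almost surely $q_n\le B+1$, so the maps $f_n$ are uniformly coarsely Lipschitz (with additive constant $C$ fixed). The key observation is that a \emph{fixed} additive constant $C$ disappears in the ultralimit once one rescales only by the metric — but here there is no rescaling; instead one uses that the ultralimit identifies points at distance $0$, and an additive error $C$ does \emph{not} vanish. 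So I need to be slightly more careful: the statement should be read with the understanding that either the $d_n$ already incorporate a scaling going to infinity (asymptotic-cone setting), or, more robustly, one checks that the induced map is a $B$-bi-Lipschitz embedding \emph{up to bounded error}, which on the level of the ultralimit becomes genuinely $B$-bi-Lipschitz. Concretely, I would argue as follows.

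Define $f_\infty:\lim_\UU(X_n,d_n,x_n)\to \lim_\UU(Y_n,d_n',y_n)$ on admissible sequences by $f_\infty([(a_n)])=[(f_n(a_n))]$. First check this is well-defined: if $(a_n)$ is admissible, i.e. $d_n(a_n,x_n)$ is bounded, then $d_n'(f_n(a_n),y_n)\le d_n'(f_n(a_n),f_n(x_n))+d_n'(f_n(x_n),y_n)\le (B+1)d_n(a_n,x_n)+C + C'$ $\UU$-a.s., where $C'$ bounds $d_n'(f_n(x_n),y_n)$ (this is where the hypothesis that the $f_n$ roughly match basepoints, or a harmless adjustment of $y_n$, is used), so $(f_n(a_n))$ is admissible. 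If $(a_n)\sim(a_n')$, then $\lim_\UU d_n(a_n,a_n')=0$, hence $\lim_\UU d_n'(f_n(a_n),f_n(a_n'))\le \lim_\UU\big((B+1)d_n(a_n,a_n')+C\big)=C$; this only gives a bound $C$, not $0$. This is the crux of the matter and, I expect, the main obstacle: a genuine quasi-isometry with nonzero additive constant does \emph{not} descend to a map on the ultralimit. The resolution is that in the situations where this lemma is applied in the paper — namely when the metrics $d_n=\dist_{c_n}$ with $c_n\to\infty$, or in Proposition \ref{prop:uniqi} where everything is rescaled — the additive constant $C$ becomes $C/c_n\to 0$ after rescaling, so effectively $C=0$ in the limit; alternatively, the hypothesis should be (and in these applications is) that the $q_n$ are the full quasi-isometry data with the additive constant absorbed, or that we pass to the scaled metrics. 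Assuming $C$ effectively vanishes in the limit (the honest reading of the hypothesis in context), well-definedness, injectivity, and the $B$-bi-Lipschitz inequalities all follow immediately from taking $\lim_\UU$ of
\[
\tfrac{1}{q_n}d_n(a_n,b_n)-C\le d_n'(f_n(a_n),f_n(b_n))\le q_n d_n(a_n,b_n)+C,
\]
giving $\tfrac{1}{B}d_\infty([(a_n)],[(b_n)])\le d_\infty'(f_\infty[(a_n)],f_\infty[(b_n)])\le B\, d_\infty([(a_n)],[(b_n)])$.

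It remains to check surjectivity of $f_\infty$, which uses the $C$-density (coarse surjectivity) of each $f_n$: given an admissible sequence $(b_n)$ in $Y_n$, pick $a_n\in X_n$ with $d_n'(f_n(a_n),b_n)\le C$ (or $\le C+1$); then $(a_n)$ is admissible by the lower quasi-isometry bound together with admissibility of $(b_n)$, and $d_\infty'(f_\infty[(a_n)],[(b_n)])=\lim_\UU d_n'(f_n(a_n),b_n)\le C$, which is $0$ in the rescaled setting, so $f_\infty$ is onto. Putting the pieces together, $f_\infty$ is a surjective $B$-bi-Lipschitz map, hence a $B$-bi-Lipschitz equivalence between $\lim_\UU(X_n,d_n,x_n)$ and $\lim_\UU(Y_n,d_n',y_n)$. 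The one genuinely delicate point, as emphasized above, is the role of the additive constant $C$; I would state the lemma (as the authors implicitly intend) for the rescaled metrics appearing in Definition \ref{defn:rcub} and Proposition \ref{prop:uniqi}, where $C$ is divided by the scaling factors $c_n\to\infty$ and therefore contributes nothing in the ultralimit, making the argument above completely rigorous.
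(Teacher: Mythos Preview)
The paper does not actually prove this lemma; it is introduced with the phrase ``the following lemma, which is well-known and easy to prove'' and then stated without argument. Your proof is the standard one --- define the limit map coordinatewise on admissible sequences, check well-definedness, take $\lim_\UU$ of the quasi-isometry inequalities to get the bi-Lipschitz bounds, and use coarse density for surjectivity --- and it is correct in the setting the authors have in mind.

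You are right to flag the additive constant $C$ as the one genuine subtlety: as literally stated, a fixed nonzero $C$ does not vanish in an arbitrary ultralimit, and the map need not even be well-defined. Your resolution is also the correct one for this paper. In the only place the lemma is invoked (Proposition~\ref{prop:uem}), the maps $\psi_n$ come from Proposition~\ref{prop:uniqi} and Lemma~\ref{lem:qie}, whose proof gives the two-sided estimate $2L|v|\le |\psi(v)|\le (4r+2L+1)|v|$ with \emph{no} additive term; after rescaling by the displacements $\partial(\alpha_n)$ and $\partial(\beta_n)$ one obtains genuine $7k$-bi-Lipschitz embeddings, so in the application $C=0$ and the issue simply does not arise. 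Your reading of the authors' intent is therefore exactly right, and with that understanding your argument is complete.
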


\begin{rem}
Real trees were introduced independently by Chiswell and Tits, see \cite{Chis} and \cite{Tits}. Chiswell proved the equivalence of the Bass-Serre theory of groups acting on simplicial trees (without inversions of edges) and integer-valued length functions on a group, see \cite{Chiswell}. Following this approach, for a group $G$ with a real-valued length function, Chiswell defined a real tree as a contractible metric space $X$ on which the group $G$ acts.

In his book \cite{serre}, Serre constructed the $\SL_2$-tree associated to a discrete rank one valuation. This tree is the $\SL_2$-case of Bruhat-Tits building. Having groups with more general valuations, Tits generalised the Tits building construction in the $\SL_2$-case to the case of more general valuations.

One can show that the spaces defined by Chiswell and Tits are complete and, in fact, are real cubings. 

Later, Alperin and Moss, \cite{AM} gave a more general definition of real trees as spaces where every two points can be joined by a unique arc or, equivalently, as $0$-hyperbolic spaces. In analogy to the work of Alperin and Moss, one can ask if there is a metric characterisation of real cubings.

There is yet another way to look at real trees: as metric spaces that are isometric to subspaces of the asymptotic cone of a free group, see \cite{MNO, DP}. In this fashion, one could define a real cubing to be a (convex) subspace of the asymptotic cone of a finitely generated partially commutative group. Just as Rips' theory can be viewed as the theory of groups acting on subspaces of the asymptotic cone of a free group, the theory developed in this paper can be regarded as the theory of groups acting on subspaces of asymptotic cones of partially commutative groups. Since the nature of the actions we consider is that of an ultralimit of actions, we restrict our consideration to real cubings that are ultralimits of cubings. 
\end{rem}

\section{Group actions on real cubings}\label{sec:7}
At this point we begin our study of group actions on real cubings. As we noticed in the introduction, intuitively, a ``good'' action on a space should capture its structure, hence in the case of direct product of trees, one would expect to recover some type of direct product structure. However, by examples constructed by Burger and Mozes, see \cite{BM}, studying general, even free actions on direct product of trees (and, more generally, on $\CAT(0)$ cube complexes) is practically hopeless. On the other hand, as we have seen in Sections \ref{sec:3} and \ref{sec:4}, free co-special group actions do preserve the structure of direct product and can be analysed.

It is clear that any (metric) cubing is a real cubing, hence, if one is to understand the structure of groups acting on real cubings, one needs to have a good understanding of group actions on cubings. We therefore consider the class of essentially free co-special actions on real cubings - a natural generalisation of co-special actions on cubings. In a sense, essentially free co-special actions can be likened to very small actions on real trees, where vertex stabilisers also have a very small action on a real tree.

Let $G$ be a group acting co-specially by isometries on a cubing $C$. Let $K=\{g\in G\mid g.x=x \hbox{ for all $x\in C$}\}$ be the kernel of the action. We shall say that the action of $G$ on a cubing is \emph{essentially free} if  $G\ne K$ and the induced action of the group $\factor{G}{K}$ on $C$ is free. 

\begin{defn} \label{defn:essfr}
Let $\CC=\lim\limits_\UU (C_i, b_i, \dist_{c_i})$ be a real cubing. Let $\{\alpha_i\}$ be a sequence of group actions of a group $G$ on the cubings $\{C_i\}$. Suppose that for all $(x_i)\in \CC$ and all $g\in G$ the sequence of elements $(g.x_i)$ is admissible and thus represents an element of $\CC$.  Define an action $\alpha$ of $G$ on $\CC$ as follows: 
$$
\alpha(g.(x_n)) = (\alpha_n(g.x_n)).
$$
We call such an action \emph{limiting} and the actions of $G$ on the $C_i$'s are called the \emph{components} of the limiting action.

Suppose further that we have a limiting action of $G$ on the real cubing $\CC$. We say that $G$ acts \emph{essentially freely and co-specially} on $\CC$ if the action of $G$ on $\CC$ is faithful, non-trivial (i.e. without a global fixed point) and the components of the action are essentially free and co-special actions. 
\end{defn}

In \cite{Guir}, Guirardel showed that, in fact, any stable action of a finitely presented group on a real tree can be approximated by actions on simplicial trees. From this perspective, the theory of (stable) actions on real trees is the theory of ultralimits of actions on simplicial trees, \cite{KL,Roe}. The main example of essentially free co-special action on a real tree (viewed as $1$-dimensional real cubings) is the action of a limit group $G$ via the (discriminating) family of homomorphisms from $G$ to $F$, see \cite{Sela1}. 

We now give the central example of essentially free co-special actions of finitely generated groups on real cubings. 
\begin{expl}
Let $G$ be a finitely generated group and let $\GG$ be a partially commutative group with trivial centre. Let $S=S^{-1}$ be a finite set generating $G$ and let $\dist$ be the edge path metric on $\widetilde{C(\GG)}$. Given  an infinite sequence of homomorphisms $\{\varphi_n: G \to \GG\}$, one can associate to it a sequence of positive integers defined by 
$$
c_n = \max_{x\in \GG} \min_{a\in S} \dist(\varphi_n(a).x, x)=\max_{g\in\GG}\min_{a\in S} \dist(\id, g^{-1}\varphi_n(a)g.\id).
$$
It is well-known that if $(\varphi_n)$ are pairwise non-conjugate in $\GG$, since $\GG$ is finitely generated, then $\lim\limits_{n\to \infty}c_n =\infty$. 
Since the image $\varphi_n(G)$ is a subgroup of $\GG$, the subgroup $\varphi_n(G)$ acts by left multiplication on the Cayley graph $\Cay(G)$ and thus every homomorphism $\varphi_n$ defines an essentially free co-special action of $G$ on $\widetilde{C(\GG)}$.

Choose an ultrafilter $\UU$ and let $a \in S$, $x_n \in \GG$ be so that $c_n = \dist(\varphi_n(a).x_n,x_n)$ for $\UU$-almost all $n$. We then obtain a limiting action of $G$ on the asymptotic cone (real cubing) $\Con_\UU(\GG; (x_n), (c_n))$. By definition, if this action is faithful, then, as we show in Proposition \ref{prop:conult}, it is essentially free and co-special.
\end{expl}

Given a finitely generated group $G=\langle S\rangle$ acting on a space $X$, it is customary to define the displacement of the action as $\sup\limits_{x\in X}\min\limits_{a\in S}\dist(x,a.x)$. As the following lemma shows, if we are \emph{given} an essentially free co-special group action on a real cubing, our definition of displacement and the usual one are basically equivalent. Furthermore, this lemma allows us to change the basepoint of the real cubing.

\begin{lem}
Let $G=\langle S\rangle$ be a finitely generated group acting essentially freely and co-specially on a real cubing $\CC =\lim\limits_\UU(C_n,b_n\dist_{c_n})$. Then, for any $x=(x_n)\in \CC$ one has that
$$
\lim\limits_\UU\left(\frac{\partial_{b_n}}{c_n}\right)=p \quad \hbox{ and } \quad \lim\limits_\UU\left(\frac{\partial_{b_n}}{\partial_{x_n}}\right)=q_x, 
$$
where $p, q_x\in \BR$, $p,q_x >0$.
\end{lem}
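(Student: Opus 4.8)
The plan is to show that both limits exist and are strictly positive by exhibiting uniform two-sided bounds between the quantities $\partial_{b_n}$, $\partial_{x_n}$ and $c_n$, valid for $\UU$-almost all $n$. First I would recall that, by Definition~\ref{defn:essfr}, the components $\alpha_n$ of the limiting action are co-special and essentially free, so each $\alpha_n$ factors through a free co-special action of $\factor{G}{K}$ (where $K$ is the common kernel of the $\alpha_n$, which is trivial by faithfulness of the limiting action). Hence, after discarding a set of indices not in $\UU$, we may regard $G$ as a subgroup of a finitely generated partially commutative group $\GG_n$ acting on $\widetilde{C(\GG_n)}$ by left multiplication (Corollary~\ref{cor:HW}), with $b_n$ and $x_n$ vertices of $\Cay(\GG_n)$. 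Since the action is by left multiplication, translation lengths are controlled in the usual way: for any vertex $y$, $\partial_y = \max_{s\in S}\dist(s\cdot y, y) = \max_{s\in S}\dist(\id, y^{-1}\varphi_n(s)y)$.

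The first key step is the comparison $\lim_\UU(\partial_{b_n}/c_n)=p>0$. By definition of $\CC$ as an ultralimit with metric $\dist_{c_n}=\dist/c_n$, and since $(s\cdot b_n)_n$ is admissible for every $s\in S$ (this is part of the definition of a limiting action), each sequence $\dist(s\cdot b_n, b_n)/c_n$ is bounded and has a $\UU$-limit; taking the maximum over the finite set $S$ gives that $\partial_{b_n}/c_n$ has a $\UU$-limit $p<\infty$. Positivity of $p$ is exactly the statement that the limiting action is non-trivial, i.e. has no global fixed point: if $p=0$, then for every $s\in S$ we would have $d_\infty(s\cdot[b], [b])=0$, so the base point $[b]=(b_n)$ of $\CC$ would be fixed by all of $S$, hence by $G$, contradicting Definition~\ref{defn:essfr}. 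This is the step where the hypotheses on the action are actually consumed.

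The second key step is $\lim_\UU(\partial_{b_n}/\partial_{x_n})=q_x$ with $0<q_x<\infty$, and then $\lim_\UU(\partial_{x_n}/c_n)=p/q_x$ follows by dividing. The inequality $\partial_{b_n}/\partial_{x_n}$ bounded above and below is the standard ``moving the basepoint'' estimate for actions: for any two vertices $y,z$ and any isometry $g$, the triangle inequality gives $\dist(g\cdot y,y)\le \dist(g\cdot z,z)+2\dist(y,z)$, hence $\partial_{y}\le \partial_{z}+2\dist(y,z)$, and symmetrically $\partial_{z}\le \partial_{y}+2\dist(y,z)$. Applying this with $y=b_n$, $z=x_n$ and dividing by $c_n$, and using that $\dist(b_n,x_n)/c_n \to d_\infty([b],[x])<\infty$ (since $x=(x_n)$ is an admissible sequence, i.e. a genuine point of $\CC$), we get that $\partial_{b_n}/c_n$ and $\partial_{x_n}/c_n$ differ by a bounded amount; combined with $\partial_{b_n}/c_n\to p>0$ this yields $\partial_{x_n}/c_n\to p'$ for some finite $p'\ge 0$, and $p'>0$ because $\partial_{x_n}\ge \partial_{b_n}-2\dist(b_n,x_n)$ would force $p'\ge 0$, while the reverse bound $\partial_{b_n}\le \partial_{x_n}+2\dist(b_n,x_n)$ gives $p\le p'+2d_\infty([b],[x])$; to conclude $p'>0$ one instead argues directly that $\partial_{x_n}=0$ for $\UU$-many $n$ would make $x_n$ a global fixed point of the co-special action $\alpha_n$, contradicting essential freeness together with $G\neq K$. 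Then $q_x:=p/p'$ is the desired positive real, and $\lim_\UU(\partial_{b_n}/\partial_{x_n})=q_x$ since the ratio is the quotient of two convergent sequences with nonzero limits.

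I expect the main obstacle to be the positivity assertions rather than the existence of the limits: existence is a soft consequence of admissibility and the finiteness of $S$, but ruling out $p=0$ and $p'=0$ requires carefully separating the ``limiting'' non-triviality hypothesis (which kills global fixed points on $\CC$, giving $p>0$) from the ``component-wise'' essential freeness hypothesis (which, together with $G\neq K$, forbids a vertex fixed by all of $G$ in $C_n$ for $\UU$-many $n$, giving $p'>0$). One should also take care that passing to the partially commutative realisation via Corollary~\ref{cor:HW} is only needed to know the actions are genuinely by isometries with well-behaved translation lengths; the argument can in fact be run purely metrically from the triangle inequality and the two hypotheses, which is the cleaner route.
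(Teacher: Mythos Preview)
Your overall approach matches the paper's: finiteness of $p$ comes from admissibility of the orbit of the basepoint, and positivity of $p$ from non-triviality of the limiting action (if $p=0$ then $[b]=(b_n)$ is a global fixed point). The paper then simply says that the statement for $q_x$ ``follows from the first statement,'' meaning: run the identical argument with $[x]$ in place of $[b]$ to get $p':=\lim_\UU(\partial_{x_n}/c_n)\in(0,\infty)$, and set $q_x=p/p'$.

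There is, however, a genuine gap in your argument for $p'>0$. You claim that the two positivity assertions consume different hypotheses---non-triviality of the limiting action for $p>0$, component-wise essential freeness for $p'>0$---and you argue the latter by saying that $\partial_{x_n}=0$ for $\UU$-many $n$ would produce a vertex of $C_n$ fixed by all of $G$. But $p'=0$ does not say $\partial_{x_n}=0$; it only says $\partial_{x_n}/c_n\to 0$, and nothing prevents $\partial_{x_n}$ from being strictly positive yet $o(c_n)$. Component-wise essential freeness is too coarse to rule this out. The correct argument is exactly the one you already used at $[b]$: if $p'=0$ then for every $s\in S$ one has $d_\infty(s\cdot[x],[x])=\lim_\UU \dist(s\cdot x_n,x_n)/c_n\le p'=0$, so $[x]$ is a global fixed point of the action on $\CC$, contradicting non-triviality of the \emph{limiting} action. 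Both positivity statements use the same hypothesis; there is no need to separate them. (As a minor aside, the kernels $K_n$ of the component actions need not coincide, so speaking of ``the common kernel $K$'' is not accurate; but this plays no role once the component-wise argument is discarded.)
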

\begin{proof}
By definition of the ultralimit, $\lim\limits_\UU(\frac{\partial_{b_n}}{c_n})$ always exists. We are to show that $p=\lim\limits_\UU(\frac{\partial_{b_n}}{c_n})\ne 0, \infty$. Since the action  of $G$ on $\CC$ (and, in particular, the basepoint $(b_n)$) is well-defined, it follows that $\lim\limits_\UU(\frac{\partial_{b_n}}{c_n})\ne\infty$. On the other hand, since the action of $G$ is essentially free and co-special and therefore non-trivial, we conclude that $p\ne 0$.  

The fact that $q_x\in \BR$, $q_x>0$ follows from the first statement.
\end{proof}

Let $G$ be a finitely generated group acting essentially freely and co-specially on a real cubing $\CC$ by an action $\alpha=\{\alpha_i\}$ and let the width $w(\CC)$ of $\CC$ be $N$. Then, by Lemma \ref{lem:width} and Proposition \ref{prop:uniqi}, $\UU$-almost all components of the action $\alpha$ define a subgroup  $\factor{G}{K_i}$ of a partially commutative group $\PP_i=\PP(\factor{G}{K_i})$, where $w(C(\PP_i))=N$ and $\PP_i$ has $2N$ generators. Since there are only finitely many such partially commutative groups, there exists a partially commutative group $\GG(\CC)$ so that:  $\GG(\CC)$ is $2N$-generated, the width $w(C(\GG(\CC)))$ equals $N$ and $\factor{G}{K_i}$ is a subgroup of $\GG(\CC)$ for $\UU$-almost all $i$. Moreover, by Proposition \ref{prop:uniqi}, there is a quasi-isometric embedding of $C_i$ into $\widetilde{C(\GG(\CC))}$. Given an essentially free co-special action of $G$ on $\CC$, we call the group $\GG(\CC)=\GG(\CC,G, \alpha)$ \emph{the partially commutative group of $\CC$}.

\begin{defn}
Let $\GG$ be a partially commutative group and let $\Con_\UU(\GG)$ be its asymptotic cone.  An isometry $t$ of $\Con_\UU(\GG)$ is called a (left) \emph{translation} if there exists an admissible element  $w\in \UG$, $w=(w_i)_{i\in \BN}$ so that for any point $p=(p_i)_{i\in \BN}\in \Con_\UU(\GG)$ the image $t(p)$ equals $(w_ip_i)_{i\in \BN}$. The element $w$ is called the \emph{vector} of the translation $t$. Note that for a translation $t$, the vector $w$ may be defined non-uniquely.
\end{defn}

\begin{lem} \label{lem:translfaith}
Let $\tau_r$ be a translation of $\Con_\UU(\GG)$ with translation vector $r=(r_i)\in \UG$, where $\GG$ is a partially commutative group with trivial centre. Then, the action $\tau_{r}$ is trivial on all the points of the asymptotic cone $\Con_\UU(\GG)$ if and only if the element $r$ is the trivial element of $\GG^{\UU}$.
\end{lem}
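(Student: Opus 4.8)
The plan is to prove both directions by translating statements about $\tau_r$ acting on points of $\Con_\UU(\GG)$ into statements about distances $\dist_{c_i}(r_ip_i, p_i)$ in the Cayley graphs $\Cay(\GG)$, where $c_i$ is the scaling sequence defining the cone. The easy direction is immediate: if $r$ is the trivial element of $\UG$, meaning $r_i = 1$ for $\UU$-almost all $i$, then $r_ip_i = p_i$ for $\UU$-almost all $i$, so $\tau_r(p) = (r_ip_i)_{i\in\BN} = (p_i)_{i\in \BN} = p$ for every admissible $p=(p_i)_{i\in\BN}$, and hence $\tau_r$ is trivial on all of $\Con_\UU(\GG)$. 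The content is in the converse, and this is where I expect the main obstacle to lie.

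For the converse I would argue by contrapositive: suppose $r$ is \emph{not} the trivial element of $\UG$, so that the set $\omega = \{i : r_i \ne 1 \text{ in } \GG\}$ lies in $\UU$; I must produce a point $p\in \Con_\UU(\GG)$ with $\tau_r(p)\ne p$. The naive choice $p = \id$ (the class of the constant identity sequence) fails in general, because $\dist(r_i, \id) = |r_i|$ could be $o(c_i)$ along $\UU$ — the translation might only become visible at small scale — and then $\tau_r(\id)=\id$ in the cone even though $r\ne 1$. So the point $p$ must be chosen adapted to the sizes $|r_i|$. The natural fix is to use \emph{powers}: for each $i\in\omega$ pick $p_i$ so that $\dist(r_i p_i, p_i)$ is comparable to $c_i$ — for instance, writing $r_i = u_i^{k_i}$ where $u_i$ is the root (least root, which exists in $\GG$ by the cited result of \cite{DK}) of $r_i$, one expects $|r_i^{m_i}| \approx m_i |r_i|$ because $r_i$ is (up to conjugacy) close to a cyclically reduced element, so choosing $m_i = \lceil c_i / |r_i|\rceil$ and setting $p_i = r_i^{m_i}$ gives $\dist(r_ip_i,p_i) = \dist(r_i^{m_i+1}, r_i^{m_i}) = |r_i| $, which is still too small. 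The correct adaptation is instead to take $p_i = \id$ but rescale the ``observation'' — equivalently, to note that $\dist_{c_i}(r_i \cdot r_i^{m_i}, r_i^{m_i})$ sees the element $r_i$ translating the geodesic ray $\id, r_i, r_i^2, \dots$, and since $\|r_i^m\| = m\|r_i\|$ for cyclically reduced $r_i$, the point $p = (r_i^{m_i})_{i\in\omega}$ with $m_i \|r_i\| \sim c_i$ is admissible, and $\tau_r(p)$ lies at cone-distance $\lim_\UU |r_i|/c_i$ from $p$ \emph{along a nondegenerate geodesic}, so if this limit is $0$ one instead uses $p$ at the point $r_i^{m_i}$ but compares with $r_i^{2m_i}$: the key algebraic input is that translation by $r_i$ moves the midpoint of the segment $[\id, r_i^{2m_i}]$ by exactly $|r_i|$, hence moves \emph{some} point of the cone a definite positive cone-distance regardless of how $|r_i|$ compares to $c_i$, because one can slide the comparison point. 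Concretely: if $\lim_\UU |r_i|/c_i = \delta > 0$, take $p=\id$ and we are done; if $\delta = 0$, take $p = (r_i^{m_i})$ with $m_i$ chosen so $\lim_\UU m_i|r_i|/c_i = 1$, and observe $d_\infty(\tau_r(p), p) = \lim_\UU |r_i|/c_i = 0$ — still zero! — so this also fails, and the resolution must be more delicate.

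The genuine resolution, and the step I flag as the \textbf{main obstacle}, is this: triviality of $\tau_r$ on \emph{all} points is a much stronger hypothesis than it first appears, and to exploit it one should test $\tau_r$ not just on one point but derive a contradiction from the assumption that $\dist_{c_i}(r_i p_i, p_i)\to_\UU 0$ for \emph{every} admissible sequence $(p_i)$. Taking $p_i = \id$ forces $|r_i| = o(c_i)$ along $\UU$. But then, using that $\GG$ has trivial centre, I claim one can find a sequence $g_i\in\GG$ with $|g_i^{-1} r_i g_i|$ \emph{not} $o(c_i)$ — here is where trivial centre enters decisively: if $r_i\ne 1$ and $Z(\GG)=1$, then $r_i$ fails to commute with some generator, and more quantitatively one can conjugate $r_i$ to an element whose length grows, OR, better, one observes that the \emph{cyclically reduced conjugacy length} $\|r_i\|_{\mathrm{conj}}$ satisfies $\|r_i\|_{\mathrm{conj}}\ge 1$ and that the element $r_i$ acts on $\widetilde{C(\GG)}$ with translation length $\|\bar r_i\|$ where $\bar r_i$ is the cyclic reduction; since $r_i \ne 1$ and $r_i$ is not (necessarily) of finite order — indeed $\GG$ is torsion-free — we have $|r_i^n| \ge n\|\bar r_i\| - C_i$ for suitable conjugating constant, and choosing $p_i = h_i r_i^{n_i} h_i^{-1}$ where $h_i$ conjugates $r_i$ to its cyclic reduction and $n_i\|\bar r_i\| \sim c_i$ makes $(p_i)$ admissible with $\dist(r_i p_i, p_i) = |r_i| $ in the wrong place again — so ultimately the clean argument is: $\tau_r$ fixes every point of $\Con_\UU(\GG)$ iff $\tau_r$ fixes $\id$ and fixes the asymptotic direction, iff $\lim_\UU |r_i|/c_i = 0$ AND the rescaled translation length $\lim_\UU \|\bar r_i\|/c_i = 0$; but one then shows directly that under $Z(\GG)=1$ and $r_i\ne 1$ one can rechoose the \emph{scaling-adapted} point $p=(p_i)$ with $p_i$ a high power of the root $u_i$ of $r_i$ scaled to length $c_i$, so that $\tau_r$ moves $p$ by cone-distance $\ge 1$, contradicting triviality. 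I would write the final version by: (i) reducing to cyclically reduced $r_i$ via conjugation-invariance of the whole setup; (ii) invoking $|u^n| = n|u|$ for cyclically reduced $u$ in a partially commutative group; (iii) picking $n_i$ with $n_i|u_i| \in [c_i, 2c_i]$ so $p_i = u_i^{n_i}$ is admissible; (iv) computing $\dist_{c_i}(r_i p_i, p_i) = |r_i|/c_i$ and — realizing this can vanish — instead picking $p_i = u_i^{n_i}$ and comparing $\tau_r(p)$ with the \emph{midpoint} construction, concluding that the segment $[\id, p]$ in the cone is nondegenerate and $\tau_r$ shifts it, so $\tau_r$ cannot be the identity. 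The crux is convincing oneself that ``$\tau_r$ trivial on all points'' genuinely forces $r$ itself trivial mod $\UU$, and the role of the trivial-centre hypothesis is exactly to rule out the degenerate case where $r_i$ is central (hence acts trivially on $\widetilde{C(\GG)}$ by left multiplication up to the center, which here is trivial anyway) — I would double-check whether trivial centre is used to ensure $r_i$ actually moves some vertex of $\Cay(\GG)$ at macroscopic scale, which is the heart of the matter.
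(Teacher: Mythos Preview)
Your main line of attack --- testing $\tau_r$ at points $p_i = u_i^{n_i}$ on the axis of $r_i$ --- cannot succeed, and you in fact notice this yourself without drawing the conclusion. If $u_i$ is the root of $r_i$, then $r_i p_i = u_i^{k_i+n_i}$ lies on the same axis, and $\dist(r_i p_i, p_i) = k_i|u_i| = |r_i|$ \emph{regardless of how $n_i$ is chosen}. So every such point is moved by exactly $|r_i|$, and when $|r_i| = o(c_i)$ along $\UU$ the cone-distance is zero. Your ``midpoint/segment shift'' fix does not help: the segment $[\id, p]$ is mapped into the same axis with a shift of $|r_i|/c_i \to_\UU 0$, so in the cone $\tau_r$ fixes the segment pointwise. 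No choice of point on the axis of $r_i$ can detect $r_i$ at the cone scale.

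The idea you briefly raise and then abandon --- ``find $g_i$ with $|g_i^{-1} r_i g_i|$ not $o(c_i)$'' --- is exactly the right one, and this is what the paper does. Since $Z(\GG)=1$ and $r_i\ne 1$, there is a canonical generator $y_i$ that does not commute with $r_i$; one can moreover choose $y_i$ so that it does not left-divide $r_i$. Then take $p_i = y_i^{c_i}$: this is admissible (distance $c_i$ from $\id$), and the key algebraic fact (a cancellation lemma in partially commutative groups, \cite[Corollary~4.8]{CK1}) is that under these two conditions the word $y_i^{-c_i} r_i y_i^{c_i}$ is already geodesic on its left, i.e.\ $y_i^{-c_i}$ left-divides it. Hence $\dist(p_i, r_i p_i) = |y_i^{-c_i} r_i y_i^{c_i}| \ge c_i$, giving $d_\infty(p, \tau_r(p)) \ge 1$. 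The moral: to see a short translation at macroscopic scale you must go \emph{off} its axis, and trivial centre is precisely what guarantees an off-axis direction exists.
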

\begin{proof}
By definition, the $i$-th component of $\tau_r$ is simply the action on $\widetilde{C(\GG)}$ defined by the left multiplication by $r_i$.

Assume by contradiction that the set $\{i \in \BN \mid r_i\ne 1\}$ belongs to the ultrafilter. For every non-trivial $r_i\in \GG$, we take a canonical generator $y_i$ of $\GG$ such that $y_i$ does not commute with $r_i$ and $y_i$ does not left-divide $r_i$ (see \cite{EKR} for definition). Notice that such a generator $y_i$ exists, since $\GG$ has trivial centre. 

Let $y=(y_i^{c_i})_{i\in \BN}$, where $(c_i)$ is the sequence of the scaling constants, be an element of $\Con_\UU(\GG)$. Consider the distance $\dist(y, r.y)$. Since the canonical generator $y_i$ neither divides nor commutes with $r_i$, it follows from \cite[Corrollary 4.8]{CK1} that ${y_i^{-c_i}}$ left-divides ${y_i^{-c_i}}r_iy_i^{c_i}$, hence $d(y, r.y)\ge 1$  and so the action $\tau_r$ is non-trivial on $\Con_\UU(\GG)$.
\end{proof}

We now show that the sequence of quasi-isometric embeddings  of $C_i$ into $\widetilde{C(\GG(\CC))}$ induces an equivariant bi-Lipschitz embedding of $\CC$ into the asymptotic cone of $\GG(\CC)$.

\begin{prop} \label{prop:uem}
Let $\alpha$ be an essentially free co-special group action of a $k$-generated group $G$ on a real cubing $\CC=\lim\limits_{\UU} (C_n,b_n,\dist_{c_n})$ and let $\{\alpha_n\}$ be the components of this action.  Let $\GG(\CC)$ be the partially commutative group of the real cubing $\CC$, let $\beta_n$ be the action of $G$ on $\widetilde{C(\GG(\CC))}$ induced by $\alpha_n$ and let $\phi_n$ be the embedding  of $\factor{G}{\Ker(\alpha_n)}$ into $\GG(\CC)$ constructed in  {\rm Proposition \ref{prop:uniqi}}. Then, there exists a $G$-equivariant bi-Lipschitz embedding $\psi$ of $\CC$ into $\Con_\UU(\GG(\CC), 1, \dist_{\partial(\beta_n)})$.
\end{prop}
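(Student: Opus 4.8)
The plan is to obtain $\psi$ as the ultralimit of the equivariant quasi-isometric embeddings of the components supplied by Proposition \ref{prop:uniqi}, and to transfer the bi-Lipschitz and equivariance properties to the limit. First I would set up the component maps. By the construction of $\GG(\CC)$ carried out just before the statement (which uses that there are only finitely many $2N$-generated partially commutative groups of width $N$, together with an ultrafilter pigeonhole argument) and by Proposition \ref{prop:uniqi}, for $\UU$-almost every $n$ the embedding $\phi_n\colon\factor{G}{\Ker(\alpha_n)}\hookrightarrow\GG(\CC)$ induces a $G$-equivariant, based quasi-isometric embedding
$$
f_n\colon\bigl(C_n,\dist_{\partial(\alpha_n)},b_n\bigr)\longrightarrow\bigl(\widetilde{C(\GG(\CC))},\dist_{\partial(\beta_n)},\id\bigr)
$$
of quasi-isometry constant at most $7k$; moreover $f_n(b_n)=\id$, the map $f_n$ intertwines $\alpha_n$ with $\beta_n$ (recall that $\beta_n$ is left multiplication by $\phi_n(\cdot)$ on $\widetilde{C(\GG(\CC))}$), and, being assembled from the purely multiplicative bounds of Lemma \ref{lem:qie}, it is in fact $7k$-bi-Lipschitz, i.e.\ $\tfrac{1}{7k}\dist_{\partial(\alpha_n)}(x,y)\le\dist_{\partial(\beta_n)}(f_n x,f_n y)\le 7k\,\dist_{\partial(\alpha_n)}(x,y)$. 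I also record that, by definition, $\Con_\UU(\GG(\CC),1,\dist_{\partial(\beta_n)})=\lim_\UU\bigl(\widetilde{C(\GG(\CC))},\dist_{\partial(\beta_n)},\id\bigr)$.

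Next I would pass to the limit. Since $\alpha$ is essentially free, co-special and non-trivial, the (unnumbered) lemma proved above gives $\lim_\UU(\partial_{b_n}/c_n)=p$ for some $p\in\BR$ with $p>0$, and $\partial(\alpha_n)=\partial_{b_n}$; hence the identity maps $(C_n,\dist_{c_n})\to(C_n,\dist_{\partial(\alpha_n)})$ are uniformly bi-Lipschitz (with constant tending to $\max\{p,1/p\}$), so for $\UU$-almost all $n$ the maps $f_n$ are uniformly bi-Lipschitz as maps $(C_n,\dist_{c_n},b_n)\to(\widetilde{C(\GG(\CC))},\dist_{\partial(\beta_n)},\id)$. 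By Lemma \ref{lem:limiso} (or by a direct estimate on representatives) the rule $\psi\bigl((x_n)_n\bigr)=(f_n(x_n))_n$ then defines a map $\CC\to\Con_\UU(\GG(\CC),1,\dist_{\partial(\beta_n)})$: admissibility of $(f_n(x_n))_n$ and independence of the choice of representative both follow from $f_n(b_n)=\id$ together with the upper bound, and passing the two-sided estimate to the $\UU$-limit shows that $\psi$ is a bi-Lipschitz embedding.

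It remains to check $G$-equivariance, and here lies the only genuine delicacy. The limiting action $\alpha$ on $\CC$ is componentwise, $\alpha(g).(x_n)_n=(\alpha_n(g).x_n)_n$, and the action $\beta$ on $\Con_\UU(\GG(\CC),1,\dist_{\partial(\beta_n)})$ obtained from the $\beta_n$ is the translation with vector $(\phi_n(g))_n$, which one must first verify lies in $\GG(\CC)^{\UU}$, i.e.\ is admissible: this holds for $g$ in the generating set $S$ because $\dist(\phi_n(g).\id,\id)=|\phi_n(g)|\le\partial(\beta_n)$ by the definition of $\partial(\beta_n)$, hence for every $g\in G$. Granting this, for any admissible $(x_n)_n$ and any $g\in G$,
$$
\psi\bigl(\alpha(g).(x_n)_n\bigr)=(f_n(\alpha_n(g).x_n))_n=(\beta_n(g).f_n(x_n))_n=\beta(g).\psi\bigl((x_n)_n\bigr),
$$
using the equivariance of each $f_n$, so $\psi$ is $G$-equivariant. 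I expect the main obstacle to be purely organizational rather than conceptual: keeping the three metrics $\dist_{c_n}$, $\dist_{\partial(\alpha_n)}$, $\dist_{\partial(\beta_n)}$ and the actions $\alpha_n$, $\beta_n$ consistently aligned so that the ultralimit of the $f_n$ is legitimately defined, and making precise that a single partially commutative group $\GG(\CC)$ can serve for $\UU$-almost all $n$ — both points being essentially settled by Proposition \ref{prop:uniqi} and the preceding construction of $\GG(\CC)$.
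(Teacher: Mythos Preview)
Your proposal is correct and follows essentially the same route as the paper: take the component equivariant quasi-isometric embeddings $\psi_n$ from Proposition \ref{prop:uniqi}, pass to the ultralimit via Lemma \ref{lem:limiso} to get the bi-Lipschitz embedding $\psi((x_n))=(\psi_n(x_n))$, define the limiting action $\beta$ componentwise as translations, and read off equivariance of $\psi$ from that of the $\psi_n$. Your version is in fact more careful than the paper's on two points it leaves implicit --- the passage from the scaling $\dist_{c_n}$ to $\dist_{\partial(\alpha_n)}$ via the unnumbered lemma on $\lim_\UU(\partial_{b_n}/c_n)=p>0$, and the admissibility of the translation vectors $(\phi_n(g))_n$ --- so there is no gap.
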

\begin{proof}
Let $\psi_n$ be the equivariant quasi-isometric embedding of $(C_n,b_n,\dist_{\partial({\alpha_n})})$ into $(\widetilde{C(\GG(\CC))},1, \dist_{\partial({\beta_n})})$ constructed in Proposition \ref{prop:uniqi}. Identifying the $1$-skeleton of $\widetilde{C(\GG(\CC))}$ with the Cayley graph of $\GG(\CC)$, the embedding $\psi_n$ maps the base-point $b_n$ to the identity - the base-point of $\widetilde{C(\GG(\CC))}$. 

By Lemma \ref{lem:limiso}, the sequence of maps  $\{\psi_n\}$  gives rise to a bi-Lipschitz embedding $\psi$ of $\CC$ into $\Con_\UU(\GG(\CC), 1, \dist_{\partial(\beta_n)})$ defined by
$$
\psi((y_n))=(\psi_n(y_n)).
$$
Furthermore, since the actions $\beta_n$ are translations, the sequence of actions $\{\beta_n\}$ defines an action $\beta$ of $G$ on $\Con_\UU(\GG(\CC), 1, \dist_{\partial(\beta_n)})$:
$$
\beta(g.(x_n))=(\beta_n(g.x_n)).
$$
It now follows by Lemma \ref{lem:translfaith}, that the action $\beta$ is essentially free and co-special. Since the maps $\psi_n$ are equivariant, it follows that $\psi$ is a $G$-equivariant embedding.
\end{proof}

The corollary below follows by definition of a translation and by construction of the action of $G$  on $\Con_\UU(\GG_N)$ given in Proposition \ref{prop:uem}
\begin{cor}
Let $G$ be a finitely generated group acting essentially freely and co-specially on a real cubing $\CC$. Then, $G$ acts essentially freely and co-specially by translations on an asymptotic cone of a finitely generated partially commutative group.
\end{cor}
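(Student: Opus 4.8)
The plan is to obtain this corollary as an essentially immediate consequence of Proposition~\ref{prop:uem}: that proposition already performs the substantive work of producing a $G$-equivariant embedding of $\CC$ into an asymptotic cone of a finitely generated partially commutative group together with a compatible action, so the remaining task is organizational --- to pin down the target as a genuine asymptotic cone and to verify that the induced action is by translations and is essentially free and co-special.

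First I would fix notation: write $\CC=\lim_\UU(C_n,b_n,\dist_{c_n})$ and let $N=w(\CC)$, which is well-defined by Definition~\ref{defn:rcub}. By Lemma~\ref{lem:width} the components $C_n$ have width $N$ for $\UU$-almost all $n$, and by Proposition~\ref{prop:uniqi} each such component $\alpha_n$ embeds $\factor{G}{\Ker(\alpha_n)}$ into the $2N$-generated partially commutative group $\PP(\factor{G}{\Ker(\alpha_n)})$ of width $N$. Since, up to isomorphism, there are only finitely many $2N$-generated partially commutative groups of width $N$, a single such group $\GG(\CC)$ --- the partially commutative group of $\CC$ --- receives $\factor{G}{\Ker(\alpha_n)}$ for $\UU$-almost all $n$; I would also note that $\GG(\CC)$ has trivial centre (it is a graph product of rank-two free groups), so that Lemma~\ref{lem:translfaith} is available for it.

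Next I would invoke Proposition~\ref{prop:uem} verbatim: it supplies a $G$-equivariant bi-Lipschitz embedding $\psi\colon\CC\to\Con_\UU(\GG(\CC),1,\dist_{\partial(\beta_n)})$, where $\beta_n$ denotes the action of $G$ on $\widetilde{C(\GG(\CC))}$ induced by $\alpha_n$ through the embedding $\phi_n\colon\factor{G}{\Ker(\alpha_n)}\hookrightarrow\GG(\CC)$, i.e. left multiplication by $\phi_n(g)$, and $\beta$ the resulting limiting action on the cone. Because each $\beta_n$ is a left-multiplication action it is co-special, and $\{\beta_n\}$ is a sequence of essentially free co-special actions, so by definition $\beta$ is an essentially free co-special action; moreover, for each $g\in G$ the sequence $(\phi_n(g))_n$ is an admissible element of $\GG(\CC)^\UU$ which acts on the cone exactly as $\beta(g)$, so $\beta$ is an action by translations in the sense of the definition preceding Lemma~\ref{lem:translfaith} (faithfulness being guaranteed by Lemma~\ref{lem:translfaith} together with the trivial centre of $\GG(\CC)$, and non-triviality by essential freeness). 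Finally, $\Con_\UU(\GG(\CC),1,\dist_{\partial(\beta_n)})$ is, by definition, an asymptotic cone of the finitely generated ($2N$-generated) partially commutative group $\GG(\CC)$, with scaling sequence $(\partial(\beta_n))$ and observation point the identity; this yields the statement.

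I expect no genuine obstacle at this stage, since the one real difficulty --- guaranteeing that a \emph{single} finitely generated partially commutative group works for $\UU$-almost all components simultaneously --- has already been overcome in Section~\ref{sec:5} and in Proposition~\ref{prop:uem} by using the finite-width hypothesis built into the notion of a real cubing. Were one to drop that hypothesis (equivalently, absent a positive answer to Problem~\ref{prob:1}), the ranks of the ambient partially commutative groups $\PP(\factor{G}{\Ker(\alpha_n)})$ could tend to infinity and no asymptotic cone of a single finitely generated partially commutative group could carry the limiting action. With the hypothesis in place, the proof is pure bookkeeping over Proposition~\ref{prop:uem} and the definition of a translation.
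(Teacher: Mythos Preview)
Your proposal is correct and follows essentially the same approach as the paper: both observe that the action $\beta$ constructed in the proof of Proposition~\ref{prop:uem} is by left translations (since each component $\beta_n$ is left multiplication by $\phi_n(g)$) and hence yields the desired action on $\Con_\UU(\GG(\CC),1,\dist_{\partial(\beta_n)})$. You have simply unpacked in more detail what the paper condenses into a single sentence, including the trivial-centre observation for $\GG(\CC)$ and the re-verification of essential freeness via Lemma~\ref{lem:translfaith}, but these points are already implicit in or established by the proof of Proposition~\ref{prop:uem}.
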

\begin{proof}
Observe that the action of $G$ on $\Con_\UU(\GG_N, 1, \dist_{\partial(\beta_n)})$ defined in the proof of Proposition \ref{prop:uem} is by left translations, hence the statement.
\end{proof}

We have shown that essentially free co-special actions on real cubings induce essentially free co-special actions by left translations on asymptotic cones of partially commutative groups. As we have mentioned above, limit groups over partially commutative groups act essentially freely and co-specially by translations on the corresponding asymptotic cones. We now show that, basically, these are the only examples.

\begin{prop} \label{prop:conult}
Let $\GG$ be a partially commutative group with trivial centre. Let $G$ be a finitely generated group acting  faithfully  by translations on $\Con_\UU(\GG)$. Then, $G$ embeds into the ultrapower $\UG$ of $\GG$ and so $G$ is fully residually $\GG$.  

Conversely, let $G$ be a finitely generated fully residually $\GG$ group. Then, $G$ acts essentially freely and co-specially by translations on the asymptotic cone $\Con_\UU(G)$ of $\GG$.
\end{prop}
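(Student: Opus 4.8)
The plan is to prove the two directions separately. For the forward direction, suppose $G = \langle g_1,\dots, g_k\rangle$ acts faithfully by translations on $\Con_\UU(\GG)$. By definition of a translation, each $g_j$ acts via a translation vector $w^{(j)} = (w^{(j)}_i)_{i\in\BN} \in \UG$; since the product of translations is a translation (with vector the product of the vectors, in $\UG$), the assignment $g_j \mapsto w^{(j)}$ extends to a homomorphism $\rho\colon G \to \UG$. The point to check is that $\rho$ is injective. If $g \in \Ker\rho$ is non-trivial, then $\rho(g)$ is trivial in $\UG$, i.e. the translation vector of $g$ represents the identity; but then, by exactly the argument of Lemma \ref{lem:translfaith} (the centre of $\GG$ is trivial, so for $\UU$-almost all $i$ one can pick a canonical generator $y_i$ that neither commutes with nor left-divides the $i$-th component of the vector, and then $\dist(y, g.y) \ge 1$ for a suitable point $y$), the translation $g$ acts non-trivially on $\Con_\UU(\GG)$ — unless its vector is already trivial in $\UG$. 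So a non-trivial kernel element would force the action of $g$ on $\Con_\UU(\GG)$ to be trivial, contradicting faithfulness. Hence $\rho$ is injective and $G$ embeds into $\UG$. That $G$ is then fully residually $\GG$ follows from Theorem \ref{thm:limgrchar}: $\GG$ is equationally Noetherian (partially commutative groups are linear, hence equationally Noetherian), and condition (3) — embedding into an ultrapower — is equivalent to condition (1), full residual $\GG$-ness, for equationally Noetherian $H$.

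For the converse, let $G$ be finitely generated and fully residually $\GG$. The idea is to reverse the construction of the central Example of Section \ref{sec:7}. Again by Theorem \ref{thm:limgrchar}, $G$ embeds into an ultrapower $\UG$ via some monomorphism $\iota$; concretely, writing $\iota(g) = (\iota_i(g))_{i\in\BN}$, this gives a sequence of homomorphisms $\varphi_i\colon G \to \GG$ such that for every non-trivial $g\in G$ we have $\varphi_i(g) \ne 1$ for $\UU$-almost all $i$ (this is precisely what it means for $(\varphi_i)$ to represent an injective map into $\UG$; one may also extract such a sequence directly from a discriminating family, refining it so that each finite subset of $G$ is discriminated cofinally). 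Fix a finite generating set $S = S^{-1}$ of $G$ and set
$$
c_i = \max_{x\in\GG}\min_{a\in S}\dist\bigl(\varphi_i(a).x, x\bigr),
$$
the displacement of the action of $\varphi_i(G)$ on $\widetilde{C(\GG)}$ by left multiplication. Choosing observation points $x_i$ and $a\in S$ realising this maximum for $\UU$-almost all $i$, form the asymptotic cone $\Con_\UU(\GG; (x_i), (c_i))$, which is a real cubing by the Example following Definition \ref{defn:rcub}. Each $\varphi_i$ defines an essentially free (its kernel fixes $\widetilde{C(\GG)}$ pointwise, and $G/\Ker\varphi_i \hookrightarrow \GG$ acts freely) co-special action of $G$ on $\widetilde{C(\GG)}$; the sequence of these actions defines a limiting action $\beta$ of $G$ on the cone, which is visibly by left translations (the translation vector of $g$ being $(\varphi_i(g))_i$).

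It remains to argue that $\beta$ is essentially free and co-special in the sense of Definition \ref{defn:essfr}, i.e. faithful and non-trivial with essentially free co-special components. The components are essentially free and co-special by construction, so the content is faithfulness and non-triviality of $\beta$. Non-triviality: by the choice of the scaling sequence $(c_i)$, the basepoint is displaced by one of the generators by distance $1$ in the cone, so there is no global fixed point. Faithfulness: if $g\in G$ is non-trivial, then $\varphi_i(g)\ne 1$ for $\UU$-almost all $i$, so the translation vector $(\varphi_i(g))_i$ is non-trivial in $\UG$, and by Lemma \ref{lem:translfaith} the corresponding translation acts non-trivially on $\Con_\UU(\GG)$; hence $\beta(g) \ne \id$. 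The main obstacle — and the place requiring the most care — is the compatibility of the discriminating sequence $(\varphi_i)$ with the scaling: one must ensure that $\Ker(\beta)$ is exactly trivial (not merely that each generator acts non-trivially), which is why passing through Lemma \ref{lem:translfaith} element-by-element, together with the fact that discrimination gives $\varphi_i(g)\ne 1$ $\UU$-almost everywhere for \emph{each} fixed $g$, is essential. A secondary subtlety is checking that the sequences $(g.x_i)$ are admissible for every $g\in G$, which holds because $g$ is a product of boundedly many elements of $S$ and each moves a point of the cone a bounded (rescaled) distance.
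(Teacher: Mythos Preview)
Your approach matches the paper's, but there is one step in the forward direction that needs tightening. You assert that $g_j \mapsto w^{(j)}$ extends to a homomorphism $\rho\colon G \to \UG$ ``since the product of translations is a translation (with vector the product of the vectors, in $\UG$)''. That observation only produces a well-defined map on the \emph{free} group on the $g_j$; to descend to $G$ you must check that every relation $r$ of $G$ maps to $1$ in $\UG$. Since $r=1$ in $G$, the element $r$ acts trivially on the cone, and the corresponding translation has vector $\rho(r)$; it is Lemma~\ref{lem:translfaith} (non-trivial vector $\Rightarrow$ non-trivial action) that forces $\rho(r)=1$ in $\UG$. The paper invokes the lemma exactly at this point to establish that $\psi$ is a homomorphism, whereas you invoke it only for injectivity---where, incidentally, only the easy direction (trivial vector $\Rightarrow$ trivial action) together with faithfulness is needed. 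Once this is inserted, your argument and the paper's are the same; your converse direction is more explicit than the paper's but follows the identical line via the embedding into $\UG$, admissibility of translation vectors under a suitable scaling, and Lemma~\ref{lem:translfaith} for faithfulness.
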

Note that if $G$ acts essentially freely and co-specially by translations, then $G$ acts faithfully and so $G$ satisfies the assumptions of the above proposition.
\begin{proof}
Let $r=(r_i)\in \GG^\UU$ be an admissible sequence representing an element of the asymptotic cone $\Con_\UU(\GG)$. 
Then, the element $r$ defines an isometry $\tau_r$  of $\Con_\UU(\GG)$. The $i$-th component of $\tau_r$ is simply the action on $\widetilde{C(\GG)}$ defined by left multiplication by $r_i$.

Define the map $\psi: G\to \UG$ by sending every generator of $G$ to its translation vector. The map $\psi$ extends to a homomorphism from $G \to \UG$. Indeed, since $G$ acts on $\Con_\UU(\GG)$, it follows that for any relation $r$ of $G$ and any point $y \in \Con_\UU(\GG)$ we have that the action of $r$ on $y$ is trivial, i.e. $\dist(y,r.y)=0$. By Lemma \ref{lem:translfaith}, it then follows that the translation vector $r$ is, in fact, the trivial element of $\UG$ and hence $\psi$ is a homomorphism.

The homomorphism $\psi$ gives rise to an action of $G$ on $\UG$. Since $G$ acts faithfully on $\Con_\UU(\GG)$, the induced action of $G$ on  $\UG$ is faithful. Therefore, the homomorphism $\psi$ is injective. We conclude that $G$ is fully residually $\GG$ since so is every subgroup of $\UG$, see Theorem \ref{thm:limgrchar}.

On the other hand, any fully residually $\GG$ group $G$ can be viewed as a subgroup of $\UG$. Therefore, $G$ acts faithfully on $\UG$ by left translations. Furthermore, since the group $G$ is finitely generated one can choose scaling constants so that every translation vector corresponding to an element of $G$ is admissible in the asymptotic cone and hence the group $G$ acts on $\Con_\UU(\GG)$. Since the action of $G$ on $\UG$ is faithful, the translation vector $(g_i)$ corresponding to a non-trivial element $g$ of $G$ is non-trivial. By Lemma \ref{lem:translfaith}, the action of $(g_i)$ on the asymptotic cone is non-trivial. It follows that the action of $G$ on $\Con_\UU(\GG)$ is faithful.
\end{proof}

Let $\GG$ be the free abelian group of finite rank $n$ and let $\Con_\UU(\GG)$ be its asymptotic cone. Then, $\Con_\UU(\GG)$ is isometric to $\BR^n$ with the $\ell^1$ metric. It is not hard to see that if a finitely generated group $G$ acts faithfully by translations on $\BR^n$, then $G$ is free abelian.

\begin{cor}
Let  $\GG$ be an arbitrary partially commutative group and $Z(\GG)$ be its centre, i.e. $\GG=\GG'\times Z(\GG)$ and $Z(\GG)= \BZ^n$. Let $G$ be a finitely generated group acting essentially freely by translations on $\Con_\UU(\GG)= \Con_\UU(\GG')\times \BR^n$, then $G$ is a subgroup of a group of the form $G'\times \BZ^m$, where $G'$ is a finitely  generated group acting faithfully by translations on $\Con_\UU(\GG')$.  Hence, $G\simeq G''\times \BZ^l$, where $G''$ is a finitely generated group acting faithfully by translations on $\Con_\UU(\GG')$. Moreover, $G$ is fully residually $\GG$.
\end{cor}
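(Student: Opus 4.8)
The plan is to split $\GG$, hence $\Con_\UU(\GG)$, through the centre of $\GG$, decompose the translation action accordingly, and then recognise the direct product; the only genuinely delicate point is the final isomorphism $G\cong G''\times\BZ^l$.

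\textbf{Step 1: reduction to the trivial-centre factor.} Write $\GG=\GG'\times Z(\GG)$ with $Z(\GG)=\BZ^n$ and $\GG'=\GG(\Gamma_0)$ the canonical parabolic subgroup on the non-cone vertices of the commutation graph; then $\GG'$ has trivial centre. Correspondingly $\Con_\UU(\GG)=\Con_\UU(\GG')\times\Con_\UU(\BZ^n)=\Con_\UU(\GG')\times\BR^n$. Since a translation of a metric product is a product of translations of the two factors, the action $\alpha$ decomposes as $\alpha=(\alpha',\alpha'')$, where $\alpha'$ is a translation action of $G$ on $\Con_\UU(\GG')$ and $\alpha''$ a translation action of $G$ on $\BR^n$. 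Faithfulness of $\alpha$ forces $\Ker\alpha'\cap\Ker\alpha''=1$, so $G$ embeds diagonally into $(G/\Ker\alpha')\times(G/\Ker\alpha'')$.

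\textbf{Step 2: the two quotients and residual finiteness.} The image $\alpha''(G)$ is a finitely generated subgroup of the abelian group $(\BR^n,+)$ of translations of $\BR^n$, hence free abelian: $G/\Ker\alpha''\cong\BZ^m$. Set $G':=G/\Ker\alpha'$; it is finitely generated and, by construction of $\alpha'$, acts faithfully by translations on $\Con_\UU(\GG')$. Thus $G\hookrightarrow G'\times\BZ^m$, which is the first assertion. Moreover, as $\GG'$ has trivial centre, Proposition \ref{prop:conult} embeds $G'$ into $(\GG')^{\UU}$; composing with $\BZ^m\hookrightarrow\BZ^{\UU}\hookrightarrow(\BZ^n)^{\UU}$ (the ultrapower $\BZ^{\UU}$ being torsion-free abelian of infinite rank) yields $G\hookrightarrow(\GG')^{\UU}\times(\BZ^n)^{\UU}=\GG^{\UU}$. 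Since $\GG$ is linear, hence equationally Noetherian, Theorem \ref{thm:limgrchar} then shows that the finitely generated group $G$, embedding into an ultrapower of $\GG$, is fully residually $\GG$.

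\textbf{Step 3: splitting off the abelian factor.} Put $K:=\Ker\alpha'$. From $[G,G]\subseteq\Ker\alpha''$ (as $G/\Ker\alpha''$ is abelian) and normality of $K$ we get $[G,K]\subseteq K\cap\Ker\alpha''=1$, so $K$ is central in $G$; and $\alpha''$ maps $K$ injectively into $\BZ^m$, so $K$ is finitely generated free abelian, $K\cong\BZ^l$. Thus $1\to K\to G\to G'\to 1$ is a central extension. If $\alpha''(K)$ is a direct summand of the free abelian group $\alpha''(G)$, say $\alpha''(G)=\alpha''(K)\oplus C$ with projection $\mathrm{pr}$, then $\rho:=(\alpha''|_K)^{-1}\circ\mathrm{pr}\circ\alpha'':G\to K$ is a retraction of $G$ onto the central subgroup $K$; hence $G=\Ker\rho\times K$, and $\Ker\rho\cong G/K=G'$ acts faithfully by translations on $\Con_\UU(\GG')$, so $G\cong G''\times\BZ^l$ with $G'':=\Ker\rho$.

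\textbf{The main obstacle} is precisely the purity of $\alpha''(K)$ in $\alpha''(G)$ — equivalently, that $G/(K\cdot\Ker\alpha'')$ is torsion-free, equivalently that the image of $\Ker\alpha''$ in the torsion-free group $G'\le(\GG')^{\UU}$ is closed under roots. Injectivity of $\alpha''|_K$ does not by itself give this (witness $2\BZ\le\BZ$), so some structural input is needed. I would extract it from the unique-roots property of $(\GG')^{\UU}$ together with the analogue, for limit groups over $\GG$, of the fact that a non-abelian limit group over a free group is centreless: namely that a finitely generated fully residually $\GG$ group has its centre as a direct factor, $G\cong G''\times Z(G)$ with $Z(G)\cong\BZ^l$ finitely generated and $G''$ fully residually $\GG'$, so that $Z(G)$ supplies the required complement to $K$. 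Proving this last splitting is the one non-formal point; the remainder of the argument is routine bookkeeping with ultralimits and the results of Section \ref{sec:7}.
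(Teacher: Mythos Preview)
Your Steps 1 and 2 are essentially the paper's argument, just phrased in terms of decomposing the action $\alpha=(\alpha',\alpha'')$ rather than decomposing each translation vector $v^i=((u^i_n,w^i_n))$ into its $\GG'$- and $\BZ^n$-components; the paper forms the overgroup $H=\langle (u^i_n)\rangle\times\langle (w^i_n)\rangle$ inside the isometry group of $\Con_\UU(\GG)$ and observes $G\le H$, which is exactly your diagonal embedding into $(G/\Ker\alpha')\times(G/\Ker\alpha'')$. Your derivation of ``$G$ is fully residually $\GG$'' via $G'\hookrightarrow(\GG')^{\UU}$ and $\BZ^m\hookrightarrow(\BZ^n)^{\UU}$ is also the intended route, though the paper leaves that sentence implicit.

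Where you diverge is Step 3. You are right that the splitting $G\simeq G''\times\BZ^l$ does not follow formally from $G\le G'\times\BZ^m$: the purity of $\alpha''(K)$ in $\alpha''(G)$ is a genuine extra hypothesis, and your proposed route through a ``centre is a direct factor'' statement for limit groups over $\GG$ would require the structural results of Sections \ref{sec:9}--\ref{sec:11}, which lie well beyond this corollary. The point to notice is that the paper's own proof \emph{does not address this clause either}: it establishes $G\le G'\times\BZ^m$ with $G'$ acting faithfully on $\Con_\UU(\GG')$ and the second factor free abelian, and stops there. The ``Hence $G\simeq G''\times\BZ^l$'' in the statement is asserted without argument. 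For the purposes of the paper only the embedding and the ``fully residually $\GG$'' conclusion are used downstream (in Theorem \ref{thm:gen}), which is presumably why the middle clause is left unjustified. So your attempt is at least as complete as the paper's, and you have correctly located the one place where real work would be needed; you have not missed an easy trick.
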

\begin{proof}
Let $G=\langle x_1,\dots, x_k\rangle$ be a set of generators of $G$ and let $v^i$ be the translation vector of $x_i$, $i=1,\dots, k$. Write $v^i=((u^i_n, w^i_n))_{n\in \BN}$, where $u^i_n\in \GG'$ and $w^i_n\in \BZ^n$ for all $i$ and $n$. Consider the subgroup $H$ of isometries  of $\Con_\UU(\GG)$ generated by the $2k$ translations $\{(u^1_n),\dots, (u^k_n), \dots, (w^1_n), \dots, (w^k_n)\}$. It is clear that $H=\langle (u^1_n),\dots, (u^k_n)\rangle \times \langle (w^1_n), \dots, (w^k_n)\rangle$ and that $G$ is a subgroup of $H$. Furthermore, since if $u_n^i\ne 1$ for almost all $n$, by Lemma \ref{lem:translfaith}, there exists an element of the asymptotic cone $\Con_\UU(\GG')$ which is not fixed by the action by $(u_n^i)$. Therefore $G'=\langle (u^1_n),\dots, (u^k_n)\rangle$ acts essentially freely co-specially by translations on $\Con_\UU(\GG')$. Finally,  the group $\langle (w^1_n), \dots, (w^k_n)\rangle$ acts by translations on $\BR^n$ and thus is free abelian.
\end{proof}

We summarize the results of this section in the following theorem.

\begin{thm}\label{thm:gen}
Let $G$ be a finitely generated group. Then
\begin{enumerate}
\item  the group $G$ acts essentially freely and co-specially on a real cubing if and only if $G$ acts essentially freely and co-specially on an asymptotic cone of a partially commutative group  $\GG$ if and only if $G$ is fully residually $\GG$ for some finitely generated partially commutative group;
\item 
the group $G$ acts essentially freely and co-specially on a real cubing  of width $n$ if and only if $G$ acts essentially freely and co-specially on an asymptotic cone of an $n$-wide and $2n$-generated partially commutative group  $\GG$ if and only if $G$ is fully residually $\GG$. 
\end{enumerate}
\end{thm}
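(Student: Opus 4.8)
The plan is to read Theorem \ref{thm:gen} as a bookkeeping statement that collects the propositions of this section into a single cycle of implications, so the real task is to record the cycle and check that the parameters---limiting versus translation actions, widths, and centres---line up at each step. I would arrange the cycle as: essentially free co-special action on a real cubing $\Rightarrow$ essentially free co-special action by translations on an asymptotic cone of a finitely generated partially commutative group $\Rightarrow$ $G$ fully residually $\GG$ $\Rightarrow$ essentially free co-special action by translations on $\Con_\UU(\GG)$ $\Rightarrow$ essentially free co-special action on a real cubing; and then run the same cycle keeping the width parameter $n$ visible to obtain part (2).

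First I would establish ``real cubing $\Rightarrow$ asymptotic cone''. Given the action $\alpha=\{\alpha_n\}$ of $G$ on $\CC=\lim_\UU(C_n,b_n,\dist_{c_n})$ of width $N$, I would apply Proposition \ref{prop:uniqi} componentwise to obtain, for $\UU$-almost every $n$, an embedding $\phi_n$ of $\factor{G}{\Ker(\alpha_n)}$ into the $2N$-generated, $N$-wide partially commutative group $\GG(\CC)$ together with equivariant quasi-isometric embeddings $C_n\hookrightarrow\widetilde{C(\GG(\CC))}$. Proposition \ref{prop:uem} then assembles these into a $G$-equivariant bi-Lipschitz embedding of $\CC$ into $\Con_\UU(\GG(\CC),1,\dist_{\partial(\beta_n)})$, and Lemma \ref{lem:translfaith} guarantees that the limiting translation action on this asymptotic cone remains faithful and non-trivial, hence essentially free and co-special. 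Since $\GG(\CC)$ is finitely generated, this already yields the middle condition of part (1), and tracking $N=n$ gives that of part (2).

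Next I would treat the passage to and from residual finiteness. In one direction, the corollary to Proposition \ref{prop:conult} shows that a finitely generated group acting essentially freely by translations on $\Con_\UU(\GG)=\Con_\UU(\GG')\times\BR^n$ is fully residually $\GG$: split off the free abelian centre and apply Proposition \ref{prop:conult} to the centreless factor $\GG'$. For the converse, a finitely generated fully residually $\GG$ group embeds into $\UG$ by Theorem \ref{thm:limgrchar} (partially commutative groups are equationally Noetherian), hence acts faithfully by left translations on $\UG$; after choosing scaling constants that make the finitely many translation vectors admissible, $G$ acts on $\Con_\UU(\GG)$, and Lemma \ref{lem:translfaith}---applied to the centreless factor, the other factor being $\BR^n$ on which translations are visibly faithful---shows this action is faithful, hence essentially free and co-special. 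Finally, $\Con_\UU(\GG)$ is a real cubing: it is the ultralimit $\lim_\UU(\widetilde{C(\GG)},\id,\dist_{c_n})$ of copies of the cubing $\widetilde{C(\GG)}$, whose width is $w(C(\GG))$ and hence uniformly bounded (by $n$ when $\GG$ is $n$-wide), so Definition \ref{defn:rcub} applies; this closes the cycle for part (1), and since all partially commutative groups produced are $2n$-generated and $n$-wide when the cubing has width $n$ (Lemma \ref{lem:width}, Proposition \ref{prop:uniqi}), while conversely the asymptotic cone of an $n$-wide partially commutative group is a real cubing of width $n$, part (2) follows as well.

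I do not expect a genuinely hard step: every ingredient is already proved. The only point requiring care is the bookkeeping itself---verifying that ``essentially free and co-special'' is interpreted compatibly for actions on cube complexes, on real cubings, and on asymptotic cones, that passing to the limiting translation action preserves this property (which is where Lemma \ref{lem:translfaith} does the work), and that the case $Z(\GG)\ne 1$, not covered directly by Proposition \ref{prop:conult}, is correctly reduced to the centreless case by the subsequent corollary.
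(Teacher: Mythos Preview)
Your proposal is correct and matches the paper's own treatment: the paper presents Theorem~\ref{thm:gen} explicitly as a summary of the preceding results of the section and gives no separate proof, and the cycle of implications you set up---Proposition~\ref{prop:uem} and its corollary for ``real cubing $\Rightarrow$ asymptotic cone'', Proposition~\ref{prop:conult} and its corollary for the equivalence with full residual $\GG$-ness (including the reduction to trivial centre), and the observation that $\Con_\UU(\GG)$ is itself a real cubing---is exactly the content being summarized. Your bookkeeping of the width parameter for part~(2) via Proposition~\ref{prop:uniqi} and Lemma~\ref{lem:2k} is likewise what the paper intends.
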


\begin{rem}
In the case of free groups, Sela introduced \emph{geometric} limit groups as quotients of finitely generated free groups by the  kernel of an action induced by a family of homomorphisms on a real tree and proved that the class of geometric limit groups is precisely the class of finitely generated fully residually free groups, see \cite{Sela1}. The above theorem gives a characterisation of the class of geometric limit groups over partially commutative groups as the class of finitely generated fully residually partially commutative groups.
\end{rem}

\section{Graph Towers}\label{sec:9}

Limit groups have been studied in various branches of group theory and logic from different viewpoints: algebraic, model-theoretic, geometric, in universal algebra and in algebraic geometry over groups. They can be described as finitely generated fully residually free groups, groups that satisfy the same set of existential formulas as free groups, as coordinate groups of irreducible algebraic sets, as subgroups of the ultrapower of the free group, as groups that realise atomic types over a free group etc., see \cite{DMR}. Limit groups form a subclass of groups acting freely on $\mathbb{R}^n$-trees (and $\mathbb{Z}^n$-trees), \cite{Guir, Rem, KhMNull, MRS}, they are toral relatively hyperbolic, \cite{Dahm, Aliba} and $\CAT(0)$, \cite{Alib}.

The first structural characterisation of limit groups is due to Kharlampovich and Miasnikov. In \cite{KhMNull}, the authors showed that limit groups are subgroups of the, so-called, NTQ groups (in Sela's terminology, see \cite{Sela1}, these groups are called $\omega$-residually free towers). Among other things, Kharlampovich and Miasnikov showed that limit groups are subgroups of an iterated sequence of extensions of centralisers. One of the main corollaries of this characterisation is the fact that limit groups are finitely presented and, more generally, coherent.

Similar results were proven independently by Sela in \cite{Sela1, Sela2}. Sela introduced and studied limit groups via their action on real trees and obtained a description of the hierarchical structure of these groups as well as their finite presentability. Following Sela's approach, Champetier and Guirardel, see \cite{CG}, gave another proof of the fact that limit groups are subgroups of $\omega$-residually free towers and also characetrised the class of limit groups (without requiring to pass to a subgroup) as the smallest class containing finitely generated free groups, and stable under free products and under taking generalized double over a group in the class. In \cite{BFnotes}, Bestvina and Feighn characterised them as constructible limit groups - a hierarchical description in terms of their generalised abelian decomposition and existence of a nice retraction onto a lower level, where free groups, are the groups on the lowest level of the hierarchy.

Partially commutative groups are known to have a highly complex family of subgroups: they contain surface groups (with only 3 exceptions) as well as graph braid groups  as subgroups, see \cite{CrispW}; furthermore, some of their subgroups are main examples of groups with remarkable finiteness properties (see \cite{Stal, Bieri, BB}). In particular, partially commutative groups are not coherent. It is clear, that the structure of limit groups over partially commutative groups is at least as complex as the structure of subgroups of partially commutative groups, hence, one can not expect to completely describe the structure of limit groups over partially commutative groups (without passing to a subgroup). Our goal is to characterise limit groups over partially commutative groups as subgroups of graph towers - a class of groups that generalises the class of $\omega$-residually free towers and NTQ groups.

Recall that a finitely generated group is an $\omega$-residually free tower if it belongs to the smallest class of groups containing all finitely generated free groups and (non-exceptional) surface groups, which is stable under taking free products, free extensions of centralisers, and attaching retracting surfaces along maximal cyclic subgroups.

The class of $\omega$-residually free towers and NTQ groups, not only characterises the class of limit groups, but also plays a crucial role in the classification of groups elementarily equivalent to a free group, \cite{KhMTar, SelaTar}. We expect that the class of graph towers will play a similar role in the classification of groups elementarily equivalent to a given partially commutative group.

We now turn our attention to defining graph towers.  Free extensions of centralisers are, in some sense, the basic operations for constructing limit groups, since, as we  have already mentioned, limit groups are subgroups of iterated sequences of free extensions of centralisers. In order to gain intuition in the case of partially commutative groups, let us review the behaviour of extensions of centralisers of elements of a given partially commutative group $\GG$. On the one hand, we showed in \cite{CK1, CKpc} that any free extension of a centraliser of an irreducible element $b\in \GG$ is discriminated by $\GG$. On the other hand, as the following example shows, in general, free extensions of centralisers of non-irreducible elements are not discriminated by $\GG$. 

\begin{expl} \label{expl:bld}
Let $\GG=F(a,b)\times F(c,d)$, and let $w=(ab,cd)\in \GG$. Consider the extension of centraliser of $w$, $H=\langle \GG, t\mid [t,C(w)]=1\rangle$. Take the element $v=[a^t, c]$ in $H$. For any homomorphism $\phi$ from $H$ to $\GG$ which fixes $\GG$, we have that $\phi(t) \in C(w)=\langle ab\rangle\times \langle cd\rangle$. It follows that $\phi(v)=1$. Hence, $H$ is not discriminated (even separated) by $\GG$.
\end{expl}

In fact, one can give a precise characterisation of the set of elements of a partially commutative group $\GG$ whose extension of centraliser is discriminated by $\GG$ in terms of their block decompositions.

The above discussion already demonstrates that, on the one hand, one can not expect that decompositions over abelian groups (or small groups, or slender groups) determine the structure of limit groups over partially commutative groups, since in the free extension of a centraliser $H= \GG \ast_{C(b)} \langle C(b),t \mid [t,C(b)]\rangle$ the amalgamation is taken over a centraliser of an irreducible element, which in general, is a partially commutative group. On the other hand, it shows that there are some constraints on the type of elements whose centralisers need to  be extended.

The requirements for attaching a (non-exceptional) surface with boundary to $\GG$ are similar: the boundary components of the surface are glued along maximal cyclic subgroups generated by irreducible elements $b_1, \dots, b_k$. In the case of free groups, the surface retracts onto the free group and the discriminating family is obtained from the retraction by pre-composing it with modular automorphisms. If one would like to follow this pattern in the case of partially commutative groups, firstly, one needs to ensure the existence of ``modular'' automorphisms. For this, a sufficient condition is that $\BA(b_i)=\BA(b_j)$, $i,j=1,\dots,k$. Secondly, in order for the surface group to be discriminated into the subgroup generated by the irreducible elements $b_i$, $i=1,\dots, k$, it is necessary to impose the condition that the surface group commute with $\BA(b_i)$. 

It is not hard to see that these conditions are sufficient for the corresponding group to be discriminated by $\GG$. In fact, we shall show that, basically, these conditions are necessary.

Roughly speaking, graph towers are built hierarchically from the partially commutative group $\GG$ by gluing retracting abelian groups and surface groups with the conditions that 
\begin{itemize}
\item the abelian and surface groups are amalgamated along cyclic subgroups generated by ``irreducible'' elements and
\item these abelian and surface groups commute with the centraliser of the subgroup onto which they retract.
\end{itemize}

As we discussed in the introduction, graph towers can be defined as an iterated sequence built inductively using, basically, amalgamated products over certain centralisers, see Lemma \ref{lem:prtower}. Although the idea is clear, the main technical problem is that, a priori, we do not have control on the structure of centralisers of elements of graph towers. In particular, it is not clear what an ``irreducible'' element is. Informally, given a limit group over a partially commutative group, one could define an irreducible element to be an element for which there exists a discriminating family that maps it into irreducible elements. But we can not define it this way either, since when constructing graph towers, we do not yet know that they are discriminated by $\GG$ and in fact, it is only a posteriori that we are able to conclude that the homomorphisms induced by solutions that factor through the fundamental branch of the Makanin-Razborov diagram discriminate the graph tower into $\GG$ and so to deduce that our ``irreducible'' elements have this property.

To deal with this difficulty, graph towers are defined as a triple: the group $\Ts$ (the graph tower), a partially commutative group $\HH$ and an epimorphism from $\HH$ to $\Ts$. We will show in the following section that in fact, both $\Ts$ and $\HH$ are fully residually $\GG$ groups.

The partially commutative group $\HH$ is defined via its commutation graph $\Gamma$. We subdivide the set of edges of the graph $\Gamma$ into two disjoint sets: $E_c(\Gamma)$ and $E_d(\Gamma)$. These two sets capture the different nature of commutation for the images of the elements of $\HH$ in $\GG$, namely two vertices $x$ and $y$ are joined by an edge from $E_c(\Gamma)$ if and only if for a discriminating family, $x$ and $y$ are sent to the same cyclic subgroup in $\GG$; and two vertices $x$ and $y$ are joined by an edge from $E_d(\Gamma)$ if and only if for a discriminating family, the images of $x$ and $y$ disjointly commute. (We also prove the existence of a discriminating family with this property.)

Now we can define irreducible elements in $\Ts$ as the images of irreducible elements in $\HH$. More generally, we define \cool subgroups $\KK$ as closed subgroups of $\HH$ (i.e. $\KK^{\perp\perp}=\KK$) so that $\KK^\perp$ is $E_d(\Gamma)$-directly indecomposable. The motivation behind this definition is as follows. Notice that since in a given finitely generated partially commutative group $\GG$ there are finitely many different canonical parabolic subgroups, so for a canonical parabolic subgroup $\KK$ of $\HH$ (where $\HH$ is discriminated by $\GG$) there exists a subgroup $\GG_\KK$ of $\GG$ so that for a discriminating family $\{\varphi_i\}$ we have that $\varphi_i(\KK)<\GG_\KK$ and for no proper subgroup $\GG_\KK'$ of $\GG_\KK$ there exists such a discriminating family. We show that given a \cool subgroup $\KK$ of $\HH$, there exists a discriminating family so that $\GG_\KK$ is \cool in $\GG$. In particular, $\GG_{\KK^\perp}$ is a directly indecomposable subgroup of $\GG$ and if $\GG_{\KK^\perp}$ is not cyclic, then $C_\GG(\GG_{\KK^\perp})=\GG_\KK$. For intuition, it is helpful to think that the image of the subgroup $\KK^\perp$ in $\Ts$ is the subgroup onto which the corresponding abelian (or surface) group retracts and so $\GG_{\KK^\perp}$ is the directly indecomposable canonical parabolic subgroup where the abelian (or surface) subgroup is mapped by the discriminating family.

\begin{defn}\label{defn:tower}
To any graph tower $\Ts$, we associate a partially commutative group $\HH$ and an epimorphism $\pi:\HH \to \Ts$.  The partially commutative  group $\HH$ is defined via its commutation graph $\Gamma$. The set of edges $E(\Gamma)$ of the graph $\Gamma$ is subdivided into two disjoints sets, the set of $d$-edges and the set of $c$-edges, i.e. $E(\Gamma)=E_d(\Gamma)\cup E_c(\Gamma)$, $E_d(\Gamma)\cap E_c(\Gamma)=\emptyset$. Furthermore, the set of edges $E_c(\Gamma)$ has the following property:
\begin{equation} \label{eq:ecprop}
\hbox{ if } (x,y),(y,z)\in E_c(\Gamma), \hbox{ then } (x,z)\in E_c(\Gamma).
\end{equation}
For every subgroup $\KK$ of $\HH$, abusing the notation, we denote the image $\pi(\KK)$ of $\KK$ in $\Ts$ by $\KK$.

We define a $\GG$-graph tower as an iterated sequence. We denote graph towers by $\Ts$ and write $\Ts^l$, to indicate that the graph tower $\Ts^l$ is of height $l$.

A $\GG$-graph tower $\Ts^0$ of height $0$ is  our fixed partially commutative group $\GG$. In this case, the  partially commutative group associated to $\Ts^0$ is also $\GG=\GG(\Gamma_0)$, all edges of $\Gamma_0$ are $d$-edges and the epimorphism $\pi_0$ is the identity. 

Assume that $\Ts^{l-1}$ is a $\GG$-graph tower of height $l-1$,  $\GG(\Gamma_{l-1})$ is its associated partially commutative group and  $\Ts^{l-1}= \factor{\GG(\Gamma_{l-1})}{\ncl\langle S_{l-1}\rangle}$.
A $\GG$-graph tower $\Ts^l$ of height $l$, the associated partially commutative group $\GG(\Gamma_{l})$ and the epimorphism $\pi_l:\GG(\Gamma_{l}) \to \Ts^l$ are constructed using one of the following alternatives.

\subsubsection*{Basic Floor}
The graph $\Gamma_l$ is defined as follows
\begin{itemize}
\item $V({\Gamma_l})=V({\Gamma_{l-1}}) \cup \{ x_1^l,\dots, x_{m_l}^l \}$; 
\item $E_d(\Gamma_l)= E_d(\Gamma_{l-1}) \cup \{ (x_i^l, x^{l-1}), i=1,\dots, m_l, x^{l-1} = \az(\KK)\}$, where  $\KK$ is an $E_d(\Gamma_{l-1})$-\cool subgroup of $\GG(\Gamma_{l-1})$; 
\item $E_c(\Gamma_l)=E_c(\Gamma_{l-1})$ if the subgroup $\KK^\perp$ is directly indecomposable (as a subgroup of $\GG(\Gamma_{l-1})$) and 
\item $E_c(\Gamma_l)=E_c(\Gamma_{l-1}) \cup \{(x_i^l,x_j^l)\mid 1\le i<j \le m_l\} \cup \{(x_i^l,x^{l-1})\mid 1\le i\le m, x^{l-1}= \az(\KK^\perp)\}$ if the subgroup $\KK^\perp$ is directly decomposable (we will show in Lemma \ref{lem:typesKperp}, that, in this case, $\KK^\perp$ is $E_c(\Gamma_{l-1})$-abelian).
\end{itemize}
We set $\GG(\Gamma_l)$ to be the associated partially commutative group. It follows from the definition that $\GG(\Gamma_{l-1})$ is a retraction of $\GG(\Gamma_l)$ and so is $\Ts_{l-1}$ of $\factor{\GG(\Gamma_{l})}{\ncl\langle S_{l-1}\rangle}$.

The group $\Ts^l$ is a quotient of  $\factor{\GG(\Gamma_{l})}{\ncl\langle S_{l-1}\rangle}$ by $\ncl\langle S(x^l,\GG(\Gamma_{l-1}))\rangle$, i.e. $\Ts^l= \factor{\GG(\Gamma_{l})}{\ncl \langle S_{l-1},S \rangle}$, where the set of relations $S$ is:
\begin{itemize}
\item the set of \emph{basic relations} $[C_{\Ts^{l-1}}(\KK^\perp),x_i^{l}] = 1$, $1\le i\le m_l$ .
\end{itemize}

\subsubsection*{Abelian Floor} 
The graph $\Gamma_l$ is defined as follows:
\begin{itemize}
\item $V({\Gamma_l})=V({\Gamma_{l-1}}) \cup \{ x_1^l,\dots, x_{m_l}^l \}$; 
\item $E_d(\Gamma_l)= E_d(\Gamma_{l-1}) \cup \{ (x_i^l, x^{l-1}), i=1,\dots, m_l, x^{l-1} =\az(\KK)\}$, where  $\KK$ is an $E_d(\Gamma_{l-1})$-\cool subgroup of $\GG(\Gamma_{l-1})$;
\item $E_c(\Gamma_l)=E_c(\Gamma_{l-1}) \cup \{(x_i^l,x_j^l)\mid 1\le i<j \le m_l\} $ if the subgroup $\KK^\perp$ is directly indecomposable (as a subgroup of $\GG(\Gamma_{l-1})$) and 
\item $E_c(\Gamma_l)=E_c(\Gamma_{l-1}) \cup \{(x_i^l,x_j^l)\mid 1\le i<j \le m_l\} \cup \{(x_i^l,x^{l-1})\mid 1\le i\le m, x^{l-1}= \az( \KK^\perp)\}$  if the subgroup $\KK^\perp$ is  directly decomposable (we will show in Lemma \ref{lem:typesKperp}, that, in this case, $\KK^\perp$ is $E_c(\Gamma_{l-1})$-abelian).
\end{itemize}

We set $\GG(\Gamma_l)$ to be the associated partially commutative group. It follows from the definition that $\GG(\Gamma_{l-1})$ is a  retraction of $\GG(\Gamma_l)$ and so is $\Ts^{l-1}$ of $\factor{\GG(\Gamma_{l})}{\ncl\langle S_{l-1}\rangle}$.

The group $\Ts^l$ is a quotient of  $\factor{\GG(\Gamma_{l})}{\ncl\langle S_{l-1}\rangle}$ by $\ncl\langle S(x^l,\GG(\Gamma_{l-1}))\rangle$, i.e. $\Ts^l= \factor{\GG(\Gamma_{l})}{\ncl \langle S_{l-1},S \rangle}$, where the set of relations $S$ is one of the following types:
\begin{itemize}
\item the relations $[C_{\Ts^{l-1}}(u),x_i^{l}] = 1$, where $u\in \KK^{\perp}<\GG(\Gamma_{l-1})$ is a non-trivial cyclically reduced root block element; 
\item the set of basic relations.
\end{itemize}

\subsubsection*{Quadratic Floor}
The graph $\Gamma_l$ is defined as follows:
\begin{itemize}
\item $V({\Gamma_l})=V({\Gamma_{l-1}}) \cup \{ x_1^l,\dots, x_{m_l}^l \}$; 
\item $E_d(\Gamma_l)= E_d(\Gamma_{l-1}) \cup \{ (x_i^l, x^{l-1}), i=1,\dots, m_l, x^{l-1} = \az(\KK)\}$, where  $\KK$ is a $E_d(\Gamma_{l-1})$-\cool subgroup of $\GG(\Gamma_{l-1})$;
\item $E_c(\Gamma_l)=E_c(\Gamma_{l-1})$ if the subgroup $\KK^\perp$ is directly indecomposable (as a subgroup of $\GG(\Gamma_{l-1})$) and 
\item $E_c(\Gamma_l)=E_c(\Gamma_{l-1}) \cup \{(x_i^l,x_j^l)\mid 1\le i<j \le m_l\} \cup \{(x_i^l,x^{l-1})\mid 1\le i\le m, x^{l-1}= \az(\KK^\perp)\}$ if the subgroup $\KK^\perp$ is  directly decomposable (we will see in Lemma \ref{lem:typesKperp}, that, in this case, $\KK^\perp$ is $E_c(\Gamma_{l-1})$-abelian).
\end{itemize}

We set $\GG(\Gamma_l)$ to be the associated partially commutative group. It follows from the definition that $\GG(\Gamma_{l-1})$ is a retraction of $\GG(\Gamma_l)$ and so is $\Ts^{l-1}$ of $\factor{\GG(\Gamma_{l})}{\ncl\langle S_{l-1}\rangle}$.

The group $\Ts^l$ is a quotient of  $\factor{\GG(\Gamma_{l})}{\ncl \langle S_{l-1}\rangle}$ by $\ncl \langle S(x^l,\GG(\Gamma_{l-1} \rangle))$, i.e. $\Ts^l= \factor{\GG(\Gamma_{l})}{\ncl \langle S_{l-1},S \rangle}$, where the set of relations $S$ consists of the set of basic relations 
$\{[C_{\Ts^{l-1}}(\KK^\perp),x_i^{l}] = 1\mid 1\le i\le m_l\}$ and a relation $W$ of one of the two following forms: 
(orientable)
\begin{equation}\label{eq:orient}
\begin{array}{l}
\phantom{a}[x_1,x_2]\cdots[x_{2g-1},x_{2g}]{u_{2g+1}}^{x_{2g+1}}\cdots {u_m}^{x_{m}}=\\
\phantom{a}[v_1,v_2]\cdots[v_{2g-1},v_{2g}]{u_{2g+1}}^{w_{2g+1}}\cdots {u_m}^{w_{m}}
\end{array}
\end{equation}
or (non-orientable)
\begin{equation}\label{eq:nonorient}
x_1^2\cdots x_{2g}^2 {u_{2g+1}}^{x_{2g+1}}\cdots {u_m}^{x_{m}}=v_1^2 \cdots v_{2g}^2 {u_{2g+1}}^{w_{2g+1}}\cdots {u_m}^{w_{m}},
\end{equation}
where $u_i, v_j, w_k \in \KK^\perp$ and 
\begin{itemize}
\item[$\circledast$] either the Euler characteristic of $W$ is at most -2, or $W$ corresponds to a punctured torus and the subgroup $\langle u_i, v_j, w_k \mid i,k=2g+1,\dots, m, j=1,\dots, 2g, \rangle$ of $\Ts^{l-1}$ is non-abelian, i.e. the retraction of the (punctured) surface onto $\Ts^{l-1}$ is non-abelian;
\item [$\circledast\circledast$]  or $g+m \ge 2$ and
\begin{itemize}
\item  the subgroup $\langle [v_1,v_2], \dots, [v_{2g-1},v_{2g}], {u_{2g+1}}^{w_{2g+1}},\dots, {u_m}^{w_{m}} \rangle$  is non-abelian, where $W$ is orientable or,
\item the subgroup $\langle v_1^2, \dots, v_{2g}^2, {u_{2g+1}}^{w_{2g+1}},\dots, {u_m}^{w_{m}}\rangle$ is non-abelian, where $W$ is non-orientable,
\end{itemize} 
i.e. the solution is not atom-commutative (see \cite[Definition 11]{KhMIrc}).
\end{itemize}
\end{defn}

\begin{lem} \label{lem:prtower}
The graph tower $\Ts^l$ admits one of the following decomposition as amalgamated product:
\begin{itemize}
\item [a1)] $\Ts^{l-1} \ast_{C_{\Ts^{l-1}}(\KK^\perp)} \left(C_{\Ts^{l-1}}(\KK^\perp)\times\langle x_1^l, \dots, x_{m_l}^l \rangle\right)$ \\
{\rm(}if $S$ is empty and $\KK^\perp$ is non-abelian{\rm)};

\item [a2)] $\Ts^{l-1} \ast_{C_{\Ts^{l-1}}(\KK^\perp)} \left(C_{\Ts^{l-1}}(\KK^\perp) \times \langle x_1^l, \dots, x_{m_l}^l \mid [x_i^l,x_j^l]=1, 1\le i,j \le m_l, i\ne j \rangle\right)$ \\ {\rm(}if $S$ is empty and $\KK^\perp$ is abelian{\rm)};

\item [b1)] $\Ts^{l-1} \ast_{C_{\Ts^{l-1}}(u)} \left(C_{\Ts^{l-1}}(u) \times \langle x_1^l, \dots, x_{m_l}^l \mid [x_i^l,x_j^l]=1, 1\le i,j \le m_l, i\ne j \rangle\right)$ \\ {\rm(}if $S$ is abelian and $u$ is non-trivial{\rm)};

\item [b2)] $\Ts^{l-1} \ast_{C_{\Ts^{l-1}}(\KK^\perp)} \left(C_{\Ts^{l-1}}(\KK^\perp) \times \langle x_1^l, \dots, x_{m_l}^l \mid [x_i^l,x_j^l]=1, 1\le i,j\le m_l, i\ne j \rangle\right)$ \\ {\rm(}if $S$ is abelian and $\KK^\perp$ is non-abelian{\rm)};

\item [c)] $\Ts^{l-1} \ast_{C_{\Ts^{l-1}}(\KK^\perp)\times \langle u_{2g+1},\dots, u_m\rangle} \left( \langle u_{2g+1}, \dots, u_m, x_1^l, \dots, x_{m_l}^l \mid W \rangle \times C_{\Ts^{l-1}}(\KK^\perp)\right)$ \\ {\rm(}if $W$ satisfies one of the properties $\circledast$ and $\circledast\circledast$ from {\rm Definition \ref{defn:tower})};
\end{itemize}
\end{lem}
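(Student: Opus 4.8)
The plan is to prove each of the five isomorphisms by rewriting the defining presentation of $\Ts^l$ until it visibly becomes the presentation of the stated amalgamated product. Recall that $\Ts^l=\factor{\GG(\Gamma_l)}{\ncl\langle S_{l-1},S\rangle}$ and that $\GG(\Gamma_l)$ is presented on the vertex set $V(\Gamma_{l-1})\sqcup\{x_1^l,\dots,x_{m_l}^l\}$ subject to the commutation relations of $\Gamma_l$. By construction the only edges of $\Gamma_l$ incident to a new vertex $x_i^l$ join it to $\az(\KK)$ (via $d$-edges) and, according to the floor type and to whether $\KK^\perp$ is directly indecomposable, possibly to the other $x_j^l$ and to $\az(\KK^\perp)$ (via $c$-edges). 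Hence, after separating off the sub-presentation $\langle V(\Gamma_{l-1})\mid R(\Gamma_{l-1}),S_{l-1}\rangle$, which is a presentation of $\Ts^{l-1}$, the group $\Ts^l$ is presented as $\Ts^{l-1}$ with the generators $x_1^l,\dots,x_{m_l}^l$ adjoined subject to: $[x_i^l,a]=1$ for $a\in\az(\KK)$; possibly $[x_i^l,x_j^l]=1$ and $[x_i^l,b]=1$ for $b\in\az(\KK^\perp)$; and the relations in $S$, that is, the basic relations $[C_{\Ts^{l-1}}(\KK^\perp),x_i^l]=1$ (or $[C_{\Ts^{l-1}}(u),x_i^l]=1$ in the abelian floor of type b1) together with, in the quadratic floor, the relation $W$.

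The \emph{first step} is to absorb the commutations with $\az(\KK)$ and $\az(\KK^\perp)$ into the centraliser relations. Inside $\GG(\Gamma_{l-1})$ the canonical parabolic $\KK$ commutes elementwise with its complement $\KK^\perp$, so the image of $\KK$ in $\Ts^{l-1}$ lies in $C_{\Ts^{l-1}}(\KK^\perp)$, and also in $C_{\Ts^{l-1}}(u)$ since $u\in\KK^\perp$. When $\KK^\perp$ is directly decomposable, Lemma~\ref{lem:typesKperp} says that $\KK^\perp$ is $E_c(\Gamma_{l-1})$-abelian, so its image in $\Ts^{l-1}$ is abelian and hence also contained in $C_{\Ts^{l-1}}(\KK^\perp)$ (resp. $C_{\Ts^{l-1}}(u)$). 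Consequently, once the basic relations in $S$ are imposed, the relations $[x_i^l,\az(\KK)]=1$ and, when present, $[x_i^l,\az(\KK^\perp)]=1$ become redundant and may be deleted. The \emph{second step} is to read off the amalgam: what remains is $\Ts^{l-1}$, the generators $x_i^l$, the centraliser relations $[x_i^l,C]=1$ (with $C=C_{\Ts^{l-1}}(\KK^\perp)$, or $C_{\Ts^{l-1}}(u)$ in case b1), the relations $[x_i^l,x_j^l]=1$ — present exactly in cases a2), b1), b2) and absent in a1) and in the quadratic floor — and, in the quadratic floor, the relation $W$ whose coefficients $u_i,v_j,w_k$ lie in the copy of $\KK^\perp$ inside $\Ts^{l-1}$. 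Comparing with the presentation of the amalgamated product $\Ts^{l-1}\ast_C(C\times H)$, which, after deleting the redundant copy of $C$ contributed by the second factor, is exactly $\Ts^{l-1}$ with $H$ adjoined and $[C,H]=1$ imposed, one recognises: $H=\langle x_1^l,\dots,x_{m_l}^l\rangle$ free, giving a1) (here $\KK^\perp$ is non-abelian); $H=\langle x_1^l,\dots,x_{m_l}^l\mid[x_i^l,x_j^l]=1\rangle$ free abelian, giving a2), b1), b2); and in the quadratic floor the second factor is $\langle u_{2g+1},\dots,u_m,x_1^l,\dots,x_{m_l}^l\mid W\rangle\times C$ amalgamated over $C\times\langle u_{2g+1},\dots,u_m\rangle$, the new $u_i$ being identified with the corresponding elements of $\KK^\perp\le\Ts^{l-1}$, giving c).

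The step I expect to be the main obstacle is the quadratic floor. There one must check that $W$, together with the centraliser relations, genuinely reorganises as a direct product of a surface-type group with $C$, amalgamated with $\Ts^{l-1}$ over $C\times\langle u_{2g+1},\dots,u_m\rangle$: one has to be sure that the coefficients of $W$ sit in a single copy of $\KK^\perp$ common to both factors and that no extra commutation relation collapses $W$. This is where the hypotheses $\circledast$ and $\circledast\circledast$ (non-abelianity of the surface retraction onto $\Ts^{l-1}$; non-atom-commutativity of the solution) enter: they force $\KK^\perp$ to be directly indecomposable in the quadratic floor, since otherwise, by the first step, $\KK^\perp$ would be $E_c(\Gamma_{l-1})$-abelian and the relations $[x_i^l,x_j^l]=1$, $[x_i^l,\az(\KK^\perp)]=1$ would degenerate $W$ to a relation among elements of $\KK^\perp$, contradicting the non-abelianity required by $\circledast$/$\circledast\circledast$. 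The remaining bookkeeping — that the relations among the $x_i^l$ are present or absent exactly as listed for each floor — is immediate from the definitions.
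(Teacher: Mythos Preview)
Your treatment of cases a1)--b2) is correct and is essentially the paper's argument spelled out: the paper says these ``follow immediately from the definition of a graph tower'', and your two steps (absorbing the $\az(\KK)$ and $\az(\KK^\perp)$ commutations into the centraliser relations, then reading off the amalgam) are exactly that immediate verification written in full.

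Case c) has a genuine gap. What you must establish is that the subgroup $\langle C_{\Ts^{l-1}}(\KK^\perp),\, u_{2g+1},\dots,u_m\rangle$ \emph{inside $\Ts^{l-1}$} is isomorphic to the direct product $C_{\Ts^{l-1}}(\KK^\perp)\times\langle u_{2g+1},\dots,u_m\rangle$; equivalently, that $C_{\Ts^{l-1}}(\KK^\perp)\cap\langle u_{2g+1},\dots,u_m\rangle=1$ in $\Ts^{l-1}$. Your argument that $\circledast$/$\circledast\circledast$ force $\KK^\perp$ to be directly indecomposable is correct, but it does not give this intersection statement: $\Ts^{l-1}$ is a quotient of a partially commutative group, not itself one, so the parabolic/orthogonal-complement calculus that would trivially give $\KK^\perp\cap\KK=1$ in $\HH_{l-1}$ does not a priori survive the quotient. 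The paper handles this exactly by \emph{not} proving it here: it records the decomposition
\[
\Ts^{l-1}\ast_{\langle C_{\Ts^{l-1}}(\KK^\perp),u_{2g+1},\dots,u_m\rangle}\bigl(\langle C_{\Ts^{l-1}}(\KK^\perp),u_{2g+1},\dots,u_m,x^l\mid W,[C_{\Ts^{l-1}}(\KK^\perp),x^l]\rangle\bigr)
\]
straight from the definition and explicitly defers the direct-product claim to Section~\ref{sec:11}, where it is proved (as a ``Claim'' in the proof of Theorem~\ref{thm:towerdisc}) using the inductive hypothesis that $\Ts^{l-1}$ is discriminated by $\GG$ via the fundamental sequence, so that $H'(\KK^\perp)$ lands in $\GG_{\KK^\perp}$ and $H'(C_{\Ts^{l-1}}(\KK^\perp))$ in a disjointly commuting parabolic. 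In short: for c) you have identified the wrong obstacle. The issue is not that $W$ might ``collapse'', but that the edge group might fail to split as a direct product inside $\Ts^{l-1}$, and this requires the discrimination argument deferred to Section~\ref{sec:11}.
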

\begin{proof}
The only decomposition that does not follow immediately from the definition of a graph tower is c). From the definition, we have that
$$
\Ts^{l-1} \ast_{ \langle C_{\Ts^{l-1}}(\KK^\perp),u_{2g+1},\dots, u_m\rangle} \left( \langle C_{\Ts^{l-1}}(\KK^\perp),  u_{2g+1}, \dots, u_m, x^l \mid W=1, [ C_{\Ts^{l-1}}(\KK^\perp), x^l]=1\rangle\right),
$$
where $x^l= \{ x_1^l,\dots,x_{m_l}^l\}$.
We shall prove in Section \ref{sec:11}, that the subgroup $\langle C_{\Ts^{l-1}}(\KK^\perp),u_{2g+1},\dots, u_m\rangle$ is the direct product of  $C_{\Ts^{l-1}}(\KK^\perp)$ and $\langle u_{2g+1},\dots, u_m\rangle$ and hence decomposition c).
\end{proof}

\begin{rem}\
\begin{itemize}
\item Since graph towers can be described as iterated sequences of amalgamated products, each graph tower $\Ts^i$ naturally embeds into the graph tower $\Ts^{i+1}$. In fact, for each floor, there is a natural retraction of $\Ts^{i+1}$ onto $\Ts^{i}$.

\item If the the graph tower is constructed using only basic floors, then $\Ts$ and $\HH$ coincide, i.e. the graph tower is itself a partially commutative group.

\item Notice that graph towers are a natural generalization of the notions of $\omega$-residually free towers and NTQ-groups. Indeed, let $\GG$ be a free group and assume by induction that $\Ts^{l-1}$ is an NTQ-group.

By assumption, in cases a1), b2) and c) the group $\KK^\perp$ is non-abelian. In particular, since limit groups have the CSA property, it follows that the centraliser $C_{\Ts^{l-1}}(\KK^\perp)$ is trivial. Therefore, case a1) corresponds to the free product of the group $\Ts^{l-1}$ and a free group; case b2) corresponds to the free product of $\Ts^{l-1}$ and a free abelian group; and case c) corresponds to an amalgamated product of $\Ts^{l-1}$ with a surface group (so that the natural retraction of the surface to $\Ts^{l-1}$ is non-abelian).

In cases a2) and b1), the subgroup $\KK^\perp$ is abelian. Hence, these cases correspond to extension of centralisers of maximal abelian subgroups, or equivalently, to an amalgamated product of $\Ts^{l-1}$ and a free abelian group (amalgamated by a maximal (in $\Ts^{l-1}$) abelian subgroup).

We therefore obtain an NTQ-group. Notice that the difference with an $\omega$-residually free tower is that our construction is not canonical in the following sense. In the construction of an $\omega$-residually free tower, at a given floor one attaches all the pieces corresponding to the abelian JSJ-decomposition. In our case, pieces are attached one by one, hence the same graph tower can be constructed in several different ways (for example, at the same level one can first attach  an abelian group and then a surface group or vice versa).

\item Notice that, in general, the centraliser of a set of elements in a partially commutative group  is neither  commutative transitive, nor malnormal, nor small (see \cite{DKRpar}). Hence, the splittings we find do not correspond to the (abelian) JSJ decomposition, or, in terms of \cite{BFnotes}, the  splitting we obtain is not a GAD. Nevertheless, the decomposition we find plays the role analogous to the one the JSJ plays for limit groups: one can define modular automorphisms and show that under some restrictions on the decomposition (similar to the ones for constructible limit groups), the groups we consider are discriminated using the retraction and appropriate modular automorphisms. 

\item  As mentioned above, edge groups that appear in the decomposition fail the necessary conditions for the existing JSJ theories: they are not abelian, slender or small etc. On the other hand, the most general theory developed by Guirardel and Levitt, see \cite{GuiLe}, requires the group to be finitely presented, while our groups do not need to be finitely presented. 

Nevertheless, it is conceivable that under appropriate definitions, the decomposition we obtain could correspond to a JSJ-decomposition where (in the terminology of \cite{GuiLe}) flexible vertex stabilisers are extensions of 2-orbifold groups with boundary (by centralisers), attached to the rest of the group in a particular way. 

As we mentioned above, the construction of the graph tower is not canonical. In the case of free groups, Kharlampovich and Miasnikov, \cite{EJSJ}, described an elimination process based on the Makanin-Razborov process that gives an effective construction of the (cyclic)  JSJ decomposition of a fully residually free group. If an appropriate JSJ theory is developed for the class of groups we deal in this paper, then one could modify the process developed in \cite{CKpc} to obtain a canonical construction of the graph tower. 
\end{itemize}
\end{rem}

\begin{figure}[!h]
  \centering
   \includegraphics[keepaspectratio,width=4in]{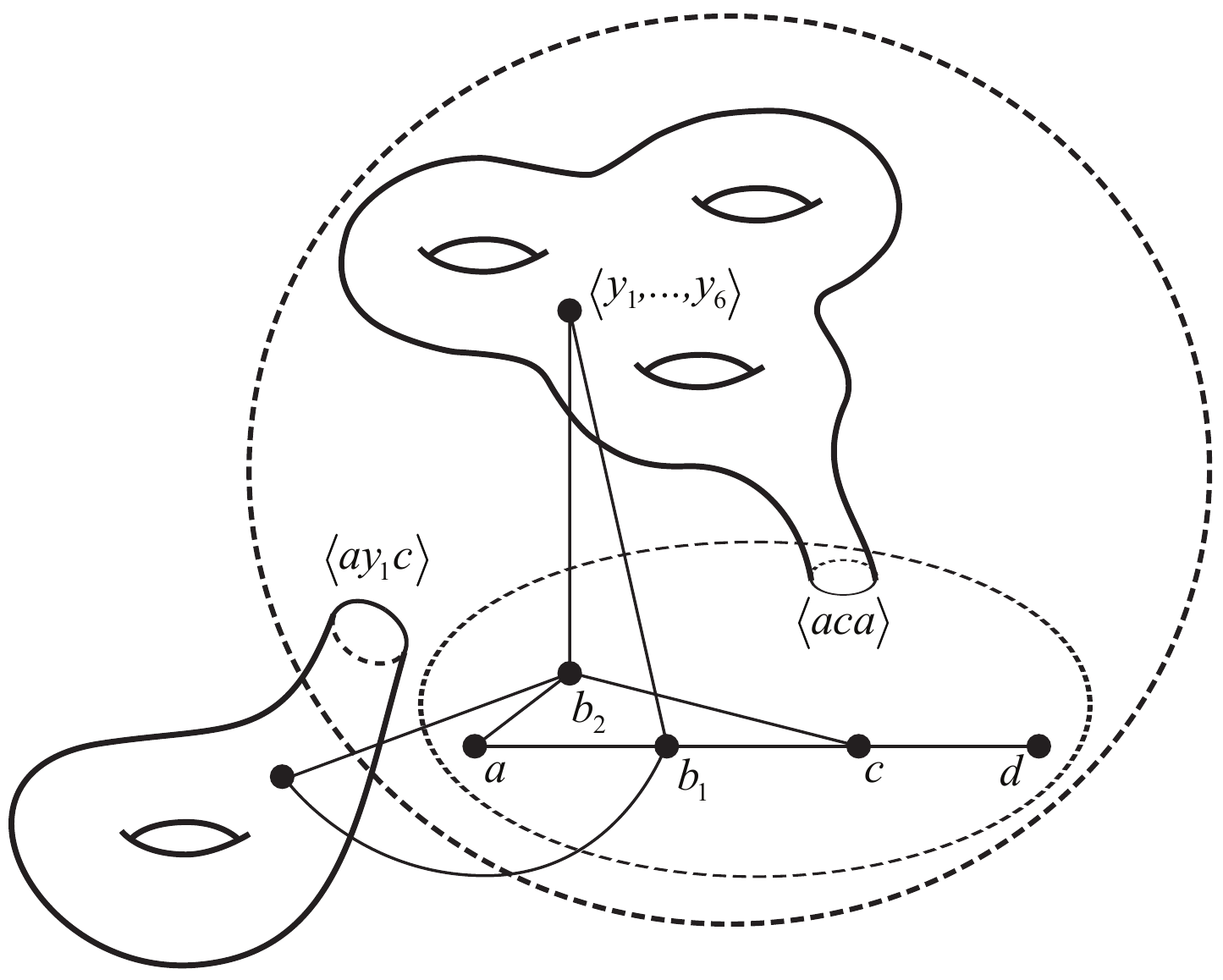}
\caption{Graph tower of height 2. Edges of the graph correspond to commutation of the generators.} \label{pic:tower}
\end{figure}

\section{Graph Tower Associated to the Fundamental Branch}\label{sec:10}

The goal of this and the next sections is to show that, for a given limit group $G$ over a partially commutative group $\GG$, one can effectively construct an embedding of $G$ into a graph tower. In this section, we construct a graph tower $\Ts$ associated to $G$. In the next section, following the approach from \cite{KhMNull}, we show that the graph tower $\Ts$ is discriminated by $\GG$ and use this fact to prove that $G$ embeds into $\Ts$.

As we have already mentioned, the construction of the  graph tower strongly relies on the process described in \cite{CKpc}, which given a finitely generated group $G$ and a partially commutative group $\GG$, produces a diagram that describes the set of all homomorphisms from $G$ to $\GG$. We now informally recall some of the notions and results from \cite{CKpc}.

Given a finitely generated group $G$ and a partially commutative group $\GG$, one can effectively construct finitely many constrained generalised equations  $\Omega_0, \dots, \Omega_m$ (see definition below) and homomorphisms $\pi_i$ from $G$ to the coordinate group of the generalised equation $\Omega_i$ so that any homomorphism from $G$ to $\GG$ factors through the coordinate group of a generalised equation $\Omega_i$ for some $i=0,\dots,m$, i.e. for any homomorphism $\varphi:G\to \GG$, there exist $i$ and a homomorphism $\varphi': G_{R(\Omega_i)}\to \GG$ so that $\varphi=\pi_i \varphi'$.

Informally, a generalised equation $\Upsilon$ is a combinatorial object which encodes a system of equations over a free monoid. 

\begin{figure}[!h]
  \centering
   \includegraphics[keepaspectratio,width=3.2in]{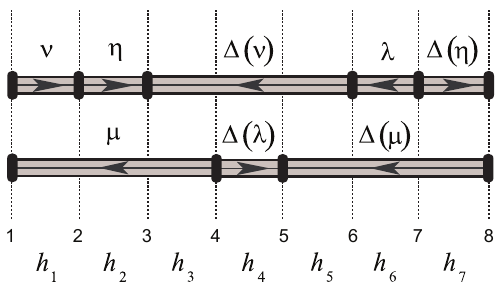}
\caption{Simple quadratic generalised equation} \label{pic:ge}
\end{figure}

Items $h_i$ correspond to variables and each pair of dual bases $\mu, \Delta(\mu)$ defines the equation $(h_{\alpha(\mu)} \cdots h_{\beta(\mu)-1})^{\varepsilon(\mu)}=(h_{\alpha(\Delta(\mu))} \cdots h_{\beta(\Delta(\mu))-1})^{\varepsilon(\Delta(\mu))}$. A tuple $H=(H_1,\dots, H_{\rho_\Upsilon})$ is a solution of the generalised equation $\Upsilon$ if $H_{\alpha(\mu)} \cdots H_{\beta(\mu)-1}$ and $H_{\alpha(\Delta(\mu))} \cdots H_{\beta(\Delta(\mu))-1}$ are freely reduced words so that $(H_{\alpha(\mu)} \cdots H_{\beta(\mu)-1})^{\varepsilon(\mu)}$ is graphically equal to $(H_{\alpha(\Delta(\mu))} \cdots H_{\beta(\Delta(\mu))-1})^{\varepsilon(\Delta(\mu))}$, for every pair of dual bases $\mu, \Delta(\mu)$, see \cite[Chapter 3]{CKpc} for details. 

A constrained generalised equation $\Omega=\langle \Upsilon, \Re_\Upsilon\rangle$ is a generalised equation $\Upsilon$ together with a set of commutation constraints $\Re_\Upsilon\subseteq h\times h$ associated to the variables $h$ of $\Upsilon$. A tuple $H=(H_1,\dots, H_{\rho_\Omega})$ is a solution of the generalised equation $\Omega$ if $H_{\alpha(\mu)} \cdots H_{\beta(\mu)-1}$ and $H_{\alpha(\Delta(\mu))} \cdots H_{\beta(\Delta(\mu))-1}$ are geodesic words in $\GG$ so that $(H_{\alpha(\mu)} \cdots H_{\beta(\mu)-1})^{\varepsilon(\mu)}$ is graphically equal to $(H_{\alpha(\Delta(\mu))} \cdots H_{\beta(\Delta(\mu))-1})^{\varepsilon(\Delta(\mu))}$, for every pair of dual bases $\mu, \Delta(\mu)$, and $H$ satisfies the commutation constraints, i.e. $H_i\lra H_j$ for all commutation constraints $\Re_\Upsilon(h_i,h_j)$. 

The coordinate group $G_{R(\Upsilon)}$ of $\Upsilon$ is the group $\factor{\GG[h]}{R(\Upsilon)}$, where, abusing the notation, we denote by $\Upsilon$ the system of equations associated to the generalised equation $\Upsilon$.
The coordinate group $G_{R(\Omega)}$ of $\Omega$ is the group $\factor{\GG[h]}{R(\Omega)}$, where, abusing the notation, we denote by $\Omega$ the system of equations associated to the generalised equation $\Upsilon$ and the set of commutation relations $[h_i,h_j]$ for all $h_i$, $h_j$ so that $\Re_\Upsilon(h_i,h_j)$. We denote by $G_\Upsilon$ (by $G_\Omega$) the group $\factor{\GG[h]}{\ncl\langle\Upsilon\rangle}$ (the group $\factor{\GG[h]}{\ncl\langle\Omega\rangle}$). 

In this section we primarily work with the groups $G_{\Omega}$. However, the notions of canonical homomorphisms, fundamental branch etc. are defined using coordinate groups of generalised equations. All these notions carry over from coordinate groups to the groups $G_{\Omega}$ and we use them in this context. For instance, by construction, the canonical homomorphism $\pi(v_0,v_1):G_{R(\Omega_0)}\to G_{R(\Omega_1)}$ is induced by a homomorphism from $G_{\Omega_0}$ to $G_{\Omega_1}$ which, abusing the notation, we also denote by $\pi(v_0, v_1)$. Similarly, any solution $H$ of $\Omega$ is induced by a homomorphism from $G_\Omega$ to $\GG$, which we also denote by $H$.

The description of the set of homomorphisms $\Hom(G,\GG)$ reduces to the study of the sets of solutions of the constrained generalised equations $\Omega_i$, $i=0,\dots,m$.

Given a generalised equation $\Omega$, we described a process (analogous to the one described by Makanin, \cite{Mak82, Mak84} and Razborov, \cite{Razborov1, Razborov2}, in the case of free groups) that produces a (possibly) infinite, locally finite, rooted tree $T(\Omega)$ which encodes the solution set of $\Omega$. Infinite branches of this tree are of three specific types: linear, quadratic or general type. The dynamics of these infinite branches determine specific families of automorphisms of the coordinate group associated to $\Omega$. One then shows that, using these families of automorphisms, one can produce a finite tree $T_{\sol}(\Omega)$ that encodes the set of solutions of $\Omega$, see \cite[Theorem 9.2]{CKpc}. 

If the group $G$ is a limit group over $\GG$, since the corresponding solution tree $T_{\sol}$ is finite, it follows by Lemma \ref{lem:discfam}, that there exists a branch of the solution tree $T_{\sol}$ so that the subfamily of homomorphisms that factors through this branch is a discriminating family of $G$ into $\GG$. We call this branch \emph{a fundamental branch of $G$}.  Any discriminating family of homomorphisms that factors through the fundamental branch is called a \emph{fundamental sequence}.

The goal of this section is, given a fundamental branch $\Omega_0\to\dots\to \Omega_q$ associated to the group $G$, to construct the graph tower $\Ts_0$ as well as a homomorphism $\tau_0$ from $G_{\Omega_0}$ to $\Ts_0$ so that for every homomorphism $\pi_0 H$ from $G$ to $\GG$ of a discriminating family there exists a homomorphism $H'$ from $\Ts_0$ to $\GG$ such that the following diagram is commutative:
\begin{equation} \label{diag:period}
\xymatrix@C3em{
G \ar[d]_{\pi_0}\\
G_{R(\Omega_0)}  \ar[dr]_{H} & \ar[l] G_{\Omega_0}\ar[d]_H \ar[r]^{\tau_0} &\Ts_0\ar[dl]^{H'} \\
&\GG 
}
\end{equation}

The graph tower is constructed from a fundamental branch of $G$ using induction on the height of the branch. One begins with the group associated to the leaf of the fundamental branch, which is a partially commutative group discriminated by $\GG$ and keeps building the graph tower according to the type of epimorphism associated to the edges (or more precisely, the type of automorphism group associated to the vertices) of the fundamental branch. The types of automorphisms are intrinsically related to the different dynamics that appear in the construction of the tree $T(\Omega)$: linear type (exotic, Levitt, or type 7-10), quadratic type (surface, or type 12) and general type (axial + surface, or type 15).

\label{pagen}
The induction hypothesis at step $q$ are as follows:
\begin{itemize}
\item [IH] Given a generalised equation $\Omega_q$, there exists a graph tower $(\Ts_q,\HH_q)$, where $\HH_q=\GG(\Gamma_q)$, and a homomorphism $\tau_q$ from $G_{\Omega_q}$ to $\Ts_q$ such that for every solution $H^{(q)}$ of the fundamental sequence, there exists a homomorphism ${H^{(q)}}':\Ts_q \to \GG$ that makes the diagram 
$$
\xymatrix@C3em{
G_{\Omega_q}\ar[dr]_{H^{(q)}} \ar[rr]^{\tau_q} & &\Ts_q\ar[dl]^{{H^{(q)}}'} \\
&\GG& 
}
$$
commutative; notice that if $\pi_q:\HH_q \to \Ts_q$, then $\pi_q {{H^{(q)}}'}$ is a homomorphism from $\HH_q$ to $\GG$ that, abusing the notation, we also denote by ${{H^{(q)}}'}$.

\item [IH1] for all generators $x_i,x_j \in \HH_q$, we have that $(x_i, x_j) \in E_d(\Gamma_q)$ if and only if ${H^{(q)}}'(x_i) \lra {H^{(q)}}'(x_j)$ for all homomorphisms ${H^{(q)}}'$ induced by a solution ${H}^{(q)}$ from the fundamental sequence; furthermore, we have that if $(x_i,x_j) \in E_c(\Gamma_q)$ then ${H^{(q)}}'(x_i)$, ${H^{(q)}}'(x_j)$ belong to a cyclic subgroup for all homomorphisms ${H^{(q)}}'$ induced by a solution ${H}^{(q)}$ from the fundamental sequence;

\item [IH2] for any item $h_i^{(q)}$, if $\tau(h_i^{(q)})=y_{i1}\dots y_{ik}$, then $\BA({H^{(q)}}'(y_{ij})) \supset \BA(H_i^{(q)})$ and $\bigcap \limits_{j=1}^k \BA({H^{(q)}}'(y_{ij}))= \BA(H_i^{(q)})$ for all solutions $H^{(q)}$ of the fundamental sequence and homomorphisms ${H^{(q)}}'$ induced by $H^{(q)}$.
\end{itemize}

The induction hypothesis IH is a natural assumption which is needed to prove that $G$ embeds into $\Ts_0$. The induction hypothesis IH1 and IH2 are essential to keep control on the ``type'' of subgroups whose centralisers we extend. Furthermore, they will be crucial in the next section in order to show that graph towers are discriminated by homomorphisms induced by solutions of the fundamental sequence.

\begin{rem}\label{rem:recall}
We would like to mention the following facts from \cite{CKpc} that will be used in the course of the proof.
\begin{enumerate}
\item In the construction of the process in \cite{CKpc}, we define several different trees. Of special interest to us are the tree $T(\Omega)$ (with infinite branches) and the finite tree $T_{\sol}(\Omega)$.
\item Groups corresponding to the leaves of $T_{\sol}(\Omega)$ are explicitly described (see \cite[Proposition 9.1]{CKpc}).
\item Without loss of generality, one can assume that the word $H(\sigma)$, where $\sigma$ is a closed section of $\Omega$ and $H$ is a solution of the constrained generalised equation $\Omega$ is a subword of a word in the DM-normal form (see \cite[Section 3.3.1]{CKpc}).
\item Automorphisms associated to the vertices of the tree $T_{\sol}(\Omega)$ are completely induced (see \cite[Section 7.1]{CKpc}). Roughly speaking, this means that solutions (in the partially commutative group $\GG=\langle \cA \rangle$) of a constrained generalised equation are induced by a solution (in the free group $F(\cA)$) of the corresponding generalised equation (over the free group).
\end{enumerate}
\end{rem}

Before turning our attention to the construction of the graph tower, we further discuss two key notions for the construction.

We first clarify the structure  of  orthogonal complements $\KK^\perp$ of \cool subgroups and  justify the somewhat non-intuitive definition. The  structure of $\KK^\perp$ allows us, a posteriori, to establish that a discriminating family of the graph tower maps a \cool subgroup $\KK$ into a closed subgroup of $\GG$ and the subgroup $\KK^\perp$ into a directly indecomposable subgroup (which we treat differently depending on if it is cyclic or non-abelian).

Secondly, we introduce the notion of a tribe. This notion plays a crucial role in the study of the dynamics of infinite branches. It is intrinsically related to the fact that one can define some ``finite hierarchy'' (finite partial order) on the type of constraints associated to the variables. We will see that one can interpret this partial order as a measure of complexity on the constraints. For this notion to be well-defined, it is essential that the partially commutative group $\GG$ be finitely generated.

\subsection*{Co-irreducible subgroups}

The definition of \cool subgroup might seem somewhat artificial since we ask that the subgroup $\KK^\perp$ be $E_d(\Gamma)$-directly indecomposable. In the following lemma we clarify the structure of the subgroup $\KK^\perp$ in the group $\HH$.

\begin{lem}\label{lem:typesKperp}
Let $(\Ts,\HH)$ be a graph tower that satisfies the induction hypothesis {\rm IH} and {\rm IH1}. Then, if $\KK^\perp$ is an $E_d(\Gamma)$-directly indecomposable canonical parabolic subgroup of $\HH$, then $\KK^\perp$ is either a directly indecomposable subgroup of $\HH$ or $E_c(\Gamma)$-abelian.
\end{lem}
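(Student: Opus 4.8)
\textit{Strategy.} The plan is to translate everything into the language of the commutation graph and reduce the assertion to a purely combinatorial statement about $c$-cliques. Put $W=\az(\KK^\perp)$, so that $\KK^\perp$ is generated by $W$ and its relations are the edges of $\Gamma$ with both endpoints in $W$. Let $\Delta(W)$ be the non-commutation graph of $\KK^\perp$ (the complement, inside $W$, of the set of $\Gamma$-edges on $W$), and let $\Delta_d(W)$ be the complement, inside $W$, of the set of $d$-edges on $W$; thus $\Delta_d(W)$ is obtained from $\Delta(W)$ by inserting, for every $c$-edge of $\Gamma$ with both endpoints in $W$, the corresponding edge. Recalling that a canonical parabolic subgroup is directly indecomposable if and only if its non-commutation graph is connected, and that $\KK^\perp$ is $E_c(\Gamma)$-abelian if and only if all pairs of vertices of $W$ are joined by $c$-edges, the lemma is equivalent to the following statement: \emph{if $\Delta_d(W)$ is connected, then either $\Delta(W)$ is connected, or all pairs of vertices of $W$ are joined by $c$-edges.}

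First I would partition $W$ into its \emph{$c$-blocks} $Q_1,\dots,Q_t$, where $u,u'\in W$ lie in the same block if and only if they can be joined by a finite path of $c$-edges inside $W$. By property \eqref{eq:ecprop} each $Q_j$ is in fact a $c$-clique, the $Q_j$ are pairwise disjoint, and the edges added to $\Delta(W)$ to form $\Delta_d(W)$ are precisely the cliques on $Q_1,\dots,Q_t$. If $t=1$ then all pairs of vertices of $W$ are $c$-edges and we are in the second alternative; so from now on assume $t\ge 2$ and that $\Delta_d(W)$ is connected, the goal being to prove that $\Delta(W)$ is connected.

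The key point is a \emph{uniformity property}: if $u,u'$ lie in the same $c$-block and $w\in W\setminus\{u,u'\}$, then the pairs $(w,u)$ and $(w,u')$ have the same type in $\Gamma$ (no edge, $d$-edge, or $c$-edge). For $c$-edges this is immediate from \eqref{eq:ecprop}. For the other two types I would use {\rm IH1}: since $(u,u')\in E_c(\Gamma)$, every homomorphism $H'$ coming from a solution of the fundamental sequence sends $H'(u)$ and $H'(u')$ into a common cyclic subgroup $\langle u_0\rangle$; now if $(w,u)\in E_d(\Gamma)$ then $H'(w)\lra H'(u)$ for all such $H'$, and using that $\BA(g^n)=\BA(g)$ for $n\ne 0$ (a consequence of the uniqueness of roots together with the fact that canonical parabolic subgroups are closed under taking roots) one gets $\BA(H'(u))=\BA(u_0)=\BA(H'(u'))$ whenever $H'(u),H'(u')\ne 1$, hence $H'(w)\lra H'(u')$ for all $H'$, and therefore $(w,u')\in E_d(\Gamma)$ again by {\rm IH1}; the case $(w,u)\notin E(\Gamma)$ is then the contrapositive, combined with \eqref{eq:ecprop} to rule out $(w,u')\in E_c(\Gamma)$. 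The one delicate point here, which I expect to be the main obstacle, is controlling the homomorphisms $H'$ under which $u$ or $u'$ could become trivial; this has to be excluded using {\rm IH2} and the way the generators of $\HH$ (in particular those involved in $c$-edges) are produced along the fundamental branch, so that the $\BA$-propagation argument is valid uniformly over the fundamental sequence.

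Granting the uniformity property, I would conclude as follows. Suppose some block $Q_j$ met two distinct connected components of $\Delta(W)$, say $u\in Q_j$ and $u'\in Q_j$ in different components. Every vertex of $Q_j$ is $\Gamma$-adjacent to $u$, so all $\Delta(W)$-neighbours of $u$ lie outside $Q_j$; by the uniformity property $u$ and $u'$ have the same neighbours outside $Q_j$, hence the same $\Delta(W)$-neighbours, which is impossible unless both are $\Delta(W)$-isolated. Applying the uniformity property once more, every vertex of $Q_j$ is then $\Delta(W)$-isolated, i.e.\ central in $\KK^\perp$; consequently, in $\Delta_d(W)$ each vertex of $Q_j$ is adjacent to exactly the other vertices of $Q_j$, so $Q_j$ is a whole connected component of $\Delta_d(W)$, contradicting connectedness of $\Delta_d(W)$ together with $t\ge 2$. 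Hence no $c$-block meets two components of $\Delta(W)$; but then inserting the cliques on the $Q_j$ into $\Delta(W)$ merges no components, so $\Delta(W)$ and $\Delta_d(W)$ have the same connected components and $\Delta(W)$ is connected, as required.
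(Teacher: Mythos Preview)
Your proof is correct and rests on the same key fact the paper uses: if $(x,y)\in E_c(\Gamma)$ then $(x,z)\in E_d(\Gamma)$ if and only if $(y,z)\in E_d(\Gamma)$ --- this is exactly your ``uniformity property'' restricted to $w$ outside the $c$-block, and the paper states it as its opening observation, derived from IH1, without addressing the triviality issue you flag; so on that point you are being at least as careful as the paper. Where the two proofs diverge is in how the observation is exploited. The paper argues by induction on the rank of a direct factor: given a decomposition $\KK^\perp=\KK_1\times\KK_2$, it fixes a generator in one factor, splits the other factor into its $c$-part and $d$-part relative to that generator, and uses the observation to produce either an $E_d$-direct decomposition (a contradiction) or a new decomposition with a strictly smaller second factor. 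Your argument is global and non-inductive: you partition $W=\az(\KK^\perp)$ into $c$-blocks (which by \eqref{eq:ecprop} are cliques), use the observation to show that vertices in the same $c$-block have identical $\Delta(W)$-neighbours and hence each $c$-block is confined to a single connected component of $\Delta(W)$, and conclude that adjoining the $c$-edges cannot merge components, so $\Delta(W)$ and $\Delta_d(W)$ have the same components. Your route is arguably cleaner and yields a more transparent combinatorial picture (the $c$-blocks behave as single ``super-vertices'' of the non-commutation graph); the paper's route is a step-by-step reduction peeling off one generator at a time. Both are valid and of comparable length.
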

\begin{proof}
Let $\HH=\GG(\Gamma)$. If follows from  the induction hypothesis IH1 that if $(x,y)\in E_c(\Gamma)$, then $(x,z)\in E_d(\Gamma)$ if and only if $(y,z)\in E_d(\Gamma)$.

Assume that $\KK^\perp$ is directly decomposable, but  $E_d(\Gamma)$-directly indecomposable in $\HH$. Without loss of generality, assume that $\KK^\perp=\KK_1\times \KK_2$. We use induction on the rank of $\KK_2$ to prove that then $\KK^\perp$ is $E_c(\Gamma)$-abelian.

Suppose that $\KK_2$ has rank 1, i.e. $\KK_2=\langle x \rangle$. Let $\az(\KK_1)=\az(\KK_{1,c}) \cup \az(\KK_{1,d})$ where $\az(\KK_{1,c})=\{y \in \az(\KK_1) \mid (x,y)\in E_c(\Gamma)\}$ and $\az(\KK_{1,d})=\{y \in \az(\KK_1) \mid (x,y)\in E_d(\Gamma)\}$. Since $\KK^\perp$ is $E_d(\Gamma)$-directly indecomposable, we have that $\KK_1\ne \KK_{1,d}$. If $\KK_1=\KK_{1,c}$, then since the set $E_c(\Gamma)$  satisfies condition (\ref{eq:ecprop}), it follows that $\KK^\perp$ is $E_c(\Gamma)$-abelian. Otherwise, we have that $(x,k_d)\in E_d(\Gamma)$, for all $k_d\in\KK_{1,d}$ and $(x,k_c)\in E_c(\Gamma)$ for all $k_c\in \KK_{1,c}$. It follows from the above observations that $(k_d,k_c)\in E_d(\Gamma)$ for all $k_d\in\KK_{1,d}$ and for all $k_c\in \KK_{1,c}$ and hence $\KK^\perp=\KK_{1,d} \times \langle \KK_{1,c},x\rangle$ is $E_d(\Gamma)$-directly decomposable - a contradiction.

Assume that $\KK_2$ has rank $r$. Let $x\in \KK_1$ and let $\az(\KK_2)=\az(\KK_{2,c}) \cup \az(\KK_{2,d})$, where $\az(\KK_{2,c})=\{y \in \az(\KK_2) \mid (x,y)\in E_c(\Gamma)\}$ and $\az(\KK_{2,d})=\{y \in \az(\KK_2) \mid (x,y)\in E_d(\Gamma)\}$. Notice that since by assumption $\KK^\perp$ is $E_d(\Gamma)$-directly indecomposable, we have that the set $\az(\KK_2,c)$ is non-empty. As above, it follows that $\KK^\perp=\langle \KK_1\cup \KK_{2,c}\rangle \times \KK_{2,d}$ and the rank of $\KK_{2,d}$ is strictly less than the rank of $\KK_2$. By induction we conclude that $\KK^\perp$ is $E_c(\Gamma)$-abelian.
\end{proof}

\subsection*{Tribes}

Let $\{\BA(S_1), \dots, \BA(S_\m)\}$ be the finite set of distinct canonical parabolic subgroups of $\GG=\langle \cA \rangle$, where $S_i\subset \cA$, $i=1,\dots,\m$.

Let $\Omega=\langle \Upsilon, \Re_\Upsilon\rangle$ be a generalised equation. Since there are only finitely many canonical parabolic subgroups of $\GG$, by Lemma \ref{lem:discfam}, there exists a fundamental sequence so that for every item $h_i$ (base $\mu$, section $\sigma$) of $\Omega$, there exists a canonical parabolic subgroup $\GG_{h_i}$ ($\GG_\mu$ and $\GG_\sigma$, correspondingly) such that $\langle\az(H_i)\rangle=\GG_{h_i}$ ($\langle\az(H(\mu))\rangle=\GG_\mu$ and $\langle\az(H(\sigma))\rangle=\GG_\sigma$, correspondingly) for every solution $H$ of the discriminating family.

A \emph{tribe} of $\Omega$ is a set of items, bases and sections of $\Omega$. Given a fundamental sequence, we say that an item $h_j$ (or a base, or a section) of $\Omega$ \emph{belongs to the tribe} $t_i$ if and only if for the fundamental sequence one has that $\BA(\GG_{h_j})=\BA(S_i)$ (or $\BA(\GG_\mu)=\BA(S_i)$, or $\BA(\GG_\sigma)=\BA(S_i)$, correspondingly), for some $i=1,\dots, \m$. Notice that, in general, two items $h_j$, $h_k$ might belong to the same tribe although $\GG_{h_j} \ne \GG_{h_k}$. Furthermore, if $\Re_\Upsilon(h_j,h_k)$, then $h_j$ and $h_k$ belong to different tribes since for every solution $H$ of the fundamental sequence one has that $H_j \lra H_k$. Abusing the notation, for a tribe $t_i$ we denote by $\BA(t_i)$ the canonical parabolic subgroup $\BA(S_i)$, i.e. the set of letters $a_k\in \cA$ so that for every $h_j\in t_i$ we have $H_j\lra  a_k$. For an item $h_i$ (base $\mu$, section $\sigma$) of $\Omega$ we set $t(h_i)$ ($t(\mu)$ and $\sigma(\mu)$) to be the tribe to which $h_i$ ($\mu$ or $\sigma$) belongs.

We say that a tribe $t_j$ \emph{dominates} the tribe $t_i$ and write $t_j\succ t_i$ if and only if $\BA(t_j) \ge \BA(t_i)$. The relation $\succ$ is a partial order on the set of all tribes. A tribe is \emph{minimal with respect to} $\succ$ if and only if it does not strictly dominate any other tribe.

Let $\Omega=\langle \Upsilon, \Re_\Upsilon \rangle$ be a generalised equation and assume that $\Re_\Upsilon(h_i,h_k)$. Then, for any solution $H$ of $\Omega$, we have that $H_i \lra H_k$. If $h_j$ belongs to a tribe that dominates the tribe of $h_i$, it follows that $H_j \lra H_k$ for all solutions $H$ of $\Omega$. Hence, every solution of $\Omega$ is a solution of $\Omega' =\langle\Upsilon, \Re_\Upsilon\cup \{(h_j,h_k)\}\rangle$. 

\begin{defn}
We say that the set $\Re_\Upsilon$ is \emph{completed} if  for all $h_j$ that belong to a tribe that dominates the tribe of $h_i$, if $\Re_\Upsilon(h_i,h_k)$ then $\Re_\Upsilon(h_j,h_k)$. In particular, if two items $h_i$ and $h_j$ belong to the same tribe, then $\Re_\Upsilon(h_i,h_k)$ if and only if $\Re_\Upsilon(h_j,h_k)$. Moreover, if $h_j$ belongs to a tribe that dominates the tribe of $h_i$, then $\Re_\Upsilon(h_j) \supseteq \Re_\Upsilon(h_i)$.
\end{defn}
\begin{rem} \label{rem:completed}
Without loss of generality, we further assume that for all generalised equations $\Omega=\langle \Upsilon, \Re_\Upsilon\rangle$ the sets $\Re_\Upsilon$ are completed.
\end{rem}

\begin{lem} \label{lem:tribes}
Let $\Omega'=\langle \Upsilon', \Re_{\Upsilon'}\rangle$ be obtained from $\Omega=\langle \Upsilon, \Re_\Upsilon\rangle$ by an elementary transformation $\ET 1-\ET 5$ or a derived transformation $\D 1, \D 2, \D 3, \D 5, \D 6$ and let $\pi$ be the corresponding epimorphism, $\pi:G_{R(\Omega)}\to G_{R(\Omega')}$. Let $w_i(h')=h_{i1}'^{\epsilon_{i1}}\cdots h_{ik_i}'^{\epsilon_{ik_i}}$, $\epsilon_{ij}=\pm 1$, $j=1,\dots,k_i$, $i=1, \dots, \rho_\Omega-1$, be the image of $h_i$ under $\pi$.  Then, for every $j$ the tribe $t(h_{ij}')$ dominates the tribe $t(h_i)$. Furthermore, every minimal tribe of $\Omega'$ dominates a minimal tribe of $\Omega$.
\end{lem}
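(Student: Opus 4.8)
\emph{Approach.} The plan is to prove the domination statement for items first, and then to deduce the claim about minimal tribes from it by an elementary argument in the finite poset of tribes of $\Omega$. For the items, I would fix a fundamental sequence, refining it by Lemma \ref{lem:discfam} if necessary so that $\GG_{h_i}$ and $\GG_{h'_j}$ are defined for every item of $\Omega$ and of $\Omega'$. Since the transformation $\Omega\to\Omega'$ comes with the epimorphism $\pi\colon G_{R(\Omega)}\to G_{R(\Omega')}$, every solution $H'$ of $\Omega'$ from the fundamental sequence produces the solution $H=\pi H'$ of $\Omega$, and the two tribe structures are read off these compatible families. For an item $h_i$ of $\Omega$ one then has $H_i=H'(w_i(h'))={H'_{i1}}^{\epsilon_{i1}}\cdots{H'_{ik_i}}^{\epsilon_{ik_i}}$. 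The crucial point is that this product is reduced: each ${H'_{ij}}^{\epsilon_{ij}}$ is, up to inversion, a subword of the geodesic word $H_i$. Granting this, $\az(H'_{ij})\subseteq\az(H_i)$, hence $\GG_{h'_{ij}}=\langle\az(H'_{ij})\rangle\le\langle\az(H_i)\rangle=\GG_{h_i}$; since $\BA(\cdot)=(\cdot)^{\perp}$ is inclusion-reversing this gives $\BA(t(h_i))=\BA(\GG_{h_i})\subseteq\BA(\GG_{h'_{ij}})=\BA(t(h'_{ij}))$, which is exactly $t(h'_{ij})\succ t(h_i)$. The same computation applies verbatim to bases and sections: if $x'$ is a base or section of $\Omega'$ with items $h'_1,\dots,h'_m$ under it, then $\GG_{x'}=\langle\GG_{h'_1},\dots,\GG_{h'_m}\rangle$, and since each $h'_j$ is a subword of an item of $\Omega$ while these items lie under a single base or section $x$ of $\Omega$, we get $\GG_{x'}\le\GG_x$, hence $t(x')\succ t(x)$.

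For the minimal tribes, I would use that the transformations $\ET 1$--$\ET 5$, $\D 1,\D 2,\D 3,\D 5,\D 6$ only subdivide or delete items and never create new ones, so that every item, base or section of $\Omega'$ comes from one of $\Omega$ in the sense above; consequently, for every realised tribe $T'$ of $\Omega'$ there is a realised tribe $T$ of $\Omega$ with $\BA(T)\subseteq\BA(T')$. Then, given a minimal tribe $t'$ of $\Omega'$, let $t$ be a tribe of $\Omega$ that is minimal among those with $\BA(t)\subseteq\BA(t')$; such a $t$ exists since this set is nonempty and finite. If $t$ were not minimal in $\Omega$, some tribe $t''$ of $\Omega$ would satisfy $\BA(t'')\subsetneq\BA(t)\subseteq\BA(t')$, contradicting the choice of $t$. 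Hence $t$ is a minimal tribe of $\Omega$ with $t'\succ t$, as required.

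The main difficulty I anticipate is the claim that no cancellation occurs in the product defining $H_i$ (together with the accompanying ``subword'' statements for bases and sections, and the fact that the transformations only subdivide or delete items). These have to be read off the definitions of the elementary transformations $\ET 1$--$\ET 5$ and the derived transformations $\D 1,\D 2,\D 3,\D 5,\D 6$ in \cite{CKpc}: in each of them the image $w_i(h')$ of an item is, up to inversion, the word formed by consecutive items along a single section of $\Omega'$, and a solution of a generalised equation is geodesic along such a section, so $H_i$ is a geodesic word and no cancellation can take place. Alternatively one may invoke directly the structural description of these transformations in \cite{CKpc}. Everything else --- the identity $\BA(\cdot)=(\cdot)^{\perp}$, its inclusion-reversal, and the poset argument for minimal tribes --- is routine.
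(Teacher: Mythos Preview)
Your proposal is correct and follows essentially the same approach as the paper. Both hinge on the fact that, for each of the listed transformations, the equality $H_i = {H'_{i1}}^{\epsilon_{i1}}\cdots {H'_{ik_i}}^{\epsilon_{ik_i}}$ is graphical (no cancellation), so that $\az(H'_{ij})\subseteq\az(H_i)$ and hence $\BA(\GG_{h_i})\subseteq\BA(\GG_{h'_{ij}})$. The paper is terser: it declares the claim obvious for all transformations except $\ET4$ and $\ET5$, and for those two it reads off directly from the description in \cite{CKpc} that each item of $\Omega'$ is either equal to some item of $\Omega$ (so the tribes coincide) or is one of two consecutive pieces of a single subdivided item $h_q$ (so the tribe dominates $t(h_q)$). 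Your version packages the same verification as the single assertion ``the image of each item is a string of consecutive items along a section of $\Omega'$, and solutions are geodesic along sections'', which is equivalent and is exactly what one checks case by case in \cite{CKpc}.

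For the minimal-tribe statement, the paper argues that in the $\ET4/\ET5$ cases the ``source'' tribe $t(h_i)$ (respectively $t(h_q)$) is itself minimal in $\Omega$; this uses that for $i\neq q$ one actually has $t(h'_{i_j})=t(h_i)$, so a strictly smaller tribe in $\Omega$ would produce a strictly smaller tribe in $\Omega'$. Your poset argument (every realised tribe of $\Omega'$ dominates some realised tribe of $\Omega$, then descend to a minimal one) is a clean, transformation-independent formulation of the same idea; it relies on the observation that every item of $\Omega'$ occurs in some $\pi(h_i)$, which is again immediate from the descriptions in \cite{CKpc}. Either route suffices.
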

\begin{proof}
The statement is clear for all the transformations but $\ET 4$ and $\ET 5$. From the description of $\ET 5$, it follows that if $\rho_{\Omega'}=\rho_\Omega$, then $\pi(h_i)=h_{i_j}'$ for all $i$. By definition of a solution of generalised equation, for every solution $H$ of $\Omega$, we have $H_i\doteq H'_{i_j}$ and hence the statement. 

Suppose now that $\rho_{\Omega'}>\rho_{\Omega}$. Then, $\pi(h_i)=h_{i_j}'$ for all $i\ne q$ and $\pi(h_q)=h_{{q'}-1}'h_{q'}'$ (here we use the notation from the description of $\ET 5$, see \cite{CKpc}). Since for every solution $H$ of $\Omega$ we have that $H_q\doteq H'_{q'-1}H'_{q'}$, the first statement follows. Furthermore, if for some $j$ the tribe $t(h_{i_j}')$ (or $t(h_{q'-1}')$ or $t(h_{q'}')$) is minimal in $\Omega'$, then it dominates the tribe $t(h_i)$ (or $t(h_q)$), which is also minimal.

The proof for $\ET 4$ is analogous and left to the reader.
\end{proof}

By construction, every boundary of the generalised equation in $\Omega_{i+1}$ either corresponds to a boundary of $\Omega_i$ or it has been introduced between $\beta(\mu_i)-1$ and $\rho_{\Omega_i}$ in $\Omega_i$. It follows that if the boundaries $j$ and $j+1$ of $\Omega_{i+1}$ correspond to boundaries between $k$ and $k+1$ in $\Omega_{i}$, then the tribe of an item $h_j$ in the generalised equation $\Omega_{i+1}$ dominates the tribe of $h_k$ in $\Omega_i$. Applying this argument recursively, we conclude that if $\Omega_l$, $l>i$ has been obtained from $\Omega_i$ by a sequence of entire transformations and the boundaries $j$ and $j+1$ of $\Omega_{l}$ correspond to boundaries between $k$ and $k+1$ in $\Omega_{i}$, then the tribe of an item $h_j$ in the generalised equation $\Omega_{l}$ dominates the tribe of $h_k$ in $\Omega_i$. 

Let $t(\mu)$ be the tribe of a base $\mu$ in $\Omega$. Then, the tribe of every item covered by $\mu$ dominates $t(\mu)$. We note that the tribe of $\mu$ in $\Omega_i$ is dominated by the tribe of $\mu$ in $\Omega_{i+1}$ (unless $\mu$ is completely removed when passing from $\Omega_i$ to $\Omega_{i+1}$).

\subsection*{Base of induction} If the height $q$ of the fundamental branch equals $0$, then the group $G_{\Omega_0}$ is the group associated to the leaf of the tree $T_{\sol}(G)$. By the description of the leaves of the tree $T_{\sol}$ (see \cite[Proposition 9.1]{CKpc} and its proof), it follows that $G_{\Omega_0}$  is a partially commutative group, which is fully residually $\GG$ and it is built from $\GG$ by a sequence of extensions of centralisers of directly indecomposable canonical parabolic subgroups. Hence, in our terms, this partially commutative group is a graph tower over $\GG$, where each floor is basic, and at each floor $\Ts_k=\HH_k$ (and $\tau_k$ is the identity).

If the variables $x_1,\dots,x_{l_k}$ extend a centraliser of a cyclic canonical parabolic subgroup $\langle a_i \rangle$ in $\GG(\Gamma_{k-1})$, then the subgroup $\langle a_i, x_1,\dots,x_{l_k}\rangle$ in $\GG(\Gamma_k)$ is free abelian and, by definition, the edges $(a_i,x_i), (x_i,x_j)$ belong to $E_c(\Gamma_k)$. All the other (new) edges belong to $E_d(\Gamma_k)$.

From the construction of the leaves of the tree $T_{\sol}(G)$, see \cite[Proposition 9.1]{CKpc}, we obtain that the graph tower satisfies the induction hypothesis:
\begin{itemize}
\item  the graph tower $(\Ts=\HH_q, \HH_q)$ trivially makes Diagram (\ref{diag:period}) commutative;
\item  by definition of a solution of a generalised equation and since $H'=H$, for all $x_i,x_j \in \HH_q$, we have that $(x_i^{(1)},x_j^{(1)}) \in E_d(\Gamma_q)$ if and only if $H'(x_i) \lra H'(x_j)$ for every homomorphism $H'$ induced by a solution $H$ from the fundamental sequence; furthermore, if $(x_i,x_j) \in E_c(\Gamma_q)$, then $H'(x_i)$, $H'(x_j)$ belong to the same cyclic subgroup for every homomorphism $H'$ induced by a solution $H$ from the fundamental sequence;
\item since $\tau_q:G_{\Omega_q} \to \HH_q$ is the identity, it obviously satisfies the hypothesis IH2: if $\tau_q(h_i)=y_{i1}\dots y_{ik}$, then $\BA(H'(y_{ij})) \supset \BA(H_i)$ and $\bigcap \limits_{j=1}^k \BA(H'(y_{ij}))= \BA(H_i)$ for all solutions $H$ of the fundamental sequence and homomorphisms $H'$ induced by $H$.
\end{itemize}

\subsection*{Step of Induction}

Suppose that the graph tower $\Ts_1$ exists and the induction hypothesis IH, IH1 and IH2 are satisfied for all groups $G_{\Omega_1}$ so that the fundamental sequence for $G_{R(\Omega_1)}$ has length less than or equal to $q-1$. We show how to construct the graph tower $\Ts_0$ for groups $G_{\Omega_0}$ whose fundamental sequence has length $q$ and prove that the graph tower satisfies all the induction hypothesis.

We construct the group $\Ts_0$ starting from the graph tower $\Ts_1$. The construction of $\Ts_0$ depends on the type of the branch through which the fundamental sequence factors through. It follows by construction of the tree $T(\Omega)$, see \cite{CKpc}, that solutions of the fundamental sequence 
\begin{itemize}
\item either factor through a finite branch of the tree $T(\Omega_0)$; in this case, every solution of the fundamental sequence factors through the group $G_{R(\Omega_v)}$ associated  to the leaf $v$ of $T(\Omega_0)$ (and so the automorphism group needed for the description of  solutions of the fundamental sequence is trivial), where the length of the fundamental branch for $\Omega_v$ equals $q-1$.
\item or they factor through an infinite branch of the tree $T(\Omega_0)$. By \cite[Lemma 4.19]{CKpc}, there are three different types of infinite branches:
\begin{enumerate}
\item infinite branch of type 7-10; in this case, every solution of the fundamental sequence is the composition of an automorphism from the automorphism group associated to the vertex of type $7-10$ (i.e. automorphisms invariant with respect to the kernel), the epimorphism $\pi(v_0,v_1)$ and a homomorphism from $G_{R(\Omega_1)}$ to $\GG$.
\item infinite branch of type 12; in this case, every solution of the fundamental sequence is the composition of an automorphism from the automorphism group associated to the vertex of type $12$ (i.e. automorphisms invariant with respect to the non-quadratic part), the epimorphism $\pi(v_0,v_1)$ and a homomorphism from $G_{R(\Omega_1)}$ to $\GG$.
\item infinite branch of type 15; in this case, we have again two alternatives: 
\begin{itemize}
\item either the fundamental sequence goes through a vertex of type $2$, i.e. the fundamental sequence factors through a generalised equation singular (or strongly singular) with respect to a periodic structure and so every solution is a composition of an automorphism associated to a vertex of type $2$, the epimorphism $\pi(v_0,v_1)$ and a homomorphism from $G_{R(\Omega_1)}$ to $\GG$; 
\item or else, every solution of the fundamental sequence is the composition of an automorphism from the group of automorphisms generated by automorphisms invariant with respect to the non-quadratic part and automorphisms associated to regular periodic structures, the epimorphism $\pi(v_0,v_1)$ and a homomorphism  from $G_{R(\Omega_1)}$ to $\GG$.
\end{itemize}
\end{enumerate}

\end{itemize}

The rest of this section essentially relies on the analogue of the Makanin-Razborov process described in \cite{CKpc}. Although our intention is to make the exposition as self-contained as possible, the reader is referred to \cite{CKpc} for notions and notation not explicitly defined here.

\subsection{Trivial group of automorphisms}

If the group of automorphisms associated to a vertex of the tree $T_{\sol}$ is trivial, we set $\Ts_0=\Ts_1$, $\HH_0=\HH_1$ and $\tau_0$ to be $\pi(v_0,v_1) \tau_1$, where $\tau_1$ is the homomorphism from $G_{\Omega_1}$ to $\Ts_1$ that makes Diagram (\ref{diag:period}) commutative. If the group of automorphisms is trivial, any homomorphism of the fundamental sequence factors through $G_{R(\Omega_1)}$ and so $\tau_0$ and $\Ts_0$ make Diagram (\ref{diag:period}) commutative.

\subsection{Quadratic Case}

In this section we study the quadratic part of a generalised equation in three steps. Firstly, we use the dynamics of an infinite branch of type 12 (surface type) in order to  understand the set of constraints  $\Re_\Upsilon $ of a generalised equation $\Omega=\langle \Upsilon, \Re_\Upsilon\rangle$ that repeats infinitely many times in the infinite branch. More precisely, we show that all items of the quadratic section that are not fixed by the group of automorphisms assigned to the vertex belong to the same minimal tribe and so, in particular, all of them commute with the same set of items. Furthermore, items of the quadratic section that are fixed by the automorphisms belong to tribes that dominate the minimal tribe (the one defined by any of the items which is not fixed by the automorphisms). 

Once we have established these relations between the tribes of items of the quadratic part, we use them to prove that there exists an automorphism of the group $G_\Omega$ that fixes the non-quadratic part and presents the quadratic part as a quotient of a partially commutative group by a surface relation.

Finally, we construct the graph tower and show that it satisfies the corresponding induction hypothesis.

\subsubsection*{Analysing Constraints}
Let us begin with the analysis of an infinite branch of type $12$. Let $\Omega=\langle\Upsilon, \Re_\Upsilon\rangle$ be a quadratic constrained generalised equation of type $12$. Throughout this section we will be mostly concerned with the active bases, boundaries, items etc., i.e. bases, items and boundaries of the (active) quadratic part. When no confusion arises, we will omit saying that the bases (items, boundaries etc.) are active.

Let us consider an infinite branch of type $12$ in the tree $T(\Omega)$, 
$$
\Omega_0 \to \Omega_1\to \dots
$$

Then, by \cite[Lemma 7.12]{CKpc}, there exists a generalised equation $\Omega_{r_0}$ that repeats infinitely many times in the infinite branch: $\Omega_{r_0}=\Omega_{r_1}=\dots$ Without loss of generality, we assume that $r_0=0$. Furthermore, the canonical epimorphism $\pi(r_i,r_j)$ from $G_{\Omega_{r_i}}$ to $G_{\Omega_{r_j}}$ is an automorphism of $G_{\Omega_{r_i}}$. In fact, since there are finitely many different generalised equations appearing in the infinite branch,  without loss of generality, we may assume that every generalised equation of the infinite branch appears in it infinitely many times.

Recall that, since the branch is infinite and $\Omega_0$ repeats infinitely many times, we have that the complexity, the number of active bases and the number of items is constant throughout the branch, i.e. $\comp(\Omega_i)=\comp(\Omega_{i+1})$, $n_A(\Omega_i)=n_A(\Omega_{i+1})$ and $\rho_{\Omega_i}=\rho_{\Omega_{i+1}}$ for all $i$. Furthermore, if $\mu_i$ is the carrier of $\Omega_i$, then every $\mu_i$-tying introduces a new boundary. There is a natural one-to-one correspondence between the bases of $\Omega_i$ and $\Omega_{i+1}$. Namely, the base $\mu_i$ in $\Omega_{i+1}$ is the only leading base which is not the carrier and all the other bases are naturally preserved. Hence, we may assume that the sets of bases of $\Omega_i$ and $\Omega_{i+1}$ coincide.

\begin{lem} \label{lem:1clsec}
Let $\Omega$ be a generalised equation of type $12$ that repeats infinitely many times in the infinite branch. Then, the active part of the generalised equation $\Omega$ has only one closed section.
\end{lem}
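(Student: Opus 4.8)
The plan is to argue by contradiction, assuming that the active (quadratic) part of $\Omega$ has at least two closed sections, and to derive a contradiction with the fact that $\Omega$ repeats infinitely many times in an infinite branch of type $12$. Recall that in a branch of type $12$, at each step $\Omega_i \to \Omega_{i+1}$ the process picks a carrier base $\mu_i$ in the active part, ties it with a leading base along one of the boundaries, and, since the complexity $\comp(\Omega_i)$, the number of active bases $n_A(\Omega_i)$, and the total number of items $\rho_{\Omega_i}$ are all constant along the branch, the transformation is (up to the natural bijections of bases already set up before the lemma) essentially a cyclic/entire transformation on the active part that shifts items from the left of the carrier to the right. The key observation is that the carrier of a quadratic generalised equation is always a base that sits inside a single closed section — the carrier ``eats'' only items of the section it lives in — so the entire-transformation dynamics of the infinite branch take place entirely within one closed section.

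The main steps, in order, are as follows. First I would fix the infinite branch $\Omega_0 \to \Omega_1 \to \cdots$ with $\Omega_0$ repeating infinitely often and recall from \cite{CKpc} the precise form of the transformation realising a type $12$ step: the carrier base $\mu_i$ is a leading active base of maximal length, each $\mu_i$-tying introduces exactly one new boundary, and the underlying interval of the active part to the right of $\beta(\mu_i)-1$ is where new boundaries appear. Second, I would observe that because all active bases are quadratic (each active item is covered exactly twice in the active part) and because no item is ever created or destroyed along the branch, an active base never crosses a closed-section boundary: the endpoints of closed sections are fixed. Third — and this is the crux — I would argue that if there were two closed sections $\sigma_1, \sigma_2$ in the active part, then the carriers $\mu_i$ along the infinite branch would, infinitely often, all lie in one of them, say $\sigma_1$ (since there are only finitely many closed sections, one of them carries infinitely many of the carrier bases, and since $\Omega_0$ repeats, all closed sections behave periodically); then the section $\sigma_2$ is never transformed, and the process restricted to $\sigma_2$ is stationary, so $\sigma_2$ contributes a nonzero additive constant to the complexity that can be stripped off. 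But a type $12$ branch is, by definition and by the construction of the tree $T(\Omega)$ in \cite{CKpc}, one in which the quadratic part cannot be reduced — the complexity is already minimal among the generalised equations equivalent to it along the branch — so a ``dead'' closed section contradicts minimality (one could simply delete $\sigma_2$ or treat it separately, lowering complexity or the number of active bases, contradicting $\comp(\Omega_i) = \comp(\Omega_{i+1})$ together with the fact that the branch was chosen of type $12$, i.e. maximally reduced). Fourth, I would clean up the boundary case where the carriers alternate between the two sections: this cannot happen with the complexity and active-base count staying constant, because tying a carrier in $\sigma_1$ strictly changes the item distribution of $\sigma_1$ while $\sigma_2$ waits, and an infinite such alternation forces one of the two sections to be visited only finitely often, reducing to the previous step.

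The hard part will be making precise the claim that a closed section which is never acted on by a carrier can be ``removed'' without affecting the rest of the branch, and that this removal strictly decreases the relevant complexity invariant — in other words, pinning down exactly why a type $12$ branch, as constructed in \cite{CKpc}, is incompatible with the presence of an inert closed section. This requires unwinding the definition of the complexity $\comp(\Omega)$ and of ``type $12$'' from \cite{CKpc} (in particular \cite[Lemma 4.19]{CKpc} and \cite[Lemma 7.12]{CKpc}) and checking that the complexity is additive over closed sections of the active part, so that an inert section is a genuine obstruction to minimality. Once that bookkeeping is in place, the contradiction is immediate and the lemma follows.
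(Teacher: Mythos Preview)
Your overall shape (argue by contradiction, show that a second closed section would have to be inert) matches the paper's, but the argument as written has a real gap and the contradiction you aim for is not the one the paper uses.

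The gap is in your ``key observation'' that the carrier only acts on the section it lives in. A base and its dual need not lie in the same closed section. In the entire transformation, transfer bases are carried from $\mu$ onto $\Delta(\mu)$, so if the carrier $\mu$ sits in $\sigma_1$ but $\Delta(\mu)$ sits in $\sigma_2$, the dynamics immediately leak into $\sigma_2$. Your second step (``an active base never crosses a closed-section boundary'') is true of individual bases but says nothing about dual pairs, and so does not justify the claim that $\sigma_2$ is untouched. The paper handles exactly this: it partitions bases into $L$ (left of the closed boundary $i$) and $R$, first proves that the carrier is \emph{always} in $L$ --- not merely infinitely often --- because a carrier in $R$ would force $n_A(\Omega_k)>n_A(\Omega_{k+1})$, and then proves the crucial extra fact that if $\nu\in R$ and $\Delta(\nu)\in L$ then $\Delta(\nu)$ is never a carrier (otherwise a transferred base would later need a carrier in $R$). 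Only with both facts can one conclude that the right-hand section is genuinely static; your proposal supplies neither.

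Your intended contradiction is also different and shakier. You want to strip off the inert section and say this lowers ``the relevant complexity invariant'', contradicting minimality of type $12$. But the constancy $\comp(\Omega_i)=\comp(\Omega_{i+1})$ is along the branch, not a minimality statement among modifications of $\Omega$, so removing a section does not contradict it; making this rigorous would require going back into the definition of type $12$ in \cite{CKpc} and arguing that the process would already have declared such a section non-active. The paper instead derives the contradiction directly: once no boundaries are ever introduced in the second section, that section cannot be active. If you want to repair your approach, the cleanest fix is to adopt the paper's $L/R$ partition, prove the two facts above about carriers and cross-section duals, and then conclude that $R$ is static and hence not active --- the complexity-additivity detour is unnecessary.
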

\begin{proof}
Suppose, on the contrary, that $\Omega$ contains more than one (active) closed section: $[1,i]$ and $[i,j]$.

For every $n$ we partition the set of bases $\BS(\Omega_n)$ of $\Omega_n$ into two. Let $L(\Omega_n)$ be the set of bases $\lambda$ of $\Omega_n$ so that $\alpha(\lambda)$ and $\beta(\lambda)$ lie to the left of the boundary $i$ of $\Omega$. Set $R(\Omega_n)$ to be the set of bases $\lambda$ of $\Omega_n$ so that $\alpha(\lambda)$ and $\beta(\lambda)$ lie to the right of the boundary $i$ of $\Omega$. Since the boundary $i$ is closed in $\Omega$, it follows that $\BS(\Omega_n)=L\sqcup R$.

Notice that if $\lambda\in R(\Omega_n)$, then $\lambda\in R(\Omega_{n+1})$.

We first observe that the carrier base  of $\Omega_n$ belongs to $L(\Omega_n)$ for all $n$.  Indeed, if $\Omega_{k+1}$ is the generalised equation so that its carrier $\mu_{k+1}$ belongs to $R(\Omega_{k+1})$ and for all $n\le k$ the carrier $\mu_n$ of $\Omega_n$ belongs to $L(\Omega_n)$, then it follows that $n_A(\Omega_k)>n_A(\Omega_{k+1})$ - a contradiction. 

We now show that if $\nu\in R(\Omega)$ and $\Delta(\nu)\in L(\Omega)$, then $\Delta(\nu)$ is never carrier.  Assume the contrary, i.e. $\Delta(\nu)=\mu_l$ is the carrier of $\Omega_l$ for some $l$. Let $\lambda$ be a transfer base. In the infinite branch, the base $\lambda$ is transferred infinitely many times. Let $\Omega_m$ be the next time $\lambda$ is transferred. It is clear that the carrier $\mu_m$ belongs to $R(\Omega_m)$, deriving a contradiction with the above observation.  

Since, the generalised equation $\Omega$ repeats, it follows that no boundaries are introduced between $\alpha(\Delta(\nu))$ and $\beta(\Delta(\nu))$. This derives a contradiction with the assumption that the section $[i,j]$ of $\Omega$ is active.
\end{proof}

Let $\lambda$ be a base of $\Omega$. We call the number $l(\lambda)=\beta(\lambda)-\alpha(\lambda)$ the \emph{length} of $\lambda$ (in $\Omega$). We call the base \emph{short}  if $l(\lambda)=1$ and \emph{long} otherwise. 

Let $\lambda_1$ and $\lambda_2$ be two long bases of $\Omega$. Notice that since there are no free boundaries in $\Omega$ and since, by Lemma \ref{lem:1clsec}, the generalised equation $\Omega$ has only one active closed section, then every boundary touches precisely two bases. Therefore, the bases $\lambda_1$ and $\lambda_2$ either do not overlap or if they do, then $\alpha(\lambda_2)=\beta(\lambda_1)- 1$ or $\alpha(\lambda_1)=\beta(\lambda_2)- 1$.

For every generalised equation $\Omega_r$ from the infinite branch and any $h_i^{(r)} \in \Omega_r$, the set of words $\{ \pi(v_r,v_s)(h_i^{(r)}) \mid  s>r, \Omega_r=\Omega_s \}$ is either finite or infinite, i.e. the orbit of an item under the automorphisms associated to $\Omega_r$ is either a finite or an infinite set. Define the set $\mathcal F(\Omega_r)$ to be the set of items of $\Omega_r$ for which this set is finite and  let $\Hc(\Omega_r)$ be its complement.

Notice that by the description of automorphisms associated to a generalised equation of type $12$, see \cite[Lemma 7.7]{CKpc}, any item that belongs to a quadratic-coefficient base of $\Omega_r$ or does not belong to the quadratic part is fixed by the automorphisms, and so belongs to the set $\mathcal F(\Omega_r)$.

Let $h_i^{(0)}\in \mathcal F(\Omega_0)$ and let 
$$
\{\varphi_s(h_i^{(0)})\mid \varphi_s=\pi(v_0,v_s)(h_i^{(0)}),\Omega_0=\Omega_s\}=\{w_{i1},\dots,w_{ik}\}
$$
be the orbit of $h_i^{(0)}$ by the family of automorphisms $\{\varphi_s\}$ associated to the vertex $v_0$.
Observe that the set of solutions
$$
F=\{\varphi_s \pi H^{(1)}\}
$$
that factors through the fundamental branch is a discriminating family, where $\varphi_s$ is an automorphism associated to the vertex corresponding to $\Omega_0$, $\Omega_1$ belongs to $T_{\sol}(\Omega_{0})$, the epimorphism $\pi: G_{\Omega_{0}} \to G_{\Omega_1}$ is proper, and $H^{(1)}$  is a solution of $\Omega_{1}$. By Lemma \ref{lem:discfam}, it follows that one of the finitely many subfamilies 
$$
F_j=\{\varphi_{s_j} \pi H^{(1)}\mid H^{(1)} \hbox{ is a solution of } \Omega_{1}\},
$$
where $\varphi_{s_j}(h_i^{(0)})=w_{ij}$, $j=1,\dots,k$, is discriminating. Hence, without loss of generality, we may assume that if $h_i^{(0)}\in \mathcal F(\Omega_0)$, then $\varphi_s(h_i^{(0)})=w$.

We therefore arrive at the following
\begin{rem}\label{rem:discrfam}
If $h_i^{(0)}\in \mathcal F(\Omega_0)$, then $\varphi\pi(v_0,v_1)(h_i^{(0)})=w(h^{(1)})$, where $w(h^{(1)})$ is a word  in items from $\Omega_1$, such that for a discriminating family of solutions $H^{(0)}=\varphi\pi(v_0,v_1)H^{(1)}$ of the fundamental sequence, we have that $H_i^{(0)}=w(H^{(1)})$.
We will show that, in fact, one can think of these items as quadratic-coefficient.
\end{rem}

Let $\Omega_s=\Omega_0$ be so that for all $h_i^{(0)}\in \mathcal F(\Omega)$ we have that $\pi(v_0,v_1)(h_i^{(0)})=w(h^{(s)})=h_{i_1}^{(1)}\cdots h_{i_n}^{(1)}$. For all $s'>s$ such that $\Omega_{s'}=\Omega_0$, we have that 
$$
\pi(v_0,v_{s'})(h_i^{(0)})=\pi(v_s,v_{s'})(\pi(v_0,v_s)(h_i^{(0)}))=\pi(v_s,v_{s'})(h_{i_1}^{(1)}\cdots h_{i_n}^{(1)})=h_{i_1}^{(1)}\cdots h_{i_n}^{(1)}. 
$$
From the definition of $\pi(v_s,v_{s'})$, we conclude that $\pi(v_s,v_{s'})(h_{i_j}^{(s)})=h_{i_j}^{(s')}$ and so $h_{i_j}^{(s)}\in \mathcal F(\Omega_s)$ and $\pi(v_s,v_t)(h_{i_j}^{(s)})=h_m^{(t)}$, for all $s<t<s'$, $\Omega_s=\Omega_{s'}$.

\begin{rem} \label{rem:K}
Without loss of generality, we may assume that for a fundamental sequence, we have that $h_i^{(0)}\in \mathcal F(\Omega_0)$ if and only if $\varphi_s(h_i^{(0)})=h_i^{(0)}$ for all $s$ so that $\Omega_s=\Omega_0$. Therefore, for the discriminating family, one has that $H_i^{(0)}=H^{(1)}(\pi(h_i))$, where $\pi: G_{R(\Omega_{0})} \to G_{R(\Omega_1)}$ is the proper quotient in the tree $T_{\sol}(\Omega_{0})$ and $H^{(1)}$ is a solution of $\Omega_{1}$.

Furthermore, we have that if $h_i^{(0)}\in \mathcal F(\Omega_0)$ then $\pi(v_0,v_t)(h_i^{(0)})=h_m^{(t)} \in \mathcal F(\Omega_t)$ for all $t>0$. Hence $|\mathcal F(\Omega_0)| \le \dots \le |\mathcal F(\Omega_t)| \le \dots$ Then, since the number $|\mathcal F(\Omega_0)|$ is bounded above by $\rho_{\Omega_0}=\rho_{\Omega_t}$, without loss of generality, we may assume that the number $|\mathcal F(\Omega_0)|$ is constant throughout the branch, i.e. $|\mathcal F(\Omega_0)|=|\mathcal F(\Omega_t)|$ for all $t$.
\end{rem}

Given a generalised equation $\Omega$, we define $\mm(\Omega)$ to be the set of minimal tribes of the generalised equation:
$$
\mm(\Omega) = \{ t(h_j) \mid h_j \in \Hc(\Omega) \hbox{ and } t(h_j) \hbox{ does not strictly dominate } t(h_k), \hbox{ for all }  h_k \in \Hc(\Omega) \},
$$ 
and $h(\mmm_i(\Omega))$ to be the set of items that belong to a given minimal tribe $\mmm_i$ of $\Omega$:
$$
h(\mmm_i(\Omega))=\{ h_j \in \Hc(\Omega)\} \mid t(h_j)\in \mmm_i, \mmm_i \in \mm(\Omega) \}.
$$ 
We draw the reader's attention to the fact that the minimal tribes are defined using the items from $\Hc(\Omega)$, which, by definition, are active.

\begin{lem} \label{lem:mintribe}
In an infinite branch of type $12$, there exists $i\in \BN$ so that $\mm(\Omega_{i})=\mm(\Omega_j)$ for all $j>i$. Furthermore, the cardinality of the set $h(\mmm_k(\Omega_{i}))$ is constant, i.e. $|h(\mmm_k(\Omega_{i}))|=|h(\mmm_k(\Omega_j))|$ for every minimal tribe $\mmm_k\in \mm(\Omega_{i})=\mm(\Omega_j)$.
\end{lem}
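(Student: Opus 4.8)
The plan is to recognise both $\mm(\Omega_i)$ and, for a fixed minimal tribe $\mmm_k$, the integer $|h(\mmm_k(\Omega_i))|$ as quantities that are monotone along the infinite branch and that range over finite sets, so that they must be eventually constant. The tools are Lemma \ref{lem:tribes} (the tribe of an item of $\Omega_{i+1}$ dominates the tribe of the item of $\Omega_i$ of which it is an image), the finiteness of the poset $(\{t_1,\dots,t_\m\},\succ)$ of tribes of $\GG$, and the fact that along an infinite branch of type $12$ the numbers $\rho_{\Omega_i}$, $n_A(\Omega_i)$, $\comp(\Omega_i)$ and $|\mathcal F(\Omega_i)|$ are constant (the last by Remark \ref{rem:K}), so that $|\Hc(\Omega_i)|=\rho_{\Omega_i}-|\mathcal F(\Omega_i)|$ is constant too.

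First I would establish a structural claim about one transformation $\Omega_i\to\Omega_{i+1}$ of the branch: the image words $\pi(v_i,v_{i+1})(h_j^{(i)})$ tile the items of $\Omega_{i+1}$, no active $\Hc$-item of $\Omega_i$ maps to the empty word or to a word involving only $\mathcal F$-items, and no $\mathcal F$-item maps to a word involving an $\Hc$-item; hence every active $\Hc$-item of $\Omega_{i+1}$ lies in the image word of a unique active $\Hc$-item of $\Omega_i$. Granting this, since $|\Hc(\Omega_i)|=|\Hc(\Omega_{i+1})|$ and each active $\Hc$-item of $\Omega_i$ contributes at least one active $\Hc$-item to $\Omega_{i+1}$, a counting argument shows it contributes exactly one; thus there is a bijection $\phi_i\colon\Hc(\Omega_i)\to\Hc(\Omega_{i+1})$ with $t(\phi_i(h))\succeq t(h)$ for all $h$, by Lemma \ref{lem:tribes}.

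Next I would deduce the stabilization of $\mm$. For $s'\in\mm(\Omega_{i+1})$, realised by an active $\Hc$-item $h'$, the item $\phi_i^{-1}(h')$ is an active $\Hc$-item of $\Omega_i$ with $t(\phi_i^{-1}(h'))\preceq s'$, hence $t(\phi_i^{-1}(h'))$ dominates some $s\in\mm(\Omega_i)$, giving $s\preceq t(\phi_i^{-1}(h'))\preceq s'$; that is, every minimal tribe of $\Omega_{i+1}$ dominates a minimal tribe of $\Omega_i$. Letting $D_i$ be the downward closure in $(\{t_1,\dots,t_\m\},\succ)$ of the antichain $\mm(\Omega_i)$, a short chase then gives $D_i\subseteq D_{i+1}$; since the poset is finite, $(D_i)$ stabilizes, say $D_i=D$ for all $i\ge i_0$. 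As $\mm(\Omega_i)$ is an antichain and $D_i$ is its downward closure, $\mm(\Omega_i)$ is exactly the set of maximal elements of $D_i$, whence $\mm(\Omega_i)=\max(D)=\mm(\Omega_{i_0})$ for all $i\ge i_0$, which is the first assertion. For the cardinalities, fix $i\ge i_0$ and $\mmm_k\in\mm(\Omega_i)=\mm(\Omega_{i+1})$. If $h'\in\Hc(\Omega_{i+1})$ has tribe $\mmm_k$, then $t(\phi_i^{-1}(h'))\preceq\mmm_k$; but $\phi_i^{-1}(h')$ is an active $\Hc$-item and $\mmm_k$ is minimal among the tribes of such items, so $t(\phi_i^{-1}(h'))=\mmm_k$. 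Thus $\phi_i^{-1}$ restricts to an injection of the active $\Hc$-items of $\Omega_{i+1}$ of tribe $\mmm_k$ into those of $\Omega_i$ of tribe $\mmm_k$, i.e. $|h(\mmm_k(\Omega_{i+1}))|\le|h(\mmm_k(\Omega_i))|$. Hence $i\mapsto|h(\mmm_k(\Omega_i))|$ is a non-increasing sequence of non-negative integers for $i\ge i_0$, so it is eventually constant; taking $i$ beyond the point where all (finitely many) sequences $|h(\mmm_k(\Omega_i))|$, $k=1,\dots,p$, have become constant yields the second assertion, $\mm(\Omega_i)$ still being constant there.

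The main obstacle is the structural claim of the second paragraph. To prove it I would go through the transformations $\ET 1$--$\ET 5$, $\D 1$, $\D 2$, $\D 3$, $\D 5$, $\D 6$ occurring in a type $12$ branch and check, using that the carrier tying introduces exactly one new boundary and that $\rho_{\Omega_i}$ and $n_A(\Omega_i)$ are constant, that active $\Hc$-items are carried to active $\Hc$-items and non-active or $\mathcal F$-items are not; the key point is that an item whose image word involves only $\mathcal F$-items inherits a finite orbit under the automorphisms of type $12$ (described in \cite[Lemma 7.7]{CKpc}), hence lies in $\mathcal F$ itself. The remaining poset-theoretic steps — monotonicity of a downward closure in a finite poset, and monotonicity of the multiplicities — are routine.
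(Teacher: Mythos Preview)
Your argument is sound in outline but considerably more laborious than the paper's, and two of your technical steps need adjustment.

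The paper's simplification is to exploit the fact that the generalised equation \emph{repeats} infinitely many times along the branch, $\Omega_{r_0}=\Omega_{r_1}=\cdots$. Since $\rho_{\Omega}$ is constant and there are only finitely many tribes, the tuple $(t(h_1^{(r_j)}),\dots,t(h_{\rho-1}^{(r_j)}))$ takes only finitely many values, so on a further cofinal subsequence the tribe of every item is literally the same; hence $\mm$ and each $|h(\mmm_k)|$ are constant on that subsequence for free. It then suffices to prove the one-step monotonicity $\mm(\Omega_i)\supseteq\mm(\Omega_{i+1})$ and $|h(\mmm_k(\Omega_i))|\ge|h(\mmm_k(\Omega_{i+1}))|$, and this is read off directly from a single entire transformation via Lemma~\ref{lem:tribes}: after introducing the boundary connection $(j,\mu,\b(j))$, the two ``new'' items of $\Omega_{i+1}$ satisfy $t(h_{\cali(\b(j))}^{(i+1)})\succeq t(h_j^{(i)})$ and $t(h_{\b(j)}^{(i+1)})\succeq t(h_{\cali(\b(j))}^{(i)})$. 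A monotone sequence that is constant on a cofinal subsequence is eventually constant, and the lemma follows. You never use repetition, which is why you are forced into the bijection and poset machinery.

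Two repairs your version needs. First, the images $\pi(v_i,v_{i+1})(h_j^{(i)})$ do \emph{not} tile the items of $\Omega_{i+1}$: in one step both $h_1^{(i)}$ and the split item $h_{\cali(\b(2))}^{(i)}$ have $h_{\cali(\b(2))}^{(i+1)}$ in their image, so ``every $\Hc$-item of $\Omega_{i+1}$ lies in the image of a \emph{unique} $\Hc$-item'' fails and your counting does not produce $\phi_i$. The correct pairing is the explicit one above, $h_1^{(i)}\leftrightarrow h_{\cali(\b(2))}^{(i+1)}$ and $h_{\cali(\b(2))}^{(i)}\leftrightarrow h_{\b(2)}^{(i+1)}$ (identity elsewhere), and one then uses constancy of $|\Hc|$ together with Remark~\ref{rem:K} to see it restricts to a bijection $\Hc(\Omega_i)\to\Hc(\Omega_{i+1})$. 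Second, the closure argument should go upward, not downward: setting $U_i=\{t:\exists\,s\in\mm(\Omega_i),\ t\succeq s\}$, the relation ``every $s'\in\mm(\Omega_{i+1})$ dominates some $s\in\mm(\Omega_i)$'' gives $U_{i+1}\subseteq U_i$, a decreasing chain in a finite poset, and $\mm(\Omega_i)=\min U_i$ recovers $\mm(\Omega_i)$ from $U_i$. Your downward-closure inclusion $D_i\subseteq D_{i+1}$ does not follow from what you proved.
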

\begin{proof}
Since the generalised equation repeats infinitely many times, $\Omega_{r_0}=\Omega_{r_1}=\dots$ and since there are finitely many different tribes, for sufficiently large $i$  and passing to a subsequence $\Omega_{r_{i_0}}=\Omega_{r_{i_1}}=\dots$, we may assume that the tribe of every item $h_k^{(r_{i_j})}$ is the same, i.e. $t(h_k^{(r_{i_0})})=t(h_k^{(r_{i_j})})$ for all $j\in \BN$. Hence $\mm(\Omega_{r_{i_0}})=\mm(\Omega_{r_{i_j}})$ and $|h(\mmm_k(\Omega_{r_{i_0}}))|=|h(\mmm_k(\Omega_{r_{i_j}}))|$ for all $\mmm_k\in \mm(\Omega_{r_{i_0}})= \mm(\Omega_{r_{i_j}})$. To simplify the notation, set $i_0=0$. 

It follows from the above equalities that in order to prove that $\mm(\Omega_{i})=\mm(\Omega_j)$ and that $|h(\mmm_k(\Omega_i))|=|h(\mmm_k(\Omega_j))|$ for all $i,j\in \BN$ and $\mmm_k\in \mm(\Omega_{i})$, it suffices to show that $\mm(\Omega_i)\supseteq \mm(\Omega_{i+1})$ and that $|h(\mmm_k(\Omega_i))|\ge |h(\mmm_k(\Omega_{i+1}))|$.

Let $\mu$ be the carrier of $\Omega_i$, let $j,j+1$ be the boundaries that touch a transfer base (that is $j+1\le \beta(\mu)-1$) and let $(j, \mu, \b(j))$ be the corresponding boundary connection introduced in the entire transformation. Suppose that $\b(j)$ is introduced between $\cali(\b(j))$ and $\cali(\b(j))+1$. By Lemma \ref{lem:tribes}, it follows that $t(h_{\cali(\b(j))}^{(i+1)})$ dominates $t(h_j^{(i)})$ and  $t(h_{\b(j)}^{(i+1)})$ dominates $t(h_{\cali(\b(j))}^{(i)})$. Therefore, we have that $\mm(\Omega_i)\supseteq \mm(\Omega_{i+1})$ and $|h(\mmm_k(\Omega_i))|\ge |h(\mmm_k(\Omega_{i+1}))|$ for all minimal tribes $\mmm_k$.
\end{proof}

We call the items of $\Omega_i$ that belong to a minimal tribe \emph{minimal}.

Recall that the carrier base $\mu_i$ of $\Omega_i$ is a long base. Since there are no free boundaries, there are exactly $\beta(\mu_i)-1$ transfer bases and all of them are short. Moreover, $\mu_i$ is a short transfer base of $\Omega_{i+1}$, see Figure \ref{pic:ge}.

\begin{defn} \label{defn:88} 
Let $\mu$ be the carrier base of $\Omega$. We take the (uniquely defined) transfer base $\lambda$ so that $\alpha(\lambda)=1$ and we $\mu$-tie the boundary $2=\beta(\lambda)$. We then transfer $\lambda$ from $\mu$ onto $\Delta (\mu)$. 

Now, we cut $\mu$ in the boundary $2$ and delete the section $[1,2]$.

It follows from the definition that the (usual) entire transformation $\D 5$ is a composition of the transformations just introduced (namely of $\beta(\mu)-2$ such transformations). From now on, we abuse the terminology and refer to the introduced transformation as to the \emph{entire transformation} and refer to the usual entire transformation as to $\D 5$ or \emph{complete entire transformation}. 
\end{defn}

\begin{lem} \label{lem:AB}
In an infinite branch $\Omega_0\to\Omega_1\to\dots$ of type $12$, there exists a generalised equation $\Omega_{p}$ such that 
\begin{description}
\item[Case A] either the item $h_1^{(p)}$ belongs to a minimal tribe of $\Omega_p$,
\item[Case B] or the entire transformation introduces a new boundary in an item that belongs to a minimal tribe of $\Omega_p$.
\end{description}
\end{lem}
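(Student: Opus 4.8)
The plan is to analyze the dynamics of the entire transformation along the infinite branch $\Omega_0\to\Omega_1\to\cdots$ of type $12$ and track what happens near the left endpoint, boundary $1$. By Lemma \ref{lem:1clsec} the active part of each $\Omega_i$ has a single closed section, so there are no free boundaries, every boundary touches exactly two bases, and in particular boundary $1$ touches the carrier base $\mu_i$ (which we have seen is always leading, hence a long base) and one short transfer base $\lambda_i$ with $\alpha(\lambda_i)=1$. First I would set up the relevant invariants: by Lemma \ref{lem:mintribe} we may assume (passing to a tail of the branch) that the set $\mm(\Omega_i)$ of minimal tribes and the cardinalities $|h(\mmm_k(\Omega_i))|$ are constant; I would also use Remark \ref{rem:K} to assume $|\mathcal F(\Omega_i)|$ is constant along the branch, and the fact that $\comp$, $n_A$ and $\rho_{\Omega_i}$ are constant.

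The core argument is a case distinction that will terminate. Consider the tribe $t(h_1^{(p)})$ of the first item along the branch. If at some stage $p$ this tribe is minimal, we are in Case A and we are done. Otherwise $t(h_1^{(p)})$ strictly dominates some minimal tribe for every $p$. Now run the entire transformation of Definition \ref{defn:88} at stage $p$: it $\mu_p$-ties the boundary $2=\beta(\lambda_p)$, transfers $\lambda_p$ onto $\Delta(\mu_p)$, cuts $\mu_p$ at $2$, and deletes the section $[1,2]$. The key point, via Lemma \ref{lem:tribes}, is that under the epimorphism $\pi$ corresponding to this transformation, the image items' tribes dominate the tribes of the preimage items; in particular the $\mu_p$-tie introduces a new boundary inside the item of $\Omega_p$ that $\Delta(\mu_p)$ lands on, and the tribe of the newly created item(s) dominates the tribe of the item of $\Omega_p$ at position $\cali$ where the boundary connection $\b(2)$ is inserted. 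If that item at position $\cali$ belongs to a minimal tribe of $\Omega_p$, we are in Case B and done. Hence I must show that if we are in neither Case A nor Case B at stage $p$, then something strictly decreases — so that the process cannot avoid both cases forever. The natural quantity is the total number of items whose tribe strictly dominates a minimal tribe but lies "above" position $1$ in a suitable sense, or more robustly: after deleting the section $[1,2]$ and since the generalised equation repeats ($\Omega_{r_0}=\Omega_{r_1}=\cdots$), I would argue by a counting/pigeonhole argument on how many times a non-minimal item can sit at position $1$ or receive the $\mu$-tie before forcing one of the two alternatives. Concretely, each entire transformation that falls into neither case must strictly move boundary $1$ past an item whose tribe is non-minimal, and since the lengths $l(\lambda)$, the number of bases, and the tribe structure are all bounded and constant along a repeating branch, this can happen only finitely often before boundary $1$ meets a minimal item (Case A) or the $\mu$-tie lands on one (Case B).

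The main obstacle I anticipate is making precise the monovariant that forces termination: one must carefully separate the items that are "between" the old boundary $1$ and the first minimal item, show this finite set cannot be replenished from the right by the transfer-transport of bases (here Lemma \ref{lem:1clsec}, which says there is only one closed section so $R(\Omega_i)$ behaves rigidly, will be essential, together with the observation from the proof of Lemma \ref{lem:1clsec} that the carrier always stays on the left), and conclude that it strictly shrinks each time we avoid both cases. A secondary subtlety is bookkeeping the difference between the "entire transformation" of Definition \ref{defn:88} (one $\lambda$ at a time) and the complete entire transformation $\D 5$, and checking that the tribe-domination statement of Lemma \ref{lem:tribes} applies to each micro-step; this is routine once one notes each micro-step is a composition of the elementary and derived transformations covered by Lemma \ref{lem:tribes}. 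I would organize the write-up as: (1) reduce to a repeating branch with constant invariants; (2) describe the effect of one entire-transformation micro-step on boundary $1$ and on tribes, invoking Lemma \ref{lem:tribes}; (3) define the monovariant and prove it strictly decreases unless Case A or Case B occurs; (4) conclude by well-ordering.
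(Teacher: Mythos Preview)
Your approach has a real gap at the step you yourself flag as the main obstacle: the monovariant. The quantity ``items between boundary $1$ and the first minimal item'' is not stable under the transformation (items are deleted on the left and created by the $\mu$-tie elsewhere, and the positions of minimal items shift), and nothing you have set up prevents non-minimal items from being replenished at position $1$ indefinitely. Your pigeonhole sketch focuses on the wrong objects --- the non-minimal items --- and there is no bound on how many times a non-minimal item can occupy position $1$ along the branch.

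The paper's proof bypasses all of this by tracking the \emph{right} object: a single minimal item $h_j^{(r_0)}$. The crucial point you are not using is that minimal tribes are, by definition, computed only over $\Hc(\Omega)$, so every minimal item lies in $\Hc$ and therefore has infinite orbit under the automorphisms $\pi(v_{r_0},v_s)$. Hence there is a first $p$ with $|\pi(v_{r_0},v_{p+1})(h_j^{(r_0)})|\ge 2$ while $|\pi(v_{r_0},v_i)(h_j^{(r_0)})|=1$ for $i\le p$. At that step either the short base covering $h_{j_p}^{(p)}$ is transferred, forcing $j_p=1$ (Case~A), or a boundary is introduced between $j_p$ and $j_p+1$ (Case~B). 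Since the image stayed a single item up to step $p$, its tribe is unchanged and, by Lemma~\ref{lem:mintribe}, still minimal in $\Omega_p$. That is the whole argument; no monovariant is needed. I recommend abandoning the descent at position $1$ and following a minimal item forward instead.
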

\begin{proof}
Let $h_j^{(r_0)}$ be a minimal item of $\Omega_{r_0}$. Since $h_j^{(r_0)} \in \Hc(\Omega_{r_0})$, by definition of the set $\Hc(\Omega_{r_0})$, the orbit of $h_j^{(r_0)}$ under the automorphisms associated to $\Omega_{r_0}$ is infinite. Therefore, there exists $p\in \BN$ so that 
$$
|\pi(r_0,i)(h_j^{(r_0)})|=1, \hbox{ for all } i\le p \hbox{ and } |\pi(r_0,p+1)(h_j^{(r_0)})|\ge 2.
$$
Hence,
\begin{itemize}
\item either a base covering the item $\pi(r_0,p)(h_j^{(r_0)})=h_{j_p}^{(p)}$ is transferred (Case A of the lemma) and so, by Remark \ref{rem:K}, one can assume that $j_p=1$,
\item or, in the entire transformation of the generalised equation $\Omega_{p}$, a new boundary  is introduced between the boundaries ${j_p}$ and $j_p+1$ (Case B of the lemma).
\end{itemize}
Furthermore, since $\pi(r_0,i)(h_j^{(r_0)})=h_{j_i}^{(i)}$ for all $i \le p$, it follows that the tribe of $h_{j_i}^{(i)}$ is the same for all $i\le p$, i.e. $t(h_{j_i}^{(i)})=t(h_{j}^{(r_0)})$. Hence if $h_j^{(r_0)}$ belongs to a minimal tribe $\mmm_k$ in $\Omega_0$, then $h_{j_i}^{(i)}$ also belongs to the tribe $\mmm_k$ and, by Lemma \ref{lem:mintribe}, $\mmm_k$ is a minimal tribe in $\Omega_{p}$.
\end{proof}

\begin{lem}
In an infinite branch of type $12$,  
$$
\Omega_0\to \Omega_1\to \dots,
$$
where $\Omega_i$ is obtained from $\Omega_{i+1}$ by an entire transformation, there exists a generalised equation $\Omega_{k}$ of the infinite branch such that the item $h_1^{(k)}$ is minimal {\rm(Case A} of {\rm Lemma \ref{lem:AB}} holds{\rm)}.
\end{lem}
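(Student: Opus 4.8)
The plan is to bootstrap from Lemma \ref{lem:AB}: we already know that along the infinite branch of type $12$ there is a generalised equation $\Omega_p$ for which \emph{either} Case A holds (some base covering a minimal item is transferred, and after renumbering $h_1^{(p)}$ is minimal) \emph{or} Case B holds (the entire transformation introduces a new boundary strictly inside a minimal item). The goal is to rule out that Case B persists forever, i.e. to show that after finitely many entire transformations we are forced into Case A for some $\Omega_k$. First I would fix the minimal item $h_j^{(r_0)}$ of $\Omega_{r_0}$ and track its image $h_{j_i}^{(i)}=\pi(r_0,i)(h_j^{(r_0)})$, which by the proof of Lemma \ref{lem:AB} stays minimal (of the same minimal tribe $\mmm_k$, using Lemma \ref{lem:mintribe}) and has length $1$ in each $\Omega_i$ for $i\le p$, with length jumping to $\ge 2$ at step $p+1$. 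The point is that ``length jumps'' cannot happen infinitely often without a base over this item being transferred: an entire transformation of carrier $\mu_i$ either leaves the minimal item untouched, or cuts/splits it by introducing a $\mu_i$-tied boundary inside it (Case B), but every time Case B occurs the minimal item is \emph{subdivided}, so its image in later $\Omega_s$ (with $\Omega_s=\Omega_0$, so $\rho_{\Omega_s}=\rho_{\Omega_0}$ is fixed) becomes a product of strictly more items, each still minimal by Lemma \ref{lem:tribes}.

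Concretely, since $\rho_{\Omega_i}$ is constant along the branch (the generalised equation $\Omega_0$ repeats), the number of boundaries, hence the number of items, is bounded; so the total number of minimal items in $h(\mmm_k(\Omega_i))$ is bounded by $\rho_{\Omega_0}$ and, by Lemma \ref{lem:mintribe}, is eventually constant. Therefore Case B — which strictly increases the number of items obtained by subdividing the original minimal item $h_j^{(r_0)}$ when we pass from $\Omega_i$ to the next copy $\Omega_s=\Omega_0$ — can occur only finitely many times before the orbit of $h_j^{(r_0)}$ under the associated automorphisms (which is infinite, by $h_j^{(r_0)}\in\Hc(\Omega_{r_0})$) forces an index at which a base covering the image of the minimal item must be transferred rather than merely cut. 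I would formalise this by a monovariant argument: assign to $\Omega_i$ the quantity $\rho_{\Omega_0}-|h(\mmm_k(\Omega_i))|$ together with (in case of ties) the number of boundaries lying strictly inside the tracked minimal item; under a Case-B entire transformation one of these strictly decreases, while Case A is exactly the terminating alternative. Since both quantities are non-negative integers, after finitely many steps we reach an $\Omega_k$ where Case A holds, i.e. $h_1^{(k)}$ is minimal.

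The main obstacle I anticipate is making the monovariant genuinely decrease under \emph{every} Case-B step while not being destroyed by the non-carrier part of the entire transformation: one must check that the $\mu_i$-tying which realises Case B really subdivides the tracked minimal item (and does not, say, create a new short section that is immediately deleted without affecting the count), and that passing from $\Omega_i$ to the next repeating copy $\Omega_s=\Omega_0$ correctly pulls back the subdivision. Here the precise form of the entire transformation in Definition \ref{defn:88} — cutting $\mu$ at boundary $2$ and deleting the section $[1,2]$ — is what guarantees that in Case B the new boundary genuinely lands \emph{inside} a minimal item and is not erased, so that its image in $\Omega_0$ records one extra item of the minimal tribe; combined with the boundedness $|h(\mmm_k(\Omega_i))|\le\rho_{\Omega_0}$ this closes the argument. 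A secondary technical point is the bookkeeping with Remark \ref{rem:K}, which we use (as in the proof of Lemma \ref{lem:AB}) to renumber so that the transferred base in Case A covers $h_1^{(k)}$ precisely; this is harmless since it only involves choosing a discriminating subfamily via Lemma \ref{lem:discfam}.
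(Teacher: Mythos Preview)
Your argument contains a genuine error that makes the proposed monovariant collapse. You write that under Case~B ``the minimal item is subdivided, so its image \dots\ becomes a product of strictly more items, each still minimal by Lemma~\ref{lem:tribes}.'' But Lemma~\ref{lem:tribes} only says that the tribe of each piece \emph{dominates} the original tribe; it does not say the pieces remain in the minimal tribe $\mmm$. In fact, the very lemma you invoke next, Lemma~\ref{lem:mintribe}, asserts that $|h(\mmm_k(\Omega_i))|$ is \emph{constant} along the branch. Hence when a minimal item is split into two, exactly one of the pieces stays minimal and the other belongs to a strictly dominating tribe. Consequently your primary monovariant $\rho_{\Omega_0}-|h(\mmm_k(\Omega_i))|$ is constant, not decreasing, and the tie-breaker (``number of boundaries lying strictly inside the tracked minimal item'') is ill-posed: the tracked minimal item is always a single item, so this count is identically zero, while if you instead mean the length of $\pi(r_0,i)(h_j^{(r_0)})$, you have given no reason why a later Case~B step must subdivide an item in \emph{this} image rather than some other minimal item of $\Omega_i$.

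The paper's proof exploits the constancy of $|h(\mmm)|$ in the opposite direction. Assuming Case~B at $\Omega_0$, the minimal item $h^{(0)}_{\cali(\b(2))}$ splits into $h^{(1)}_{\cali(\b(2))}$ and $h^{(1)}_{\b(2)}$; since $|h(\mmm)|$ is constant, exactly one of these is minimal. The crucial observation you are missing is the specific tribe-domination relation coming from the boundary connection $(2,\mu,\b(2))$: the tribe of the piece $h^{(1)}_{\b(2)}$ dominates the tribe of $h_1^{(0)}$. Thus if this piece is the minimal one, then $h_1^{(0)}$ is already minimal and $\Omega_0$ itself satisfies Case~A. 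If instead the other piece $h^{(1)}_{\cali(\b(2))}$ is minimal, one continues: let $k$ be the first index with $\cali(\b(k))>\cali(\b(2))$, and argue (again using constancy of $|h(\mmm)|$ and the domination relations of Lemma~\ref{lem:tribes}) that $h^{(0)}_{k-1}$ is minimal, giving Case~A at a later $\Omega_k$. The argument is a direct case analysis linking the pieces of the split to the transferred items $h_i^{(0)}$, not a monovariant descent.
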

\begin{proof}
By Lemma \ref{lem:AB}, it suffices to show that if in the infinite branch there exists a generalised equation that satisfies Case B of Lemma \ref{lem:AB}, then there exists a generalised equation that satisfies Case A.

Without loss of generality, we may assume that the generalised equation $\Omega_{0}$ satisfies Case B of Lemma \ref{lem:AB}, i.e.  the entire transformation introduces a boundary connection $(2,\mu, \b(2))$ and the boundary $\b(2)$ is between the boundaries $\cali(\b(2))$ and $\cali(\b(2))+1$, where $\mu$ is the carrier base of $\Omega_0$ and $h_{\cali(\b(2))}^{(0)}$ is a minimal item that belongs to some minimal tribe $\mmm$.

Since, by Lemma \ref{lem:mintribe}, the number of minimal items of a fixed minimal tribe $\mmm$ in the infinite branch is constant, it follows that either $h_{\cali(\b(2))}^{(1)}$ or $h_{\b(2)}^{(1)}$ is minimal and belongs to $\mmm$. If  $h_{\b(2)}^{(1)} \in h(\mmm)$, then, by Lemma \ref{lem:tribes}, $h_1^{(0)}\in h(\mmm)$ and hence $\Omega_0$ satisfies Case A of Lemma \ref{lem:AB}. 

Assume now that $h_{\cali(\b(2))}^{(1)}$ is minimal and belongs to $\mmm$. Let $k$ be so that for all the boundary connections $(i, \mu,\b(i))$, $i=2,\dots, k-1$, one has $\cali(\b(i))=\cali(\b(2))$ and 
$\cali(\b(k))>\cali(\b(2))$. 

Suppose that $\Omega_i$, $i=1, \dots, k-1$, does not satisfy  Case A of Lemma \ref{lem:AB}. Then, since the number of minimal items of the minimal tribe $\mmm$ is constant in the infinite branch, it follows that $h_{\b(k-1)}^{(k)}$ is minimal and belongs to $h(\mmm)$ (note that here $k-1$ is the boundary of $\Omega_0$). 
By definition of $k$ and Lemma \ref{lem:tribes}, we have that the tribe of $h_{\b(k-1)}^{(k)}$ dominates the tribe of $h_{k-1}^{(0)}$ and hence, $h_{k-1}^{(0)}$ is minimal and belongs to $h(\mmm)$. We conclude that the generalised equation $\Omega_{k}$ falls under Case A of Lemma \ref{lem:AB}.
\end{proof}

\begin{lem}\label{lem:mincar}
Let $\Omega_0$ be a generalised equation of type $12$ that repeats infinitely many times in the infinite branch. Let  $\mu$ be the carrier of $\Omega_0$ and let $h_1^{(0)}$ be an item from a minimal tribe $\mmm$.  Then, the item $h_{\beta(\mu)-1}^{(0)}$ is also minimal and belongs to $h(\mmm)$. In particular, the item $h_1^{(1)}$ of $\Omega_1$, where $\Omega_1$ is obtained from $\Omega_0$ by a complete entire transformation is minimal and belongs to $h(\mmm)$.

The tribe of every item covered by $\mu$ dominates the minimal tribe $\mmm$ of $h_1^{(0)}$. Furthermore, if an item $h_i^{(0)}$ covered by $\mu$ strictly dominates the minimal tribe, then $\pi(v_0,v_1)(h_i^{(0)})=h_j^{(1)}$ and, in particular, $h_i^{(0)} \in \mathcal{F} (\Omega_0)$.
\end{lem}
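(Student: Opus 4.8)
The plan is to analyze how the carrier base $\mu$ of $\Omega_0$ interacts with the entire transformation and use the constancy results already established (Lemma \ref{lem:mintribe} and the remarks on $\mathcal{F}$) to track tribes through one step of the infinite branch. First I would set up notation: since $\mu$ is the carrier and there are no free boundaries and (by Lemma \ref{lem:1clsec}) only one active closed section, every boundary touches exactly two bases, and there are exactly $\beta(\mu)-1$ short transfer bases $\lambda_1,\dots,\lambda_{\beta(\mu)-1}$ with $\alpha(\lambda_1)=1$, arranged consecutively under $\mu$. The item $h_1^{(0)}$ is covered by $\lambda_1$ (and by $\mu$), and the item $h_{\beta(\mu)-1}^{(0)}$ is covered by $\lambda_{\beta(\mu)-1}$ (and by $\mu$). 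The key point is that in the complete entire transformation $\D 5$, each transfer base $\lambda_i$ gets transferred from $\mu$ onto $\Delta(\mu)$ and the leftmost section gets deleted, so that $\mu$ becomes a short transfer base of $\Omega_1$ covering the item that was $h_{\beta(\mu)-1}^{(0)}$; this last item becomes $h_1^{(1)}$ in $\Omega_1$ (after renumbering, using Remark \ref{rem:K}).

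Next I would argue that $h_{\beta(\mu)-1}^{(0)}$ is minimal and belongs to $h(\mmm)$. By Lemma \ref{lem:tribes} applied to the entire transformation (which is a composition of derived transformations of the type covered by that lemma), every minimal tribe of $\Omega_1$ dominates a minimal tribe of $\Omega_0$, and conversely the image of each item lies in a tribe dominating the tribe of the original item. Since $h_1^{(0)}$ is covered by $\mu$ and $\lambda_1$, after transferring $\lambda_1$ the item corresponding to $h_1^{(0)}$ lands inside $\Delta(\mu)$; the chain of boundary connections introduced forces the tribe of $h_{\beta(\mu)-1}^{(0)}$ (the rightmost item under $\mu$) to be comparable with the tribe of $h_1^{(0)}$. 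The cardinality-preservation part of Lemma \ref{lem:mintribe}, namely $|h(\mmm_k(\Omega_i))| = |h(\mmm_k(\Omega_j))|$, then prevents the minimal tribe from losing or gaining items along the branch, which pins $h_{\beta(\mu)-1}^{(0)}$ (hence $h_1^{(1)}$) to be minimal and in $h(\mmm)$; this is essentially the bookkeeping argument already used in the proof of the previous lemma (Case A versus Case B), and I would follow that template closely.

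For the second paragraph of the statement — that the tribe of every item covered by $\mu$ dominates $\mmm$, and that items covered by $\mu$ which strictly dominate $\mmm$ map to single items and lie in $\mathcal{F}(\Omega_0)$ — I would use the observation recorded right after Lemma \ref{lem:tribes}: ``the tribe of every item covered by a base $\mu$ dominates $t(\mu)$.'' Since $\mu$ carries $h_1^{(0)}$ which is in the minimal tribe $\mmm$, we get $t(\mu) = \mmm$ (it cannot strictly dominate $\mmm$ because it is dominated by $t(h_1^{(0)}) = \mmm$, and $\mmm$ is minimal, so $t(\mu) \preceq \mmm$ forces equality), and hence every item covered by $\mu$ has tribe dominating $\mmm$. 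For the last claim, if $h_i^{(0)}$ is covered by $\mu$ and $t(h_i^{(0)}) \succ \mmm$ strictly, then $h_i^{(0)} \notin h(\mmm)$; since in the entire transformation the only new boundary introduced in the segment covered by the carrier is the one coming from $\mu$-tying the boundary $2$ (Definition \ref{defn:88}), an item whose tribe strictly dominates the minimal tribe does not receive a new internal boundary — otherwise, by the argument of Lemma \ref{lem:AB} Case B combined with the constancy of the minimal tribe, it would eventually force a minimal item to be split, contradicting minimality of $\mmm$ being the tribe being tracked. Hence $\pi(v_0,v_1)(h_i^{(0)}) = h_j^{(1)}$ is a single item, and by definition of $\mathcal{F}(\Omega_0)$ (items with finite orbit under the automorphism group), $h_i^{(0)} \in \mathcal{F}(\Omega_0)$.

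The main obstacle I expect is the careful verification that the rightmost item $h_{\beta(\mu)-1}^{(0)}$ lies in the \emph{same} minimal tribe $\mmm$ as $h_1^{(0)}$, rather than merely in some tribe dominating $\mmm$: this requires combining the one-sided domination from Lemma \ref{lem:tribes} with the exact-cardinality statement of Lemma \ref{lem:mintribe} and ruling out that a \emph{different} minimal item migrates into the relevant position, which is the kind of delicate pigeonhole argument that the proof of the preceding lemma already performs and which I would adapt verbatim. A secondary subtlety is handling the renumbering of items when passing from $\Omega_0$ to $\Omega_1$ via the complete entire transformation (a composition of $\beta(\mu)-2$ elementary entire transformations of Definition \ref{defn:88}), keeping track of which original item becomes $h_1^{(1)}$; this is routine but must be stated precisely to make the final sentence ``$h_1^{(1)}$ of $\Omega_1$ is minimal and belongs to $h(\mmm)$'' literally correct.
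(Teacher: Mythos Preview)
Your overall strategy --- track tribes through one step of the branch using the domination statement of Lemma~\ref{lem:tribes} together with the constancy of $|h(\mmm)|$ from Lemma~\ref{lem:mintribe} --- is exactly the paper's, but two of your shortcuts do not go through as stated.

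First, the argument ``$t(\mu)\preceq \mmm$ and $\mmm$ is minimal, hence $t(\mu)=\mmm$'' is circular. Minimality of $\mmm$ is only among tribes of items in $\Hc(\Omega_0)$, and $t(\mu)$ is the tribe of a \emph{base}; nothing prevents $\BA(\GG_\mu)$ from being a proper subgroup of $\BA(\GG_{h_1^{(0)}})$ a priori (this would happen precisely if some item under $\mu$ has tribe incomparable with $\mmm$). In fact $t(\mu)=\mmm$ is equivalent to the very statement you are trying to prove, namely that every item under $\mu$ dominates $\mmm$, so you cannot use it as input. The paper does \emph{not} argue via $t(\mu)$; instead it runs the following induction on the boundary connections $(j,\mu,n_j)$ for $j=2,\dots,\beta(\mu)-1$: at each step the item $h_{\cali(n_j)}^{(0)}$ on the $\Delta(\mu)$ side is split in two, and the constancy of $|h(\mmm)|$ forces at least one half to lie in $h(\mmm)$, hence $h_{\cali(n_j)}^{(0)}$ itself was already in $h(\mmm)$, hence both halves are. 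Marching $j$ from $2$ up to $\beta(\mu)-1$ yields both conclusions simultaneously: the item $h_{\beta(\mu)-1}^{(0)}$ is minimal, and every item under $\mu$ with $\cali(i)<\cali(i+1)$ is minimal while those with $\cali(i)=\cali(i+1)$ dominate $\mmm$.

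Second, your argument for the last claim confuses the two sides of the carrier. New boundaries are introduced on the $\Delta(\mu)$ side, not under $\mu$; items $h_i^{(0)}$ under $\mu$ are \emph{deleted}, and $\pi(v_0,v_1)(h_i^{(0)})$ is the word between $n_i$ and $n_{i+1}$ on the $\Delta(\mu)$ side. Once the induction above shows that new boundaries land only inside items of tribe $\mmm$, the contrapositive gives: if $t(h_i^{(0)})\succ\mmm$ strictly then $\cali(i)=\cali(i+1)$, so $\pi(v_0,v_1)(h_i^{(0)})$ is a single item; iterating along the branch (finitely many items, each image a single item) then gives $h_i^{(0)}\in\mathcal{F}(\Omega_0)$. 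Your sketch gestures at this but the mechanism you describe (``the only new boundary introduced in the segment covered by the carrier is the one coming from $\mu$-tying the boundary $2$'') is not what happens.
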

\begin{proof}
Recall that every $\mu$-tying introduces a new boundary. We only consider the case when $\varepsilon(\Delta(\mu))=\varepsilon(\mu)=1$. The remaining cases are analogous. We $\mu$-tie all the boundaries $2, \dots, \beta(\mu)-1$ thus introducing boundaries $n_2,\dots, n_{\beta(\mu)-1}$ between the boundaries $\cali(n_2)$ and $\cali(n_2)+1$, \dots, $\cali(n_{\beta(\mu)-1})$ and $\cali(n_{\beta(\mu)-1})+1$. We will denote the corresponding items of $\Omega_1$ by $h_{\cali(j)}^{(1)}$, $h_{\cali(j)+1}^{(1)}$ and $h_{n_j}^{(1)}$. 

Since the number of minimal items of a fixed minimal tribe in the infinite branch is constant, it follows that at least one of the items $h_{\cali(2)}^{(1)}$ and $h_{n_2}^{(1)}$ belongs to $h(\mmm)$. Hence, the item $h_{\cali(2)}^{(0)}$ of $\Omega_0$ is minimal and belongs to $h(\mmm)$. Therefore, since the number $|h(\mmm)|$ is constant both $h_{\cali(2)}^{(1)}$ and $h_{n_2}^{(1)}$ are minimal and belong to $h(\mmm)$. 

If $\cali(3)>\cali(2)$, we conclude that the item $h_2^{(0)}$ of $\Omega_0$ is minimal and belongs to $h(\mmm)$. Either $\cali(2)=\cali(3)$ or, otherwise, we can repeat the above argument for $h_2$. Let $m$ be maximal so that $\cali(2)=\cali(m)$. Then, the items $h_{n_i}^{(1)}$ of $\Omega_1$ belong to the same tribe as the item $h_i^{(0)}$ of $\Omega_0$, $i=2,\dots, m-1$, and since the number $|h(\mmm)|$ in the infinite branch is constant, then $h_{n_m}^{(1)}$ is minimal and belongs to $h(\mmm)$. It follows that the item $h_m^{(0)}$ is minimal and belongs to $h(\mmm)$, hence we can repeat the argument for $h_m^{(0)}$.

We conclude that the item $h_{\beta(\mu)-1}^{(0)}$ of $\Omega_0$ is minimal and belongs to $h(\mmm)$ as so does the item $h_{n_{\beta(\mu)-1}}^{(1)}$ of $\Omega_1$.

Notice that we have shown that for any item $h_i^{(0)}$ covered by $\mu$, if $\cali(i) < \cali(i+1)$, then the tribe of $h_i^{(0)}$ is minimal and belongs to $h(\mmm)$. Furthermore, all the boundaries are introduced in items whose tribe is $\mmm$. Hence for any $h_i^{(0)}$ covered by $\mu$ such that $\cali(i) = \cali(i+1)$, we have that its tribe dominates the minimal tribe $\mmm$ of $h_1^{(0)}$.
\end{proof}

\begin{rem} \label{rem:manymin}
From the proof of Lemma \ref{lem:mincar}, it follows that, under the assumptions of the lemma, the image of the minimal item $h_{\cali(2)}^{(0)}$ of $\Omega_0$ under $\pi(v_0, v_1)$ is $h_{\cali(2)}^{(1)}h_{n_2}^{(1)}$ and both of the items $h_{\cali(2)}^{(1)}$, $h_{n_2}^{(1)}$ are minimal.
\end{rem}

\begin{lem} \label{lem:alltransfer}
Let $\Omega_0$ be a generalised equation of type $12$ that repeats infinitely many times in the infinite branch. Then, in the infinite branch, all bases of $\Omega_0$, but, perhaps, bases $\nu$ contained in the long base $\lambda$ so that $\beta(\lambda)=\rho_A$, are transferred infinitely many times.
\end{lem}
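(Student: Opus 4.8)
The plan is to analyse the infinite branch of type $12$ using the structural facts already established, and to argue by a combination of counting invariants and the ``repetition'' phenomenon of the branch. Recall that the complexity, the number of active bases, and the number of items are constant along the branch (since $\Omega_0$ repeats infinitely often), and that the entire transformation at each step transfers the (unique) short transfer base adjacent to the left end of the carrier and introduces exactly one new boundary. Since there are finitely many distinct generalised equations in the branch, after passing to a subsequence we may assume every generalised equation of the branch occurs infinitely often, and (as in the proof of Lemma \ref{lem:alltransfer}'s preliminaries, cf.\ Lemma \ref{lem:mintribe} and Remark \ref{rem:K}) that the correspondence between bases of consecutive equations is the natural one, so we may speak of ``the base $\nu$'' uniformly along the branch.

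First I would set up the dichotomy for a base $\nu$ of $\Omega_0$: either $\nu$ (under the natural identification) serves as the carrier or as a transfer base at infinitely many steps, or there is a step after which $\nu$ is never again leading. A base $\nu$ which is never again the carrier and never again transferred must, from that point on, lie entirely to the right of the carrier in every subsequent equation, i.e.\ $\nu\in R(\Omega_i)$ for all large $i$ in the notation of Lemma \ref{lem:1clsec}. The key point is then to understand what such a persistently-right base can be. By Lemma \ref{lem:1clsec} the active part has a single closed section and every boundary touches exactly two bases; so the bases that sit to the right of the carrier and are never disturbed are exactly those ``buried'' inside a long base whose right endpoint is at $\rho_A$ — this is precisely the exceptional family $\nu$ contained in $\lambda$ with $\beta(\lambda)=\rho_A$ named in the statement. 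Concretely: if $\nu$ lies to the right of the carrier but is not contained in such a maximal right long base, then eventually the carrier will move to cover or to the right of $\nu$ (otherwise the carrier would stay forever bounded away from $\rho_A$, contradicting that $\Omega_0$ repeats and hence that the carrier sweeps the whole closed section), at which point $\nu$ becomes leading.

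The core of the argument is therefore: \textbf{(1)} the carrier base must ``sweep'' the entire closed section infinitely often, which follows from the fact that $n_A$ is constant and $\Omega_0$ repeats — if the carrier stayed within a proper left portion $[1,i]$ forever then, as in the proof of Lemma \ref{lem:1clsec}, one produces a strict drop $n_A(\Omega_k)>n_A(\Omega_{k+1})$ or an item to the right of $\beta(\mu)$ that never gets a new boundary, contradicting that the relevant section is active; \textbf{(2)} when the carrier reaches a base $\nu$ (other than one buried in the terminal long base), $\nu$ is either transferred or becomes the carrier; combined with repetition, this means $\nu$ is leading at infinitely many steps; \textbf{(3)} a base that is leading infinitely often is, in particular, transferred infinitely often unless it is the carrier infinitely often — but there is a unique carrier per equation and the carrier changes, so by repetition and the one-to-one correspondence of bases it too is transferred (as a non-carrier leading base) infinitely often. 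Assembling (1)--(3) gives the conclusion for every base except possibly those $\nu$ contained in the long base $\lambda$ with $\beta(\lambda)=\rho_A$, which may remain to the right of the carrier forever and hence never be transferred.

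The main obstacle I anticipate is step (1)--(2): making precise that the carrier genuinely sweeps past every base not protected by the terminal long base, i.e.\ ruling out that the carrier oscillates forever within some sub-interval while some right base $\nu$ (not of the exceptional form) is never touched. The cleanest way to handle this is to mimic the bookkeeping of Lemma \ref{lem:1clsec}: partition $\BS(\Omega_n)$ relative to a fixed boundary $j$ just to the left of $\alpha(\nu)$, observe that the set $R$ of bases entirely right of $j$ can only grow along the branch, note that the carrier cannot live in $R$ forever (else $n_A$ drops), and deduce that some base in $R$ — in fact $\nu$ or a base overlapping it — eventually becomes leading; then use the single-closed-section property and the fact that every boundary touches two bases to propagate ``$\nu$ is eventually leading'' from ``some base overlapping $\nu$ is eventually leading.'' The remaining routine verification is that ``leading infinitely often'' upgrades to ``transferred infinitely often'' using that the carrier of $\Omega_{i}$ becomes a short transfer base of $\Omega_{i+1}$ (as recorded just before Definition \ref{defn:88}), so even a base that is the carrier at step $i$ is transferred at step $i+1$.
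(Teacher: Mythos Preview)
Your sketch captures the right intuition—that the process ``sweeps through'' the active section—but the mechanism you propose for step~(1) does not work as written, and this is where the argument actually lives.

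You propose to mimic the $L/R$ partition of Lemma~\ref{lem:1clsec} relative to a boundary $j$ just left of $\alpha(\nu)$, and then to argue that ``the carrier cannot live in $R$ forever (else $n_A$ drops).'' But the carrier is always the leading long base, so it always begins at boundary~$1$ and hence always lies in $L$; the statement is vacuous and gives no leverage. The content of Lemma~\ref{lem:1clsec} is not that the carrier alternates between $L$ and $R$, but rather that if the carrier never leaves $L$ then no new boundaries are introduced in the rightmost closed section, forcing it to be inactive. When there is a single closed section (as here), that argument does not directly say that the carrier's right endpoint eventually passes every fixed base: new boundaries are introduced inside $\Delta(\mu)$, which may sit to the \emph{left} of $\nu$, so the process could in principle keep cycling through a left sub-interval while $\nu$ is never touched. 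You need a concrete reason why this cannot happen, and ``otherwise items to the right never receive new boundaries'' does not apply, because the duals of bases covering those items may live to the left.

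The paper's proof supplies exactly this missing mechanism, and it is not the $L/R$ bookkeeping you suggest. Instead it focuses on the terminal long base $\lambda$ with $\beta(\lambda)=\rho_A$ and argues: if $\lambda$ or any base contained in it is ever carrier or transferred, then the entire segment $[1,\alpha(\lambda)]$ of $\Omega_0$ has already been processed and we are done. If not, then for $\Omega_0$ to \emph{repeat}, new boundaries must appear in $[\alpha(\lambda),\beta(\lambda)]$; the only way this happens is that $\Delta(\eta)$ is carrier for some $\eta\in C(\lambda)\cup\{\lambda\}$, and one then shows that $\Delta(\lambda)$ itself must be carrier at some step. A short case analysis on $\varepsilon(\lambda)\cdot\varepsilon(\Delta(\lambda))$ finishes the argument: if the product is $-1$, after $\Delta(\lambda)$ is carrier we get $\beta(\lambda)<\rho_A$, so $\lambda$ must later be transferred for $\Omega_0$ to recur; if the product is $+1$, the first transfer base of that step is eventually transferred past $\alpha(\lambda)$, again forcing $[1,\alpha(\lambda)]$ to have been processed. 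This use of repetition together with the orientation dichotomy for $\lambda,\Delta(\lambda)$ is the genuine content, and your sketch does not contain an analogue of it.
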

\begin{proof}
Notice that, without loss of generality, we can assume that if a base is transferred, then it is transferred infinitely many times in an infinite branch. 

Consider the long base $\lambda$ of $\Omega_0$ so that $\beta(\lambda)=\rho_A$. Let $C(\lambda)$ be the set of bases contained in $\lambda$.

If either $\lambda$ or any of the bases in $C(\lambda)$ is the carrier base or is transferred in some generalised equation of the infinite branch, say $\Omega_k$, then  $h_1^{(k)}$ is an item with boundaries inside one of the items $h_{\alpha(\lambda)}^{(0)},\dots,h_{\beta(\lambda)}^{(0)}$. Hence, the section $[1,\alpha(\lambda)]$ of $\Omega_0$ (and so all the bases in this section) has been transferred and the statement follows. Notice that since the base $\lambda$ is long, for a base contained in $\lambda$ to be a carrier base of a generalised equation $\Omega_k$, it is necessary that $\Delta(\lambda)$ or $\lambda$ be the carrier base of a generalised equation $\Omega_l$, where $l<k$.
 
Assume further that $\lambda$ and the bases from $C(\lambda)$ are neither carrier bases nor transfer bases.

Without loss of generality, we may assume that there exists a base $\eta \in C(\lambda)\cup \{\lambda\}$ so that  $\Delta(\eta)$ is the carrier of some $\Omega_k$. Indeed, suppose that $\Delta(\eta)$ is neither carrier nor transfer base for all $\eta \in C(\lambda)\cup \{\lambda\}$. In this case, the section $[\alpha(\lambda), \beta(\lambda)]$ can be declared non-active and we can set $\rho_A$ to be $\alpha(\lambda)$. Similarly, if some base $\Delta(\eta)$ is transferred but for all generalised equations we have $\beta(\Delta(\eta))-\alpha(\Delta(\eta))=1$, then the section $[\alpha(\lambda), \beta(\lambda)]$ can be declared non-active and we can set $\rho_A$ to be $\alpha(\lambda)$. Finally, assume that a base $\Delta(\eta)$ is transferred and for some generalised equation we have that $\beta(\Delta(\eta))-\alpha(\Delta(\eta))>1$. Since $\Delta(\eta)$ is transferred infinitely many times and transfer bases are always short, it follows that before being transferred, the base $\Delta(\eta)$ must be a carrier base (since this is the only way a long base can become short). Therefore, there exists a base $\eta \in C(\lambda)\cup \{\lambda\}$ so that  $\Delta(\eta)$ is the carrier of some $\Omega_k$.

Note that since the branch is infinite, every $\mu$-tying introduces a new boundary. Since there exists a base $\eta\in C(\lambda)$ so that $\Delta(\eta)$ is the carrier of some generalised equation, say $\Omega_k$, at this step the new boundaries are introduced in the section $[\alpha(\lambda), \beta(\lambda)]$. Hence, for the generalised equation $\Omega_0$ to repeat, there must exist $l$ so that $\Delta(\lambda)$ is the carrier of $\Omega_l$.

Notice that, in particular, we have shown that in the assumptions of the lemma, either $\lambda$ or its dual $\Delta(\lambda)$ is a carrier base.

Suppose that $\varepsilon(\lambda)\cdot \varepsilon(\Delta(\lambda))=-1$. Then, in the generalised equation $\Omega_{l+1}$ we have $\beta(\lambda)< \rho_A$. Hence,  for the generalised equation $\Omega_0$ to repeat, the base $\lambda$ has to be transferred. In this case, we conclude that all the bases of $\Omega_0$, but, perhaps, bases $\nu$ contained in a base $\lambda$ so that $\beta(\lambda)=\rho_A$, are transferred infinitely many times.

Suppose now that $\varepsilon(\lambda)\cdot \varepsilon(\Delta(\lambda))=1$. Let $\nu$ be the (uniquely defined) transfer base of $\Omega_l$ so that $\alpha(\nu)=1$. Since the base $\nu$ is a transfer base of $\Omega_l$, it is transferred infinitely many times in an infinite branch and hence for some generalised equation, say $\Omega_k$, the item $h_1^{(k)}$ is an item with boundaries inside one of the items $h_{\alpha(\lambda)}^{(0)},\dots,h_{\beta(\lambda)}^{(0)}$. Hence, the section $[1,\alpha(\lambda)]$ of $\Omega_0$ (and so all the bases in this section) has been transferred and the statement follows.
\end{proof}

\begin{lem} \label{lem:1stlast}
Let $\Omega$ be a generalised equation of type $12$ that repeats infinitely many times in the infinite branch and let $h_1$ be minimal. Then, for every long base $\lambda$, the items $h_{\alpha(\lambda)}$ and $h_{\beta(\lambda)-1}\ne h_{\rho_A-1}$ are minimal.

In other words, all items that are not covered by a short base are minimal and belong to the same minimal tribe  $\mmm$ and all the other items belong to tribes that dominate $\mmm$.
\end{lem}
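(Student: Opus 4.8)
The plan is to use the dynamics of the infinite branch of type $12$ together with the results already established (Lemmas \ref{lem:1clsec}, \ref{lem:mintribe}, \ref{lem:AB}, \ref{lem:mincar}, \ref{lem:alltransfer}) to trace which items must lie in the minimal tribe $\mmm$ of $h_1$ through the entire transformations. First I would invoke Lemma \ref{lem:alltransfer}: in the infinite branch every base of $\Omega$ (with the possible exception of bases contained in the terminal long base $\lambda_0$ with $\beta(\lambda_0)=\rho_A$) is transferred infinitely many times, hence is also the carrier infinitely many times (a long base can only become short after being a carrier). Using Remark \ref{rem:K} and the fact that the carrier can be chosen to start at position $1$, I would argue that for any long base $\lambda$, after suitably many entire transformations $\lambda$ (or $\Delta(\lambda)$) becomes the carrier, and at that moment its first item sits at position $1$, which is minimal by hypothesis. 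I would then transport this minimality backwards along the automorphisms $\pi(v_i,v_j)$ using Lemma \ref{lem:tribes} (which says the tribe of the image dominates the tribe of the preimage, and that minimal tribes pull back to minimal tribes) together with the stability of $|h(\mmm_k)|$ from Lemma \ref{lem:mintribe}.

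The key steps, in order, are: (1) fix the minimal tribe $\mmm = t(h_1)$ and recall from Lemma \ref{lem:mincar} that when $h_1$ is minimal and $\mu$ is the carrier, $h_{\beta(\mu)-1}$ is also minimal and belongs to $h(\mmm)$, and every item covered by $\mu$ has tribe dominating $\mmm$, with strict domination forcing the item into $\mathcal{F}(\Omega)$. (2) Given an arbitrary long base $\lambda$, distinguish whether $\lambda$ is a base contained in the terminal long base or not; in the generic case use Lemma \ref{lem:alltransfer} to find a generalised equation $\Omega_k$ in the branch where $\lambda$ or $\Delta(\lambda)$ is the carrier. (3) At that stage, $h_{\alpha(\lambda)}$ (after renumbering via Remark \ref{rem:K}, the item at position $1$) is minimal; applying Lemma \ref{lem:mincar} to $\Omega_k$ shows $h_{\beta(\lambda)-1}$ is minimal as well, unless $\beta(\lambda) = \rho_A$, which is exactly the excluded case $h_{\beta(\lambda)-1} \ne h_{\rho_A - 1}$. (4) Pull these back to $\Omega$ itself: since $\Omega$ repeats infinitely often, $\pi(v_0, v_k)$ restricted to the relevant section is the identity on items up to renumbering, so minimality in $\Omega_k$ yields minimality in $\Omega$ by Lemma \ref{lem:tribes} and the constancy of $|h(\mmm)|$. (5) Finally, observe that an item not covered by any short base must be one of $h_{\alpha(\lambda)}$ or $h_{\beta(\lambda)-1}$ for some long $\lambda$ (by the non-overlapping/adjacency structure of long bases noted after Lemma \ref{lem:1clsec} and the absence of free boundaries), so the two statements of the lemma are equivalent; all remaining items, being covered by a short base, have tribe dominating $\mmm$ by Lemma \ref{lem:mincar}, and since there is a unique minimal tribe reachable this way (all the items we identified lie in the same $\mmm$), domination is by $\mmm$.

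The main obstacle I expect is step (4): carefully controlling the bookkeeping of boundary and item indices across the (possibly long) sequence of entire transformations $\Omega \to \Omega_1 \to \dots \to \Omega_k$, so that ``the item at position $1$ in $\Omega_k$'' genuinely corresponds, after $\pi(v_0,v_k)$, to $h_{\alpha(\lambda)}$ in $\Omega$ and not to some item produced by a $\mu$-tying in between. Here one must use that $\Omega_0 = \Omega_k$ as generalised equations, that the correspondence of bases between consecutive $\Omega_i$'s is the natural one described just before Lemma \ref{lem:1clsec}, and that new boundaries are introduced only in items of the minimal tribe (the last sentence of the proof of Lemma \ref{lem:mincar}) — so an item strictly dominating $\mmm$ is never subdivided and its tribe is rigidly tracked, while minimal items may split but only into minimal items, keeping $|h(\mmm)|$ constant. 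A secondary subtlety is handling the excluded terminal long base $\lambda_0$ with $\beta(\lambda_0) = \rho_A$: here Lemma \ref{lem:alltransfer} does not guarantee its inner bases are transferred, so one only concludes $h_{\alpha(\lambda_0)}$ is minimal (from the transfer of the section $[1,\alpha(\lambda_0)]$) but makes no claim about $h_{\rho_A - 1}$, which is precisely the content of the stated exception.
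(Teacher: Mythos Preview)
Your overall strategy---iterate Lemma \ref{lem:mincar} along the branch so that $h_1^{(i)}$ and $h_{\beta(\mu_i)-1}^{(i)}$ are always minimal, use Lemma \ref{lem:alltransfer} to guarantee each long base eventually becomes carrier, and pull minimality back via Lemma \ref{lem:tribes}---is close in spirit to the paper's argument. The genuine gap is your step (4). The assertion that ``$\pi(v_0,v_k)$ restricted to the relevant section is the identity on items up to renumbering'' is false: even when $\Omega_0=\Omega_k$ as generalised equations, $\pi(v_0,v_k)$ is a nontrivial automorphism, and the base $\kappa$ need not occupy in $\Omega_k$ the position it had in $\Omega_0$. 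The facts you cite (non-minimal items are never subdivided, minimal items split only into minimal items) are correct, but they are statements about tribes, not about positions; they do not tell you that $h_1^{(k)}$ actually sits inside the original item $h_{\alpha(\kappa)}^{(0)}$.

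Your approach can be repaired, but only if you insist on the \emph{first} step $k_0$ at which $\kappa$ becomes carrier. At that moment $\kappa$ has not yet been transferred (a long base can only become short after being carrier, as you note), so its endpoints, viewed as boundaries of $\Omega_0$, have not moved: boundary $1$ of $\Omega_{k_0}$ is literally the $\Omega_0$-boundary $\alpha(\kappa)$, and the right endpoint of $\kappa$ in $\Omega_{k_0}$ is the $\Omega_0$-boundary $\beta(\kappa)$. Then $h_1^{(k_0)}$ and the last item under $\kappa$ in $\Omega_{k_0}$ lie between the $\Omega_0$-boundaries $\alpha(\kappa),\alpha(\kappa)+1$ and $\beta(\kappa)-1,\beta(\kappa)$ respectively, and Lemma \ref{lem:tribes} (tribe in $\Omega_{k_0}$ dominates tribe in $\Omega_0$) combined with minimality of $\mmm$ finishes. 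You never make this ``first time, hence not yet transferred'' argument.

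The paper avoids this bookkeeping by a slightly different device: rather than waiting for $\kappa$ to be carrier, it tracks the fixed $\Omega_0$-boundary $\beta(\kappa)$ through the branch and finds the transition step $N$ at which boundary $1$ of $\Omega_i$ first reaches or passes it. At step $N-1$, one of the two known-minimal items $h_1^{(N-1)}$, $h_{\beta(\mu_{N-1})-1}^{(N-1)}$ is shown to lie between the $\Omega_0$-boundaries $\beta(\kappa)-1$ and $\beta(\kappa)$, and Lemma \ref{lem:tribes} gives minimality of $h_{\beta(\kappa)-1}^{(0)}$. The paper only proves the $h_{\beta(\kappa)-1}$ half directly, since $h_{\alpha(\kappa)}=h_{\beta(\kappa')-1}$ for the preceding long base $\kappa'$ (and $h_1$ is minimal by hypothesis), so the $h_{\alpha(\kappa)}$ half follows.
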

\begin{proof}
By Lemma \ref{lem:mincar}, if $\Omega_1$ is obtained from $\Omega$ by a (complete) entire transformation $\D 5$, then the item $h_1^{(1)}$ of $\Omega_1$ is also minimal. Hence, in the infinite branch, the items $h_1^{(i)}$ and $h_{\beta(\mu_i)-1}^{(i)}$ are always minimal and the items $h_j$, $1< j<  \beta(\mu_i)-1$ belong to tribes that dominate the tribe of $h_1^{(i)}$, where $\mu_i$ is the carrier of $\Omega_i$. 

Let $\lambda$ be the (uniquely defined) long base so that $\beta(\lambda)=\rho_A$. We now consider an arbitrary long base $\kappa$ of the generalised equation $\Omega$ distinct from $\lambda$. To prove the lemma, it suffices to show that $h_{\beta(\kappa)-1}$ is minimal.

For every generalised equation $\Omega_i$ we have that
\begin{description}
\item[Case I] either the boundary $1$ of $\Omega_i$ corresponds to a boundary $j_i$ of $\Omega$, $j_i\ge \beta(\kappa)$ in $\Omega$,
\item[Case II]  or there exists a base $\nu_i$ of $\Omega_i$ so that $\beta(\nu_i)$ corresponds to the boundary $\beta(\kappa)$ of $\Omega$.
\end{description}

Furthermore, since $\kappa\ne \lambda$, by Lemma \ref{lem:alltransfer}, there exists $N$ so that for all $n\ge N$ the generalised equation  $\Omega_n$ satisfies Case I, and for all $m<N$ the generalised equation $\Omega_m$ satisfies Case II. 

Notice that the boundary $\beta(\mu_{N-1})$ of $\Omega_{N-1}$ corresponds to a boundary $j_{N}$ of $\Omega$, so that $j_{N} \ge  \beta(\kappa)$. Suppose that $j_N=\beta(\kappa)$ (i.e. the boundary $\beta(\nu_{N-1})=\beta(\mu_{N-1})$ corresponds to the boundary $\beta(\kappa)$ of $\Omega$). Then, the boundary $\beta(\mu_{N-1})-1$ of $\Omega_{N-1}$ corresponds to a boundary between $\beta(\kappa)-1$ and $\beta(\kappa)$ in $\Omega$. Since $h_{\beta(\mu_{N-1})-1}^{(N-1)}$ is minimal, so is $h_{\beta(\kappa)-1}$.

If $j_N>\beta(\kappa)$ in $\Omega$ (i.e. $\nu_{N-1}$ is a transfer base of $\Omega_{N-1}$), then, it is not hard to see that the boundary $\alpha(\mu_{N-1})$ of $\Omega_{N-1}$ corresponds to a boundary between $\beta(\kappa)-1$ and $\beta(\kappa)$ in $\Omega$. We conclude that $h_{\beta(\kappa)-1}$ is minimal.
\end{proof}

\begin{cor} \label{cor:prelasttransfer}
Let $\Omega$ be a generalised equation of type $12$ that repeats infinitely many times in the infinite branch and let the item $h_1$ of $\Omega$ be a minimal item that belongs to the minimal tribe $\mmm=t(h_1)$ of $\Omega$.  Then, the tribe $t(h_i)$ of any item  $h_i$ of $\Omega$ dominates the tribe $t(h_1)$, $i=2,\dots, \rho_A-1$.
\end{cor}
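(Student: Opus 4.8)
The plan is to obtain the corollary directly from Lemma \ref{lem:1stlast}. That lemma, in its reformulated form, splits the active items of $\Omega$ into two classes: those not covered by any short base, which are minimal and lie in the common minimal tribe $\mmm$, and all the remaining items, whose tribes dominate $\mmm$. Since $\mmm = t(h_1)$ by hypothesis and the relation $\succ$ is a partial order (in particular reflexive, as $\BA(\mmm) \ge \BA(\mmm)$), it suffices to check that this dichotomy exhausts the range $i = 2, \dots, \rho_A - 1$.

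Concretely, I would fix an item $h_i$ with $2 \le i \le \rho_A - 1$ and argue by cases. If $h_i$ is not covered by a short base, then Lemma \ref{lem:1stlast} gives that $h_i$ is minimal with $t(h_i) = \mmm = t(h_1)$, whence $t(h_i) \succ t(h_1)$. If $h_i$ is covered by a short base, then Lemma \ref{lem:1stlast} asserts precisely that $t(h_i)$ dominates $\mmm = t(h_1)$. In both cases $t(h_i) \succ t(h_1)$, which is the claim.

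The only point that needs a short remark is the item $h_{\rho_A-1}$, which Lemma \ref{lem:1stlast} excludes from its ``minimal'' conclusion (the long base $\lambda$ with $\beta(\lambda) = \rho_A$ contributes $h_{\alpha(\lambda)}$ but not $h_{\beta(\lambda)-1} = h_{\rho_A-1}$). I would note that $h_{\rho_A-1}$ nevertheless falls into the second class: since $\Omega$ is quadratic, each active item is covered by exactly two bases, and one of the two bases covering $h_{\rho_A-1}$ is $\lambda$; any base covering $h_{\rho_A-1}$ has its right endpoint at $\rho_A$, and a long base with that property would contradict the uniqueness of $\lambda$, so the second base covering $h_{\rho_A-1}$ is short and the previous paragraph applies. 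There is no genuine obstacle here: all the content sits in Lemma \ref{lem:1stlast} and the preceding analysis of the infinite branch of type $12$; the corollary merely records the uniform consequence that will be convenient for the construction of the quadratic floor of the graph tower.
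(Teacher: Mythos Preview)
Your argument is correct, but it takes a different route from the paper's. You lean entirely on the ``In other words'' reformulation in Lemma~\ref{lem:1stlast}: every active item is either not covered by a short base (hence minimal, in $\mmm$) or covered by a short base (hence its tribe dominates $\mmm$), and you simply check that $h_{\rho_A-1}$ falls into the second class. Since the dichotomy is exhaustive and $\succ$ is reflexive, the corollary follows immediately.

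The paper, by contrast, does not invoke the full reformulation. It uses Lemma~\ref{lem:1stlast} (together with Lemma~\ref{lem:alltransfer}) only for the items $h_1,\dots,h_{\alpha(\lambda)-1}$, where $\lambda$ is the long base with $\beta(\lambda)=\rho_A$, and then supplies a separate direct argument for the items $h_{\alpha(\lambda)},\dots,h_{\rho_A-1}$ lying under $\lambda$: since $\Delta(\lambda)$ is not contained in $\lambda$, the items under $\Delta(\lambda)$ already dominate $t(h_1)$, hence so does $t(\Delta(\lambda))=t(\lambda)$, and therefore every item under $\lambda$ dominates $t(h_1)$. The advantage of the paper's approach is that it does not rely on the portion of the reformulation in Lemma~\ref{lem:1stlast} concerning items under $\lambda$, which the proof of that lemma does not treat explicitly (the proof there only tracks items lying under carriers, and short bases contained in $\lambda$ need not be transferred). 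Your approach is shorter but depends on accepting that clause of Lemma~\ref{lem:1stlast} at face value; the paper's approach closes that gap independently via the dual base.
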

\begin{proof}
By Lemma \ref{lem:alltransfer}, all bases of $\Omega$ but, perhaps, bases $\nu$ contained in a base $\lambda$ so that $\beta(\lambda)=\rho_A$, are transferred infinitely many times. Hence, by Lemma \ref{lem:1stlast}, it now follows that the tribes of all the items $h_i$, $i=1,\dots, \alpha(\lambda)-1$ of $\Omega$ dominate the tribe $t(h_1)$, where $\lambda$ is the (uniquely defined) long base so that $\beta(\lambda)=\rho_A$. 

Since the base $\Delta(\lambda)$ is not contained in $\lambda$, so the tribe of every item that belongs to $\Delta(\lambda)$ dominates the tribe $t(h_1)$. It follows that the base $\Delta(\lambda)$, and so $\lambda$, belong to a tribe that dominates $t(h_1)$. Therefore, the tribe of every item that belongs to $\lambda$ dominates $t(h_1)$.
\end{proof}

\begin{defn}
It follows from Corollary \ref{cor:prelasttransfer} that if $\Omega$ is a generalised equation of type $12$ that repeats infinitely many times in the infinite branch, then there is only one minimal tribe in $\Omega$, i.e. $|\mm(\Omega)|=1$. We further denote this minimal tribe by $\mmm$.
\end{defn}

\subsubsection*{Structure of the Quadratic Part}

Let $\Omega=\langle \Upsilon, \Re_\Upsilon\rangle$ be a generalised equation of type $12$ that repeats infinitely many times in the infinite branch and assume that the tribe of $h_1$ is minimal. Denote by $\BS_A(\Omega)$ the set of all active bases of the generalised equation $\Omega$. Our next goal is to show that the set  
$$
\{ h(\eta)\mid \eta \in \BS_A(\Omega)\}
$$
is a generating set of the subgroup of $G_\Omega$ generated by the active items of $\Omega$ (i.e. the subgroup generated by the items from the quadratic section of $\Omega$). In this new generating set, we prove that this subgroup is Tietze-equivalent to a one-relator quotient of a partially commutative group, where the relation is a quadratic word.

To simplify the notation, below we sometimes write $\eta$ instead of $h(\eta)$. Firstly, we use induction to define the map $\psi$ from the subgroup generated by the items of the quadratic part of $\Upsilon$ to the subgroup $\langle  \BS_A(\Omega) \rangle$ of $G_\Upsilon$ as follows.

Recall that if $\mu_1$ is the long leading base, then every item $h_i$, $i=1,\dots, \beta(\mu_1)-2$ is covered by a short base $\nu_i$. Then, for every item $h_i$, $i=1,\dots, \beta(\mu_1)-2$, we set $\psi(h_i)=\nu_i$. 

Notice that 
$$
h_{\beta(\mu_1)-1}=h_{\beta(\mu_1)-2}^{-1} \dots h_2^{-1}h_1^{-1}\mu_1. 
$$
We set 
$$
\psi(h_{\beta(\mu_1)-1})=\psi(h_{\beta(\mu_1)-2}^{-1} \dots h_2^{-1}h_1^{-1})\mu_1. 
$$
Note that the image of the item $h_{\beta(\mu_1)-1}$ is a word $w_{h_{\beta(\mu_1)-1}}$ in the bases $\nu_i$ so that $\alpha(\nu_i)<\beta(\mu_1)-1$ and every such base occurs in $w_{h_{\beta(\mu_1)-1}}$ precisely once.

Suppose that the map $\psi$ is defined for all the items $h_i$, $i<k$, and that if an item $h_i$ is covered only by long bases, then the image $\psi(h_i)$ is a word $w_{h_i}$ in the bases $\nu$ so that $\alpha(\nu)<i$ and every such base occurs in $w_{h_i}$ precisely once. 

We define $\psi(h_k)$ recursively. If $h_k$ is covered by a short base $\nu_k$, then we set $\psi(h_k)=\nu_k$. Otherwise, $h_k$ is only covered by (two) long bases $\kappa_1$ and $\kappa_2$, where $\alpha(\kappa_2)=\beta(\kappa_1)- 1=k$. Then, we define 
$$
\psi(h_k)=\psi(h_{k-1}^{-1} \dots h_{1}^{-1} h_1 \dots h_{\alpha(\kappa_1-1)})\kappa_1=\psi((h_{k-1}^{-1}  \dots h_{\alpha(\kappa_1)}^{-1})\kappa_1=\nu_{k-1}^{-1}\dots \nu_{\alpha(\kappa_1)+1}^{-1} \psi(h_{\alpha(\kappa_1)}^{-1})\kappa_1.
$$ 
Since $\alpha(\kappa_1)<k$ and the item $h_{\alpha(\kappa_1)}$ is covered by two long bases, it follows by induction assumption that $\psi(h_{\alpha(\kappa_1)})$ is a word in the bases $\nu$ so that $\alpha(\nu)<\alpha(\kappa_1)$, and every such base occurs in this word precisely once. It follows that $\psi(h_k)$ is a word in the bases $\nu$ so that $\alpha(\nu)<k$, and every such base occurs in this word precisely once. 

\begin{expl}
Let $\Omega$ be the generalised equation given on Figure \ref{pic:ge}. Then,
$$
\begin{array}{llll}
\psi(h_1)=\nu, & \psi(h_2)=\eta, & \psi(h_3)=\eta^{-1}\nu^{-1}\mu, & \psi(h_4)=\Delta(\lambda),\\ \psi(h_5)=\Delta(\lambda)^{-1}\mu^{-1}\nu\eta\Delta(\nu),&  \psi(h_6)=\lambda, & \psi(h_7)=\Delta(\eta).
\end{array}
$$
\end{expl}

We now determine the presentation of the subgroup generated by active items of the quadratic part of $\Upsilon$ in the generators $\BS_A(\Omega)$.

Note that, by Lemma \ref{lem:1clsec}, every boundary of $\Omega$ touches precisely two bases, i.e. for every boundary $i$, $1 <i< \rho$, there exist a base $\nu_i$ such that $\alpha(\nu_i)=i$ and a base $\lambda_i$ such that $\beta(\lambda_i)=i$. Therefore we can define the words:
$$
W_1= \mu_1 \nu_{\beta(\mu_1)} \nu_{\beta(\nu_{\beta(\mu_1)})}\cdots\nu_m \nu_{\beta(\nu_m)},
$$
where $\beta(\nu_{\beta(\nu_m)})=\rho_{\Omega}$ and 
$$
W_2=\nu_1\nu_{\beta(\nu_1)} \nu_{\beta(\nu_{\beta(\nu_1)})}\cdots \nu_n \nu_{\beta(\nu_n)},
$$
where $\beta(\nu_{\beta(\nu_n)})=\rho_{\Omega}$.
We set 
\begin{equation} \label{eq:W}
W=W_1W_2^{-1}.
\end{equation}

For each item $h_i$, $i=\rho_A,\dots, \rho_\Omega-1$, i.e. for items that does not belong to the active quadratic section, we set $\psi(h_i)=h_i$.

\begin{expl}
Let $\Omega$ be the generalised equation given on  Figure \ref{pic:ge}. Then, 
$$
W_1=\nu\eta\Delta(\nu)\lambda\Delta(\eta), \quad W_2=\mu\Delta(\lambda)\Delta(\mu).
$$
\end{expl}

\begin{lem} \label{lem:9.19}
In the above notation, the map $\psi$ induces an isomorphism from $G_\Upsilon$ to the group
$$
\caK_\Upsilon=\langle \BS_A(\Omega), h_{\rho_A},\dots, h_{\rho_\Omega-1} \mid W=1, \eta^{\varepsilon(\eta)}\Delta(\eta) ^{-\varepsilon(\Delta(\eta))}=1, \nu^{\varepsilon(\nu)} h(\Delta(\nu))^{-\varepsilon(\Delta(\nu))},R'\rangle
$$
where $\eta$ runs over the set of quadratic bases, $\nu$ runs over the set of quadratic-coefficient bases and  $R'$ is the set of relations of the non-active part, i.e. $R'$ is the set of relations $h(\mu)^{\varepsilon(\mu)} h(\Delta(\mu) )^{-\varepsilon(\Delta(\mu))}$, for all pairs of non-active bases $\mu$, $\Delta(\mu)$.
\end{lem}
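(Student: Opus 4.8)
The plan is to show that $\psi$ is a well-defined homomorphism $G_\Upsilon \to \caK_\Upsilon$, exhibit an inverse homomorphism, and conclude. First I would set up the map $\psi$ precisely as a homomorphism on the free group $\GG[h]$: the generators $h_i$ with $i \geq \rho_A$ are sent to themselves, while the active items $h_i$, $1 \le i \le \rho_A - 1$, are sent to the words in $\BS_A(\Omega)$ defined by the recursion in the text. One checks by the same induction used to define $\psi$ that each $\psi(h_i)$ with $h_i$ covered only by long bases is a word in the bases $\nu$ with $\alpha(\nu) < i$, each occurring exactly once; this structural fact is what makes the recursion terminate and makes $\psi$ literally computable. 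To see that $\psi$ descends to $G_\Upsilon = \factor{\GG[h]}{\ncl\langle \Upsilon \rangle}$, I would verify that $\psi$ kills every defining relator of $\Upsilon$: for each pair of dual bases $\mu, \Delta(\mu)$ the relator asserts $(h_{\alpha(\mu)}\cdots h_{\beta(\mu)-1})^{\varepsilon(\mu)} = (h_{\alpha(\Delta(\mu))}\cdots h_{\beta(\Delta(\mu))-1})^{\varepsilon(\Delta(\mu))}$, and by the very definition of $\psi$ on the items $h_{\beta(\kappa)-1}$ (obtained by "solving" for the last item covered by a long base $\kappa$), the images of the two sides of such a relation are forced to agree modulo the defining relations of $\caK_\Upsilon$, namely $W=1$, the quadratic relations $\eta^{\varepsilon(\eta)}\Delta(\eta)^{-\varepsilon(\Delta(\eta))}=1$, the quadratic-coefficient relations, and $R'$. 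The relation $W = 1$ is precisely the bookkeeping identity saying that reading the boundaries $1,\dots,\rho_\Omega$ of $\Omega$ from left to right along the two "rows" of bases gives the same word; it is the single genuinely global relation and it is exactly what is needed to close up the recursion at the top level (the item $h_{\beta(\mu_1)-1}$ and $h_{\rho_A-1}$).

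Next I would construct the inverse homomorphism $\theta: \caK_\Upsilon \to G_\Upsilon$. On generators: a base $\eta \in \BS_A(\Omega)$ is sent to the corresponding word in items, $\theta(\eta) = (h_{\alpha(\eta)}\cdots h_{\beta(\eta)-1})^{\varepsilon(\eta)}$, viewed in $G_\Upsilon$; the generators $h_i$ with $i \geq \rho_A$ are sent to themselves. One then checks that $\theta$ kills all relators of $\caK_\Upsilon$: the relations $\eta^{\varepsilon(\eta)}\Delta(\eta)^{-\varepsilon(\Delta(\eta))} = 1$ and the quadratic-coefficient relations and $R'$ are, tautologically, the defining relations of $\Upsilon$ (every base equals its dual as a word in items), and $W = 1$ holds in $G_\Upsilon$ because $W_1$ and $W_2$ are both equal, as words in items, to $h_1 h_2 \cdots h_{\rho_\Omega - 1}$ up to the boundary identifications. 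Finally one verifies $\theta \circ \psi = \mathrm{id}$ on each item generator $h_i$: for short-covered items this is immediate from $\psi(h_i) = \nu_i$ and $\theta(\nu_i) = h_i$; for the long-covered items one runs the same induction, using that $\theta$ applied to the defining word of $\psi(h_k)$ telescopes back to $h_k$ in $G_\Upsilon$ by the base relations. Similarly $\psi \circ \theta = \mathrm{id}$ on the generators $\eta$ and $h_i$, $i\ge\rho_A$, of $\caK_\Upsilon$: applying $\psi$ to $\theta(\eta) = (h_{\alpha(\eta)}\cdots h_{\beta(\eta)-1})^{\varepsilon(\eta)}$ and collapsing via the defining recursion of $\psi$ returns $\eta$, where again the relation $W=1$ is used for the two bases whose last item was defined by the global solve.

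The main obstacle I anticipate is the careful verification that $\psi$ is well-defined, i.e. respects \emph{all} dual-base relations of $\Upsilon$, because $\psi$ treats the "last" item of each long base asymmetrically (it is defined by solving a relation rather than being sent to a generator), so one must check that the choice of which item is "last" does not matter, equivalently that the overlap structure of long bases (controlled by Lemma \ref{lem:1clsec}: every boundary touches exactly two bases) together with the relation $W=1$ makes the recursion consistent. Concretely, one long base $\kappa$ may contain an item $h_k$ with $k = \beta(\kappa')-1$ for another long base $\kappa'$, and one must confirm the two ways of computing $\psi(h_k)$ agree in $\caK_\Upsilon$; the single closed active section guaranteed by Lemma \ref{lem:1clsec} and the absence of free boundaries is exactly what prevents branching pathologies and reduces everything to the one global relation $W$. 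Once this consistency is established, the rest is the routine mutual-inverse bookkeeping sketched above, and the isomorphism follows; moreover the presentation of $\caK_\Upsilon$ is visibly that of a one-relator quotient of a partially commutative group (the partially commutative structure being inherited from the constraints $\Re_\Upsilon$ on the items, transported through $\psi$), with $W$ a quadratic word in the generators $\BS_A(\Omega)$, which is the form needed for the subsequent analysis of the quadratic floor.
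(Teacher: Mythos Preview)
Your plan is essentially the paper's own proof: define $\psi$, exhibit the inverse $\varrho(\eta)=h(\eta)$, and check the two composites are the identity. The one sharpening worth noting is that the paper does not wrestle with any ``consistency'' issue in the recursion; the definition of $\psi(h_k)$ is deterministic (there is exactly one long base $\kappa_1$ with $\beta(\kappa_1)-1=k$), so there are never two competing formulas for $\psi(h_k)$. The actual content of the well-definedness check is the telescoping identity $\psi(h(\eta))=\eta$ for every active base $\eta$ with $\beta(\eta)\ne\rho_A$, which the paper verifies directly; this immediately sends each dual-base relation $h(\eta)^{\varepsilon(\eta)}h(\Delta(\eta))^{-\varepsilon(\Delta(\eta))}$ to a defining relator of $\caK_\Upsilon$, and the single remaining relation for the long base $\lambda$ with $\beta(\lambda)=\rho_A$ is exactly what the relator $W=1$ absorbs. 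Once you reframe your ``main obstacle'' as proving that telescoping identity, the rest of your outline goes through verbatim.
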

\begin{proof}
Straightforward computation shows that for every base $\eta$ that belongs to the quadratic part such that either $\eta$ is short or $\beta(\eta)\ne \rho_A$, we have $\psi(h(\eta))=\eta$. Indeed, if $\eta$ is short, then the statement is obvious. Let $\eta$ be a long base. Then, the item $h_{\beta(\eta)-1}$ is covered by two long bases, $\eta$ and $\kappa$. We have
$$
\begin{array}{l}
\psi(h(\eta))=\psi(h_{\alpha(\eta)})\dots \psi(h_{\beta(\eta)-2})\psi(h_{\beta(\eta)-1})=\\
\psi(h_{\alpha(\eta)})\dots \psi(h_{\beta(\eta)-2})\cdot \nu_{\beta(\eta)- 2}^{-1}\dots \nu_{\alpha(\eta)+1}^{-1} \psi(h_{\alpha(\eta)}^{-1})\eta=\\
\psi(h_{\alpha(\eta)})\cdot \nu_{\alpha(\eta)+1}\cdots \nu_{\beta(\eta)-2}\cdot \nu_{\beta(\eta)- 2}^{-1}\dots \nu_{\alpha(\eta)+1}^{-1} \psi(h_{\alpha(\eta)}^{-1})\eta=\eta
\end{array}
$$
We conclude that $\psi$ maps $h(\eta)^{\varepsilon(\eta)}h(\Delta(\eta))^{-\varepsilon(\Delta(\eta))}$ to the identity in $\caK_\Upsilon$ for all pairs of dual bases except for the pair $\lambda, \Delta(\lambda)$, where $\lambda$ is the (uniquely defined) long base so that $\beta(\lambda)=\rho_A$. For the pair $\lambda, \Delta(\lambda)$, we have
$$
\psi(h(\lambda))=\psi(h_{\alpha(\lambda)})\dots\psi(h_{\beta(\lambda)-1})=
\psi(h_{\alpha(\lambda)})\nu_{\alpha(\lambda)+1}\dots \nu_{\rho_A-1}.
$$
Thus, $\psi(h(\lambda))$ is the product of all bases $\nu$ of $\Omega$ so that $\nu\ne \lambda$. It follows that $\psi(h(\lambda)^{\varepsilon(\lambda)}h(\Delta(\lambda))^{-\varepsilon(\Delta(\lambda))})$ is trivial in $\caK_\Upsilon$ since $W=1$ and $\lambda^{\varepsilon(\lambda)}\Delta(\lambda) ^{-\varepsilon(\Delta(\lambda))}=1$ are relations of $\caK_\Upsilon$. We conclude that $\psi$ is a (surjective) homomorphism.

We define the map $\varrho:\caK_\Upsilon\to G_{\Upsilon}$ by $\varrho(\eta)=h(\eta)$. The map $\varrho$ extends to a homomorphism. Straightforward verification shows that every relation of $\caK_\Upsilon$ maps to the identity in $G_{\Omega}$ and that $\varrho\psi=\id$ and $\psi\varrho=\id$ and we conclude that $\psi$ is an isomorphism.
\end{proof}

Let us now show how to extend the isomorphism $\psi$ to an isomorphism of $G_\Omega$.

Let $\Xi$ be the set of relations defined as follows. 
\begin{itemize}
\item
For every pair of items $h_i,h_j$ of $\Omega$ so that $\Re_\Upsilon(h_i,h_j)$ and $h_i$, $h_j$ are covered by short bases $\nu_i$, $\nu_j$, correspondingly, we set $[\nu_i,\nu_j]=1\in \Xi$.
\item
For every pair of items $h_i,h_j$ of $\Omega$ so that $\Re_\Upsilon(h_i,h_j)$ and $h_i$ is covered by a short base $\nu_i$ and $h_j$ is a non-active item, we set $[\nu_i, h_j]=1\in \Xi$.
\item
For every pair of items $h_i,h_j$ of $\Omega$ so that $\Re_\Upsilon(h_i,h_j)$ and $h_i$ is covered only by long bases and $h_j$ is a non-active item, we set $[\eta,h_j]=1\in \Xi$, for all bases $\eta$ in the quadratic section.
\item
For every pair of items $h_i,h_j$ of $\Omega$ so that $\Re_\Upsilon(h_i,h_j)$ and $h_i$ and $h_j$ belong to the non-active part, we set $[h_i,h_j]=1 \in \Xi$.
\end{itemize}

\begin{lem}\label{lem:isoquad}
Let $\Omega=\langle \Upsilon, \Re_\Upsilon\rangle$ be a generalised equation  of type $12$ that repeats infinitely many times in the infinite branch  and let $h_1$ be minimal. Then, in the above notation, the map $\psi$ induces an isomorphism from $G_\Omega$ to the group
\begin{equation} \label{eq:cak}
\caK=\langle \BS_A(\Omega), h_{\rho_A},\dots, h_{\rho_\Omega-1} \mid W=1, \eta^{\varepsilon(\eta)}\Delta(\eta) ^{-\varepsilon(\Delta(\eta))}=1, \nu^{\varepsilon(\nu)} h( \Delta(\nu))^{-\varepsilon(\Delta(\nu))},R', \Xi \rangle,
\end{equation}
where $\eta$ runs over the set of quadratic bases, $\nu$ runs over the set of quadratic-coefficient bases and  $R'$ is the set of relations of the non-active part, i.e. $R'$ is the set of relations $h(\mu)^{\varepsilon(\mu)} h(\Delta(\mu) )^{-\varepsilon(\Delta(\mu))}$, for all pairs of non-active bases $\mu$, $\Delta(\mu)$.
\end{lem}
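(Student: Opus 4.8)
The plan is to bootstrap from Lemma \ref{lem:9.19}. In the proof of that lemma we produced mutually inverse isomorphisms $\psi\colon G_\Upsilon\to\caK_\Upsilon$ and $\varrho\colon\caK_\Upsilon\to G_\Upsilon$, with $\varrho(\eta)=h(\eta)$ and $\psi(h_i)=w_{h_i}$ the word in the active bases defined recursively above. Now $G_\Omega$ is the quotient of $G_\Upsilon$ by the normal closure $N$ of $\{[h_i,h_j]\mid\Re_\Upsilon(h_i,h_j)\}$, and $\caK$ is the quotient of $\caK_\Upsilon$ by the normal closure $M$ of $\Xi$. Hence it suffices to check the two inclusions $\psi(N)\subseteq M$ and $\varrho(M)\subseteq N$; once both hold, $\psi$ and $\varrho$ descend to homomorphisms $G_\Omega\to\caK$ and $\caK\to G_\Omega$ whose composites are the identity, so the descended maps are mutually inverse isomorphisms.

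For $\psi(N)\subseteq M$ the key point is a structural reduction. Using the completedness of $\Re_\Upsilon$ (Remark \ref{rem:completed}) together with Lemma \ref{lem:1stlast} and Corollary \ref{cor:prelasttransfer}, I would first show that in a constrained pair $\Re_\Upsilon(h_i,h_j)$ neither item is covered only by long bases: such an item lies in the unique minimal tribe $\mmm$ by Lemma \ref{lem:1stlast}, and since $\mmm$ is dominated by every tribe, completedness forces its constraint set to be contained in that of every other item; a nonempty such set would give $\Re_\Upsilon(h_k,h_k)$ for some $k$, hence $H_k=1$ for every solution of the fundamental sequence, which we exclude (trivial items having been removed beforehand). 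Consequently each of $\psi(h_i),\psi(h_j)$ is a single generator of $\caK$ — namely $\nu_i$ (resp.\ $\nu_j$) if the item is covered by a short base, and $h_i$ (resp.\ $h_j$) if it is non-active — and the four resulting possibilities for the pair $(h_i,h_j)$ match exactly the four clauses defining $\Xi$, so $[\psi(h_i),\psi(h_j)]\in\Xi\subseteq M$.

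For $\varrho(M)\subseteq N$ I would run through the clauses of $\Xi$. A relator $[\nu_i,\nu_j]$, with $\nu_i,\nu_j$ short bases covering $h_i,h_j$ and $\Re_\Upsilon(h_i,h_j)$, is sent by $\varrho$ to $[h(\nu_i),h(\nu_j)]=[h_i,h_j]\in N$; the relators $[\nu_i,h_j]$ and $[h_i,h_j]$ from the second and fourth clauses likewise map to defining commutators of $N$. For the third clause, $\varrho([\eta,h_j])=[h(\eta),h_j]$ with $h(\eta)=h_{\alpha(\eta)}\cdots h_{\beta(\eta)-1}$; here the underlying long-base item has minimal tribe, so completedness yields $\Re_\Upsilon(h_k,h_j)$ for every item $h_k$ covered by $\eta$, whence $[h(\eta),h_j]$ is a product of conjugates of defining commutators of $N$ (and, as noted, this clause is in fact vacuous by the reduction above). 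This establishes both inclusions and finishes the proof.

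The step I expect to require the most care is the structural reduction of the second paragraph: pinning down precisely which items can carry a commutation constraint and keeping the description of $\psi$ on long-base items consistent with the recursion of Lemma \ref{lem:9.19}, so that after the reduction one is genuinely left only with single-generator images. Everything else is routine verification mirroring the computations already carried out in the proof of Lemma \ref{lem:9.19}.
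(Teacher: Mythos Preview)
Your overall strategy---lifting the isomorphism $\psi\colon G_\Upsilon\to\caK_\Upsilon$ from Lemma~\ref{lem:9.19} to the quotients by checking $\psi(N)\subseteq M$ and $\varrho(M)\subseteq N$---is exactly the paper's approach, and your treatment of $\varrho(M)\subseteq N$ is correct and matches the paper's second paragraph.

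The gap is in your structural reduction for $\psi(N)\subseteq M$. You claim that in a constrained pair $\Re_\Upsilon(h_i,h_j)$ neither item can be covered only by long bases, arguing that such an item lies in the minimal tribe $\mmm$, that $\mmm$ is dominated by \emph{every} tribe, and hence via completedness one derives $\Re_\Upsilon(h_k,h_k)$. But Corollary~\ref{cor:prelasttransfer} only asserts that the tribe of every \emph{active} item dominates $\mmm$; non-active items need not. So if $h_i$ is covered only by long bases (hence minimal) and $h_j$ is a non-active item whose tribe does not dominate $\mmm$, your argument does not produce $\Re_\Upsilon(h_j,h_j)$, and this case genuinely occurs. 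This is precisely the situation the third clause of $\Xi$ is designed for: it is not vacuous.

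The fix is immediate and is implicit in what you already wrote for the reverse inclusion. When $h_i$ is covered only by long bases and $\Re_\Upsilon(h_i,h_j)$ holds (so $h_j$ is non-active, since as the paper notes any active $h_j$ dominates $h_i$ and hence cannot satisfy $H_i\lra H_j$), completedness gives $\Re_\Upsilon(h_k,h_j)$ for every active item $h_k$. Then $\psi(h_i)$ is a word in active bases $\eta$, and $[\psi(h_i),h_j]$ is a product of conjugates of the relators $[\eta,h_j]\in\Xi$. So $\psi(N)\subseteq M$ still holds; you just need to keep this fifth case rather than arguing it away.
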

\begin{proof}
By Lemma \ref{lem:9.19}, the map $\psi$ induces an isomorphism from $G_\Upsilon$ to $\caK_\Upsilon$. Observe that if an item $h_i$ belongs to two long bases, then, by Lemma \ref{lem:mintribe}, it belongs to the minimal tribe. Since any other item $h_j$ from the quadratic part belongs to a tribe that dominates the tribe of $h_i$, it follows that $h_j\notin \Re_\Upsilon(h_i)$, i.e. if $\Re_\Upsilon(h_i,h_k)$, then $h_k$ is non-active. Now a straightforward verification shows that $\psi$ is an epimorphism.

Furthermore, since, by Remark \ref{rem:completed}, the set $\Re_\Upsilon$ is completed, it follows that $\psi$ is an isomorphism. Indeed, if an item $h_i$ belongs to two long bases, then by Lemma \ref{lem:1stlast}, it belongs to the minimal tribe. Since items of the quadratic part belong to tribes that dominate the minimal tribe and, by assumption, $\Re_\Upsilon$ is completed, it follows that if  $\Re_\Upsilon(h_i,h_j)$, then $\Re_\Upsilon(h_i,h_k)$ for all items $h_k$ from the quadratic part.
\end{proof}

\bigskip

Our next goal is to show that under the above conditions, one can take the quadratic equation $W$ to the standard form. In order to do so, we replace the generalised equation $\Omega$ by another generalised equation, in which if an item belongs to a tribe which strictly dominates the minimal tribe, then this item is covered by a short quadratic-coefficient base.

By Lemma \ref{lem:mincar}, items that belong to tribes which strictly dominate the minimal tribe belong to $\mathcal{F}(\Omega_0)$. By Remark \ref{rem:discrfam}, there exists a  fundamental sequence so that for any item $h_i^{(0)}$ that belongs to a tribe that strictly dominates $t(h_1^{(0)})$, there exists a word $w_i(h^{(1)})\in G_{\Omega_{1}}$ so that for all solutions $H^{(0)}=\varphi\pi(v_0,v_1)H^{(1)}$ of the discriminating family, we have that $\varphi \pi(v_0,v_1)(h_i)=w_i(h^{(1)})$ and $H_i^{(0)}=H^{(1)}(w_i)$.

We replace $\Omega_{0}=\langle \Upsilon_0, \Re_{\Upsilon_0} \rangle$ by a new generalised equation $\Omega_{0}'=\langle \Upsilon_0', \Re_{\Upsilon_0'} \rangle$ constructed as follows. Replace the non-active part of $\Omega_{0}$ by the non-active part of $\Omega_{1}=\langle \Upsilon_1,\Re_{\Upsilon_1}\rangle$. For any item $h_i^{(0)}$ in the active part that does not belong to the minimal tribe $t(h_1^{(0)})$ let $w_i(h^{(1)})={h_{i,1}^{(1)}}^{\epsilon_{i,1}}\dots {h_{i,{k_i}}^{(1)}}^{\epsilon_{i,k_i}}$, $\epsilon_{i,j} \in \{1, -1\}$. 

Introduce new boundaries in the item $h_i^{(0)}$ so that $h_i^{(0)}={{h_{i,1}^{(0)}}'}^{\epsilon_{i,1}} \cdots {{h_{i,{k_i}}^{(0)}}'}^{\epsilon_{i,k_i}}$, $\epsilon_{i,j} \in \{1, -1\}$, $j=1,\dots, k_i$. Erase the short base $\lambda$ of $\Omega_{0}$ that covers $h_i^{(0)}$. We now introduce new bases $\lambda_{i,1},\dots, \lambda_{i,{k_i}}$ along with the corresponding duals in such a way that $\lambda_{i,j}$ covers the item ${h_{i,j}^{(0)}}'$, $j=1,\dots,k_i$ and the dual $\Delta(\lambda_{i,j})$ covers the item $h_{i,j}^{(1)}$ and $\varepsilon(\lambda_{i,j})=\epsilon_{i,j}$,  $\varepsilon(\Delta(\lambda_{i,j}))=1$. 

Note that, by Lemma \ref{lem:1stlast}, it follows that if a short base covers an item that belongs to a tribe which strictly dominates $t(h_1^{(0)})$, then either it is a quadratic-coefficient base or it is quadratic and its dual is also a short base. Hence, the base $\Delta(\lambda)$ is a short base that covers an item that dominates $t(h_1^{(0)})$.

The set $\Re_{\Upsilon_0'}$ of $\Omega_0'$ is defined naturally: $\Re_{\Upsilon_0'}({h_{i,j}^{(0)}}', {h_{i',j'}^{(0)}}')$ if and only if $\Re_{\Upsilon_1}(h_{i,j}^{(1)}, h_{i',j'}^{(1)})$ or $\Re_{\Upsilon_0}(h_i^{(0)}, h_{i'}^{(0)})$; $\Re_{\Upsilon_0'}({h_{i,j}^{(0)}}', h_{i'}^{(0)})$ if and only if $\Re_{\Upsilon_0}(h_{i}^{(0)}, h_{i'}^{(0)})$; $\Re_{\Upsilon_0'}({h_{i,j}^{(0)}}', h_{i}^{(1)})$ if and only if $\Re_{\Upsilon_1}(h_{i,j}^{(1)}, h_{i}^{(1)})$; the other relations in $\Re_{\Upsilon_0'}$ are naturally induced by $\Re_{\Upsilon_0}$ and $\Re_{\Upsilon_1}$. We assume that $\Re_{\Upsilon_0'}$ is completed (see Remark \ref{rem:completed}). We conclude that:
\begin{itemize}
\item $\Omega_{0}'$ is a generalised equation of type $12$;
\item if $\varsigma: G_{\Omega_{0}} \to G_{\Omega_{0}'}$ is a natural homomorphism, then any solution $H$ of the generalised equation $\Omega_0$ that belongs to a fundamental sequence induces a solution $H'$ of the generalised equation $\Omega_0'$. In other words, for any homomorphism from the fundamental sequence there exists a homomorphism from $G_{\Omega_{0}'}$  so that the following diagram is commutative
$$
\xymatrix{
  G_{\Omega_{0}} \ar[rr]^{\varsigma} \ar[dr]_{H} &  &    G_{\Omega_{0}'} \ar[dl]^{H'}    \\
                & G_{\Omega_1}\ar[d]_{H^{(1)}}\\
 & \GG                               }
$$
\item $G_{\Omega_1}$ is a retraction of $G_{\Omega_0'}$.
\end{itemize}

Note that if $\nu$ is an active base of $\Omega_{0}'$, then either the tribe of every item ${h_i^{(0)}}'$, $i=\alpha(\nu),\dots, \beta(\nu)-1$, covered by $\nu$ dominates $t(h_1^{(0)})$ and at least one of the items belongs to the tribe $t(h_1^{(0)})$ or else every item covered by $\nu$ belongs to a tribe that strictly dominates $t(h_1^{(0)})$ and then the base $\nu$ is a short quadratic-coefficient base. In other words, in $\Omega'_0$ the tribe of every quadratic base is the same minimal tribe and the tribes of the quadratic-coefficient bases dominate this minimal tribe.

Let 
$$
\calL=\langle\{ \eta\mid \eta\in \BS_A(\Omega_{v_0}')\}, h_{\rho_A}, \dots, h_{\rho_\Omega-1}\mid 
\mu^{\varepsilon(\mu)}\Delta(\mu) ^{-\varepsilon(\Delta(\mu))}=1, \nu^{\varepsilon(\nu)}h(\Delta(\nu)) ^{-\varepsilon(\Delta(\nu))}=1\rangle
$$
for all quadratic bases $\mu$ and all quadratic coefficient bases $\nu$. It is clear that $\calL$ is a free group.

Let $\caM$ be quotient of $\calL$ by the set of commutators $\Xi'$ defined analogously to $\Xi$, see Equation (\ref{eq:cak}). By definition,  $\caM$ is a free partially commutative group.

The word $W$ defined in Equation (\ref{eq:W}) is a quadratic word in the free group $\calL$. It can be taken to the surface relation form (\ref{eq:orient}) or (\ref{eq:nonorient}) by an automorphism of the free group $\calL$, see \cite{ComEd}.

\begin{lem} \label{lem:qnorm}
Let $\phi$ be the automorphism of the free group $\calL$ that takes the quadratic equation $W=1$ to the normal form. Then, $\phi$ induces an automorphism $\phi'$ of the partially commutative group $\caM$ that takes the quadratic equation $W=1$ to the normal form and fixes all the quadratic-coefficient bases and items $h_i$, $i=\rho_A,\dots, \rho_\Omega-1$.
\end{lem}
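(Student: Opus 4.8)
The plan is to transfer the automorphism $\phi$ of the free group $\calL$ constructed via classical surface-relation normalisation (see \cite{ComEd}) to the partially commutative quotient $\caM$ of $\calL$. Recall that $\caM = \calL / \ncl\langle \Xi' \rangle$, where $\Xi'$ is the set of commutation relations among the generators $\BS_A(\Omega'_{0})$ and the non-active items $h_i$, $i=\rho_A,\dots,\rho_\Omega-1$, and that the generators of $\caM$ split into two kinds: the quadratic bases (all of which, by the analysis of $\Omega'_0$, lie in the single minimal tribe $\mmm$), and the quadratic-coefficient bases together with the non-active items $h_i$ (which lie in tribes strictly dominating $\mmm$). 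First I would recall the explicit form of $\phi$: the Comerford--Edmunds algorithm proceeds by a sequence of Nielsen-type elementary substitutions, each of which replaces one quadratic generator $x$ by a product $x' = u x'' v$ where $u,v$ are words in the \emph{other} quadratic generators, and leaves every non-quadratic generator (here: the quadratic-coefficient bases and the items $h_i$) fixed. So $\phi$ restricts to the identity on the subgroup generated by the quadratic-coefficient bases and the $h_i$.

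The key step is to check that $\phi$ descends to a well-defined endomorphism $\phi'$ of $\caM$, i.e. that $\phi(\Xi') \subseteq \ncl\langle \Xi' \rangle$. By the above, $\phi$ fixes all the non-quadratic generators, so any commutator in $\Xi'$ of the form $[\nu_i,\nu_j]$ (two non-active items), or $[\nu_i,h_j]$ (non-active item with a non-active item), is literally fixed by $\phi$, hence trivially maps into $\ncl\langle\Xi'\rangle$. The only commutators of $\Xi'$ that could be moved are those involving a quadratic base. Here is where the tribe structure does the work: if $\Re_{\Upsilon'_0}(h_i,h_j)$ and $h_i$ is covered by a quadratic base, then $h_i$ lies in the minimal tribe $\mmm$; since $\Re_{\Upsilon'_0}$ is completed (Remark \ref{rem:completed}) and $\mmm$ is minimal, $h_j$ must lie in a tribe dominating $\mmm$, so $h_j$ is in fact non-active (or a quadratic-coefficient item) and is fixed by $\phi$; moreover, again by completedness, $h_i$ then commutes with \emph{every} quadratic generator. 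Thus the relators of $\Xi'$ involving quadratic bases come in ``blocks'': for each such $h_j$, the subgroup of $\caM$ generated by all quadratic bases is central mod $\ncl\langle\Xi'\rangle$ with respect to $h_j$. Since $\phi$ sends each quadratic generator to a word in the quadratic generators only, and these all commute with $h_j$ modulo $\Xi'$, the image $\phi([\eta,h_j])$ lies in $\ncl\langle\Xi'\rangle$. Hence $\phi$ induces an endomorphism $\phi'$ of $\caM$; applying the same argument to $\phi^{-1}$ (which has the same shape) shows $\phi'$ is an automorphism.

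It then remains to verify the two stated properties of $\phi'$. That $\phi'$ fixes the quadratic-coefficient bases and the items $h_i$, $i=\rho_A,\dots,\rho_\Omega-1$, is immediate from the corresponding property of $\phi$ together with the fact that the quotient map $\calL \to \caM$ is the identity on generators. That $\phi'$ carries the quadratic equation $W=1$ (where $W$ is as in Equation (\ref{eq:W})) to the surface normal form (\ref{eq:orient}) or (\ref{eq:nonorient}) follows because $\phi(W)$ is \emph{already} in normal form in $\calL$ by choice of $\phi$, and this equality of words in $\calL$ projects to the corresponding equality in $\caM$ (normal forms (\ref{eq:orient}), (\ref{eq:nonorient}) are the same words whether read in $\calL$ or in its quotient).

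The main obstacle is the well-definedness check, i.e. controlling $\phi(\Xi')$: one must be careful that a single elementary Nielsen substitution $\eta \mapsto u\eta v$ with $u,v$ words in the \emph{quadratic} generators does not disturb a commutation relation $[\eta', h_j]$ for a quadratic base $\eta' \neq \eta$ appearing in $u$ or $v$. This is precisely handled by the completedness of $\Re_{\Upsilon'_0}$: since all quadratic bases belong to the same minimal tribe $\mmm$ and $h_j$ lies in a strictly dominating tribe, completedness forces $\Re_{\Upsilon'_0}(\eta', h_j)$ for \emph{all} quadratic bases $\eta'$ simultaneously; so the whole subgroup generated by the quadratic bases commutes with $h_j$ in $\caM$, and any substitution staying inside that subgroup preserves the relation. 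Working this uniformity out carefully — i.e. that ``quadratic generator commutes with $h_j$'' is an all-or-nothing phenomenon — is the crux of the argument and is exactly what Lemmas \ref{lem:mintribe}, \ref{lem:1stlast} and Corollary \ref{cor:prelasttransfer} were set up to provide.
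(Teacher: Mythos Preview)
Your argument is essentially correct and rests on the same structural fact the paper uses: because every quadratic base lies in the unique minimal tribe $\mmm$ and $\Re_{\Upsilon'_0}$ is completed, any generator that commutes in $\caM$ with one quadratic base automatically commutes with \emph{every} active base, so the elementary Nielsen moves of the Comerford--Edmunds algorithm preserve $\ncl\langle\Xi'\rangle$. The paper packages this slightly differently: it walks through the explicit reduction steps ($x\mapsto A^{-1}xAB^{-1}$, $x\mapsto xC^{-1}$, etc.) and at each step invokes \cite{pcauto} to certify that the move is an automorphism of the partially commutative group $\caM$; you instead verify normal-closure preservation directly. Both routes are equivalent once one observes that the Laurence transvection criterion ``$\mathrm{lk}(x)\subseteq\mathrm{lk}(y)$'' is exactly the completedness statement.

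One inaccuracy to fix: you claim that each elementary substitution sends a quadratic generator $x$ to $u\,x\,v$ with $u,v$ words in the \emph{other quadratic generators}, and later that ``$\phi$ sends each quadratic generator to a word in the quadratic generators only''. This is false as stated: the subwords $A,B,C,\ldots$ appearing in the Comerford--Edmunds moves are subwords of $W$ and therefore contain quadratic-coefficient bases as well. Your argument is not harmed, however, because the third clause in the definition of $\Xi$ (together with completedness) forces $[\nu,h_j]\in\Xi'$ for \emph{all} active bases $\nu$, quadratic-coefficient included, whenever $[\eta,h_j]\in\Xi'$ for some quadratic base $\eta$. So the ``all-or-nothing'' commutation phenomenon you isolate holds across the full active generating set $\BS_A(\Omega'_0)$, not just the quadratic bases, and the rest of your proof goes through verbatim once you replace ``quadratic generators'' by ``active bases'' in those two sentences.
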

\begin{proof}
Denote by $\var(W)$ the set of variables of $W=1$ that occur in $W$ exactly twice. By definition of $\calL$ and $\caM$ and by construction of $\Omega_0'$, we get that if $x\in \var(W)$, then $x$ is a quadratic base of $\Omega_0'$ and the tribe of $x$ is the minimal tribe $t(h_1^{(0)})$.

Suppose that $W = AxBxC$. Let $\phi_x$ be the automorphism of $\calL$ induced by the map  $x\to A^{-1}xA B^{-1}$. The automorphism $\phi_x$ transforms the word $W$ into 
$$
\phi_x(W) = A A^{-1}x A B^{-1}BA^{-1} xAB^{-1}C = x^2AB^{-1}C. 
$$
Every letter in $A$ and $B$ is either a variable of $W=1$ (then it corresponds to a quadratic base which belongs to $t(h_1^{(0)})$) or it is a coefficient of $W=1$ (then it corresponds to a quadratic-coefficient base which belongs to a tribe that strictly dominates $t(h_1^{(0)})$). Hence, by \cite{pcauto}, it follows that $\phi_x$ induces an automorphism of $\caM$.

Observe that $AB^{-1}C$ is a quadratic word in fewer variables than $W$. The statement now follows by induction.

Suppose now that every variable $x$ of $W$ occurs in it as $x$ and as $x^{-1}$. Let $W = Ax^{-1}BxC$, where the number of variables $|\var(B)|$ in $B$ is minimal among all such decompositions of $W$. In particular, it follows that $B$ is linear.

If $\var(B) = \emptyset$, then we consider the automorphism $\phi_x$ of $\calL$ defined by the map $x \mapsto xC^{-1}$. Then, $\phi_x(W)=ACx^{-1}Bx$.  As above, the automorphism $\phi_x$ induces an automorphism of $\caM$. Note that the number of variables in $AC$ is strictly lower than that of $W$ and the statement, in this case, follows by induction.

Let $\var(B)\ne \emptyset$. Then, $B = B_1y^{\delta}B_2$, where $\delta=\pm 1$, and neither $B_1$ nor $B_2$ contains $y^{\pm 1}$. Applying the automorphism $y \mapsto y^{-1}$, if necessary, we may assume that $\delta=1$.
 
Consider the automorphism $\phi_y$ of $\calL$ defined by $y \mapsto B_1^{-1}yB_2^{-1}$. Then, $\phi_y(W) = \phi_y(A)x^{-1}yx\phi_y(C)$. The variable $y^{-1}$ occurs either in $A$ or in $C$. We assume that $y^{-1}$ occurs in $C$ (the other case is similar), i.e. $C = C_1y^{-1}C_2$. Then, 
$$
\phi_y(A) = A, \ \phi_y(C)= C_1B_2y^{-1}B_1C_2 \hbox{ and }\phi_y(W) = Ax^{-1}yxC_1B_2y^{-1}B_1C_2.
$$

Applying the automorphism $\phi_x$ defined by the map $x \mapsto x(C_1B_2)^{-1}$, we get that 
$$
\phi_x\phi_y(W)= AC_1B_2x^{-1}yxy^{-1}B_1C_2.
$$
Let $\phi_1$ be the automorphism that conjugates $x$ and $y$ by $ACB_2$. Then
$$
\phi_1\phi_x\phi_y(W)= x^{-1}yxy^{-1}B_1C_2AC_1B_2. 
$$
As above, the automorphisms $\phi_1$, $\phi_x$ and $\phi_y$ induce automorphisms of $\caM$. We now observe that $B_1C_2AC_1B_2$ is a quadratic word in fewer variables than $W$ and the statement now follows by induction.
\end{proof}

\begin{cor}\label{cor:quadNF}
In the notation of {\rm Lemma \ref{lem:qnorm}}, the automorphism $\phi'$ induces an automorphism of the group $\mathcal K$ that takes the quadratic relation to the normal form. Therefore, if $\Omega=\langle \Upsilon, \Re_\Upsilon\rangle$ is a generalised equation of type $12$ that repeats infinitely many times in the infinite branch, $h_1$ is minimal and $\Re_\Upsilon$ is completed, then, $G_\Omega$ is isomorphic to the group
$$
\caK=\langle \BS_A(\Omega), h_{\rho_A},\dots, h_{\rho_\Omega-1} \mid W=1, \eta^{\varepsilon(\eta)}\Delta(\eta) ^{-\varepsilon(\Delta(\eta))}=1, \nu^{\varepsilon(\nu)}  \Delta(\nu)^{-\varepsilon(\Delta(\nu))}=1,R', \Xi' \rangle,
$$
where $\eta$ runs over the set of quadratic bases, $\nu$ runs over the set of quadratic-coefficient bases and $W$ is a quadratic word in the normal form 
$$
\begin{array}{l}
\phantom{a}[\eta_1,\eta_2]\cdots[\eta_{2g-1},\eta_{2g}]{\lambda_{2g+1}}^{\eta_{2g+1}}\cdots {\lambda_m}^{\eta_{m}}=\\
\phantom{a}\pi(v_0,v_1)([\eta_1,\eta_2]\cdots[\eta_{2g-1},\eta_{2g}]{\lambda_{2g+1}}^{\eta_{2g+1}}\cdots {\lambda_m}^{\eta_{m}})
\end{array}
$$
or 
$$
\eta_1^2\cdots \eta_{2g}^2 {\lambda_{2g+1}}^{\eta_{2g+1}}\cdots {\lambda_m}^{\eta_{m}}=\pi(v_0,v_1)(\eta_1^2\cdots \eta_{2g}^2 {\lambda_{2g+1}}^{\eta_{2g+1}}\cdots {\lambda_m}^{\eta_{m}})
$$
and where $R'$ is the set of relations of the non-active part, i.e. $R'$ is the set of relations $\mu^{\varepsilon(\mu)} \Delta(\mu)^{-\varepsilon(\Delta(\mu))}$, where $\mu$, $\Delta(\mu)$ runs over the set of pairs of non-active bases.
\end{cor}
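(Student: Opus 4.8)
The plan is to combine the three preceding lemmas into a single statement about $G_\Omega$. By Lemma \ref{lem:isoquad}, under the standing hypotheses (the generalised equation $\Omega$ of type $12$ repeats infinitely many times in the infinite branch, $h_1$ is minimal, and $\Re_\Upsilon$ is completed), the map $\psi$ already gives an isomorphism $G_\Omega\cong\caK$ with $\caK$ as in Equation (\ref{eq:cak}), where the defining relation $W$ is the ``double word'' $W_1W_2^{-1}$ built from the base structure of $\Omega$. The only remaining thing to do is to observe that $W$ can be brought to the surface normal form by an automorphism that is \emph{compatible with the partially commutative structure and constraints}, which is precisely the content of Lemma \ref{lem:qnorm} together with Lemma \ref{lem:mincar} and Corollary \ref{cor:prelasttransfer} identifying the variables of $W$ as quadratic bases in the unique minimal tribe $\mmm$.

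First I would recall the intermediate replacement step: by Lemma \ref{lem:mincar} and Remark \ref{rem:discrfam}, items strictly dominating $\mmm$ lie in $\mathcal F(\Omega_0)$, so we may pass to the generalised equation $\Omega_0'$ constructed before Lemma \ref{lem:qnorm}, in which every quadratic base has tribe $\mmm$ and every quadratic-coefficient base has tribe strictly dominating $\mmm$; crucially $G_{\Omega_0'}$ carries the induced presentation and the relevant diagram commutes, so it suffices to prove the statement for $\Omega_0'$ and transport it back. Then I would invoke Lemma \ref{lem:qnorm}: the automorphism $\phi$ of the free group $\calL$ that takes the quadratic word $W$ to form (\ref{eq:orient}) or (\ref{eq:nonorient}) is a composition of elementary automorphisms each of which only conjugates a variable of $W$ (a quadratic base in $\mmm$) by words in bases and coefficients — and by the cited result on automorphisms of partially commutative groups (\cite{pcauto}), since all these bases either lie in $\mmm$ or dominate $\mmm$ and $\Re_\Upsilon$ is completed, $\phi$ lifts to an automorphism $\phi'$ of $\caM$ fixing all quadratic-coefficient bases and all non-active items $h_i$, $i=\rho_A,\dots,\rho_\Omega-1$.

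Next I would push $\phi'$ through the quotient $\caM\to\caK$: since $\phi'$ fixes the quadratic-coefficient bases and the non-active items, it respects the relations $\nu^{\varepsilon(\nu)}\Delta(\nu)^{-\varepsilon(\Delta(\nu))}$, $R'$ and the commutator set $\Xi'$, hence descends to an automorphism of $\caK$ that carries the relation $W=1$ to its normal form; composing with the isomorphism $G_\Omega\cong\caK$ of Lemma \ref{lem:isoquad} (and the earlier identification $\caK\cong G_{\Omega_0'}$ and its transport) yields the claimed isomorphism of $G_\Omega$ with the group presented as in the statement, with $W$ in the displayed orientable or non-orientable normal form. The verification that $\phi'$ is well-defined on $\caK$ — i.e. that no constraint is violated when we conjugate a quadratic variable by a word that may involve quadratic-coefficient bases — is the step I expect to be the genuine (if modest) obstacle; it rests entirely on the tribe bookkeeping of the previous lemmas, namely that every item covered by two long bases is minimal (Lemma \ref{lem:1stlast}) so that, $\Re_\Upsilon$ being completed, the constraint set of a quadratic variable is ``upward closed'' with respect to domination and therefore preserved by conjugation by anything dominating $\mmm$. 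The rest is routine diagram-chasing through the commutative triangles already set up before Lemma \ref{lem:qnorm}.
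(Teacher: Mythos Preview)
Your proposal is correct and follows essentially the same approach as the paper, which in fact offers no explicit proof of this corollary: it is meant as an immediate consequence of Lemma \ref{lem:isoquad} (giving $G_\Omega\cong\caK$ with the raw $W$) together with Lemma \ref{lem:qnorm} (giving the automorphism $\phi'$ of $\caM$ fixing quadratic-coefficient bases and non-active items), so that $\phi'$ descends to an isomorphism of $\caK$ carrying $W$ to its normal form. Your identification of the only nontrivial check---that $\phi'$ respects the relations $R'$ and $\Xi'$ because it fixes the generators involved---is exactly the point, and your tribe bookkeeping is the right justification; the paper simply takes this for granted.
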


\subsubsection*{Construction of a Homomorphism from $G_{\Omega_0}$ to a Graph Tower}

We finally address the construction of the graph tower. Our goal is to construct a graph tower $(\Ts_0, \HH_0)$ and a homomorphism $\tau_0$ from $G_{\Omega_0}$ to $\Ts_0$ such that for every homomorphism $H$ from $G_{R(\Omega_0)}$ to $\GG$ from the fundamental sequence there exists a homomorphism $H'$ such that Diagram (\ref{diag:period}) commutes. Recall that we use induction on the length of the fundamental sequence $G_{R(\Omega_0)}\to G_{R(\Omega_1)}\to \dots \to G_{R(\Omega_q)}$ and our induction hypothesis are IH, IH1 and IH2 defined on page  \pageref{pagen}.

We further assume that $\Omega_0$ satisfies the properties of $\Omega_0'$, i.e. the short bases that belong to a tribe that strictly dominate the minimal tribe are quadratic-coefficient bases.

We aim to prove that there exists the following commutative diagram
$$
\xymatrix{
G_{\Omega_0}\ar[d]_{\pi(v_0,v_1)} \ar[rr]^{\tau_0}&  &\Ts_0 \ar@{->>}@<-1ex>[d]& & \ar@{->>}[ll] \HH_0 \ar@{->>}@<-1ex>[d]\\
G_{\Omega_1}  \ar[rr]_{\tau_1}	\ar[dr]_H				 &  & \ar[dl]^{H'} \Ts_1\ar@{_{(}->}@<-1ex>[u]  & & \ar@{->>}[ll] \HH_1\ar@{_{(}->}@<-1ex>[u]\\
 & \GG&
}
$$
and that the graph tower $(\Ts_0, \HH_0)$ satisfies all the induction hypothesis. We fix the following notation for the generators of the groups we consider: $G_{\Omega_i} =\langle h^{(i)}\rangle$, $i = 0,1$, $\HH_1 = \langle x^{(1)}\rangle$, and $\Ts_1 = \langle y^{(1)}\rangle$. Denote by $\varphi_0$ the composition of $\pi(v_0,v_1)$ and $\tau_1$. 

Let $\caN$ be the subset of the set of quadratic bases of $\Omega_0$ so that, for each pair of dual bases $\mu, \Delta(\mu)$, the set $\caN$ contains exactly one of them. Let $n$ be the cardinality of the set $\caN$. Notice that, by definition, the set $\caN$ contains long bases and short bases that belong to the minimal tribe.

For every $\eta \in \caN$, set $\varphi_0(h^{(0)}(\eta))=w_\eta(y^{(1)})$. We define the canonical parabolic subgroup $\KK$ of $\HH_1$ to be $\BA_{E_d(\Gamma_1)}(\{w_\eta(x^{(1)}), \eta \in \caN\})$.

\begin{lem}
The subgroup $\KK$ of $\HH_1$ is $E_d(\Gamma_1)$-\cool.
\end{lem}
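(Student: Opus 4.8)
The plan is to unwind the definition of $E_d(\Gamma_1)$-\cool{} subgroup and verify the two required properties for $\KK=\BA_{E_d(\Gamma_1)}(\{w_\eta(x^{(1)}),\ \eta\in\caN\})$: first that $\KK$ is closed, i.e. $\KK^{\perp\perp}=\KK$, and second that $\KK^\perp$ is $E_d(\Gamma_1)$-directly indecomposable. It is convenient to pass immediately to the $E_d$-component: by definition $\KK$ is $E_d(\Gamma_1)$-\cool{} if and only if the induced subgroup $\KK_d$ of $\GG_d=\GG((V(\Gamma_1),E_d(\Gamma_1)))$ is \cool{} in $\GG_d$. Now $\KK_d$ is by construction exactly $\BA(\{w_\eta(x^{(1)})\}_{\eta\in\caN})$ computed inside $\GG_d$, i.e. the canonical parabolic subgroup generated by the common orthogonal complement $\big(\bigcup_{\eta}\az(w_\eta(x^{(1)}))\big)^{\perp}$ in $\Gamma_{1,d}$, where $\Gamma_{1,d}=(V(\Gamma_1),E_d(\Gamma_1))$.

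First I would record the general fact that for any subset $X\subseteq V$ of a simplicial graph, the canonical parabolic subgroup $\langle X^\perp\rangle$ is closed, i.e. $(X^\perp)^{\perp\perp}=X^\perp$; this is a purely combinatorial statement about $\perp$ in graphs (one always has $Y\subseteq Y^{\perp\perp}$ and $Y^{\perp}=Y^{\perp\perp\perp}$, applied to $Y=X^\perp$). Since $\KK_d=\langle Z^\perp\rangle$ with $Z=\bigcup_{\eta\in\caN}\az(w_\eta(x^{(1)}))$, this immediately gives that $\KK_d$ is closed in $\GG_d$, hence $\KK$ is closed in the required sense. So the only real content is the directly-indecomposable part.

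The heart of the argument is therefore to show that $\KK_d^\perp=\langle Z^{\perp\perp}\rangle$ is directly indecomposable as a subgroup of $\GG_d$, equivalently that the full subgraph of $\Gamma_{1,d}$ on the vertex set $Z^{\perp\perp}$ is connected in the non-commutation graph $\Delta(\Gamma_{1,d})$. Here is where the induction hypotheses and the structure of the $w_\eta$ come in. By Corollary \ref{cor:quadNF} and Lemma \ref{lem:mincar}, the bases in $\caN$ all belong to the single minimal tribe $\mmm$ of $\Omega_0$ (the quadratic bases $\eta$), so for a solution $H^{(0)}$ of the fundamental sequence the elements $H^{(0)}(\eta)$ all have the same $\BA$, namely $\BA(\mmm)$; combined with IH2 this forces $\BA\big(H^{(1)}(w_\eta(x^{(1)}))\big)$, and hence $\BA$ of the corresponding subgroup of $\GG$, to be a fixed directly indecomposable canonical parabolic subgroup's complement — and $\az$ of that subgroup is exactly $Z^{\perp\perp}$ read off via IH1 (the $E_d$-edges record disjoint commutation). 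The key point is that $\BA(b)$ of an irreducible/block element $b$ is $\langle\az(b)^\perp\rangle$ with $\az(b)^{\perp\perp}$ connected; since the $w_\eta$ are images of the quadratic bases which under $\tau_1$ behave as a single block element's orbit, their common $\BA_{E_d}$-complement $Z^{\perp\perp}$ is connected in $\Delta(\Gamma_{1,d})$. I would make this precise by arguing: if $Z^{\perp\perp}$ split as a non-trivial $E_d$-direct product $A\times B$, then for every solution of the fundamental sequence the images would split accordingly, contradicting the fact (from the quadratic analysis, Lemma \ref{lem:1stlast} and Corollary \ref{cor:prelasttransfer}) that the retraction target is directly indecomposable; Remark \ref{rem:discfam} then pins down that $\GG_\KK$ is \cool{} and $\GG_{\KK^\perp}$ directly indecomposable.

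The main obstacle I anticipate is the last paragraph: translating the dynamical/tribal information about $\Omega_0$ (that all quadratic bases lie in one minimal tribe and the coefficient bases dominate it) into the purely combinatorial connectivity statement about the graph $\Gamma_{1,d}$, via the induction hypotheses IH1–IH2 and the fact that $\tau_1$ sends the quadratic part to a one-relator quotient with a block-like retraction. One has to be careful that $\BA_{E_d(\Gamma_1)}(\cdot)$ is being computed in $\GG_d$ and not in $\HH_1$ itself, and that the equality $\KK_d^\perp = \langle Z^{\perp\perp}\rangle$ (closedness of the complement) is used in both directions. Everything else — the closedness of $\KK$, the reduction to $\GG_d$, the general $\perp$-identities — is routine.
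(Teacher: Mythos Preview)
Your treatment of closedness is fine and matches the paper: $\KK=\langle Z^\perp\rangle$ is always closed. The gap is in the direct-indecomposability half. You argue that if $\KK^\perp=\KK_1\times_{E_d}\KK_2$ then, via IH1, images under the fundamental sequence split as $\GG_{\KK_1}\times\GG_{\KK_2}$, and then say this ``contradicts the fact (from Lemma~\ref{lem:1stlast} and Corollary~\ref{cor:prelasttransfer}) that the retraction target is directly indecomposable.'' But those results only say that all active items dominate the single minimal tribe $\mmm$; they say nothing about direct indecomposability of any subgroup of $\GG$. Nor does Remark~\ref{rem:discfam} supply it: that remark only picks out a minimal canonical parabolic containing the image, not an indecomposable one. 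So at the crucial point you are asserting exactly what has to be proved.

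The paper's argument fills this gap with a genuinely new idea you do not mention: it uses that $H^{(0)}[1,\rho_A]$ is a subword of a DM-normal-form word (Remark~\ref{rem:recall}(3)), hence by Lemma~\ref{lem:pc} admits only boundedly many $\GG_{\KK_1}\times\GG_{\KK_2}$-alternations; on the other hand, each application of the entire transformation strictly increases the number of minimal items appearing in the expression for $h^{(0)}[1,\rho_A]$ (this is extracted in the proof from Remark~\ref{rem:manymin}), and since $\Omega_0$ repeats infinitely often one can realise solutions $H^{(0)}$ whose closed-section word contains arbitrarily many minimal items, each forcing an alternation. The resulting unbounded-versus-bounded clash is the actual contradiction. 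Your tribal/connectivity heuristic does not substitute for this: the minimal-tribe information lives in $\GG$, not in $\HH_1$, and IH1/IH2 alone cannot rule out that $\az(w_\eta)^{\perp\perp}$ is $E_d$-disconnected in $\Gamma_{1,d}$ without invoking the normal-form alternation bound.
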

\begin{proof}
Let us begin with an observation. By Remark \ref{rem:discfam}, there exists a canonical parabolic subgroup $\GG_\KK< \GG$ such that for a fundamental sequence of solutions, we have that $\langle\az({H^{(0)}}'(\KK))\rangle=\GG_\KK$. By definition of $\KK$ and by the induction hypothesis IH1, we have that ${H^{(1)}}'(\varphi_0( h(\eta))) \lra \GG_\KK$ for a fundamental sequence of $G_{\Omega_1}$. Furthermore, if $a_i \lra \{{H^{(1)}}'(\varphi_0( h(\eta)))\mid \eta \in \mathcal N\}$, where $a_i\in\cA$, $\GG=\GG(\cA)$, again by the induction hypothesis IH1, we have that $a_i\in \KK$ and so $a_i\in \GG_\KK$, hence we have that
$$
\langle {\az(\{{H^{(1)}}'(\varphi_0( h(\eta)))\mid \eta \in \mathcal N\} )}^\perp\rangle=\GG_\KK.
$$

Since every solution $H^{(0)}$ from the fundamental sequence is obtained from a solution $H^{(1)}$ of $\Omega_1$ from the fundamental sequence by precomposing it with a canonical automorphism (and $\pi(v_0,v_1)$), it follows that $a_i \lra  \{H^{(0)}(\eta), \eta \in \caN\}$ for all solutions $H^{(0)}$ of the fundamental sequence if and only if $a_i \lra \{H^{(1)}(\pi(v_0,v_1)(h(\eta))), \eta \in \caN\}$ for all $H^{(1)}$ from the fundamental sequence, where $a_i\in \GG$. Therefore, $\langle \az (\{H^{(0)}(\eta), \eta \in \caN\})^\perp\rangle= \GG_\KK$ and since all the bases $\eta$ belong to the same minimal tribe, it follows that $\langle \az(H^{(0)}(\eta))^\perp\rangle=\GG_\KK$, for all $\eta \in \mathcal N$. Furthermore, since any minimal item $h_i^{(0)}$ belongs to the same minimal tribe, we have that $\langle {\az(H_i^{(0)})}^\perp\rangle=\GG_\KK$. Hence, if $h_j^{(0)}$ is so that the tribe of $h_j^{(0)}$ dominates the minimal tribe, then we have that $H_j^{(0)}\in \GG_{\KK}^\perp$.

Let us now address the statement of the lemma. By definition, the group $\KK$ is closed, i.e. $\KK=\KK^{\perp\perp}$. We only need to show that $\KK^\perp$ is $E_d(\Gamma_1)$-directly indecomposable.

Assume the contrary, then $\KK^\perp=\KK_1\times \dots \times \KK_r$, where $r>1$ (and this decomposition is with respect to the edges from $E_d(\Gamma_1)$). Without loss of generality, we shall assume that $r=2$. By Remark \ref{rem:discfam}, there exist canonical parabolic subgroups $\GG_{\KK_i}<\GG$, $i=1,2$ such that for all solutions $H^{(0)}$ from the fundamental sequence we have ${H^{(0)}}'(\KK_i)=\GG_{\KK_i}$, $i=1,2$. Furthermore, by the induction hypothesis IH1, $\GG_{\KK_1} \lra \GG_{\KK_2}$. As we have shown, if $h_i^{(0)}$ is an item whose tribe dominates the minimal tribe, then ${H_i^{(0)}}\in \GG_\KK^\perp< \GG_{\KK_1}\times \GG_{\KK_2}$. It follows by Lemma \ref{lem:1stlast}, that $H^{(0)}([1,\rho_A])\in \GG_{\KK_1}\times \GG_{\KK_2}$.

By Lemma \ref{lem:1clsec}, the section $[1, \rho_A]$ is a closed section of $\Omega$ and, by Remark \ref{rem:recall}, the word $H^{(0)}[1,\rho_A]$ is a subword of a word in the DM-normal form. It follows by Lemma \ref{lem:pc}, that the  word $H^{(0)}[1,\rho_A]$ contains only a bounded number of  $\GG_{\KK_1}\times \GG_{\KK_2}$-alternations.

When applying an entire transformation from $\Omega_0$ to $\Omega_1$, the only items that could be mapped to a word of length greater than one by  the epimorphism $\pi(v_0,v_1)$ are $h_1^{(0)}$ and $h_{\cali(2)}^{(0)}$, where the boundary connection $(2, \mu, n_2)$ is introduced and $n_2$ is introduced between the boundaries $\cali(2)$ and $\cali(2)+1$ of $\Omega_0$. The image of $h_1^{(0)}$ is a word that contains at least one minimal item, namely the item $h_{\cali(2)}^{(1)}$. By Remark \ref{rem:manymin}, the image of $h_{\cali(2)}^{(0)}$ is the word $h_{\cali(2)}^{(1)}h_{n_2}^{(1)}$ and both of the items $h_{\cali(2)}^{(1)}$, $h_{n_2}^{(1)}$ are minimal. Hence, the image of $h^{(0)}([1,\rho_A])$ is a word in variables $h^{(1)}$ that contains at least $|h(\mmm)|+1$ minimal items. 

Repeating this argument, we conclude that $\pi(v_0,v_k)(h^{(0)}([1,\rho_A]))$ is a word in $h^{(k)}$ that contains at least $|h(\mmm)|+k$ minimal items. By \cite[Lemma 7.12]{CKpc}, in an infinite branch of type 12, there exists an infinite sequence of generalised equations $\Omega_0= \Omega_{n_1}= \dots $, so that every solution $H^{(n_k)}$ of $\Omega_{n_k}$ induces a solution $H^{(0)}=\pi(v_0, v_{n_k}) H^{(n_k)}$ of $\Omega_0$. Therefore, on the one hand, $H^{(0)}[1,\rho_A] \in \GG_{\KK_1}\times \GG_{\KK_2}$. On the other hand, $H^{(0)}[1,\rho_A]$ is graphically equal to a word $w(H^{(1)})$ that contains $|h(\mmm)|+n_k$ minimal items. Every minimal item defines at least one $\GG_{\KK_1}\times \GG_{\KK_2}$-alternation. Hence, the word $H^{(0)}[1,\rho_A]$ contains at least $|h(\mmm)|+n_k$ many $\GG_{\KK_1}\times \GG_{\KK_2}$-alternations, deriving a contradiction. Therefore, we have that $s=r=1$, $H_i^{(0)}$ is a block and $\KK^\perp$ is directly indecomposable.
\end{proof}

\begin{rem}\label{rem:blocks}
It follows from the above proof that for all solutions $H$ from the  fundamental sequence, $H_i \in \GG_{\KK^\perp}$ is a block element for all minimal items $h_i$ from $\Omega$ and $\BA(H_i)=\BA(\GG_{\KK^\perp})$.
\end{rem}

\begin{lem} \label{lem:eired}
Let $\Omega$ be a generalised equation of type $12$ that repeats infinitely many times in the infinite branch  and let $h_1$ be minimal. Then, there exists an element $U\in G_{\Omega}$ so that $H(U)$ is irreducible for all $H$ from the fundamental sequence.
\end{lem}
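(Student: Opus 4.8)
The plan is to take $U$ to be the element of $G_\Omega$ represented by the product $h_1h_2\cdots h_{\rho_A-1}$ of all active items (equivalently, $U=\varrho(W_1)$ in the notation of the construction preceding Corollary \ref{cor:quadNF}), so that $H(U)=H([1,\rho_A])=H_1H_2\cdots H_{\rho_A-1}$ is exactly the value of the (unique, by Lemma \ref{lem:1clsec}) active closed section under a solution $H$ of the fundamental sequence. By Remark \ref{rem:recall} this word may be assumed to be a subword of a word in DM-normal form, hence geodesic, and by the argument in the proof of the previous lemma it lies in $\GG_{\KK^\perp}$; moreover $\az(H_i)\subseteq\az(\GG_{\KK^\perp})$ for every active item $h_i$ and $\az(\GG_{\KK^\perp})\cap\BA(\GG_{\KK^\perp})=\emptyset$. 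One must then show that $H(U)$ is irreducible, i.e. that its cyclic reduction is a non-trivial block element of $\GG$, for every $H$ in the fundamental sequence.

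Non-triviality is immediate: $H(U)$ is geodesic of length $\sum_i|H_i|\geq|H_1|\geq 1$, so $H(U)\neq 1$ and its cyclic reduction is non-trivial. For the block property I would argue by contradiction exactly as in the proof of the previous lemma. If the cyclic reduction of $H(U)$ admitted a non-trivial block decomposition, one would obtain a non-trivial direct decomposition $\GG_1\times\GG_2$ of a canonical parabolic subgroup of $\GG$ with $\az(\GG_1)\cup\az(\GG_2)\subseteq\az(\GG_{\KK^\perp})$ and $H(U)\in\GG_1\times\GG_2$. Using that $\Omega$ repeats along the infinite branch ($\Omega_0=\Omega_{n_1}=\Omega_{n_2}=\dots$, by \cite[Lemma 7.12]{CKpc}) and that each $\Omega_{n_k}$ induces solutions of $\Omega_0$, the word $H(U)$ is graphically equal to a word in the items of $\Omega_{n_k}$ containing at least $|h(\mmm)|+n_k$ minimal items. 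Each minimal item $H_i$ is a block of $\GG_{\KK^\perp}$ with $\BA(H_i)=\BA(\GG_{\KK^\perp})$ (Remark \ref{rem:blocks}), so its support can be contained neither in $\az(\GG_1)$ nor in $\az(\GG_2)$ — otherwise the letters of the other factor would lie in $\BA(H_i)=\BA(\GG_{\KK^\perp})$, contradicting $\az(\GG_{\KK^\perp})\cap\BA(\GG_{\KK^\perp})=\emptyset$ — and therefore each minimal item contributes at least one $\GG_1\times\GG_2$-alternation to the DM-normal form of $H(U)$. Since $|h(\mmm)|+n_k\to\infty$, this contradicts the uniform bound of Lemma \ref{lem:pc}. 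Hence the cyclic reduction of $H(U)$ is a block, so $H(U)$ is irreducible.

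The hard part will be pinning down the decomposition $\GG_1\times\GG_2$ precisely enough for the alternation count to run: the cyclic reduction differs from $H(U)$ by an uncontrolled conjugating tail, so one must count alternations in the linear DM-normal-form word $H(U)$ rather than in its cyclic reduction, and one must verify (using the minimality of $\GG_{\KK^\perp}$ among canonical parabolic subgroups containing the relevant images, Remark \ref{rem:discfam}, together with Remark \ref{rem:blocks}) that a genuine block-disconnection of the cyclic reduction forces the disconnection of a canonical parabolic subgroup whose alphabet contains the supports of all minimal items. This is precisely the step where the rigidity of Remark \ref{rem:blocks} — $H_i$ a block with $\BA(H_i)=\BA(\GG_{\KK^\perp})$ — is essential, just as in the proof of the $E_d(\Gamma_1)$-co-irreducibility of $\KK$ above.
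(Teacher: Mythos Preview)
Your alternation-counting strategy does not transfer to this lemma, and the candidate $U=h[1,\rho_A]$ is not the one the paper uses. The alternation argument in the proof that $\KK^\perp$ is $E_d(\Gamma_1)$-directly indecomposable works because the putative decomposition $\KK_1\times\KK_2$ is a \emph{structural} property of $\HH_1$, independent of any particular solution: one then lets $H$ range over solutions coming from $\Omega_{n_k}$ for larger and larger $k$, producing unboundedly many alternations in the \emph{varying} words $H[1,\rho_A]$. Here, by contrast, you must show $H(U)$ is irreducible for every \emph{fixed} $H$ in the fundamental sequence. If some particular $H_0(U)$ fails to be irreducible, the block decomposition $\GG_1\times\GG_2$ you extract from its cyclic reduction is specific to $H_0$; you cannot then let $k\to\infty$, since $H_0(U)$ is a single fixed word in $\GG$ and carries only boundedly many alternations. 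Passing to a subfamily via Lemma~\ref{lem:discfam} does not help: you would merely prove something about the subfamily, not about $H_0$. Moreover, even setting this aside, the alphabet of the cyclic reduction of $H(U)$ may be a proper subset of $\az(\GG_{\KK^\perp})$, so there is no reason that minimal items---which are blocks of $\GG_{\KK^\perp}$, not of $\langle\az(\GG_1)\cup\az(\GG_2)\rangle$---should have letters in both $\GG_1$ and $\GG_2$, and they may lie entirely inside the conjugating tail $w$.

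The paper takes a completely different, direct route: it shows first that every $H[1,i]$ is a block (since $H_1$ is a block and all tribes dominate the minimal one), so the only obstruction to irreducibility is cyclic reduction. It then chooses $U=h[1,\alpha(\Delta(\mu))-1]$ (when $\varepsilon(\mu)=\varepsilon(\Delta(\mu))$) and argues that if $H(U)=d_1 u d_1^{-1}$ with $d_1$ a non-trivial left divisor of $H_1$, then the longer geodesic word $H[1,\beta(\Delta(\mu))]=H(U)\cdot H(\Delta(\mu))$ would exhibit cancellation---$d_1^{-1}$ right-divides $H(U)$ and $d_1$ left-divides $H(\Delta(\mu))=H(\mu)$---contradicting that $H[1,\beta(\Delta(\mu))]$ is reduced as written. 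The cases $\varepsilon(\mu)=-\varepsilon(\Delta(\mu))$ are reduced to this one by applying one or two complete entire transformations (which preserve the isomorphism type of $G_{R(\Omega)}$). The point is that a carefully chosen \emph{proper} initial segment has something following it inside the section that witnesses cyclic reducedness; the full section $h[1,\rho_A]$ has nothing after it to play this role.
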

\begin{proof}
Since the word $H_1$ is a block (see Remark \ref{rem:blocks}), since the tribe of $h_i$ dominates the minimal tribe (see Lemma \ref{cor:prelasttransfer}), and since the word $H[1,i]$ is reduced as written, it follows that the word $H[1,i]$ is a block element for all $1< i\le \rho_A$. Furthermore, if the word $H[1,i]$ is not irreducible, then it is not cyclically reduced.

If the word $H_1H[2,i] =w v w^{-1}$ is not cyclically reduced, then by \cite[Proposition 3.18]{EKR}, the word $w=d_1d_2$, where $d_1$ is a left divisor of $H_1$, $H_1=d_1 u_1$, $d_2$ is a left divisor of $H[2,i]$, $H[2,i]=d_2u_2$, and $d_2\lra u_1$. Furthermore, since $H_1$ is a block element and $\BA(d_2)>\BA(H_1)$, it follows that $d_2\not \lra H_1$, hence $d_1\ne 1$.

Let $\mu$ be the carrier base of $\Omega$. Suppose that $\varepsilon(\mu)=\varepsilon(\Delta(\mu))$. In this case, we show that $H[1,\alpha(\Delta(\mu))-1]$ is irreducible. Assume the contrary. Then, from the above discussion, it follows that $H_1H[2,\alpha(\Delta(\mu))-1]=d_1 u d_1^{-1}$, where $d_1 \ne 1$ is a left divisor of $H_1$. On the other hand, by Lemma \ref{lem:1clsec}, the word $H[1,\beta(\Delta(\mu))]= H[1,\alpha(\Delta(\mu))-1]H(\Delta(\mu))$ is reduced - a contradiction since $d_1^{-1}$ right-divides $H[1,\alpha(\Delta(\mu))-1]$ and $d_1$ left-divides $H(\Delta(\mu))$. 

Suppose now that $\varepsilon(\mu)=-\varepsilon(\Delta(\mu))$. Let $\nu$ be the (uniquely defined) long base, so that $\alpha(\nu)=\beta(\mu)-1$. Note that, since $\Omega$ is formally consistent, see \cite[Definition 3.9]{CKpc}, we have that $\nu\ne \Delta(\mu)$. Let $\Omega'$ be obtained from $\Omega$ by a complete entire transformation. If $\varepsilon(\nu)=\varepsilon(\Delta(\nu))$ in $\Omega'$, then, by Lemma \ref{lem:mincar}, the argument above applies to the generalised equation $\Omega'$ and the carrier base $\nu$. 

Suppose that $\varepsilon(\nu)=-\varepsilon(\Delta(\nu))$ in $\Omega'$. Let $\Omega''$ be obtained from $\Omega'$ by a complete entire transformation. Then, we have that $\varepsilon(\mu)=\varepsilon(\Delta(\mu))$ in $\Omega''$. Notice that since the generalised equation $\Omega$ repeats infinitely many times in the infinite branch, the tribe $t(\mu)$ of $\mu$ is minimal in $\Omega''$. Without loss of generality, let $\alpha(\mu)<\alpha(\Delta(\mu))$. From the above argument, it follows that the word $H''[\alpha(\mu), \alpha(\Delta(\mu))-1]$ is irreducible.

Therefore, since $G_{R(\Omega)}\simeq  G_{R(\Omega')} \simeq G_{R(\Omega'')}$, the statement follows.
\end{proof}

Recall that quadratic words of the type $[x,y], x^2, z^{-1}cz$, where $c$ is a constant are called \emph{atomic quadratic words} or simply \emph{atoms}. Let $W = 1$ be a quadratic equation over $G$ written in the form $r_1 r_2\cdots r_k=d$, where $r_i$ are atoms and $d \in G$. The number $k$ is called the \emph{rank} of the quadratic equation. Suppose that the rank  $k$ of $W=1$ is greater than or equal to $2$. A solution $H$ of $W$ is atom-commutative  if $[H(r_i),H(r_{i+1})]=1$ for all $i=1,\dots,k-1$.

Suppose that the quadratic equation satisfies one of the following two alternatives:
\begin{itemize}
\item the Euler characteristic of $W$ is at most $-2$, or $W$ corresponds to a punctured torus and the subgroup 
$$
\langle \varphi_0(\eta) , \varphi_0(h_i^{(0)}) \mid \eta \in \caN, h_i^{(0)} \hbox{ is covered by a quadratic-coefficient base} \rangle
$$
of $\Ts_1$ is non-abelian, i.e. the retraction of the (punctured) surface onto $\Ts_1$ is non-abelian;
\item the rank $k$ of $W$ is greater than or equal to $2$ and the minimal solution $\varphi_0$ is not atom-commutative.
\end{itemize}

Note that, in the case of free groups, the above conditions are sufficient to ensure that the radical of the quadratic equation $W=1$ coincides with the normal closure, \cite{KhMNull, Sela1}. We will see in the next section that the same result holds for arbitrary partially commutative groups (under the condition that the set of solutions of the quadratic equation factors through an infinite branch).

Notice that, by Lemma \ref{lem:typesKperp}, we have that $\KK^\perp$ is either $E_d(\Gamma)$-directly indecomposable  or $E_c(\Gamma)$-free abelian. If the equation $W$ satisfies one of the above alternatives, then $\KK^\perp$ cannot be free abelian and hence it is directly indecomposable.

Define the graph $\Gamma_0$ as follows: 
\begin{itemize}
\item $V(\Gamma_0)=V(\Gamma_1) \cup \{x_1^{(0)},\dots, x_n^{(0)}\}$; 
\item $E_c(\Gamma_0)=E_c(\Gamma_1)$ and 
\item $E_d(\Gamma_0)=E_d(\Gamma_1)\cup \{ (x_i^{(0)}, x_j^{(1)}) \mid \hbox{ for all } x_j^{(1)} \in \KK, i=1,\dots,n\}$, 
\end{itemize}
where $n=|\caN|$. Then, the group $\HH_0=\GG(\Gamma_0)$ is given by the presentation 
$$
\langle 
\HH_1, x_1^{(0)},\dots, x_n^{(0)} \mid [x_i^{(0)}, x_j^{(1)}]=1,  \hbox{ for all } x_j^{(1)} \in \KK, i=1,\dots,n
\rangle. 
$$

Define the map $\tau_0'$ as follows
$$
\left\{ 
\begin{array}{llll}
\eta_i             & \to & x_i^{(0)}, & \eta_i \in \caN \\
\Delta(\eta_i) & \to & {x_i^{(0)}}^\epsilon, & \eta_i \in  \caN, \eta_i=\Delta(\eta_i)^\epsilon, \epsilon\in \{\pm 1\}, \\
\nu                 &\to & \varphi_0(h(\nu)), & \hbox{for all quadratic-coefficient bases } \nu, \\
h_i^{(0)}          &\to  & \varphi_0(h_i^{(0)}), & \hbox{for all non-active items } h_i^{(0)}.
\end{array}
\right.
$$

If $\Ts_1=\langle y ^{(1)}\mid S_1\rangle$, then we set $\Ts_0$ to be the quotient of $\HH_0$ by the set of relations $S_1$ and the set of relations $S$ consisting of the set of basic relations $[C_{\Ts_1}(\KK^\perp),x_i^{(0)}] = 1$, $1\le i\le n$ and the relation $\tau_0'(W)$, where $W$ is a quadratic word in the normal form from Corollary \ref{cor:quadNF}. Notice that $\tau_0'(W)$ is a quadratic word in the normal form in variables $x_i^{(0)}$, $i=1,\dots,n$.

\begin{lem} \label{lem:qhom}
The map $\tau_0'$ extends via the isomorphism $\psi$ {\rm(}see {\rm Lemma \ref{lem:isoquad})} to a homomorphism $\tau_0$ from $G_{\Omega_0}$ to $\Ts_0$.
\end{lem}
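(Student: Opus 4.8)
The plan is to verify that $\tau_0'$, which is a priori only a map defined on a generating set of a subgroup of $G_{\Omega_0}$, respects all the defining relations of $G_{\Omega_0}$ when transported through the isomorphism $\psi:G_{\Omega_0}\to \caK$ of Corollary \ref{cor:quadNF}. So first I would recall that, by Corollary \ref{cor:quadNF}, $G_{\Omega_0}$ is isomorphic to the group $\caK$ with presentation
$$
\caK=\langle \BS_A(\Omega_0), h_{\rho_A},\dots, h_{\rho_\Omega-1} \mid W=1, \eta^{\varepsilon(\eta)}\Delta(\eta) ^{-\varepsilon(\Delta(\eta))}=1, \nu^{\varepsilon(\nu)}  \Delta(\nu)^{-\varepsilon(\Delta(\nu))}=1,R', \Xi' \rangle,
$$
with $W$ already in surface normal form. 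Since $\tau_0'$ is defined on the generators $\BS_A(\Omega_0)\cup\{h_{\rho_A},\dots,h_{\rho_\Omega-1}\}$ of $\caK$, it suffices to check that $\tau_0'$ kills each relator of $\caK$ in $\Ts_0$; then $\tau_0:=\psi^{-1}$ followed by the induced homomorphism $\caK\to\Ts_0$ is the desired map, and it agrees with $\tau_0'$ on the active generators in the sense of the statement.

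The relator-by-relator verification would proceed as follows. For the quadratic relator $W=1$: $\tau_0'$ sends $\eta_i\mapsto x_i^{(0)}$, $\Delta(\eta_i)\mapsto {x_i^{(0)}}^\epsilon$, and each quadratic-coefficient base $\nu$ to $\varphi_0(h(\nu))$, so by construction $\tau_0'(W)$ is precisely the relator $\tau_0'(W)$ imposed in the presentation of $\Ts_0$ — hence it is trivial there. For the dual-base relators $\eta^{\varepsilon(\eta)}\Delta(\eta)^{-\varepsilon(\Delta(\eta))}$ of quadratic bases: these map to ${x_i^{(0)}}^{\varepsilon(\eta)}({x_i^{(0)}}^{\epsilon})^{-\varepsilon(\Delta(\eta))}$, which is trivial in $\HH_0$ (and hence in $\Ts_0$) by the compatibility of the signs $\epsilon$ with $\varepsilon$ built into the definition of $\tau_0'$. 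For the relators $\nu^{\varepsilon(\nu)}\Delta(\nu)^{-\varepsilon(\Delta(\nu))}$ of quadratic-coefficient bases and for $R'$ (the relators $\mu^{\varepsilon(\mu)}\Delta(\mu)^{-\varepsilon(\Delta(\mu))}$ of non-active dual bases): here $\tau_0'$ sends everything via $\varphi_0$, and since $\varphi_0=\pi(v_0,v_1)\tau_1$ is a homomorphism from $G_{\Omega_0}$ to $\Ts_1$, each such relator is already trivial in $G_{\Omega_0}$ hence its $\varphi_0$-image is trivial in $\Ts_1\leq \Ts_0$. Finally, for the commutation relators in $\Xi'$: those among non-active items, or among non-active items and quadratic-coefficient bases, map under $\varphi_0$ to commutators that hold in $\Ts_1$ for the same reason; the subtle ones are the commutators $[\nu_i,\nu_j]=1$ between quadratic-coefficient bases whose corresponding items satisfy $\Re_{\Upsilon_0}$ — these map to $[\varphi_0(h(\nu_i)),\varphi_0(h(\nu_j))]$, which is a relation in $G_{\Omega_0}$ (since $\Re_{\Upsilon_0}(h_i,h_j)$ forces $[h(\nu_i),h(\nu_j)]=1$ there), hence vanishes in $\Ts_1$ under $\varphi_0$; and commutators of the form $[\eta,h_j]$ for a minimal item covered only by long bases similarly go to relations valid in $G_{\Omega_0}$.

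The main obstacle — and the place where the hypotheses IH, IH1, IH2 and the structural lemmas are really needed — is the interplay with the \emph{basic relations} $[C_{\Ts_1}(\KK^\perp),x_i^{(0)}]=1$ and with the centraliser edges $E_c(\Gamma_0)$: one must be sure that imposing these additional relations on $\HH_0$ to form $\Ts_0$ does not create unwanted collapse that would prevent $\tau_0'$ from descending, and conversely that $\tau_0'$ actually respects the $E_d(\Gamma_0)$ commutations between the new generators $x_i^{(0)}$ and the generators of $\KK$. For the latter, I would invoke IH2 (via the preceding lemma that $\KK$ is $E_d(\Gamma_1)$-co-irreducible) together with the fact that $\varphi_0(h(\eta))=w_\eta(y^{(1)})$ is built from exactly the letters $\BA_{E_d(\Gamma_1)}$-orthogonal to $\KK$, so that $x_i^{(0)}\mapsto x_i^{(0)}$ commuting with $\az(\KK)$ is consistent with $w_\eta$ commuting with $\KK$ in $\Ts_1$. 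I expect this argument to be short once the earlier lemmas are in place, so the proof as a whole is a bookkeeping verification; the delicate point to state carefully is simply that every relator of $\caK$ lies in the kernel of the composite $\caK\to\Ts_0$, which I would organize exactly along the four classes of relators above.
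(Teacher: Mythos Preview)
Your overall approach---identifying $G_{\Omega_0}$ with $\caK$ via Corollary~\ref{cor:quadNF} and checking each class of relators---is exactly what the paper does, and your treatment of $W$, the dual-base relators, $R'$, and the commutators in $\Xi'$ involving only quadratic-coefficient bases or non-active items is correct: all of these map through $\varphi_0$ and vanish because $\varphi_0$ is a homomorphism.

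The gap is in the key case: the relators $[\eta,h_j]\in\Xi'$ with $\eta$ a \emph{quadratic} base (so $\tau_0'(\eta)=x_i^{(0)}$, a new generator) and $h_j$ a non-active item. Your sentence ``similarly go to relations valid in $G_{\Omega_0}$'' does not work here, because the image $[x_i^{(0)},\varphi_0(h_j^{(0)})]$ involves $x_i^{(0)}\notin\Ts_1$, so it is not the $\varphi_0$-image of a relation in $G_{\Omega_0}$. What you must actually show is that $\varphi_0(h_j^{(0)})\in\KK$; then $[x_i^{(0)},\varphi_0(h_j^{(0)})]=1$ holds in $\HH_0$ (and hence $\Ts_0$) by the $E_d(\Gamma_0)$-edges you added between $x_i^{(0)}$ and $\az(\KK)$. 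The paper's argument for this runs: since $\Re_{\Upsilon_0}$ is completed and $h_i$ (covered only by long bases) is minimal, $\Re_{\Upsilon_0}(h_k^{(0)},h_j^{(0)})$ holds for \emph{every} item $h_k^{(0)}$ in the quadratic part; this propagates along $\pi(v_0,v_1)$ to $\Re_{\Upsilon_1}$; IH2 for $\tau_1$ then gives ${H^{(1)}}'(y_m)\lra {H^{(1)}}'(y_n)$ for the relevant letters; IH1 for $\HH_1$ turns this into $(x_m^{(1)},x_n^{(1)})\in E_d(\Gamma_1)$; hence every letter of $\varphi_0(h_j^{(0)})$ lies in $\BA_{E_d(\Gamma_1)}(\{w_\eta\})=\KK$.

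Your final paragraph invokes the right hypotheses (IH1, IH2) but the reasoning is inverted: you argue that $w_\eta=\varphi_0(h(\eta))$ is orthogonal to $\KK$---which is tautological, since $\KK$ is \emph{defined} as $\BA_{E_d(\Gamma_1)}(\{w_\eta\})$---rather than that $\varphi_0(h_j^{(0)})$ lies \emph{in} $\KK$. Your worry about ``unwanted collapse'' from the basic relations is also a red herring: you are defining a map \emph{into} $\Ts_0$, so additional relations in the target can only help relators vanish, never obstruct the map.
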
 
\begin{proof}
By Corollary \ref{cor:quadNF}, the group $G_{\Omega_0}$ is isomorphic to the group $\mathcal K$. We show that $\tau_0'$ induces a homomorphism from $\mathcal K$ to $\Ts_0$.

It is immediate to check that if $r$ is either the relation $W=1$ or a relation of the type $\nu^{\varepsilon(\nu)}=\Delta(\nu)^{\varepsilon(\Delta(\nu))}$ or a relation from $R'$, then $\tau_0'(r)=1$. Therefore, we are left to show that $\tau_0'$ is trivial on the set of commutators $\Xi$.

Assume that $([\nu_i, \nu_j]=1)\in \Xi$ where $\nu_i,\nu_j$ are short bases. Since bases from the quadratic part belong to tribes that dominate the minimal tribe, it follows that $\nu_i$ and $\nu_j$ belong to a tribe that strictly dominates the minimal tribe and hence, since we assume that $\Omega_0=\Omega_0'$, the bases $\nu_i$ and $\nu_j$ are quadratic-coefficient bases. Since $\tau_0'([\nu_i, \nu_j])=\varphi_0([\nu_i, \nu_j])$ and $\varphi_0$ is a homomorphism, we have that $\tau_0'([\nu_i, \nu_j])=1$.

Similarly, if $([\nu_i, h_j^{(0)}]=1)\in \Xi$, where $\nu_i$ is a quadratic-coefficient base and $h_j^{(0)}$ is a non-active item, then $\tau_0'([\nu_i, h_j^{(0)}])=\varphi_0([\nu_i, h_j^{(0)}])=1$.

If $([h_i^{(0)},h_j^{(0)}]=1) \in \Xi$, where $h_i^{(0)}$ and $h_j^{(0)}$ belong to the non-active part, then $\tau_0'([h_i^{(0)}, h_j^{(0)}])=\varphi_0([h_i^{(0)}, h_j^{(0)}])=1$.

Assume that $([\nu_i, h_j^{(0)}]=1)\in \Xi$, where $\nu_i$ is a quadratic base and $h_j^{(0)}$ is an item from the non-active part. By the definition of $\Xi$ and the fact that $\Re_{\Upsilon_0}$ is completed, one has that $\Re_{\Upsilon_0}(h_i^{(0)},h_j^{(0)})$ for all $h_i^{(0)}$ from the quadratic part of $\Omega_0$. Hence, from the description of the process, we conclude that $\Re_{\Upsilon_1}(h_i^{(1)},h_j^{(1)})$ for all $h_i^{(1)}$ from the word $\pi(v_0,v_1)(h_i^{(0)})$ and all $h_j^{(1)}$ from the word $\pi(v_0,v_1)(h_j^{(0)})$. By definition of a solution of a generalised equation, it follows that for any solution $H^{(1)}$ of $\Omega_1$, we have that $H_i^{(1)} \lra H_j^{(1)}$. By the induction hypothesis IH2 on $\tau_1$, for all $y_i$ from the word $\tau_1(h_i^{(1)})$ and all $y_j$ from the word $\tau_1(h_j^{(1)})$, we have that $H'(y_i) \lra H'(y_j)$. By the induction hypothesis IH1 on $\HH_1$, it follows that $(x_i,x_j)\in E_d(\Gamma_1)$. Therefore, for any $x_j$ from the word $\varphi_0(h_j^{(0)})$ and any $x_i$ from the word $\varphi_0(h_i^{(0)})$, where $h_i^{(0)}$ belongs to the quadratic part, we have that $(x_i,x_j)\in E_d(\Gamma_1)$ and so $x_j \in \BA(h^{(0)}(\eta))$ for every base $\eta$ from the quadratic part. Thus, $\varphi_0(h_j^{(0)}) \in \KK$. We conclude that $\tau_0([\nu_i, h_j^{(0)}])=1$. This shows that $\tau_0$ is a homomorphism.
\end{proof}

\begin{lem}\label{lem:qdiagcom}
The homomorphism $\tau_0:G_{\Omega_0} \to \Ts_0$ makes Diagram {\rm(\ref{diag:period})} commutative.
\end{lem}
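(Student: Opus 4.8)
The plan is to fix a homomorphism $H$ from the fundamental sequence and build the map $H'$ on the right of Diagram~(\ref{diag:period}) out of the inductive data for $\tau_1$. Write $H^{(0)}\colon G_{\Omega_0}\to\GG$ for the induced homomorphism. Since the fundamental sequence factors through an infinite branch of type~$12$, we have $H^{(0)}=\varphi\,\pi(v_0,v_1)\,H^{(1)}$, where $\varphi$ is an automorphism from the group associated to the type-$12$ vertex and $H^{(1)}$ is a solution of $\Omega_1$ lying in the fundamental sequence of $G_{\Omega_1}$; recall that $\varphi$ is invariant with respect to the non-quadratic part, so by Remark~\ref{rem:K} it fixes every non-active item and every item covered by a quadratic-coefficient base. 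By the induction hypothesis IH applied to $\tau_1$ and $H^{(1)}$ there is a homomorphism ${H^{(1)}}'\colon\Ts_1\to\GG$ (also viewed, via $\pi_1$, as a homomorphism $\HH_1\to\GG$) with $\tau_1{H^{(1)}}'=H^{(1)}$.

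Next I would define $H'\colon\Ts_0\to\GG$. Using the amalgamated product decomposition of $\Ts_0$ from Lemma~\ref{lem:prtower}(c), $\Ts_0=\Ts_1\ast_{A}B$ with $A=C_{\Ts_1}(\KK^\perp)\times\langle u_{2g+1},\dots,u_m\rangle$ and $B=\langle u_{2g+1},\dots,u_m,x_1^{(0)},\dots,x_n^{(0)}\mid W\rangle\times C_{\Ts_1}(\KK^\perp)$, it suffices to define $H'$ on the two vertex groups so that they agree on $A$. On $\Ts_1$ put $H'={H^{(1)}}'$. On $B$ put $H'={H^{(1)}}'$ on the direct factor $C_{\Ts_1}(\KK^\perp)$ and, on the surface factor, $H'(u_j)={H^{(1)}}'(u_j)$ and $H'(x_i^{(0)})=H^{(0)}(h(\eta_i))$ for $\eta_i\in\caN$. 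The two definitions agree on $A$. That $H'|_B$ is a homomorphism requires two checks. First, the surface relation is killed: evaluating $H'$ on $W$ reproduces $H^{(0)}$ applied to the defining relation $W=1$ of $G_{\Omega_0}$ (through the isomorphism $\psi$ of Lemma~\ref{lem:isoquad} and Corollary~\ref{cor:quadNF}), using that $H'(u_j)={H^{(1)}}'(u_j)={H^{(1)}}\bigl(\pi(v_0,v_1)(h(\lambda_j))\bigr)=H^{(0)}(h(\lambda_j))$ since $\varphi$ fixes the items under the quadratic-coefficient base $\lambda_j$; hence $H'|_B(W)=H^{(0)}(W)=1$. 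Second, the direct-product structure of $B$ is respected: $H'(C_{\Ts_1}(\KK^\perp))$ must commute with $H'(u_j)$ — immediate, as $C_{\Ts_1}(\KK^\perp)$ centralises $u_j\in\KK^\perp$ in $\Ts_1$ — and with each $H'(x_i^{(0)})=H^{(0)}(h(\eta_i))$. For the latter one uses that $H^{(0)}(h(\eta_i))$ lies in the canonical parabolic subgroup $\GG_{\KK^\perp}$: by Lemma~\ref{lem:1stlast} and Corollary~\ref{cor:prelasttransfer} every item covered by $\eta_i$ is either minimal or has tribe dominating $\mmm$, and in either case its $H^{(0)}$-value lies in $\GG_{\KK^\perp}$ (Remark~\ref{rem:blocks}, respectively the argument in the proof of co-irreducibility of $\KK$), so the same holds for the product $H^{(0)}(h(\eta_i))$; while ${H^{(1)}}'(C_{\Ts_1}(\KK^\perp))$ centralises $\GG_{\KK^\perp}$ because $\GG_{\KK^\perp}$ is directly indecomposable and non-cyclic, so that $C_\GG(\GG_{\KK^\perp})=\GG_\KK=\BA(\GG_{\KK^\perp})$, and the images of $C_{\Ts_1}(\KK^\perp)$ under ${H^{(1)}}'$ are controlled by IH1 and IH2 as in that proof. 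Thus $H'$ is a well-defined homomorphism. (Equivalently, one may define $H'$ on the generators of $\Ts_0$ directly and verify it respects $S_1$, the partially commutative relations of $\Gamma_0$, the basic relations $[C_{\Ts_1}(\KK^\perp),x_i^{(0)}]=1$ and $\tau_0'(W)$; the only non-trivial verifications are the two above.)

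It then remains to verify $\tau_0H'=H^{(0)}$ as homomorphisms $G_{\Omega_0}\to\GG$. Since $\tau_0=\tau_0'\psi$ and $\psi\colon G_{\Omega_0}\to\caK$ is an isomorphism, it is enough to check $H'(\tau_0'(g))=H^{(0)}(\psi^{-1}(g))$ for $g$ a generator of $\caK$. For $\eta_i\in\caN$ this is the definition of $H'(x_i^{(0)})$; for a dual $\Delta(\eta_i)$ it follows because $\Delta(\eta_i)=\eta_i^{\pm1}$ in $\caK$ and the solution $H^{(0)}$ identifies $H^{(0)}(h(\Delta(\eta_i)))$ with the corresponding power of $H^{(0)}(h(\eta_i))$; for a quadratic-coefficient base $\nu$ and a non-active item $h_i^{(0)}$ one has $\tau_0'(\nu)=\varphi_0(h(\nu))$, $\tau_0'(h_i^{(0)})=\varphi_0(h_i^{(0)})$, and $H'(\varphi_0(\,\cdot\,))={H^{(1)}}'(\varphi_0(\,\cdot\,))={H^{(1)}}(\pi(v_0,v_1)(\,\cdot\,))=H^{(0)}(\,\cdot\,)$ by IH for $\tau_1$ and the invariance of these items under $\varphi$. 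The remaining two triangles of Diagram~(\ref{diag:period}) commute by the very definitions of $\pi_0$ and of the canonical epimorphism $G_{\Omega_0}\to G_{R(\Omega_0)}$, so the whole diagram commutes.

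I expect the main obstacle to be the second well-definedness check — that ${H^{(1)}}'(C_{\Ts_1}(\KK^\perp))$ centralises $\GG_{\KK^\perp}$ — since this is precisely where the inductive control on the images of $C_{\Ts_1}(\KK^\perp)$ (via IH1 and IH2) must be combined with the structural fact that $\GG_{\KK^\perp}$ is non-cyclic and directly indecomposable, so that its centraliser in $\GG$ is exactly the canonical parabolic subgroup $\GG_\KK$; everything else is bookkeeping with the factorisation of $H^{(0)}$ and the presentation of $\Ts_0$.
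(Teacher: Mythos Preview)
Your approach is essentially the same as the paper's: define ${H^{(0)}}'$ to agree with ${H^{(1)}}'$ on $\Ts_1$ and to send $x_i^{(0)}$ to $H^{(0)}(h(\eta_i))$, then check the surface relation and the basic relations $[C_{\Ts_1}(\KK^\perp),x_i^{(0)}]$ are killed, and finally read off commutativity from the definitions. Your verification of these two points and of $\tau_0H'=H^{(0)}$ on generators matches the paper's argument.

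One caution: you frame the construction of $H'$ through the amalgamated product decomposition of Lemma~\ref{lem:prtower}(c), but the direct product structure $\langle C_{\Ts_1}(\KK^\perp),u_{2g+1},\dots,u_m\rangle=C_{\Ts_1}(\KK^\perp)\times\langle u_{2g+1},\dots,u_m\rangle$ is only established later (in the proof of Theorem~\ref{thm:towerdisc} in Section~\ref{sec:11}), and that argument in turn uses that $\Ts_1$ is discriminated by the fundamental sequence --- which is proved by induction relying on the present lemma for lower heights. So invoking Lemma~\ref{lem:prtower}(c) here is circular in presentation. The paper avoids this by working directly with the presentation of $\Ts_0$ as a quotient of $\HH_0$ by $S_1\cup S$, exactly the alternative you sketch in parentheses; you should make that the primary argument rather than a side remark. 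With that adjustment your proof is fine.
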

\begin{proof}
Notice that any solution of the fundamental sequence is the composition of an automorphism associated to the vertex $v_0$, the epimorphism $\pi(v_0,v_1)$ and a solution of $\Omega_1$. Since by \cite[Lemma 7.12]{CKpc}, all the automorphisms fix the subgroup $NQ$ of $G_{\Omega_0}$ generated by the items covered by quadratic-coefficient bases and the non-active items, it follows that the restriction of any solution $H^{(0)}$ of the fundamental sequence to this subgroup coincides with a solution $H^{(1)}$ on the subgroup $\pi(v_0,v_1)(NQ)$ of $G_{\Omega_1}$, i.e. for all items $h_i^{(0)}\in NQ$ we have that $H^{(0)}_i=H^{(0)}(h_i^{(0)})=H^{(1)}(\pi(v_0,v_1)(h_i^{(0)}))$. By induction hypothesis, there exists a homomorphism ${H^{(1)}}'$ from $\Ts_1$ that makes Diagram  (\ref{diag:period}) commutative. We define ${H^{(0)}}'$ on the subgroup $\Ts_1$ of $\Ts_0$ to be ${H^{(1)}}'$ and set ${H^{(0)}}'(x_i^{(0)})=H^{(0)}(\eta_i)$, where $\eta_i \in \caN$. 

It suffices to show that ${H^{(0)}}'$ is a homomorphism. In this case, the commutativity of the diagram follows by construction.  Let us show that ${H^{(0)}}'([ x_i^{(0)}, C_{\Ts_1}(\KK^\perp)])=1$. From the definition of $\KK$ and the induction hypothesis on the tower $(\Ts_1,\HH_1)$, we have that 
$$
{H^{(1)}}'(\varphi_0(\eta))=H^{(1)}(\pi(v_0,v_1)(\eta)) \lra {H^{(1)}}'(\KK). 
$$
Since every solution $H^{(0)}$ from the fundamental sequence is obtained from a solution $H^{(1)}$ from the fundamental sequence by precomposing it with a canonical automorphism (and $\pi(v_0,v_1)$), we conclude that $a_i \lra  \{H^{(0)}(\eta), \eta \in \caN\}$ for all solutions $H^{(0)}$ of the fundamental sequence if and only if $a_i \lra \{H^{(1)}(\pi(v_0,v_1)(h(\eta))), \eta \in \caN\}$ for all $H^{(1)}$ from the fundamental sequence, where $a_i\in \GG$.  It follows that 
$$
{H^{(0)}}(\eta_i)={H^{(0)}}'(x_i^{(0)}) \lra {H^{(1)}}'(\KK)={H^{(0)}}'(\KK)\hbox{ and so } {H^{(0)}}'(x_i^{(0)}) \in \langle \az({H^{(0)}}'(\KK))^\perp\rangle.
$$
By induction hypothesis, $\langle\az({H^{(1)}}'(\KK))^\perp\rangle=\langle\az({H^{(1)}}'(\KK^\perp))\rangle$ and hence $[{H^{(0)}}'(x_i^{(0)}), C_\GG({H^{(0)}}'(\KK^\perp))]=1$.

Finally, direct computation shows that ${H^{(0)}}'(W)=H^{(0)}(W)=1$. This proves that ${H^{(0)}}'$ is a homomorphism.
\end{proof}

\begin{lem}\label{lem:qH0ind}
The group $\HH_0$ satisfies the induction hypothesis {\rm IH1}: for every $x_i,x_j \in \HH_0$, we have that $(x_i,x_j) \in E_d(\Gamma_0)$ if and only if ${H^{(0)}}'(x_i) \lra {H^{(0)}}'(x_j)$ for all homomorphisms ${H^{(0)}}'$ induced by a solution $H^{(0)}$ from the fundamental sequence. Furthermore, we have that if $(x_i,x_j) \in E_c(\Gamma_0)$, then ${H^{(0)}}'(x_i) ,{H^{(0)}}'(x_j)$ belong to a cyclic subgroup for all homomorphisms ${H^{(0)}}'$ induced by a solution $H^{(0)}$ from the fundamental sequence.
\end{lem}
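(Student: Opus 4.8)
The plan is to verify IH1 for the enlarged graph $\Gamma_0$ by splitting into the edges inherited from $\Gamma_1$ and the genuinely new edges incident to the vertices $x_1^{(0)},\dots,x_n^{(0)}$. For the inherited edges, note that for $x_i,x_j\in\HH_1$ we have $(x_i,x_j)\in E_d(\Gamma_0)$ if and only if $(x_i,x_j)\in E_d(\Gamma_1)$, and by the induction hypothesis IH1 on $(\Ts_1,\HH_1)$ this holds if and only if ${H^{(1)}}'(x_i)\lra{H^{(1)}}'(x_j)$ for every homomorphism ${H^{(1)}}'$ from the fundamental sequence of $G_{\Omega_1}$; since every solution $H^{(0)}$ of the fundamental sequence of $G_{\Omega_0}$ restricts on the subgroup $\Ts_1\le\Ts_0$ to such an ${H^{(1)}}'$ (this is exactly the compatibility established in the proof of Lemma \ref{lem:qdiagcom}, where ${H^{(0)}}'|_{\Ts_1}={H^{(1)}}'$), the equivalence transfers verbatim. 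The same remark handles the $c$-edges inside $\HH_1$: $E_c(\Gamma_0)=E_c(\Gamma_1)$, so those are covered by IH1 for $(\Ts_1,\HH_1)$.

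The substantive case is a new edge $(x_i^{(0)},x_j^{(1)})\in E_d(\Gamma_0)$ with $x_j^{(1)}\in\KK$. Here I would argue as in Lemma \ref{lem:qdiagcom}: by definition of $\KK=\BA_{E_d(\Gamma_1)}(\{w_\eta(x^{(1)}),\ \eta\in\caN\})$ and the induction hypothesis IH1, for every $\eta\in\caN$ one has ${H^{(1)}}'(\varphi_0(h(\eta)))\lra{H^{(1)}}'(\KK)$, hence ${H^{(1)}}'(x_j^{(1)})\lra{H^{(1)}}'(\varphi_0(h(\eta)))$. Since any solution $H^{(0)}$ of the fundamental sequence is obtained from some $H^{(1)}$ by precomposing with a canonical automorphism and $\pi(v_0,v_1)$, the canonical automorphisms fix the non-quadratic part and act on the quadratic bases $\eta\in\caN$, so $a\lra\{H^{(0)}(\eta):\eta\in\caN\}$ for all solutions of the fundamental sequence if and only if $a\lra\{H^{(1)}(\pi(v_0,v_1)(h(\eta))):\eta\in\caN\}$ for all $H^{(1)}$ of the fundamental sequence, where $a\in\cA$; combined with ${H^{(0)}}'(x_i^{(0)})=H^{(0)}(\eta_i)$ and $\eta_i\in\caN$, this gives ${H^{(0)}}'(x_i^{(0)})\lra{H^{(0)}}'(x_j^{(1)})={H^{(1)}}'(x_j^{(1)})$, as required. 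Conversely, if ${H^{(0)}}'(x_i^{(0)})\lra{H^{(0)}}'(x_j^{(1)})$ for all ${H^{(0)}}'$ from the fundamental sequence, then, since ${H^{(0)}}'(x_i^{(0)})=H^{(0)}(\eta_i)$ runs over the images of all bases in $\caN$ (all minimal, in the same tribe, so $\langle\az(H^{(0)}(\eta))^\perp\rangle=\GG_\KK$ by the argument used in the proof that $\KK$ is \cool), the element $x_j^{(1)}$ lies in $\KK=\BA_{E_d(\Gamma_1)}(\{w_\eta(x^{(1)})\})$; then by the definition of $E_d(\Gamma_0)$ on the new vertices, $(x_i^{(0)},x_j^{(1)})\in E_d(\Gamma_0)$. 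There are no new $c$-edges incident to the $x_i^{(0)}$'s and no new $d$-edges among the $x_i^{(0)}$'s themselves (the quadratic relation $\tau_0'(W)$ is imposed in $\Ts_0$, not as a commutation in $\Gamma_0$), so for a pair $x_i^{(0)},x_j^{(0)}$ there is no edge in either $E_d(\Gamma_0)$ or $E_c(\Gamma_0)$, and one checks directly from the non-atom-commutativity/non-abelian retraction hypothesis ($\circledast$ or $\circledast\circledast$) that ${H^{(0)}}'(x_i^{(0)})={H^{(0)}}(\eta_i)$ and ${H^{(0)}}'(x_j^{(0)})={H^{(0)}}(\eta_j)$ do not disjointly commute for all solutions of the fundamental sequence — indeed, were $H^{(0)}(\eta_i)\lra H^{(0)}(\eta_j)$ always, the restriction of $W$ would force an atom-commutative (resp.\ abelian) solution, contradicting the choice made in the construction.

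Finally I would record the statement about $c$-edges of $\Gamma_0$: since $E_c(\Gamma_0)=E_c(\Gamma_1)$, any $(x_i,x_j)\in E_c(\Gamma_0)$ is an edge of $E_c(\Gamma_1)$ with $x_i,x_j\in\HH_1$, and the conclusion that ${H^{(0)}}'(x_i),{H^{(0)}}'(x_j)$ lie in a common cyclic subgroup is precisely IH1 for $(\Ts_1,\HH_1)$ transported along ${H^{(0)}}'|_{\Ts_1}={H^{(1)}}'$. The main obstacle I anticipate is the converse direction for the new $d$-edges, i.e.\ deducing membership in $\KK$ from disjoint commutation with \emph{all} images $H^{(0)}(\eta_i)$: this requires knowing that the set $\{H^{(0)}(\eta):\eta\in\caN\}$ ``detects'' $\KK$ exactly, which in turn relies on the block structure of the $H^{(0)}(\eta)$ and the equality $\langle\az(H^{(0)}(\eta))^\perp\rangle=\GG_\KK$ for every $\eta\in\caN$ — facts that were established (Remark \ref{rem:blocks} and the proof that $\KK$ is \cool) precisely so that this step goes through, so the work is in carefully invoking them rather than in a new argument.
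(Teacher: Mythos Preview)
Your overall structure matches the paper's proof closely: the split into inherited edges, new $d$-edges $(x_i^{(0)},x_j^{(1)})$ with $x_j^{(1)}\in\KK$, and the pair $(x_i^{(0)},x_j^{(0)})$, together with the observation $E_c(\Gamma_0)=E_c(\Gamma_1)$, is exactly what the paper does. The forward and converse directions for the mixed case $(x_i^{(0)},x_j^{(1)})$ are argued the same way, via the same-tribe propagation and IH1 for $\HH_1$.

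The one place where you diverge from the paper is the case $(x_i^{(0)},x_j^{(0)})$. The paper disposes of it in one line: all bases in $\caN$ belong to the same minimal tribe, hence $H^{(0)}(\eta_i)\not\lra H^{(0)}(\eta_j)$ (two elements in the same tribe cannot disjointly commute --- concretely, by Remark~\ref{rem:blocks} both images are block elements of $\GG_{\KK^\perp}$ with $\BA(H^{(0)}(\eta_i))=\BA(\GG_{\KK^\perp})$, so disjoint commutation would force $H^{(0)}(\eta_j)\in\GG_\KK\cap\GG_{\KK^\perp}=1$). Your route via the hypotheses $\circledast$/$\circledast\circledast$ is an unnecessary detour and, as written, does not quite go through: disjoint commutation of $H^{(0)}(\eta_i)$ with $H^{(0)}(\eta_j)$ is not the same thing as atom-commutativity of the solution (atoms are $[\eta_1,\eta_2]$, $\eta_k^2$, $\lambda^{\eta}$, not single variables), so the implication you claim is not immediate. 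Replace that paragraph with the same-tribe argument and your proof is then essentially identical to the paper's.
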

\begin{proof}
If $x_i,x_j \in \HH_1<\HH_0$, then the statement follows by the induction hypothesis IH1 for $\HH_1$.

By definition $(x_i^{(0)},x_j^{(0)}) \notin E_d(\Gamma_0)$, for all $1\le i<j\le n$. Let us show that ${H^{(0)}}'(x_i^{(0)}) \not\lra {H^{(0)}}'(x_j^{(0)})$. Notice that by definition, ${H^{(0)}}'(x_i^{(0)})=H(\eta_i)$, where $\eta_i \in \caN$. Since all the bases from $\caN$ belong to the same tribe, it follows that $H^{(0)}(\eta_i) \not\lra H^{(0)}(\eta_j)$.

By definition $(x_i^{(0)}, x_j^{(1)})\in E_d(\Gamma_0)$, for all $x_j^{(1)}\in \KK$, $i=1,\dots,n$. From the definition of $\KK$ and the induction hypothesis IH1 for the graph tower $(\Ts_1,\HH_1)$, we have that ${H^{(1)}}'(\varphi_0(\eta))=H^{(1)}(\pi(v_0,v_1)(\eta)) \lra {H^{(1)}}'(\KK)$. As we have already seen in Lemma \ref{lem:qdiagcom}, we have that $a_i \lra  \{H^{(0)}(\eta), \eta \in \caN\}$ for all solutions $H^{(0)}$ of the fundamental sequence if and only if $a_i \lra \{H^{(1)}(\pi(v_0,v_1)(h(\eta))), \eta \in \caN\}$ for all solutions $H^{(1)}$ from the fundamental sequence, where $a_i\in \GG$.

Assume that for $x_j^{(1)}\in \HH_1$ we have that ${H^{(0)}}'(x_j^{(1)}) \lra {H^{(0)}}'(x_i^{(0)})$ for some $i=1,\dots,n$. Since by definition ${H^{(0)}}'(x_i^{(0)})=H^{(0)}(\eta_i)$ and all the quadratic bases belong to the same minimal tribe, it follows that ${H^{(0)}}'(x_j^{(1)}) \lra {H^{(0)}}'(x_i^{(0)})$ for all $i=1,\dots,n$. Since the sets $\{\pi(v_0,v_1)(h(\eta)), \eta \in \caN\}$ and $\{ h(\eta), \eta \in \caN\}$ define the same minimal tribe, we conclude that 
$$
{H^{(0)}}'(x_j^{(1)})={H^{(1)}}'(x_j^{(1)}) \lra H^{(0)}(\pi(v_0,v_1)(h(\eta_i)))={H^{(0)}}'(\varphi_0(h(\eta_i)))={H^{(1)}}'(\varphi(h(\eta_i))) 
$$
for all $i=1,\dots,n$. By the induction hypothesis IH1 on $\HH_1$, it follows that $x_j^{(1)} \in \BA(\varphi(h(\eta_i)))=\KK$. Therefore, $(x_j^{(1)},x_i^{(0)})\in E_d(\Gamma_0)$.

Finally, since all the edges $(x_i,x_j)$ from $E_c(\Gamma_0)$ belong to $E_c(\Gamma_1)$, by induction on $\HH_1$ and definition of ${H^{(0)}}'$, we conclude that for every solution $H^{(0)}$ from the fundamental sequence, ${H^{(0)}}'(x_i)={H^{(1)}}'(x_i)$ and ${H^{(0)}}'(x_j)={H^{(1)}}'(x_j)$ belong to the same cyclic subgroup.
\end{proof}

\begin{lem}\label{lem:qtau0ind}
The homomorphism $\tau_0$ satisfies the induction hypothesis {\rm IH2}: for all $h_i^{(0)}$, if $\tau_0(h_i^{(0)})=y_{i1}\dots y_{ik}$, then for a fundamental sequence of solutions we have that $\BA({H^{(0)}}'(y_{ij})) \supset \BA(H_i^{(0)})$ and $\bigcap \limits_{j=1}^k \BA({H^{(0)}}'(y_{ij}))= \BA(H_i^{(0)})$.
\end{lem}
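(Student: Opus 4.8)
\textbf{Proof plan for Lemma \ref{lem:qtau0ind}.}

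The plan is to argue case-by-case according to the combinatorial role of the item $h_i^{(0)}$ in $\Omega_0=\Omega_0'$, using the explicit formula for $\tau_0$ given just before Lemma \ref{lem:qhom} together with the isomorphism $\psi$ from Lemma \ref{lem:isoquad}. Recall that $\tau_0=\tau_0'\circ\psi$ and that $\psi(h_i^{(0)})$ is a word in the active bases $\mathbf{S}_A(\Omega_0)$ (for active $h_i^{(0)}$) or $h_i^{(0)}$ itself (for non-active $h_i^{(0)}$); applying $\tau_0'$, a non-active item or one covered by a quadratic-coefficient base is sent (via $\varphi_0$) into $\Ts_1$, while an item $h_i^{(0)}$ lying in the minimal tribe is sent to a word in the new generators $x_1^{(0)},\dots,x_n^{(0)}$. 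So there are essentially two regimes to analyse: the part of $\tau_0(h_i^{(0)})$ that lands in $\Ts_1=\langle y^{(1)}\rangle$, and the part built from the $x_j^{(0)}$'s.

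For the $\Ts_1$-part I would reduce to the induction hypothesis IH2 for $\tau_1$ as follows. If $h_i^{(0)}$ is non-active or covered by a quadratic-coefficient base, then, as established in the discussion preceding Remark \ref{rem:discrfam} (Case B items belong to $\mathcal F(\Omega_0)$) and in the construction of $\Omega_0'$, for a fundamental sequence we have $H_i^{(0)}=H^{(1)}(\pi(v_0,v_1)(h_i^{(0)}))$. Writing $\pi(v_0,v_1)(h_i^{(0)})=h_{i_1}^{(1)}\cdots h_{i_p}^{(1)}$, the induction hypothesis IH2 on $\tau_1$ gives $\bigcap_j\BA({H^{(1)}}'(y_{ij}))=\BA(H^{(1)}_{i_l})$ for each factor $h_{i_l}^{(1)}$ once $\tau_1(h_{i_l}^{(1)})=y_{i1}\cdots$; combining over all factors and using that $\BA$ of a product is the intersection of the $\BA$'s of the geodesic pieces (together with the fact, recalled in Remark \ref{rem:recall}, that solutions are written in DM-normal form so that $H(\sigma)$ is a subword of a normal-form word, whence the geodesics concatenate without cancellation) yields $\bigcap_j\BA({H^{(0)}}'(y_{ij}))=\BA(H^{(1)}(\pi(v_0,v_1)(h_i^{(0)})))=\BA(H_i^{(0)})$ and $\BA({H^{(0)}}'(y_{ij}))\supseteq\BA(H_i^{(0)})$ for each $j$. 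The inclusion $\supseteq$ is the easy half throughout and follows from monotonicity of $\BA$ under taking subwords.

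For the $x_j^{(0)}$-part the key input is Remark \ref{rem:blocks}: for every minimal item $h_i^{(0)}$ and every solution $H$ of the fundamental sequence, $H_i\in\GG_{\KK^\perp}$ is a block element with $\BA(H_i)=\BA(\GG_{\KK^\perp})$. By construction ${H^{(0)}}'(x_j^{(0)})=H^{(0)}(\eta_j)$ with $\eta_j\in\caN$ a quadratic base in the minimal tribe, and by Remark \ref{rem:blocks} applied to the items covered by $\eta_j$ (or to $\eta_j$ directly, via Lemma \ref{lem:eired} and the block analysis in the proof of the preceding lemma) we get $\BA({H^{(0)}}'(x_j^{(0)}))=\BA(\GG_{\KK^\perp})$. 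Now if $h_i^{(0)}$ is itself minimal, then $\BA(H_i^{(0)})=\BA(\GG_{\KK^\perp})$ by Remark \ref{rem:blocks}, so every factor of $\tau_0(h_i^{(0)})$ that is an $x_j^{(0)}$ already realizes the correct $\BA$, and the intersection over all factors is exactly $\BA(\GG_{\KK^\perp})=\BA(H_i^{(0)})$. If $h_i^{(0)}$ is active but not minimal — i.e.\ its tribe strictly dominates $\mmm$ — then, since $\Omega_0=\Omega_0'$, $h_i^{(0)}$ is covered by a quadratic-coefficient base and we are back in the $\Ts_1$-regime treated above; the only subtlety is that $\psi(h_i^{(0)})$ may be a longer word in the $\eta$'s, but Lemma \ref{lem:9.19}/\ref{lem:isoquad} guarantee every base in the quadratic part occurs once in such a word and the cancellation bookkeeping is the same as in the non-active case. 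Assembling the two regimes, in all cases the factors of $\tau_0(h_i^{(0)})$ have $\BA$ containing $\BA(H_i^{(0)})$ and intersecting to it.

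\textbf{Main obstacle.} I expect the delicate point to be controlling $\BA$ of the concatenated word ${H^{(0)}}'(\tau_0(h_i^{(0)}))$ from the $\BA$'s of the individual factors ${H^{(0)}}'(y_{ij})$ — i.e.\ verifying that no new commuting letters appear "for free" when the pieces are juxtaposed. This requires that the word ${H^{(0)}}'(\tau_0(h_i^{(0)}))$ be (essentially) geodesic as written, which is exactly where one must invoke that solutions of the fundamental sequence are realized inside DM-normal-form words (Remark \ref{rem:recall}), that ${H^{(0)}}'$ on $\Ts_1$ agrees with ${H^{(1)}}'$ and respects the retraction structure, and Lemma \ref{lem:pc} to bound alternations; pinning down "no spurious cancellation" carefully is the technical heart of the argument, while the rest is a case analysis driven by Remark \ref{rem:blocks} and the induction hypothesis IH2 on $\tau_1$.
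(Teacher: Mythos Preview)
Your approach is essentially the same as the paper's: split into the case where $h_i^{(0)}$ is non-active or covered by a quadratic-coefficient base (handled by $\varphi_0=\pi(v_0,v_1)\tau_1$ and the induction hypothesis IH2 on $\tau_1$), and the case where $h_i^{(0)}$ is covered only by long bases and hence minimal (handled by Remark~\ref{rem:blocks}, which gives $\BA({H^{(0)}}'(x_j^{(0)}))=\BA(H^{(0)}(\eta_j))=\BA(\GG_{\KK^\perp})=\BA(H_i^{(0)})$ and the presence of at least one $x_j^{(0)}$ in $\psi(h_i^{(0)})$).

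Your ``main obstacle'' is overstated, however. The statement of IH2 concerns the individual letters $y_{ij}$ of $\tau_0(h_i^{(0)})$ and the intersection of their $\BA$'s, not $\BA$ of the concatenated image; so no delicate cancellation analysis is required. The only place the product structure enters is in checking that $\bigcap_l \BA(H^{(1)}_{i_l})=\BA(H_i^{(0)})$ when $\pi(v_0,v_1)(h_i^{(0)})=h_{i_1}^{(1)}\cdots h_{i_p}^{(1)}$, and this is immediate because solutions of generalised equations are \emph{graphical} equalities by definition (so $H_i^{(0)}\doteq H^{(1)}_{i_1}\cdots H^{(1)}_{i_p}$ is geodesic as written and $\az(H_i^{(0)})=\bigcup_l\az(H^{(1)}_{i_l})$, whence $\BA(H_i^{(0)})=\bigcap_l\BA(H^{(1)}_{i_l})$). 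The paper dispatches this in one line (``By construction, the epimorphism $\pi(v_0,v_1)$ satisfies the statement of the lemma''); you do not need DM-normal-form subtleties or Lemma~\ref{lem:pc} here. Also, your aside that ``$\psi(h_i^{(0)})$ may be a longer word in the $\eta$'s'' when $h_i^{(0)}$ is covered by a short (quadratic-coefficient) base is off: in that case $\psi(h_i^{(0)})=\nu_i$ is a single base and $\tau_0'(\nu_i)=\varphi_0(h(\nu_i))$, so you are directly in the $\Ts_1$-regime.
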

\begin{proof}
If the item $h_i^{(0)}$ is covered by a quadratic-coefficient base or it is non-active,  then $\tau_0(h_i^{(0)})=\tau_1 \pi(v_0,v_1) (h_i^{(0)})$. By construction, the epimorphism $\pi(v_0,v_1)$ satisfies the statement of the lemma, and by induction hypothesis so does $\tau_1$. It follows that the statement of the lemma holds for $\tau_0(h_i^{(0)})$.

We are left to consider the case when  $h_i^{(0)}$ belongs to a quadratic base and is not covered by a quadratic-coefficient base. In this case, $h_i^{(0)}$ is covered by two long bases and, hence belongs to a minimal tribe. Then, by definition, $\tau_0(h_i^{(0)})=\tau_0'(\psi(h_i^{(0)}))$. If $\psi(h_i^{(0)})=w_i(\caN, C)$ where $C$ is the set of items covered by quadratic-coefficient bases, then $\tau_0(h_i^{(0)})=w_i(x_i^{(0)}, \varphi_0(C))=y_{i1}\dots y_{ik}$. Since ${H^{(0)}}'(\tau_0(h_i^{(0)}))=w_i(H^{(0)}(\eta_i),H^{(0)}(C))$ and all the bases and items of the quadratic part belong to tribes that dominate the minimal tribe, it follows that $\BA({H^{(0)}}'(y_{ij})) \supset \BA(H_i^{(0)})$. Furthermore, since the item $h_i^{(0)}$ belongs to a quadratic base, it follows that $y_{il}=x_i^{(0)}$ for some $l=1,\dots,k$ and some $i=1,\dots,n$. Since $x_i^{(0)}$ belongs to the minimal tribe, we have that $\bigcap \limits_{j=1}^k \BA({H^{(0)}}'(y_{ij}))= \BA(H_i^{(0)})$.
\end{proof}

\bigskip
Let us now deal with the exceptional cases.

\subsubsection*{Rank one} 
Assume that $W$ corresponds to a torus, i.e. $W=[\eta_1,\eta_2]=1$. Define the graph $\Gamma_0$ as follows. If $\KK^\perp$ is non-abelian, set:
\begin{itemize}
\item $V(\Gamma_0)=V(\Gamma_1) \cup \{x_1^{(0)}, x_2^{(0)} \}$,
\item $E_c(\Gamma_0)=E_c(\Gamma_1) \cup \{(x_1^{(0)}, x_2^{(0)})\}$ and 
\item $E_d(\Gamma_0)=E_d(\Gamma_1)\cup \{ (x_i^{(0)}, x_j^{(1)}) \mid \hbox{ for all } x_j^{(1)} \in \KK, i=1,2\}$.
\end{itemize}
If $\KK^\perp$ is abelian, then set:
\begin{itemize}
\item $V(\Gamma_0)=V(\Gamma_1) \cup \{x_1^{(0)}, x_2^{(0)} \}$,
\item $E_c(\Gamma_0)=E_c(\Gamma_1) \cup \{(x_1^{(0)}, x_2^{(0)})\} \cup \{(x_i^{(0)},x_j^{(1)}) \mid \hbox{ for all } x_j^{(1)} \in \KK^\perp, i=1,2\}$, and 
\item $E_d(\Gamma_0)=E_d(\Gamma_1)\cup \{ (x_i^{(0)}, x_j^{(1)}) \mid \hbox{ for all } x_j^{(1)} \in \KK, i=1,2\}$.
\end{itemize} 

If $\Ts_1=\langle y ^{(1)}\mid S_1\rangle$, then we set $\Ts_0$ to be the quotient of $\HH_0$ by the set of relations $S_1$ and the set of basic relations $S$: 
$$
[C_{\Ts_1}(\KK^\perp),x_i^{(0)}] = 1, \ i=1,2.
$$ 

\begin{lem}
The map  $\tau_0':\eta_i\mapsto x_i^{(0)}$, $i=1,2$ extends via the isomorphism $\psi$ {\rm(}see {\rm Lemma \ref{lem:isoquad})} to a homomorphism $\tau_0$ from $G_{\Omega_0}$ to $\Ts_0$ that makes Diagram {\rm(\ref{diag:period})} commutative. Furthermore, $\Ts_0$, $\HH_0$ and $\tau_0$ satisfy the induction hypothesis {\rm IH}, {\rm IH1} and {\rm IH2}.
\end{lem}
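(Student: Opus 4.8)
The plan is to run the argument of the general quadratic case (Lemmas \ref{lem:qhom}, \ref{lem:qdiagcom}, \ref{lem:qH0ind} and \ref{lem:qtau0ind}) almost verbatim, the only structural difference being that here the surface relation $W=[\eta_1,\eta_2]=1$ corresponds to a torus, so the quadratic part of $G_{\Omega_0}$ is free abelian of rank two and the floor adjoins a rank-two free abelian group rather than a surface group; accordingly the relation $W$ is never imposed explicitly on $\Ts_0$ but is forced by the edge $(x_1^{(0)},x_2^{(0)})\in E_c(\Gamma_0)$. First I would observe that $\KK=\BA_{E_d(\Gamma_1)}(\{w_\eta(x^{(1)}),\eta\in\caN\})$ is still $E_d(\Gamma_1)$-\cool: the argument in the general case showing $\KK^\perp$ is $E_d(\Gamma_1)$-directly indecomposable uses only the DM-normal form of $H^{(0)}[1,\rho_A]$ and the growth of the number of minimal items along the infinite branch of type $12$, not the non-exceptionality of $W$; by Lemma \ref{lem:typesKperp}, $\KK^\perp$ is then either directly indecomposable or $E_c(\Gamma_1)$-abelian, which is the dichotomy recorded in the statement. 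As in Remark \ref{rem:blocks}, for every solution $H^{(0)}$ of the fundamental sequence and every minimal item $h_i^{(0)}$ of $\Omega_0$ we have $H_i^{(0)}\in\GG_{\KK^\perp}$, $H_i^{(0)}$ a block, and $\BA(H_i^{(0)})=\BA(\GG_{\KK^\perp})$; in particular $\GG_{\KK^\perp}$ is directly indecomposable, cyclic when $\KK^\perp$ is $E_c(\Gamma_1)$-abelian and non-abelian (hence of trivial centre) otherwise.

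For the well-definedness of $\tau_0$ I would use the isomorphism $\psi\colon G_{\Omega_0}\to\caK$ of Lemma \ref{lem:isoquad} and Corollary \ref{cor:quadNF} and check that $\tau_0'$ annihilates every defining relation of $\caK$. The relations $\nu^{\varepsilon(\nu)}h(\Delta(\nu))^{-\varepsilon(\Delta(\nu))}$, the relations $R'$ of the non-active part, and the commutators in $\Xi'$ are handled exactly as in Lemma \ref{lem:qhom}: an item covered by two long bases belongs to the minimal tribe $\mmm$, so any commutation constraint on it involves only non-active items, whose $\varphi_0$-images lie in $\KK$ by the induction hypotheses IH1 and IH2 for $(\Ts_1,\HH_1)$. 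The remaining relation $W=[\eta_1,\eta_2]$ maps under $\tau_0'$ to $[x_1^{(0)},x_2^{(0)}]$, which is trivial in $\HH_0=\GG(\Gamma_0)$, hence in $\Ts_0$, precisely because $(x_1^{(0)},x_2^{(0)})\in E_c(\Gamma_0)$.

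For commutativity of Diagram (\ref{diag:period}) and the induction hypotheses, I would put ${H^{(0)}}'={H^{(1)}}'$ on the retract $\Ts_1\leq\Ts_0$ and ${H^{(0)}}'(x_i^{(0)})=H^{(0)}(\eta_i)$, and verify that ${H^{(0)}}'$ respects the defining relations of $\Ts_0$: the basic relations $[C_{\Ts_1}(\KK^\perp),x_i^{(0)}]=1$ follow as in Lemma \ref{lem:qdiagcom} (the images of $\caN$ disjointly commute with exactly the canonical generators disjointly commuting with ${H^{(0)}}'(\KK)$, so ${H^{(0)}}'(x_i^{(0)})\in\langle\az({H^{(0)}}'(\KK^\perp))\rangle$); the commutation $[{H^{(0)}}'(x_1^{(0)}),{H^{(0)}}'(x_2^{(0)})]=[H^{(0)}(\eta_1),H^{(0)}(\eta_2)]=1$ holds because $W$ holds for $H^{(0)}$; and in the abelian sub-case the extra relations $[x_i^{(0)},x_j^{(1)}]=1$, $x_j^{(1)}\in\KK^\perp$, follow since $H^{(0)}(\eta_i)\in\GG_{\KK^\perp}$ is cyclic. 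Then IH1 and IH2 are checked as in Lemmas \ref{lem:qH0ind} and \ref{lem:qtau0ind}, the one new point being the $E_c$-edges: $(x_1^{(0)},x_2^{(0)})\in E_c(\Gamma_0)$ requires $H^{(0)}(\eta_1),H^{(0)}(\eta_2)$ to lie in a common cyclic subgroup; in the abelian sub-case this holds since $\GG_{\KK^\perp}$ is cyclic, and in the non-abelian sub-case since $H^{(0)}(\eta_1)$ is a block in the directly indecomposable group $\GG_{\KK^\perp}$ with $\BA(H^{(0)}(\eta_1))=\BA(\GG_{\KK^\perp})$, so its centraliser in $\GG_{\KK^\perp}$ is $\langle\sqrt{H^{(0)}(\eta_1)}\rangle$ (the non-root part of the centraliser meets $\GG_{\KK^\perp}$ in its trivial centre), forcing $H^{(0)}(\eta_2)\in\langle\sqrt{H^{(0)}(\eta_1)}\rangle$; the same argument covers the edges $(x_i^{(0)},x_j^{(1)})$ with $x_j^{(1)}\in\KK^\perp$.

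The main obstacle I anticipate is not the bookkeeping but the abelian sub-case: one must be certain that $\GG_{\KK^\perp}$ is genuinely cyclic rather than merely abelian, and that commuting block images in a directly indecomposable partially commutative group land in a common cyclic subgroup. Both facts rest on Remark \ref{rem:blocks} (minimal items go to block elements $H_i^{(0)}$ with $\BA(H_i^{(0)})=\BA(\GG_{\KK^\perp})$) together with the description of centralisers of block elements and the triviality of the centre of a directly indecomposable non-cyclic partially commutative group; I would isolate these two observations as short preliminary lemmas before carrying out the verification above.
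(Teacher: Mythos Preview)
Your proposal is correct and takes essentially the same approach as the paper: the paper's own proof is the single sentence ``Proof is analogous to the proofs of Lemmas \ref{lem:qhom}, \ref{lem:qdiagcom}, \ref{lem:qH0ind}, \ref{lem:qtau0ind},'' and you have carried out precisely that analogy, supplying the extra verifications for the new $E_c$-edges that the torus relation $W=[\eta_1,\eta_2]$ forces. Your identification of the one genuinely new point---that ${H^{(0)}}'(x_1^{(0)})$ and ${H^{(0)}}'(x_2^{(0)})$ must lie in a common cyclic subgroup---and your sketch of why this holds in each of the two sub-cases is exactly the content the paper leaves implicit; the same fact is used explicitly in the subsequent ``Other cases'' paragraph, where it is deduced from Remark \ref{rem:blocks} and Lemma \ref{lem:eired}.
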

\begin{proof}
Proof is analogous to the proofs of Lemmas \ref{lem:qhom}, \ref{lem:qdiagcom}, \ref{lem:qH0ind}, \ref{lem:qtau0ind}.
\end{proof}

Notice that other quadratic equations of rank one do not define an infinite family of homomorphisms. Indeed, by Remark \ref{rem:recall}, any solution of a generalised equation is induced by a solution of the corresponding generalised equation over the free group. Now the claim follows from the description of the radical ideal for quadratic equations in the free group, see \cite{KhMIrc}.

\bigskip

\subsubsection*{Other cases} Finally, let us consider the remaining cases. Notice that if the image of the surface under the retraction is commutative, then the minimal solution is atom-commutative. Hence, without loss of generality, we assume that the minimal solution is atom-commutative.

Define the graph $\Gamma_0$ as follows. If $\KK^\perp$ is non-abelian, set:
\begin{itemize}
\item $V(\Gamma_0)=V(\Gamma_1) \cup \{x_1^{(0)},\dots,  x_n^{(0)} \}$; 
\item $E_c(\Gamma_0)=E_c(\Gamma_1) \cup \{(x_i^{(0)}, x_j^{(0)})\mid 1\le i<j\le n\}$ and 
\item $E_d(\Gamma_0)=E_d(\Gamma_1)\cup \{ (x_i^{(0)}, x_j^{(1)}) \mid \hbox{ for all } x_j^{(1)} \in \KK, i=1,\dots,n\}$.
\end{itemize}
If $\KK^\perp$ is abelian, then set
\begin{itemize}
\item $V(\Gamma_0)=V(\Gamma_1) \cup \{x_1^{(0)},\dots, x_n^{(0)}\}$; 
\item $E_c(\Gamma_0)=E_c(\Gamma_1) \cup \{(x_i^{(0)}, x_j^{(0)})\mid 1\le i<j\le n \} \cup \{(x_i^{(0)},x_j^{(1)}) \mid x_j^{(1)} \in \KK^\perp, i=1,\dots, n\}$;
\item $E_d(\Gamma_0)=E_d(\Gamma_1)\cup \{ (x_i^{(0)}, x_j^{(1)}) \mid \hbox{ for all } x_j^{(1)} \in \KK, i=1,\dots,n\}$.
\end{itemize}

If $\Ts_1=\langle y ^{(1)}\mid S_1\rangle$, then we set $\Ts_0$ to be the quotient of $\HH_0$ by the set of relations $S_1$ and the set of relations $S$ defined as follows:
\begin{itemize}
\item if one of the elements $\varphi_0(\eta_1\dots \eta_{2g}), \varphi_0(\lambda_{i})$, $i=2g+1,\dots,m$ is non-trivial, then the set of relations $S$ is defined as follows:
$$
S=\{x_1^{(0)}\dots x_{2g}^{(0)} = \varphi_0(\eta_1\dots \eta_{2g}),\ [x_i^{(0)},C]= 1, i=1,\dots,n\}, 
$$
where 
$$
C=C_{\Ts_1}(\varphi_0(\lambda_{2g+1})^{\varphi_0(\eta_{2g+1})}, \dots, \varphi_0(\lambda_m)^{\varphi_0(\eta_{m})},\varphi_0(\eta_1\dots \eta_{2g}) );
$$ 
\item otherwise, $S=\{x_1^{(0)}\dots x_{2g}^{(0)} =1\}\cup \{\hbox{the set of basic relations}\}$.
\end{itemize}

In these cases, define the map $\tau_0'$ as follows:
$$
\left\{ 
\begin{array}{llll}
\eta_j             & \to & x_j^{(0)}, & \eta_j \in \caN, j=1,\dots, 2g \\
\Delta(\eta_j) & \to & {x_j^{(0)}}^\epsilon, & \eta_j \in  \caN, \eta_j=\Delta(\eta_j)^\epsilon, \epsilon=\pm 1, j=1,\dots, 2g \\
\eta_j             & \to & x_j^{(0)}\varphi_0(\eta_j), & \eta_j \in \caN, j=2g+1,\dots,n \\
\Delta(\eta_j) & \to & {x_j^{(0)}\varphi_0(\eta_j)}^\varepsilon, & \eta_j \in  \caN, \eta_j=\Delta(\eta_j)^\epsilon, \epsilon=\pm 1, j=2g+1,\dots,n \\
\nu                 &\to & \varphi_0(h(\nu)), & \hbox{if $\nu$ is a quadratic-coefficient base,} \\
h_i^{(0)}          &\to  & \varphi_0(h_i^{(0)}), & \hbox{if $h_i^{(0)}$ is a non-active item}.
\end{array}
\right.
$$

\begin{lem} 
The map $\tau_0'$ extends via the isomorphism $\psi$ {\rm(}see {\rm Lemma \ref{lem:isoquad})} to a homomorphism $\tau_0$ from $G_{\Omega_0}$ to $\Ts_0$ that makes Diagram {\rm(\ref{diag:period})} commutative.  Furthermore, $\Ts_0$, $\HH_0$ and $\tau_0$ satisfy the induction hypothesis {\rm IH}, {\rm IH1} and {\rm IH2}.
\end{lem}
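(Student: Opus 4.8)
The plan is to repeat, \emph{mutatis mutandis}, the four-step argument of Lemmas \ref{lem:qhom}, \ref{lem:qdiagcom}, \ref{lem:qH0ind} and \ref{lem:qtau0ind}, which treat the generic quadratic case: (i) show that $\tau_0'$ descends through the isomorphism $\psi$ of Lemma \ref{lem:isoquad} to a homomorphism $\tau_0\colon G_{\Omega_0}\to\Ts_0$; (ii) produce, for each solution $H^{(0)}$ of the fundamental sequence, a homomorphism ${H^{(0)}}'\colon\Ts_0\to\GG$ making Diagram (\ref{diag:period}) commute, which yields \textup{IH}; (iii) verify \textup{IH1} for $\HH_0$; and (iv) verify \textup{IH2} for $\tau_0$. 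The structural change relative to the generic case is that here the retraction of the surface onto $\Ts_1$ is atom-commutative (this is the standing hypothesis of the present subsection), so $\tau_0'$ carries the twist $\eta_j\mapsto x_j^{(0)}\varphi_0(\eta_j)$ for $j>2g$ and the relator set $S$ contains $x_1^{(0)}\cdots x_{2g}^{(0)}=\varphi_0(\eta_1\cdots\eta_{2g})$ together with the commutation relations $[x_i^{(0)},C]=1$; each verification must be reconciled with this presentation.

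For step (i), by Corollary \ref{cor:quadNF} it is enough to see that $\tau_0'$ kills every defining relator of $\caK$. The relators $R'$, the quadratic-coefficient relators $\nu^{\varepsilon(\nu)}\Delta(\nu)^{-\varepsilon(\Delta(\nu))}$ and the commutators $\Xi'$ are handled exactly as in Lemma \ref{lem:qhom}, the only inputs being that $\varphi_0$ is a homomorphism and that, by \textup{IH1} and \textup{IH2} for $(\Ts_1,\HH_1)$, whenever $\Re_{\Upsilon_0}$ forces a commutation between an item of the quadratic part and a non-active item, the corresponding $\varphi_0$-image lies in $\KK$. The relator $W$ is the new point: since the new generators $x_i^{(0)}$ are pairwise joined by $c$-edges in $\Gamma_0$, $\tau_0'$ sends the commutator segment of $W$ to the identity; the conjugate segment $\lambda_j^{\eta_j}\mapsto\varphi_0(\lambda_j)^{x_j^{(0)}\varphi_0(\eta_j)}$ is collapsed to $\varphi_0(\lambda_j)^{\varphi_0(\eta_j)}$ using $[x_j^{(0)},C]=1$ --- here one uses atom-commutativity of $\varphi_0$ to place the relevant conjugates of the $\varphi_0(\lambda_j)$ in a common abelian subgroup of $\Ts_1$ and hence past the $x_j^{(0)}$ --- and the relation $x_1^{(0)}\cdots x_{2g}^{(0)}=\varphi_0(\eta_1\cdots\eta_{2g})$ takes care of the leading factor, so that $\tau_0'$ maps the two sides of $W$ to the same element of $\Ts_0$.

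For step (ii), one sets ${H^{(0)}}'|_{\Ts_1}={H^{(1)}}'$ (provided by the induction hypothesis), ${H^{(0)}}'(x_j^{(0)})=H^{(0)}(\eta_j)$ for $j\le 2g$, and ${H^{(0)}}'(x_j^{(0)})=H^{(0)}(\eta_j)\cdot{H^{(1)}}'(\varphi_0(h(\eta_j)))^{-1}$ for $j>2g$, the choices being forced by the requirement $\tau_0{H^{(0)}}'=H^{(0)}$ on the $\eta_j$. As in Lemma \ref{lem:qdiagcom}, the work is to check that ${H^{(0)}}'$ kills $S$: the relations $[x_i^{(0)},C]=1$ hold because, by the definition of $\KK$ and \textup{IH1} for $(\Ts_1,\HH_1)$, ${H^{(0)}}'(x_i^{(0)})$ disjointly commutes with ${H^{(0)}}'(\KK)$ and so lies in $\langle\az({H^{(0)}}'(\KK^\perp))\rangle=C_\GG({H^{(0)}}'(\KK^\perp))$, while ${H^{(0)}}'(x_1^{(0)}\cdots x_{2g}^{(0)})={H^{(0)}}'(\varphi_0(\eta_1\cdots\eta_{2g}))$ follows from the corresponding identity among the $H^{(0)}(\eta_j)$ using atom-commutativity; a direct computation then gives ${H^{(0)}}'(W)=H^{(0)}(W)=1$, and commutativity of the diagram holds by construction.

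Steps (iii) and (iv) are carried out verbatim as in Lemmas \ref{lem:qH0ind} and \ref{lem:qtau0ind}: the pattern of $d$- and $c$-edges added to $\Gamma_0$ is the same as in the generic case (the extra $c$-edges among the $x_i^{(0)}$, and those to $\KK^\perp$ when $\KK^\perp$ is abelian, only enlarge the commutation and are matched on the solution side by atom-commutativity and the induction hypothesis on $\KK^\perp$), and \textup{IH2} is verified tribe by tribe as before, the new case $\eta_j\mapsto x_j^{(0)}\varphi_0(\eta_j)$ causing no trouble since $x_j^{(0)}$ still lies in the minimal tribe. I expect the main obstacle to be step (i), together with its mirror image in step (ii): one has to check carefully that atom-commutativity of the minimal solution is precisely what permits the conjugators $x_j^{(0)}$ to be absorbed into the centraliser $C$ and the commutator part of $W$ to collapse, so that the twisted map $\tau_0'$ is compatible with the modified relator set $S$.
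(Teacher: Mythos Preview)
Your four-step outline is the right one, and steps (iii) and (iv) do go through as you say. The gap is in step (ii), precisely at the verification of the relations $[x_i^{(0)},C]=1$ (and implicitly of the $E_c$-edges among the $x_i^{(0)}$). You recycle the argument of Lemma~\ref{lem:qdiagcom}: show ${H^{(0)}}'(x_i^{(0)})\lra{H^{(0)}}'(\KK)$, deduce ${H^{(0)}}'(x_i^{(0)})\in\GG_{\KK^\perp}$, and stop. That was enough in the generic case because there the edge group was $C_{\Ts_1}(\KK^\perp)$, and ${H^{(1)}}'(C_{\Ts_1}(\KK^\perp))\subseteq C_\GG(\GG_{\KK^\perp})$. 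Here, however, $C=C_{\Ts_1}\bigl(\varphi_0(\lambda_{2g+1})^{\varphi_0(\eta_{2g+1})},\dots,\varphi_0(\lambda_m)^{\varphi_0(\eta_m)},\varphi_0(\eta_1\cdots\eta_{2g})\bigr)$ is the centraliser of finitely many specific elements of $\KK^\perp$, hence is \emph{larger} than $C_{\Ts_1}(\KK^\perp)$; in particular ${H^{(1)}}'(C)$ contains the atom images themselves, which sit in $\GG_{\KK^\perp}$ and do not commute with an arbitrary element of $\GG_{\KK^\perp}$. So knowing only that ${H^{(0)}}'(x_i^{(0)})\in\GG_{\KK^\perp}$ does not give $[{H^{(0)}}'(x_i^{(0)}),{H^{(1)}}'(C)]=1$. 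The same problem afflicts your verification of the $c$-edges: atom-commutativity means the \emph{atom} images commute, not the individual $H^{(0)}(\eta_j)$, so ``matched by atom-commutativity'' is not an argument.

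What the paper supplies, and you omit, is a two-step strengthening. First, atom-commutativity of the minimal solution propagates to \emph{every} solution $H^{(0)}$ of the fundamental sequence, because $H^{(0)}$ is obtained from the minimal solution by a canonical automorphism. Second, since by Remark~\ref{rem:blocks} each $H^{(0)}(\eta_i)$ is a block element with $\BA(H^{(0)}(\eta_i))=\BA(\GG_{\KK^\perp})$ and by Lemma~\ref{lem:eired} some $H^{(0)}(U)$ is irreducible, atom-commutativity forces all the atom images $H^{(0)}(\lambda_j^{\eta_j})$ and $H^{(0)}(\eta_1\cdots\eta_{2g})$ into a single cyclic subgroup of $\GG$. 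It is this common-cyclic conclusion that makes ${H^{(0)}}'([x_i^{(0)},C])=1$ hold, and without it your step (ii) does not go through.
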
 
\begin{proof}
By Corollary \ref{cor:quadNF}, the group $G_{\Omega_0}$ is isomorphic to the group $\mathcal K$. We show that $\tau_0'$ induces a homomorphism from $\mathcal K$ to $\Ts_0$.

It is immediate to check that $\tau_0'(r)=1$ if $r$ is either the relation $W$, a relation of the type $\nu^{\varepsilon(\nu)}=\Delta(\nu)^{\varepsilon(\Delta(\nu))}$ or a relation from $R'$.

Therefore, we are left to show that $\tau_0'$ is trivial on the set of commutators $\Xi$, see Equation (\ref{eq:cak}). The proof is analogous to the proof of Lemma \ref{lem:qhom}.

Let us now show that Diagram (\ref{diag:period}) is commutative. Any solution of the fundamental sequence is the composition of an automorphism associated to the vertex, the epimorphism $\pi(v_0,v_1)$ and a solution of $\Omega_1$ from the fundamental sequence. Since by \cite[Lemma 7.12]{CKpc} all the automorphisms fix the subgroup $NQ$ of $G_{R(\Omega_0)}$ generated by the items covered by quadratic-coefficient bases and the non-active items, it follows that the restriction of any solution $H^{(0)}$ of the fundamental sequence onto this subgroup coincides with a solution $H^{(1)}$ on the subgroup $\pi(v_0,v_1)(NQ)$ of $G_{\Omega_1}$, i.e. for all items $h_i^{(0)}\in NQ$ we have that $H^{(0)}(h_i^{(0)})=H^{(1)}(\pi(v_0,v_1)(h_i^{(0)}))$. By induction hypothesis, there exists a homomorphism ${H^{(1)}}'$ from $\Ts_1$ that makes Diagram (\ref{diag:period}) commutative. We define ${H^{(0)}}'$ on the subgroup $\Ts_1$ of $\Ts_0$ to be ${H^{(1)}}'$ and we define ${H^{(0)}}'(x_j^{(0)})=H^{(0)}(\eta_j)$, if $\eta_j \in \caN$ and $j=1,\dots, 2g$, and ${H^{(0)}}'(x_j^{(0)})=H^{(0)}(\eta_j)H^{(0)}(\varphi_0(\eta_j))^{-1}$, if $\eta_j \in \caN$ and $j=2g+1,\dots, n$. 

It suffices to show that ${H^{(0)}}'$ is a homomorphism. In this case, the commutation of Diagram (\ref{diag:period}) follows by construction. Since, by assumption, the minimal solution $\varphi_0(\eta_i)$ is atom-commutative and since any solution of the fundamental sequence is obtained from the minimal solution by precomposing it with an automorphism, it follows that any solution of the fundamental sequence is also atom-commutative. Furthermore, notice that by Remark \ref{rem:blocks}, the image of any item under the homomorphism $H^{(0)}$ is a block element such that $\BA(H^{(0)}(\eta_i))=\BA(\GG_{\KK^\perp})$, for all $i=1,\dots,n$. By Lemma \ref{lem:eired}, there exists an element $U$ so that $H^{(0)}(U)$ is irreducible. Hence, the images of all the atoms under $H^{(0)}$ belong to the same cyclic subgroup: 
$$
\langle \sqrt{H^{(0)}(\lambda_{2g+1}^{\eta_{2g+1}})}\rangle= \dots =\langle \sqrt{H^{(0)}(\lambda_n^{\eta_{n}})} \rangle=\langle \sqrt{H^{(0)}(\eta_1\dots \eta_{2g})}\rangle.
$$
We conclude that $H^{(0)}([x_i^{(0)},C])=1$. Therefore, Diagram (\ref{diag:period}) is commutative.

The proofs that the induction hypothesis IH1 and IH2 hold, are analogous to the proofs of Lemmas \ref{lem:qH0ind} and \ref{lem:qtau0ind}, correspondingly.
\end{proof}

Notice that since $H^{(0)}(x_1)$ is cyclically reduced, the argument given above shows that if the equation is non-orientable (there is an atom $x_1^2$), then $H^{(0)}(c^{x_i})$ is cyclically reduced. Hence, we conclude that in this case, the retraction of the surface is in fact abelian.

We summarize the results of this section in the proposition below.

\begin{prop} \label{prop:quad}
Let $\Omega=\langle \Upsilon, \Re_\Upsilon\rangle$ be a generalised equation of type $12$ that repeats infinitely many times in the infinite branch, let $h_1$ be minimal and $\Re_\Upsilon$ be completed. Then, there exists a graph tower $(\Ts, \HH)$ and a homomorphism $\tau$ from $G_{\Omega}$ to $\Ts$ such that for all solutions of the fundamental sequence, there exists a homomorphism from $\Ts$ to $\GG$ that makes Diagram {\rm(\ref{diag:period})} commutative.
\end{prop}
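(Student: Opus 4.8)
The plan is to assemble Proposition \ref{prop:quad} from the pieces developed throughout the Quadratic Case subsection, organizing the argument around the trichotomy that governs the shape of the surface relation $W$: (i) the generic case where $W$ has Euler characteristic at most $-2$ or is a punctured torus with non-abelian retraction, or has rank $\ge 2$ with a minimal solution that is not atom-commutative; (ii) the rank-one (torus) case $W=[\eta_1,\eta_2]$; and (iii) the remaining (exceptional) cases, where one shows the minimal solution is necessarily atom-commutative. In all three cases the strategy is identical in outline: first replace $\Omega_0$ by the normalized generalised equation $\Omega_0'$ (so that short bases lying in a tribe strictly dominating the minimal tribe are quadratic-coefficient bases), then use Corollary \ref{cor:quadNF} to identify $G_{\Omega_0}$ with the one-relator partially commutative quotient $\caK$ where $W$ is in surface normal form, then construct the graph $\Gamma_0$ and the partially commutative group $\HH_0=\GG(\Gamma_0)$ by adjoining the new vertices $x_1^{(0)},\dots,x_n^{(0)}$ with the prescribed $d$-edges to $\KK$ (and $c$-edges among themselves, plus $c$-edges to $\KK^\perp$ when $\KK^\perp$ is abelian), and finally define $\Ts_0$ as the quotient of $\HH_0$ by $S_1$ together with the basic relations and the image $\tau_0'(W)$ of the surface relation.

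The first substantive step is to verify that $\KK=\BA_{E_d(\Gamma_1)}(\{w_\eta(x^{(1)}):\eta\in\caN\})$ is $E_d(\Gamma_1)$-co-irreducible; this is exactly the Lemma proved just before Remark \ref{rem:blocks}, and its proof is where the real work lies — it uses Lemma \ref{lem:pc} to bound the number of $\GG_{\KK_1}\times\GG_{\KK_2}$-alternations in a DM-normal-form word, against the fact (from \cite[Lemma 7.12]{CKpc}) that an infinite branch of type $12$ forces arbitrarily many minimal items into $\pi(v_0,v_k)(h^{(0)}([1,\rho_A]))$, yielding a contradiction with any nontrivial $E_d(\Gamma_1)$-direct decomposition of $\KK^\perp$. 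Granting this, Lemma \ref{lem:typesKperp} tells us $\KK^\perp$ is either directly indecomposable or $E_c(\Gamma_1)$-abelian, and the hypotheses on $W$ in the generic case rule out the abelian alternative (an atom-commutative-excluding or high-complexity surface cannot retract to an abelian group), so $\KK^\perp$ is directly indecomposable exactly when needed; in the torus and exceptional cases $\KK^\perp$ may be abelian and the graph $\Gamma_0$ is built accordingly. Next I would invoke Lemma \ref{lem:qhom} (and its analogues for the torus and exceptional cases) to see that $\tau_0'$ descends through the isomorphism $\psi$ of Lemma \ref{lem:isoquad} to a homomorphism $\tau_0:G_{\Omega_0}\to\Ts_0$ — the crux there being that $\tau_0'$ kills the commutator set $\Xi$, which reduces via the completeness of $\Re_{\Upsilon_0}$ (Remark \ref{rem:completed}) and induction hypotheses IH1, IH2 to the statement that a non-active item commuting with a quadratic item must be sent into $\KK$.

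The third step is the commutativity of Diagram (\ref{diag:period}), handled by Lemma \ref{lem:qdiagcom} and its variants: given a solution $H^{(0)}$ from the fundamental sequence, one factors it as (canonical automorphism)$\circ\pi(v_0,v_1)\circ H^{(1)}$, uses that the automorphisms fix the non-quadratic-coefficient subgroup $NQ$, invokes the induction hypothesis to get ${H^{(1)}}'$ on $\Ts_1$, extends by ${H^{(0)}}'(x_i^{(0)})=H^{(0)}(\eta_i)$ (suitably twisted in the exceptional cases), and checks that ${H^{(0)}}'$ respects the basic relations $[x_i^{(0)},C_{\Ts_1}(\KK^\perp)]=1$ — this uses that $H^{(0)}(\eta_i)$ disjointly commutes with ${H^{(0)}}'(\KK)$ together with the induction hypothesis $\langle\az({H^{(1)}}'(\KK))^\perp\rangle=\langle\az({H^{(1)}}'(\KK^\perp))\rangle$ — and respects $W$ by direct computation; in the exceptional case one additionally needs Lemma \ref{lem:eired} and Remark \ref{rem:blocks} to conclude that all atoms map into a common cyclic subgroup so that $H^{(0)}([x_i^{(0)},C])=1$. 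Finally, IH1 and IH2 for $(\Ts_0,\HH_0,\tau_0)$ follow from Lemmas \ref{lem:qH0ind} and \ref{lem:qtau0ind} and their analogues. The statement of Proposition \ref{prop:quad} is then simply the conjunction of these lemmas across the three cases. The main obstacle, as indicated, is the co-irreducibility of $\KK$: everything else is bookkeeping with the induction hypotheses, but that lemma is where the dynamics of the type-$12$ branch — the steady creation of new minimal items — must be traded against the bounded-alternation property of DM-normal forms.
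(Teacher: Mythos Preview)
Your proposal is correct and follows essentially the same approach as the paper: Proposition \ref{prop:quad} is explicitly stated as a summary of the preceding subsection, and you have accurately identified and organized the relevant lemmas (co-irreducibility of $\KK$, Lemmas \ref{lem:qhom}, \ref{lem:qdiagcom}, \ref{lem:qH0ind}, \ref{lem:qtau0ind}, together with their rank-one and exceptional-case analogues) and the trichotomy on $W$ that governs which variant applies. Your emphasis on the co-irreducibility lemma as the substantive dynamical step, with the rest being induction bookkeeping, matches the paper's logical structure.
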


\subsection{Singular and strongly singular periodic structures}

In this and the following sections we use the notation and results from \cite[Chapter 6]{CKpc}, of which we now give a very brief and informal summary.

Informally, a periodic structure $\mathcal P$ is a set of items, bases and sections of a generalised equation $\Omega$ that imposes restrictions on the set of solutions of $\Omega$, namely that solutions are $P$-periodic and the image of items, bases and section from the periodic structure are ``long'' (they are subwords of $P^{n}$ of length greater than $2|P|$). 

Given a periodic structure on $\Omega$, we study the corresponding coordinate group by introducing a new set of generators and exhibiting the corresponding presentation.  In order to define the new set of generators, one constructs a graph $\Gamma=\Gamma(\mathcal P)$, whose edges are labelled by items that belong to a section of the periodic structure. Without loss of generality, one can assume that $\Gamma$ is connected. One then chooses a maximal subforest of $\Gamma$ so that the edges are labelled by items that belong to a section from $\P$, but the items themselves do not belong to $\P$. We then complete the forest to a maximal subtree $T$ of $\Gamma$. The free group generated by the items that label edges of $\Gamma$ is generated by the items that label edges of $T$ and cycles $\cc_e$, where $e\in \Gamma\smallsetminus T$.

Under the assumption that $\Omega$ is periodised with respect to $\P$ (i.e. words defined by cycles of $\Gamma$ based at a given point commute), one can  choose a basis $C^{(1)}\cup C^{(2)}$ of the subgroup generated by the cycles $\cc_e$.  For each edge $e_i\in T$ labelled by an item that belongs to $\P$, we consider two families of cycles $u_{i,e}$, $z_{i,e}$ defined by edges $e\notin T$ labelled by items that do not belong to $\P$. The cycles $u_{i,e}$, $z_{i,e}$ are based at the origin and the terminus of $e_i$, correspondingly. We prove that the set
\begin{itemize}
\item $\bar t$ of items that do not belong to sections from $\P$;
\item $\{h(e), e\in T, h(e)\notin \P\}$;
\item $\{h(e_1),\dots, h(e_m), e_i \in T, h(e_i)\in \mathcal P\}$;
\item $\{u_{ie}, z_{ie}, i=1,\dots, m, e\notin T, h(e)\notin \mathcal P\}$;
\item $C^{(1)}, C^{(2)}$;
\end{itemize}
is a generating set of the  group $G_{\Omega}$ and in this generating set, the system of equations $\Upsilon$ is equivalent to the union of the following two systems of equations:
            $$
                \left\{
                \begin{array}{ll}
                u_{ie}^{h(e_i)}=z_{ie},&\hbox{where } e\in T,\, e\in \Sh; 1\leq i\leq m \\
                \left[u_{ie_1},u_{ie_2}\right]=1,&\hbox{where } e_j\in T, \,e_j\in \Sh, \, j=1,2; 1\leq i\leq m \\
                \left[h(\cc_1),h(\cc_2)\right]=1, & \hbox{where } \cc_1,\cc_2\in C^{(1)}\cup C^{(2)}
                \end{array}
                \right.
            $$
        and a system:
            $$
            \Psi \left( \{h(e)\,\mid\, e\in T, e\in \Sh\}, h(C^{(1)}), \bar t,\bar u,\bar z,\cA\right)=1,
            $$
        such that neither $h(e_i)$, $1\le i\le m$, nor $h(C^{(2)})$ occurs in $\Psi$.

\begin{defn} \label{defn:singreg}
Let $\Omega$ be a generalised equation and let $\langle \P,R\rangle$ be a connected periodic structure on $\Omega$. We say that the generalised equation $\Omega$ is \emph{strongly singular with respect to the periodic structure $\langle \P,R\rangle$} if one of the following conditions holds.
\begin{itemize}
\item[(a)] The generalised equation $\Omega$ is not periodised with respect to the periodic structure $\langle \P,R\rangle$.
\item[(b)] The generalised equation $\Omega$ is periodised with respect to the periodic structure $\langle \P,R\rangle$ and there exists an automorphism $\varphi$ of the coordinate group $G_{R(\Upsilon)}$ of the form described in parts  \cite[Lemma 6.14(2) and (3)]{CKpc}, such that $\varphi$ does not induce an automorphism of $G_{R(\Omega)}$.
\end{itemize}

We say that the generalised equation $\Omega$ is \emph{singular with respect to the periodic structure $\langle \P,R\rangle$} if $\Omega$ is not strongly singular with respect to the periodic structure $\langle \P,R\rangle$ and one of the following conditions holds
\begin{itemize}
\item[(a)] The set $C^{(2)}$ has more than one element.
\item[(b)] The set $C^{(2)}$ has exactly one element, and (in the above notation) there exists a cycle $\cc_{e_0}\in \langle C^{(1)}\rangle$, $h(e_0)\notin \P$ such that $h(\cc_{e_0})\ne 1$ in $G_{R(\Omega)}$.
\end{itemize}

Otherwise, we say that $\Omega$ is \emph{regular with respect to the periodic structure $\langle \P,R\rangle$}. In particular, if $\Omega$ is singular or regular with respect to the periodic structure  $\langle \P,R\rangle$, then $\Omega$ is periodised.

When no confusion arises, instead of saying that $\Omega$ is (strongly) singular (or regular) with respect to the periodic structure $\langle \P,R\rangle$, we say that the periodic structure $\langle \P,R\rangle$ is (\emph{strongly})\emph{singular} (or \emph{regular}).
\end{defn}

Suppose that the fundamental sequence goes through a vertex $v_0$ of type $2$, i.e. solutions of the discriminating family are $P$-periodic and $\Omega_{v_0}$ is either singular or strongly singular with respect to the corresponding periodic structure $\langle\mathcal{P}, R\rangle$. 

To simplify the notation, set $\Omega_i=\Omega_{v_i}=\langle \Upsilon_i, \Re_{\Upsilon_i}\rangle$. Recall that $G_{\Omega_i}=\langle h^{(i)}\rangle$, $i=1,2$, $\HH_1=\langle x^{(1)}\rangle$, and $\Ts_1=\langle y^{(1)}\rangle$.
 
If the periodic structure $\langle\mathcal{P}, R\rangle$ is strongly singular, then we set $\Ts_0=\Ts_1$ and $\tau_0$ to be $\pi(v_0,v_1) \tau_1$, where $\tau_1$ is the homomorphism from $G_{\Omega_1}$ to $\Ts_1$ that makes Diagram (\ref{diag:period}) commutative. By \cite[Lemma 6.17]{CKpc}, any homomorphism of the fundamental sequence factors through $G_{R(\Omega_1)}$ and so $\tau_0$, $\Ts_0$ and $\HH_0$ make Diagram (\ref{diag:period}) commutative and satisfy the induction hypothesis.

Let us now assume that the  periodic structure $\langle\mathcal{P}, R\rangle$  is singular.

The set of elements
\begin{equation} \label{2.52}
\{h^{(0)}(e) \mid e \in T \} \cup \{h^{(0)}(\cc_e) \mid e \not \in T \}
\end{equation}
forms a basis of the free group $F(h^{(0)})$ generated by
$$
 \{h_k^{(0)} \mid h_k^{(0)}\in \sigma, \sigma\in {\P} \}.
$$

Suppose first that the periodic structure is singular of type (a), i.e. the set $C^{(2)}$ has more than one element. Denote by $\varphi_0$ the composition of $\pi(v_0,v_1)$ and $\tau_1$. 

Since the periodic structure is singular of type (a), the cardinality $n$ of the set $C^{(2)}$ is greater than or equal to $2$ and there exists an element $\cc\in C^{(2)}$ so that the image $\pi(v_0,v_1)(h^{(0)}(\cc))$ in $G_{R(\Omega_1)}$ is non-trivial. Let $w(y^{(1)})=w(\varphi_0(h^{(0)}(\cc)))$ be the image of $h^{(0)}(\cc)$ in $\Ts_1$, and consider the word $w(x^{(1)})$ in $\HH_1$. Set $\KK$ to be $\BA_{\HH_1}(w(x^{(1)}))$.

Let $x^{(0)}=\{x_1^{(0)}, \dots, x_n^{(0)} \}$, where $n=|C^{(2)}|$. Define the graph $\Gamma_0$ as follows. If the canonical parabolic subgroup $\KK^\perp$ is non-abelian, set:
\begin{itemize}
\item $V(\Gamma_0)=V(\Gamma_1) \cup \{x_1^{(0)},\dots, x_n^{(0)} \}$; 
\item $E_c(\Gamma_0)=E_c(\Gamma_1)\cup \{ (x_i^{(0)},x_j^{(0)})\mid 1\le i<j\le n)\}$ and 
\item $E_d(\Gamma_0)=E_d(\Gamma_1)\cup \{ (x_i^{(0)}, x_j^{(1)}) \mid \hbox{ for all } x_j^{(1)} \in \KK, i=1,\dots,n\}$.
\end{itemize} 
If the canonical parabolic subgroup $\KK^\perp$ is abelian, set:
\begin{itemize}
\item $V(\Gamma_0)=V(\Gamma_1) \cup \{x_1^{(0)},\dots, x_n^{(0)} \}$; 
\item $E_c(\Gamma_0)=E_c(\Gamma_1)\cup \{ (x_i^{(0)},x_j^{(0)})\mid 1\le i<j\le n\} \cup \{(x_i^{(0)},x_j^{(1)})\mid x_j^{(1)}\in \KK^\perp, 1\le i\le n\}$ and 
\item $E_d(\Gamma_0)=E_d(\Gamma_1)\cup \{ (x_i^{(0)}, x_j^{(1)}) \mid \hbox{ for all } x_j^{(1)} \in \KK, i=1,\dots,n\}$.
\end{itemize} 

Then, the group $\HH_0$ is defined to be $\GG(\Gamma_0)$. Let $S_0'$ be the following set of relations: 
$$
[x_i^{(0)}, C_{\Ts_1}(\varphi_0(h^{(0)}(\cc)))]=1, \hbox{ for all } 1\le i\le n
$$
and set $\Ts_0$ to be the quotient $\factor{\HH_0}{S_0}$, where $S_0=S_1\cup S_0'$.

Recall that, by definition of the generating set $\bar x$ associated to the periodic structure and since the generalised equation is periodised (the periodic structure being singular), the set $\{h(e) \mid e\in T\} \cup \{h^{(0)}(C^{(1)}), h^{(0)}(C^{(2)})\}$ is a basis of the free group $F(\{h_k^{(0)} \in \sigma, \sigma \in \mathcal P\})$. Furthermore, any $h_k^{(0)}\in \sigma, \sigma \in \mathcal P$ such that $h_k^{(0)}=h(e)$, $e\notin T$, $e:v\to v'$, is expressed in the generating set $\{h(e) \mid e\in T\} \cup \{h^{(0)}(C^{(1)}), h^{(0)}(C^{(2)})\}$ as 
$$
h_k^{(0)}=h^{(0)}(\p(v_0,v))^{-1} v_{k1}(C^{(1)}) v_{k2}(C^{(2)})h^{(0)}(\p(v_0, v')), 
$$
where 
$$
v_{k1}(C^{(1)})\in \langle h^{(0)}(C^{(1)})\rangle, \ v_{k2}(C^{(2)}) \in \langle h^{(0)}(C^{(2)})\rangle \hbox{ and } \p(v_0,v),\p(v_0, v')\in \langle h(e) \mid e\in T \rangle.
$$

\begin{lem} \label{lem:singatauhom}
The map $\tau_0$, 
$$
h_k^{(0)}\mapsto 
\left\{ 
\begin{array}{lll}
\varphi_0(h_k^{(0)}),& \hbox{ for all $h_k^{(0)} \in \sigma, \sigma \notin \mathcal P$,} \\
\varphi_0(h_k^{(0)}),& \hbox{ for all $h_k^{(0)}=h(e), e\in T$,} \\
\varphi_0(h^{(0)}(\p(v_0, v))^{-1}) \varphi_0(v_{k1}(C^{(1)})) v_{k2}(x^{(0)}) \varphi_0(h^{(0)}(\p(v_0,v'))), & \hbox{ for all $h_k^{(0)}=h(e), e \notin T$,}
\end{array}
\right.
$$
extends to a homomorphism from $G_{\Omega_0}$ to $\Ts_0$.
\end{lem}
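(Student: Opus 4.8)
The plan is to verify that $\tau_0$ respects all the relations of $G_{\Omega_0}$, exactly as in the proof of Lemma~\ref{lem:9.19} and Lemma~\ref{lem:isoquad}, but now working with the generating set adapted to the periodic structure $\langle\mathcal P,R\rangle$. Recall from the summary preceding Definition~\ref{defn:singreg} that, since $\Omega_0$ is periodised, the group $G_{\Omega_0}$ has a presentation in the generators $\bar t$, $\{h(e)\mid e\in T\}$, $\{h(e_1),\dots,h(e_m)\}$, $\{u_{ie},z_{ie}\}$, $C^{(1)}$, $C^{(2)}$, with the relations being: the periodicity relations $u_{ie}^{h(e_i)}=z_{ie}$, the commutation relations $[u_{ie_1},u_{ie_2}]=1$ and $[h(\cc_1),h(\cc_2)]=1$ for $\cc_1,\cc_2\in C^{(1)}\cup C^{(2)}$, the system $\Psi$ in which neither $h(e_i)$ nor $h(C^{(2)})$ occurs, together with the commutation constraints $\Re_{\Upsilon_0}$. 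First I would check that $\tau_0$ is well-defined on this generating set: the formula in the statement defines $\tau_0$ on the items $h_k^{(0)}$, and since the elements listed above are words in the items, one only needs to see that $\tau_0$ is consistent, i.e. it does not matter which word one picks. This follows because $\tau_0$ on items in sections not in $\mathcal P$ and on items $h(e)$, $e\in T$, is just $\varphi_0$ (a homomorphism), and on the remaining items $h_k^{(0)}=h(e)$, $e\notin T$, the formula is precisely $\varphi_0$ applied to the $T$-edges composing $\p(v_0,v)^{-1}$ and $\p(v_0,v')$, $\varphi_0$ applied to $v_{k1}(C^{(1)})$, and the substitution $v_{k2}(x^{(0)})$ of the new generators $x_i^{(0)}$ for the basis $C^{(2)}$; so $\tau_0$ is defined unambiguously on $\langle h(e)\mid e\in T\rangle$ and on $\langle h^{(0)}(C^{(1)})\rangle$ via $\varphi_0$, and on $\langle h^{(0)}(C^{(2)})\rangle$ via $\cc_i\mapsto x_i^{(0)}$.

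Next I would check each family of relations in turn. For the commutation relations $[h(\cc_1),h(\cc_2)]=1$ with $\cc_1,\cc_2\in C^{(1)}$: these are sent by $\varphi_0$ to trivial elements since $\varphi_0$ is a homomorphism and the relation $[h^{(0)}(\cc_1),h^{(0)}(\cc_2)]=1$ already holds in $G_{\Omega_1}$ (hence in $\Ts_1$) by the periodisation hypothesis. For $\cc_1,\cc_2\in C^{(2)}$: $\tau_0$ sends these to $[x_i^{(0)},x_j^{(0)}]$, which is trivial in $\Ts_0$ by construction since $(x_i^{(0)},x_j^{(0)})\in E_c(\Gamma_0)$. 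For $\cc_1\in C^{(1)}$, $\cc_2\in C^{(2)}$: here $\tau_0$ sends the relation to $[\varphi_0(h^{(0)}(\cc_1)),x_j^{(0)}]$, and this is where the relations $S_0'$ enter — by the choice of $\KK=\BA_{\HH_1}(w(x^{(1)}))$ together with the induction hypotheses IH1 and IH2, one shows that $\varphi_0(h^{(0)}(\cc_1))$ lies in the subgroup $\KK$ of $\HH_1$, and since $[x_j^{(0)},C_{\Ts_1}(\varphi_0(h^{(0)}(\cc)))]=1$ in $\Ts_0$ and (as in the proof of Lemma~\ref{lem:qdiagcom}) $\langle\az({H^{(0)}}'(\KK))^\perp\rangle=\langle\az({H^{(0)}}'(\KK^\perp))\rangle$, the element $x_j^{(0)}$ commutes with the image of $\KK$, hence with $\varphi_0(h^{(0)}(\cc_1))$. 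The periodicity relations $u_{ie}^{h(e_i)}=z_{ie}$ and $[u_{ie_1},u_{ie_2}]=1$ are mapped to relations that already hold in $\Ts_1$ via $\varphi_0$ — one uses that, for $e\in\Sh$, the cycles $u_{ie}$, $z_{ie}$ are built from items lying in sections not in $\mathcal P$, so $\tau_0$ agrees with $\varphi_0$ on them and $\varphi_0$ is a homomorphism that already satisfies the corresponding relations of $G_{\Omega_1}$. The system $\Psi$ involves neither $h(e_i)$ nor $h(C^{(2)})$, so $\tau_0$ agrees with $\varphi_0$ on every generator occurring in $\Psi$, and therefore $\tau_0(\Psi)=\varphi_0(\Psi)=1$ since $\Psi$ holds in $G_{\Omega_1}$. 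Finally, the commutation constraints in $\Re_{\Upsilon_0}$ are handled exactly as in Lemma~\ref{lem:qhom}: using that $\Re_{\Upsilon_0}$ is completed, and the induction hypotheses IH1, IH2 for $(\Ts_1,\HH_1)$, one deduces that the images under $\varphi_0$ of constrained items disjointly commute in $\HH_1$, hence in $\Ts_0$; for constraints involving the new generators $x_i^{(0)}$, one uses that they correspond to elements of $C^{(2)}$, which belong to the periodic section, hence to the ``minimal'' tribe, so the relevant letters are forced into $\KK$ and the $S_0'$ relations give the commutation.

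The main obstacle I expect is the verification of the mixed commutation relations $[h(\cc_1),h(\cc_2)]=1$ with $\cc_1\in C^{(1)}$ and $\cc_2\in C^{(2)}$, because this is where one must show that $\varphi_0(h^{(0)}(\cc_1))$ genuinely lands in the canonical parabolic subgroup $\KK=\BA_{\HH_1}(w(x^{(1)}))$ — this requires a careful tribe analysis along the singular branch (analogous to the passage from Corollary~\ref{cor:prelasttransfer} and Remark~\ref{rem:blocks} in the quadratic case), translating the fact that, in the periodic structure, cycles from $C^{(1)}$ commute with the period $P$ and hence their images disjointly commute with the images of the $C^{(2)}$-cycles for every solution of the fundamental sequence, into a statement about edges of $\Gamma_1$ via IH1. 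Once this is in place, the rest is the routine relation-by-relation check described above, and $\tau_0$ extends to the claimed homomorphism $G_{\Omega_0}\to\Ts_0$ via the identification of $G_{\Omega_0}$ with the presentation adapted to $\langle\mathcal P,R\rangle$.
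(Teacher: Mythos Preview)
Your overall plan is sound and matches the paper's strategy: use the generating set adapted to the periodic structure, observe that $\tau_0$ agrees with $\varphi_0$ on everything except the $C^{(2)}$-part, and check the relations one by one. The treatment of $\Psi$, of the cycle commutators with both cycles in $C^{(1)}$ or both in $C^{(2)}$, and of the constraints $\Re_{\Upsilon_0}$ via IH1, IH2 and completedness is essentially the paper's argument.

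There is, however, a genuine error in your handling of the mixed commutators $[h^{(0)}(\cc_1),h^{(0)}(\cc_2)]$ with $\cc_1\in C^{(1)}$ and $\cc_2\in C^{(2)}$. You claim that $\varphi_0(h^{(0)}(\cc_1))\in\KK=\BA_{\HH_1}(\varphi_0(h^{(0)}(\cc)))$, i.e.\ that $\varphi_0(h^{(0)}(\cc_1))$ \emph{disjointly} commutes with $\varphi_0(h^{(0)}(\cc))$ in $\HH_1$. This is false in general: for every $P$-periodic solution $H$ one has $H(h^{(0)}(\cc_1))=P^{k}$ and $H(h^{(0)}(\cc))=P^{n}$ with $n\neq 0$, so their images lie in the same cyclic subgroup and certainly do not disjointly commute. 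By IH1 this rules out the letters of $\varphi_0(h^{(0)}(\cc_1))$ lying in $\BA_{\HH_1}(\varphi_0(h^{(0)}(\cc)))$. The tribe analysis you outline in your final paragraph cannot repair this, because the obstruction is intrinsic: $C^{(1)}$-cycles and the defining $C^{(2)}$-cycle $\cc$ share alphabet under every solution.

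The correct (and much simpler) argument is the one the paper gives: since $[h^{(0)}(\cc_1),h^{(0)}(\cc)]=1$ already holds in $G_{\Omega_0}$ (the equation is periodised), applying the homomorphism $\varphi_0$ yields $[\varphi_0(h^{(0)}(\cc_1)),\varphi_0(h^{(0)}(\cc))]=1$ in $\Ts_1$, so $\varphi_0(h^{(0)}(\cc_1))\in C_{\Ts_1}(\varphi_0(h^{(0)}(\cc)))$. Now the relation $S_0'$, namely $[x_i^{(0)},C_{\Ts_1}(\varphi_0(h^{(0)}(\cc)))]=1$, gives $\tau_0([h^{(0)}(\cc_1),h^{(0)}(\cc_2)])=[\varphi_0(h^{(0)}(\cc_1)),x_i^{(0)}]=1$ directly. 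No appeal to $\KK$ or to IH1/IH2 is needed here; the whole point of defining $\Ts_0$ via the \emph{centraliser} $C_{\Ts_1}(\varphi_0(h^{(0)}(\cc)))$ rather than via $\KK$ is precisely to make this step immediate.
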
 
\begin{proof}
By \cite[Lemma 6.14]{CKpc}, the set 
\begin{equation}\label{eq:psgen}
\{h_k^{(0)}\mid h_k^{(0)} \in \sigma, \sigma \notin \mathcal P\} \cup \{h_k^{(0)} \mid h_k^{(0)} \in T \} \cup h^{(0)}(C^{(1)} \cup C^{(2)})
\end{equation}
is a generating set of $G_{\Omega_0}$.

Notice that, by definition, the map $\tau_0$ on the set 
$$
S=\{h_k^{(0)}\mid h_k^{(0)} \in \sigma, \sigma \notin \mathcal P\} \cup \{h_k^{(0)} \mid h_k^{(0)} \in T \} \cup h^{(0)}(C^{(1)}), 
$$
coincides with the homomorphism $\varphi_0$. It follows that $\tau_0$ extends to a homomorphism on the subgroup $\langle S \rangle$ of $G_{\Omega_0}$.

By \cite[Lemma 6.14]{CKpc}, since the generalised equation is periodised, i.e. $[h^{(0)}(\cc_1), h^{(0)}(\cc_2)]=1$ for all cycles $\cc_1, \cc_2 \in C^{(1)}\cup C^{(2)}$, it follows that all the relations  $h^{(0)}(\mu)^{-\varepsilon(\mu)} h^{(0)}(\Delta(\mu))^{\varepsilon(\Delta(\mu))}$ belong to the subgroup generated by $S$ and thus $\tau_0(h^{(0)}(\mu)^{-\varepsilon(\mu)} h^{(0)}(\Delta(\mu))^{\varepsilon(\Delta(\mu))})=1$. 

Therefore, in order to show that the map $\tau_0$ induces a homomorphism from $G_{\Upsilon_0}$ to $\Ts_0$ we need to show that $\tau_0([h^{(0)}(\cc_1),h^{(0)}(\cc_2)])=1$ in $\Ts_0$ for all $\cc_1,\cc_2 \in C^{(1)}\cup C^{(2)}$. If $\cc_1,\cc_2\in C^{(1)}$, then 
$$
\tau_0([h^{(0)}(\cc_1),h^{(0)}(\cc_2)])=\varphi_0([h^{(0)}(\cc_1),h^{(0)}(\cc_2)])=1. 
$$
If $\cc_1,\cc_2 \in C^{(2)}$, then, by definition of $\Gamma_0$, 
$$
\tau_0([h^{(0)}(\cc_1),h^{(0)}(\cc_2)])=[x_i^{(0)},x_j^{(0)}]=1.
$$
Finally, assume that $\cc_1\in C^{(1)}$ and $\cc_2\in C^{(2)}$. Since $[h^{(0)}(\cc_1),h^{(0)}(\cc)]=1$ in $G_{\Omega_0}$, for all $\cc_1\in C^{(1)}$, and the non-trivial cycle $\cc\in C^{(2)}$ that defines the subgroup $\KK$, we have that $[\varphi_0(h^{(0)}(\cc_1)), \varphi_0(h^{(0)}(\cc))]=1$ in $\Ts_1$ and so $\varphi_0(h^{(0)}(\cc_1)) \in C_{\Ts_1}(\varphi_0(h^{(0)}(\cc))$. We conclude from the definition of $S_0$ that $\tau_0([h^{(0)}(\cc_1),h^{(0)}(\cc_2)])=1$ in $\Ts_0$.

To prove that $\tau_0$ extends to a homomorphism from $G_{\Omega_0}$ to $\Ts_0$ it is left to show that $\tau_0([h_i^{(0)}, h_j^{(0)}])=1$, for all $h_i^{(0)}, h_j^{(0)}$ such that $\Re_{\Upsilon_0}(h_i^{(0)}, h_j^{(0)})$. In fact, since the map extends to a homomorphism on the subgroup generated by $S$, we only need to check that $\tau_0([h_i^{(0)}, h_j^{(0)}])=1$, for all $h_j^{(0)}\in \mathcal P$  so that $\Re_{\Upsilon_0}(h_i^{(0)}, h_j^{(0)})$. For every $P$-periodic solution $H^{(0)}$, we have that $\az(H_i^{(0)}) \subset \az(H_j^{(0)})$, for all $h_i^{(0)} \in  \sigma, \sigma \in \mathcal P$, $h_j^{(0)} \in \mathcal P$. Recall that if $\Re_{\Upsilon_0}(h_i^{(0)}, h_j^{(0)})$, then for all solutions $H^{(0)}$ of $\Omega_0$ we have that $H_i^{(0)} \lra H_j^{(0)}$. Therefore, if $\Re_{\Upsilon_0}(h_i^{(0)}, h_j^{(0)})$ and $h_j^{(0)} \in \mathcal P$, then $h_i^{(0)} \in \sigma, \sigma \notin \mathcal P$. 

Furthermore, since the periodic structure is not strongly singular and the set $\Re_{\Upsilon_0}$ is completed, it follows that for all $h_j^{(0)} \in \mathcal P$ and for every $h_i^{(0)}$ such that $\Re_{\Upsilon_0}(h_i^{(0)}, h_j^{(0)})$,  one has that $\Re_{\Upsilon_0}(h_i^{(0)}, h_k^{(0)})$, for all $h_k^{(0)} \in \sigma, \sigma \in \mathcal P$.

By properties of the elementary transformations, we have that $\Re_{\Upsilon_1}(h_r^{(1)}, h_s^{(1)})$, for all $h_r^{(1)}$ from $\pi(v_0,v_1)(h_i^{(0)})$ and all $h_s^{(1)} \in \pi(v_0,v_1)(h_k^{(0)})$, $h_k^{(0)}\in \sigma, \sigma \in \mathcal P$. By the induction hypothesis IH2 on $\tau_1$, for any $y_m^{(1)}$ in $\varphi_0(h_i^{(0)})$, $y_n^{(1)}$ in $\varphi_0(h_k^{(0)})$ and any solution $H^{(1)}$ of $\Omega_1$ from the fundamental sequence,  we have that $\BA({H^{(1)}}'(y_m^{(1)})) \supset \BA(H^{(0)}_i)$ and $\BA({H^{(1)}}'(y_n^{(1)})) \supset \BA(H_k^{(0)})$, hence ${H^{(1)}}'(y_m^{(1)}) \lra {H^{(1)}}'(y_n^{(1)})$. By the induction hypothesis IH1 on $\HH_1$, we have that $(x_n^{(1)},x_m^{(1)})\in E_d(\Gamma_1)$ and so $\varphi_0(h_i^{(0)}) \lra \varphi_0(h_k^{(0)})$ in $\HH_1$, for all $h_k^{(0)}\in \sigma,\sigma \in \mathcal P$ and, hence, in particular, $\varphi_0(h_i^{(0)})$ belongs to the subgroup $\KK$ of $\HH_1$. Therefore, if $\Re_{\Upsilon_0}(h_i^{(0)}, h_j^{(0)})$ and $h_j^{(0)}\in \mathcal P$, then $\tau_0([h_i^{(0)}, h_j^{(0)}])=1$.
\end{proof}

\begin{lem}\label{lem:singadiagcom}
The homomorphism $\tau_0:G_{\Omega_0} \to \Ts_0$ makes Diagram {\rm(\ref{diag:period})} commutative.
\end{lem}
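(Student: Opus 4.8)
The plan is to verify directly that for every solution $H^{(0)}$ of the fundamental sequence, the homomorphism ${H^{(0)}}'$ constructed at the end of the proof of Lemma~\ref{lem:singatauhom} closes Diagram~(\ref{diag:period}), i.e.\ that $\tau_0 {H^{(0)}}' = H^{(0)}$ as homomorphisms from $G_{\Omega_0}$ to $\GG$. As in the quadratic case (Lemma~\ref{lem:qdiagcom}), the key point is that any solution of the fundamental sequence is a composition of a canonical automorphism of $G_{\Omega_0}$ associated to the vertex $v_0$ of type $2$ (one of the automorphisms of~\cite[Lemma 6.14]{CKpc} that stabilise the chosen periodic structure), the epimorphism $\pi(v_0,v_1)$, and a solution $H^{(1)}$ of $\Omega_1$ lying in the fundamental sequence for $G_{R(\Omega_1)}$. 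So ${H^{(1)}}'$ exists by the induction hypothesis IH, and we only need to see that the formula ${H^{(0)}}'|_{\Ts_1}={H^{(1)}}'$, ${H^{(0)}}'(x_i^{(0)})=H^{(0)}(h^{(0)}(\cc_i))$ (where $\cc_1,\dots,\cc_n$ is the chosen ordering of $C^{(2)}$), defines a genuine homomorphism $\Ts_0\to\GG$.

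First I would recall that $\Ts_0=\factor{\HH_0}{S_0}$ with $S_0=S_1\cup S_0'$, so defining a homomorphism out of $\Ts_0$ amounts to: (i) checking the defining commutation relations of $\HH_0=\GG(\Gamma_0)$ are respected, (ii) checking the relations of $S_1$ are respected, and (iii) checking the relations $[x_i^{(0)},C_{\Ts_1}(\varphi_0(h^{(0)}(\cc)))]=1$ of $S_0'$ are respected. Item (ii) is immediate since ${H^{(0)}}'$ restricts to ${H^{(1)}}'$ on $\Ts_1$ and the latter is a homomorphism. For (i), the new $d$-edges are $(x_i^{(0)},x_j^{(1)})$ for $x_j^{(1)}\in\KK$, and the new $c$-edges are among the $x^{(0)}$'s (plus, in the abelian case, edges to $\KK^\perp$); here I would invoke precisely the argument used in Lemma~\ref{lem:singatauhom} and Lemma~\ref{lem:qH0ind}: since $\KK=\BA_{\HH_1}(w(x^{(1)}))$ with $w$ the image of $h^{(0)}(\cc)$, and since $[h^{(0)}(\cc_1),h^{(0)}(\cc)]=1$ in $G_{\Omega_0}$ for all $\cc_1\in C^{(1)}$, the induction hypotheses IH1 and IH2 on $(\Ts_1,\HH_1)$ force ${H^{(1)}}'(\KK)\lra {H^{(1)}}'(\varphi_0(h^{(0)}(\cc)))$, so each ${H^{(0)}}'(x_i^{(0)})=H^{(0)}(h^{(0)}(\cc_i))$ disjointly commutes with ${H^{(0)}}'$ of every generator of $\KK$. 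The $c$-edges among $x^{(0)}$'s are respected because all the $h^{(0)}(\cc_i)$, being $P$-periodic of ``long'' support (they lie in sections of $\mathcal P$), have images under $H^{(0)}$ that are powers of a common root $\sqrt{P}$-like element, hence pairwise commute; in the abelian case one additionally uses that $\KK^\perp$ is abelian (Lemma~\ref{lem:typesKperp}) to handle edges to $\KK^\perp$. For (iii), the relation $[x_i^{(0)},C_{\Ts_1}(\varphi_0(h^{(0)}(\cc)))]=1$ is sent to $[H^{(0)}(h^{(0)}(\cc_i)),C_\GG({H^{(0)}}'(\varphi_0(h^{(0)}(\cc))))]$, and this is trivial because all the cycles in $C^{(2)}$ are $P$-periodic with the same underlying period $P$, so $H^{(0)}(h^{(0)}(\cc_i))$ and $H^{(0)}(h^{(0)}(\cc))$ lie in the same cyclic subgroup $\langle\sqrt{H^{(0)}(P)}\rangle$, whence $H^{(0)}(h^{(0)}(\cc_i))$ centralises $C_\GG(H^{(0)}(h^{(0)}(\cc)))\supseteq \langle\sqrt{H^{(0)}(P)}\rangle$.

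Having established that ${H^{(0)}}'$ is a well-defined homomorphism, commutativity of the square $G_{\Omega_0}\xrightarrow{\tau_0}\Ts_0\xrightarrow{{H^{(0)}}'}\GG$ versus $G_{\Omega_0}\xrightarrow{H^{(0)}}\GG$ is a generator-by-generator check against the defining formula of $\tau_0$ in Lemma~\ref{lem:singatauhom}. On generators $h_k^{(0)}\in\sigma$ with $\sigma\notin\mathcal P$, or $h_k^{(0)}=h(e)$ with $e\in T$, one has $\tau_0(h_k^{(0)})=\varphi_0(h_k^{(0)})=\tau_1\pi(v_0,v_1)(h_k^{(0)})$, and since the automorphism part of $H^{(0)}$ fixes these generators (they generate the subgroup $NQ$ stabilised by the automorphisms of~\cite[Lemma 6.14]{CKpc}), we get ${H^{(0)}}'\tau_0(h_k^{(0)})={H^{(1)}}'\tau_1\pi(v_0,v_1)(h_k^{(0)})=H^{(1)}(\pi(v_0,v_1)(h_k^{(0)}))=H^{(0)}(h_k^{(0)})$ using the induction hypothesis that ${H^{(1)}}'\tau_1=H^{(1)}$. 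For the remaining generators $h_k^{(0)}=h(e)$, $e\notin T$, one writes $h_k^{(0)}=h^{(0)}(\p(v_0,v))^{-1}v_{k1}(C^{(1)})v_{k2}(C^{(2)})h^{(0)}(\p(v_0,v'))$ and applies ${H^{(0)}}'\tau_0$: the $\p$-parts and the $v_{k1}(C^{(1)})$-part go through $\varphi_0$ and hence to their $H^{(0)}$-values by the case already done, while ${H^{(0)}}'(\tau_0(v_{k2}(C^{(2)})))={H^{(0)}}'(v_{k2}(x^{(0)}))=v_{k2}(H^{(0)}(h^{(0)}(\cc_1)),\dots,H^{(0)}(h^{(0)}(\cc_n)))=H^{(0)}(v_{k2}(C^{(2)}))$ by the very definition of ${H^{(0)}}'(x_i^{(0)})$; multiplying these back together gives $H^{(0)}(h_k^{(0)})$. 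This completes the check. The only real subtlety — and the step I expect to be the main obstacle to write cleanly rather than conceptually — is (iii)/(i): pinning down that all the $C^{(2)}$-cycles have images under $H^{(0)}$ in a common cyclic (hence mutually centralising and root-compatible) subgroup, which requires carefully unwinding the $P$-periodicity of the periodic structure from~\cite[Chapter 6]{CKpc} together with the structure of centralisers in partially commutative groups; everything else is a bookkeeping descent through the induction hypotheses IH, IH1, IH2 exactly parallel to Lemmas~\ref{lem:qhom}--\ref{lem:qtau0ind}.
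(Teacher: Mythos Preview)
Your proposal is correct and follows essentially the same strategy as the paper: write $H^{(0)}=\phi\,\pi(v_0,v_1)H^{(1)}$ with $\phi\in\AA(\Omega_0)$, define ${H^{(0)}}'$ to agree with ${H^{(1)}}'$ on $\Ts_1$ and to send $x_i^{(0)}$ to $H^{(0)}(h^{(0)}(\cc_i))$, and then use $P$-periodicity of the fundamental sequence to verify the relations of $\Ts_0$. The paper's argument is much terser --- it records only that ${H^{(0)}}'(x_i^{(0)})=P^n$ and ${H^{(0)}}'(\varphi_0(h^{(0)}(\cc)))=P^k$ with $k\ne 0$, and declares the relation $[x_i^{(0)},C_{\Ts_1}(\varphi_0(h^{(0)}(\cc)))]$ satisfied --- while you spell out the checks (i)--(iii) and the generator-by-generator verification of commutativity. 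Two minor inaccuracies: ${H^{(0)}}'$ is not constructed in Lemma~\ref{lem:singatauhom} but right here; and the subgroup fixed by the automorphisms is not the quadratic-case $NQ$ but the subgroup $\langle S\rangle$ with $S=\{h_k^{(0)}\in\sigma,\ \sigma\notin\mathcal P\}\cup\{h_k^{(0)}\in T\}\cup h^{(0)}(C^{(1)})$ (this is what \cite[Lemma~6.18]{CKpc} gives), so your appeal to the automorphism fixing the $T$-edges and $C^{(1)}$-cycles should be phrased through $S$ rather than $NQ$.
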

\begin{proof}
By \cite[Lemma 6.18]{CKpc}, for any solution $H^{(0)}$ of the fundamental sequence, there exist a solution $H^{(1)}$ of $\Omega_1$ and an automorphism $\phi$ from the group $\AA(\Omega_0)$ (see \cite[Definition 6.16]{CKpc}) such that $H^{(0)}=\phi \pi(v_0,v_1) H^{(1)}$. Furthermore, if 
$$
S=\{h_k^{(0)}\mid h_k^{(0)} \in \sigma, \sigma \notin \mathcal P\} \cup \{h_k^{(0)} \mid h_k^{(0)} \in T \} \cup h^{(0)}(C^{(1)}), 
$$
then, we have that $H^{(0)}(h_i^{(0)})=H^{(1)}(\pi(v_0,v_1)(h_i^{(0)}))$, where $h_i^{(0)} \in \langle S \rangle$. By the induction hypothesis, there exists a homomorphism ${H^{(1)}}'$ from $\Ts_1$ to $\GG$ that makes Diagram (\ref{diag:period}) commutative. We define ${H^{(0)}}'$ on the subgroup $\Ts_1$ of $\Ts_0$ to be ${H^{(1)}}'$ and we define ${H^{(0)}}'(y_i^{(0)})=H^{(0)}(\cc_{2i})$, where $\cc_{2i} \in C^{(2)}$ is so that $\tau_0(h^{(0)}(\cc_{2i}))=y_i^{(0)}$.

If we show that ${H^{(0)}}'$ is a homomorphism, then the commutativity of Diagram follows by construction. Since solutions of the fundamental sequence are $P$-periodic, it follows that ${H^{(0)}}'(y_i^{(0)})=H^{(0)}(\cc_2)=P^n$ and ${H^{(0)}}'(\varphi_0(h^{(0)}(\cc)))=P^k$, $k\ne 0$. Therefore, ${H^{(0)}}'$ is a homomorphism and makes Diagram (\ref{diag:period}) commutative.
\end{proof}

\begin{lem}\label{lem:singaKcool}
The subgroup $\KK=\BA(w(x^{(1)}))$ of $\HH_1$ is $E_d(\Gamma_1)$-co-irreducible.
\end{lem}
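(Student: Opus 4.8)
The goal is to show that $\KK = \BA(w(x^{(1)}))$ is $E_d(\Gamma_1)$-co-irreducible, i.e. that $\KK$ is closed ($\KK^{\perp\perp}=\KK$) and that $\KK^\perp$ is $E_d(\Gamma_1)$-directly indecomposable. Closedness is automatic: $\KK$ is defined as $\BA_{\HH_1}(w(x^{(1)}))$, which is of the form $\langle S^\perp\rangle$ for the set $S=\az(w(x^{(1)}))$, and for any canonical parabolic subgroup of this form one has $\KK^{\perp\perp}=\KK$ by the elementary properties of the orthogonal-complement operation (recall $\langle\az(v)^\perp\rangle=\BA(v)$ for cyclically reduced $v$ and that $(\cdot)^\perp$ is antitone with $X\subseteq X^{\perp\perp}$). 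So the content of the lemma is the indecomposability of $\KK^\perp$, and this is where I would follow the model of the proof of the corresponding Lemma in the quadratic case (the lemma preceding Remark~\ref{rem:blocks}), using the dynamics of the infinite branch — here, of type $15$, through a vertex of type $2$ with a singular periodic structure — rather than type $12$.

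First I would pin down the ``discriminating-family dictionary'' for $\KK$, exactly as in the quadratic case: by Remark~\ref{rem:discfam} there is a canonical parabolic subgroup $\GG_\KK<\GG$ with $\langle\az({H^{(1)}}'(\KK))\rangle=\GG_\KK$ for a fundamental sequence, and by the induction hypothesis IH1 the condition $a_i\lra \{{H^{(1)}}'(\varphi_0(h^{(0)}(\cc)))\}$ forces $a_i\in\KK$, so that $\langle\az({H^{(1)}}'(\varphi_0(h^{(0)}(\cc))))^\perp\rangle = \GG_\KK$ (note $\KK=\BA_{\HH_1}(w(x^{(1)}))$ is built from the single non-trivial cycle $\cc\in C^{(2)}$ chosen above). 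Since every solution $H^{(0)}$ of the fundamental sequence is $H^{(0)}=\phi\,\pi(v_0,v_1)\,H^{(1)}$ for $\phi\in\AA(\Omega_0)$, and the automorphisms of $\AA(\Omega_0)$ fix the non-periodic part (by \cite[Lemma 6.14]{CKpc}), the relation $a_i\lra H^{(0)}(h^{(0)}(\cc))$ holds for all $H^{(0)}$ iff it holds for $H^{(1)}(\pi(v_0,v_1)(h^{(0)}(\cc)))$, whence $\langle\az(H^{(0)}(h^{(0)}(\cc)))^\perp\rangle=\GG_\KK$. The point I then need is the analogue of Remark~\ref{rem:blocks}: $H^{(0)}(h^{(0)}(\cc))$ is a \emph{block} element with $\BA(H^{(0)}(h^{(0)}(\cc)))=\BA(\GG_{\KK^\perp})$; this is where the periodicity is used, since a $P$-periodic image of a ``long'' cycle is (up to conjugacy) a power of the cyclically reduced block $\sqrt{P}$, so $\az$ of it equals $\az(P)$ and is connected in the non-commutation graph.

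Now assume for contradiction that $\KK^\perp=\KK_1\times\KK_2$ is a non-trivial $E_d(\Gamma_1)$-decomposition. By Remark~\ref{rem:discfam} pick $\GG_{\KK_i}<\GG$ with ${H^{(0)}}'(\KK_i)=\GG_{\KK_i}$ along the fundamental sequence, and by IH1 $\GG_{\KK_1}\lra\GG_{\KK_2}$, so $\GG_{\KK^\perp}<\GG_{\KK_1}\times\GG_{\KK_2}$ and hence $H^{(0)}(h^{(0)}(\cc))\in\GG_{\KK_1}\times\GG_{\KK_2}$. Using that $h^{(0)}(\cc)$ lies in a closed section and that, by Remark~\ref{rem:recall}, $H^{(0)}$ restricted to that section is (a subword of) a word in DM-normal form, Lemma~\ref{lem:pc} bounds the number of $\GG_{\KK_1}\times\GG_{\KK_2}$-alternations of $H^{(0)}(h^{(0)}(\cc))$ by a constant depending only on $\GG$. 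Against this I need the growth mechanism of the infinite branch: because the periodic structure is singular and repeats infinitely often along the branch, iterating the entire/periodic transformations replaces $h^{(0)}(\cc)$ by longer and longer $P$-periodic words — concretely $\pi(v_0,v_k)$ multiplies the ``period-count'' of the cycle, so $H^{(0)}(h^{(0)}(\cc))$ must be graphically equal to $P^{n_k}$-type words with $n_k\to\infty$, each copy of $P$ contributing at least one alternation — and by \cite[Lemma 6.17, Lemma 6.18]{CKpc} there are genuine solutions realising these arbitrarily large $n_k$. This contradicts the uniform bound from Lemma~\ref{lem:pc}, forcing $\KK^\perp$ to be $E_d(\Gamma_1)$-directly indecomposable.

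\textbf{Main obstacle.} The delicate point is establishing, in the singular-periodic setting, the precise analogue of Remark~\ref{rem:blocks} and the unbounded-alternations estimate: one must verify that the singular (as opposed to strongly singular) case genuinely produces an infinite branch along which the cycle $\cc\in C^{(2)}$ is repeatedly lengthened by the periodic transformations (rather than, say, being eventually removed or stabilising), and that the corresponding solutions stay inside $\GG_{\KK_1}\times\GG_{\KK_2}$ while accumulating $\Omega(n_k)$ alternations. This requires carefully invoking the structural results on periodic structures from \cite[Chapter 6]{CKpc} — in particular that singularity of type (a) gives $|C^{(2)}|\ge 2$ with a cycle whose image is non-trivial, and that the associated automorphism group is non-trivial — to mirror the role played by \cite[Lemma 7.12]{CKpc} and the carrier/transfer-base bookkeeping in the type $12$ argument. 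Once that dynamical input is in place, the rest is a routine transcription of the quadratic-case proof.
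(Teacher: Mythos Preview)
Your overall strategy matches the paper's: establish the $\GG_\KK$ dictionary via IH1 and Remark~\ref{rem:discfam}, assume a non-trivial $E_d(\Gamma_1)$-splitting $\KK^\perp=\KK_1\times\KK_2$, deduce that images of cycles lie in $\GG_{\KK_1}\times\GG_{\KK_2}$, and obtain a contradiction by playing the bounded-alternation Lemma~\ref{lem:pc} against unbounded period-counts. That is exactly what the paper does.

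Where you go slightly astray is in the growth mechanism, which you flag as your main obstacle. There is \emph{no} infinite branch here and no iteration of $\pi(v_0,v_k)$: the vertex is of type~2, the map $\pi(v_0,v_1)$ is a single proper epimorphism, and the ``infinity'' lives entirely in the automorphism group $\AA(\Omega_0)$ attached to the singular periodic structure. The paper's argument is much more direct than the quadratic-case bookkeeping you are trying to mimic: from ${H^{(1)}}'(w(x^{(1)}))=P^n\in\GG_\KK^\perp$ one sees that the period $P$ itself contains at least one $\GG_{\KK_1}\times\GG_{\KK_2}$-alternation; then one simply cites the description of $P$-periodic solutions from \cite[Chapter~6]{CKpc} to produce, for every $m$, a solution $H^{(0)}$ of $\Omega_0$ with $H^{(0)}(\cc_2)\doteq P^{m'}$, $m'\ge m$, for some $\cc_2\in C^{(2)}$. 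Since $H^{(0)}(\cc_2)$ is a subword of $H^{(0)}(\sigma)$ for a closed section $\sigma\in\P$, Remark~\ref{rem:recall} and Lemma~\ref{lem:pc} give the bounded alternations, contradicting the $m'$ many alternations in $P^{m'}$. No analogue of Remark~\ref{rem:blocks} is needed as an \emph{input}: the conclusion that $P$ is a cyclically reduced irreducible root element (and hence that images are block elements) falls out at the end, once $\KK^\perp$ is shown indecomposable. So you can drop the attempt to establish the block property beforehand and replace your ``infinite-branch'' growth argument with the one-line appeal to the Chapter~6 description of solutions periodised with respect to $\langle\P,R\rangle$.
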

\begin{proof}
By Remark \ref{rem:discfam}, there exists a canonical parabolic subgroup $\GG_\KK< \GG$ such that for a fundamental sequence of solutions we have that $\langle \az({H^{(0)}}'(\KK))\rangle=\GG_\KK$. By definition of $\KK$ and by the induction hypothesis IH1, we have that ${H^{(1)}}'(w(x^{(1)})) \lra \GG_\KK$. Furthermore, if $a_i \lra {H^{(1)}}'(w(x^{(1)}))$, where $a\in \cA$, $\GG=\GG(\cA)$, again, by the induction hypothesis IH1, we have that $a_i\in \KK$ and so $a_i\in \GG_\KK$, hence we have that $\langle \az({H^{(1)}}'(w(x^{(1)})))^\perp\rangle =\GG_\KK$.

Let us now address the statement of the lemma. By definition the group $\KK$ is closed, i.e. $\KK=\KK^{\perp\perp}$. We only need to show that $\KK^\perp$ is $E_d(\Gamma_1)$-directly indecomposable.

Assume the contrary, then $\KK^\perp=\KK_1\times \dots \times \KK_r$, where $r>1$ (with respect to the edges from $E_d(\Gamma_1)$). Without loss of generality, we may assume that $r=2$. By Remark \ref{rem:discfam}, there exist canonical parabolic subgroups $\GG_{\KK_i}<\GG$, $i=1,2$ such that for solutions $H^{(0)}$ from the fundamental sequence we have ${H^{(0)}}'(\KK_i)=\GG_{\KK_i}$, $i=1,2$. Furthermore, by the induction hypothesis IH1, one has that $\GG_{\KK_1} \lra \GG_{\KK_2}$. 

Since $\langle \az({H^{(1)}}'(w(x^{(1)})))^\perp\rangle =\GG_\KK$, it follows that ${H^{(1)}}'(w(x^{(1)}))\in \GG_\KK^\perp < \GG_{\KK_1}\times \GG_{\KK_2}$. Furthermore, it follows that ${H^{(1)}}'(w(x^{(1)}))$ contains at least one $\GG_{\KK_1}\times\GG_{\KK_2}$-alternation.

Since every solution of the fundamental sequence is $P$-periodic, it follows that ${H^{(1)}}'(w(x^{(1)}))=P^n$ and $P$ contains at least one $\GG_{\KK_1}\times \GG_{\KK_2}$-alternation. By the description of solutions of generalised equations periodised with respect to a periodic structure, see \cite[Chapter 6]{CKpc}, for every $m\in \BN$, there exists a solution $H^{(0)}$ of $\Omega_0$ so that $H^{(0)}(\cc_2)\doteq P^{m'}$, where $m'\ge m$, $\cc_2\in C^{(2)}$. Hence, $H^{(0)}(\cc_2)$ contains at least $m'$ many $\GG_{\KK_1}\times \GG_{\KK_2}$-alternations. This derives a contradiction, since, by Remark \ref{rem:recall}, $H^{(0)}(\cc_2)$ is a subword of a word in the normal form and, by Lemma \ref{lem:pc}, it contains only a bounded number of alternations.  Hence, we conclude that $P$ is a cyclically reduced irreducible root element and so $r=1$ and $\KK^\perp$ is $E_d(\Gamma_1)$-directly indecomposable.
\end{proof}

\begin{rem}\label{rem:blockperiod}
Solutions of generalised equations define graphical equalities, so, if solutions of the fundamental sequence are $P$-periodic, then the period $P$ is cyclically reduced. Furthermore, as we have shown above, the period $P$ is an irreducible root  element from $\GG_{\KK^\perp}=\GG_\KK^\perp$.
\end{rem}

\begin{lem}\label{lem:qH0ind1}
The group $\HH_0$ satisfies the induction hypothesis {\rm IH1}: for every $x_i,x_j \in \HH_0$, we have that $(x_i,x_j) \in E_d(\Gamma_0)$ if and only if $H'(x_i) \lra H'(x_j)$ for all homomorphisms $H'$ induced by solutions $H$ from the fundamental sequence; furthermore, we have that if $(x_i,x_j) \in E_c(\Gamma_0)$, then $H'(x_i)$ and $H'(x_j)$ belong to the same cyclic subgroup for all homomorphisms $H'$ induced by solutions $H$ from the fundamental sequence.
\end{lem}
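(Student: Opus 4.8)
\emph{Plan of proof.} I would argue exactly as in the proof of Lemma~\ref{lem:qH0ind}, by a case analysis on the two generators $x_i,x_j$ of $\HH_0=\GG(\Gamma_0)$, using the induction hypothesis IH1 for the graph tower $(\Ts_1,\HH_1)$, the explicit description of $\Gamma_0$, and the fact that ${H^{(0)}}'$ restricts to ${H^{(1)}}'$ on $\Ts_1$ and sends $x_i^{(0)}$ to $H^{(0)}(\cc_{2i})$ for $\cc_{2i}\in C^{(2)}$. If both $x_i,x_j$ lie in $\HH_1$, then no new $d$- or $c$-edge of $\Gamma_0$ joins them and ${H^{(0)}}'$ agrees with ${H^{(1)}}'$ on $\HH_1$, so the statement is precisely IH1 for $\HH_1$.

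Next, if $x_i=x_i^{(0)}$ and $x_j=x_j^{(0)}$ are both new generators, then $(x_i^{(0)},x_j^{(0)})\in E_c(\Gamma_0)\setminus E_d(\Gamma_0)$. Since every solution of the fundamental sequence is $P$-periodic and, by Remark~\ref{rem:blockperiod}, $P$ is a cyclically reduced irreducible root element of $\GG_{\KK^\perp}$, each ${H^{(0)}}'(x_i^{(0)})=H^{(0)}(\cc_{2i})$ is a power of $P$, so both images lie in the common cyclic subgroup $\langle P\rangle$, which gives the $c$-edge statement. Choosing (as in the proof of Lemma~\ref{lem:singaKcool}) a solution for which these powers are nontrivial, the two images share their alphabet and hence do not disjointly commute, which is consistent with the absence of the $d$-edge.

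The main case is $x_i=x_i^{(0)}$, $x_j=x_j^{(1)}\in\HH_1$. If $x_j^{(1)}\in\KK=\BA_{\HH_1}(w(x^{(1)}))$, equivalently $x_j^{(1)}\lra w(x^{(1)})$ in $\HH_1$, then IH1 for $\HH_1$ yields ${H^{(1)}}'(x_j^{(1)})\lra {H^{(1)}}'(w(x^{(1)}))$; but ${H^{(1)}}'(w(x^{(1)}))={H^{(0)}}'(\varphi_0(h^{(0)}(\cc)))$ is a nonzero power of $P$ by Lemma~\ref{lem:singadiagcom}, so $\BA$ of it equals $\BA(P)=\langle\az(P)^\perp\rangle=\BA(\GG_{\KK^\perp})$, and since ${H^{(0)}}'(x_i^{(0)})$ is a power of $P$ we conclude ${H^{(0)}}'(x_j^{(1)})\lra {H^{(0)}}'(x_i^{(0)})$, matching $(x_i^{(0)},x_j^{(1)})\in E_d(\Gamma_0)$. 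In the subcase where $\KK^\perp$ is abelian and $x_j^{(1)}\in\KK^\perp$ (the source of the extra $c$-edges), by Lemma~\ref{lem:typesKperp} and the direct indecomposability of $\KK^\perp$ established in Lemma~\ref{lem:singaKcool}, $\GG_{\KK^\perp}$ is directly indecomposable and abelian, hence equals $\langle P\rangle$ since $P$ is a root element; so both ${H^{(0)}}'(x_j^{(1)})\in\GG_{\KK^\perp}$ and ${H^{(0)}}'(x_i^{(0)})$ lie in this cyclic subgroup. Conversely, if $x_j^{(1)}\notin\KK$ (and, in the abelian case, $x_j^{(1)}\notin\KK^\perp$) and one assumes ${H^{(0)}}'(x_j^{(1)})\lra {H^{(0)}}'(x_i^{(0)})$ for every ${H^{(0)}}'$, then picking a solution with $H^{(0)}(\cc_{2i})$ a nontrivial power of $P$ and using $\BA(P^k)=\BA(P)$ (valid as $P$ is cyclically reduced) forces ${H^{(1)}}'(x_j^{(1)})\lra {H^{(1)}}'(w(x^{(1)}))$ for all ${H^{(1)}}'$, whence $x_j^{(1)}\lra w(x^{(1)})$ in $\HH_1$ by IH1 for $\HH_1$, i.e. $x_j^{(1)}\in\KK$, a contradiction.

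The hard part will be the last step: passing from disjoint commutation of ${H^{(0)}}'(x_j^{(1)})$ with a high power $P^k$ to disjoint commutation with the word $w(x^{(1)})$ and then with its individual letters, so that IH1 for $\HH_1$ can be applied letter by letter. This relies on the combinatorial identities $\BA(P^k)=\BA(P)=\langle\az(P)^\perp\rangle=\BA(\GG_{\KK^\perp})$ (using that $P$ is cyclically reduced and a block, Remark~\ref{rem:blockperiod}) together with careful bookkeeping of the ``for all solutions of the fundamental sequence'' quantifier and of the existence of solutions realising arbitrarily large periodic exponents, exactly as in the proofs of Lemmas~\ref{lem:qH0ind}, \ref{lem:qdiagcom} and \ref{lem:singaKcool}.
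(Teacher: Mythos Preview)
Your proposal is correct and follows essentially the same case analysis as the paper: both generators in $\HH_1$ (reduce to IH1 for $\HH_1$); both generators new (powers of the common period $P$, hence in $\langle P\rangle$); one new generator $x_i^{(0)}$ and one old generator $x_j^{(1)}$ (use $P$-periodicity to compare ${H^{(0)}}'(x_i^{(0)})$ with ${H^{(1)}}'(\varphi_0(h^{(0)}(\cc)))$, both being powers of $P$, and then invoke IH1 for $\HH_1$ to land in $\KK=\BA(w(x^{(1)}))$). Your treatment of the $E_c$-edges in the abelian $\KK^\perp$ subcase is also the paper's argument, phrased slightly differently: the paper uses IH1 to see that $\langle {H^{(1)}}'(\KK^\perp)\rangle$ is cyclic and then that $P\in\GG_{\KK^\perp}$ is irreducible, while you observe directly that $\GG_{\KK^\perp}$ is cyclic and generated by the root $P$; these are the same observation.

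One small remark on what you flag as the ``hard part'': the paper sidesteps the need to pass from disjoint commutation with a word to disjoint commutation with its letters by noting that both ${H^{(0)}}'(x_i^{(0)})=H^{(0)}(\cc_{2i})$ and ${H^{(1)}}'(\varphi_0(h^{(0)}(\cc)))$ are powers of the \emph{same} period $P$ (with the latter nonzero by the choice of $\cc$), so $\BA$ of the two coincide and one may replace the first by the second before invoking IH1 for $\HH_1$; this yields $x_j^{(1)}\in\BA(\varphi_0(h^{(0)}(\cc)))=\KK$ directly, with no letter-by-letter bookkeeping. Your route through $\BA(P^k)=\BA(P)$ is equivalent.
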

\begin{proof}
If $x_i,x_j \in \HH_1<\HH_0$, then the statement follows by the induction hypothesis on $\HH_1$.

By definition, $(x_i^{(0)},x_j^{(0)}) \in E_c(\Gamma_0)$ for all $1\le i< j\le n$. Let us show that ${H^{(0)}}'(x_i^{(0)})$ and ${H^{(0)}}'(x_j^{(0)})$ belong to the same cyclic subgroup. Indeed, since solutions of the fundamental sequence are $P$-periodic, it follows that ${H^{(0)}}'(x_i^{(0)})=H^{(0)}(\cc_{2i})=P^k$ for all $i=1,\dots,n$.

Furthermore, if $\KK^\perp$ is abelian, then $(x_i^{(0)},x_j^{(1)})\in E_c(\Gamma_0)$ for all $x_j^{(1)}\in \KK^\perp$. By the induction hypothesis IH1, for all solutions the subgroup $\langle{H^{(0)}}'(\KK^\perp)\rangle=\langle{H^{(1)}}'(\KK^\perp)\rangle$ is a cyclic subgroup of $\GG$. Since, by Remark \ref{rem:blockperiod}, the period $P$ is an irreducible element from $\langle \az({H^{(0)}}'(\KK^\perp))\rangle$, 
it follows that ${H^{(0)}}'(x_i^{(0)})=H^{(0)}(\cc_{2i})=P^k$ and ${H^{(0)}}'(x_j^{(1)})$ belong to the same cyclic subgroup.

By definition $(x_i^{(0)}, x_j^{(1)})\in E_d(\Gamma_0)$, for all $x_j^{(1)}\in \KK$, $i=1,\dots,n$. From the definition of $\KK$ and the induction hypothesis on the tower $(\Ts_1,\HH_1)$, we have that ${H^{(1)}}'(\varphi_0(\cc))=H^{(1)}(\pi(v_0,v_1)(\cc))=P^k \lra {H^{(1)}}'(\KK)$. Since solutions are $P$-periodic, it follows that 
$$
{H^{(0)}}'(x_i^{(0)})=H^{(0)}(\cc_{2i})=P^l \lra {H^{(1)}}'(\KK)={H^{(0)}}'(\KK),
$$
where $\tau_0(\cc_{2i})=x_i^{(0)}$, $i=1,\dots, |C^{(2)}|=n$.

Assume that for $x_j^{(1)}\in \HH_1$ we have that ${H^{(0)}}'(x_j^{(1)}) \lra {H^{(0)}}'(x_i^{(0)})$ for some $i=1,\dots,n$. Since ${H^{(0)}}'(x_i^{(0)})=H^{(0)}(\cc_{2i})=P^l$, it follows that 
$$
{H^{(0)}}'(x_j^{(1)})={H^{(1)}}'(x_j^{(1)}) \lra H^{(0)}(\pi(v_0,v_1)(h(\cc)))={H^{(0)}}'(\varphi_0(h(\cc)))={H^{(1)}}'(\varphi(h(\cc)))=P^l. 
$$
By the induction hypothesis IH1 on $\HH_1$, it follows that $x_j^{(1)} \in \BA(\varphi_0(h(\cc)))=\KK$. Therefore, $(x_j^{(1)},x_i^{(0)})\in E_d(\Gamma_0)$.
\end{proof}

\begin{lem}\label{lem:singatau0ind}
The homomorphism $\tau_0$ satisfies the induction hypothesis {\rm IH2}: for all $h_i^{(0)}$, if $\tau_0(h_i^{(0)})=x_{i1}\dots x_{ik}$, then for a fundamental sequence of solutions we have that $\BA(H'(x_{ij})) \supset \BA(H_i^{0})$ and $\bigcap \limits_{j=1}^k \BA(H'(x_{ij}))= \BA(H_i^{(0)})$.
\end{lem}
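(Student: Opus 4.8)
The statement to prove (Lemma \ref{lem:singatau0ind}) is the induction hypothesis IH2 for the homomorphism $\tau_0$ in the singular-type-(a) case; its proof follows the template already used for Lemma \ref{lem:qtau0ind} in the quadratic case, and I would structure it the same way, splitting into cases according to where the item $h_i^{(0)}$ sits with respect to the periodic structure $\mathcal P$ and the maximal subtree $T$. The three relevant cases are: (i) $h_i^{(0)}$ belongs to a section $\sigma\notin\mathcal P$ or $h_i^{(0)}=h(e)$ for $e\in T$; (ii) $h_i^{(0)}=h(e)$ for $e\notin T$; and the auxiliary observation that the cycles $h^{(0)}(\cc)$ for $\cc\in C^{(2)}$ are mapped to the new generators $x^{(0)}$, which are known (by Remark \ref{rem:blockperiod}) to be sent by $H^{(0)}{}'$ to powers of the period $P$, an irreducible root element of $\GG_{\KK^\perp}$ with $\BA(P)=\BA(\GG_{\KK^\perp})$.

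\textbf{First steps.} In case (i), by the definition of $\tau_0$ (Lemma \ref{lem:singatauhom}), $\tau_0(h_i^{(0)})=\varphi_0(h_i^{(0)})=\tau_1\pi(v_0,v_1)(h_i^{(0)})$; the epimorphism $\pi(v_0,v_1)$ satisfies IH2 by construction (this is the standard property of elementary transformations, as recorded in Lemma \ref{lem:tribes} and used throughout Section \ref{sec:10}), and $\tau_1$ satisfies IH2 by the induction hypothesis on the tower $(\Ts_1,\HH_1)$. Composing, one gets $\BA(H^{(0)}{}'(x_{ij}))\supseteq\BA(H_i^{(0)})$ for each letter $x_{ij}$ of the image, and $\bigcap_j\BA(H^{(0)}{}'(x_{ij}))=\BA(H_i^{(0)})$; this is a routine verification. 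In case (ii), $h_i^{(0)}$ is a word of the shape $h^{(0)}(\p(v_0,v))^{-1}v_{k1}(C^{(1)})v_{k2}(C^{(2)})h^{(0)}(\p(v_0,v'))$ and $\tau_0$ replaces $v_{k2}(C^{(2)})$ by the corresponding word $v_{k2}(x^{(0)})$ while leaving everything else as $\varphi_0$; here I would use that for a $P$-periodic solution $H^{(0)}$ one has $\az(H_i^{(0)})\subseteq\az(P^{n})$ hence $\BA(P)\subseteq\BA(H_i^{(0)})$, and conversely that the letters $x_j^{(0)}$ go to powers of $P$ so $\BA(H^{(0)}{}'(x_j^{(0)}))=\BA(P)\supseteq$ — wait, one needs $\supseteq\BA(H_i^{(0)})$, which follows because $\az(H_i^{(0)})$ is contained in $\az$ of the whole $P$-periodic word and $H_i^{(0)}$ is ``long'' (a subword of $P^{n}$ of length $>2|P|$ so $\az(H_i^{(0)})=\az(P)$, giving $\BA(H^{(0)}{}'(x_j^{(0)}))=\BA(P)=\BA(\GG_{\KK^\perp})\supseteq\BA(H_i^{(0)})$, with equality of the intersection because $H_i^{(0)}$ itself, being a subword containing a full period, has $\az(H_i^{(0)})=\az(P)$).

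\textbf{The main obstacle.} The delicate point is the equality $\bigcap_{j=1}^k\BA(H^{(0)}{}'(x_{ij}))=\BA(H_i^{(0)})$ in case (ii), since the image word mixes generators coming from $\Ts_1$ (through $\varphi_0$ on the $T$-edges, the $C^{(1)}$-cycles and the tree-paths) with the new generators $x^{(0)}$ coming from $C^{(2)}$; one must show these two groups of letters have ``enough in common'' so that their joint disjoint-commutation subgroup collapses exactly to $\BA(H_i^{(0)})$, and no smaller. The key facts I would invoke are: (a) the period $P$ is an irreducible root block element of $\GG_{\KK^\perp}$ with $\BA(P)=\BA(\GG_{\KK^\perp})$ (Remark \ref{rem:blockperiod}, Lemma \ref{lem:singaKcool}); (b) the $T$-edge generators and $C^{(1)}$-cycles, having tribes dominating the minimal tribe associated to $\mathcal P$, are mapped by $H^{(0)}{}'$ into subgroups $\GG'$ with $\BA(\GG')\supseteq\BA(\GG_{\KK^\perp})$; and (c) since $H_i^{(0)}$ is a $P$-periodic long word, $\az(H_i^{(0)})=\az(P)$, so $\BA(H_i^{(0)})=\BA(P)=\BA(\GG_{\KK^\perp})$ and the intersection is forced down to exactly this group. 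Thus in all three cases the two IH2 conditions hold, and the lemma follows. I would close by remarking that the cases where $\mathcal P$ is singular of type (b) are handled identically, with $C^{(2)}=\{\cc\}$ a single cycle and $n=1$, so that the same argument applies verbatim.
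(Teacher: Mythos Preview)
Your proposal is correct and follows essentially the same approach as the paper: split according to whether $h_i^{(0)}$ lies in the subgroup $\langle S\rangle$ generated by items outside $\mathcal P$, tree-edge items, and $C^{(1)}$-cycles (your case (i), handled via $\tau_0=\tau_1\pi(v_0,v_1)$ and the induction hypothesis on $\tau_1$), or is an item $h_k^{(0)}\in\mathcal P$ expressed through the new generators $x^{(0)}$ (your case (ii), handled via $\az(H_k^{(0)})=\az(P)$ and ${H^{(0)}}'(x_i^{(0)})=P^k$). Your explicit justification that $\az(H_i^{(0)})=\az(P)$ because the item is ``long'' (length $>2|P|$) is a point the paper leaves implicit; your invocation of ``tribes dominating the minimal tribe'' in fact (b) is a slight detour---the paper argues directly from $P$-periodicity that $\az(H_i^{(0)})\subset\az(P)$ for all items in $\mathcal P$-sections---but this does not affect correctness.
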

\begin{proof}
Recall that if 
$$
S=\{h_k^{(0)}\mid h_k^{(0)} \in \sigma, \sigma \notin \mathcal P\} \cup \{h_k^{(0)} \mid h_k^{(0)} \in T \} \cup h^{(0)}(C^{(1)}), 
$$
then, for $h_i^{(0)} \in \langle S \rangle$ and for all solutions of the fundamental sequence $H^{(0)}$ we have that $H^{(0)}(h_i^{(0)})=H^{(1)}(\pi(v_0,v_1)(h_i^{(0)}))$. 

If $h_i^{(0)} \in \langle S \rangle$, then $\tau_0(h_i^{(0)})=\tau_1 \pi(v_0,v_1) (h_i^{(0)})$. If $\pi(v_0,v_1)(h_i^{(0)})=h_{i_1}^{(1)}\dots h_{i_m}^{(1)}$, then since solutions of the generalised equations define graphical equalities it follows that for every solution $H^{(1)}$ of the fundamental sequence, we have that $H^{(1)}(\pi(v_0,v_1)(h_i^{(0)}))= H^{(1)}_{i_1}\dots H^{(1)}_{i_m}$ and so 
$$
\BA(H^{(1)}(h_{i_j}^{(1)})) \supset \BA(H^{(1)}(\pi(v_0,v_1)(h_i^{(0)})))=\BA(H_i^{(0)}). 
$$
Since, by induction assumption, the statement of the lemma holds for $\tau_1$, we conclude that the statement holds for $\tau_0(h_i^{(0)})=\tau_1 \pi(v_0,v_1) (h_i^{(0)})$, for all $h_i^{(0)} \in \langle S \rangle$.

If $h_k^{(0)}\notin \langle S\rangle$, then $h_k^{(0)} \in \mathcal P$ and 
$$
h_k^{(0)}=h^{(0)}(\p(v_0,v))v_{k1}(C^{(1)}) v_{k2}(C^{(2)})h^{(0)}(\p(v_0, v'))
$$
for some 
$$
v_{k1}(C^{(1)})\in \langle h^{(0)}(C^{(1)})\rangle, \ v_{k2}(C^{(2)}) \in \langle h^{(0)}(C^{(2)})\rangle \hbox{ and } h^{(0)}(\p(v_0,v)),h^{(0)}(\p(v_0,v')) \in \langle h(e) \mid e\in T\rangle. 
$$
In this case, 
$$
\tau_0(h_k^{(0)})= \varphi_0\left( h^{(0)}(\p(v_0,v))v_{k1}(C^{(1)})\right) v_{k2}(x^{(0)})\varphi_0(h^{(0)}(\p(v_0, v')).
$$
Since every solution $H^{(0)}$ of the fundamental sequence is $P$-periodic, it follows that $\az(H_i^{(0)}) \subset \az(P)$, for all $h_i^{(0)}\in \sigma, \sigma\in \mathcal P$. Hence, for all $x_i$ in the words $\varphi_0(h^{(0)}(\p(v_0,v))v_{k1}(C^{(1)}))$ and $\varphi_0(h^{(0)}(\p(v_0, v')))$, by the induction hypothesis IH2 on $\tau_1$, we can conclude that $\BA({H^{(0)}}'(x_i))\supset \BA(P)=\BA(H_k^{(0)})$. Furthermore, 
$$
\az({H^{(0)}}'(x_i^{(0)}))= \az(H^{(0)}(\cc_{2i}))=\az(P^k) = \az(H_k^{(0)})
$$ 
and so $\BA( {H^{(0)}}'(x_i^{(0)})) = \BA(H_k^{(0)})$ and, in particular, 
$$
\bigcap\limits_{x_i\ in \ \tau_0(h_k^{(0)})}  \BA({H^{(0)}}'(x_{i}))= \BA(H_k^{(0)}).
$$
\end{proof}

\bigskip

Assume now that the periodic structure is singular of type (b), i.e. the set $C^{(2)}$ has exactly one element and there exists a cycle $\cc=\cc_{e_0} \in C^{(1)}$, $h^{(0)}(e_0) \notin \mathcal P$ such that $h^{(0)}(\cc_{e_0})=w(h^{(0)})\ne 1$ in $G_{\Omega_0}$. Since $\cc\in C^{(1)}$, for all solutions $H^{(0)}=\varphi\pi(v_0,v_1)H^{(1)}$ of $\Omega_0$ we have that $H^{(0)}(\cc)=H^{(1)}(\pi(v_0,v_1)(h(\cc)))$. Since $G_{R(\Omega_0)}$ is separated by $\GG$, there exists $H^{(0)}$ so that $1\ne H^{(0)}(\cc)= H^{(1)}(\pi(v_0,v_1)(h(\cc)))$, hence $\pi(v_0,v_1)(h(\cc))\ne 1$ in $G_{R(\Omega_1)}$. Let $w(x')=\varphi_0(h^{(0)}(\cc_{e_0}))$ be the image of $h^{(0)}(\cc_{e_0})$ in $\Ts_1$, and consider the word $w(x^{(1)})$ in $\HH_1$. Set  $\KK$ to be $\BA_{\HH_1}(w(x^{(1)}))$.

Define the graph $\Gamma_0$ as follows. If the canonical parabolic subgroup $\KK^\perp$ is non-abelian, set:
\begin{itemize}
\item $V(\Gamma_0)=V(\Gamma_1) \cup \{x^{(0)}\}$; 
\item $E_c(\Gamma_0)=E_c(\Gamma_1)$ and 
\item $E_d(\Gamma_0)=E_d(\Gamma_1)\cup \{ (x^{(0)}, x_j^{(1)}) \mid \hbox{ for all } x_j^{(1)} \in \KK\}$.
\end{itemize} 
If the canonical parabolic subgroup $\KK^\perp$ is abelian, set:
\begin{itemize}
\item $V(\Gamma_0)=V(\Gamma_1) \cup \{x^{(0)}\}$; 
\item $E_c(\Gamma_0)=E_c(\Gamma_1)\cup \{ (x^{(0)},x_j^{(0)})\mid x_j^{(1)}\in \KK^\perp\}$ and 
\item $E_d(\Gamma_0)=E_d(\Gamma_1)\cup \{ (x^{(0)}, x_j^{(1)}) \mid \hbox{ for all } x_j^{(1)} \in \KK\}$.
\end{itemize} 

Now, the group $\HH_0$ is defined to be $\GG(\Gamma_0)$.

Let $S_0'$ be the following set of relations: 
$$
[x^{(0)}, C_{\Ts_1}(\varphi_0(h^{(0)}(\cc_{e_0})))]=1,
$$
and set $\Ts_0$ to be the quotient $\factor{\HH_0}{\ncl\langle S_0\rangle}$, where $S_0=S_1 \cup S_0'$.

Recall that, by definition of the generating set $\bar x$ and since the generalised equation is periodised (the periodic structure being singular), the set $\{h(e) \mid e\in T\} \cup \{h^{(0)}(C^{(1)}), h^{(0)}(C^{(2)})\}$ is a basis of the free group on the alphabet $\{h_k^{(0)} \in \sigma, \sigma \in \mathcal P\}$. Furthermore, any $h_k^{(0)}\in \sigma, \sigma \in \mathcal P$ so that $h_k^{(0)}=h(e)$, $e\notin T$, $e:v\to v'$, is expressed in the generating set $\{h(e) \mid e\in T\} \cup \{h^{(0)}(C^{(1)}), h^{(0)}(C^{(2)})\}$ as 
$$
h_k^{(0)}=h^{(0)}(\p(v_0,v))^{-1} v_{k1}(C^{(1)}) v_{k2}(C^{(2)})h^{(0)}(\p(v_0, v')),
$$
for some $v_{k1}(C^{(1)})\in \langle h^{(0)}(C^{(1)})\rangle$ and some $v_{k2}(C^{(2)}) \in \langle h^{(0)}(C^{(2)})\rangle$.

\begin{lem} \label{lem:singbtauhom}
The map $\tau_0$, 
$$
h_k^{(0)}\mapsto 
\left\{ 
\begin{array}{lll}
\varphi_0(h_k^{(0)}),& \hbox{ for all $h_k^{(0)} \in \sigma, \sigma \notin \mathcal P$,} \\
\varphi_0(h_k^{(0)}),& \hbox{ for all $h_k^{(0)}=h(e), e\in T$,} \\
\varphi_0(h^{(0)}(\p(v_0, v))^{-1}) \varphi_0(v_{k1}(C^{(1)})) v_{k2}(x^{(0)}) \varphi_0(h^{(0)}(\p(v_0,v'))), & \hbox{ for all $h_k^{(0)}=h(e), e \notin T$,}
\end{array}
\right.
$$
extends to a homomorphism from $G_{\Omega_0}$ to $\Ts_0$.
\end{lem}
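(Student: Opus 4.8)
The statement is the exact analogue, for a singular periodic structure of type (b), of Lemma~\ref{lem:singatauhom}, which handled type (a). So the plan is to mirror that proof. First I would recall from \cite[Lemma 6.14]{CKpc} that, since the generalised equation $\Omega_0$ is periodised with respect to $\langle\mathcal P,R\rangle$, the set
$$
\{h_k^{(0)}\mid h_k^{(0)} \in \sigma, \sigma \notin \mathcal P\} \cup \{h_k^{(0)} \mid h_k^{(0)} \in T \} \cup h^{(0)}(C^{(1)} \cup C^{(2)})
$$
is a generating set of $G_{\Omega_0}$, and that here $C^{(2)}$ is a single element, so $h^{(0)}(C^{(2)})$ is absent from the domain of $\tau_0$ while the cycle $\cc_{e_0}\in C^{(1)}$ plays the role that the distinguished cycle $\cc\in C^{(2)}$ played in type (a). I would then observe that on the subgroup $\langle S\rangle$ generated by $S=\{h_k^{(0)}\mid h_k^{(0)} \in \sigma, \sigma \notin \mathcal P\} \cup \{h_k^{(0)} \mid h_k^{(0)} \in T \} \cup h^{(0)}(C^{(1)})$ the map $\tau_0$ agrees with $\varphi_0=\pi(v_0,v_1)\tau_1$, hence extends to a homomorphism there, and that by \cite[Lemma 6.14]{CKpc} every relation $h^{(0)}(\mu)^{-\varepsilon(\mu)}h^{(0)}(\Delta(\mu))^{\varepsilon(\Delta(\mu))}$ lies in $\langle S\rangle$, so $\tau_0$ kills these relations automatically.

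Next I would check the commutation relations $[h^{(0)}(\cc_1),h^{(0)}(\cc_2)]=1$ for $\cc_1,\cc_2\in C^{(1)}\cup C^{(2)}$: when both lie in $C^{(1)}$ this follows from $\varphi_0$ being a homomorphism; when at least one is the unique element of $C^{(2)}$ one uses that $[h^{(0)}(\cc_1),h^{(0)}(\cc_{e_0})]=1$ in $G_{\Omega_0}$ forces $\varphi_0(h^{(0)}(\cc_1))\in C_{\Ts_1}(\varphi_0(h^{(0)}(\cc_{e_0})))$, so the relation $[x^{(0)},C_{\Ts_1}(\varphi_0(h^{(0)}(\cc_{e_0})))]=1$ in $S_0'$ does the job. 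Finally I would treat the commutation constraints $\Re_{\Upsilon_0}(h_i^{(0)},h_j^{(0)})$: exactly as in Lemma~\ref{lem:singatauhom}, if $h_j^{(0)}\in\mathcal P$ then $P$-periodicity of solutions forces $h_i^{(0)}\in\sigma$, $\sigma\notin\mathcal P$, and since the periodic structure is not strongly singular and $\Re_{\Upsilon_0}$ is completed, $\Re_{\Upsilon_0}(h_i^{(0)},h_k^{(0)})$ holds for all $h_k^{(0)}\in\sigma$, $\sigma\in\mathcal P$; then properties of elementary transformations give $\Re_{\Upsilon_1}$-relations for the images, the induction hypothesis IH2 on $\tau_1$ passes disjoint commutation to $\varphi_0(h_i^{(0)})$, IH1 on $\HH_1$ puts the corresponding edges in $E_d(\Gamma_1)$, and hence $\varphi_0(h_i^{(0)})\in\KK$, so $\tau_0([h_i^{(0)},h_j^{(0)}])=1$ by construction of $\Gamma_0$.

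\textbf{Main obstacle.} The only real subtlety, compared with type (a), is bookkeeping around the single distinguished cycle $\cc_{e_0}\in C^{(1)}$ rather than an element of $C^{(2)}$: one must be careful that the word $w(x^{(1)})=\varphi_0(h^{(0)}(\cc_{e_0}))$ is genuinely non-trivial in $\HH_1$ (this is precisely why the hypothesis $h^{(0)}(\cc_{e_0})\neq 1$ in $G_{\Omega_0}$ and the separation of $G_{R(\Omega_0)}$ by $\GG$ were invoked before the lemma, giving $\pi(v_0,v_1)(h^{(0)}(\cc_{e_0}))\neq 1$ in $G_{R(\Omega_1)}$) so that $\KK=\BA_{\HH_1}(w(x^{(1)}))$ is a proper canonical parabolic subgroup and the new vertex $x^{(0)}$ is not forced to commute with everything. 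Beyond that, the argument is a line-by-line transcription of the proof of Lemma~\ref{lem:singatauhom}, and I would simply say so, spelling out only the point where $C^{(1)}$ versus $C^{(2)}$ changes the computation of the commutators.
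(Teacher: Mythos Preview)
Your proposal is correct and follows exactly the paper's approach: the paper's proof simply reads ``Proof is analogous to the proof of Lemma~\ref{lem:singatauhom}'', and you have correctly identified the one bookkeeping change, namely that the distinguished cycle defining $\KK$ is $\cc_{e_0}\in C^{(1)}$ rather than an element of $C^{(2)}$. Your write-up in fact supplies more detail than the paper does; the only phrasing I would tighten is that $h^{(0)}(C^{(2)})$ is not ``absent from the domain of $\tau_0$'' but rather its unique element is sent to $x^{(0)}$ under the induced map on the generating set.
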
 
\begin{proof}
Proof is analogous to the proof of Lemma \ref{lem:singatauhom}
\end{proof}

\begin{lem}\label{lem:singbdiagcom}
The homomorphism $\tau_0:G_{\Omega_0} \to \Ts_0$ makes Diagram {\rm(\ref{diag:period})} commutative.
\end{lem}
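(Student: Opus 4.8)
The plan is to run the argument of Lemma \ref{lem:singadiagcom} almost verbatim, the only change being that here $C^{(2)}$ is a singleton and the distinguished cycle $\cc_{e_0}$ lies in $C^{(1)}$. First I would invoke the description of the fundamental sequence through a vertex of type $2$ from \cite[Chapter 6]{CKpc} (the analogue for periodic structures singular of type (b) of the statement used for type (a)): every solution $H^{(0)}$ of the fundamental sequence decomposes as $H^{(0)}=\phi\,\pi(v_0,v_1)\,H^{(1)}$ with $H^{(1)}$ a solution of $\Omega_1$ and $\phi\in\AA(\Omega_0)$, and, writing
$$
S=\{h_k^{(0)}\mid h_k^{(0)}\in\sigma,\ \sigma\notin\mathcal P\}\cup\{h_k^{(0)}\mid h_k^{(0)}\in T\}\cup h^{(0)}(C^{(1)}),
$$
one has $H^{(0)}(h_i^{(0)})=H^{(1)}(\pi(v_0,v_1)(h_i^{(0)}))$ for all $h_i^{(0)}\in\langle S\rangle$; recall that $\langle S\rangle$ together with $h^{(0)}(C^{(2)})$ generates $G_{\Omega_0}$ by \cite[Lemma 6.14]{CKpc}, i.e. by the set (\ref{eq:psgen}).

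Next I would produce the homomorphism ${H^{(0)}}'$ witnessing commutativity: on the retract $\Ts_1\leq\Ts_0$ it is the homomorphism ${H^{(1)}}':\Ts_1\to\GG$ furnished by the induction hypothesis {\rm IH} (so that $\tau_1$ followed by ${H^{(1)}}'$ equals $H^{(1)}$), and it sends the new generator $x^{(0)}$ to $H^{(0)}(\cc_2)$, where $\cc_2$ is the unique element of $C^{(2)}$, so that $\tau_0(h^{(0)}(\cc_2))=x^{(0)}$. The content of the proof is to check that ${H^{(0)}}'$ kills the defining relations of $\Ts_0$ that are not already relations of $\Ts_1$, namely the edges of $\Gamma_0$ joining $x^{(0)}$ to the vertices of $\KK$ (and, when $\KK^\perp$ is abelian, to those of $\KK^\perp$) and the basic relation $[x^{(0)},C_{\Ts_1}(\varphi_0(h^{(0)}(\cc_{e_0})))]=1$. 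Here I would use that the solutions of the fundamental sequence are $P$-periodic: then ${H^{(0)}}'(x^{(0)})=H^{(0)}(\cc_2)=P^n$, while ${H^{(1)}}'(\varphi_0(h^{(0)}(\cc_{e_0})))=H^{(1)}(\pi(v_0,v_1)(h^{(0)}(\cc_{e_0})))=H^{(0)}(\cc_{e_0})=P^k$ with $k\neq 0$ (the value is a power of $P$ because $\cc_{e_0}$ is a cycle of the periodic structure, and it is a nonzero power for solutions of the fundamental sequence since $h^{(0)}(\cc_{e_0})\neq 1$ in $G_{R(\Omega_0)}$ and $G_{R(\Omega_0)}$ is separated by $\GG$ through the fundamental sequence). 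By Remark \ref{rem:blockperiod}, $P$ is a cyclically reduced irreducible root element of $\GG_{\KK^\perp}=\GG_\KK^{\perp}$, so $C_\GG(P^k)=C_\GG(P)=\langle P\rangle\times\BA(P)$, and this subgroup is centralised by $P^n=H^{(0)}(\cc_2)$; since ${H^{(0)}}'(C_{\Ts_1}(\varphi_0(h^{(0)}(\cc_{e_0}))))\subseteq C_\GG(P^k)$, the basic relation is mapped to a relation of $\GG$. The edge relations joining $x^{(0)}$ to $\KK$ (resp. to $\KK^\perp$) follow from $\KK=\BA_{\HH_1}(w(x^{(1)}))$ and from the induction hypothesis {\rm IH1} on $\HH_1$, exactly as in the proof of Lemma \ref{lem:singaKcool}, together with the description of $C_\GG(P)$ above and, in the abelian case, with the fact that $P\in\GG_{\KK^\perp}$ is then a power of a generator of the cyclic subgroup ${H^{(1)}}'(\KK^\perp)$.

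Finally, granting that ${H^{(0)}}'$ is a homomorphism, commutativity of Diagram (\ref{diag:period}) is read off the construction on the generators of $G_{\Omega_0}$ listed in (\ref{eq:psgen}): on $\langle S\rangle$ the map $\tau_0$ is $\varphi_0=\tau_1\pi(v_0,v_1)$, so composing with ${H^{(0)}}'={H^{(1)}}'$ on $\Ts_1$ recovers $H^{(1)}\pi(v_0,v_1)=H^{(0)}$ there, while $h^{(0)}(\cc_2)$ is sent by $\tau_0$ to $x^{(0)}$ and then back to $H^{(0)}(\cc_2)$ by ${H^{(0)}}'$; hence $\tau_0$ followed by ${H^{(0)}}'$ equals $H^{(0)}$. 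I expect the genuinely delicate point to be the centraliser computation in the second paragraph — verifying that the $P$-periodic value $P^n$ of $x^{(0)}$ centralises the ${H^{(1)}}'$-image of $C_{\Ts_1}(\varphi_0(h^{(0)}(\cc_{e_0})))$ — which rests on the structure $C_\GG(P)=\langle P\rangle\times\BA(P)$ of centralisers of irreducible root elements and on the periodicity recorded in Remark \ref{rem:blockperiod}; the remainder is routine bookkeeping with the generating set (\ref{eq:psgen}) and with the induction hypotheses for $(\Ts_1,\HH_1)$.
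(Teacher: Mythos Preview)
Your proposal is correct and follows the paper's approach exactly: the paper's own proof consists of the single line ``Proof is analogous to the proof of Lemma \ref{lem:singadiagcom}'', and you have carried out precisely that analogy, with the expected modifications (a single new generator $x^{(0)}$, and the distinguished cycle $\cc_{e_0}\in C^{(1)}$ in place of $\cc\in C^{(2)}$). In fact you supply more detail than the paper does for the type (a) case---in particular the explicit verification, via Remark \ref{rem:blockperiod} and the centraliser structure $C_\GG(P)=\langle P\rangle\times\BA(P)$, that ${H^{(0)}}'$ kills the basic relation---whereas the paper's proof of Lemma \ref{lem:singadiagcom} simply records that both values are powers of $P$ and concludes.
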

\begin{proof}
Proof is analogous to the proof of  Lemma \ref{lem:singadiagcom}
\end{proof}

\begin{lem}\label{lem:singbKcool}
The subgroup $\KK=\BA(w(x^{(1)}))$ of $\HH_0$ is $E_d(\Gamma_0)$-\cool.
\end{lem}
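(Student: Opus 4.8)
The statement to prove is Lemma~\ref{lem:singbKcool}: that in the singular-of-type-(b) case the subgroup $\KK=\BA_{\HH_1}(w(x^{(1)}))$ of $\HH_0$ (equivalently, of $\HH_1$, since $\HH_1$ is a canonical parabolic subgroup of $\HH_0$) is $E_d(\Gamma_0)$-co-irreducible. Since, by definition, $\KK$ is obtained as $\BA_{\HH_1}(\cdot)$ it is automatically closed, i.e.\ $\KK^{\perp\perp}=\KK$; so the only content is to show that $\KK^\perp$ is $E_d(\Gamma_0)$-directly indecomposable. The plan is to follow verbatim the structure of the proof of Lemma~\ref{lem:singaKcool} (the singular-of-type-(a) analogue), the only difference being that now the relevant cycle $\cc=\cc_{e_0}$ lies in $C^{(1)}$ rather than $C^{(2)}$, and that its image $w(h^{(0)})=h^{(0)}(\cc_{e_0})$ is required to be non-trivial in $G_{\Omega_0}$ (which, together with the fact that $G_{R(\Omega_0)}$ is separated by $\GG$, forces $\pi(v_0,v_1)(h(\cc))\neq 1$ in $G_{R(\Omega_1)}$ — already observed just before the lemma).

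First I would record, using Remark~\ref{rem:discfam} and the induction hypothesis IH1, that there is a canonical parabolic subgroup $\GG_\KK<\GG$ with $\langle\az({H^{(0)}}'(\KK))\rangle=\GG_\KK$ for the fundamental sequence, and that by definition of $\KK$ together with IH1 one gets ${H^{(1)}}'(w(x^{(1)}))\lra\GG_\KK$ and, conversely, any generator $a_i\in\cA$ disjointly commuting with ${H^{(1)}}'(w(x^{(1)}))$ lies in $\GG_\KK$; hence $\langle\az({H^{(1)}}'(w(x^{(1)})))^\perp\rangle=\GG_\KK$. Next, suppose for contradiction that $\KK^\perp$ decomposes (with respect to $E_d(\Gamma_1)$, hence $E_d(\Gamma_0)$) as $\KK^\perp=\KK_1\times\KK_2$ non-trivially. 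By Remark~\ref{rem:discfam} there are canonical parabolic $\GG_{\KK_i}<\GG$ with ${H^{(0)}}'(\KK_i)=\GG_{\KK_i}$, and by IH1, $\GG_{\KK_1}\lra\GG_{\KK_2}$. Since $\langle\az({H^{(1)}}'(w(x^{(1)})))^\perp\rangle=\GG_\KK$ we conclude ${H^{(1)}}'(w(x^{(1)}))\in\GG_\KK^\perp<\GG_{\KK_1}\times\GG_{\KK_2}$ and, being non-trivial and split between the two factors, it contains at least one $\GG_{\KK_1}\times\GG_{\KK_2}$-alternation.

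The core of the argument is then the periodicity/unboundedness step. Because solutions of the fundamental sequence are $P$-periodic, ${H^{(1)}}'(w(x^{(1)}))$ is a power $P^n$ of the period $P$, and hence $P$ itself contains a $\GG_{\KK_1}\times\GG_{\KK_2}$-alternation. Now, by the description of solutions of generalised equations periodised with respect to a periodic structure (see \cite[Chapter~6]{CKpc}), for every $m\in\BN$ there is a solution $H^{(0)}$ of $\Omega_0$ for which the item carrying $\cc=\cc_{e_0}$ is mapped to a power $P^{m'}$ with $m'\geq m$; for the type-(b) case one uses here precisely the hypothesis that $h^{(0)}(\cc_{e_0})\neq 1$ in $G_{\Omega_0}$, which guarantees the cycle carries a non-trivial, genuinely $P$-periodic image whose length is unbounded along the fundamental sequence. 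Thus $H^{(0)}(\cc_{e_0})$ contains at least $m'$ many $\GG_{\KK_1}\times\GG_{\KK_2}$-alternations. But by Remark~\ref{rem:recall}(3), $H^{(0)}(\cc_{e_0})$ is (a subword of) a word in the DM-normal form, so by Lemma~\ref{lem:pc} the number of $\GG_{\KK_1}\times\GG_{\KK_2}$-alternations in it is bounded by a constant depending only on the number of clans of $\GG$. Taking $m'$ larger than this constant yields a contradiction, so $r=1$, i.e.\ $\KK^\perp$ is $E_d(\Gamma_0)$-directly indecomposable, and $\KK$ is $E_d(\Gamma_0)$-co-irreducible. (As in Remark~\ref{rem:blockperiod}, one moreover extracts that $P$ is a cyclically reduced irreducible root element of $\GG_{\KK^\perp}=\GG_\KK^\perp$.)

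The main obstacle — and the only place where type (b) differs substantively from type (a) — is the verification that the cycle $\cc_{e_0}\in C^{(1)}$ does produce an \emph{unbounded} family of $P$-periodic subwords along the fundamental sequence: in type (a) this was immediate because $C^{(2)}$-cycles are, by the structure of the periodic decomposition, exactly the ones whose images blow up, whereas here one must extract the same unboundedness from a $C^{(1)}$-cycle using the extra hypothesis $h^{(0)}(\cc_{e_0})\neq 1$ in $G_{\Omega_0}$ and the separation of $G_{R(\Omega_0)}$ by $\GG$. Once that unboundedness is in place, the alternation-counting contradiction via Lemma~\ref{lem:pc} and Remark~\ref{rem:recall} goes through exactly as in Lemma~\ref{lem:singaKcool}, and I would simply say ``the proof is analogous to that of Lemma~\ref{lem:singaKcool}'' after spelling out this one point.
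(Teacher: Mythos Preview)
Your overall strategy is exactly the paper's: the proof is declared analogous to Lemma~\ref{lem:singaKcool}, and the outline you give (closedness is automatic, assume an $E_d$-direct decomposition $\KK^\perp=\KK_1\times\KK_2$, deduce an alternation in $P$, derive unboundedly many alternations, contradict Lemma~\ref{lem:pc} via Remark~\ref{rem:recall}(3)) is the right one.

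There is, however, a slip in the step you flag as the ``main obstacle''. You propose to extract the unbounded family of $P$-powers from the cycle $\cc_{e_0}\in C^{(1)}$ itself, using the hypothesis $h^{(0)}(\cc_{e_0})\neq 1$. But $C^{(1)}$-cycles lie in the subgroup $S$ fixed by the canonical automorphisms (cf.\ the description in Lemma~\ref{lem:singadiagcom}); hence $H^{(0)}(\cc_{e_0})=H^{(1)}(\pi(v_0,v_1)(h(\cc_{e_0})))$ depends only on $H^{(1)}$ and there is no mechanism to make the exponent of $P$ grow along the fundamental sequence. So this route to unboundedness does not work as stated.

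The fix is simpler than you suggest and makes the proof genuinely identical to the type-(a) case: in type~(b) one still has $|C^{(2)}|=1$, so the unique cycle $\cc_2\in C^{(2)}$ remains available, and the unboundedness step from Lemma~\ref{lem:singaKcool} (``for every $m$ there exists $H^{(0)}$ with $H^{(0)}(\cc_2)\doteq P^{m'}$, $m'\ge m$'') applies verbatim. The role of the hypothesis $h^{(0)}(\cc_{e_0})\neq 1$ (and hence $\pi(v_0,v_1)(h(\cc_{e_0}))\neq 1$ in $G_{R(\Omega_1)}$) is only to guarantee that $w(x^{(1)})$ is non-trivial in $\HH_1$, so that ${H^{(1)}}'(w(x^{(1)}))=P^n$ with $n\neq 0$ and $P$ really does contain a $\GG_{\KK_1}\times\GG_{\KK_2}$-alternation. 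Once that is in place, the contradiction via $\cc_2\in C^{(2)}$ and Lemma~\ref{lem:pc} goes through without further change, and the conclusion (including Remark~\ref{rem:blockperiod}) follows.
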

\begin{proof}
Proof is analogous to the proof of  Lemma \ref{lem:singaKcool}
\end{proof}

\begin{lem}\label{lem:singbH0ind}
The group $\HH_0$ satisfies the induction hypothesis {\rm IH1}: for all $x_i,x_j \in \HH_0$, we have that $(x_i,x_j) \in E_d(\Gamma_0)$ if and only if $H'(x_i) \lra H'(x_j)$ for all homomorphisms $H'$ induced by solutions $H$ from the fundamental sequence. Furthermore, we have that if $(x_i,x_j) \in E_c(\Gamma_0)$, then $H'(x_i)$ and $H'(x_j)$ belong to a cyclic subgroup for all homomorphisms $H'$ induced by solutions $H$ from the fundamental sequence.
\end{lem}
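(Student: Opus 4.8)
The plan is to follow the proof of Lemma \ref{lem:qH0ind1} almost verbatim, the only structural simplification being that the singular-of-type-(b) floor adjoins a single new vertex $x^{(0)}$ rather than a family $x_1^{(0)},\dots,x_n^{(0)}$, so the clause of IH1 concerning $c$-edges between two new vertices is vacuous here. First I would dispose of the edges that already live in $\Gamma_1$: since $\Gamma_0$ is obtained from $\Gamma_1$ by adding $x^{(0)}$ together with the $d$-edges $(x^{(0)},x_j^{(1)})$ for $x_j^{(1)}\in\KK$ and, when $\KK^{\perp}$ is abelian, the $c$-edges $(x^{(0)},x_j^{(1)})$ for $x_j^{(1)}\in\KK^{\perp}$, two generators $x_i,x_j\in\HH_1<\HH_0$ are joined by a $d$-edge (resp. a $c$-edge) of $\Gamma_0$ if and only if they are in $\Gamma_1$. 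Since every $H'$ induced by a solution of the fundamental sequence of $\Omega_0$ restricts on the subgroup $\Ts_1$ to a homomorphism ${H^{(1)}}'$ induced by a solution of the fundamental sequence of $\Omega_1$, the required equivalence for such a pair is immediate from IH1 for the graph tower $(\Ts_1,\HH_1)$.

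Next I would compute the image of the new generator. Writing $C^{(2)}=\{\cc_2\}$, so that $\tau_0(h^{(0)}(\cc_2))=x^{(0)}$, the argument of Lemma \ref{lem:singbdiagcom} gives ${H^{(0)}}'(x^{(0)})=H^{(0)}(\cc_2)$; since solutions of the fundamental sequence are $P$-periodic and $\cc_2$ is a cycle of a periodised generalised equation, the description of periodised solutions in \cite[Chapter 6]{CKpc} yields $H^{(0)}(\cc_2)=P^{l}$ for some integer $l$, and $\az(P)=\az(H_i^{(0)})$ for every $h_i^{(0)}\in\sigma$, $\sigma\in\mathcal P$. By the proof of Lemma \ref{lem:singbKcool} (cf. Remark \ref{rem:blockperiod}) the period $P$ is a cyclically reduced irreducible root element lying in $\GG_{\KK^{\perp}}=\GG_{\KK}^{\perp}$, so $\BA(P)=\BA(\GG_{\KK^{\perp}})$ and $\langle\az({H^{(1)}}'(w(x^{(1)})))^{\perp}\rangle=\GG_{\KK}$, where $w(x^{(1)})$ is the word in $\HH_1$ with $\varphi_0(h^{(0)}(\cc_{e_0}))=w(x^{(1)})$ and $\KK=\BA_{\HH_1}(w(x^{(1)}))$. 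Now for the edges at $x^{(0)}$: if $\KK^{\perp}$ is non-abelian there are no $c$-edges at $x^{(0)}$ and nothing to check, while if $\KK^{\perp}$ is abelian then for $x_j^{(1)}\in\KK^{\perp}$ IH1 for $\HH_1$ makes $\langle{H^{(0)}}'(\KK^{\perp})\rangle=\langle{H^{(1)}}'(\KK^{\perp})\rangle$ cyclic, and since $P$ is an irreducible element of $\langle\az({H^{(0)}}'(\KK^{\perp}))\rangle$ the elements ${H^{(0)}}'(x^{(0)})=P^{l}$ and ${H^{(0)}}'(x_j^{(1)})$ lie in a common cyclic subgroup. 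For the $d$-edges: if $x_j^{(1)}\in\KK$ then IH1 for $\HH_1$ gives ${H^{(0)}}'(x_j^{(1)})\lra{H^{(1)}}'(w(x^{(1)}))=H^{(1)}(\pi(v_0,v_1)(h^{(0)}(\cc_{e_0})))=H^{(0)}(\cc_{e_0})$, which is again a power of $P$, so ${H^{(0)}}'(x_j^{(1)})\lra P^{l}={H^{(0)}}'(x^{(0)})$ for all $H'$ of the fundamental sequence; conversely, if ${H^{(0)}}'(x_j^{(1)})\lra{H^{(0)}}'(x^{(0)})$ for all such $H'$, then, because $P^{l}$ and $H^{(0)}(\cc_{e_0})$ have the same root $P$ on a discriminating subfamily, ${H^{(0)}}'(x_j^{(1)})\lra{H^{(1)}}'(w(x^{(1)}))$ and IH1 for $\HH_1$ forces $x_j^{(1)}\in\BA_{\HH_1}(w(x^{(1)}))=\KK$, i.e. $(x_j^{(1)},x^{(0)})\in E_d(\Gamma_0)$.

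The hard part is not any single deduction but the bookkeeping that legitimises the two identifications above: one must check that $\cc_{e_0}\in C^{(1)}$ has image independent of the modular automorphism $\varphi$ (so that ${H^{(0)}}'(\varphi_0(h^{(0)}(\cc_{e_0})))$ and ${H^{(0)}}'(x^{(0)})$ are genuinely powers of one and the same period $P$, with nonzero exponent on a discriminating subfamily, using $\pi(v_0,v_1)(h^{(0)}(\cc_{e_0}))\neq 1$ in $G_{R(\Omega_1)}$), and that the $\BA$-support of $P$ coincides with $\GG_{\KK^{\perp}}$ and with the $\BA$-support of the items of $\mathcal P$. All of this is exactly what is recorded in the proof of Lemma \ref{lem:singbKcool} and in the description of periodised solutions of \cite{CKpc}; once these facts are in hand, every disjoint-commutation statement about edges incident to $x^{(0)}$ is converted, via IH1 for $(\Ts_1,\HH_1)$, into a disjoint-commutation statement inside $\HH_1$, which is precisely the mechanism of the proof of Lemma \ref{lem:qH0ind1}.
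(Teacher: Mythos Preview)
Your proposal is correct and takes essentially the same approach as the paper, which simply records that the proof is analogous to that of Lemma \ref{lem:qH0ind1}. You have in fact supplied more detail than the paper does, correctly noting the one structural simplification (a single new vertex $x^{(0)}$, so the clause about $c$-edges between new vertices is vacuous) and correctly tracking that here $\KK$ is defined via the cycle $\cc_{e_0}\in C^{(1)}$ while $x^{(0)}$ corresponds to the unique element of $C^{(2)}$, with both mapping to powers of the same period $P$ under solutions of the fundamental sequence.
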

\begin{proof}
Proof is analogous to the proof of Lemma \ref{lem:qH0ind1}
\end{proof}

\begin{lem}\label{lem:singbtau0ind}
The homomorphism $\tau_0$ satisfies the induction hypothesis {\rm IH2}: for all $h_i^{(0)}$, if $\tau_0(h_i^{(0)})=x_{i1}\dots x_{ik}$, then for a fundamental sequence of solutions we have that $\BA({H^{(0)}}'(x_{ij})) \supset \BA(H_i^{(0)})$ and $\bigcap \limits_{j=1}^k \BA({H^{(0)}}'(x_{ij}))= \BA(H_i^{(0)})$.
\end{lem}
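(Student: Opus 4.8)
The plan is to follow the proof of Lemma \ref{lem:singatau0ind} essentially verbatim, adapting it to the type (b) construction, where $|C^{(2)}|=1$, a single new generator $x^{(0)}$ is added, and the distinguished cycle $\cc_{e_0}\in\langle C^{(1)}\rangle$ is used only to define $\KK$ and the basic relation. First I would recall the generating set
$$
S=\{h_k^{(0)}\mid h_k^{(0)} \in \sigma, \sigma \notin \mathcal P\} \cup \{h_k^{(0)} \mid h_k^{(0)} \in T \} \cup h^{(0)}(C^{(1)})
$$
of $G_{\Omega_0}$ coming from \cite[Lemma 6.14]{CKpc}, and split the verification of IH2 into two cases according to whether the item $h_i^{(0)}$ lies in $\langle S\rangle$ or not.

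In the first case, $h_i^{(0)}\in\langle S\rangle$, so by the definition of $\tau_0$ (Lemma \ref{lem:singbtauhom}) we have $\tau_0(h_i^{(0)})=\tau_1\pi(v_0,v_1)(h_i^{(0)})$. Writing $\pi(v_0,v_1)(h_i^{(0)})=h_{i_1}^{(1)}\cdots h_{i_m}^{(1)}$ and using that solutions of generalised equations define graphical equalities, for every solution $H^{(1)}$ of the fundamental sequence one has $H^{(1)}(\pi(v_0,v_1)(h_i^{(0)}))=H^{(1)}_{i_1}\cdots H^{(1)}_{i_m}$ as a reduced word, whence $\BA(H^{(1)}(h_{i_j}^{(1)}))\supseteq \BA(H^{(1)}(\pi(v_0,v_1)(h_i^{(0)})))=\BA(H_i^{(0)})$ for each $j$. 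Applying the induction hypothesis IH2 to $\tau_1$ on each factor $h_{i_j}^{(1)}$ then yields simultaneously the inclusion $\BA({H^{(0)}}'(x_{ij}))\supseteq\BA(H_i^{(0)})$ for every letter $x_{ij}$ of $\tau_0(h_i^{(0)})$ and the equality $\bigcap_j \BA({H^{(0)}}'(x_{ij}))=\BA(H_i^{(0)})$; the bookkeeping here is the same as in Lemma \ref{lem:singatau0ind}.

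In the second case $h_k^{(0)}\notin\langle S\rangle$, so $h_k^{(0)}\in\mathcal P$ and $h_k^{(0)}=h^{(0)}(\p(v_0,v))^{-1}v_{k1}(C^{(1)})v_{k2}(C^{(2)})h^{(0)}(\p(v_0,v'))$, hence $\tau_0(h_k^{(0)})=\varphi_0(h^{(0)}(\p(v_0,v))^{-1})\varphi_0(v_{k1}(C^{(1)}))v_{k2}(x^{(0)})\varphi_0(h^{(0)}(\p(v_0,v')))$. Since every solution of the fundamental sequence is $P$-periodic, $\az(H_i^{(0)})\subseteq\az(P)$ for all $h_i^{(0)}$ in a section of $\mathcal P$, and in fact $\BA(H_k^{(0)})=\BA(P)$ and $\az(H_k^{(0)})=\az(P)$ because items of $\mathcal P$ are long. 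The $\varphi_0$-parts of $\tau_0(h_k^{(0)})$ are handled by IH2 for $\tau_1$ exactly as before, giving $\BA({H^{(0)}}'(x_i))\supseteq\BA(P)=\BA(H_k^{(0)})$ for each of their letters. For the remaining factor $v_{k2}(x^{(0)})$, recall that ${H^{(0)}}'(x^{(0)})=H^{(0)}(\cc_2)=P^{n}$, where $\cc_2$ is the unique cycle in $C^{(2)}$ and $n\in\BZ$; hence $\az({H^{(0)}}'(x^{(0)}))=\az(P)=\az(H_k^{(0)})$ and so $\BA({H^{(0)}}'(x^{(0)}))=\BA(H_k^{(0)})$. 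Since $x^{(0)}$ actually occurs in the decomposition of $\tau_0(h_k^{(0)})$, this both completes the inclusion and pins the intersection down to $\BA(H_k^{(0)})$.

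The main point requiring care, as in the type (a) argument, is this second case: one must ensure that for \emph{every} item $h_k^{(0)}\in\mathcal P\setminus\langle S\rangle$ the generator $x^{(0)}$ genuinely appears in $\tau_0(h_k^{(0)})$, i.e.\ that $v_{k2}(C^{(2)})$ is non-trivial in the cycle expression, since it is precisely the $x^{(0)}$-factor that forces the intersection of the $\BA$'s down to $\BA(H_k^{(0)})$ rather than merely containing it. This uses the structure of the generating set attached to the periodic structure from \cite[Lemma 6.14]{CKpc} together with the fact that the period $P$ is a cyclically reduced irreducible root element of $\GG_{\KK^\perp}=\GG_{\KK}^{\perp}$ (Remark \ref{rem:blockperiod} and Lemma \ref{lem:singbKcool}), so that $\az(H^{(0)}(\cc_2))=\az(P)$; all the remaining verifications are routine and identical to those in the proof of Lemma \ref{lem:singatau0ind}.
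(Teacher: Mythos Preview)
Your proposal is correct and follows exactly the approach the paper intends: its proof of Lemma~\ref{lem:singbtau0ind} is literally ``analogous to the proof of Lemma~\ref{lem:singatau0ind}'', and you have reproduced that argument with the obvious type~(b) adjustments ($|C^{(2)}|=1$, a single generator $x^{(0)}$). One small remark: your final paragraph worries about whether $x^{(0)}$ genuinely occurs in $\tau_0(h_k^{(0)})$, but the justification is more direct than the one you give --- since $S$ together with $h^{(0)}(C^{(2)})$ generates and $h_k^{(0)}\notin\langle S\rangle$, the component $v_{k2}(C^{(2)})$ must be non-trivial; the irreducibility of $P$ is not needed for this point (though it is for $\az(P^n)=\az(P)$ when $n\ne 0$).
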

\begin{proof}
Proof is analogous to the proof of  Lemma \ref{lem:singatau0ind}
\end{proof}

We summarise the results of this section in the following

\begin{prop} \label{prop:sing}
Let $\Omega=\langle \Upsilon, \Re_{\Upsilon}\rangle$ be a generalised equation singular or strongly singular with respect to a periodic structure and let $\Re_\Upsilon$ be completed. Then, there exists a graph tower $(\Ts, \HH)$ and a homomorphism $\tau$ from $G_{\Omega}$ to $\Ts$ such that for all solutions of the fundamental sequence, there exists a homomorphism from $\Ts$ to $\GG$ that makes Diagram {\rm(\ref{diag:period})} commutative.
\end{prop}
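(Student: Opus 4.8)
The plan is to prove the proposition by assembling the case analysis carried out in Section~\ref{sec:10} for a fundamental branch passing through a vertex of type $2$, checking in each case that the constructed triple $(\Ts_0,\HH_0,\tau_0)$ is a genuine graph tower satisfying the induction hypotheses IH, IH1, IH2. Fix the fundamental sequence; every solution in it is $P$-periodic for the relevant period $P$, and $\Omega=\Omega_{v_0}$ is singular or strongly singular with respect to the periodic structure $\langle\mathcal P,R\rangle$ of Definition~\ref{defn:singreg}. I would split into three cases: (i) $\langle\mathcal P,R\rangle$ strongly singular; (ii) singular of type (a), i.e. $|C^{(2)}|\ge 2$; (iii) singular of type (b), i.e. $|C^{(2)}|=1$ with a distinguished nontrivial cycle $\cc_{e_0}$ with $h(e_0)\notin\mathcal P$.

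In case (i) one sets $\Ts_0=\Ts_1$, $\HH_0=\HH_1$ and $\tau_0=\pi(v_0,v_1)\tau_1$; by \cite[Lemma 6.17]{CKpc} every homomorphism of the fundamental sequence factors through $G_{R(\Omega_1)}$, so Diagram~(\ref{diag:period}) commutes and IH, IH1, IH2 are inherited from $(\Ts_1,\HH_1)$. In case (ii) one puts $\KK=\BA_{\HH_1}(w(x^{(1)}))$, where $w(x^{(1)})$ is the preimage in $\HH_1$ of the image $\varphi_0(h^{(0)}(\cc))$ of the distinguished cycle $\cc\in C^{(2)}$, builds $\Gamma_0$ from $\Gamma_1$ by adjoining $n=|C^{(2)}|$ mutually $c$-commuting vertices $x_1^{(0)},\dots,x_n^{(0)}$ joined by $d$-edges to $\az(\KK)$ (and by $c$-edges to $\az(\KK^\perp)$ when $\KK^\perp$ is abelian), takes $\HH_0=\GG(\Gamma_0)$, and defines $\Ts_0$ to be the quotient of $\HH_0$ by $S_1$ together with the relations $[x_i^{(0)},C_{\Ts_1}(\varphi_0(h^{(0)}(\cc)))]=1$. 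Then $\Ts_0$ is an abelian floor (type b1 of Lemma~\ref{lem:prtower}) precisely because $\KK$ is $E_d(\Gamma_1)$-co-irreducible, which is Lemma~\ref{lem:singaKcool}; the map $\tau_0$, defined on the generating set of $G_{\Omega_0}$ attached to the periodic structure, extends to a homomorphism by Lemma~\ref{lem:singatauhom}; commutativity of Diagram~(\ref{diag:period}) is Lemma~\ref{lem:singadiagcom}; and IH1, IH2 are Lemmas~\ref{lem:qH0ind1} and~\ref{lem:singatau0ind}, using Remark~\ref{rem:blockperiod} that $P$ is a cyclically reduced irreducible root element of $\GG_{\KK^\perp}$.

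Case (iii) proceeds identically with a single new vertex $x^{(0)}$ attached to $\cc_{e_0}$: one first observes that $\pi(v_0,v_1)(h^{(0)}(\cc_{e_0}))\ne 1$ in $G_{R(\Omega_1)}$ (since $\cc_{e_0}\in\langle C^{(1)}\rangle$ and $G_{R(\Omega_0)}$ is separated by $\GG$), then repeats the construction of $\Gamma_0$, $\HH_0$, $\Ts_0$, $\tau_0$ and invokes Lemmas~\ref{lem:singbKcool}, \ref{lem:singbtauhom}, \ref{lem:singbdiagcom}, \ref{lem:singbH0ind} and~\ref{lem:singbtau0ind}. Setting $(\Ts,\HH)=(\Ts_0,\HH_0)$ and $\tau=\tau_0$ then gives the statement.

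The main obstacle is the co-irreducibility of $\KK$ (Lemma~\ref{lem:singaKcool} and its type-(b) twin Lemma~\ref{lem:singbKcool}), which is what makes the floor legitimate rather than merely a presentation. The argument I would run: a direct decomposition $\KK^\perp=\KK_1\times\KK_2$ over $E_d(\Gamma_1)$ would, through a discriminating family and IH1, force ${H^{(1)}}'(w(x^{(1)}))=P^m\in\GG_{\KK_1}\times\GG_{\KK_2}$; but along an infinite branch periodised with respect to $\langle\mathcal P,R\rangle$ one can realise solutions with $H^{(0)}(\cc_2)\doteq P^{m'}$ for $m'$ arbitrarily large, so $P$ would produce unboundedly many $\GG_{\KK_1}\times\GG_{\KK_2}$-alternations inside a subword of a word in DM-normal form (Remark~\ref{rem:recall}(3)), contradicting Lemma~\ref{lem:pc}. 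Hence $P$ is an irreducible root element, $\KK^\perp$ is $E_d(\Gamma_1)$-directly indecomposable, and the floor is valid. Everything else is bookkeeping with the periodised generating set of \cite[Chapter 6]{CKpc}; the periodic/singular-structure machinery does the real work, and this proposition is its packaging.
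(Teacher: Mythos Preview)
Your proposal is correct and follows exactly the paper's approach: the proposition is stated in the paper as a summary of the preceding case analysis (strongly singular; singular of type (a); singular of type (b)), with no separate proof given, and you have correctly identified and invoked each of the relevant lemmas (\ref{lem:singatauhom}--\ref{lem:singatau0ind} for type (a), \ref{lem:singbtauhom}--\ref{lem:singbtau0ind} for type (b), and \cite[Lemma~6.17]{CKpc} for the strongly singular case). Your sketch of the co-irreducibility argument also matches the paper's Lemma~\ref{lem:singaKcool}.
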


\subsection{Regular periodic structures}

By \cite[Lemma 6.19]{CKpc},  given a generalised equation $\Omega=\langle \Upsilon, \Re_\Upsilon\rangle$ with no boundary connections, periodised with respect to a connected regular periodic structure $\langle \mathcal P,R \rangle$ and any periodic solution $H$ of $\Omega$ such that $\mathcal P(H,P)= \langle \mathcal P,R\rangle$ and so that $H$ is minimal with respect to the trivial group of automorphisms, either for all $k$, $1 \le k \le \rho$ we have $|H_k| \le 2\rho |P|$, or there exists a cycle $\cc \in \pi_1(\Gamma,v_{\Gamma})$ so that $H(\cc) = P^n$, where $1 \le n \le 2\rho$.

Suppose first that the fundamental sequence falls under the conditions of the first assumption, i.e. all the items from the periodic structure are of bounded length. Then, in a bounded number of steps $c$, all the items, bases and sections from the periodic structure will be transferred onto the sections that do not belong to the periodic structure. Let $\Omega_c$ be the corresponding generalised equation. Then, every solution of the fundamental sequence is a solution of $\Omega_c$. In this case, the graph tower for $\Omega$, is the graph tower associated to $\Omega_c$.

Suppose now that the fundamental sequence falls under the conditions of the second assumption. Then, there exists a cycle $\cc \in \pi_1(\Gamma,v_{\Gamma})$ so that $H(\cc) = P^n$, where $1 \le n \le 2\rho$. Let $w=\varphi_0(h(\cc))$  be the image of $h(\cc)$ in $\Ts_1$, $w=w(y^{(1)})$ and consider the word $w(x^{(1)})$ in $\HH_1$. Set  $\KK$ to be $\BA_{\HH_1}(w(x^{(1)}))$.

Let $x^{(0)}=\{x_1^{(0)}, \dots, x_{\m}^{(0)} \}$. Define the graph $\Gamma_0$ as follows. If the canonical parabolic subgroup $\KK^\perp$ is non-abelian, set:
\begin{itemize}
\item $V(\Gamma_0)=V(\Gamma_1) \cup \{x_1^{(0)},\dots, x_\m^{(0)} \}$; 
\item $E_c(\Gamma_0)=E_c(\Gamma_1)\cup \{ (x_i^{(0)},x_j^{(0)})\mid 1\le i<j\le \m)\}$ and 
\item $E_d(\Gamma_0)=E_d(\Gamma_1)\cup \{ (x_i^{(0)}, x_j^{(1)}) \mid \hbox{ for all } x_j^{(1)} \in \KK, i=1,\dots,\m\}$.
\end{itemize} 
If the canonical parabolic subgroup $\KK^\perp$ is abelian, set:
\begin{itemize}
\item $V(\Gamma_0)=V(\Gamma_1) \cup \{x_1^{(0)},\dots, x_\m^{(0)} \}$; 
\item $E_c(\Gamma_0)=E_c(\Gamma_1)\cup \{ (x_i^{(0)},x_j^{(0)})\mid 1\le i<j\le \m\} \cup \{(x_i^{(0)},x_j^{(1)})\mid x_j^{(1)}\in \KK^\perp, 1\le i\le \m\}$ and 
\item $E_d(\Gamma_0)=E_d(\Gamma_1)\cup \{ (x_i^{(0)}, x_j^{(1)}) \mid \hbox{ for all } x_j^{(1)} \in \KK, i=1,\dots,\m\}$.
\end{itemize} 

Then, the group $\HH_0$ is defined to be $\GG(\Gamma_0)$.

Let $S_0'$ be the set of relations defined as follows. For every equation $u_{ie}^{h(e_i)} =z_{ie}$ (we use notation from \cite[Chapter 6]{CKpc}), we write
$$
[x_i^{(0)}, C_{\Ts_1}(\varphi_0(u_{ie}))]=1,\ i=1, \dots, \m
$$
and set $\Ts_0$ to be the quotient $\factor{\HH_0}{S_0}$, where $S_0=S_1 \cup S_0'$.

Recall that, by definition of the generating set $\bar x$ associated to the periodic structure and since the generalised equation is periodised, any $h_k^{(0)}\in \sigma, \sigma \in \mathcal P$ such that $h_k^{(0)}=h(e)$, $e\notin T$, $e:v\to v'$ is expressed in the generating set $\{h(e) \mid e\in T\} \cup \{h^{(0)}(C^{(1)}), h^{(0)}(C^{(2)})\}$ as 
$$
h_k^{(0)}=h^{(0)}(\p(v_0,v))^{-1} v_{k1}(C^{(1)}) v_{k2}(C^{(2)})h^{(0)}(\p(v_0, v'))
$$
for some $v_{k1}(C^{(1)})\in \langle h^{(0)}(C^{(1)})\rangle$ and some $v_{k2}(C^{(2)}) \in \langle h^{(0)}(C^{(2)})\rangle$. Furthermore, the paths $\p(v_0,v)$ and $\p(v_0,v')$ can be represented as $\p_1e_{i_1}\p_2\dots \p_r e_{i_r} \p_{r+1}$ and $\p_1' e_{j_1}\p_2'\dots \p_{r'}' e_{j_{r'}} \p_{r'+1}'$, correspondingly, where $\p_s, \p_{s'}'$ are paths in $T_0$, $1\le s \le r+1$, $1\le s' \le r'+1$ and $e_{i_t}, e_{j_{t'}} \in \{e_1,\dots, e_\m\}$, $1\le t\le r$, $1\le t' \le r'$.

\begin{lem}
The map $\tau_0'$
$$
h_k^{(0)}\mapsto 
\left\{ 
\begin{array}{lll}
\varphi_0(h_k^{(0)}), & \hbox{for all $h_k^{(0)} \in \sigma, \sigma \notin \mathcal P$ and $h_k^{(0)} \in T \cap \Sh$,}\\
\varphi_0(h^{(0)}(\cc)), & \hbox{for all $\cc \in C^{(1)}\cup C^{(2)}$,}\\
x_i^{(0)}\varphi_0(h(e_i)), & \hbox{for all $h_k^{(0)}=h(e_i)$, $i=1,\dots,\m$,}
\end{array}
\right.
$$
induces a map $\tau_0$ from the set $\{h^{(0)}\}$ which extends to a homomorphism from $G_{\Omega_0}$ to $\Ts_0$.
\end{lem}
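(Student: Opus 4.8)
The plan is to follow the pattern established for the earlier cases, in particular the proofs of Lemmas \ref{lem:qhom}, \ref{lem:singatauhom} and \ref{lem:singbtauhom}. Recall from \cite[Chapter 6]{CKpc} and the summary at the beginning of this section that, since $\Omega_0$ is periodised with respect to the (connected, regular) periodic structure $\langle \mathcal P, R\rangle$, in the generating set associated to $\langle \mathcal P, R\rangle$ the group $G_{\Omega_0}$ is presented by the relations $u_{ie}^{h(e_i)}=z_{ie}$, $[u_{ie_1},u_{ie_2}]=1$, $[h(\cc_1),h(\cc_2)]=1$ (for $\cc_1,\cc_2\in C^{(1)}\cup C^{(2)}$), a system $\Psi$ in which neither $h(e_i)$, $1\le i\le \m$, nor $h(C^{(2)})$ occurs, together with the commutation relations coming from $\Re_{\Upsilon_0}$. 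The elements $u_{ie},z_{ie}$ and the cycles $h^{(0)}(\cc)$ are words not involving $h(e_1),\dots,h(e_\m)$, so on every generator of this set other than $h(e_1),\dots,h(e_\m)$ the map $\tau_0$ coincides with the homomorphism $\varphi_0=\pi(v_0,v_1)\tau_1\colon G_{\Omega_0}\to\Ts_1$, whereas $\tau_0(h(e_i))=x_i^{(0)}\varphi_0(h(e_i))$. In particular $\tau_0$ already restricts to a homomorphism on the subgroup generated by the generators fixed by $\varphi_0$.

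First I would dispose of the relations that do not involve $h(e_1),\dots,h(e_\m)$: the system $\Psi$, the commutators $[u_{ie_1},u_{ie_2}]=1$ and (since $\tau_0=\varphi_0$ on cycles) the relations $[h(\cc_1),h(\cc_2)]=1$ are relations of $G_{\Omega_0}$, hence are sent to $1$ because $\varphi_0$ is a homomorphism. For the conjugation relations $u_{ie}^{h(e_i)}=z_{ie}$ I would use the basic relations $[x_i^{(0)},C_{\Ts_1}(\varphi_0(u_{ie}))]=1$ of $S_0'$: since $\varphi_0(u_{ie})$ centralises itself we get $[x_i^{(0)},\varphi_0(u_{ie})]=1$ in $\Ts_0$, so that
$$
\tau_0(u_{ie})^{\tau_0(h(e_i))}=\varphi_0(u_{ie})^{x_i^{(0)}\varphi_0(h(e_i))}=\varphi_0(u_{ie})^{\varphi_0(h(e_i))}=\varphi_0(u_{ie}^{h(e_i)})=\varphi_0(z_{ie})=\tau_0(z_{ie}).
$$

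It then remains to check that $\tau_0$ kills every commutation relation $[h_i^{(0)},h_j^{(0)}]$ with $\Re_{\Upsilon_0}(h_i^{(0)},h_j^{(0)})$. Since $\tau_0$ agrees with the homomorphism $\varphi_0$ off the generators $h(e_1),\dots,h(e_\m)$, only the case in which one of the two items, say $h_j^{(0)}$, lies in a section of $\mathcal P$ needs a genuine argument, and here I would argue exactly as in Lemma \ref{lem:singatauhom}. For a $P$-periodic solution $H^{(0)}$ one has $\az(H_k^{(0)})\subseteq\az(P)$ for every $h_k^{(0)}$ in a section of $\mathcal P$, so $\Re_{\Upsilon_0}(h_i^{(0)},h_j^{(0)})$ with $h_j^{(0)}\in\mathcal P$ forces $h_i^{(0)}$ to lie in a section not in $\mathcal P$; completeness of $\Re_{\Upsilon_0}$ (Remark \ref{rem:completed}) together with the fact that $\langle\mathcal P,R\rangle$ is regular (in particular, not strongly singular) then yields $\Re_{\Upsilon_0}(h_i^{(0)},h_k^{(0)})$ for all $h_k^{(0)}$ in a section of $\mathcal P$. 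Propagating these constraints through $\pi(v_0,v_1)$ (properties of the elementary transformations) and invoking the induction hypothesis IH2 on $\tau_1$ and IH1 on $\HH_1$, one concludes that $\varphi_0(h_i^{(0)})$ disjointly commutes with $\varphi_0(u_{ie})$ and with $\varphi_0(h(\cc))$, hence $\varphi_0(h_i^{(0)})\in\KK=\BA_{\HH_1}(w(x^{(1)}))$. By construction of $E_d(\Gamma_0)$ this gives $[\varphi_0(h_i^{(0)}),x_i^{(0)}]=1$ in $\HH_0$, so in $\Ts_0$; since $\tau_0(h_i^{(0)})=\varphi_0(h_i^{(0)})$ and $\tau_0(h_j^{(0)})$ is a word in the $\varphi_0$-generators and in the (pairwise commuting) $x_i^{(0)}$, it follows that $\tau_0([h_i^{(0)},h_j^{(0)}])=1$.

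The main obstacle is precisely this last step: as in the quadratic case (Lemma \ref{lem:qhom}) and the singular case (Lemma \ref{lem:singatauhom}), one must carry the commutation constraints of $\Omega_0$ forward through the elementary transformations and then translate them into membership in $\KK$ via IH1 and IH2. The remaining bookkeeping — identifying which original items $h^{(0)}$ correspond to the new generators $\bar t$, $\{h(e): e\in T\cap\Sh\}$, $u_{ie}$, $z_{ie}$, $h(C^{(1)})$, $h(C^{(2)})$, and checking that $\tau_0$ acts on the cycles $u_{ie},z_{ie}$ exactly as $\varphi_0$ does — is routine and parallel to the earlier lemmas. Once $\tau_0$ is known to be a homomorphism, the statements that it makes Diagram (\ref{diag:period}) commute and that $\HH_0$, $\tau_0$ satisfy IH1 and IH2 will follow, as in Lemmas \ref{lem:singadiagcom}, \ref{lem:singaKcool}, \ref{lem:qH0ind1} and \ref{lem:singatau0ind}, together with the observation that $\KK^\perp$ is directly indecomposable or $E_c(\Gamma_1)$-free abelian by Lemma \ref{lem:typesKperp}.
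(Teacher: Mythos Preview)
Your proposal is correct and follows essentially the same approach as the paper's proof: identify the generating set from \cite[Lemma 6.14]{CKpc}, observe that $\tau_0$ agrees with $\varphi_0$ on all generators except $h(e_1),\dots,h(e_\m)$, dispose of the relations living in that subgroup via $\varphi_0$, kill the conjugation relations $u_{ie}^{h(e_i)}=z_{ie}$ using $[x_i^{(0)},\varphi_0(u_{ie})]=1$, and finally handle the $\Re_{\Upsilon_0}$-commutators by showing $\varphi_0(h_i^{(0)})\in\KK$ via completeness of $\Re_{\Upsilon_0}$, propagation through $\pi(v_0,v_1)$, and the induction hypotheses IH1 and IH2. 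The only cosmetic difference is that the paper phrases the reduction as ``$h_j^{(0)}\in\mathcal P$'' rather than ``$h_j^{(0)}$ in a section of $\mathcal P$'', which is the sharper statement since $\tau_0=\varphi_0$ on all items $h_k^{(0)}\notin\mathcal P$ (even those lying in a section of $\mathcal P$); your argument still goes through, but you may want to tighten this.
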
 
\begin{proof}
By \cite[Lemma 6.14]{CKpc}, the set 
$$
\{h_k^{(0)}\mid h_k^{(0)} \in \sigma, \sigma \notin \mathcal P\} \cup \{h_k^{(0)} \mid h_k^{(0)} \in T \cap \Sh\} \cup h^{(0)}(C^{(1)}) \cup h^{(0)}(C^{(2)}) \cup \{h^{(0)}(e_1),\dots, h^{(0)}(e_\m )\} 
$$
is a generating set of $G_{\Omega_0}$.

Notice that the map $\tau_0$ on the set 
$$
S=\{h_k^{(0)}\mid h_k^{(0)} \in \sigma, \sigma \notin \mathcal P\} \cup \{h_k^{(0)} \mid h_k^{(0)} \in T\cap \Sh \} \cup h^{(0)}(C^{(1)}) \cup h^{(0)}(C^{(2)})
$$ 
coincides with the composition of the epimorphism $\pi(v_0,v_1)$ and the homomorphism $\tau_1$, and so it extends to a homomorphism on the subgroup $\langle S\rangle$ of  $G_{\Omega_0}$. In particular, the map $\tau_0$ coincides with $\varphi_0$ on the subgroup generated by all the items $h_k^{(0)}\notin \mathcal P$.

By \cite[Lemma 6.14]{CKpc}, since $\Omega_0$ is periodised, i.e. $[h^{(0)}(\cc_1), h^{(0)}(\cc_2)]=1$ for all cycles $\cc_1, \cc_2 \in C^{(1)}\cup C^{(2)}$, it follows that all the words $h^{(0)}(\mu)^{-\varepsilon(\mu)} h^{(0)}(\Delta(\mu))^{\varepsilon(\Delta(\mu))}$ belong to the subgroup generated by $S$ and so 
$$
\tau_0(h^{(0)}(\mu)^{-\varepsilon(\mu)} h^{(0)}(\Delta(\mu))^{\varepsilon(\Delta(\mu))})=1. 
$$
We note that the words $[h^{(0)}(\cc_1), h^{(0)}(\cc_2)]$ are trivial elements in the subgroup generated by $S$ for all cycles $\cc_1, \cc_2 \in C^{(1)}\cup C^{(2)}$. Therefore, in order to show that the map  $\tau_0$ extends to a homomorphism from $G_{\Upsilon_0}$ to $\Ts_0$ it suffices to show that $\tau_0(u_{ei}^{h^{(0)}(e_i)}z_{ei}^{-1})=1$ in $\Ts_0$, $1\le i \le \m$. Indeed, 
$$
\tau_0(u_{ei}^{h^{(0)}(e_i)}z_{ei}^{-1})=\varphi_0(u_{ei})^{x_i^{(0)}\varphi_0(h^{(0)}(e_i))}\varphi_0(z_{ei}^{-1}).
$$
Since from the definition of $S_0$, $[x_i^{(0)},\varphi_0(u_{ei})]=1$, it follows that 
$$
\tau_0(u_{ei}^{h^{(0)}(e_i)}z_{ei}^{-1})=\varphi_0(u_{ei}^{h^{(0)}(e_i)}z_{ei}^{-1})=1.
$$

To prove that $\tau_0$ extends to a homomorphism from $G_{\Omega_0}$ to $\Ts_0$, it is left to show that $\tau_0([h_i^{(0)}, h_j^{(0)}])=1$, for all $h_i^{(0)}, h_j^{(0)}$ such that $\Re_{\Upsilon_0}(h_i^{(0)}, h_j^{(0)})$. In fact, since the map extends to a homomorphism on the subgroup generated by $S$ we only need to check that $\tau_0([h_i^{(0)}, h_j^{(0)}])=1$, for all $h_j^{(0)}\in \mathcal P$ so that $\Re_{\Upsilon_0}(h_i^{(0)}, h_j^{(0)})$. For every $P$-periodic solution $H^{(0)}$, we have that $\az(H_i^{(0)}) \subset \az(H_j^{(0)})$, for all $h_i^{(0)} \in  \sigma, \sigma \in \mathcal P$, $h_j^{(0)} \in \mathcal P$. Recall that if $\Re_{\Upsilon_0}(h_i^{(0)}, h_j^{(0)})$, then we have that $H_i^{(0)} \lra H_j^{(0)}$ for all solutions $H^{(0)}$ of $\Omega_0$. Therefore, if $\Re_{\Upsilon_0}(h_i^{(0)}, h_j^{(0)})$ and $h_j^{(0)} \in \mathcal P$, then $h_i^{(0)} \in \sigma, \sigma \notin \mathcal P$. 

Furthermore, since the periodic structure is not strongly singular and the set $\Re_{\Upsilon_0}$ is completed, it follows that for all $h_j^{(0)} \in \mathcal P$ and for every $h_i^{(0)}$ such that $\Re_{\Upsilon_0}(h_i^{(0)}, h_j^{(0)})$,  one has that $\Re_{\Upsilon_0}(h_i^{(0)}, h_k^{(0)})$, for all $h_j^{(0)} \in \sigma, \sigma \in \mathcal P$.

By properties of the elementary transformations, we have that $\Re_{\Upsilon_1}(h_r^{(1)}, h_s^{(1)})$, for all the items $h_r^{(1)}$ which appear in the word $\pi(v_0,v_1)(h_i^{(0)})$ and all items $h_s^{(1)}$ from the word $\pi(v_0,v_1)(h_k^{(0)})$, $h_k^{(0)}\in \sigma, \sigma \in \mathcal P$. By the induction hypothesis IH2 on $\tau_1$, for any $y_m^{(1)}$ which appears in the word $\varphi_0(h_i^{(0)})$, any $y_n^{(1)}$ from the word $\varphi_0(h_k^{(0)})$ and any solution $H^{(1)}$ of $\Omega_1$ from the fundamental sequence,  we have that $\BA({H^{(1)}}'(y_m^{(1)})) \supset \BA(H^{(0)}_i)$ and $\BA({H^{(1)}}'(y_n^{(1)})) \supset \BA(H_k^{(0)})$, hence ${H^{(1)}}'(y_m^{(1)}) \lra {H^{(1)}}'(y_n^{(1)})$. By the induction hypothesis IH1 on $\HH_1$, we have that $(x_n^{(1)},x_m^{(1)})\in E_d(\Gamma_1)$ and so $\varphi_0(h_i^{(0)}) \lra \varphi_0(h_k^{(0)})$ in $\HH_1$, for all $h_k^{(0)}\in \sigma,\sigma \in \mathcal P$ and, hence, in particular, $\varphi_0(h_i^{(0)})$ belongs to the subgroup $\KK$ of $\HH_1$. Therefore, if $\Re_{\Upsilon_0}(h_i^{(0)}, h_j^{(0)})$ and $h_j^{(0)}\in \mathcal P$, then $\tau_0([h_i^{(0)}, h_j^{(0)}])=1$.
\end{proof}

\begin{lem}\label{lem:regdiagcom}
The homomorphism $\tau_0:G_{\Omega_0} \to \Ts_0$ makes Diagram {\rm (\ref{diag:period})} commutative.
\end{lem}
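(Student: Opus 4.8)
The plan is to adapt the proof of Lemma~\ref{lem:singadiagcom} to the generating set and relations attached to a regular periodic structure. In the first of the two cases considered above (all items of the periodic structure of bounded length), the tower $\Ts_0$ is the tower already constructed for the generalised equation $\Omega_c$ obtained after transferring the periodic sections, and every solution of the fundamental sequence is a solution of $\Omega_c$; so the commutativity of Diagram~(\ref{diag:period}) is inherited from the inductive construction for $\Omega_c$, and we may assume we are in the second case, where $S_0=S_1\cup S_0'$ with $S_0'$ the set of relations $[x_i^{(0)},C_{\Ts_1}(\varphi_0(u_{ie}))]=1$.

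Fix a solution $H^{(0)}$ of the fundamental sequence. By the results of \cite[Chapter 6]{CKpc} there are a solution $H^{(1)}$ of $\Omega_1$ from the fundamental sequence and an automorphism $\phi$ associated to the vertex $v_0$ (of the shape described in \cite[Lemma 6.14]{CKpc}) with $H^{(0)}=\phi\,\pi(v_0,v_1)\,H^{(1)}$; moreover $\phi$ fixes the subgroup $\langle S\rangle$ generated by the items lying in sections not in $\mathcal P$, the tree items $h^{(0)}(e)$ with $e\in T\cap\Sh$, and the cycles $h^{(0)}(C^{(1)}\cup C^{(2)})$, so that $H^{(0)}(h_i^{(0)})=H^{(1)}(\pi(v_0,v_1)(h_i^{(0)}))$ there. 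By the induction hypothesis IH there is a homomorphism ${H^{(1)}}'\colon\Ts_1\to\GG$ making Diagram~(\ref{diag:period}) commute for $\Omega_1$. I would then set ${H^{(0)}}'$ on $\Ts_1\le\Ts_0$ to be ${H^{(1)}}'$ and put ${H^{(0)}}'(x_i^{(0)})=H^{(0)}(h^{(0)}(e_i))\,{H^{(1)}}'(\varphi_0(h^{(0)}(e_i)))^{-1}$ for $i=1,\dots,\m$.

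The first step is to check that ${H^{(0)}}'$ extends to a homomorphism $\Ts_0\to\GG$, i.e. that it kills $S_0'$. Since $H^{(0)}$ is $P$-periodic and, by the argument of Lemma~\ref{lem:singaKcool} together with Remark~\ref{rem:blockperiod}, the period $P$ is a cyclically reduced irreducible root element, the cycles $u_{ie},z_{ie}$ have $\GG$-images that are powers of $P$, and ${H^{(1)}}'(\varphi_0(u_{ie}))=H^{(1)}(\pi(v_0,v_1)(u_{ie}))=H^{(0)}(u_{ie})=P^{k_{ie}}$; in particular $C_\GG({H^{(1)}}'(\varphi_0(u_{ie})))=C_\GG(P)$. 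On the other hand, by the description of the automorphisms of a regular periodic structure, $H^{(0)}(h^{(0)}(e_i))$ and $H^{(1)}(\pi(v_0,v_1)(h^{(0)}(e_i)))={H^{(1)}}'(\varphi_0(h^{(0)}(e_i)))$ differ by a power of $P$, hence ${H^{(0)}}'(x_i^{(0)})$ is a power of $P$; since $P$ is a root block element, every power of $P$ lies in, and is central in, $C_\GG(P)$, which gives ${H^{(0)}}'([x_i^{(0)},C_{\Ts_1}(\varphi_0(u_{ie}))])=1$. Thus ${H^{(0)}}'$ is well defined on $\Ts_0$.

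It then remains to verify $\tau_0{H^{(0)}}'=H^{(0)}$ on the generating set of $G_{\Omega_0}$ from \cite[Lemma 6.14]{CKpc}. On the items not in $\mathcal P$, on the tree items of $T\cap\Sh$, and on the cycles $h^{(0)}(\cc)$, $\cc\in C^{(1)}\cup C^{(2)}$, one has $\tau_0=\varphi_0$ and ${H^{(0)}}'\varphi_0={H^{(1)}}'\varphi_0=H^{(1)}\pi(v_0,v_1)=H^{(0)}$ by the previous paragraph; on the generators $h^{(0)}(e_i)$ one computes ${H^{(0)}}'(\tau_0(h^{(0)}(e_i)))={H^{(0)}}'(x_i^{(0)})\,{H^{(0)}}'(\varphi_0(h^{(0)}(e_i)))=H^{(0)}(h^{(0)}(e_i))$ directly from the definition of ${H^{(0)}}'(x_i^{(0)})$. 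Since, by construction of the fundamental branch, $H^{(0)}$ factors through $\pi_0$ and $G_{R(\Omega_0)}$, all of Diagram~(\ref{diag:period}) commutes. The main obstacle is precisely the well-definedness step: extracting from the relations $u_{ie}^{h^{(0)}(e_i)}=z_{ie}$ and the explicit form of the regular-periodic automorphisms that ${H^{(0)}}'(x_i^{(0)})$ is a power of $P$ and hence commutes with $C_\GG(P)$; as in Lemma~\ref{lem:singaKcool}, this ultimately rests on the irreducibility of $P$, on Remark~\ref{rem:recall} (solutions on closed sections are subwords of DM-normal form words), and on the alternation bound of Lemma~\ref{lem:pc}. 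The rest is bookkeeping parallel to Lemmas~\ref{lem:singadiagcom} and~\ref{lem:qdiagcom}.
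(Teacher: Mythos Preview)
Your proof is correct and is exactly the ``analogous'' argument the paper has in mind (the paper's own proof consists solely of the sentence ``Proof is analogous to the one of Lemma~\ref{lem:singadiagcom}''). You have correctly identified the adaptation required: in the regular case the new generators $x_i^{(0)}$ are attached to the long tree edges $h^{(0)}(e_i)$ rather than to cycles in $C^{(2)}$, so the definition ${H^{(0)}}'(x_i^{(0)})=H^{(0)}(h^{(0)}(e_i))\,{H^{(1)}}'(\varphi_0(h^{(0)}(e_i)))^{-1}$ and the verification that this is a power of $P$ (via the explicit form of the regular-periodic automorphisms, which left-multiply $h^{(0)}(e_i)$ by cycles) is precisely what is needed to kill the relations $S_0'$ and the new commutation relations of $\Gamma_0$.
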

\begin{proof}
Proof is analogous to the one of Lemma \ref{lem:singadiagcom}.
\end{proof}

\begin{lem}\label{lem:regKcool}
The subgroup $\KK=\BA(w(x^{(1)}))$ of $\HH_1$ is $E_d(\Gamma_0)$-\cool.
\end{lem}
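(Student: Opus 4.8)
The plan is to prove Lemma \ref{lem:regKcool} by following verbatim the structure of the proof of Lemma \ref{lem:singaKcool}, since the subgroup $\KK = \BA(w(x^{(1)}))$ here is defined in exactly the same way, the only difference being the origin of the word $w$: in the singular case $w = \varphi_0(h^{(0)}(\cc))$ for a cycle $\cc \in C^{(2)}$, while in the regular case $w = \varphi_0(h(\cc))$ for a cycle $\cc \in \pi_1(\Gamma, v_\Gamma)$ realising $H(\cc) = P^n$, $1 \le n \le 2\rho$. First I would invoke Remark \ref{rem:discfam} to obtain a canonical parabolic subgroup $\GG_\KK < \GG$ with $\langle \az({H^{(0)}}'(\KK))\rangle = \GG_\KK$ for a fundamental sequence of solutions, and then use the induction hypothesis IH1 (applied to the tower $(\Ts_1, \HH_1)$) to conclude, exactly as in Lemma \ref{lem:singaKcool}, that $\langle \az({H^{(1)}}'(w(x^{(1)})))^\perp\rangle = \GG_\KK$. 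Since $\KK$ is closed by definition ($\KK = \KK^{\perp\perp}$), the only thing left is to show $\KK^\perp$ is $E_d(\Gamma_1)$-directly indecomposable.

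Next I would argue by contradiction: if $\KK^\perp = \KK_1 \times \dots \times \KK_r$ with $r > 1$ with respect to $E_d(\Gamma_1)$, reduce to $r = 2$, and by Remark \ref{rem:discfam} get $\GG_{\KK_i} < \GG$ with ${H^{(0)}}'(\KK_i) = \GG_{\KK_i}$ and, by IH1, $\GG_{\KK_1} \lra \GG_{\KK_2}$. From $\langle \az({H^{(1)}}'(w(x^{(1)})))^\perp\rangle = \GG_\KK$ we get ${H^{(1)}}'(w(x^{(1)})) \in \GG_\KK^\perp < \GG_{\KK_1}\times\GG_{\KK_2}$, which forces at least one $\GG_{\KK_1}\times\GG_{\KK_2}$-alternation in ${H^{(1)}}'(w(x^{(1)}))$. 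The key input specific to the regular case is that solutions of the fundamental sequence are $P$-periodic and $H(\cc) = P^n$, so ${H^{(1)}}'(w(x^{(1)})) = P^n$ and hence the period $P$ itself contains a $\GG_{\KK_1}\times\GG_{\KK_2}$-alternation. Then, using the description of solutions of generalised equations periodised with respect to a periodic structure from \cite[Chapter 6]{CKpc}, for every $m \in \BN$ there is a solution $H^{(0)}$ of $\Omega_0$ with $H^{(0)}(\cc') \doteq P^{m'}$, $m' \ge m$, for an appropriate cycle $\cc'$ (here I would use the cycle $\cc$ itself, or one of the cycles $\cc_{e_i}$ arising in the regular periodic structure — I need to pick one whose image can be made arbitrarily long, which is precisely what the second alternative of \cite[Lemma 6.19]{CKpc} provides). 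This yields at least $m'$ many $\GG_{\KK_1}\times\GG_{\KK_2}$-alternations in a word which, by Remark \ref{rem:recall}(3), is a subword of a word in the DM-normal form, contradicting Lemma \ref{lem:pc}, which bounds the number of alternations by a constant depending only on $\GG$. Therefore $r = 1$, so $P$ is a cyclically reduced irreducible root element and $\KK^\perp$ is $E_d(\Gamma_1)$-directly indecomposable, as required.

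I expect the main (and in fact only real) obstacle to be a bookkeeping issue rather than a conceptual one: namely making sure that in the regular periodic structure setting there genuinely exists a cycle whose image under the discriminating family of solutions can be taken to be $P^{m'}$ with $m'$ unbounded, so that the "unbounded alternations" argument goes through. In the singular case of type (a) this was the defining property of $C^{(2)}$; in the regular case one must extract this from the second alternative of \cite[Lemma 6.19]{CKpc} (the case where $H(\cc) = P^n$) together with the structure of solutions periodised with respect to $\langle \mathcal P, R\rangle$. Once that is pinned down, the rest is a word-for-word transcription of the proof of Lemma \ref{lem:singaKcool}, so I would keep the write-up short: state the reduction, cite Remark \ref{rem:discfam}, IH1, Remark \ref{rem:recall}, Lemma \ref{lem:pc} and \cite[Chapter 6]{CKpc}/\cite[Lemma 6.19]{CKpc}, and conclude. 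In fact, the cleanest exposition is simply: \emph{Proof is analogous to the proof of Lemma \ref{lem:singaKcool}, using the second alternative of \cite[Lemma 6.19]{CKpc} to produce arbitrarily long $P$-periodic images, together with Lemma \ref{lem:pc}.}
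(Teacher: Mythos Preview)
Your proposal is correct and takes exactly the same approach as the paper: the paper's entire proof reads ``Proof is analogous to the one of Lemma \ref{lem:singaKcool}'', and you have correctly reconstructed how that analogy runs, including the one genuine adaptation (locating an element of the periodic structure whose $P$-periodic image has unbounded exponent so that Lemma \ref{lem:pc} yields the contradiction). Your closing sentence is, in fact, precisely the paper's proof.
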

\begin{proof}
Proof is analogous to the one of Lemma \ref{lem:singaKcool}
\end{proof}

\begin{lem}\label{lem:regH0ind}
The group $\HH_0$ satisfies the induction hypothesis {\rm IH1}: for every $x_i,x_j \in \HH_0$, we have that $(x_i,x_j) \in E_d(\Gamma_0)$ if and only if $H'(x_i) \lra H'(x_j)$ for all homomorphisms $H'$ induced by solutions $H$ from the fundamental sequence. Furthermore, if $(x_i,x_j) \in E_c(\Gamma_0)$, then $H'(x_i) ,H'(x_j)$ belong to a cyclic subgroup for all homomorphisms $H'$ induced by solutions $H$ from the fundamental sequence.
\end{lem}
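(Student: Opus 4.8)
The plan is to transcribe the proof of Lemma~\ref{lem:qH0ind1}, adjusting it to the generators $x_1^{(0)},\dots,x_{\m}^{(0)}$ and the graph $\Gamma_0$ attached in the regular case. Throughout, ${H^{(0)}}'\colon\Ts_0\to\GG$ is the homomorphism built (as in the proof of Lemma~\ref{lem:regdiagcom}, which follows Lemma~\ref{lem:singadiagcom}) from a solution $H^{(0)}=\phi\,\pi(v_0,v_1)\,H^{(1)}$ of the fundamental sequence; it restricts to ${H^{(1)}}'$ on $\Ts_1$, and each ${H^{(0)}}'(x_i^{(0)})$ is the power of the period $P$ recording the periodic block $h^{(0)}(e_i)$. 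I will use from Lemma~\ref{lem:regKcool} (and the analogue of Remark~\ref{rem:blockperiod} obtained along the lines of Lemma~\ref{lem:singaKcool}) that $P$ is a cyclically reduced irreducible root element of $\GG_{\KK^\perp}=\GG_\KK^\perp$, and that ${H^{(1)}}'(w(x^{(1)}))=H^{(1)}(\pi(v_0,v_1)(h(\cc)))$ is itself a nontrivial power of $P$.

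First, if $x_i,x_j\in\HH_1<\HH_0$ the statement is exactly IH1 for $\HH_1$. Next I would treat the new $c$-edges: by construction $(x_i^{(0)},x_j^{(0)})\in E_c(\Gamma_0)$ for all $1\le i<j\le\m$, and since every solution of the fundamental sequence is $P$-periodic, all the ${H^{(0)}}'(x_i^{(0)})$ are powers of $P$, hence lie in the cyclic subgroup $\langle\sqrt{P}\rangle$. If moreover $\KK^{\perp}$ is abelian, then $(x_i^{(0)},x_j^{(1)})\in E_c(\Gamma_0)$ for every $x_j^{(1)}\in\KK^{\perp}$; IH1 for $\HH_1$ gives that $\langle{H^{(0)}}'(\KK^{\perp})\rangle=\langle{H^{(1)}}'(\KK^{\perp})\rangle$ is cyclic, and as $P$ is irreducible and lies in $\langle\az({H^{(0)}}'(\KK^{\perp}))\rangle$, both ${H^{(0)}}'(x_i^{(0)})$ and ${H^{(0)}}'(x_j^{(1)})$ lie in a common cyclic subgroup. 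This settles the $E_c$-part.

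For the $d$-edges, both implications must be checked. If $(x_i^{(0)},x_j^{(1)})\in E_d(\Gamma_0)$ then $x_j^{(1)}\in\KK=\BA_{\HH_1}(w(x^{(1)}))$, so by IH1 for $(\Ts_1,\HH_1)$ we have ${H^{(1)}}'(w(x^{(1)}))\lra{H^{(1)}}'(\KK)$; since ${H^{(0)}}'(x_i^{(0)})$ and ${H^{(1)}}'(w(x^{(1)}))$ are both powers of $P$, disjoint commutation with one is the same as with the other, whence ${H^{(0)}}'(x_i^{(0)})\lra{H^{(0)}}'(x_j^{(1)})$. Conversely, suppose $x_j^{(1)}\in\HH_1$ and ${H^{(0)}}'(x_j^{(1)})\lra{H^{(0)}}'(x_i^{(0)})$ for some $i$; then ${H^{(0)}}'(x_j^{(1)})\lra{H^{(1)}}'(w(x^{(1)}))$, again because both ${H^{(0)}}'(x_i^{(0)})$ and ${H^{(1)}}'(w(x^{(1)}))$ are powers of $P$, so IH1 for $\HH_1$ forces $x_j^{(1)}\in\BA_{\HH_1}(w(x^{(1)}))=\KK$ and hence $(x_j^{(1)},x_i^{(0)})\in E_d(\Gamma_0)$. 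Finally, if $x_j^{(1)}\in\HH_1\setminus(\KK\cup\KK^{\perp})$ then $(x_i^{(0)},x_j^{(1)})\notin E(\Gamma_0)$, and by the contrapositive just proved ${H^{(0)}}'(x_j^{(1)})\not\lra{H^{(0)}}'(x_i^{(0)})$; this exhausts all pairs and proves IH1 for $\HH_0$.

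The main obstacle is not the logical structure, which is a line-by-line copy of Lemma~\ref{lem:qH0ind1}, but pinning down the claim that ${H^{(0)}}'(x_i^{(0)})$ is exactly a nontrivial power of $P$. This depends on the explicit generating set $\{h^{(0)}(e_i)\}$ produced by a regular periodic structure in \cite[Chapter 6]{CKpc}, on the definition of ${H^{(0)}}'$ fixed in the proof of Lemma~\ref{lem:regdiagcom}, and on Lemma~\ref{lem:regKcool}, which guarantees that $P$ is a cyclically reduced irreducible root element of $\GG_{\KK^\perp}$ and so behaves, with respect to disjoint commutation, exactly like ${H^{(1)}}'(w(x^{(1)}))$; once this is granted, each assertion above reduces to the corresponding step in the proof of Lemma~\ref{lem:qH0ind1} with $n$ replaced by $\m$ and $\cc_{2i}$ replaced by the cycle carried by $e_i$.
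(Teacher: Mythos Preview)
Your proposal is correct and follows exactly the approach the paper intends: the paper's own proof of Lemma~\ref{lem:regH0ind} consists of the single sentence ``Proof is analogous to the one of Lemma~\ref{lem:qH0ind1}'', and you have carried out precisely that analogy, with the correct substitutions ($n\rightsquigarrow\m$, the generators $x_i^{(0)}$ now indexed by the edges $e_1,\dots,e_\m$ rather than by $C^{(2)}$, and ${H^{(0)}}'(x_i^{(0)})$ realised as a power of $P$ via the automorphisms of \cite[Lemma~6.14]{CKpc}). Your closing caveat about pinning down ${H^{(0)}}'(x_i^{(0)})$ as a power of $P$ is well placed, and the justification you sketch---through Lemma~\ref{lem:regdiagcom} and Lemma~\ref{lem:regKcool}---is the intended one; note only that since $P$ is already a root element you may write $\langle P\rangle$ rather than $\langle\sqrt{P}\rangle$.
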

\begin{proof}
Proof is analogous to the one of Lemma \ref{lem:qH0ind1}
\end{proof}

\begin{lem}\label{lem:regtau0ind}
The homomorphism $\tau_0$ satisfies the induction hypothesis {\rm IH2}: for all $h_i^{(0)}$, if $\tau_0(h_i^{(0)})=x_{i1}\dots x_{ik}$, then for a fundamental sequence of solutions we have that $\BA({H^{(0)}}'(x_{ij})) \supset \BA(H_i^{(0)})$ and $\bigcap \limits_{j=1}^k \BA({H^{(0)}}'(x_{ij}))= \BA(H_i^{(0)})$.
\end{lem}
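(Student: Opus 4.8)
The plan is to adapt the proof of Lemma~\ref{lem:singatau0ind} to the regular case, the role now being played, instead of a single $C^{(2)}$-cycle, by the tree edges $e_1,\dots,e_\m$ carrying items of $\mathcal P$, and the relations $[x_i^{(0)},C_{\Ts_1}(\varphi_0(u_{ie}))]=1$ replacing $[x^{(0)},C_{\Ts_1}(\varphi_0(h^{(0)}(\cc)))]=1$. First I would recall from \cite[Lemma~6.14]{CKpc} that, as $\Omega_0$ is periodised with respect to the regular periodic structure, $\tau_0$ restricted to the subgroup of $G_{\Omega_0}$ generated by
$$
S=\{h_k^{(0)}\mid h_k^{(0)}\in\sigma,\ \sigma\notin\mathcal P\}\cup\{h_k^{(0)}\mid h_k^{(0)}\in T\cap\Sh\}\cup h^{(0)}(C^{(1)})\cup h^{(0)}(C^{(2)})
$$
coincides with $\varphi_0=\tau_1\pi(v_0,v_1)$, while every item $h_k^{(0)}$ lying on a section of $\mathcal P$ has the standard expression $h_k^{(0)}=h^{(0)}(\p(v_0,v))^{-1}v_{k1}(C^{(1)})v_{k2}(C^{(2)})h^{(0)}(\p(v_0,v'))$, with $\p(v_0,v)$, $\p(v_0,v')$ factored through $T_0$-segments $\p_s$ and edges $e_{i_t}\in\{e_1,\dots,e_\m\}$, each of the latter contributing the syllable $x_{i_t}^{(0)}\varphi_0(h^{(0)}(e_{i_t}))$ under $\tau_0$.

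For an item $h_i^{(0)}\in\langle S\rangle$ the argument is the first half of the proof of Lemma~\ref{lem:singatau0ind}: since $\tau_0(h_i^{(0)})=\tau_1(\pi(v_0,v_1)(h_i^{(0)}))$ and solutions of generalised equations are graphical equalities, writing $\pi(v_0,v_1)(h_i^{(0)})=h_{i_1}^{(1)}\cdots h_{i_m}^{(1)}$ gives $\BA(H^{(1)}(h_{i_j}^{(1)}))\supseteq\BA(H_i^{(0)})$ for every $j$, and the assertion descends from IH2 for $\tau_1$. For an item $h_k^{(0)}$ on a section of $\mathcal P$ I would use that the solutions of the fundamental sequence are $P$-periodic and that $h_k^{(0)}$ is long, so $\az(H_k^{(0)})=\az(P)$ and hence $\BA(H_k^{(0)})=\BA(P)$ (the period $P$ being cyclically reduced). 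Then the syllables of $\tau_0(h_k^{(0)})$ are of two kinds: images $\varphi_0(\cdot)$ of items on sections of $\mathcal P$ (using $H^{(0)}(\cc)=H^{(1)}(\pi(v_0,v_1)(h^{(0)}(\cc)))$ for cycles), whose $\BA$-subgroup contains $\BA(P)$ by IH2 for $\tau_1$ together with the fact that these items have alphabet contained in $\az(P)$; and variables $x_i^{(0)}$, for which the relations $[x_i^{(0)},C_{\Ts_1}(\varphi_0(u_{ie}))]=1$ and the irreducibility of the root element $P\in\GG_{\KK^\perp}$ (see the proof of Lemma~\ref{lem:regKcool}) force ${H^{(0)}}'(x_i^{(0)})$ to be a power of $P$, so again $\BA({H^{(0)}}'(x_i^{(0)}))\supseteq\BA(P)$. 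This yields $\BA({H^{(0)}}'(x_{ij}))\supseteq\BA(H_i^{(0)})$ for every syllable; and since the reduced product of all these factors equals $H_k^{(0)}$, whose alphabet is all of $\az(P)$, any letter lying in $\bigcap_j\BA({H^{(0)}}'(x_{ij}))$ must avoid $\az(P)$ and commute with $H_k^{(0)}$, hence lie in $\BA(P)$; therefore $\bigcap_j\BA({H^{(0)}}'(x_{ij}))=\BA(H_k^{(0)})$.

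The step I expect to be the main obstacle is showing that ${H^{(0)}}'(x_i^{(0)})$ is a \emph{non-trivial} power of $P$ for a suitably chosen fundamental sequence of solutions: one has to combine the defining relations in $S_0'$ with the normal form of centralisers of irreducible root elements to see that the double-centraliser constraint pins ${H^{(0)}}'(x_i^{(0)})$ to $\langle P\rangle$, and then --- exactly as in Remarks~\ref{rem:discrfam} and~\ref{rem:K} --- one must pass to a subfamily of the discriminating family on which the exponent is non-zero (otherwise $\BA({H^{(0)}}'(x_i^{(0)}))$ would be all of $\GG$ and the intersection could be too large). Once this is in place the argument runs as for Lemmas~\ref{lem:singatau0ind} and~\ref{lem:singbtau0ind}: only the bookkeeping of the $T_0$-path factorisation, and the observation that every syllable produced by $\varphi_0$ comes from an item on a section of $\mathcal P$, hence with alphabet contained in $\az(P)$, still need to be checked.
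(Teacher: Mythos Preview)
Your adaptation of Lemma~\ref{lem:singatau0ind} is correct and matches the paper's intended argument (the paper simply defers to that lemma). The division into items lying in $\langle S\rangle$ (handled through $\varphi_0=\tau_1\pi(v_0,v_1)$ and IH2 for $\tau_1$) and items $h_k^{(0)}\in\mathcal P$ (handled via the explicit form of $\tau_0$ involving the syllables $x_i^{(0)}\varphi_0(h(e_i))$) is exactly right, and your product argument for the intersection is clean.

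Two small corrections. First, the reason ${H^{(0)}}'(x_i^{(0)})$ is a power of $P$ is not the relation $[x_i^{(0)},C_{\Ts_1}(\varphi_0(u_{ie}))]=1$ in $\Ts_0$; it is the \emph{definition} of ${H^{(0)}}'$ given in Lemma~\ref{lem:regdiagcom}: since $\tau_0(h(e_i))=x_i^{(0)}\varphi_0(h(e_i))$ and the diagram commutes, one has ${H^{(0)}}'(x_i^{(0)})=H^{(0)}(h(e_i))\cdot\bigl(H^{(1)}(\pi(v_0,v_1)(h(e_i)))\bigr)^{-1}$, and both factors lie in $\langle\az(P)\rangle$ with the appropriate periodicity, forcing the quotient into $\langle P\rangle$. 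Second, and more importantly, the ``main obstacle'' you flag --- that ${H^{(0)}}'(x_i^{(0)})$ be a \emph{non-trivial} power of $P$ --- is not actually needed, and you do not need to pass to a subfamily. Your own product argument already closes the gap: if $a\in\bigcap_j\BA({H^{(0)}}'(x_{ij}))$ then $a$ disjointly commutes with every factor, hence with their product ${H^{(0)}}'(\tau_0(h_k^{(0)}))=H_k^{(0)}$, and $a\notin\bigcup_j\az({H^{(0)}}'(x_{ij}))\supseteq\az(H_k^{(0)})$, so $a\in\BA(H_k^{(0)})$. This works even if some ${H^{(0)}}'(x_i^{(0)})=1$, because $\BA(1)=\GG$ contributes nothing to the intersection constraint, while the remaining syllables (coming from $\varphi_0(h(e_i))$, which evaluates to a word with alphabet $\az(P)$ by $P$-periodicity and $h(e_i)\in\mathcal P$) already pin the intersection down to $\BA(P)$. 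So you may drop the final paragraph of your proposal entirely.
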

\begin{proof}
Proof is analogous to the one of Lemma \ref{lem:singatau0ind}
\end{proof}

We summarize the results of this section in the following
\begin{prop} \label{prop:reg}
Let $\Omega=\langle \Upsilon, \Re_{\Upsilon}\rangle$ be a generalised equation regular with respect to a periodic structure and let $\Re_\Upsilon$ be completed. Then, there exists a graph tower $(\Ts, \HH)$ and a homomorphism $\tau$ from $G_{\Omega}$ to $\Ts$ such that for all solutions of the fundamental sequence, there exists a homomorphism from $\Ts$ to $\GG$ that makes Diagram {\rm(\ref{diag:period})} commutative.
\end{prop}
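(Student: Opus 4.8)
This proposition is the summary of the constructions already carried out in this subsection, so the proof consists of assembling the preceding lemmas and splitting according to the two behaviours of a minimal periodic solution recorded in \cite[Lemma 6.19]{CKpc}. Recall that either every item of the periodic structure $\langle\mathcal P,R\rangle$ has length bounded by $2\rho|P|$ under every minimal periodic solution, or there is a cycle $\cc\in\pi_1(\Gamma,v_\Gamma)$ with $H(\cc)=P^n$, $1\le n\le 2\rho$, for all such solutions.

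In the bounded case, the plan is to show that the periodic structure is absorbed after a bounded number $c$ of entire transformations: all items, bases and sections of $\mathcal P$ are then transferred onto sections outside $\mathcal P$, producing a generalised equation $\Omega_c$ through which every solution of the fundamental sequence factors via $\pi(v_0,v_c)$. Since the vertex carrying the periodic structure has disappeared, $\Omega_c$ is handled by a strictly shorter fundamental branch, so the induction hypotheses IH, IH1 and IH2 apply to $\Omega_c$, furnishing a graph tower $(\Ts_c,\HH_c)$ and a homomorphism $\tau_c\colon G_{\Omega_c}\to\Ts_c$. Setting $(\Ts_0,\HH_0)=(\Ts_c,\HH_c)$ and $\tau_0=\pi(v_0,v_c)\tau_c$ transports the commutativity of Diagram {\rm(\ref{diag:period})} and the hypotheses IH1, IH2 back to $\Omega_0$, using that $\pi(v_0,v_c)$ respects solutions of the fundamental sequence and satisfies the requirement needed for IH2.

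In the remaining case, the plan is to take the objects $\Gamma_0$, $\HH_0=\GG(\Gamma_0)$, $\Ts_0=\factor{\HH_0}{S_0}$ and $\tau_0$ built above from the cycle $\cc$ and to verify that they form a graph tower with the required properties. By Lemma \ref{lem:regKcool} the subgroup $\KK=\BA_{\HH_1}(w(x^{(1)}))$ is $E_d(\Gamma_1)$-\cool and Lemma \ref{lem:typesKperp} then shows that $\KK^\perp$ is directly indecomposable or $E_c(\Gamma_1)$-abelian; together with Remark \ref{rem:blockperiod}, which exhibits the period $P$ as a cyclically reduced irreducible root element of $\GG_{\KK^\perp}$, this identifies the graph $\Gamma_0$ together with the relations $S_0=S_1\cup S_0'$ as an abelian-type floor in the sense of Lemma \ref{lem:prtower}, so that $(\Ts_0,\HH_0)$ is indeed a graph tower of height one more than $(\Ts_1,\HH_1)$. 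The map $\tau_0$ is a well-defined homomorphism by the construction lemma preceding Lemma \ref{lem:regdiagcom}; it makes Diagram {\rm(\ref{diag:period})} commute by Lemma \ref{lem:regdiagcom}, which provides, for each solution $H^{(0)}$ of the fundamental sequence, the homomorphism $\Ts_0\to\GG$ extending ${H^{(1)}}'$ on the subgroup $\Ts_1$; and it satisfies IH1 and IH2 by Lemmas \ref{lem:regH0ind} and \ref{lem:regtau0ind}. Combining the two cases yields the proposition, and in fact the stronger statement that the new graph tower again satisfies IH, IH1 and IH2, as needed to continue the induction of Section \ref{sec:10}.

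The main obstacle is the bounded-length case: one must argue carefully that the periodic structure really is absorbed in boundedly many steps and that $\Omega_c$ sits strictly lower in the induction, so that the argument is not circular; in the unbounded case the substantive work has already been done in Lemma \ref{lem:regKcool}, which relies on the alternation bound of Lemma \ref{lem:pc} applied to words in DM-normal form, the remainder being the bookkeeping of checking the induction hypotheses.
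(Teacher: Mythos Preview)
Your proposal is correct and matches the paper's approach: Proposition \ref{prop:reg} is stated in the paper simply as a summary of the subsection, with no separate proof, and your write-up assembles precisely the ingredients the paper has laid out --- the dichotomy from \cite[Lemma 6.19]{CKpc}, the reduction to $\Omega_c$ in the bounded case, and Lemmas \ref{lem:regKcool}, \ref{lem:regdiagcom}, \ref{lem:regH0ind}, \ref{lem:regtau0ind} together with the homomorphism lemma preceding them in the unbounded case.

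One small caveat on the bounded case: your phrasing ``$\Omega_c$ is handled by a strictly shorter fundamental branch'' is not quite what the paper asserts. The $c$ entire transformations taking $\Omega_0$ to $\Omega_c$ are typically isomorphisms on coordinate groups, so the length $q$ of the fundamental branch need not drop. The paper's point is rather that the periodic structure has been transferred off the active part, so $\Omega_c$ no longer carries a regular periodic structure and the analysis continues (in the context of Case~15, one moves to the next periodic section $[1,i_{l+1}]$, and the induction there is on the number $k$ of such sections, not on $q$). You correctly flag this as the delicate point; just be aware that the termination is governed by the ambient Case~15 induction rather than by the branch-length induction directly.
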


\subsection{Case 15}

Suppose that the infinite branch is of type 15. Let $\Omega_0 \to \Omega_1 \to\dots\to \Omega_N$ be the branch of the tree $T_0(\Omega_{0})$ such that all the epimorphisms $\pi(v_i,v_{i+1}): G_{R(\Omega_i)}\to G_{R(\Omega_{i+1})}$ are isomorphisms, $i=0,\dots, N-2$ and $\pi(v_{N-1},v_{N}):G_{R(\Omega_{N-1})} \to G_{R(\Omega_N)}$ is a proper epimorphism. It follows by construction of the tree $T(\Omega_0)$, see \cite{CKpc}, that one can subdivide the branch $\Omega_0 \to \Omega_1 \to\dots\to \Omega_N$ as follows. Choose $0<n_0<n_1<\dots<n_k=N$ so that
\begin{itemize}
\item the quadratic part of $\Omega_j$ is non-trivial for all $j<n_0$ and $\Omega_{n_0}$ has trivial quadratic part, i.e. in the process the quadratic part of $\Omega_0$ is transferred to the non-quadratic part of $\Omega_{n_0}$ and the generalised equation $\Omega_{n_0}$ is the first one in the branch with trivial quadratic part;
\item every solution $H$ of the fundamental sequence is $P_l$-periodic in the section $[1,i_l]$ of the generalised equation $\Omega_{n_l}$ and $\Omega_{n_l}$ is periodised with respect to a regular periodic structure $\P(H,P_l)$. Furthermore, the section $[1,i_l]$ is transferred in the process from $\Omega_{n_l}$ to $\Omega_{n_{l+1}}$, $k=0,\dots, N-1$.
\end{itemize}

We now use induction on $n_k$ to show that there exists a graph tower $\Ts_0$  and homomorphisms making Diagram (\ref{diag:period}) commutative for all solutions of a fundamental sequence.

Suppose first that $k=1$. Then, the existence of the graph tower $\Ts_0$ for $G_{\Omega_0}$ and the required homomorphisms follows by Proposition \ref{prop:quad}. Assume now that $k>1$. By construction, declaring the section $[1,i_{k-1}]$ to be the active section of $\Omega_{n_{k-1}}$, the generalised equation $\Omega_{n_{k-1}}$ is periodised with respect to a regular periodic structure $\P(H,P_{k-1})$. Hence, by Proposition \ref{prop:reg}, there exists a graph tower $\Ts_{k-1}$ for $\Omega_{n_{k-1}}$ that satisfies the required properties. This proves the base of induction. 

Assume by induction that there exists a graph tower $\Ts_2$ for $G_{\Omega_{n_2}}$. Then, again using Proposition \ref{prop:reg}, we conclude that there exists a graph tower $\Ts_1$ and the required homomorphisms for $G_{\Omega_{n_1}}$.

Finally, since there exists a tower $\Ts_1$ for $G_{\Omega_{n_1}}$, by Proposition \ref{prop:quad}, there exists a graph tower $\Ts_0$ satisfying the required properties for the group $G_{\Omega_{n_0}}$. 

We arrive at the following
\begin{prop} \label{prop:case15}
Let $\Omega=\langle \Upsilon, \Re_{\Upsilon}\rangle$ be a generalised equation of the infinite branch of type $15$ and let $\Re_\Upsilon$ be completed. Then, there exist a graph tower $(\Ts, \HH)$ and a homomorphism $\tau$ from $G_{\Omega}$ to $\Ts$ such that for all solutions of the fundamental sequence, there exists a homomorphism from $\Ts$ to $\GG$ that makes Diagram {\rm(\ref{diag:period})} commutative.
\end{prop}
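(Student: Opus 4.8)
The plan is to prove Proposition~\ref{prop:case15} by following exactly the inductive scheme that the surrounding exposition has set up: a branch of type~$15$ decomposes, by construction of the tree $T(\Omega_0)$ in \cite{CKpc}, into finitely many ``quadratic blocks'' (where the quadratic part is non-trivial and eventually transferred to the non-quadratic part) alternating with finitely many ``periodic blocks'' (where a section becomes periodised with respect to a regular periodic structure and is then transferred). Concretely, fix $0<n_0<n_1<\dots<n_k=N$ as in the itemized description above; the goal is to peel these blocks off one at a time, starting from the group $G_{\Omega_N}$ at the deep end of the branch (which by induction on the length of the fundamental sequence already carries a graph tower $(\Ts_N,\HH_N)$ and a homomorphism $\tau_N$ satisfying IH, IH1, IH2), and building up a graph tower for $G_{\Omega_0}$.

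First I would treat the outermost periodic block: declaring $[1,i_{k-1}]$ to be the active section of $\Omega_{n_{k-1}}$, this generalised equation is periodised with respect to a \emph{regular} periodic structure $\mathcal P(H,P_{k-1})$, so Proposition~\ref{prop:reg} applies and produces a graph tower $(\Ts_{k-1},\HH_{k-1})$ together with $\tau_{k-1}:G_{\Omega_{n_{k-1}}}\to\Ts_{k-1}$ making Diagram~(\ref{diag:period}) commute for all solutions of the fundamental sequence, and (by the lemmas in that subsection) satisfying IH1 and IH2. Then I would alternate: given a graph tower for $G_{\Omega_{n_{l}}}$ satisfying the induction hypotheses, Proposition~\ref{prop:reg} (for the periodic block between $n_{l-1}$ and $n_l$) or Proposition~\ref{prop:quad} (for a quadratic block of type~$12$, in particular the innermost one between $n_0$ and $n_1$, and the block between $\Omega_0$ and $\Omega_{n_0}$ where the quadratic part of $\Omega_0$ is transferred) yields a graph tower for $G_{\Omega_{n_{l-1}}}$ --- and crucially, the statements of Propositions~\ref{prop:quad} and~\ref{prop:reg} as proved above already carry the induction hypotheses IH, IH1, IH2 forward, so the alternation can be continued without re-checking them. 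Iterating $k+1$ times (a finite number, since the branch $\Omega_0\to\dots\to\Omega_N$ is finite and there are only finitely many distinct generalised equations in it, cf.\ \cite[Lemma 7.12]{CKpc}) lands on a graph tower $(\Ts_0,\HH_0)$ and a homomorphism $\tau_0=\tau:G_{\Omega_0}\to\Ts_0$. Composing the commutative diagrams at each stage --- each solution of the fundamental sequence of $G_{\Omega_{n_{l-1}}}$ restricts, via an automorphism from the appropriate automorphism group and the relevant canonical epimorphism $\pi(v_i,v_{i+1})$, to a solution of the fundamental sequence of $G_{\Omega_{n_l}}$, so the induced homomorphisms $H'$ assemble --- gives, for every solution of the fundamental sequence of $G_{\Omega_0}$, a homomorphism from $\Ts$ to $\GG$ making Diagram~(\ref{diag:period}) commute. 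This is precisely the assertion of the proposition.

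The one point that needs a word of care --- and which I expect to be the only real obstacle beyond bookkeeping --- is that the blocks must genuinely alternate in the way claimed and that at the junction between a periodic block and a quadratic block, the graph tower and homomorphism produced by (say) Proposition~\ref{prop:reg} really do serve as a legitimate ``$\Ts_1$'' input for Proposition~\ref{prop:quad} (and vice versa). This is where one must invoke the structural facts recalled just before the subsection: by construction of the tree $T_0(\Omega_0)$ in \cite{CKpc}, between consecutive $n_l$'s the epimorphisms $\pi(v_i,v_{i+1})$ are isomorphisms except for the last one, which is proper, so no information is lost in passing between the $\Omega_{n_l}$, and the ``completed'' hypothesis on $\Re_\Upsilon$ (Remark~\ref{rem:completed}) propagates along the branch so that the hypotheses of Propositions~\ref{prop:quad}, \ref{prop:reg}, \ref{prop:sing} are met at every stage. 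Since each application of these three propositions is proved above to preserve IH, IH1, IH2, and since the number of stages is finite, the induction closes.

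\begin{proof}
By construction of the tree $T(\Omega_0)$, see \cite[Chapter 7]{CKpc}, the finite branch $\Omega_0 \to \Omega_1 \to \dots \to \Omega_N$ of type $15$ subdivides at levels $0<n_0<n_1<\dots<n_k=N$ as described above: for $j<n_0$ the quadratic part of $\Omega_j$ is non-trivial and $\Omega_{n_0}$ is the first generalised equation of the branch with trivial quadratic part; and for each $l=0,\dots,k-1$, declaring $[1,i_l]$ to be the active section, $\Omega_{n_l}$ is periodised with respect to a regular periodic structure $\mathcal P(H,P_l)$ and the section $[1,i_l]$ is transferred between $\Omega_{n_l}$ and $\Omega_{n_{l+1}}$. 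Since there are only finitely many distinct generalised equations in the branch (see \cite[Lemma 7.12]{CKpc}), $k$ is finite. By Remark~\ref{rem:completed} we may assume throughout that all the sets $\Re_{\Upsilon_i}$ are completed.

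We argue by downward induction along the levels $n_k,\dots,n_0,0$, constructing at each stage a graph tower together with a homomorphism making Diagram~(\ref{diag:period}) commutative for all solutions of the fundamental sequence and satisfying the induction hypotheses IH, IH1 and IH2. At the deepest level $n_k=N$, by the induction hypothesis on the length of the fundamental sequence (which is strictly shorter for $G_{\Omega_N}$ since $\pi(v_{N-1},v_N)$ is a proper epimorphism), there is a graph tower $(\Ts_N, \HH_N)$ and a homomorphism $\tau_N : G_{\Omega_N} \to \Ts_N$ with the required properties.

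Now suppose that, for some $1 \le l \le k$, a graph tower $(\Ts_{n_l}, \HH_{n_l})$ and a homomorphism $\tau_{n_l} : G_{\Omega_{n_l}} \to \Ts_{n_l}$ satisfying IH, IH1 and IH2 have been constructed. Between levels $n_{l-1}$ and $n_l$ the epimorphisms $\pi(v_i,v_{i+1})$ are isomorphisms except for the last one; declaring $[1,i_{l-1}]$ to be the active section, $\Omega_{n_{l-1}}$ is periodised with respect to a regular periodic structure. Hence, by Proposition~\ref{prop:reg}, there exists a graph tower $(\Ts_{n_{l-1}}, \HH_{n_{l-1}})$ and a homomorphism $\tau_{n_{l-1}} : G_{\Omega_{n_{l-1}}} \to \Ts_{n_{l-1}}$ such that for all solutions of the fundamental sequence there is a homomorphism from $\Ts_{n_{l-1}}$ to $\GG$ making Diagram~(\ref{diag:period}) commute, and, by the lemmas proved in that subsection, the induction hypotheses IH1 and IH2 are satisfied. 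Applying this for $l=k,k-1,\dots,1$ in turn yields, after $k$ steps, a graph tower $(\Ts_{n_0}, \HH_{n_0})$ and a homomorphism $\tau_{n_0}$ with the required properties.

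Finally, between levels $0$ and $n_0$ the quadratic part of $\Omega_0$ is transferred to the non-quadratic part; the infinite branch through $\Omega_0$ is of type $12$ in this range. Since a graph tower for $G_{\Omega_{n_0}}$ satisfying IH, IH1 and IH2 has been constructed, Proposition~\ref{prop:quad} produces a graph tower $(\Ts, \HH) = (\Ts_0, \HH_0)$ and a homomorphism $\tau = \tau_0 : G_{\Omega_0} \to \Ts$ such that for all solutions of the fundamental sequence there is a homomorphism from $\Ts$ to $\GG$ making Diagram~(\ref{diag:period}) commute. This is precisely the assertion of the proposition.

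For the commutativity claim, observe that each solution of the fundamental sequence of $G_{\Omega_{n_{l-1}}}$ is the composition of an automorphism from the automorphism group associated to the corresponding vertex, a canonical epimorphism $\pi(v_i, v_{i+1})$, and a solution of the fundamental sequence of $G_{\Omega_{n_l}}$ (respectively of $G_{\Omega_{n_0}}$ and then of $G_{\Omega_1}$). Hence the homomorphisms from the graph towers to $\GG$ provided by the successive applications of Propositions~\ref{prop:reg} and~\ref{prop:quad} assemble into a single homomorphism from $\Ts$ to $\GG$, which makes Diagram~(\ref{diag:period}) commute for the given solution.
\end{proof}
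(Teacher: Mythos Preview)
Your proposal is correct and follows essentially the same approach as the paper: subdivide the type-$15$ branch at the levels $n_0<\dots<n_k=N$, use the outer induction hypothesis (on the length of the fundamental sequence) to start with a graph tower for $G_{\Omega_N}$, peel off the regular periodic blocks one by one via Proposition~\ref{prop:reg} in a downward induction on $l$, and finish with the quadratic block between $\Omega_0$ and $\Omega_{n_0}$ via Proposition~\ref{prop:quad}. Your write-up is, if anything, slightly more explicit than the paper's about carrying the induction hypotheses IH, IH1, IH2 through each stage and about how the commutative diagrams assemble.
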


\subsection{Linear case}

Suppose that the infinite branch is of the type 7-10. Then, by \cite[Lemma 7.10]{CKpc}, there exists a generalised equation that repeats in the branch infinitely many times. Without loss of generality, we may assume that $\Omega$ is the generalised equation that repeats infinitely many times.

In the case of free groups, if an item is covered only once, one can delete the item together with the base that covers it and its dual. On the level of groups, this corresponds to a Tietze transformation: deletion of a generator and a relation in which this generator occurs only once. Repeating this operation, one eliminates all the ``eliminable'' bases and obtains the, so called, kernel $\Ker(\Omega)$ of the corresponding generalised equation $\Omega$, see \cite{Razborov2, KhMNull}. One can then show that $F_{R(\Omega)}=F_{R(\Ker(\Omega))}*F(Z)$, see \cite{Razborov2, KhMNull}.

In our case, if an item is covered only once and one deletes the item along with the base that covers it and its dual, then one looses the ``partially commutative structure'' in the following sense. The item $h_i$ which has been deleted together with a relation $h_i=w_i=w_i(h_1,\dots, h_{i-1}, h_{i+1}, \dots h_{\rho-1})$, has commutation constraints associated to it, hence $h_i$ appears in the commutation relations. Therefore, the presentation we obtain after deleting the generator $h_i$ and the relation $h_i=w_i$, contains relations of the form $[h_j, w_i]$ (note that, originally, all the commutation relations are commutation of the generators, hence, the term ``partially commutative structure''), see \cite[Section 4.3]{CKpc} for further discussion. It follows that, one cannot use these Tietze transformations to exhibit the splitting.

By analogy with the quadratic case, one could try to study the dynamics of the infinite branch and try to gain control on the constraints and (at best) conclude that the relations $[w_i,h_j]$ are consequences of the other relations, i.e. that for each $h_k$ that appears in $w_i$, there is a relation $[h_k,h_j]$.

Unfortunately, the dynamics of the linear branch is much harder than that of the quadratic case and we did not manage to advance in this direction (in fact, as we will see from the approach we are about to present, this might not be a sensible option).

Our approach is to study this case by ``forcing'' it to behave as the cases we know how to deal with: the quadratic case and the general case. In order to carry out this new analysis of the branch, we do the contrary of what is done in the free group case: we use Tietze transformations to \emph{introduce} new (non-active) generators and relations.  Our goal is to transform the generalised equation  to a new one, where every item is covered at least twice and then study the obtained generalised equation as in the quadratic and general cases. 

As one might expect, such a naive attempt does not (directly) work, as, in general, after some transformations, the generalised equation can fall again into one of the types 7-10. 

At this point, tribes again  play a crucial role. We show that one can ensure that every time the generalised equation falls into one of the types 7-10, the minimal tribe of the generalised equation \emph{strictly} increases. Since $\GG$ is finitely generated,  there exists a constant $N(\GG)$ that bounds any sequence of strictly dominating tribes. Furthermore, if at some point, the minimal tribe of a generalised equation turns out to be the maximal tribe, then all the items of the generalised equation, in fact, belong to this maximal tribe. In this case, we can proceed as in the free group case and eliminate the item and the corresponding relation, since the commutation relations $[w_i,h_k]$ are indeed a consequence of the relations $[h_j,h_k]$ for all $h_j$ which occur in $w_i$.

Hence, after introducing fictitious bases to make the generalised equation of type 7-10 into a generalised equation where every item is covered at least twice and running the process for the obtained generalised equation, one of the following alternatives occurs:
\begin{enumerate}
\item[A.] the process terminates in finitely many steps;
\item[B.] the obtained generalised equation defines an infinite branch of type 12 or 15, thus we can analyse it;
\item[C.] after finitely many steps, the generalised equation is again transformed into a generalised equation of the type 7-10, but the minimal tribe of the latter generalised equation strictly dominates the minimal tribe of the original generalised equation.
\end{enumerate}
We can have case C at most $N(\GG)$ times. Hence, after finitely many instances of case C, one has one of the cases A, B or the generalised equation is again of type 7-10, but we can perform the Tietze transformation, eliminate the item $h_i$ together with the relation $h_i=w_i$ and preserve the ``partially commutative structure''. All these alternatives can be analysed and hence the linear case can be tackled.

We now turn our attention to the formal analysis of the linear case. We define a new generalised equation $\Omega^a=\langle \Upsilon^a, \Re_{\Upsilon^a}\rangle$ constructed from $\Omega=\langle \Upsilon, \Re_{\Upsilon}\rangle$ as follows. For every item $h_i$ which is covered exactly once, we introduce 
\begin{itemize}
\item a base $\lambda_{i}$, so that $\varepsilon(\lambda_i)=1$, $\alpha(\lambda_i)=i$, $\beta(\lambda_i)=i+1$,
\item an item $h_{d_i}$ (along with the boundary and re-enumerate all the boundaries as appropriate) in the non-active part of $\Omega$ and,
\item the dual base $\Delta(\lambda_i)$ of $\lambda_i$ so that $\varepsilon(\Delta(\lambda_i))=1$, $\alpha(\Delta(\lambda_i))=d_i$ and $\beta(\Delta(\lambda_i))=d_i+1$. 
\end{itemize}

We set $\Re_{\Upsilon^a}(h_{d_i})=\Re_\Upsilon(h_i)$. We call the bases $\lambda_{i}$ \emph{auxiliary}. Denote the obtained generalised equation by $\Omega^a$. 

It is obvious that the group $G_{\Omega^a}$ is isomorphic to $G_{\Omega}$. Furthermore, $\tp(\Omega^a)\ge 11$. We apply the process to the generalised equation $\Omega^a$ and follow the branch of the fundamental sequence (recall that, by Lemma \ref{lem:discfam}, there is only one such branch). 

Assume first that one of the auxiliary bases $\lambda_i$ is a carrier base of the generalised equation $\Omega_{M(\lambda_i)}$. In this case, we $\mu$-tie the boundary $\beta(\lambda_i)$ in all the bases $\mu$, $\mu\ne \lambda_i$ that contains $\beta(\lambda_i)$ and close the section $\sigma(\lambda_i)=[1,\beta(\lambda_i)]$. 

We now apply $\D 5$ to the obtained generalised equation and, using $\lambda_i$ as the carrier, we transfer all the bases from $\sigma(\lambda_i)$ to the non-active part. Notice that, since the base $\lambda_i$ is the carrier and the section $\sigma(\lambda_i)$ is closed, the pair of dual bases $(\lambda_i, \Delta(\lambda_i))$ is removed and the number of auxiliary bases in the active part of the obtained generalised equation has decreased. The non-active section $\sigma(\Delta(\lambda_i))$ of $\Omega_{M(\lambda_i)}$ defines a (non-active) closed section in $\Omega_j$, for all $j>{M(\lambda_i)}$, that we still denote by $\sigma(\Delta(\lambda_i))$.

It follows from the process that the branch of the tree $T(\Omega^a)$ is either finite and the leaf is of type 1 (it is a proper quotient) or of type 2 (there are no active sections), or the branch is infinite of type 12 or of type 15.

\subsubsection*{Case I} If the branch of $T(\Omega^a)$ is finite and the leaf is of type 1, then any solution of fundamental sequence is a solution of the leaf. By induction hypothesis on the height of proper quotients, we can conclude that there exists a graph tower with the required properties.

\subsubsection*{Case II} Assume now that the branch is finite and the leaf $\Omega_k'$ is of type 2. Let $\Omega_{m(\lambda_i)}$ be the generalised equation obtained from $\Omega_k'$ by declaring one of the sections $\sigma(\Delta(\lambda_i))$ active and all the other sections non-active.

\subsubsection*{Case II.1} Suppose first that every active item in $\Omega_{m(\lambda_i)}$ is covered at least twice. In this case, we argue  by induction on the number of auxiliary bases and prove the existence of a graph tower for $\Omega_{m(\lambda_i)}$. If there are no auxiliary (active) bases in $\Omega_{m(\lambda_i)}$, then, by Propositions  \ref{prop:sing}, \ref{prop:quad} and \ref{prop:case15}, there exists a graph tower for $G_{\Omega_{m(\lambda_i)}}$. Since the number of auxiliary bases of $\Omega_{m(\lambda_i)}$ is strictly less than the number of auxiliary bases of $\Omega_k$ (as the base $\lambda_i$ is eliminated in $\Omega_{m(\lambda_i)}$), then we conclude by induction that there exists a tower for $\Omega_{m(\lambda_i)}$.

\subsubsection*{Case II.2} Suppose next that there is an item $h_j$ in $\sigma(\Delta(\lambda_i))$ which is covered exactly once, say by the base $\nu$. Note that the tribes of all the items in $\sigma(\Delta(\lambda_i))$ dominate the tribe $t(\lambda_i)$.  To simplify the notation we write $\Omega_{m(\lambda_i)}=\Omega$.

If $h_j$ belongs to the tribe $t(\lambda_i)$, then since the tribes of all the items in $\sigma(\Delta(\lambda_i))$ dominate the tribe $t(\lambda_i)$, it follows that $\nu$ (and so $\Delta(\nu)$) belong to the tribe $t(\lambda_i)$. The base $\nu$ is eliminable and we eliminate it together with the item $h_j$ as follows.

We have that the variable $h_j$ occurs only once in $\Upsilon$: in the equation $h(\nu)^{\varepsilon(\nu)}=h(\Delta(\nu))^{\varepsilon(\Delta(\nu))}$ corresponding to the base $\nu$, denote this equation by $s_\nu$.  Therefore, in the group $G_{\Omega}$ the relation $s_\nu$ can be written as $h_j = w_\nu$, 
$$
h_j=h_{j-1}^{-1}\cdots h_{\alpha(\nu)}^{-1} h(\Delta(\nu))h_{\beta(\nu)-1}^{-1}\cdots h_{j+1}^{-1}. 
$$
Consider the generalised equation $\Upsilon'$ obtained from $\Upsilon$ by deleting the equation $s_\nu$ and the item $h_j$. The presentation of $G_{\Upsilon'}$ is obtained from the presentation of $G_\Upsilon$ using a Tietze transformation. Thus, these groups are isomorphic. We define the relation $\Re_{\Upsilon'}$ as the restriction of $\Re_\Upsilon$ to the set $h\setminus\{h_j\}$, and $\Omega'=\langle\Upsilon',\Re_{\Upsilon'}\rangle$.

It follows that
$$
G_{\Omega}\simeq \factor{G[h_1,\dots,h_{j-1},h_{j+1},\dots,h_\rho]}{\Omega'\cup\{[h_k,w_\nu]\mid \Re_\Upsilon(h_j,h_k)\})}.
$$
Note that if $\Re_\Upsilon(h_j, h_k)$, then, since all the items covered by $\nu$ or $\Delta(\nu)$ belong to tribes that dominate $t(\lambda_i)$, it follows that $\Re_\Upsilon(h_i,h_k)$ for all $i=\alpha(\nu),\dots, j-1,j+1, \dots, \beta(\nu)-1,\alpha(\Delta(\nu)),\dots, \beta(\Delta(\nu))-1$. We conclude that the groups $G_{\Omega}$ and $G_{\Omega'}$ are isomorphic and we denote this isomorphism by $i:G_{\Omega}\to G_{\Omega'}$.

Notice that the number of eliminable bases of $\Omega'$ is strictly lower than the number of eliminable bases of $\Omega$. Furthermore, if the number of eliminable bases of $\Omega'$ is $0$, then $\Omega'=\Ker(\Omega)$. Since for the kernel $\Ker(\Omega)$ the linear case can not further occur, it follows that the fundamental sequence factors either through a finite branch or through an infinite branch of type 12 or type 15. Then, by Propositions \ref{prop:sing}, \ref{prop:quad} and \ref{prop:case15}, there exists a graph tower for $\Omega'$. We conclude by induction on the number of eliminable bases that there exists a graph tower $\Ts'$ (the group $\HH'$ and the homomorphism $\tau'$) for the group $G_{\Omega'}$.

We now show that the triple $\Ts'$, $\HH'$ and $i \tau'$ is a graph tower associated to $G_{\Omega}$. Since $i$ is an isomorphism, it implies that Diagram (\ref{diag:period}) is commutative. We are left to show that $i\tau'$ satisfies the induction hypothesis IH1 and IH2. Since  $\HH'$ satisfies the property IH1 for the homomorphisms induced by solutions of $\Omega'$ and every solution of $\Omega$ induces a solution of $\Omega'$, it follows that $\HH'$ satisfies the assumption IH1 for the family of solutions of $\Omega$.  Let $h_j$ be the item of $\Omega$ which has been eliminated. It suffices to prove that for all $x_i$ in $i\tau'(h_j)$ and any solution from the fundamental sequence we have that $\BA(H'(x_i)) \supset \BA(H_j)$. The latter follows since $h_j = w_\nu$, $h_j$ belongs to the tribe $t(\lambda_i)$, any $h_k$ in $w_\nu$ dominates the tribe $t(\lambda_i)$ (i.e. for any solution from the fundamental sequence $\BA(H_k) \subset \BA(H_j)$) and $\tau_1$ satisfies the induction hypothesis IH2 for the items $h_k$ in $\Omega'$.

\subsubsection*{Case II.3} Suppose finally that all the active items that are covered once in $\Omega_{m(\lambda_i)}$ belong to tribes that strictly dominate $t(\lambda_i)$. In this case, instead of $\Omega_{m(\lambda_i)}$, we consider the generalised equation $\Omega_{m(\lambda_i)}^a$: for every active item that is covered only once we introduce a new pair of auxiliary bases, and apply the argument given above for $\Omega^a$ to $\Omega_{m(\lambda_i)}^a$. Note that all the new auxiliary bases in $\Omega_{m(\lambda_i)}^a$ belong to tribes that strictly dominate $t(\lambda_i)$ and that the length of a sequence of strictly dominating tribes is bounded above. Furthermore, if the tribe $t(\lambda_i)$ is maximal, then all the items belong to this tribe. In particular, if the tribe is maximal and there are items covered just once, they are eliminable and as we have shown in Case II.2, in this case, there exists the corresponding graph tower. Therefore, by (increasing) induction on the length of the sequence of strictly dominating tribes for $t(\lambda_i)$, we prove the existence of the corresponding graph tower.

\subsubsection*{Case III} Suppose that the fundamental branch in $T(\Omega^a)$ is infinite, we show that in this case, the branch is of type 12 or 15. By construction, every auxiliary base of $\Omega^a$ can be the carrier at most once. It follows that if the fundamental branch in $\Omega^a$ is infinite, then there exists $N$ so that for all $n>N$ the carrier of $\Omega_n$ is not an auxiliary base. Furthermore, if the carrier of $\Omega_n$ has its dual in the non-active part, then the complexity of the generalised equation $\Omega_{n+1}$ is strictly lower than the complexity of $\Omega_n$. 

We conclude that in the infinite branch there exists a generalised equation $\Omega_{k}$ so that 
\begin{itemize}
\item for all $n>k$, the carrier base of $\Omega_n$ is neither auxiliary nor does its dual belong to the non-active part of $\Omega_n$;
\item if $\tp(\Omega_k)=12$, then $\Omega_k$ repeats in the infinite branch infinitely many times and $h_1^{(k)}$ belongs to a minimal tribe. 
\end{itemize}
Using the description of the cases 12 and 15, we conclude that there exists a generalised equation $\Omega_k'$ so that $G_{R(\Omega_k')}$ is a proper quotient of $G_{R(\Omega_k)}$ and such that every solution of the fundamental sequence is a composition of an automorphism of type 12 and 15, correspondingly, the epimorphism from $G_{R(\Omega_k)}$ to $G_{R(\Omega_k')}$ and a solution of $\Omega_k'$. By induction hypothesis there exists a graph tower for $G_{\Omega_k'}$ and by Propositions \ref{prop:sing}, \ref{prop:quad} and \ref{prop:case15} one can construct a tower for $G_{\Omega_k}$.

Finally, we summarize the results of this whole section in the theorem below.
\begin{thm}
Let $G$ be a limit group over $\GG$ and let $\Omega_0\to\dots \to\Omega_q$ be the fundamental branch for $G$. Then, there exists a homomorphism from $G_{\Omega_0}$ to a graph tower $\Ts_0$ which makes Diagram {\rm(\ref{diag:period})} commutative. Furthermore, if the group $G$ is given by its finite radical presentation, then the graph tower $\Ts_0$ and the corresponding homomorphism can be constructed effectively.
\end{thm}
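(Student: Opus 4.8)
The plan is to prove the statement by induction on the length $q$ of the fundamental branch $\Omega_0 \to \dots \to \Omega_q$, strengthening the conclusion to the full induction hypothesis {\rm IH}, {\rm IH1}, {\rm IH2} recorded on page~\pageref{pagen}. Carrying {\rm IH1} and {\rm IH2} along the induction is not optional: {\rm IH} is what makes Diagram~(\ref{diag:period}) commute at each stage, while {\rm IH1} and {\rm IH2} are exactly the bookkeeping on the decomposition $E(\Gamma_q) = E_d(\Gamma_q) \cup E_c(\Gamma_q)$ and on the subgroups $\BA(H_i)$ needed to recognise \cool subgroups and, in the next section, to run the discrimination argument. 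For the base case $q = 0$, the group $G_{\Omega_0}$ is the group attached to a leaf of $T_{\sol}(G)$; by \cite[Proposition 9.1]{CKpc} it is a partially commutative group built from $\GG$ by a sequence of extensions of centralisers of directly indecomposable canonical parabolic subgroups, hence a graph tower all of whose floors are basic, with $\Ts_0 = \HH_0$ and $\tau_0$ the identity, and one checks (as in the ``Base of induction'' paragraph) that {\rm IH}, {\rm IH1}, {\rm IH2} hold trivially.

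For the inductive step I would assume the result for all groups whose fundamental branch has length $\le q-1$ and build $\Ts_0$ from $\Ts_1$ according to the type of the automorphism group attached to $v_0$, which is governed by the type of infinite branch of $T(\Omega_0)$ through which the fundamental sequence factors \cite[Lemma 4.19]{CKpc}: (i) trivial automorphism group --- set $\Ts_0 = \Ts_1$, $\tau_0 = \pi(v_0,v_1)\tau_1$; (ii) quadratic type (case $12$) --- apply Proposition~\ref{prop:quad}; (iii) singular or strongly singular periodic structure --- apply Proposition~\ref{prop:sing}; (iv) general type (case $15$) --- apply Proposition~\ref{prop:case15}, whose construction internally alternates Propositions~\ref{prop:quad} and~\ref{prop:reg} along the subdivision $0 < n_0 < \dots < n_k = N$; and (v) the linear type (cases $7$--$10$). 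In (i)--(iv) the cited statements already produce the graph tower, the homomorphism, and {\rm IH}--{\rm IH2}; one only verifies that the standing hypotheses of those propositions (``$\Omega$ repeats infinitely often, $h_1$ minimal, $\Re_\Upsilon$ completed'') can be arranged along the fundamental branch using Lemma~\ref{lem:discfam} and Remarks~\ref{rem:completed} and~\ref{rem:K}.

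The main obstacle is case (v), because the dynamics of a type $7$--$10$ branch resist the direct normal-form analysis of the quadratic case. Here I would follow the strategy sketched in the ``Linear case'' subsection: replace $\Omega$ by $\Omega^a$ by adjoining, for every item covered exactly once, an auxiliary base $\lambda_i$, its dual placed in the non-active part, and a new non-active item, so that $G_{\Omega^a} \cong G_{\Omega}$ and $\tp(\Omega^a) \ge 11$; then run the process of \cite{CKpc} on $\Omega^a$ along the unique fundamental branch, which is either finite of type $1$ or $2$, or infinite of type $12$ or $15$. The finite type-$2$ leaves are handled by declaring a section $\sigma(\Delta(\lambda_i))$ active and inducting on the number of auxiliary bases (Case~II.1), eliminating genuinely eliminable bases by a Tietze transformation that preserves the partially commutative structure precisely because the eliminated item lies in the tribe $t(\lambda_i)$ and every letter of its defining word dominates that tribe (Case~II.2), and otherwise re-auxiliarising (Case~II.3). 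The crux is termination: each time we are forced back into a type $7$--$10$ equation, the minimal tribe strictly increases in the order $\succ$, and since $\GG$ is finitely generated the length of any strictly $\succ$-increasing chain of tribes is bounded by a constant $N(\GG)$ --- this is exactly where finite generation of $\GG$ and the theory of tribes are indispensable --- and when the minimal tribe becomes maximal all items lie in it, so the Tietze elimination applies unconditionally. Feeding the result back through Propositions~\ref{prop:sing}, \ref{prop:quad}, \ref{prop:case15} and the isomorphisms $G_{R(\Omega)} \cong G_{R(\Omega')}$ yields $(\Ts_0,\HH_0,\tau_0)$ together with {\rm IH}--{\rm IH2}.

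Finally, for effectiveness: if $G$ is given by a finite radical presentation, \cite{CKpc} produces the finite tree $T_{\sol}(G)$ effectively; the fundamental branch is identified effectively since $T_{\sol}(G)$ is finite and discrimination of a finite set of elements into $\GG$ is decidable (partially commutative groups are equationally Noetherian with decidable word problem); and every floor-building step above --- the choice of $\KK = \BA_{E_d(\Gamma_1)}(\{w_\eta(x^{(1)})\})$, the graph $\Gamma_0$, the relation set $S$, and the maps $\tau_0'$ --- is given by an explicit finite recipe, and the auxiliary-base bookkeeping in case (v) terminates in at most $N(\GG)$ rounds. Hence $\Ts_0$ and $\tau_0$ are constructible, which completes the proof.
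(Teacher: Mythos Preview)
Your proposal is correct and follows essentially the same approach as the paper: the theorem is the culmination of Section~\ref{sec:10}, and your write-up is a faithful summary of that section's architecture --- induction on $q$ carrying {\rm IH}, {\rm IH1}, {\rm IH2}, the base of induction via \cite[Proposition 9.1]{CKpc}, the case split on the type of the infinite branch handled by Propositions~\ref{prop:quad}, \ref{prop:sing}, \ref{prop:reg}, \ref{prop:case15}, and the linear case via the auxiliary-base trick with termination forced by the strictly increasing minimal tribe bounded by $N(\GG)$. The only cosmetic difference is that the paper organises the periodic-structure cases (singular versus regular) as separate subsections rather than folding the singular case into the type-$15$ discussion, but the logical content is identical.
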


\section{Graph towers are discriminated by $\GG$}\label{sec:11}

In Section \ref{sec:10}, we proved that given a limit group $G$ over $\GG$ and a fundamental branch $\Omega_0\to\dots \to \Omega_q$, one can effectively construct a graph tower $\Ts_0$ and a homomorphism $\tau_0:G_{\Omega_0}\to \Ts_0$ that makes Diagram (\ref{diag:period}) commutative.  In this section, we show that the graph tower $\Ts_0$ is discriminated by $\GG$ by the family of homomorphisms induced by the fundamental sequence. We use this fact to conclude that every limit group over $\GG$ is a subgroup of a graph tower. Furthermore, if $G$ is given by its finite radical presentation, then the graph tower $\Ts_0$ and the embedding can be constructed effectively.

\begin{thm}\label{thm:towerdisc}
Let $G$ be a limit group over $\GG$ and let $(\Ts,\HH)$ be the graph tower associated to the fundamental branch of $G$. Then, the graph tower $\Ts$ is discriminated by $\GG$ by the family of homomorphisms induced by solutions that factor through the fundamental branch.
\end{thm}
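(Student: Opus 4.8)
The plan is to prove the statement by induction on the height $q$ of the fundamental branch $\Omega_0 \to \dots \to \Omega_q$, mirroring the inductive construction of the graph tower $\Ts = \Ts_0$ in Section~\ref{sec:10}. The base case, $q = 0$, is immediate: by the analysis of the leaves of the tree $T_{\sol}(G)$ (see \cite[Proposition 9.1]{CKpc}), the group $G_{\Omega_0}$ is a partially commutative group built from $\GG$ by a sequence of extensions of centralisers of directly indecomposable canonical parabolic subgroups, and it is fully residually $\GG$ by construction; here $\Ts_0 = \HH_0$ and the discriminating family is precisely the family of solutions $H$ of the (trivial) fundamental sequence. For the inductive step, I assume that the graph tower $\Ts_1$ associated to $\Omega_1$ is discriminated by $\GG$ via the family induced by solutions factoring through the fundamental branch of $\Omega_1$, and I must show the same for $\Ts_0$ built on top of $\Ts_1$ by one of the floor constructions (basic, abelian, quadratic/surface, or the periodic-structure cases). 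Throughout, the homomorphisms of $\Ts_0$ into $\GG$ are the ${H^{(0)}}'$ produced in Lemmas~\ref{lem:qdiagcom}, \ref{lem:singadiagcom}, \ref{lem:singbdiagcom}, \ref{lem:regdiagcom} (and their analogues), one for each solution $H^{(0)} = \varphi\,\pi(v_0,v_1)\,H^{(1)}$ of the fundamental sequence, where $\varphi$ ranges over the canonical automorphism group attached to the vertex $v_0$.

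\textbf{Key steps.} The core of the argument is to show that for any finite set of non-trivial elements $w_1, \dots, w_s \in \Ts_0$ there is a solution $H^{(0)}$ of the fundamental sequence with ${H^{(0)}}'(w_j) \ne 1$ for all $j$. Using the amalgamated product decomposition of $\Ts_0$ over the centraliser $C_{\Ts_1}(\KK^\perp)$ (or $C_{\Ts_1}(u)$, or the appropriate subgroup in the periodic cases) given by Lemma~\ref{lem:prtower}, each $w_j$ can be written in a normal form with respect to this splitting; its non-triviality is detected either in a syllable lying in $\Ts_1$ — handled by the induction hypothesis on $\Ts_1$ — or in a syllable involving the new stable letters $x_i^{(0)}$ and the surface/abelian relator. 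For the latter, the strategy is the classical one (cf.\ \cite{KhMNull, Sela1}): one first applies a suitable canonical automorphism $\varphi$ from the automorphism group attached to $v_0$ to ``stretch'' the contribution of the new letters, making their images under $H^{(0)}$ large and generic (for surface floors this is the statement that the radical of the quadratic equation $W=1$ coincides with its normal closure, which holds here because $W$ satisfies conditions $\circledast$ or $\circledast\circledast$ and because the solution set factors through an infinite branch — the remark following Corollary~\ref{cor:quadNF}); then one chooses $H^{(1)}$ discriminating for the finitely many remaining $\Ts_1$-constraints. The induction hypotheses IH1 and IH2 from Section~\ref{sec:10} are exactly what guarantee that the centraliser conditions $[C_{\Ts_1}(\KK^\perp), x_i^{(0)}] = 1$ are compatible with a discriminating choice: IH1 ensures $\langle \az({H^{(0)}}'(\KK))^\perp \rangle = \GG_{\KK^\perp}$ and that $\KK^\perp$ maps to a directly indecomposable (hence cyclic or non-abelian) canonical parabolic subgroup, so that $C_\GG({H^{(0)}}'(\KK^\perp))$ really is $\langle \az({H^{(0)}}'(\KK)) \rangle$. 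This is also where the delayed claim from Lemma~\ref{lem:prtower} — that $\langle C_{\Ts_1}(\KK^\perp), u_{2g+1}, \dots, u_m \rangle$ is the direct product $C_{\Ts_1}(\KK^\perp) \times \langle u_{2g+1}, \dots, u_m \rangle$ — must be verified, again using IH1 together with the fact that the $u_k$ lie in $\KK^\perp$.

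\textbf{Main obstacle.} The hardest part will be the surface (quadratic) floor and, relatedly, the general type~15 case: one must show that the family $\{{H^{(0)}}'\}$ discriminates the group $\langle u_{2g+1}, \dots, u_m, x_1^{(0)}, \dots, x_{m_l}^{(0)} \mid W \rangle \times C_{\Ts_1}(\KK^\perp)$, i.e.\ that the surface relator $W$ imposes no hidden relations beyond those expected, and that the retraction onto $\Ts_1$ can be ``twisted'' by modular automorphisms to separate any prescribed element. Over free groups this rests on the theory of the radical of quadratic equations (\cite{KhMIrc}) and on small cancellation / valence arguments for the associated foliated band complex; over an arbitrary partially commutative group the substitute is the analysis of the infinite branch of type~12 carried out in Section~\ref{sec:10} (Lemmas~\ref{lem:isoquad}, \ref{lem:qnorm}, Corollary~\ref{cor:quadNF}, Lemma~\ref{lem:eired}, Remark~\ref{rem:blocks}), which already forces the tribes of the quadratic part to collapse to a single minimal tribe and forces solutions to be block elements with controlled $\BA$-support. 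The plan is to feed these structural facts into the standard discrimination argument: apply a high power of a Dehn-twist-like canonical automorphism along the relator $W$ so that the images of the atoms under $H^{(0)}$ become long cyclically reduced words (Lemma~\ref{lem:eired} gives an irreducible element to twist by), check via Lemma~\ref{lem:pc} that the resulting words in $\GG$ cannot cancel against the $\Ts_1$-part or against each other, and finally absorb the non-orientable degenerate subcase (where, as noted after the last lemma of the ``Other cases'' subsection, the retraction is actually abelian) into the abelian floor analysis. Once the surface case is settled, the abelian floors (cases a2, b1, b2) follow by the much simpler observation that extensions of centralisers of cyclically reduced root block elements in a fully residually $\GG$ group are again fully residually $\GG$ (this is the content of \cite{CK1, CKpc} recalled in Section~\ref{sec:9}), and the periodic-structure cases reduce to the same input via Remark~\ref{rem:blockperiod}, which identifies the period $P$ as an irreducible root element of $\GG_{\KK^\perp}$. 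Assembling these cases along the branch completes the induction, and the effectivity statement follows because every step — the construction of $T_{\sol}$, the choice of fundamental branch (Lemma~\ref{lem:discfam}), and each floor — is effective when $G$ is given by a finite radical presentation.
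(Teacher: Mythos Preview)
Your inductive skeleton matches the paper's: induction on tower height, case split along the amalgamated decompositions of Lemma~\ref{lem:prtower}, and verification of the direct-product claim for the surface floor. Two of your planned tools, however, diverge from what the paper uses, and one of them would not carry the argument.

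For the abelian and basic floors (a2, b1, b2) you invoke the fact that extensions of centralisers of irreducible elements are fully residually $\GG$, citing \cite{CK1,CKpc}. Those results are proved for centralisers \emph{in $\GG$}, not in $\Ts_1$. The paper bridges this gap explicitly: for each discriminating ${H^{(1)}}'$ it constructs an intermediate group $A=\GG *_{C_\GG(a)}\bigl(C_\GG(a)\times\langle x^{(0)}\rangle\bigr)$, an extension of the centraliser of a canonical generator $a$ of $\GG$, checks that ${H^{(1)}}'$ extends to a map $\Ts_0\to A$ carrying reduced amalgam words to reduced amalgam words, and only then applies \cite[Lemma 4.17]{CK1} and \cite[Corollary 2.11]{CKpc} to $A$ (using that the canonical automorphisms at $v_0$ induce automorphisms of $A$ fixing $\GG$). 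Your direct route is repairable, but you should make this two-step reduction explicit rather than asserting the result for $\Ts_1$.

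For the surface floor (c), your proposed cancellation control via Lemma~\ref{lem:pc} is the wrong lemma: that result bounds $\HH$-alternations in the DM-normal form and was used in Section~\ref{sec:10} to prove $\KK^\perp$ is $E_d$-directly indecomposable, not to prevent cancellation between images of distinct amalgam syllables in $\GG$. The paper instead runs the argument of \cite[Proposition 4.22]{Wilton}, substituting for the free-group big-powers input \cite[Lemma 4.13]{Wilton} the BP property for irreducible elements in partially commutative groups \cite[Lemma 4.17]{CKpc} (this is precisely where Lemma~\ref{lem:eired} and Remark~\ref{rem:blocks}, which you correctly cite, are fed in), and substituting for the CSA step in \cite[Example 4.21]{Wilton} the observation that if $H'(S')$ were abelian it would be cyclic generated by an irreducible element, and cyclic subgroups of $\GG$ satisfy the required malnormality-up-to-commutation. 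Without a BP-type input, Dehn-twist stretching alone does not rule out cancellation between the image of the surface piece and the images of the $\Ts_1$-syllables, so this is a genuine gap in your plan rather than a stylistic difference.
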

\begin{proof}
We prove the statement by induction on the height of the graph tower. Suppose that the height of the graph tower is $0$. Then, $\Ts$ is the partially commutative group associated to a leaf of the tree $T_{\sol}(\Omega)$ for some generalised equation $\Omega$ and so, by \cite[Proposition 9.1]{CKpc}, the graph tower $\Ts$ is discriminated by $\GG$ by the family of solutions of the generalised equation.

Suppose that if $\Ts$ is a graph tower of height less than or equal to $l-1$, then $\Ts$ is discriminated by $\GG$ by the family of homomorphisms induced by solutions of the fundamental sequence.

Let $\Ts_0$ be a  graph tower of height $l$. By Lemma \ref{lem:prtower}, the graph tower $\Ts_0$ has a decomposition as an amalgamated product. We now analyse each of the possible decompositions. 

Notice that if $\Ts_0$ is of type a1), then it only appears in the construction of the leaves of $T_{\sol}$, i.e. the graph tower $\Ts_1$ coincides with the associated partially commutative group $\GG(\Gamma_1)$. In this case, since $\KK^\perp$ is directly indecomposable, it follows from \cite[Corollary 2.11]{CKpc} that the family of solutions $H$ of the fundamental sequence which maps the variables $x^{(0)}$ into irreducible elements of the subgroup $\KK^\perp$ is a discriminating family.

\medskip

Assume now that the graph tower $\Ts_0$ is of type a2). This case appears either in the construction of a leaf, or in the presence of a periodic structure or in the particular case when the quadratic equation in the normal form does not satisfy the properties $\circledast$ and $\circledast\circledast$, see Definition \ref{defn:tower} (and $\KK^\perp$ is $E_c(\Gamma)$-abelian). 

If it appears in the construction of a leaf, the result follows analogously to the case a1).

Let us now consider the remaining cases. We begin with an observation. Let $\GG_\KK$ be the canonical parabolic subgroup such that for the fundamental sequence of solutions $H$ we have that $\langle \az(H'(\KK))\rangle = \GG_\KK$. Since $\GG_\KK^\perp \lra \GG_\KK$, by the induction hypothesis IH1, it follows that $\GG_\KK^\perp < \KK^\perp$. Since $\KK^\perp$ is $E_c(\Gamma_1)$-abelian, so is $\GG_\KK^\perp$. Since for the discriminating family of solutions the subgroup defined by $H'(\KK^\perp)$ is cyclic and $\langle\az(H'(\GG_\KK^\perp))\rangle=\GG_\KK^\perp$, it follows that $\GG_\KK^\perp$ is a cyclic canonical parabolic subgroup generated by the generator $a$. Let $C_\GG(a)=\langle a\rangle \times \BA(a)$. Since $\langle a\rangle =\GG_{\KK^\perp} \lra \GG_\KK$, we have that $\GG_\KK<\BA(a)$. On the other hand, since $\BA(a) \lra H'(\KK^\perp)$, by the induction hypothesis IH1, it follows that $\BA(a)< \KK$ and so $\BA(a)<\GG_\KK$. Hence $\GG_\KK=\BA(a)$ and $\GG_{\KK^\perp}=\langle a\rangle$. Furthermore, since $C_\GG(a)<H'(C_{\Ts_1}(\KK^\perp))<C_\GG(H'(\KK^\perp))< C_\GG(a)$ for all homomorphisms $H'$ induced by the discriminating family, we conclude that $C_\GG(a)=\GG_{C_{\Ts_1}(\KK^\perp)}$.

Since $\Ts_0$ is an amalgamated product, it follows that every non-trivial element $e$ of $\Ts_0$ can be written in the reduced form as $a_1b_1a_2b_2\cdots a_k b_k a_{k+1}$, where $a_2, \dots, a_{k+1} \notin  C_{\Ts_1}(\KK^\perp)$ and $b_1,\dots, b_k$ are words in the generators $\{x_1^l,\dots, x_{m_l}^l\}$. Hence, there exist $w_i \in \KK^\perp$ such that $[a_i,w_i]\ne 1$, $i=2,\dots, k+1$.

Since, by induction hypothesis, $\Ts_1$ is discriminated by $\GG$ by the family of homomorphisms induced by the set of solutions, there exists a homomorphism ${H^{(1)}}'$ which is injective on the finite set $\{[a_i,w_i] \mid i=2,\dots, k+1\}$. Therefore,  ${H^{(1)}}'([a_i,w_i])=[{H^{(1)}}'(a_i),a^{l_i}]\ne 1$. Hence ${H^{(1)}}'(a_i)\notin C_\GG(a)$ and, in particular, using the above observation, ${H^{(1)}}'(a_i)\notin \GG_{C_{\Ts_1}(\KK^\perp)}$.

The homomorphism ${H^{(1)}}'$ induces a homomorphism ${H^{(1)}}''$ from $\Ts_0$ to the group 
$$
A=\GG \ast_{\GG_{C_{\Ts_1}(\KK^\perp)}} \left(\GG_{C_{\Ts_1}(\KK^\perp)} \times \langle x_1^l,\dots, x_{m_l}^l\mid [x_i^l,x_j^l], 1\le i<j\le n \rangle\right), 
$$
i.e. ${H^{(1)}}''$ is a homomorphism from $\Ts_0$ to the extension of the centraliser of the element $a$: 
$$
A=\GG \ast_{C_\GG(a)} \langle C_\GG(a), x_1^l,\dots,x_{m_l}^l \mid [C_\GG(a),x_i^l]=1, [x_i^l,x_j^l]=1 \rangle.
$$
Moreover, the image $H'(a_1)b_{1}\dots b_{k} H'(a_{k+1})$ of $e$ in $\GG$, is a reduced element of the amalgamated product $A$ since $H'(a_i) \notin \GG_{ C_{\Ts_1}(\KK^\perp)}$, $i=2,\dots,k+1$.

Furthermore, in all the cases under consideration (for periodic structures and for exceptional subcases of the quadratic case), we have that any solution ${H^{(0)}}$ of the fundamental sequence is the composition of a canonical automorphism $\phi$ of $G_{R(\Omega_0)}$, the epimorphism $\pi(v_0,v_1)$ and a solution ${H^{(1)}}$. Moreover, notice that any such automorphism $\phi$ induces an automorphism of $A$ that fixes $\GG$. Hence  the family of solutions ${H^{(0)}}$ of the fundamental sequence defines a family of homomorphisms from $A$ to $\GG$ that are obtained from a (non-trivial) solution (induced by $\pi(v_0,v_1) {H^{(1)}}$) by pre-composing with canonical automorphisms.

By \cite[Lemma 4.17]{CK1} and \cite[Corollary 2.11]{CKpc}, the group $A$ (an extension of a centraliser of the canonical generator $a$) is discriminated by $\GG$ by this family of homomorphisms. Therefore, we conclude that $\Ts_0$ is discriminated by $\GG$ by the family of homomorphisms induced by the solutions of the fundamental sequence.

\medskip

Assume that $\Ts_0$ is of type b1) or b2). These cases occur in the presence of a periodic structure or in the particular case when the quadratic equation in the normal form does not satisfy the properties $\circledast$ and $\circledast\circledast$ from Definition \ref{defn:tower}. By Remarks \ref{rem:blockperiod} and \ref{rem:blocks}, the homomorphisms $H'$ induced by solutions of the fundamental sequence (only in the case b1)) satisfy that $H'(u)$ and $H'(x_i^l)$ are irreducible elements of $\GG_{\KK^\perp}$. Now the proof is analogous to the case a2).

\medskip

Suppose that $\Ts_0$ is of type c). This case only occurs when the quadratic equation satisfies  properties $\circledast$ and $\circledast\circledast$ from Definition \ref{defn:tower}.

\subsubsection*{Claim.} The subgroup of $\Ts_1$ generated by $C_{\Ts_1}(\KK^\perp)$ and $\varphi_0(\lambda_i)$, where $\lambda_i$ is a quadratic-coefficient base, $i=2g+1,\dots,m$ is the direct product
$$
\langle C_{\Ts_1}(\KK^\perp), \varphi_0(\lambda_{2g+1}),\dots, \varphi_0(\lambda_m)\rangle =  C_{\Ts_1}(\KK^\perp)\times \langle \varphi_0(\lambda_{2g+1}),\dots, \varphi_0(\lambda_m)\rangle.
$$

Let us prove the claim. In fact, we show that the subgroup $C$ generated by $\KK^\perp$ and $C_{\Ts_1}(\KK^\perp)$ is a direct product. Then,  since $\varphi_0(\lambda_i)\in \KK^\perp$, the claim will follow. By Remark \ref{rem:blocks}, for the family of homomorphisms $H'$ induced by the set of solutions of the fundamental sequence, we have that for all minimal items $h_i$, the image $H'_i$ is a block element of $\GG_{\KK^\perp}$ and $\BA(H'_i)=\BA(\GG_{\KK^\perp})$. In particular, it follows that $\GG_{\KK^\perp}$ is a (non-abelian) directly indecomposable canonical parabolic subgroup. By the description of centralisers in partially commutative groups, it follows that $\GG_{\KK^\perp} \cap C_\GG(\GG_{\KK^\perp})=1$ and so $\GG_{\KK^\perp} \lra \GG_{C_{\Ts_1}(\KK^\perp)}$. By induction hypothesis, we have that the subgroup $C$ is discriminated by the subgroup $\GG_{\KK^\perp} \times \GG_{C_{\Ts_1}(\KK^\perp)}$ and so $C$ is the direct product of $\KK^\perp$ and $C_{\Ts_1}(\KK^\perp)$. Indeed since $C=\KK^\perp \cdot C_{\Ts_1}(\KK^\perp)$ and $[\KK^\perp,C_{\Ts_1}(\KK^\perp)]=1$, we are left to show that $\KK^\perp \cap C_{\Ts_1}(\KK^\perp)=1$. Suppose that there exist $u \in \KK^\perp$ and $v\in C_{\Ts_1}(\KK^\perp)$ so that $u=v$ in $C$. Then, $H'(u)\in \GG_{\KK^\perp}$, $H'(v)\in \GG_{C_{\Ts_1}(\KK^\perp)}$ and $H'(u)=H'(v)$. Since $\GG_{\KK^\perp}\lra \GG_{C_{\Ts_1}(\KK^\perp)}$, so $H'(u)=H'(v)=1$ for all homomorphisms $H'$. It follows that $u=v=1$.

The strategy to prove that $\Ts_0$ is discriminated by $\GG$ is as follows. As above, to prove that $\Ts_0$ is discriminated, it suffices to prove that so is  
$$
\GG \ast_{H'(C_{\Ts_1}(\KK^\perp))\times  E} \left(\langle E,x_1^l,\dots, x_n^l \mid W\rangle \times H'(C_{\Ts_1}(\KK^\perp))\right),
$$
where $E=H'(\langle \varphi_0(\lambda_{2g+1}),\dots, \varphi_0(\lambda_m)\rangle)$. To prove the latter, we follow the standard argument for limit groups over free groups, see for instance \cite[Proposition 4.22]{Wilton}. 

There are two adjustments required for the argument in \cite{Wilton} to go through. Firstly, free groups satisfy the BP property, see \cite[Lemma 4.13]{Wilton} and secondly, limit groups over free groups are CSA. In general, partially commutative groups do not have the BP property (see \cite{Blath}). In our case, since, by induction, we assume that $\Ts_1$ is discriminated by the family of homomorphisms induced by a fundamental sequence, it follows that the images of $v_i, w_j \in \Ts_1$ under the discriminating family are block elements (see Remark \ref{rem:blocks}). Furthermore, by Lemma \ref{lem:eired}, there exists a non-trivial curve $U$ which is mapped to irreducible elements for all solutions of a fundamental sequence. We prove in \cite[Lemma 4.17]{CKpc} that we do have the BP property with respect to irreducible elements in partially commutative groups. Therefore, the argument given in \cite[Proposition 4.22]{Wilton} goes through, using \cite[Lemma 4.17]{CKpc} instead of \cite[Lemma 4.13]{Wilton}.

The argument of \cite{Wilton} uses the CSA property in \cite[Example 4.21]{Wilton}. In general partially commutative group do not have the CSA property, but we do can apply the argument from \cite[Example 4.21]{Wilton} in our context. Let $\Sigma$, $\zeta$, $S$, $S'$ and $t$ be as in \cite[Example 4.21]{Wilton}. By assumption, we have that $H'(S)$ is non-abelian and we want to show that $H'(S')$ is also non-abelian. 

By contradiction, assume that $H'(S')$ is abelian. By induction hypothesis, the discriminating family of homomorphisms $H'$ induced by the solutions has the property that if $h_i$ is a minimal item, then $H'_i$ is a block element and there exists an irreducible element $H(U)\in H'(S')$. It follows that if the image $H'(S')$ is abelian, then it is cyclic. In partially commutative groups, cyclic subgroups satisfy the property that if $H'(S') \cap H'({S'})^{H'(t)}$ is non-trivial, then $H'(t)$ commutes with $H'(S')$ and hence $H'(S)$ is abelian, deriving a contradiction. 

Now the argument is identical to the one given in \cite[Proposition 4.22]{Wilton}. 
\end{proof}

\begin{cor} \label{cor:limit}
Let $G$ be a limit group over $\GG$. Then, $G$ is a subgroup of a graph tower $(\Ts,\HH)$. Furthermore if $G$ is given by its finite radical presentation, then the graph tower $(\Ts, \HH)$ and the embedding can be effectively constructed.
\end{cor}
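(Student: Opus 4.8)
The statement to establish is Corollary \ref{cor:limit}: that every limit group $G$ over $\GG$ embeds into a graph tower, effectively if $G$ is given by a finite radical presentation. The plan is to combine the two main ingredients already assembled in the excerpt: the construction of the graph tower associated to a fundamental branch (Theorem at the end of Section \ref{sec:10}) and the discrimination result (Theorem \ref{thm:towerdisc}). First I would recall that since $G$ is a limit group over $\GG$, by Theorem \ref{thm:limgrchar} it is the coordinate group of an irreducible variety, and by the Makanin--Razborov machinery of \cite{CKpc} the set $\Hom(G,\GG)$ is described by a finite solution tree $T_{\sol}$. Since $\GG$ is equationally Noetherian and $G$ is discriminated by $\GG$, Lemma \ref{lem:discfam} applies (the discriminating family splits as a finite union of the families factoring through the finitely many branches of $T_{\sol}$), so there is a single branch --- the fundamental branch $\Omega_0 \to \cdots \to \Omega_q$ --- through which a discriminating subfamily factors.

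Next, I would invoke the construction carried out throughout Section \ref{sec:10}: associated to this fundamental branch there is a graph tower $(\Ts_0, \HH_0)$ and a homomorphism $\tau_0 \colon G_{\Omega_0} \to \Ts_0$ making Diagram (\ref{diag:period}) commute. Composing with the canonical map $\pi_0 \colon G \to G_{R(\Omega_0)}$ and the natural map $G_{R(\Omega_0)} \leftarrow G_{\Omega_0}$, one obtains a homomorphism $i \colon G \to \Ts_0$. It remains to prove that $i$ is injective. This is where Theorem \ref{thm:towerdisc} enters: $\Ts_0$ is discriminated by $\GG$ via the family of homomorphisms $\{H'\}$ induced by solutions factoring through the fundamental branch. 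The key point is that, by commutativity of Diagram (\ref{diag:period}), for each such $H'$ the composition $i \circ (\pi_0 \text{-part}) \circ H'$ recovers the corresponding homomorphism $\varphi = \pi_0 H \colon G \to \GG$ in the discriminating family of $G$. Hence if $g \in \Ker(i)$ were non-trivial, then $\varphi(g) = 1$ for every $\varphi$ in a discriminating family of $G$, contradicting that the family discriminates $G$. Therefore $i$ is a monomorphism and $G$ embeds into the graph tower $\Ts_0$.

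For the effectiveness statement, I would note that every step in the chain is effective when $G$ is given by a finite radical presentation: the solution tree $T_{\sol}(G)$ is constructed effectively by the process of \cite{CKpc} (this is part of the cited Theorem 9.2 there); identifying a fundamental branch is a finite search since $T_{\sol}$ is finite and one can test discrimination of the finitely many candidate subfamilies against any fixed finite set of elements; the construction of $(\Ts_0, \HH_0)$ and $\tau_0$ in Section \ref{sec:10} proceeds by an explicit inductive procedure along the branch (basic, abelian, quadratic, periodic and linear floors are each described by explicit presentations and Tietze transformations); and the embedding $i$ is then written down explicitly as the composite above. This yields the effective construction of both the graph tower and the embedding.

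\textbf{Main obstacle.} The genuinely hard part is not this corollary itself --- which is a short logical assembly --- but the two theorems it rests on, and the subtle point hiding inside the assembly is verifying that the discriminating family of $\Ts_0$ pulls back along $i$ to a discriminating family of $G$. One must be careful that the automorphisms $\sigma_i$ attached to the vertices of the fundamental branch (which appear in the description of solutions through $\pi(v_i,v_{i+1})$) are correctly absorbed: the homomorphisms $H'$ of Theorem \ref{thm:towerdisc} are those induced by the \emph{canonically-automorphism-precomposed} solutions of the fundamental sequence, and one needs exactly that every homomorphism in a discriminating family of $G$ arises this way, so that $\Ker(i)$ is killed by each of them. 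This compatibility is precisely what the induction hypotheses IH, IH1, IH2 and the commutativity of Diagram (\ref{diag:period}) were set up to guarantee, so the obstacle is really bookkeeping: tracing that a non-trivial element of $G$ survives in $G_{\Omega_0}$ (via the separation property relating $\factor{G[X]}{\ncl\langle S\rangle}$ and $G_{R(S)}$, cf. the remark after Theorem \ref{thm:limgrchar}), then survives under $\tau_0$ by discrimination of $\Ts_0$, matching up the two families of homomorphisms at each stage.
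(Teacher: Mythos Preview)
Your approach is essentially the same as the paper's, and the key ingredients (the construction of Section~\ref{sec:10}, Theorem~\ref{thm:towerdisc}, and the commutativity of Diagram~(\ref{diag:period})) are exactly right. One point needs tightening: the map $\tau_0$ is defined on $G_{\Omega_0}$, not on $G_{R(\Omega_0)}$, and the natural arrow goes $G_{\Omega_0} \twoheadrightarrow G_{R(\Omega_0)}$, so you cannot simply ``compose'' $\pi_0 \colon G \to G_{R(\Omega_0)}$ with $\tau_0$ as you wrote. The paper resolves this cleanly by first observing that discrimination of $\Ts_0$ by the family $\{H'\}$ together with the commutativity of Diagram~(\ref{diag:period}) forces $\ker(G_{\Omega_0} \to G_{R(\Omega_0)}) \subset \ker \tau_0$ (since every $H$ in the fundamental sequence kills this kernel and $H = \tau_0 H'$), so $\tau_0$ descends to $\tau' \colon G_{R(\Omega_0)} \to \Ts_0$; then $i = \pi_0 \tau'$ is well defined and your injectivity argument goes through verbatim. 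You allude to this issue in your ``Main obstacle'' paragraph, but it should be the first step of the proof rather than an afterthought.
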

\begin{proof}
Since $\Ts$ is discriminated by $\GG$ by the family of homomorphisms induced by a fundamental sequence of solutions associated to $G$ and since Diagram (\ref{diag:period}) is commutative, it follows that $\ker(G_{\Omega}\to G_{R(\Omega)})<\ker \tau$. Hence, by the universal property of the quotient, $\tau$ induces a homomorphism $\tau'$ from $G_{R(\Omega)}$ to $\Ts$. Since the fundamental sequence that discriminates $G$ into $\GG$ factors through $\Ts$, it follows that the composition of  $\pi:G \to G_{R(\Omega)}$ and $\tau$ is an embedding of $G$ into $\Ts$.
\end{proof}

\begin{thm} \label{thm:main}
Let $G$ be a finitely generated group. The group $G$ acts essentially freely co-specially on a real cubing if and only if it is a subgroup of a graph tower.
\end{thm}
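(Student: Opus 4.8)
The plan is to assemble Theorem~\ref{thm:main} by gluing together the two halves of the paper: the ``soft'' reduction from real cubings to limit groups over partially commutative groups (Section~\ref{sec:7}), and the ``hard'' structural analysis producing graph towers (Sections~\ref{sec:10} and~\ref{sec:11}).

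First I would prove the forward implication. Suppose $G$ is finitely generated and acts essentially freely and co-specially on a real cubing. By Theorem~\ref{thm:gen} (the main result of Section~\ref{sec:7}), this is equivalent to $G$ being a limit group over some finitely generated partially commutative group $\GG$, i.e.\ $G$ is finitely generated and fully residually $\GG$. Now Corollary~\ref{cor:limit} applies directly: every limit group over $\GG$ embeds into a graph tower $(\Ts,\HH)$. This gives one direction, and moreover the construction is effective when $G$ is given by a finite radical presentation, which accounts for the ``furthermore'' sentence following the theorem statement.

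For the converse, assume $G$ is a finitely generated subgroup of a graph tower $\Ts$. The strategy is: (1) observe that $\Ts$ itself is discriminated by $\GG$ — this is exactly Theorem~\ref{thm:towerdisc} when $\Ts$ arises from a fundamental branch, but for an arbitrary graph tower one argues by induction on height using Lemma~\ref{lem:prtower} and the discrimination results for extensions of centralisers (\cite[Corollary 2.11]{CKpc}, \cite[Lemma 4.17]{CK1}) and for quadratic floors, exactly as in the proof of Theorem~\ref{thm:towerdisc}; (2) a subgroup of a group discriminated by $\GG$ is itself discriminated by $\GG$, so $G$ is fully residually $\GG$; since $G$ is finitely generated, it is a limit group over $\GG$; (3) apply Theorem~\ref{thm:gen} in the reverse direction to conclude that $G$ acts essentially freely and co-specially on an asymptotic cone of a partially commutative group, which is a real cubing. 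One point that needs care here is that the partially commutative group produced in step (2) must be finitely generated: one passes to the canonical parabolic subgroup $\bar\GG$ generated by the alphabets of the images of a finite generating set of $G$, using Proposition~\ref{prop:corHW}-style reasoning, so that $G$ is fully residually $\bar\GG$ with $\bar\GG$ finitely generated.

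The main obstacle is really bookkeeping rather than a new idea: the delicate part is showing that an \emph{arbitrary} graph tower (not just one built from a fundamental branch via the Makanin--Razborov process) is discriminated by $\GG$. The inductive argument in Theorem~\ref{thm:towerdisc} is phrased in terms of the induction hypotheses IH, IH1, IH2 attached to the construction; for an abstractly presented graph tower one must verify that the properties of $\KK$ and $\KK^\perp$ encoded in Definition~\ref{defn:tower} (closedness, $E_d$-direct indecomposability, the structure given by Lemma~\ref{lem:typesKperp}, and the conditions $\circledast$, $\circledast\circledast$ on the quadratic relation) suffice to run the discrimination argument floor by floor — extension of centraliser of an irreducible element for basic and abelian floors, and the Wilton-style double argument (using \cite[Lemma 4.17]{CKpc} for the BP property and the cyclic-subgroup malnormality substitute for CSA) for surface floors. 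Once discrimination of $\Ts$ is in hand, everything else is a routine chaining of the already-established equivalences in Theorem~\ref{thm:gen}.
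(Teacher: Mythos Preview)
Your proposal is correct and follows exactly the paper's approach: the paper's own proof is the single sentence ``Follows from Theorem~\ref{thm:gen} and Corollary~\ref{cor:limit},'' and your expansion is the natural unpacking of that sentence in both directions.

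One remark: you spend a paragraph worrying about whether an \emph{arbitrary} graph tower (in the sense of Definition~\ref{defn:tower}) is discriminated by $\GG$, versus only those arising from a fundamental branch. This caution is fair---Theorem~\ref{thm:towerdisc} is literally stated only for towers built from fundamental branches, and its proof invokes Remark~\ref{rem:blocks} and Lemma~\ref{lem:eired}, which are about solutions of the fundamental sequence. The paper's one-line proof of Theorem~\ref{thm:main} does not address this either; it is simply taking for granted that the inductive discrimination argument of Theorem~\ref{thm:towerdisc} goes through for any graph tower, since the argument is organised floor-by-floor via Lemma~\ref{lem:prtower} and the conditions in Definition~\ref{defn:tower} are designed precisely so that each floor is an extension of centraliser of an irreducible element or a non-exceptional surface attachment, to which \cite[Corollary~2.11]{CKpc}, \cite[Lemma~4.17]{CK1}, and the Wilton-style argument apply. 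So your ``obstacle'' is real but is one the paper itself leaves implicit; you are not missing anything the paper supplies.
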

\begin{proof}
Follows from Theorem \ref{thm:gen} and Corollary \ref{cor:limit}.
\end{proof}

In the case of free actions, the above theorem results in the following corollary, which can be likened to Rips' theorem on free actions on real trees.
\begin{cor} \label{cor:main}
A finitely generated group $G$ acts freely, essentially freely and co-specially on a real cubing if and only if $G$ is a subgroup of the graph product of free abelian and {\rm(}non-exceptional{\rm)} surface groups.

In particular, if the real cubing is a real tree, then $G$ is a {\rm(}subgroup of{\rm)} the free product of free abelian groups and {\rm(}non-exceptional{\rm)} surface groups.
\end{cor}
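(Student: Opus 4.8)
\textbf{Proof proposal for Corollary \ref{cor:main}.}

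The plan is to deduce the corollary from Theorem \ref{thm:main} by restricting attention to \emph{free} actions and identifying which graph towers can occur, together with their subgroups. First I would observe that a free action is in particular essentially free, so by Theorem \ref{thm:main} a finitely generated group $G$ acting freely, essentially freely and co-specially on a real cubing is a subgroup of a graph tower $\Ts$. The key point is then to show that in the construction of $\Ts$ no extension of centralisers actually takes place once the action is free; equivalently, that one may take $\Ts$ to be a graph tower in which every centraliser $C_{\Ts^{l-1}}(D)$ (or $C_{\Ts^{l-1}}(\KK^\perp)$) that is amalgamated along is trivial. Granting this, the presentations a1)--c) of Lemma \ref{lem:prtower} simplify: case a1) becomes a free product of $\Ts^{l-1}$ with a free group; cases a2) and b1) become free products with a free abelian group; case b2) becomes a free product with a free abelian group; and case c) becomes a free product of $\Ts^{l-1}$ with a (non-exceptional) surface group. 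Since graph towers of height $0$ are partially commutative groups, an induction on height shows that $\Ts$ is built from partially commutative groups, free abelian groups, free groups and non-exceptional surface groups by iterated free products, i.e. $\Ts$ is (a subgroup of) the graph product of free abelian and non-exceptional surface groups. Conversely, any subgroup of such a graph product acts freely co-specially on the universal cover of the standard cube complex of the ambient partially commutative-like group by Proposition \ref{prop:HW} and Corollary \ref{cor:finitewidth}, hence (viewing the cube complex as a real cubing) acts freely, essentially freely and co-specially on a real cubing; the action is essentially free because it is free and non-trivial, and faithful.

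To justify the triviality of the amalgamated centralisers, I would argue via the kernel of the action. If $G$ acts freely on a cubing $C_i$ via $\alpha_i$, then $\Ker(\alpha_i)$ is trivial, so in Proposition \ref{prop:uniqi} and Proposition \ref{prop:uem} the group $G$ itself (not merely $G/K_i$) embeds into the partially commutative group $\GG(\CC)$, and the limiting action of $G$ on $\Con_\UU(\GG(\CC))$ is by left translations with $G$ injecting into the ultrapower $\GG(\CC)^\UU$ by Proposition \ref{prop:conult}. Now I would use the discriminating family: by Corollary \ref{cor:limit}, $G$ embeds into a graph tower $\Ts$ discriminated by $\GG$ via the homomorphisms induced by a fundamental sequence, and the composition $G \hookrightarrow \Ts \to \GG$ of each such homomorphism restricted to $G$ is injective on larger and larger finite sets. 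The point is that if some floor of $\Ts$ amalgamated along a nontrivial centraliser $C_{\Ts^{l-1}}(D)$, then $C_{\Ts^{l-1}}(D)$ would be a nontrivial abelian (indeed, a direct factor) subgroup intersecting $G$ in a way incompatible with $G$ acting freely; more precisely, a free action on a cubing forces $G$ to have no nontrivial centre in any canonical parabolic overgroup, and the block-element analysis (Remarks \ref{rem:blocks} and \ref{rem:blockperiod}) shows that the only obstruction to triviality is the presence of a directly decomposable $\KK^\perp$, which in the free case cannot occur because it would produce a $\BZ^2$ acting non-freely. Thus every $\KK^\perp$ is directly indecomposable and non-abelian (case a1), b2), c)) with trivial centraliser, or cyclic (cases a2), b1)) again with the ``centraliser factor'' absorbed; in all cases the amalgamation degenerates to a free product. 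I would make this precise by induction, using that $\Ts^{l-1}$ is itself (by induction) a subgroup of a graph product of free abelian and surface groups and hence has the property that centralisers of non-abelian subgroups are trivial and centralisers of cyclic subgroups are maximal cyclic exactly when the surrounding graph product has no edges touching that vertex.

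For the ``in particular'' statement about real trees: a real tree is a real cubing of width $1$ by Lemma \ref{lem:width}(1), so the ambient partially commutative group has no commutation, hence the graph product above has \emph{trivial} commutation graph and is simply a free product of free abelian and non-exceptional surface groups; then $G$, being a finitely generated subgroup, is a free product of free abelian groups and non-exceptional surface groups by the Kurosh subgroup theorem (this recovers exactly the Rips/Kharlampovich--Miasnikov description). The converse direction in the tree case is immediate since a free product of free abelian and surface groups acts freely on a real tree (as a limit of free simplicial actions).

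\textbf{Main obstacle.} The hard part is the forward direction's claim that freeness of the action forces \emph{all} the amalgamating centralisers to be trivial, i.e. that the graph tower degenerates to a graph product. This requires carefully tracking, through the inductive construction of $\Ts_G$ in Section \ref{sec:10}, that a nontrivial $C_{\Ts^{l-1}}(D)$ would reflect back to a nontrivial centraliser in $G$ (via the discriminating family into $\GG$ and the fact that $G$ injects), and then that such a centraliser is incompatible with a free action on a cubing — which is itself essentially the statement that a group acting freely co-specially on a cubing is a subgroup of a partially commutative group (Proposition \ref{prop:HW}) and hence, if it is directly decomposable along the relevant parabolic, contains a $\BZ^2$ that cannot act freely while being amalgamated. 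Pinning down the compatibility between "free action", "trivial kernel $K_i$", and "no centraliser extensions" is where the real work lies; the rest is bookkeeping with Lemma \ref{lem:prtower} and the Kurosh theorem.
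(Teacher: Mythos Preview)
The paper states Corollary \ref{cor:main} without proof, so there is nothing to compare your argument against directly. Your overall strategy --- apply Theorem \ref{thm:main} and then identify which graph towers can arise when the action is additionally free --- is the natural one, but the forward direction has a genuine gap at the crucial step.

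Your central claim is that freeness of the action on the real cubing forces every amalgamating centraliser $C_{\Ts^{l-1}}(\KK^\perp)$ in the tower to be trivial, so that the tower collapses to an iterated free product. The justification you offer does not hold up. First, you assert that such a centraliser ``would be a nontrivial abelian (indeed, a direct factor) subgroup''; this is false --- centralisers of subsets in partially commutative groups (and in graph towers) are canonical parabolic subgroups, hence themselves partially commutative and typically non-abelian. Second, you argue that a directly decomposable $\KK^\perp$ ``would produce a $\BZ^2$ acting non-freely''; but $\BZ^2$ acts freely on the real cubing $\BR^2$, and the corollary explicitly allows free abelian vertex groups, so the presence of large abelian subgroups is no obstruction at all to freeness of the action on a real cubing. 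Third, you pass without justification from ``$G$ acts freely on the real cubing $\CC$'' to ``$\Ker(\alpha_i)$ is trivial'': the hypothesis concerns the limiting action, not the component actions on the $C_i$. Finally, even in your ``absorbed'' cyclic case b1), the amalgam $\Ts^{l-1} *_{C(u)} (C(u)\times \BZ^m)$ is taken over at least $\langle u\rangle$, not over the trivial group, so the claimed degeneration to a free product fails there as well.

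The real-tree clause via width $1$ and the Kurosh subgroup theorem is reasonable once the general statement is established, but it inherits the gap above. You have correctly located the obstacle; you have not overcome it.
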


We now record some properties of limit groups over partially commutative groups.

\begin{prop} \
\begin{enumerate}
\item Every limit group over $\GG$ is torsion free.
\item Every 2-generated limit group is either free or free abelian.
\item Every solvable subgroup of a limit group is abelian.
\item Let $\mathcal G$ be an algebraic group over $\BR$ in which $\GG$ embeds. Then, for any limit group $G$ over $\GG$ there exists an embedding $G \hookrightarrow\mathcal  G$. In particular, $G$ embeds into $\SL_n(\BR)$.  The natural map $G \to \PSL_n(\BR)$ is also an embedding.
\item 
Let $\GG$ be a partially commutative group and let $(\Ts,\HH)$ be a graph tower associated to a limit group $G$ over $\GG$. Then, abelian subgroups of $\Ts$  {\rm(}and thus of $G${\rm)} are free, and there is a uniform bound on their rank.
\end{enumerate}
\end{prop}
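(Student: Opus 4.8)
The plan is to prove all five items at once, in two groups: items (1)--(3) and (5) follow from a single structural statement about graph towers — namely that $\Ts$ acts on a (complete) $\mathrm{CAT}(0)$ cube complex, or more directly, that $\Ts$ is discriminated by the finitely generated partially commutative group $\GG$ (Corollary \ref{cor:limit} and Theorem \ref{thm:towerdisc}) — while item (4) is an elementary consequence of the residual definition together with linearity of partially commutative groups. Throughout, the key mechanism is: any finite configuration of elements of $\Ts$ witnessing a ``bad'' phenomenon can be pushed, via a discriminating homomorphism $H':\Ts\to\GG$, into $\GG$, where the phenomenon is impossible because $\GG$ has the corresponding good property. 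So the first step is to record exactly which properties of $\GG$ we need: $\GG$ is torsion free; every $2$-generated subgroup of $\GG$ is free or free abelian (\cite{Baudisch}); every solvable subgroup of $\GG$ is abelian (indeed centralisers and abelian subgroups of partially commutative groups are well understood, see \cite{DK,DKRpar}); $\GG$ is linear, hence embeds in some $\SL_n(\BR)$ (\cite{Heqn}); and abelian subgroups of $\GG$ are free of rank bounded by the number of generators of $\GG$.

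For item (1): if $g\in\Ts$ has finite order $m>1$, pick a discriminating family $\{H_i'\}$ of $G$ (equivalently of $\Ts$) into $\GG$; some $H_i'(g)\neq 1$, and $H_i'(g)$ has order dividing $m$ in $\GG$, contradicting torsion-freeness of $\GG$. For item (2): let $H<G$ be $2$-generated and non-abelian; by Remark \ref{rem:discfam} there is a canonical parabolic subgroup $\HH_0$ of $\GG$ and a discriminating family $\{\varphi_i\}$ with $\varphi_i(H)<\HH_0$. A discriminating homomorphism is injective on a $2$-generated non-free non-abelian group only if the image is such a group, impossible in a partially commutative group by \cite{Baudisch}; if $H$ is abelian one argues similarly with the free-abelian alternative, so in either case $H$ is free or free abelian. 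For item (3): a solvable subgroup $H<G$ is locally a limit group over $\GG$; one shows directly that the metabelian (in fact derived) structure collapses — pick $a,b$ with $[a,b]\neq 1$ in some term of the derived series, discriminate into $\GG$ to get $[\varphi_i(a),\varphi_i(b)]\neq 1$ inside a solvable subgroup of $\GG$, which is abelian, a contradiction; hence $H$ is abelian.

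For item (4): by Theorem \ref{thm:limgrchar}(3), since $\GG$ is equationally Noetherian, $G$ embeds into an ultrapower $\GG^{\UU}$ of $\GG$. If $\GG$ embeds into an algebraic group $\mathcal G$ over $\BR$, then $\GG^{\UU}$ embeds into $\mathcal G^{\UU}=\mathcal G(\BR^{\UU})$, which is an algebraic group over the real closed field $\BR^{\UU}$; but $G$ is finitely generated, hence its matrix entries (finitely many) lie in a finitely generated subfield, which embeds into $\BR$, giving $G\hookrightarrow\mathcal G(\BR)=\mathcal G$. Taking $\mathcal G=\SL_n$ (using linearity of $\GG$, \cite{Heqn}) gives $G\hookrightarrow\SL_n(\BR)$; the composite $G\to\PSL_n(\BR)$ is still injective since $G$ is torsion free by item (1) while the kernel $\{\pm I\}\cap G$ is a torsion subgroup. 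For item (5): let $A<\Ts$ be abelian. Either use the amalgamated-product structure of Lemma \ref{lem:prtower} and induction on the height to control abelian subgroups floor by floor (each floor amalgamates over a centraliser, and an abelian subgroup either lies in a conjugate of a factor or is conjugate into an edge group), or — more cleanly — use that $\Ts$ is discriminated by $\GG$: every finitely generated subgroup $A_0$ of $A$ is fully residually $\GG$, hence by item (3)-type reasoning embeds into an ultrapower of $\GG$; abelian subgroups of $\GG^{\UU}$ are free abelian of rank at most the rank of $\GG$ (this is a first-order-ish consequence: $\GG$ has no free abelian subgroup of rank $>N:=\mathrm{rk}(\GG)$, hence neither does $\GG^{\UU}$, and the analogous upper bound for torsion-free abelian groups of bounded rank is inherited), so $\mathrm{rk}(A_0)\le N$; letting $A_0$ range over finitely generated subgroups of $A$, $A$ itself is torsion-free abelian of rank at most $N$, hence free abelian of rank $\le N$.

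\textbf{Main obstacle.} The delicate point is item (5), specifically establishing the \emph{uniform} bound on ranks of abelian subgroups and — for the ``free abelian'' (not merely torsion-free abelian) conclusion — ruling out abelian subgroups that are not finitely generated or have unbounded rank. The cheap argument via ultrapowers gives that every finitely generated abelian subgroup has rank $\le N$, which forces $A$ to be torsion-free of rank $\le N$ and hence (a torsion-free abelian group of finite rank that is locally free abelian is free abelian) free abelian of rank $\le N$; the subtlety is making sure the bound $N$ depends only on $\GG$ and not on the tower, which it does because all discriminating homomorphisms land in the \emph{fixed} finitely generated group $\GG$. Alternatively, the inductive argument along Lemma \ref{lem:prtower} requires knowing that abelian subgroups of an amalgamated product $\Ts^{l-1}\ast_{C}\!\big(C\times Q\big)$ are either conjugate into $\Ts^{l-1}$, or into $C\times Q$, or are of the form (abelian in $C$) $\times$ (abelian in $Q$), and that centralisers $C=C_{\Ts^{l-1}}(\KK^\perp)$ have bounded rank — this is where one would invoke the structure of centralisers in partially commutative groups via the discrimination, and it is the part that would need the most care to write out rigorously.
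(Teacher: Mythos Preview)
Your treatment of items (1)--(4) is essentially correct and matches the paper's approach (the paper simply refers to standard arguments, citing \cite{Wilton}); the phrasing in (2) is garbled, but the underlying mechanism --- discriminate $[a,b]\ne 1$ into $\GG$ and invoke Baudisch to force any putative relation $w(a,b)=1$ to a contradiction --- is the right one.

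The genuine gap is in item (5): your ``cleaner'' ultrapower argument does not work, and the flaw is precisely in the step you flag as ``first-order-ish''. The claim that $\GG^{\UU}$ has no free abelian subgroup of rank greater than $N=\mathrm{rk}(\GG)$ is false. Take $\GG=\BZ$: then $\BZ^{\UU}$ is a torsion-free abelian group of uncountable rank, so it contains $\BZ^r$ for every $r$. More to the point, $\BZ^r$ is fully residually $\BZ$ for every $r$ (a generic linear functional separates any finite set of nonzero lattice points), so limit groups over $\BZ$ already realise abelian groups of all finite ranks. There is therefore no bound depending only on $\GG$; the uniform bound in the statement is uniform over abelian subgroups of a \emph{fixed} tower $\Ts$ and necessarily depends on $\Ts$. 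Your secondary step (``torsion-free abelian of finite rank and locally free abelian implies free abelian'') is also false: $\mathbb{Q}$ has rank one and every finitely generated subgroup is infinite cyclic, yet $\mathbb{Q}$ is not free abelian.

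The paper proves (5) by exactly the route you list as the alternative and then set aside: induction on the height via the amalgamated-product decomposition of Lemma~\ref{lem:prtower} and Bass--Serre theory. An abelian subgroup $A<\Ts^l$ acts on the Bass--Serre tree; if it fixes a vertex it lies in a conjugate of a vertex group, whose abelian subgroups are (inductively) free of rank at most $N_{l-1}+m_l$; otherwise $A$ acts by translations on an invariant line, the kernel lies in an edge group (hence is free abelian of bounded rank by induction), and $A$ is an abelian extension of this kernel by $\BZ$, hence $A\cong\BZ^{k+1}$ since $\mathrm{Ext}^1(\BZ,\BZ^k)=0$. This gives both freeness and the bound $N_{l-1}+m_l+1$. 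You should carry out this argument rather than the ultrapower one.
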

\begin{proof}
All but the last statement follow from definition using general arguments, see, for example,  \cite{Wilton}. To prove the last one we use the fact that every limit group over $\GG$ is a subgroup of a graph tower.

First notice that since, by Theorem \ref{thm:towerdisc}, the group $\Ts$ is discriminated by $\GG$, then any abelian subgroup of $\Ts$ is torsion-free. Let $A$ be an abelian subgroup of $\Ts$. Let us prove that the rank of $A$ is uniformly bounded by induction on the height of $\Ts$. If the height is $0$, then the graph tower coincides with the partially commutative group $\GG$ and hence the rank of an abelian subgroup is bounded by the number of vertices in a maximal clique of the graph $\Gamma$ or, more coarsely, it is bounded by the number of generators of $\GG$. 

Let $\Ts=\Ts^l$ be of height $l$. By Lemma \ref{lem:prtower}, the group $\Ts$ admits a decomposition as an amalgamated product. Let $T$ be the Bass-Serre tree associated to the decomposition of $\Ts$. Notice that the rank of an abelian subgroup of $\Ts^{l-1}$ is uniformly bounded by $N_{l-1}$ by induction hypothesis. Furthermore, the other vertex group is a direct product of a subgroup of $\Ts^{l-1}$ and either a free, or a free abelian or a surface group. Hence in all the cases, we conclude that the rank of an abelian subgroup of this vertex group is uniformly bounded by $N_{l-1}+m_l$. If the subgroup $A$ fixes a vertex of $T$ then $A$ is a subgroup of a (conjugate) of a vertex group and so its rank is uniformly bounded by $N_{l-1}+m_l$. Otherwise, $A$ fixes a line $T_A$ in $T$, on which it acts by translations. The quotient $\Delta = A\backslash T_A$ is topologically a circle; after some collapses $\Delta$ is an HNN-extension; so the rank of $A$ is uniformly bounded by $N_{l-1}+m_l+1$.
\end{proof}

\section{Irreducible components}
If a group $G$ is equationally Noetherian, then every algebraic set $V$ in $G^n$ is a finite union of its irreducible components. Using the duality between the categories of algebraic sets and coordinate groups, one can conclude  that if $G$ is equationally Noetherian, then any coordinate group is the subdirect product of the direct product of the coordinate groups of the irreducible components, \cite{BMR1}.

Since partially commutative groups are equationally Noetherian, any finitely generated residually $\GG$ group is a subdirect product of the direct product of finitely many limit groups over $\GG$.

In the case of free groups, Kharlampovich and Miasnikov, \cite{KhMNull}, used the Makanin-Razborov process to describe an embedding of a residually free group into the direct product of limit groups. In a recent work on the structure of finitely presented residually free groups, Bridson, Howie, Miller and Short, \cite{BHMS}, gave a different (canonical) construction of such an embedding.

The aim of this section is to generalise the forementioned results and show that given a finitely generated residually $\GG$ group $G$, one can effectively construct an embedding of $G$ into the direct product of limit groups over $\GG$. Namely, we prove the following
\begin{thm} \label{thm:sollast}
Let $\GG$ be a partially commutative group and let $G$ be a finitely generated residually $\GG$ group. Then, one can effectively construct finitely many fully residually $\GG$ graph towers $\Ts_1,\dots,\Ts_k$ and homomorphisms $p_i$ from $G$ to $\Ts_i$, $i=1,\dots,k$, so that any homomorphism from $G$ to $\GG$ factors through a graph tower $\Ts_i$, for some $i=1,\dots,k$, i.e. for any homomorphism $\varphi:G\to \GG$ there exist $i\in \{1,\dots,k\}$ and a homomorphism $\varphi_i:\Ts_i\to \GG$ so that $\varphi=p_i\varphi_i$. In particular, $G$ is a subgroup of the direct product of the graph towers $\Ts_i$, $i=1,\dots,k$ and a subdirect product of the direct product of groups $p_i(G)<\Ts_i$, $i=1,\dots, k$.
\end{thm}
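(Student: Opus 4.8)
The plan is to reduce Theorem \ref{thm:sollast} to the construction carried out in Sections \ref{sec:10} and \ref{sec:11} for a single limit group, combined with the finiteness of the solution tree $T_{\sol}(G)$ from \cite{CKpc}. Since $\GG$ is equationally Noetherian (being linear, hence $G$-equationally Noetherian by \cite{BMR1}), the algebraic set $V_\GG(S)$ defined by a finite radical presentation $G = G_{R(S)}$ decomposes into finitely many irreducible components $V_1,\dots,V_k$. By the duality between algebraic sets and coordinate groups and Theorem \ref{thm:limgrchar}, each coordinate group $G_{R(V_j)}$ is a limit group over $\GG$, the natural maps $q_j\colon G \to G_{R(V_j)}$ are surjective, and $G$ embeds into $\prod_j G_{R(V_j)}$ as a subdirect product. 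Concretely, one extracts the components effectively from $T_{\sol}(G)$: each leaf $v$ of the tree carries a coordinate group and a completely described family of homomorphisms to $\GG$ (\cite[Theorem 9.2, Proposition 9.1]{CKpc}), and the finitely many branches of $T_{\sol}(G)$ together account for all of $\Hom(G,\GG)$; grouping branches by which irreducible component of $V_\GG(S)$ their image families are Zariski-dense in yields the desired finite list.

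Next I would apply the main construction of Section \ref{sec:10}, now to an \emph{arbitrary} branch of $T_{\sol}(G)$ rather than only to a fundamental branch. For each branch $\Omega_0 \to \cdots \to \Omega_q$ we build, by the same induction on the height of the branch and using the trichotomy of infinite-branch types (linear/type 7--10, quadratic/type 12, general/type 15) together with periodic structures, a graph tower $(\Ts, \HH)$ and a homomorphism $\tau$ from $G_{\Omega_0}$ to $\Ts$ making the analogue of Diagram (\ref{diag:period}) commute for every solution that factors through that branch. The point — and this is where care is needed — is that Propositions \ref{prop:quad}, \ref{prop:sing}, \ref{prop:reg}, \ref{prop:case15}, and the linear-case analysis were all stated and proved under the hypothesis that the relevant family of solutions is a \emph{discriminating} family (the ``fundamental sequence''), so that invariants like the minimal tribe and the co-irreducibility of $\KK$ are controlled. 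For a general branch the corresponding family is only \emph{separating} into a canonical parabolic subgroup, but this is precisely the content of Remark \ref{rem:discfam} and Lemma \ref{lem:discfam}: restricting to the branch, one still gets a family whose images lie in (and are Zariski-dense in) a fixed directly indecomposable / canonical parabolic piece of $\GG$, and $G_{R(\Omega_v)}$ at the leaf of that branch \emph{is} a limit group over $\GG$ (coordinate group of an irreducible component). Hence the tribe machinery and the co-irreducibility arguments (Lemma \ref{lem:typesKperp} and the lemmas establishing that $\KK$ is $E_d(\Gamma)$-\cool) go through verbatim with ``fundamental sequence'' replaced by ``a separating family factoring through the branch and minimal in the sense of Remark \ref{rem:discfam}''.

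Then I would invoke the discrimination result, Theorem \ref{thm:towerdisc}, in this slightly more general setting: the graph tower $\Ts_j$ attached to the $j$-th branch is discriminated by $\GG$ via the homomorphisms induced by the solutions factoring through that branch, so $\Ts_j$ is fully residually $\GG$ (a graph tower which is a limit group over $\GG$). As in Corollary \ref{cor:limit}, commutativity of the diagram forces $\ker(G_{\Omega_0}\to G_{R(\Omega_0)}) < \ker\tau$, so $\tau$ descends to $\tau'\colon G_{R(V_j)} \to \Ts_j$; composing with $q_j\colon G \to G_{R(V_j)}$ gives $p_j\colon G \to \Ts_j$, and since the discriminating family that separates $G_{R(V_j)}$ into $\GG$ factors through $\Ts_j$, the composite $p_j$ is injective on $G_{R(V_j)}$, in particular $q_j$-fibre-wise faithful. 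Finally, any $\varphi\colon G \to \GG$ determines a path from the root to a leaf of $T_{\sol}(G)$, hence lies on some branch $j$, hence factors as $\varphi = p_j \varphi_j$ for a suitable $\varphi_j\colon \Ts_j \to \GG$ (the composite of the residual data along that branch with the explicitly described homomorphism from the leaf group to $\GG$). Since $\bigcap_j \ker q_j$ is trivial (the components cover $V_\GG(S)$ and $G$ is residually $\GG$, so the intersection of the kernels of all homomorphisms $G \to \GG$ is trivial, and each such homomorphism factors through some $q_j$), the diagonal map $G \to \prod_j \Ts_j$ is an embedding, exhibiting $G$ as a subdirect product of the $p_j(G) < \Ts_j$.

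The main obstacle, and the step that requires the most genuine work rather than bookkeeping, is the second paragraph: verifying that the entire Section \ref{sec:10}--\ref{sec:11} apparatus — in particular the tribe invariants in the quadratic and linear cases, the co-irreducibility of the subgroups $\KK$, and the BP-property and CSA-substitute arguments of Theorem \ref{thm:towerdisc} — remains valid when the governing family of solutions is only ``separating and minimal along a branch'' instead of globally discriminating. Everything there was phrased in terms of a fundamental sequence, and one must check that the only property actually used is that the family is Zariski-dense in a fixed canonical parabolic / directly indecomposable piece of $\GG$ (equivalently, that the leaf group is a genuine limit group), which is supplied branch-by-branch by Remark \ref{rem:discfam}. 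The rest — decomposing $V_\GG(S)$ into components, reading off branches from $T_{\sol}(G)$, assembling the diagonal embedding — is routine given the results already established.
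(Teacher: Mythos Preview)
Your overall strategy --- run the Section \ref{sec:10}--\ref{sec:11} construction along every branch of the solution tree rather than only the fundamental one, then assemble the resulting towers into a product --- is exactly the paper's, and you have correctly located the single real obstacle: that construction was written for a \emph{discriminating} family, whereas an arbitrary branch carries only a family of solutions that need not discriminate.

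Your proposed resolution of that obstacle, however, has a gap. You assert that the arguments of Section \ref{sec:10} use only that the family is ``Zariski-dense in a fixed canonical parabolic / directly indecomposable piece of $\GG$'', a global condition on the image of $G$. That is not what is used. The tribe machinery (and with it the minimality analysis in the quadratic case, the co-irreducibility of $\KK$, the choice of $\GG_\KK$, Remarks \ref{rem:discrfam} and \ref{rem:K}, etc.) requires that for \emph{each individual item} $h_i$ there be a fixed canonical parabolic subgroup $\GG_{h_i}$ with $\langle\az(H_i)\rangle=\GG_{h_i}$ for \emph{every} solution $H$ in the family. For a discriminating family this is arranged by Lemma \ref{lem:discfam}: among the finitely many candidate $\GG_{h_i}$'s, one still yields a discriminating subfamily. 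For a merely separating family different solutions may land in different parabolics, and no single choice covers them all; your replacement hypothesis does not force this per-item rigidity, so the tribe arguments do not ``go through verbatim''.

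The paper's fix is both simpler and more direct than weakening the hypothesis. It observes that discrimination enters Sections \ref{sec:10}--\ref{sec:11} \emph{only} through invocations of Lemma \ref{lem:discfam}, each of which partitions the current family into finitely many subfamilies with a prescribed property. Rather than selecting one (which is what requires discrimination), one keeps \emph{all} of them: build a finite tree $T_{\sol}'(G)$ from $T_{\sol}(G)$ by replacing each branch $B$ by copies $B_1,\dots,B_l$, one per subfamily arising at the successive partition points, and declare that the homomorphisms factoring through $B_j$ are exactly those making the $j$-th choice at every stage. Along each branch of $T_{\sol}'(G)$ the construction of Section \ref{sec:10} and the discrimination proof of Theorem \ref{thm:towerdisc} then run unchanged, yielding a fully residually $\GG$ tower $\Ts_B$ and $p_B=\pi_B\tau_B:G\to\Ts_B$; every $\varphi\in\Hom(G,\GG)$ lies in some subfamily, hence factors through the corresponding $\Ts_B$, and residual-$\GG$-ness of $G$ gives the embedding into the product. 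Your detour through irreducible components is unnecessary for this argument and does not address the per-item issue above.
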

Notice that $p_i(G)<\Ts_i$, $i=1,\dots, k$, are finitely generated subgroups of the graph towers $\Ts_i$. Since the graph towers $\Ts_i$ are discriminated by $\GG$, it follows that $p_i(G)$, $i=1,\dots, k$ are limit groups over $\GG$.

Since partially commutative groups are equationally Noetherian, any finitely generated residually $\GG$ group admits a finite radical presentation.

\begin{cor}
Let $G$ be a finitely generated residually $\GG$ group given by its finite radical presentation. Then, one can effectively construct an embedding of $G$ into the direct product of finitely many limit groups over $\GG$.
\end{cor}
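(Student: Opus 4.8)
The final statement to prove is the following corollary:

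\begin{cor*}
Let $G$ be a finitely generated residually $\GG$ group given by its finite radical presentation. Then, one can effectively construct an embedding of $G$ into the direct product of finitely many limit groups over $\GG$.
\end{cor*}

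\textbf{Proof proposal.} The plan is to deduce the corollary directly from Theorem \ref{thm:sollast}, together with the remarks that immediately follow it. First I would invoke Theorem \ref{thm:sollast} with the given finitely generated residually $\GG$ group $G$: since $G$ is given by its finite radical presentation, the effectivity clause of Theorem \ref{thm:sollast} applies and produces finitely many fully residually $\GG$ graph towers $\Ts_1,\dots,\Ts_k$ together with homomorphisms $p_i \colon G \to \Ts_i$, $i=1,\dots,k$, effectively constructed from the radical presentation, with the property that every homomorphism $\varphi \colon G \to \GG$ factors as $\varphi = p_i \varphi_i$ for some $i$ and some $\varphi_i \colon \Ts_i \to \GG$. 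The diagonal map $p = (p_1,\dots,p_k) \colon G \to \Ts_1 \times \dots \times \Ts_k$ is then the candidate embedding, and $G$ embeds as a subdirect product of $p_1(G)\times\dots\times p_k(G)$, exactly as stated in the conclusion of Theorem \ref{thm:sollast}.

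The one point requiring a short argument is \emph{injectivity} of $p$ and the upgrade from ``subgroup of a product of graph towers'' to ``subgroup of a product of \emph{limit groups over $\GG$}''. For injectivity, let $g \in G$ be non-trivial. Since $G$ is residually $\GG$, there is a homomorphism $\varphi \colon G \to \GG$ with $\varphi(g) \ne 1$; by Theorem \ref{thm:sollast} this $\varphi$ factors through some $\Ts_i$ via $p_i$, so $\varphi = p_i\varphi_i$ and hence $p_i(g) \ne 1$ (otherwise $\varphi(g)=\varphi_i(p_i(g))=\varphi_i(1)=1$). Thus $p(g)\ne 1$, so $p$ is injective. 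For the final clause, observe that $p_i(G)$ is a finitely generated subgroup of the graph tower $\Ts_i$, and by Theorem \ref{thm:towerdisc} each graph tower $\Ts_i$ is discriminated by $\GG$; hence $p_i(G)$, being a finitely generated subgroup of a group fully residually $\GG$, is itself fully residually $\GG$, i.e.\ a limit group over $\GG$. Therefore $p$ realizes $G$ as a subgroup of $p_1(G)\times\dots\times p_k(G)$, a direct product of finitely many limit groups over $\GG$.

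Finally I would address \emph{effectivity} of the whole construction. The hypothesis guarantees that $G$ comes with a finite radical presentation; feeding this into the effective version of Theorem \ref{thm:sollast} (which in turn rests on the effective Makanin--Razborov-type process of \cite{CKpc} and the effective construction of graph towers from fundamental branches in Sections \ref{sec:10} and \ref{sec:11}) yields the towers $\Ts_i$, their finite presentations, and the homomorphisms $p_i$ algorithmically. Assembling the diagonal map $p$ is then a trivial effective step. There is essentially no hard mathematical obstacle here beyond what has already been established: the corollary is a packaging of Theorem \ref{thm:sollast}, Corollary \ref{cor:limit} and Theorem \ref{thm:towerdisc}. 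The only mild subtlety to state carefully is that ``residually $\GG$'' is exactly what is needed to ensure the diagonal map separates points (whereas one would need ``fully residually $\GG$'' to embed into a \emph{single} tower), and that finite generation of $G$ is what passes to the images $p_i(G)$ to make them limit groups rather than merely residually $\GG$ groups.
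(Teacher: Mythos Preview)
Your proposal is correct and matches the paper's approach: the corollary is deduced from Theorem~\ref{thm:sollast} together with the observation (stated in the paper immediately after the theorem) that each $p_i(G)$ is a finitely generated subgroup of the fully residually $\GG$ graph tower $\Ts_i$ and hence a limit group over $\GG$. Your injectivity argument for the diagonal map and your handling of effectivity are exactly what the paper intends; the long proof block following the corollary in the paper is in fact the proof of Theorem~\ref{thm:sollast} itself, from which the corollary is an immediate consequence.
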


It is worthwhile mentioning that, in general, the decomposition  we construct is not minimal, i.e. $G$ might be presented as the subdirect product of a direct product of (strictly) less than $k$ limit groups.
\begin{proof}
For a limit group over $\GG$, we have effectively constructed a graph tower associated to the fundamental branch, see Corollary \ref{cor:limit}. In the proof, the fact that the fundamental sequence is a discriminating family is  used only in and when referring to Lemma \ref{lem:discfam}, i.e. when given a decomposition of a discriminating family of homomorphism $\{\varphi_i\}$ into a finite union of families of homomorphisms $\{\varphi_{i1}\}, \dots, \{\varphi_{il}\}$ (with a specific property), we conclude that then one of the subfamilies is a discriminating family.

Let $G$ be a finitely generated residually $\GG$ group and let $T_{\sol}(G)$ be the Makanin-Razborov diagram constructed in \cite{CKpc}. We construct a tree $T_{\sol}'(G)$ from the tree $T_{\sol}(G)$ as follows. We follow the proof given in Section \ref{sec:10}, but for each branch $B$ of the tree $T_{\sol}(G)$, if a family of homomorphisms $\{\varphi_i\}$ of the branch $B$ is decomposed into a finite union $\{\varphi_{i1}\}, \dots, \{\varphi_{il}\}$ of subfamilies (with certain properties), we replace the branch $B$ by $l$ identical branches $B_1, \dots, B_l$ and require that the homomorphisms that factor through the branch $B_j$ are homomorphisms that factor through $B$ and satisfy the properties satisfied by the family $\{\varphi_{ij}\}$. We denote the new tree by $T_{\sol}'(G)$.

Now, for each branch $B: \Omega_{0,B}\to \Omega_{1,B}\to \dots\to \Omega_{n, B}$ of the tree $T_{\sol}'(G)$,  as in Section \ref{sec:10},  we construct a  graph tower $\Ts_{B}$ and a homomorphism $\tau_{B}: G_{\Omega_{0,B}} \to \Ts_{B}$ that makes Diagram (\ref{diag:period}) commutative.

An argument, analogous to the one given in Section \ref{sec:11}, shows that each graph tower constructed for a branch of the tree $T_{\sol}'(G)$ is discriminated by $\GG$.

Let $\pi_{B}$ be the homomorphism from $G$ to $G_{\Omega_{0,B}}$ constructed in \cite[Section 3.3]{CKpc} and set $p_{B}:G\to \Ts_{B}$ to be $\pi_{B} \tau_{B}$. Since the tree $T_{\sol}'(G)$ describes all the homomorphisms from $G$ to $\GG$ (see \cite{CKpc}), by the commutativity of Diagram (\ref{diag:period}), we conclude that every homomorphism from $G$ to $\GG$ factors through some graph tower $\Ts_{B}$.

Furthermore, since the group $G$ is residually $\GG$, it follows that the homomorphisms $p_{B}:G\to \Ts_{B}$ induce an embedding of $G$ into the direct product of graph towers $\Ts_{B}$, where $B$ runs over all the branches of the tree $T_{\sol}'(G)$.
\end{proof}

We finish with the following 
\begin{cor}
For any finite system of equations $S(X)=1$ over a partially commutative group $\GG$, one can find effectively a finite family of graph towers  $\Ts_1,\dots,\Ts_k$, $\Ts_i=\factor{\GG[Y_i]}{S_i}$ and word mappings $p_i: V(S_i)\to V(S)$ such that for every $b\in V(S)$ there exist $i$ and $c\in V(S_i)$ for which $b=p(c)$, i.e.
$$
V(S)= p_1(V(S_1)) \cup\dots\cup p_k(V(S_k))
$$
and all sets $p_i(S_i)$ are irreducible; moreover, every irreducible component of $V(S)$ can be obtained as a closure of some $p_i(V(S_i))$ in the Zariski topology.
\end{cor}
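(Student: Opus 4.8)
The plan is to translate the algebraic geometry of the system $S(X)=1$ over $\GG$ into the language of coordinate groups and then apply Theorem~\ref{thm:sollast} to the relevant limit groups. First I would recall that since $\GG$ is equationally Noetherian, the algebraic set $V(S)$ decomposes as a finite union of its irreducible components $V_1,\dots,V_m$, and by the duality between algebraic sets and coordinate groups (see \cite{BMR1}), each $V_j$ corresponds to a coordinate group $G_{R(S_j')}=\factor{\GG[X]}{R(S_j')}$ which is a limit group over $\GG$ by Theorem~\ref{thm:limgrchar}. In particular $G_S=\factor{\GG[X]}{\ncl\langle S\rangle}$ is a finitely generated group, and (after passing to $G_{R(S)}$) we obtain a finitely generated residually $\GG$ group, so Theorem~\ref{thm:sollast} applies: one effectively constructs finitely many fully residually $\GG$ graph towers $\Ts_1,\dots,\Ts_k$, each of the form $\Ts_i=\factor{\GG[Y_i]}{S_i}$ for a suitable finite radical presentation, together with homomorphisms $p_i:G_S\to \Ts_i$ through which every homomorphism $G_S\to \GG$ factors.

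Next I would set up the correspondence between homomorphisms and points of the variety. A point $b\in V(S)$ is exactly a $\GG$-homomorphism $\pi_b:\GG[X]\to\GG$ with $S\subseteq\ker\pi_b$, equivalently a homomorphism $G_S\to\GG$. By Theorem~\ref{thm:sollast}, there exist $i\in\{1,\dots,k\}$ and a homomorphism $\varphi_i:\Ts_i\to\GG$ with $\pi_b=p_i\varphi_i$. Composing $p_i:G_S\to\Ts_i$ with the presentation maps, the homomorphism $p_i$ is given on generators by a tuple of words in $Y_i$, i.e.\ it defines a word mapping $p_i:V(S_i)\to V(S)$ in the sense required: a point $c\in V(S_i)$ is a homomorphism $\Ts_i\to\GG$, and $p_i(c):=p_i\circ c\in V(S)$. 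The displayed containment $V(S)=p_1(V(S_1))\cup\dots\cup p_k(V(S_k))$ is then precisely the factorisation statement of Theorem~\ref{thm:sollast} read pointwise, and the reverse inclusion $p_i(V(S_i))\subseteq V(S)$ holds because $p_i$ is a $\GG$-homomorphism $G_S\to\Ts_i$ and $S$ maps to the trivial element of $\Ts_i$.

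To finish, I would verify irreducibility and the ``closure'' clause. Each $\Ts_i$ is a fully residually $\GG$ group, hence by Theorem~\ref{thm:limgrchar} it is the coordinate group of an irreducible algebraic set over $\GG$; thus $V(S_i)$ is irreducible, and its image $p_i(V(S_i))$ under the word mapping $p_i$, being a continuous (morphism) image of an irreducible set, is irreducible in $V(S)$ (one checks that the Zariski closure of a morphic image of an irreducible set is irreducible). For the last assertion, each irreducible component $V_j$ of $V(S)$ is a closed irreducible subset contained in the finite union $\bigcup_i \overline{p_i(V(S_i))}$, so by irreducibility $V_j\subseteq\overline{p_{i(j)}(V(S_{i(j)}))}$ for some index $i(j)$; conversely $\overline{p_{i(j)}(V(S_{i(j)}))}$ is an irreducible closed subset of $V(S)$, so it is contained in some component, and maximality of components forces $V_j=\overline{p_{i(j)}(V(S_{i(j)}))}$.

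\emph{Main obstacle.} The substantive content is entirely carried by Theorem~\ref{thm:sollast} (and hence by the constructions of Sections~\ref{sec:10} and~\ref{sec:11}); the corollary itself is essentially a dictionary translation. The only genuine care needed is (a) to check that the word mappings $p_i$ obtained from $p_i:G_S\to\Ts_i$ are well defined on varieties, i.e.\ that $p_i$ really sends $V(S_i)$ into $V(S)$ — this is immediate from $p_i$ being a $\GG$-homomorphism annihilating $S$ — and (b) the topological bookkeeping that a morphic image of an irreducible set has irreducible closure and that this forces equality with an irreducible component; both are standard facts about the Zariski topology over an equationally Noetherian group, used already in \cite{BMR1}. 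I expect no difficulty beyond assembling these pieces, with the one point worth stating explicitly being that passing from $G_S$ to $G_{R(S)}$ (to ensure residual $\GG$-ness, so Theorem~\ref{thm:sollast} applies) does not change the variety $V(S)$, which follows from $R(S)=\bigcap_{b\in V(S)}\ker\pi_b$.
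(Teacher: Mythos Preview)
Your proposal is correct and is precisely the argument the paper intends: the corollary is stated immediately after Theorem~\ref{thm:sollast} without a separate proof, because it is exactly the dictionary translation you give---apply Theorem~\ref{thm:sollast} to the residually $\GG$ coordinate group $G_{R(S)}$, read the factorisation of $\Hom(G_{R(S)},\GG)$ through the graph towers $\Ts_i$ as a covering $V(S)=\bigcup_i p_i(V(S_i))$ by images of irreducible varieties, and then use the standard Noetherian/Zariski bookkeeping from \cite{BMR1} to match irreducible components with closures of the $p_i(V(S_i))$. There is nothing to add; your handling of the passage from $G_S$ to $G_{R(S)}$ and the irreducibility of morphic images is exactly the care the paper would expect the reader to supply.
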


\end{document}